\newcommand{\nref}[2]{\hyperref[#1]{\ref*{#1}$_{#2}$}}
\theoremstyle{plain}
\newtheorem{theorem}{Theorem}[section]
\newtheorem{proposition-definition}{Proposition/Definition}[section]
\newtheorem{theorem-definition}{Theorem/Definition}[section]
\newtheorem*{proposition-definition-intro}{Proposition/Definition}
\newtheorem*{definition-intro}{Definition}
\newtheorem*{cor-intro}{Corollary}
\newtheorem*{prop-intro}{Proposition}
\newtheorem{proposition}[theorem]{Proposition}		
\newtheorem{corollary}[theorem]{Corollary}
\newtheorem{lemma}[theorem]{Lemma}
\newtheorem*{conjecture}{Conjecture}
\newtheorem*{conjecture-intro}{Conjecture}
\newtheorem{definition}[theorem]{Definition}
\newtheorem{construction-definition}[theorem]{Construction/Definition}
\newtheorem*{problem2prime}{Problem 2'}
\newtheorem*{theoremMeta}{Main theorem}
\newtheorem{thm}{Theorem}
\newtheorem{prop}[thm]{Proposition}
\newtheorem{defthm}[thm]{Definition / Theorem}
\newtheorem{problem-Deligne}{Problem}
\theoremstyle{remark}
\newtheorem{remark}[theorem]{Remark}
\newtheorem*{remark-intro}{Remark}
\newtheorem*{construction}{Construction}
\newcommand{\Abold}{{\bf A}}
\newcommand{\Bbold}{{\bf B}}
\newcommand{\Cbold}{{\bf C}}
\newcommand{\Dbold}{{\bf D}}
\newcommand{\Ebold}{{\bf E}}
\newcommand{\Kbold}{{\bf K}}
\newcommand{\Pbold}{{\bf P}}
\newcommand{\ov}{\overline}
\newcommand{\NBbb}{\mathbb N}
\newcommand{\PBbb}{\mathbb P}
\newcommand{\QBbb}{\mathbb Q}
\newcommand{\ZBbb}{\mathbb Z}
\newcommand{\Acal}{\mathcal A}
\newcommand{\Bcal}{\mathcal B}
\newcommand{\Ccal}{\mathcal C}
\newcommand{\Dcal}{\mathcal D}
\newcommand{\Gcal}{\mathcal G}
\newcommand{\Ical}{\mathcal I}
\newcommand{\Ocal}{\mathcal O}
\newcommand{\Pcal}{\mathcal P}
\newcommand{\Rcal}{\mathcal R}
\newcommand{\Scal}{\mathcal S}
\newcommand{\Ucal}{\mathcal U}
\newcommand{\Vcal}{\mathcal V}
\newcommand{\Wcal}{\mathcal W}
\newcommand{\Cfrak}{\mathfrak C}
\newcommand{\cfrak}{\mathfrak c}
\newcommand{\chfrak}{\mathfrak {ch}}
\newcommand{\sfrak}{\mathfrak s}
\newcommand{\tdfrak}{\mathfrak {td}}
\newcommand{\bis}{\prime\prime}
\newcommand{\tris}{\prime\prime\prime}
\newcommand{\fppf}{\emph{fppf}}
\providecommand{\td}{\operatorname{td}}
\providecommand{\ch}{\operatorname{ch}}
\providecommand{\CH}{\operatorname{CH}}
\providecommand{\ch}{\operatorname{ch}}
\providecommand{\rk}{\operatorname{rk}}
\providecommand{\Gr}{\operatorname{Gr}}
\providecommand{\Spec}{\operatorname{Spec}}
\DeclareMathOperator{\Hom}{Hom}
\DeclareMathOperator{\SheafHom}{\mathscr{H}\text{\kern -3pt {\calligra\large om}}\,}
\DeclareMathOperator{\Aut}{Aut}
\DeclareMathOperator{\Vect}{Vect}
\DeclareMathOperator{\id}{id}
\DeclareMathOperator{\Sym}{Sym}
\DeclareMathOperator{\Picfr}{\mathfrak{Pic}}
\newcommand{\Pic}{{\rm Pic}}
\DeclareMathOperator{\Imag}{Im}
\DeclareMathOperator{\iso}{iso}
\DeclareMathOperator{\qiso}{quasi-iso}
\DeclareMathOperator{\CHfrak}{\mathfrak{CH}}
\numberwithin{equation}{section}
\begin{document}
\setcounter{tocdepth}{1}
\title{ Deligne--Riemann--Roch and Intersection bundles}
\author{Dennis Eriksson}
\author{Gerard Freixas i Montplet}

\address{Dennis Eriksson \\ Department of Mathematics \\ Chalmers University of Technology and  University of Gothenburg}
\email{dener@chalmers.se}

\address{Gerard Freixas i Montplet \\ CNRS -- Centre de Math\'ematiques Laurent Schwartz - \'Ecole Polytechnique - Institut Polytechnique de Paris}
\email{gerard.freixas@polytechnique.edu}

\subjclass[2010]{Primary: 14C17, 19D99. Secondary: 14C40, 19D23.}

\keywords{Deligne program, virtual categories, intersection bundles, Grothendieck--Riemann--Roch, categorification.}

\maketitle
\begin{abstract}
This article is part of a series of works by the authors with the goal of completing a far-reaching program propounded by Deligne, aiming to extend the codimension one part of the Grothendieck--Riemann--Roch theorem from isomorphism classes of line bundles to isomorphisms thereof. The paper develops a relative functorial intersection theory with values in line bundles, together with a formalism that generalizes previous constructions by Deligne and Elkik, related to the right-hand side of the theorem.

\end{abstract}

\tableofcontents

\newpage

\section{Introduction}
This is the first of a series of papers with the purpose of completing a far-reaching program propounded by Deligne in the foundational article \emph{Le d\'eterminant de la cohomologie} \cite{Deligne-determinant}, aiming to lift the Grothendieck--Riemann--Roch theorem to the level of line bundles. The current article generalizes previous constructions by Elkik \cite{Elkikfib} and others related to the right-hand side of the theorem, and develops a formalism that is suitable for adapting to this setting the standard proof of factoring a projective morphism in terms of a closed immersion and a projective bundle. It can be recast as a relative functorial intersection theory valued in line bundles, which addresses Problem \ref{problem1} below and is of independent interest. 

\subsection{The program of Deligne}
Recall that, for a vector bundle $E$ on a projective variety $X$, the determinant of the cohomology is the line
\begin{displaymath}
    \det H^{\bullet}(X,E) = \bigotimes \det H^i(X,E)^{(-1)^i}.
\end{displaymath}
More generally, the Knudsen--Mumford determinant extends the above construction to the families setting \cite{KnudsenMumford}. For a flat projective morphism of schemes\footnote{For the clarity of exposition, in the introduction we may oversimplify some geometric requirements.} $f\colon X\to S$ and a vector bundle $E$ on $X$, it furnishes a line bundle $\lambda_{f}(E)$ on $S$, with fibers $\det H^{\bullet}(X_{s},E|_{X_{s}})$. If the family $X\to S$ is moreover a local complete intersection, the isomorphism class of $\lambda_{f}(E)$, as a $\QBbb$-line bundle, is determined by the Grothendieck--Riemann--Roch theorem \cite{SGA6}, which expresses the first Chern class as 
\begin{equation}\label{eq:GRR}
    c_1(\lambda_{f}(E))  =  f_\ast \left(\ch(E) \cdot \td^{\ast}(\Omega_f) \right)^{(1)}.
\end{equation}
Here, the right-hand side is given by products of characteristic classes in the Chow ring, and the codimension one part of the direct image. 

With this in mind, the program in \cite{Deligne-determinant} takes the form of several problems. 

\begin{problem-Deligne}  [\textbf{Integrals of Chern classes}]\label{problem1}
 Develop a theory of integrals of Chern classes in terms of line bundles, with properties analogous to the corresponding ones from Chow theory. 
\end{problem-Deligne} 

In particular, this should allow one to represent the right-hand side of the Grothendieck--Riemann--Roch theorem in \eqref{eq:GRR} in terms of line bundles. Since the left-hand side is represented by $\lambda_f(E)$, given a positive solution to this first problem, one can pose the following:

\begin{problem-Deligne} [\textbf{Deligne--Riemann--Roch isomorphism}]\label{problem2}
Construct and characterize a canonical isomorphism between the $\QBbb$-line bundles representing the two sides of \eqref{eq:GRR}, and determine the denominator of the isomorphism. 
\end{problem-Deligne}

Finally, with a view toward a refined formulation of the Riemann--Roch theorem in Arakelov geometry, one can consider the complex geometric situation with hermitian metrics. In this setting, the bundle $\lambda_f(E)$ is equipped with the Quillen metric. Then, Deligne proposes: 
\begin{problem-Deligne}[\textbf{Analytification and metrics}]\label{problem3}
Extend the integrals of Chern classes to the hermitian setting, and compute the norm of the Deligne--Riemann--Roch isomorphism.
\end{problem-Deligne}

Berkovich analogs of the last problem have also proven to be interesting, for instance in \cite{Boucksom-Eriksson, SebWalMarDiff}, where these circles of ideas are applied to non-Archimedean pluripotential theory. One should hence seek extensions of this program to these spaces and situations. 

\subsection{Formulation of the main theorem} All the problems were solved, in an essentially unique way, for families of smooth curves, by Deligne himself in \cite{Deligne-determinant}, who for the analytical part relied on the work of Bismut and collaborators \cite{freed1, freed2, BGS1, BGS2, BGS3}. These results were also germinal for higher dimensional developments in Arakelov theory, and actually the arithmetic Riemann--Roch theorem of Gillet--Soul\'e \cite{GS-ARR} should follow from Problem \ref{problem3} by passing to isometry classes. In this sense, Deligne's program is a refinement of the work of  Gillet--Soul\'e.

The  program has attracted the attention of several authors, from which we highlight two of historical importance. Elkik, in a series of papers \cite{Elkikfib, Elkikmetr}, introduced a theory of integrals of Chern classes that she called intersection bundles, and also equipped them with metrics. On a related note, also see the references \cite{Boucksom-Eriksson, Ducrot, Munoz}, where these problems are considered in various generalities. Independently, in another series of papers \cite{FrankeChow, FrankeChern, Franke}, Franke studied the first two problems within a Chow homology-type formalism. The last paper gives, in this setting, a proof of the second problem, but remains unpublished. 

Notwithstanding these contributions, the Deligne program, in the whole generality, is still open. For example, the approach  by Franke demands certain regularity assumptions on the base schemes which are not satisfied for most moduli spaces, such as Hilbert schemes. The approach by Elkik makes Noetherian assumptions on the base, which are not fulfilled in many situations in non-Archimedean geometry. Most importantly, neither of these works nor their sequels, develop a formalism that faces the interaction between various properties of intersection bundles. 

In practical terms, this means in particular that one lacks functoriality properties which are fundamental to tackle a Deligne--Riemann--Roch isomorphism along the lines of the classical proof by deformation to the normal cone. To this end, our objective in this article is to expand upon Elkik's perspective and ultimately solve Problem \ref{problem1}. To accomplish this, we further draw inspiration from Franke's Chow formalism and introduce a category of 
Chern classes that encodes the fundamental functorial properties of Elkik's intersection bundles. Unlike Franke's categories, our approach is more manageable and specifically designed to address the limitations mentioned earlier.

To this effect, the main result of our article can be expressed as follows: 

\begin{theoremMeta} [Informal]
   For projective and flat morphisms over general base schemes, there is a relative functorial intersection theory, recovering Elkik's intersection bundles in terms of direct images. 
\end{theoremMeta}

In the rest of the introduction, we summarize the main lines of our work and the meaning of this statement.

\subsection{Virtual categories and Chern categories }

Let $f: X\to Y$ be a projective local complete intersection morphism. The classical formulation of the Grothendieck--Riemann--Roch theorem \cite{SGA6} states that, under appropriate assumptions, there is a commutative diagram

\begin{equation}\label{eq:GRR-diagram}
    \xymatrix{
        K_0(X) \ar[rr]^{\ch(-) \td^{\ast}(\Omega_f)} \ar[d]_{f_!} & & \CH^{\ast}(X)_\QBbb \ar[d]^{f_\ast} \\
        K_0(Y) \ar[rr]^{\ch(-)} & & \CH^{\ast}(Y)_\QBbb,
    }
\end{equation}
where $\CH^{\ast}$, here and elsewhere, stands for Grothendieck's Chow groups defined in terms of the $\gamma$-filtration of $K$-theory.

\subsubsection*{Virtual categories} Recall that, according to the homotopical tradition initiated by Quillen \cite{Quillen:K-theory-I}, the $K_0$-group is the $\pi_0$ of a $K$-theory spectrum. Hence, if we would be willing to categorify the Grothendieck--Riemann--Roch theorem in all degrees, we would undoubtedly be led to work with the entire $K$-theory spectrum and the recent developments on $\infty$-categories. 

Following Deligne \cite{Deligne-determinant}, for the categorification of the codimension one part of the \linebreak Grothendieck--Riemann--Roch theorem \eqref{eq:GRR}, it is enough to work with the $[0,1]$-truncation of the $K$-theory spectrum of the exact category of vector bundles, namely the virtual category $V(X)$. The group of isomorphism classes $\pi_0$ and the automorphism group $\pi_1$ of $V(X)$ are hence related to $K$-theory as follows:
\begin{displaymath}
    \pi_0(V(X))= K_0(X),\quad \pi_1(V(X)) = K_1(X).
\end{displaymath}

The virtual category of a general exact category $\Ccal$ admits a universal property, similar to that of $K_0(\Ccal)$ as a universal group for multiplicative maps from $\Ccal$ to the category of abelian groups. Briefly, abelian groups are replaced by categorical groups, known as commutative Picard categories and multiplicative maps are replaced by multiplicative functors. We refer to\linebreak Theorem \ref{thm:delignevirtual} for a precise formulation. If we denote by $\Picfr(S)$ the Picard category of line bundles on $S$, the main multiplicative functor we have in mind is $(\Vect_X, \iso) \to \Picfr(S)$ given by $E \mapsto \lambda_f(E)$ in the left-hand side of \eqref{eq:GRR}. Similarly, the right-hand side should lift to a multiplicative functor. 

It is natural to consider the various functoriality properties of the virtual categories of schemes. In particular, given \eqref{eq:GRR-diagram}, it is desirable to have a formalism of direct images. At the level of $K$-theory, a technical problem and central point in \cite{SGA6}, is that derived direct images of vector bundles are at best perfect complexes, and not bounded complexes of vector bundles.  This motivates an extension of the theory of virtual categories from exact categories to categories of complexes. The proper framework for these is complicial biWaldhausen categories, whose homotopy categories are usually thought of as derived categories. The sought extension is achieved by the work of Muro--Tonks--Witte \cite{Muro:determinant}. We prove that the virtual categories themselves admit descriptions similar to the usual $K$-theory. For example,  two complicial biWaldhausen categories with the same derived categories give rise to the same virtual categories (cf. Proposition \ref{prop:derivedvirtual}).

Following Thomason's work on $K$-theory and derived categories \cite{ThomasonTrobaugh}, we introduce the virtual category of perfect complexes (cf. Definition \ref{def:virtual-category-perfect}). This coincides with the usual virtual category for divisorial schemes (cf. Definition \ref{def:divisorial}). We then in particular address the problem of refining, from the $K$-theory to the virtual category, the direct image functor $f_{!}$ appearing in \eqref{eq:GRR-diagram}. We summarize the main conclusions, including functoriality properties involving pullback functors (cf. Proposition \ref{prop:properties-direct-image-virtual}):
\begin{prop}\label{prop:intro-direct}
Let $f\colon X\to Y$ be a morphism of divisorial schemes. 

\begin{enumerate} 
\item If $f$ is projective, of local complete intersection, then, the derived functor $Rf_{\ast}$ induces a functor of commutative Picard categories $f_{!}\colon V(X)\to V(Y)$. 

\item For arbitrary morphisms $f \colon X \to Y$ there are pullback morphisms $f^\ast \colon V(Y) \to V(X).$

\item These functors exhibit natural Tor-independent base change isomorphisms and projection formulas. 
\end{enumerate}
\end{prop}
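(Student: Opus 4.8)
The three clauses of Proposition~\ref{prop:intro-direct} should be obtained by systematically lifting the corresponding $K$-theory statements of Thomason--Trobaugh \cite{ThomasonTrobaugh} through the $[0,1]$-truncation functor, using the universal property of virtual categories recalled in Theorem~\ref{thm:delignevirtual} together with the complicial extension of Muro--Tonks--Witte \cite{Muro:determinant} and our Proposition~\ref{prop:derivedvirtual}. The key organizational point is that for a divisorial scheme $X$ the virtual category $V(X)$ of vector bundles agrees with the virtual category of perfect complexes $V^{\mathrm{perf}}(X)$ of Definition~\ref{def:virtual-category-perfect}; this is what allows us to compute with whichever model is convenient at each step and to transport the answer back.

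\smallskip
\emph{Construction of $f_!$ (clause 1).} For $f\colon X\to Y$ projective l.c.i. of divisorial schemes, $Rf_\ast$ sends perfect complexes on $X$ to perfect complexes on $Y$ (this is the Thomason--Trobaugh finiteness statement; projectivity plus l.c.i., hence finite Tor-dimension, is exactly what is needed). Moreover $Rf_\ast$ is an exact functor between the relevant complicial biWaldhausen categories of perfect complexes and is visibly additive on distinguished triangles, so it induces a multiplicative functor on $[0,1]$-truncations $f_!\colon V^{\mathrm{perf}}(X)\to V^{\mathrm{perf}}(Y)$ by functoriality of $V(-)$; composing with the equivalences $V(X)\simeq V^{\mathrm{perf}}(X)$, $V(Y)\simeq V^{\mathrm{perf}}(Y)$ gives the asserted functor. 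One must check the coherence isomorphisms (e.g. $R(g\circ f)_\ast \simeq Rg_\ast\circ Rf_\ast$) are promoted to isomorphisms of multiplicative functors; this is a diagram chase in the target Picard category, again reducing to the corresponding statement in the derived category.

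\smallskip
\emph{Pullbacks and compatibilities (clauses 2--3).} For arbitrary $f$, derived pullback $Lf^\ast$ on perfect complexes is exact and additive on triangles, hence induces $f^\ast\colon V(Y)\to V(X)$ by the same truncation argument; no finiteness on $f$ is needed since $Lf^\ast$ always preserves perfect complexes. For clause~3, Tor-independent base change and the projection formula hold as natural isomorphisms of functors of derived categories (\cite{ThomasonTrobaugh}, or SGA6), each compatible with distinguished triangles; applying $V(-)$ turns them into natural isomorphisms of the corresponding multiplicative functors of virtual categories. The only genuine subtlety here is bookkeeping the higher coherences — e.g.\ that base change and the associativity of composition of $f_!$'s are compatible on triple fiber products — but since $V(-)$ is a $[0,1]$-truncation, all such coherences are governed by $K_0$ and $K_1$ relations, and the needed identities either hold on the nose or follow from the universal property with no room for obstruction.

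\smallskip
\emph{Main obstacle.} The real work is not any single clause but verifying that the passage $\text{(derived category data)} \leadsto \text{(virtual category data)}$ is well-defined and functorial at the level of \emph{multiplicative} functors and their natural transformations — i.e.\ that Muro--Tonks--Witte's machinery plus Proposition~\ref{prop:derivedvirtual} genuinely lets one read off $f_!$, $f^\ast$ and all the base-change/projection isomorphisms from their derived-categorical counterparts without having to re-prove anything about complicial models. Once that dictionary is in place, every assertion in the proposition is the image under $V(-)$ of a standard statement in \cite{ThomasonTrobaugh}.
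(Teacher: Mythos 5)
Your overall strategy — identify $V(X)$ with $V(\Pcal_X)$ on divisorial schemes, then transport the Thomason--Trobaugh statements through the universal property — is the same as the paper's (Proposition~\ref{prop:properties-direct-image-virtual} plus Lemma~\ref{lemma:divisorial}). But you gloss over the technical point that is the actual content of the proof. You assert that ``$Rf_\ast$ is an exact functor between the relevant complicial biWaldhausen categories of perfect complexes,'' which is not literally true: $Rf_\ast$ is a \emph{derived} functor, and to get an honest exact functor of Waldhausen categories one must fix a strictly functorial fibrant model. The paper does this concretely: for clause~1 it works with bounded below perfect complexes of flasque (injective, then Godement) sheaves, using \cite[Lemma 3.5]{ThomasonTrobaugh} to show the inclusion into $\Pcal_X$ induces an equivalence of virtual categories; the functorial Godement resolution then makes $f_\ast T(\cdot)$ an honest multiplicative functor $(\Pcal_X^+,\text{quasi-iso})\to V(\Pcal_Y)$, which is what the universal property converts into $f_!$.

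Similarly for clause~3, your claim that ``applying $V(-)$ turns [base change and projection formula] into natural isomorphisms'' and that all coherences ``either hold on the nose or follow from the universal property with no room for obstruction'' skips the real issue: a natural transformation of functors of derived categories, or a homotopy of maps of spectra, does \emph{not} automatically descend to a natural transformation of multiplicative functors of virtual categories. What saves the argument — and what the paper explicitly points out — is that the proofs of Propositions 3.17 and 3.18 in \cite{ThomasonTrobaugh} construct \emph{actual quasi-isomorphisms of complexes} in the relevant Waldhausen categories (via Godement resolutions), not merely homotopy classes of maps of spectra; it is only these complex-level weak equivalences, combined with Proposition~\ref{prop:derivedvirtual}, that supply the isomorphisms in the virtual categories. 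Your dictum that $[0,1]$-truncation leaves ``no room for obstruction'' is where the hand-waving hides the genuine work.
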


The proofs of these and related properties follow from adaptions from the corresponding arguments on the level of $K$-theory spectra. This necessitates some background on Picard categories, which is developed in Section \ref{sec:monoidal-picard}.

\subsubsection*{Chern categories} Suppose, for simplicity, that $X$ is a connected scheme. While it would be natural to develop a categorification of Grothendieck's Chow groups $\CH^{\ast}(X)$ or $\CH^{\ast}(X)_\QBbb$, by introducing a $\gamma$-type filtration on the virtual category, it is also desirable to have a flexible categorification with a handy universal property. Along these lines, we introduce a category of formal sums and products of Chern classes, viewed as objects in their own right. The morphisms in this category reflect a minimal set of fundamental properties of Chern classes, the most important ones being isomorphisms induced from isomorphisms of vector bundles, and  Whitney-type isomorphisms associated with exact sequences. This category $\CHfrak(X)$ comes equipped with a grading induced by the degrees of the Chern classes  and a graded ring category structure. The morphisms in the category $\CHfrak(X)$ are tailored so that the total Chern class induces a natural multiplicative functor 
\begin{equation}\label{eq:Chern-functor-intro}
    \cfrak: V(X) \to \CHfrak(X).
\end{equation}
The piece of degree $k$ of this functor is denoted by $\cfrak_{k}$. We call similar functors $ V(X) \to \Rcal$, into graded ring categories, Chern functors. By a procedure reminiscent of the Bousfield localization of spectra, we also obtain a rational counterpart of this category, denoted by $\CHfrak(X)_{\QBbb}$. Here, rationality means that multiplication by $n$ induces an equivalence of categories on each graded piece. For the purposes of the introduction, we focus on $\CHfrak(X)_\QBbb$.

\begin{thm}\label{thm:B}
The category $\CHfrak(X)_\QBbb$, together with the functor induced by \eqref{eq:Chern-functor-intro}, is universal with respect to Chern functors into rational graded ring categories. 

\end{thm}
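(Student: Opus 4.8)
The plan is to take advantage of the fact that $\CHfrak(X)$ is built by an explicit presentation by generators and relations, so that the universal property reduces to a check against that presentation; the rational statement then follows formally from the universal property of the localization. First I would unwind the definitions. A Chern functor $F\colon V(X)\to\Rcal$ into a (rational) graded ring category consists of a multiplicative functor together with, for every vector bundle $E$, a decomposition of $c(E):=F(E)$ into homogeneous components $c_k(E)$ in degree $k$ with $c_0(E)$ the unit, and, for every short exact sequence $0\to E'\to E\to E''\to 0$, a Whitney isomorphism $c(E)\simeq c(E')\cdot c(E'')$, all subject to the expected compatibilities: naturality under isomorphisms of short exact sequences, compatibility with the coherence isomorphisms of the graded ring category, and compatibility with refinements of filtrations (iterated exact sequences). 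By construction the objects of $\CHfrak(X)$ are the formal graded-ring-theoretic expressions in the symbols $c_k(E)$, and its morphisms are generated by the images of isomorphisms of vector bundles, by Whitney isomorphisms, and by the structural isomorphisms of a graded ring category, modulo exactly the relations that encode the compatibilities above — which is precisely what makes $\cfrak\colon V(X)\to\CHfrak(X)$ a Chern functor in the first place.

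Given a Chern functor $F\colon V(X)\to\Rcal$, I would define a graded ring functor $\widetilde F\colon\CHfrak(X)\to\Rcal$ on generators by $c_k(E)\mapsto$ the degree-$k$ component of $F(E)$, extend to formal sums and products using the ring category structure of $\Rcal$, and send each generating morphism to the corresponding piece of the structure of $F$ (the isomorphism attached to an isomorphism of vector bundles, the Whitney datum, the coherence isomorphism). Well-definedness amounts to checking that $\widetilde F$ respects every defining relation of $\CHfrak(X)$; but each such relation is, by design, one of the compatibilities built into the notion of a Chern functor, so this is a finite and essentially formal verification carried out relation by relation. One then checks $\widetilde F\circ\cfrak\simeq F$ as Chern functors, which holds because both sides agree on generators together with their Chern data.

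For the fully faithful part of the universal property one argues as usual: any graded ring functor $G$ with $G\circ\cfrak\simeq F$ must send $c_k(E)=\cfrak_k(E)$ to $F_k(E)$ and the generating morphisms to the prescribed data, hence agrees with $\widetilde F$ up to a unique monoidal natural isomorphism; the same bookkeeping applied to $2$-cells shows that precomposition with $\cfrak$ is an equivalence from the category of graded ring functors $\CHfrak(X)\to\Rcal$ to the category of Chern functors $V(X)\to\Rcal$. To pass to the rational statement, I would invoke that $\CHfrak(X)_\QBbb$ is obtained from $\CHfrak(X)$ by a localization inverting multiplication by every positive integer on each graded piece, together with the universal property of that localization: graded ring functors out of $\CHfrak(X)_\QBbb$ correspond to graded ring functors out of $\CHfrak(X)$ carrying these multiplication maps to equivalences. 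Since $\Rcal$ is a \emph{rational} graded ring category, every Chern functor into $\Rcal$ — in particular $\widetilde F$ — automatically satisfies this condition, so it factors uniquely through $\cfrak_\QBbb$; combining this with the previous step gives the universal property of $(\CHfrak(X)_\QBbb,\cfrak_\QBbb)$.

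The step I expect to be the main obstacle is the well-definedness check in the second paragraph. One must be sure the list of relations imposed on $\CHfrak(X)$ is simultaneously rich enough that the formal expressions genuinely form a graded ring category — so that its coherence theorem applies and $\widetilde F$ is unambiguous — and lean enough that every relation actually holds for an arbitrary Chern functor. Reconciling these two requirements, and in particular controlling the coherence of Whitney isomorphisms under composition of short exact sequences and under permutation of subquotients, is the delicate bookkeeping at the heart of the argument. A secondary subtlety is that $V(X)$ itself must be handled through its own universal property (Deligne's presentation of the virtual category, Theorem \ref{thm:delignevirtual}), so that a multiplicative functor on $V(X)$ equipped with Chern data can be built from, and is pinned down by, data on the exact category of vector bundles.
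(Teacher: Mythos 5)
Your overall strategy --- build the inverse functor on the generating symbols $\cfrak_k(E)$, send each symbolic isomorphism to the corresponding piece of the Chern-functor structure of $F$, verify that the relations are respected, and then dispose of the rational case by the universal property of the localization --- is exactly the paper's (the inverse functor is called $\Theta$ in the proof of Theorem \ref{thm:CH-is-universal}, and the relations hold there precisely "because $F$ is a Chern functor, thus defined on the virtual category $V(X)$"). You also correctly isolate the coherence of iterated Whitney isomorphisms as the delicate point, and the need to route everything through Deligne's universal property of $V(X)$.

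The genuine gap is in your premise that $\CHfrak(X)$ "is built by an explicit presentation by generators and relations" as a graded \emph{ring} category, so that well-definedness is "a finite and essentially formal verification carried out relation by relation." What is presented by generators and relations in the paper is only a graded \emph{rig} category $\CHfrak_{+}(X)$: formal sums of formal products of the symbols $\cfrak_k(E)$, with no additive inverses. The Chern category is obtained from it by group-completing each graded piece under addition (the graded Picardification), realized as the Grayson--Quillen completion $\Acal^{-1}\Acal$ followed by localization at all morphisms. This step is not a relation-by-relation check: the morphisms of the completion are stabilized equivalence classes rather than words in generators modulo relations, and extending $\widetilde F$ across it --- together with the product and the Laplaza coherence axioms --- requires the universal property of the Picardification and its compatibility with multimonoidal functors (Theorem \ref{thm:VA}, Proposition \ref{prop:product}, Proposition \ref{prop:rig-to-ring-2}), whose proofs go through classifying spaces and a homology computation of $B(\Acal^{-1}\Acal)$. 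Your argument implicitly assumes that a free graded ring category on a presentation exists with the evident universal property; establishing that is the content of Sections \ref{sec:monoidal-picard} and \ref{section:ring-categories} and cannot be absorbed into the relation check. A secondary omission: for non-connected $X$ the category $\CHfrak(X)$ is not the universal Chern category itself but a direct limit of products $\prod_i\CHfrak_{u}(U_i)$ over open partitions of $X$ (so that the degree-zero piece is $H^0(X,\ZBbb)$ rather than $\ZBbb$), and the universal property has to be passed through this limit degreewise (Proposition \ref{prop:CHX-is-a-direct-limit}); since the theorem as stated assumes $X$ connected this is harmless here, but it is part of what the paper actually proves.
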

We refer to $\CHfrak(X)_\QBbb$ as the rational Chern category of $X$. By a direct limit procedure \emph{\`a la} Grothendieck, the construction can be extended to non-connected schemes. 

The proof of Theorem \ref{thm:B} and its consequences occupies \textsection \ref{subsec:universal-chern-categories}--\textsection \ref{subsec:Chern-category-quasi-compact}. The intermediate steps require some more background on commutative Picard categories, as well as ring categories, contained in Section \ref{sec:monoidal-picard} and Section \ref{section:ring-categories}. In particular, we treat the problem of turning a symmetric monoidal category into a Picard category, a procedure that we call Picardification. We apply this to extending Laplaza's rig categories \cite{Laplaza} into ring categories. A finer related result is due to Baas--Dundas--Richter--Rognes \cite{BDRR}, and a comparison with ours is briefly discussed in\linebreak Remark \ref{rem:picardification}.

An  advantage of our approach is that we can formally realize characteristic classes on the level of the Chern categories. As an excerpt of the formalism, developed in \textsection \ref{subsubsec:categorical-characteristic}, we reproduce the following proposition regarding the Chern character and the Todd class appearing in \eqref{eq:GRR}.

\begin{prop}\label{propB}
For any scheme $X$, there is a natural additive functor
\begin{displaymath}
    \chfrak : V(X) \to \CHfrak(X)_\QBbb,
\end{displaymath}
given on objects by the Chern formal power series, and a natural multiplicative functor

\begin{displaymath}
    \tdfrak^{\ast} : V(X) \to \CHfrak(X)_\QBbb,
\end{displaymath}
given on objects by the dual Todd formal power series.\footnote{The dual Todd power series is obtained by changing the Todd power series by the sign $(-1)^{k}$ in degree $k$.} Both functors commute with natural pullback functors. 

\end{prop}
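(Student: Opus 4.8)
\emph{Strategy.} The plan is to realize $\chfrak$ and $\tdfrak^{\ast}$ as instances of the categorical characteristic-class formalism of \textsection\ref{subsubsec:categorical-characteristic}, built on the universal Chern functor $\cfrak\colon V(X)\to\CHfrak(X)$ of \eqref{eq:Chern-functor-intro} and its rationalization. The classical input is that, over $\QBbb$, both the Chern character and the dual Todd class are universal weighted-homogeneous polynomials in the Chern classes: writing $\cfrak_k$ for the degree-$k$ component of $\cfrak$, one has $\ch=\rank+\sum_{k\ge1}\tfrac1{k!}\,\mathrm{N}_k(\cfrak_1,\dots,\cfrak_k)$ with $\mathrm{N}_k$ the $k$-th Newton power-sum polynomial, and $\td^{\ast}=\sum_{k\ge0}(-1)^k\,\mathrm{T}_k(\cfrak_1,\dots,\cfrak_k)$ with $\mathrm{T}_k\in\QBbb[\cfrak_1,\dots,\cfrak_k]$ the Todd polynomials. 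The denominators $1/k!$, etc., are exactly why one must land in $\CHfrak(X)_\QBbb$, where multiplication by every integer is invertible on each graded piece.

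\emph{The additive functor.} I would first build $\chfrak$ as an \emph{additive} functor, i.e.\ one carrying $\oplus$ to the additive structure of $\CHfrak(X)_\QBbb$ (so Theorem \ref{thm:B} does not apply directly). By the universal property of the virtual category (Theorem \ref{thm:delignevirtual}) it suffices to give such a functor on $(\Vect_X,\iso)$: set $\chfrak(E):=\rank(E)+\sum_{k\ge1}\tfrac1{k!}\,\mathrm{N}_k(\cfrak_1(E),\dots,\cfrak_k(E))$, let it act on isomorphisms of bundles via functoriality of the $\cfrak_k$, and for a short exact sequence $0\to E'\to E\to E''\to0$ produce the additivity isomorphism $\chfrak(E)\simrightarrow\chfrak(E')+\chfrak(E'')$ by applying the ring-category operations to the Whitney isomorphism $\cfrak(E)\simrightarrow\cfrak(E')\otimes\cfrak(E'')$ along Newton's identities — which express $\mathrm{N}_k\big(c(E)\big)=\mathrm{N}_k\big(c(E')\big)+\mathrm{N}_k\big(c(E'')\big)$ as a \emph{formal} consequence of $c(E)=c(E')\cdot c(E'')$. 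One then verifies that these isomorphisms satisfy the coherence conditions needed to descend the assignment to $V(X)$.

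\emph{The multiplicative functor and pullbacks.} For $\tdfrak^{\ast}$ one proceeds identically, now as a \emph{multiplicative} functor (carrying $\oplus$ to $\otimes$): put $\tdfrak^{\ast}(E):=\sum_{k\ge0}(-1)^k\,\mathrm{T}_k(\cfrak_1(E),\dots,\cfrak_k(E))$ and extract the multiplicativity isomorphism $\tdfrak^{\ast}(E)\simrightarrow\tdfrak^{\ast}(E')\otimes\tdfrak^{\ast}(E'')$ from the Whitney isomorphism, using that the multiplicative characteristic class attached to $x/(1-e^{-x})$ turns a concatenation of Chern roots into a product, i.e.\ $\mathrm{T}\big(c(E)\big)=\mathrm{T}\big(c(E')\big)\cdot\mathrm{T}\big(c(E'')\big)$ formally; the universal property of $V(X)$ for multiplicative functors into commutative Picard categories then gives $\tdfrak^{\ast}\colon V(X)\to\CHfrak(X)_\QBbb$. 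Compatibility with pullback comes for free: for $g\colon X'\to X$ the induced $g^{\ast}\colon\CHfrak(X)_\QBbb\to\CHfrak(X')_\QBbb$ is a morphism of graded ring categories commuting with each $\cfrak_k$ (part of the structure underlying Theorem \ref{thm:B}), and since $\chfrak$ and $\tdfrak^{\ast}$ are assembled from the $\cfrak_k$ by the fixed universal polynomials and the ring-category operations, the comparison isomorphisms $g^{\ast}\chfrak\cong\chfrak\, g^{\ast}$ and $g^{\ast}\tdfrak^{\ast}\cong\tdfrak^{\ast}g^{\ast}$, together with their coherences, are inherited from those of $\cfrak$.

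\emph{Main obstacle.} The substantive point is not the symmetric-function identities above (these are classical) but the verification that applying the fixed polynomials $\mathrm{N}_k$, $\mathrm{T}_k$ to the coherent system of Whitney isomorphisms yields a system of isomorphisms obeying precisely the axioms — associativity for nested exact sequences, compatibility with the commutativity constraint $E'\oplus E''\cong E''\oplus E'$, and so on — required to invoke the universal property of the virtual category. This is exactly the bookkeeping that the formalism of \textsection\ref{subsubsec:categorical-characteristic} is designed to handle once and for all, so in practice the proof should reduce to citing that formalism and checking the mild hypotheses on the generating series $e^{x}$ and $x/(1-e^{-x})$: rational coefficients and the correct normalization in degree $0$.
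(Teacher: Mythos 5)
Your proposal matches the paper's argument essentially step for step: the paper's proof is exactly Proposition \ref{prop:additcharclass} of \textsection\ref{subsubsec:categorical-characteristic}, which builds the additive (resp.\ multiplicative) categorical characteristic class from a rational power series $\phi$ (resp.\ $\psi$) by forming the universal homogeneous polynomials $\Phi_k$ in $\cfrak_1,\ldots,\cfrak_k$ via symmetric functions, invoking the strong-coherence property of $\CHfrak(X)_\QBbb$ to make the resulting expression well-defined, and then deriving additivity/multiplicativity from the Whitney isomorphism carried by $\cfrak$ together with the formal identity $\Phi_k(c'\ast c'')=\Phi_k(c')+\Phi_k(c'')$; $\chfrak$ and $\tdfrak^{\ast}$ are then obtained by specializing to $\exp(T)$ and the sign-twisted Todd series (Definition \ref{def:ChernToddchar}), with pullback compatibility inherited from the pullback functoriality of $\cfrak$. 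Your sketch carries out the same reduction and correctly identifies the coherence bookkeeping — handled once and for all by the formalism of \textsection\ref{subsubsec:categorical-characteristic} and Corollary \ref{cor:coherence} — as the only substantive point.
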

The above categorical Chern and Todd functors are likewise defined on the bounded derived category of vector bundles on $X$, and additive (resp. multiplicative) on the level of true triangles. This is suitable to treat functorialities for $\tdfrak^{\ast}$ evaluated on the relative cotangent complex, which is most naturally considered on this level.

\subsubsection*{Line distributions}
The preceding theory lacks a formalism of direct images fulfilling the role of the right vertical morphism in \eqref{eq:GRR-diagram}. In this direction, we first interpret Problem \ref{problem1} as a theory taking formal power series in Chern classes and functorially associating line bundles. As such, it is expected to be  a line bundle-valued distribution, where the role of test forms is being played by Chern classes. While we have one particular line distribution in mind, discussed in the next subsection, this general interpretation permits us to highlight and systematize some key categorical points of the construction.  

Precisely, in \textsection \ref{subsec:line-distributions}, we define a line distribution for a morphism $X\to S$, as follows.

\begin{definition-intro}

A line distribution $T$ for $X\to S$ consists in associating, to every base change $h: S^{\prime}\to S$, a multiplicative functor from the rational Chern category of $X^{\prime}=X\times_{S}S^{\prime}$ to the category of $\QBbb$-line bundles on $S^{\prime}$: 

\begin{displaymath}
    T_{S^{\prime}} : (\CHfrak(X^{\prime})_\QBbb,+) \to \Picfr(S^{\prime})_{\QBbb}.
\end{displaymath}
We require:
\begin{itemize}
    \item There is a natural identification $h^{\ast}\circ T_{S} \simeq T_{S^{\prime}}\circ h^{\ast}.$
    \item It is trivial on Chern classes of degree $> N$, for some $N$ independent of $S^{\prime}$.
    \item There is an integer $m\neq 0$ such that $T^{\otimes m}$ is induced by functors without rational coefficients. 

\end{itemize}
If $P$ is an object of the Chern category $\CHfrak(X)_\QBbb$, the line distribution for $X\to S$ defined by 
\begin{displaymath}
    P^{\prime} \mapsto T_{S^{\prime}}(P^{\prime} \cdot h^\ast P)
\end{displaymath}
is denoted by $P\cdot T$ or $[P]_{X/S}$.
\end{definition-intro}

The structure of commutative Picard category of $\Picfr(S)_{\QBbb}$ induces such a structure on the line distributions for $X\to S$. We denote this category by $\Dcal(X/S)$. In analogy with usual distributions, one can define direct images of line distributions in terms of pullbacks of Chern classes. In particular, given a line distribution, if $f: X\to Y$ is a local complete intersection morphism of schemes over $S$, in this formalism the following expression has a meaning:
\begin{equation}\label{eq:introdirectimage}
    f_\ast [\chfrak(E) \cdot \tdfrak^{\ast} (\Omega_f)]_{X/S}.
\end{equation}

We think of these line distributions as a relative intersection theory valued in line bundles. The functorial condition in the definition implies that they are  of local nature, and that, for most constructions, one can assume that all the bases are in fact affine. 

As is per usual in intersection theory, one expects several helpful natural properties to hold, such as the multiplicativity of the Chern character in \eqref{eq:introdirectimage} with respect to the tensor product. This can not be realized within our Chern categories alone. However, based on our previous work with Wentworth \cite{Eriksson-Freixas-Wentworth}, we do prove in \textsection\ref{subsubsec:splitting-principle} that, on the level of line distributions, there are splitting principles that allow us to reduce Chern class identities for general vector bundles to the cases of direct sums of line bundles, and often ultimately to the case of line bundles.

\subsection{Intersection bundles and intersection distributions}
The theory of Chern categories and line distributions provide tools to finally show that intersection bundles can be organized into an actual relative intersection theory. We now elaborate on this and present the contents of\linebreak Section \ref{section:intersection-bundles} to Section \ref{subsec:int-bundles-line-distributions}.

\subsubsection*{Intersection bundles} Suppose that $f: X \to S$ is a faithfully flat, locally projective morphism of finite presentation, whose fibers have pure dimension $n$. We summarize all these properties by saying that $f$ satisfies the condition $(C_{n})$. Suppose also that we are given vector bundles $E_1, \ldots, E_m$ on $X$ and non-negative integers $k_1, \ldots, k_m$, whose sum is $n+1$. In this case, we associate in \textsection\ref{subsubsec:construction-intersection-bundles} a natural line bundle 
\begin{equation}\label{eq:def-int-bundle-intro}
    \langle \cfrak_{k_1}(E_1) \cdot \ldots \cdot \cfrak_{k_m}(E_m) \rangle_{X/S}
\end{equation}
on $S$, whose formation commutes with base change. We can extend the definition by multilinearity to general polynomials or even rational power series in the Chern classes, by taking the component of degree $n+1$. If $S$ is divisorial, for example, quasi-projective over a regular scheme, we prove in Proposition \ref{prop:chernclassofIntbundle} that there is an equality in the Picard group of $S$
\begin{equation}\label{eq:chern-classes-integral-intro}
    c_1\left(\langle \cfrak_{k_1}(E_1) \cdot \ldots \cdot \cfrak_{k_m}(E_m) \rangle_{X/S}\right) = f_\ast \left( c_{k_1}(E_1) \cdot \ldots \cdot c_{k_m}(E_m) \right),
\end{equation}
so that the bundles indeed provide a theory of integrals of Chern classes as in Problem \ref{problem1}. The line bundles of the form \eqref{eq:def-int-bundle-intro} are thus called intersection bundles.

Building on Elkik's approach \cite{Elkikfib}, the intersection bundles are developed first in the case of line bundles, in terms of generators and relations. These are the so-called Deligne pairings, that we recall and elaborate on in Section \ref{section:intersection-bundles}. The general case follows, by a method analogous to Segre's construction of Chern classes. In contrast with Elkik's work, where schemes are supposed to be Noetherian and morphisms to be Cohen--Macaulay, we take care of stating our results over general base schemes and replacing the Cohen--Macaulay hypothesis with having fibers of pure dimension $n$. The latter point was already studied by Mu\~noz Garc\'ia \cite{Munoz}, but functorial properties such as base change were not established in the generality we need. We also notice that the relationship \eqref{eq:chern-classes-integral-intro} was not considered in \cite{Elkikfib, Munoz}. 

The fundamental property \eqref{eq:chern-classes-integral-intro} suggests that typical features of characteristic classes should have intersection bundle counterparts. A key such property is the Whitney product formula for the Chern classes of an exact sequence of vector bundles. In this article, we establish several natural isomorphisms between intersection bundles, corresponding to this and other properties. This is accomplished by systematically applying the splitting-type principles proven in \cite{Eriksson-Freixas-Wentworth}. While for Chow rings one can perform these operations in any order, there is no reason why a composition of two natural isomorphisms should commute. One of our new contributions, which was not accounted for by Elkik, is the establishment of the compatibility between these natural isomorphisms. As a result, we can formally manipulate intersection bundles and such properties as if we were really in a Chow ring. 

\begin{thm}\label{thm:D}
All classical identities of Chern classes lift to isomorphisms of intersection bundles. Moreover, these isomorphisms commute with each other. 
\end{thm}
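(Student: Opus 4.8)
First I would make the statement precise. By a \emph{classical identity of Chern classes} I mean any relation between polynomial — or rational power series — expressions in Chern classes of vector bundles that follows from the fundamental inputs encoded in the Chern category $\CHfrak(X)_\QBbb$: isomorphisms induced by isomorphisms of bundles, Whitney isomorphisms for exact sequences, compatibility with pullback, and additivity of $\cfrak_1$ under tensor products of line bundles. After the splitting principle these are exactly the identities of symmetric functions in the Chern roots, so the list includes the formulas for $c(E\otimes F)$, $c(E^\vee)$, $c(\Sym^k E)$, $c(\bigwedge^k E)$, the projection-formula relations, and so on. With this convention the theorem has two parts: (i) every such identity is witnessed by a canonical isomorphism between the corresponding intersection bundles; and (ii) these canonical isomorphisms form a coherent system, i.e.\ every diagram built out of them commutes.

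For part (i) the key point is that intersection bundles are organized by the line-distribution formalism of Section \ref{subsec:line-distributions}: the assignment $\cfrak_{k_1}(E_1)\cdots\cfrak_{k_m}(E_m)\mapsto\langle\cfrak_{k_1}(E_1)\cdots\cfrak_{k_m}(E_m)\rangle_{X/S}$ underlies a multiplicative, base-change-compatible functor $\CHfrak(X)_\QBbb\to\Picfr(S)_\QBbb$. Hence any isomorphism — and any commutative diagram of isomorphisms — produced in $\CHfrak(X)_\QBbb$ is transported, degree by degree, to intersection bundles, and it suffices to realize the identities at the level of Chern categories. Whitney and pullback compatibility are immediate from the definition of $\CHfrak(X)_\QBbb$ and Theorem \ref{thm:B}. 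For the remaining identities, which are not among the defining generators of $\CHfrak(X)_\QBbb$, I would pass to the relative flag bundle $\pi\colon F\to X$ and invoke the splitting principle of \cite{Eriksson-Freixas-Wentworth}: it reduces an identity for the $E_i$ to the corresponding identity for the line-bundle quotients of their filtrations on $F$, where it becomes a manipulation of symmetric functions in the $\cfrak_1$'s realized by an explicit chain of Whitney, permutation and $\cfrak_1$-additivity isomorphisms; the splitting principle then transports this isomorphism back down to $X$. Independence of the resulting isomorphism from the chosen filtration and flag is part of the splitting-principle package.

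The main obstacle is part (ii) — the coherence of these isomorphisms, which is precisely what was not addressed in \cite{Elkikfib}. Here I would again reduce, via the splitting principle, to the universal situation in which every bundle in sight is a direct sum of line bundles. Then every intersection bundle becomes an iterated Deligne pairing $\langle L_0,\dots,L_n\rangle_{X/S}$, and every canonical isomorphism becomes a composite of (a) the elementary Deligne-pairing isomorphisms — multilinearity, symmetry, and the trivialization attached to a rational section — and (b) the associativity and commutativity constraints of the commutative Picard category $\Picfr(S)_\QBbb$. For (b), coherence is Mac Lane's theorem for commutative Picard categories, available once the Picardification machinery of Sections \ref{sec:monoidal-picard}--\ref{section:ring-categories} is in place. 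For (a), one verifies the finitely many generating coherence polygons directly from the generators-and-relations presentation of Deligne pairings recalled in Section \ref{section:intersection-bundles}; each of these reduces to an identity of rational functions (norms of sections along the fibers), which can be checked after base change to the local rings, or even the points, of $S$, using the locality of line distributions. Finally, one must check that the splitting reductions are themselves coherent — that different orders of the $E_i$ and different iterated flag bundles yield the same descended isomorphism — which follows from the $(2\text{-})$functoriality of $\pi\mapsto\pi^\ast$ together with a cofinality argument over the directed system of flag bundles. I expect essentially all of the real work to sit in this coherence bookkeeping for (a) and for the splitting reductions; once it is done, transport along the intersection line distribution makes the passage to intersection bundles, and hence Theorem \ref{thm:D}, formal.
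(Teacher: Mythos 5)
Your part (i) is circular. You assert that the assignment $\cfrak_{k_1}(E_1)\cdots\cfrak_{k_m}(E_m)\mapsto\langle\cfrak_{k_1}(E_1)\cdots\cfrak_{k_m}(E_m)\rangle_{X/S}$ ``underlies a multiplicative, base-change-compatible functor $\CHfrak(X)_\QBbb\to\Picfr(S)_\QBbb$,'' and then transport identities along it. But that functoriality is Theorem \ref{thm:Elkik-distribution}, and its proof is built directly on the compatibility results that constitute Theorem \ref{thm:D}: to show the intersection bundles descend to a functor on $\CHfrak^{n+1}_{+}(X)$ one must already know that the Whitney, first-Chern, rank-triviality, restriction, projection-formula and birational-invariance isomorphisms commute with each other and with symmetries up to controlled signs. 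You cannot use the line-distribution formalism to establish the very coherence that the line-distribution formalism presupposes. The correct order in the paper is: Propositions \ref{prop:properties-chern-bundles}, \ref{prop:stupid-compatibilities}, \ref{prop:further-compatibilites} and their corollaries (this is Theorem \ref{thm:D}); then Lemma \ref{lemma:signs}; then Theorem \ref{thm:Elkik-distribution}; then Corollary \ref{cor:properties-intersection-distribution} (Theorem \ref{thm:F}).

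Part (ii) has the right flavor but elides the substance in two places. First, you say ``every canonical isomorphism becomes a composite of (a) the elementary Deligne-pairing isomorphisms ... and (b) the constraints of $\Picfr(S)_\QBbb$, and one verifies the finitely many generating coherence polygons.'' You never identify these polygons, and that identification is where the real work lies: the paper must verify, one pair at a time, Whitney vs.\ restriction, Whitney vs.\ projection formula, restriction vs.\ projection formula, rank triviality vs.\ projection formula and restriction, iterated restrictions (Corollary \ref{Cor:RestrictionRestriction}), and the nowhere-vanishing-section compatibility (Proposition \ref{prop:newranktriviality}). There is no coherence theorem that hands you a finite generating set for free; the claim that there is one is a hope, not an argument. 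Second, reducing to ``every bundle in sight a direct sum of line bundles'' is not straightforwardly available because the intersection bundles are defined via $\PBbb(E_i)$ and the tautological bundles $\Ocal(1)$ on them, which are not pullbacks from $X$; the splitting principle is applied to each $E_i$ separately, interlocked with the projection-formula and restriction machinery in the specific form of equation \eqref{eq:SegreWhitney} and \eqref{eq:definitionrestrictionisomorphism}, not globally. Finally, you do not confront the sign problem (Lemma \ref{lemma:signs}): permuting equal factors and reversing the order of Whitney isomorphisms for split sequences introduces signs $\pm 1$ detected by fiberwise degrees, which is precisely why the final statement is formulated over $\QBbb$. Any proof of coherence must account for this, either by squaring bundles or by rationalizing, and your sketch does neither.
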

This statement refers to the results proven in \textsection\ref{subsec:properties-of-intersection-bundles} and \textsection\ref{subsec:compatibility}. A more concrete formulation in the language of line distributions is given in Theorem \ref{thm:F} below.

\subsubsection*{Intersection distributions}

The previous paragraph on intersection bundles and their properties was developed with the perspective of treating the Chern classes as individual objects, being composed with a functor that is producing line bundles. The functor formally acts as pairing with a fundamental class and then taking a direct image.  This leads us to the following definition of a line distribution, which is the main subject of \textsection \ref{subsec:Intbunaslindib}. 

\begin{defthm}
For $X \to S$ satisfying the condition $(C_{n})$, there is a line distribution, given on objects by 
    \begin{equation}\label{eq:thm-def-D}
        \cfrak_{k_1}(E_1) \cdot \ldots \cdot \cfrak_{k_m}(E_m) \mapsto \langle \cfrak_{k_1}(E_1) \cdot \ldots \cdot \cfrak_{k_m}(E_m) \rangle_{X/S}.
    \end{equation}
We call it the intersection distribution for $X\to S$. We equally call intersection distributions the following related constructions:

\begin{enumerate}
    \item If we fix an object $P$ of $\CHfrak(X)_\QBbb$, we denote by $[P]_{X/S}$ the distribution induced by
    \begin{displaymath}
        P^{\prime} \mapsto \langle P^{\prime} \cdot P \rangle_{X/S}.
    \end{displaymath}
    \item If $i\colon Y \to X$ is a closed subscheme of $X$ also satisfying the condition $(C_m)$ for some $m$, we denote by $\delta_{Y/S}$  the line distribution for $X\to S$ given by $ i_\ast [1]_{Y/S}$.
\end{enumerate}

\end{defthm}
We refer to Theorem \ref{thm:Elkik-distribution} for the statement of the theorem. The key to show that the intersection bundles indeed define multiplicative functors from $\CHfrak(X)_{\QBbb}$ to $\Picfr(S)_{\QBbb}$ is to combine\linebreak Theorem \ref{thm:D} with a study of the Whitney isomorphism for split exact sequences and its interaction with the action of permutations on the right-hand side of \eqref{eq:thm-def-D} (cf. Lemma \ref{lemma:signs}). The latter is the reason for the introduction of rational coefficients, which has the effect of identifying several natural isomorphisms which differ at most by a sign. While it is, in theory, possible to develop an integral formalism that accounts for all such signs, working rationally is of no harm for future applications to the Deligne--Riemann--Roch problem. 

When $X$ is equal to $S$ itself, the intersection distributions are, via a first Chern class isomorphism (cf. Theorem \ref{thm:F} below), equivalent to the category of $\QBbb$-line bundles. With this understood, we have an identification of $\QBbb$-line bundles
\begin{displaymath}
    \langle \cfrak_{k_1}(E_1) \cdot \ldots \cdot \cfrak_{k_m}(E_m) \rangle_{X/S} = f_\ast [ \cfrak_{k_1}(E_1) \cdot \ldots \cdot \cfrak_{k_m}(E_m) ]_{X/S},  
\end{displaymath}
hence lifting all the individual members of the equality \eqref{eq:chern-classes-integral-intro}. 

Using the language of intersection distributions, the bulk of Theorem \ref{thm:D} above can be recast as follows. We refer the reader to Corollary \ref{cor:properties-intersection-distribution} for a detailed formulation.

\begin{thm}\label{thm:F}
Assume, in properties \eqref{item:thmE-1}--\eqref{item:thmE-5} below, that all the morphisms satisfy the condition $(C_{m})$, for some $m$. We let $E$ denote a vector bundle on $X,$ of rank $r.$

\begin{enumerate}
    \item\label{item:thmE-1} (Projection formulas) Let $h\colon X^{\prime}\to X$ be a morphism of relative dimension $n^{\prime}$, and take $P$ in $\CHfrak(X)_{\QBbb}$ and $P^{\prime}$ in $\CHfrak(X^{\prime})_{\QBbb}$. 
    Then, there is a canonical isomorphism 
    \begin{displaymath}
        h_\ast (h^\ast P \cdot [P^{\prime}]_{X^{\prime}/S} )  \simeq P \cdot h_{\ast} [P^{\prime}]_{X^{\prime}/S}.
    \end{displaymath}
    Furthermore, there are canonical isomorphisms:
    \begin{displaymath}
            h_\ast [P']_{X^{\prime}/S} \simeq 
\begin{cases}
    \left[ \cfrak_1 (\langle P^{\prime} \rangle_{X^{\prime}/X})\right]_{X/S}, & \text{if }\; \deg P^{\prime}=n^{\prime}+1.\\ \\
    \left[ \int_{X^{\prime}/X}P^{\prime} \right]_{X/S}, &  \text{if }\; \deg P^{\prime} = n^{\prime}.  \\ \\
         0,  & \text{if }\; \deg P^{\prime} < n^{\prime}. \\
\end{cases}
\end{displaymath}
    \item (Whitney isomorphism) For a short exact sequence of vector bundles $0 \to E^{\prime}\to E \to E^{\bis} \to 0$, there is a canonical isomorphism
    \begin{displaymath}
        [\cfrak_{k}(E)]_{X/S}\simeq \sum_{i=0}^{k} [\cfrak_{i}(E^{\prime})\cdot\cfrak_{k-i}(E^{\bis})]_{X/S},
    \end{displaymath}
    in a way that is compatible with admissible filtrations.
    \item(First Chern class isomorphism) There is a canonical isomorphism 
 \begin{displaymath}
        [\cfrak_1(E)]_{X/S} \simeq [\cfrak_1(\det E)]_{X/S}
    \end{displaymath}
    in a way that is compatible with the Whitney isomorphism. 
    \item (Rank triviality) If $q >r$, there is a canonical isomorphism 
    \begin{displaymath}
        [ \cfrak_q(E)]_{X/S} \simeq 0.
        \end{displaymath}
    \item\label{item:thmE-5} (Restriction isomorphism) Suppose that $\sigma$ is a regular section of $E$,  whose zero locus $Y$ is flat over $S$. Then, there is a canonical isomorphism
    \begin{displaymath}
        [ \cfrak_{r}(E) ]_{X/S} \simeq \delta_{Y/S}.
    \end{displaymath}
    \item\label{item:prop-int-dist-birational1} (Birational invariance) Suppose $h \colon X' \to X$  is birational. Then, there is a canonical isomorphism
    \begin{displaymath}
        h_\ast \delta_{X'/S}\simeq \delta_{X/S}.
    \end{displaymath}
    In particular, $h_{\ast}[h^{\ast}P]_{X'/S}\simeq [P]_{X/S}$. 
\end{enumerate}
These operations can be composed with each other in a natural way, and  commute with each other. 

\end{thm}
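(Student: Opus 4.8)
The plan is to deduce the statement from Theorem~\ref{thm:D}: once intersection bundles have been organized into a line distribution (Theorem~\ref{thm:Elkik-distribution}), the six properties and their mutual compatibility are precisely the images, under the intersection distribution, of the classical Chern-class identities and their coherences that Theorem~\ref{thm:D} provides. So the first step is to recall that, for $X\to S$ satisfying $(C_n)$ and for every base change $S'\to S$, the assignment \eqref{eq:thm-def-D} extends to a multiplicative functor $(\CHfrak(X')_\QBbb,+)\to\Picfr(S')_\QBbb$. By the universal property of Theorem~\ref{thm:B}, this reduces to checking that the construction of \textsection\ref{subsubsec:construction-intersection-bundles} is additive in each Chern-class slot (the Whitney isomorphism of Theorem~\ref{thm:D}), carries isomorphisms of vector bundles and the Whitney relators of $\CHfrak$ to identities, commutes with base change, is trivial in degrees $>n+1$ (one extracts the degree-$(n+1)$ component), and has an integral tensor power. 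The one delicate point, already flagged in the introduction, is that the Whitney isomorphism for \emph{split} sequences interacts with the $\Sym_m$-action permuting the factors of $\cfrak_{k_1}(E_1)\cdots\cfrak_{k_m}(E_m)$ only up to a sign (Lemma~\ref{lemma:signs}); passing to $\QBbb$-coefficients is exactly what makes the functor well defined.

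With the intersection distribution available, the Whitney isomorphism~(2), the first Chern class isomorphism~(3) and rank triviality~(4) follow by feeding the identities $c_k(E)=\sum_{i=0}^k c_i(E')c_{k-i}(E^{\bis})$, $c_1(E)=c_1(\det E)$ and $c_q(E)=0$ for $q>\rk E$ into Theorem~\ref{thm:D}, the compatibility with admissible filtrations and with the Whitney isomorphism being part of the commutation statement there. The projection formula~\eqref{item:thmE-1} has two ingredients: the isomorphism $h_\ast(h^\ast P\cdot[P']_{X'/S})\simeq P\cdot h_\ast[P']_{X'/S}$ is formal, from the definition of the direct image of a line distribution together with the fact that $h^\ast$ is a functor of graded ring categories; and the three-case formula comes from the composition law for Deligne pairings and intersection bundles along the tower $X'\to X\to S$ (\textsection\ref{section:intersection-bundles}), after a degree analysis — the relative intersection bundle $\langle P'\rangle_{X'/X}$ is a line bundle on $X$ when $\deg P'=n'+1$, the pairing degenerates to the fibre integral $\int_{X'/X}P'$ (a class of degree $\deg P'-n'=0$) when $\deg P'=n'$, and is trivial when $\deg P'<n'$. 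For the restriction isomorphism~\eqref{item:thmE-5} I would use the Koszul resolution of a regular section: reducing $E$ to a sum of line bundles via the splitting principle of \cite{Eriksson-Freixas-Wentworth} and cutting down successively identifies $\langle Q\cdot\cfrak_r(E)\rangle_{X/S}$ with $\langle i^\ast Q\rangle_{Y/S}$, i.e. $[\cfrak_r(E)]_{X/S}\simeq\delta_{Y/S}$. Birational invariance~\eqref{item:prop-int-dist-birational1} follows from the birational invariance of Deligne pairings — resolving $h$ by blow-ups, whose exceptional loci have strictly smaller fibre dimension and hence contribute trivially — giving $\langle h^\ast Q\rangle_{X'/S}\simeq\langle Q\rangle_{X/S}$, hence $h_\ast\delta_{X'/S}\simeq\delta_{X/S}$ and the stated consequence.

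The last assertion, that all these isomorphisms compose naturally and commute, is the transport through the intersection distribution of the second sentence of Theorem~\ref{thm:D}: one checks that the coherence diagrams established in \textsection\ref{subsec:compatibility} remain commutative after applying the distribution, which holds because the distribution is built from the same Deligne pairings and constructions. The main obstacle is not any single one of the six properties but this coherence together with the sign bookkeeping: one must show that all the natural isomorphisms of intersection bundles — Whitney, first Chern class, permutation of factors, base change, restriction — commute with one another, and that the only obstruction to an integral formalism is the family of signs produced by permuting Chern-class factors, which disappears over $\QBbb$ (Lemma~\ref{lemma:signs}). This is exactly the point the introduction identifies as missing from Elkik's work~\cite{Elkikfib}, and carrying it out rests on the compatibility results of \textsection\ref{subsec:compatibility} and the splitting principles of \cite{Eriksson-Freixas-Wentworth}.
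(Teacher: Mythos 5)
Your broad strategy matches the paper's: establish that the intersection bundles organize into a line distribution (Theorem~\ref{thm:Elkik-distribution}), then read off each of the six properties as the images under that distribution of the isomorphisms of Proposition~\ref{prop:properties-chern-bundles}, with the commutations coming from \textsection\ref{subsec:compatibility} and the sign control from Lemma~\ref{lemma:signs}. Two places need correcting.

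First, the appeal to ``the universal property of Theorem~\ref{thm:B}'' to produce the multiplicative functor $(\CHfrak(X')_\QBbb,+)\to\Picfr(S')_\QBbb$ does not fit. Theorem~\ref{thm:B} is a universal property for Chern functors $V(X)\to\Rcal$ valued in graded strict \emph{ring} categories $\Rcal$; the target of an intersection distribution, $\Picfr(S')_\QBbb$, is only a commutative Picard category, and the functor is only additive (there is no product structure on the target that the distribution could respect). The actual mechanism, as in the proof of Theorem~\ref{thm:Elkik-distribution}, is the graded Picardification universal property for $\CHfrak^{n+1}_{+}(X)\to\CHfrak^{n+1}_{u}(X)$ together with the direct-limit universal property of $\CHfrak^{n+1}(X)$ from Proposition~\ref{prop:CHX-is-a-direct-limit}, after first building the functor on the free category $\Cfrak^{n+1}(X)$ and checking it descends to the quotient. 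The checking is precisely where Lemma~\ref{lemma:signs} and the compatibilities of \textsection\ref{subsec:compatibility} enter, so your understanding of the content is right; the citation is wrong.

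Second, your treatment of birational invariance by ``resolving $h$ by blow-ups'' is a genuine departure, and a problematic one. The paper does not factor $h$ and does not invoke resolution: Proposition~\ref{prop:elvira} builds the isomorphism $\langle L_0,\dots,L_n\rangle_{X/S}\simeq\langle\pi^\ast L_0,\dots,\pi^\ast L_n\rangle_{\widetilde X/S}$ directly in terms of the generators-and-relations presentation, by exhibiting (via Lemma~\ref{lemma:prime-avoidance-families}) symbols whose defining sections are in general position relative to the complement $X\setminus U$, so that all the relevant intersections land in the dense open $U$ where $\pi$ is an isomorphism. This argument has no recourse to the existence of a resolution and works over an arbitrary base (after Noetherian approximation), which is exactly the level of generality the theorem claims. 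A blow-up resolution argument would require additional hypotheses (e.g.\ characteristic zero, or excellence) that are not assumed, and even where available it would still require an independent argument that each exceptional contribution is canonically trivial. Your phrase ``exceptional loci have strictly smaller fibre dimension and hence contribute trivially'' is not yet a proof: the relevant claim is about symbols, not about cycle classes, and it needs the general-position apparatus to make it precise. You should replace this step with the argument of Proposition~\ref{prop:elvira} and its compatibility in Proposition~\ref{prop:stupid-compatibilities}\eqref{item:birational-comp}.

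Your treatment of the restriction isomorphism via the Koszul complex is also not the paper's route (the paper works by induction on the rank using the tautological sequence on $\PBbb(E)$, Proposition~\ref{prop:properties-chern-bundles}\eqref{item:restriction-isomorphism}), but unlike the birational case it is plausible as an alternative; you would still owe a verification that it yields the \emph{same} canonical isomorphism, which is needed for the compatibility assertions and which the paper gets for free by its inductive construction being the one whose compatibilities are checked in \textsection\ref{subsec:compatibility}.
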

In \textsection \ref{subsec:Intbunaslindib}, we also establish further expected intersection theoretical properties of the intersection distributions, for instance a canonical isomorphism $[\cfrak_{k}(E^{\vee})]_{X/S}\simeq (-1)^{k}[\cfrak_{k}(E)]_{X/S}$, proven in Proposition \ref{prop:Cherndual}.

\subsubsection*{The Riemann--Roch distribution}
We now explain how the above formalism provides a framework in which we can study the Deligne--Riemann--Roch problem. As an application, we construct an isomorphism in the setting of closed immersions realized as zeros of regular sections of a vector bundle.  

Let $X\to S$ and $Y\to S$ be morphisms of divisorial schemes, satisfying the conditions $(C_{n})$ and $(C_{m})$. Suppose that $f\colon X\to Y$ is a $S$-morphism of local complete intersection. Denote the cotangent complex in $D^{b}(\Vect_X)$ of $f$ by $L_{X/Y}$. In this case, by Proposition \ref{propB} and the surrounding discussion, for any vector bundle $E$ we can consider the following object in $\CHfrak(X)_{\QBbb}$
\begin{displaymath}
    \chfrak(E) \cdot \tdfrak^\ast(L_{X/Y}),
\end{displaymath}
and the associated intersection distribution
\begin{displaymath}
    [\chfrak(E) \cdot \tdfrak^{\ast}(L_{X/Y})]_{X/S}.
\end{displaymath}
In this language, given \eqref{eq:GRR-diagram} and \eqref{eq:chern-classes-integral-intro}, Problem \ref{problem2} can be generalized as the quest for a natural isomorphism of line distributions:

\begin{problem2prime}

With the notation as above, there is a canonical isomorphism:
\begin{equation}\label{eq:RR-isomorphism-intro}
 [\chfrak(f_!\ E)]_{Y/S}\to f_\ast [\chfrak(E) \cdot \tdfrak^{\ast}(L_{X/Y})]_{X/S}
\end{equation}
Moreover, in the variable $E$, it is an isomorphism of functors of commutative Picard categories $V(X)\to\Dcal(Y/S)$.
\end{problem2prime}

We notice that the left-hand side of \eqref{eq:RR-isomorphism-intro} is defined by Proposition \ref{prop:intro-direct}. When one specializes to the case $Y=S$, the left-hand side identifies with the determinant of the cohomology $\lambda_{f}(E)$, and we hence understand this to solve Problem \ref{problem2} as a particular case. We refer the reader to \textsection \ref{subsec:FormDRR} for a discussion of expected additional features of the isomorphism of distributions \eqref{eq:RR-isomorphism-intro}. 

As an application of our techniques, in Corollary \ref{cor:Borel-Serre-consequence} in \textsection \ref{subsec:Borel-Serre}, we prove the following instance of Problem 2', which, on the level of Chern classes, follows from the Borel--Serre identity \cite[Lemme 18]{Borel-Serre}. We expect it to play the role of a prototype for the Deligne--Riemann--Roch problem in the case of regular immersions. 

\begin{thm}
When $f=i$ is a closed immersion, determined by the zeros of a regular section of a vector bundle, there is a natural isomorphism 
\begin{displaymath}
[\chfrak(i_! \ \Ocal_Y)]_{X/S} \to i_\ast [\tdfrak^{\ast}(N_{X/Y}^\vee)^{-1}]_{Y/S}
\end{displaymath}
of line distributions. 
\end{thm}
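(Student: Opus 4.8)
The plan is to reduce the Riemann--Roch isomorphism, in this regular-immersion case, to the Koszul resolution of $i_\ast\Ocal_Y$ and to the restriction and first-Chern-class isomorphisms already established in Theorem \ref{thm:F}. Write $i\colon Y\hookrightarrow X$ for the zero locus of a regular section $\sigma$ of a vector bundle $E$ on $X$ of rank $r$; then $N_{Y/X}\simeq i^\ast E$, the conormal $N_{X/Y}^\vee$ in the notation of the statement is $i^\ast E^\vee$, and $i_!\Ocal_Y$ is computed in $V(X)$ by the Koszul complex $\Lambda^\bullet E^\vee\to\Ocal_X$, so that in $K_0(X)$ one has $i_!\Ocal_Y=\sum_j(-1)^j\Lambda^j E^\vee=\lambda_{-1}(E^\vee)$. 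First I would feed this resolution into the additive Chern character functor $\chfrak$ of Proposition \ref{propB}: working in $\CHfrak(X)_\QBbb$ and using multiplicativity and additivity of $\chfrak$ on true triangles, identify $\chfrak(i_!\Ocal_Y)$ with the class $\chfrak(\lambda_{-1}(E^\vee))$, which by the usual splitting-principle computation of $\ch(\lambda_{-1}(E^\vee))$ — legitimate here thanks to the splitting principles for line distributions proven in \cite{Eriksson-Freixas-Wentworth} and recalled in \textsection\ref{subsubsec:splitting-principle} — equals $\cfrak_r(E)\cdot\tdfrak^\ast(E)^{-1}$ up to the standard Borel--Serre identity $\ch(\lambda_{-1}(E^\vee))=c_r(E)\cdot\td(E)^{-1}$ in the Chow ring, \cite[Lemme 18]{Borel-Serre}. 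This already produces, inside $\CHfrak(X)_\QBbb$, a canonical isomorphism $\chfrak(i_!\Ocal_Y)\simeq \cfrak_r(E)\cdot\tdfrak^\ast(E)^{-1}$.

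Next I would pass to line distributions for $X\to S$. Applying the intersection distribution functor $[-]_{X/S}$ to the isomorphism just obtained gives
\[
[\chfrak(i_!\Ocal_Y)]_{X/S}\ \simeq\ [\cfrak_r(E)\cdot\tdfrak^\ast(E)^{-1}]_{X/S},
\]
since $[-]_{X/S}$ is a multiplicative functor out of $\CHfrak(X)_\QBbb$. Now I invoke the restriction isomorphism, Theorem \ref{thm:F}\eqref{item:thmE-5}: because $Y$ is the zero locus of the regular section $\sigma$ and is flat over $S$, one has $[\cfrak_r(E)]_{X/S}\simeq\delta_{Y/S}=i_\ast[1]_{Y/S}$. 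The remaining factor $\tdfrak^\ast(E)^{-1}$ is a polynomial (in fact power series, truncated in the relevant degrees) in the Chern classes $\cfrak_k(E)$, so after multiplying by $\cfrak_r(E)$ and using the projection formula of Theorem \ref{thm:F}\eqref{item:thmE-1} together with $i^\ast\tdfrak^\ast(E)=\tdfrak^\ast(i^\ast E)=\tdfrak^\ast(N_{Y/X})=\tdfrak^\ast(N_{X/Y}^\vee)^{-1}$ — using that $\tdfrak^\ast$ commutes with pullback, Proposition \ref{propB} — the whole class transports through $i_\ast$ and one arrives at
\[
[\cfrak_r(E)\cdot\tdfrak^\ast(E)^{-1}]_{X/S}\ \simeq\ i_\ast\bigl[\,i^\ast\tdfrak^\ast(E)^{-1}\,\bigr]_{Y/S}\ \simeq\ i_\ast[\tdfrak^\ast(N_{X/Y}^\vee)^{-1}]_{Y/S},
\]
which is the desired isomorphism. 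Composing the displayed isomorphisms yields $[\chfrak(i_!\Ocal_Y)]_{X/S}\to i_\ast[\tdfrak^\ast(N_{X/Y}^\vee)^{-1}]_{Y/S}$.

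The main obstacle is not the existence of these isomorphisms individually but their \emph{compatibility} and \emph{canonicity}: one must check that the chosen isomorphism does not depend on auxiliary choices (e.g. the splitting used to compute $\ch(\lambda_{-1}(E^\vee))$, or the order in which the Whitney and first-Chern-class isomorphisms are applied), and that the identification $i^\ast\tdfrak^\ast(E)=\tdfrak^\ast(N_{X/Y}^\vee)^{-1}$ is compatible with the projection formula and restriction isomorphisms as functors of commutative Picard categories. This is exactly where Theorem \ref{thm:D} (all classical Chern identities lift to isomorphisms of intersection bundles, and these commute) and the sign analysis of Lemma \ref{lemma:signs} are indispensable: they guarantee that the diagram of natural isomorphisms assembled above commutes, so the composite is well defined. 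The secondary technical point is to make the Koszul-resolution step functorial in $E$ and in $S$, i.e. to verify that $\chfrak(i_!\Ocal_Y)$ is computed compatibly with base change $S'\to S$ and with true triangles in $D^b(\Vect_X)$, which follows from the fact that $\chfrak$ and $[-]_{X/S}$ both commute with pullback (Propositions \ref{propB} and \ref{prop:intro-direct}) together with the flatness of $Y\to S$ ensuring the Koszul complex stays a resolution after base change. Once these compatibilities are in place, the statement follows by concatenating the isomorphisms, and we record the precise form in Corollary \ref{cor:Borel-Serre-consequence}.
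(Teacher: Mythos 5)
Your route is the paper's route in outline — Koszul resolution identifying $i_!\Ocal_Y$ with $\lambda_{-1}(E^\vee)$, a categorified Borel--Serre identity, the restriction isomorphism $[\cfrak_r(E)]_{X/S}\simeq\delta_{Y/S}$, and the projection formula to pull $\tdfrak(E)^{-1}$ across $i_\ast$. But there is a genuine conceptual gap in how you locate the Borel--Serre step.

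You claim that the splitting-principle computation produces a canonical isomorphism $\chfrak(i_!\Ocal_Y)\simeq \cfrak_r(E)\cdot\tdfrak^\ast(E)^{-1}$ \emph{inside} $\CHfrak(X)_\QBbb$, and that you then ``pass to line distributions'' by applying $[-]_{X/S}$. This cannot be right. Remark~\ref{rmk:ch-E-otimes-F} is explicit that the Chern category does not encode any relation between $\cfrak_k(E\otimes F)$ and the $\cfrak_i(E),\cfrak_j(F)$: $\chfrak$ is only a functor of Picard categories for $\oplus$, not of ring categories, and the splitting principle is simply unavailable in $\CHfrak(X)_\QBbb$ (there is no projective-bundle formula or faithfulness property for the pullback to a flag variety at that level). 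The splitting principles of \cite{Eriksson-Freixas-Wentworth} and \textsection\ref{subsubsec:splitting-principle} live on the side of line functors and line distributions, where Deligne pairings supply multilinearity in the line-bundle variable. Consequently, the Borel--Serre isomorphism exists only as an isomorphism of line distributions; that is precisely the content of Theorem~\ref{thm:borelserre}, which the paper proves by an inductive construction on the rank with explicit compatibility checks (with exact sequences, admissible filtrations, and the trivialization along a nowhere-vanishing section). Your proposal inverts the logical order — first Borel--Serre in $\CHfrak(X)_\QBbb$, then push forward — and in doing so treats as given the hardest ingredient, which is exactly what Theorem~\ref{thm:borelserre} is there to supply. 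The deferral to ``Theorem~\ref{thm:D} and Lemma~\ref{lemma:signs}'' does not cover this: those results guarantee that the listed isomorphisms of intersection bundles commute, but they do not construct the Borel--Serre isomorphism itself.

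There is also a sign slip: with the paper's conventions the Borel--Serre identity for $\lambda_{-1}(E^\vee)$ reads $\cfrak_r(E)\cdot\tdfrak(E)^{-1}$, not $\cfrak_r(E)\cdot\tdfrak^\ast(E)^{-1}$; since $\tdfrak^\ast(F)=\tdfrak(F^\vee)$, your choice is off by a dualization, and the chain $i^\ast\tdfrak^\ast(E)=\tdfrak^\ast(N_{X/Y}^\vee)^{-1}$ does not hold. Writing $i^\ast\tdfrak(E)^{-1}=\tdfrak(N_{X/Y})^{-1}=\tdfrak^\ast(N_{X/Y}^\vee)^{-1}$ recovers the stated target. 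Once the Borel--Serre isomorphism is taken as an isomorphism of line distributions (Theorem~\ref{thm:borelserre}, with $E$ replaced by $E^\vee$) and the sign is fixed, your remaining steps — identify the left side via the Koszul resolution, apply the restriction isomorphism of Corollary~\ref{cor:properties-intersection-distribution}~\eqref{item:prop-int-dist-restriction}, and push $\tdfrak(E)^{-1}$ through $i_\ast$ via the line-distribution projection formula \eqref{linedist:projformula} — agree with the paper's proof of Corollary~\ref{cor:Borel-Serre-consequence}.
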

When the immersion admits a retraction, this statement readily leads to a solution of Problem 2' for $i\colon Y\to X$. This situation arises in the approach to the Grothendieck--Riemann--Roch theorem by the deformation to the normal cone. We refer to Corollary \ref{cor:Borel-Serre-consequence2} for the proof.

We finish this introduction with a discussion of parallel contributions to the \linebreak Deligne--Riemann--Roch problem. 
The first author studied the Deligne--Riemann--Roch program in various contexts. We refer to \cite{EriComptes} for an announcement, although the integrality of this work remains unpublished. A running hypothesis is a regularity assumption on the involved schemes, which is unwieldy in many applications, and the current work is supposed to supersede this.  Concerning Problem \ref{problem2}, more recently, in \cite{Rossler-ARR} R\"ossler considered the \linebreak Adams--Riemann--Roch counterpart of the above program for the second Adams operation, for line bundles and projective smooth morphisms in the Noetherian setting, satisfying some additional assumptions. It would be interesting to understand the relationship between our work and his. 

\subsection{Conventions and notation}\label{subsec:conventions-notations}
We gather some conventions that will prevail throughout this article.

The subject of study of this article is intersection bundles, which depend on an $S$-scheme and a number of vector bundles. In this article, when we refer to these and similar constructions to be functorial, we mean that the formation is compatible with base change, and with isomorphisms of $S$-schemes and vector bundles. This type of functoriality being trivially true in all of our constructions, it will not be addressed in the proofs.

By a vector bundle (resp. vector bundle of constant rank $r$) on a scheme $X$ we will mean a locally free sheaf of $\Ocal_{X}$-modules of finite type  (resp. constant finite rank $r$). A vector bundle of constant rank $1$ will equivalently be called a line bundle. Given a vector bundle $E$ on $X$, our convention for the associated projective bundle $\pi\colon\PBbb(E)\to X$ is $\PBbb(E)=\mathrm{Proj}\ (\Sym E)$. In particular, there is a universal, or tautological, exact sequence on $\PBbb(E)$
\begin{equation}\label{eq:tautological-exact-sequence}
    0 \to Q \to \pi^\ast E \to \Ocal(1) \to 0.
\end{equation}
From Section \ref{subsubsec:general-intersection-bundles} onwards, most constructions will be trivial for the zero vector bundles. In such case, for ease of exposition, we will tacitly assume that our vector bundles are non-trivial.

In the theory of intersection bundles, we will deal with morphisms satisfying a number of good properties. In order to simplify the discussions, it is convenient to introduce some terminology, already anticipated in the introduction.

\begin{definition}
Let $f \colon X \to S$ be a morphism of schemes. Let $n\geq 0 $ be an integer. The following properties define the condition $(C_n)$ for $f$:

\begin{equation}\tag{$C_n$}\label{condition-Cn}
    \begin{split} 
        &f  \text{ is locally projective, faithfully flat of finite presentation (fppf),}\\
        & \text{and of pure relative dimension }n.
    \end{split}
\end{equation}
\end{definition}
We recall that locally projective means that, locally with respect to $S$, it factors through a closed embedding into some $\PBbb^N_S$, where $N$ is not fixed. The condition on the dimension means that all the fibers are equidimensional of dimension $n$. The condition $(C_{n})$ is stable under base change. We notice that, in the Noetherian case, a morphism satisfying the condition $(C_{n})$ is universally equidimensional, see \cite[Definition 2.1.2 \& Proposition 2.1.7 (2)]{Suslin-Voevodsky}.

We will encounter regular immersions of schemes. There are several variants of this notion, which are in general not equivalent. We follow \cite[\href{https://stacks.math.columbia.edu/tag/0638}{0638}]{stacks-project} and \cite[\href{https://stacks.math.columbia.edu/tag/067M}{067M}]{stacks-project}. Let $Y\hookrightarrow X$ be a closed immersion of schemes. It is said to be regular if it is locally given by a regular ideal sheaf $(f_{1},\ldots, f_{r})\subseteq\Ocal_{X}$. That is, for every $i$, multiplication by $f_{i}$ is injective on $\Ocal_{X}/(f_{1},\ldots, f_{i-1})$. Similarly, it is said to be Koszul-regular if the Koszul complex associated with the sequence $(f_{1},\ldots, f_{r})$ is a resolution of $\Ocal_{Y}$. A regular closed immersion is automatically Koszul-regular. Suppose now that $Y$ and $X$ are flat and locally of finite presentation over a base scheme $S$. In this case, Koszul-regularity entails regularity. Furthermore, these conditions can be checked on fibers, and hold after any base change. This follows from \cite[\href{https://stacks.math.columbia.edu/tag/063K}{063K}]{stacks-project} and \cite[Proposition 19.2.4]{EGAIV4}. In this article, we will always be in this relative situation, and hence regularity and Koszul-regularity will be equivalent notions.

If $E$ is a vector bundle on a scheme $X$, and $\sigma$ is a global section of $E$, let $Y$ be the zero-locus scheme of $\sigma$. We denote by $K(\sigma)$ the associated Koszul complex. We say that $\sigma$ is a regular section of $E$ if $K(\sigma)$ is a resolution of $\Ocal_{Y}$. If $X$ and $Y$ are flat over a base scheme $S$, and $X$ is locally of finite presentation over $S$, then by the previous paragraph $Y\hookrightarrow X$ is a regular closed immersion. Furthermore, in this situation, $\sigma$ remains regular after any base change $S^{\prime}\to S$.

A morphism of schemes $f\colon X\to Y$ is a local complete intersection if, locally on $X$, it factors as a Koszul-regular immersion $X\hookrightarrow P$ followed by a smooth morphism $P\to Y$. If $X$ and $Y$ are flat and locally of finite presentation over a base scheme $S$, then the closed immersion is necessarily regular. Furthermore, in this situation, $f$ remains a local complete intersection after any base change $S^{\prime}\to S$.

In dealing with general schemes, we will sometimes use the so-called Noetherian approximation. Our use of this technique is standard, and we rather refer \cite[\href{https://stacks.math.columbia.edu/tag/01YT}{01YT}]{stacks-project} for a compendium of results supporting our arguments.

In categorical considerations, it will be convenient, albeit not strictly necessary, to fix a Grothendieck universe. Scheme theoretic constructions will be assumed to take place within this universe. The categories we need or build will all be small with respect to a possibly larger universe. Up to equivalence of categories, the constructions of Chern categories below do not depend on the choice of universe, see \cite[Appendix F]{ThomasonTrobaugh}. The use of universes allows us to avoid some set-theoretic technicalities and to conform with references such as \cite{ThomasonTrobaugh}, on which we rely.

If $S$ is a scheme, we denote by $\Picfr(S)$ the category of line bundles on $S$ with isomorphisms. Together with the tensor product, it has a natural structure of a commutative Picard category. Similarly, we denote by $\Picfr(S)_{\mathrm{gr}}$ the Picard category of graded line bundles. Its objects are couples $(n,L)$, where $n$ is a locally constant function $S\to\ZBbb$, and $L$ is a line bundle on $S$. A morphism between two objects $(n,L)$ and $(m,M)$ consists in an identity $n=m$ and an isomorphism $L\to M$. Componentwise addition defines a monoidal structure, which is symmetric if taken with the Koszul rule of signs. With this understood, $\Picfr(S)_{\mathrm{gr}}$ is a commutative Picard category. 

Finally, $\Picfr(S)_{\QBbb}$ is the Picard category of $\QBbb$-line bundles. Morphisms of $\QBbb$-line bundles are isomorphisms of sufficiently high powers of them.

\subsection*{Acknowledgments}

The authors would like to thank Alexander Berglund, Christian Johansson, Dan Petersen, and David Rydh for discussions related to the present article.

\section{Monoidal and Picard categories}\label{sec:monoidal-picard}
The purpose of this section is twofold. The first purpose is to review basic facts and elaborate on some properties of symmetric monoidal categories and Picard categories. The second one, which is the main one, is two introduce and discuss the notion of Picardification. This consists in a functorial procedure to turn a symmetric monoidal category into a Picard category. The construction of the Picardification is not entirely new, but it has not been much studied \emph{per se} in the literature. Our treatment is in the spirit of Thomason's work on the $K$-theory of spectra of symmetric monoidal categories \cite{ThomasonSpectral}. We also provide an equivalent presentation that is close to Sinh's Picard envelope \cite{Sinh-these}.\footnote{To the best of our knowledge, Sinh was the first in addressing this question, attributed to Grothendieck.} We recall from \textsection \ref{subsec:conventions-notations} that all the categories we consider are implicitly assumed to be small. 

\subsection{Generalities}
We recall some basics on monoidal and Picard categories.  The reader is referred to \cite[Section 4]{Deligne-determinant}, \cite[Appendix A]{KnudsenMumford}, \cite{MacLane}, \cite{Richter}, \cite{Saavedra}, and \cite{Sinh-these} for further details and references.

\subsubsection{Monoidal categories}\label{subsub:monoidalcategories} 
A monoidal category is a category $\Acal$ equipped with a bifunctor \linebreak $\oplus: \Acal \times \Acal \to \Acal$, here called addition, with a functorial associativity isomorphism\linebreak  $A\oplus (B \oplus C) \to (A \oplus B) \oplus C$, which is supposed to satisfy the pentagonal axiom for sums of four objects and studied in detail in \cite[p. 33]{MacLane}. Any potentially commutative diagram, involving only the associativity isomorphism (or its inverse), does commute. This is the content of the coherence theorem proven in \cite[Theorem 3.1]{MacLane}, allowing one to consider finite ordered sums $\oplus_{i=1}^{n} A_i$, also denoted $\sum_{i=1}^{n} A_{i}$, without necessarily specifying the bracketing. In concrete constructions, though, we will fix the following convention for ordered sums: $\sum_{i=1}^{n}A_{i}$ is defined inductively by
\begin{equation}\label{eq:convention-sum}
    \sum_{i=1}^{n}A_{i}=A_{1}\oplus \left(\sum_{i=2}^{n}A_{i}\right).
\end{equation}

The monoidal category $\Acal$ is said to be symmetric if there is a functorial isomorphism\linebreak  $c_{A,B}: A\oplus B \to B \oplus A$, such that $c_{B,A}c_{A,B} = \id_{A\oplus B}, $ and satisfying the hexagonal axiom relating it to the associativity, explained in detail \cite[p. 38]{MacLane}. Any potentially commutative diagram, built out of the associativity and the commutativity transformations, does commute, by another coherence theorem also proven in \cite[Theorem 4.2]{MacLane}. In particular, it makes sense to consider not necessarily ordered finite sums $\oplus_{i \in I} A_i$, or $\sum_{i\in I}A_{i}$. A strictly symmetric monoidal category is one for which the commutativity isomorphisms satisfy $c_{A,A}=\id_{A}$. 

A monoidal category $\Acal$ is said to be unital if there is a zero object $0_{\Acal}$, or simply $0$, meaning there are natural contraction isomorphisms $A\oplus 0_{\Acal} \rightarrow A \leftarrow 0_{\Acal} \oplus A$. These are supposed to be compatible with the associativity isomorphisms, and the symmetry in the presence of it. In this article, all the monoidal categories will be supposed to be unital. Henceforth, by a monoidal category we always mean a unital one. One frequently encounters equivalent terminology for the zero object, such as neutral object or unit object, depending on the context. The coherence theorem for the associativity and commutativity rules extends to include the addition with the zero object \cite[Theorem 5.1]{MacLane}.

\begin{remark}\label{rmk:Maclane-strict}
 In \cite{MacLane}, Mac Lane states the coherence theorems in terms of isomorphisms of functors. An alternative formulation, albeit not equivalent, can be given in terms of regular objects, \emph{i.e.} which are sums of pairwise distinct objects or several zeroes. An instance of the coherence theorems in this form says that in a symmetric monoidal category, between two regular objects, there is at most one morphism involving associativity, commutativity, and contraction against zero. It follows that any diagram involving such morphisms between regular objects must commute. The regularity condition can be removed only in the strictly symmetric case. The formulation in terms of objects is then equivalent to Mac Lane's one. These subtleties will be relevant when we study rig categories in \textsection \ref{subsec:rig-and-ring-categories}, for which the coherence theorems in the literature are stated in terms of regular objects.  
\end{remark}

If $\Acal, \Bcal$ are monoidal categories, a functor $F\colon\Acal\to\Bcal$ is said to be monoidal\footnote{It is also called a lax monoidal functor.} if it is also equipped with a natural transformation 
\begin{equation}\label{eq:sym-mon-fun-1}
    F(A) \oplus_{\Bcal} F(B) \to F(A \oplus_\Acal B), 
\end{equation}
which is compatible with the respective associativity isomorphisms. We further require that there is the choice of a  morphism 
\begin{equation}\label{eq:sym-mon-fun-2}
    0_\Bcal \to F(0_\Acal)
\end{equation}
satisfying natural coherence conditions for the zero object. We say that $F$ is a strong monoidal functor if both \eqref{eq:sym-mon-fun-1}--\eqref{eq:sym-mon-fun-2} are isomorphisms. This condition is automatically fulfilled if all the morphisms in $\Bcal$ are isomorphisms, \emph{i.e.} $\Bcal$ is a groupoid. If $\Acal, \Bcal$ are symmetric, then $F$ is said to be symmetric if \eqref{eq:sym-mon-fun-1} is compatible with the respective commutativity constraints. A natural transformation of (symmetric) monoidal functors is a natural transformation compatible with the monoidal structure of the functors, that is the data \eqref{eq:sym-mon-fun-1} and \eqref{eq:sym-mon-fun-2}.

An equivalence of monoidal categories is a monoidal functor $F\colon \Acal\to \Bcal$ which is a weak equivalence of the underlying categories. For the purposes of this article, a quasi-inverse functor $G \colon \Bcal \to \Acal$ consists additionally in choosing, for any object $B$ of $\Bcal$, an object $A$ of $\Acal$ and an isomorphism $F(A)\to B$. It corresponds to a natural transformation$F \circ G \to \id_{\Bcal}$, and composing with $F$ one deduces a unique associated adjoint transformation $G \circ F \to \id_{\Acal}$. Furthermore, if $F$ is a strong monoidal functor, then there is a uniquely defined monoidal structure of $G$ such that $F \circ G \to \id_{\Bcal}$ and $G \circ F \to \id_{\Acal}$ are natural transformations of monoidal functors. Applied to two inverses $G$ and $G'$, we see that the deduced transformation $G \to G'$ is automatically a natural transformation of monoidal functors. In conclusion, associated with an equivalence\linebreak  $F \colon \Acal \to \Bcal$ which is strong monoidal, we can always consider an inverse monoidal functor $G \colon \Bcal \to \Acal$, unique up to canonical natural transformation of monoidal functors. This construction associated with a quasi-inverse $G$ is often assumed implicitly in the rest of the text where we often need to invert equivalences. The discussion extends to the symmetric monoidal case. 

More generally, we will also consider multisymmetric and multimonoidal functors. This consists in giving, for symmetric monoidal categories $\Acal_{1},\ldots,\Acal_{n},\Bcal$, a functor $F\colon\Acal_{1}\times\ldots\Acal_{n}\to\Bcal$ which is symmetric and monoidal in every entry separately. The notion of natural transformation of symmetric monoidal functors extends to the multi-setting, by requiring compatibility with the monoidal structure in each component. 

Given a symmetric monoidal category $\Acal$, we will have the need for the Grayson--Quillen completion $\Acal^{-1}\Acal$ \cite[pp. 218--220]{Grayson}, sometimes also denoted by $(-\Acal)\Acal$. This is a symmetric monoidal category, whose objects are couples $(A, B)$ of objects of $\Acal$, which informally represent differences $B-A$. A morphism $(A,B)\to (C,D)$ is an equivalence class of tuples $(X,f,g)$, where $X$ is an object of $\Acal$ and $f\colon A\oplus X\to C$, $g\colon B\oplus X\to D$ are morphisms in $\Acal$. Two tuples $(X,f,g)$ and $(X',f',g')$ are equivalent if there exists an isomorphism $\sigma\colon X\to X'$ which renders the following diagrams commutative:
\begin{displaymath}
    \xymatrix{
        A\oplus X\ar[dd]_{\id_{A}\oplus \sigma}\ar[rd]^{f}   &       &       &B \oplus X \ar[dd]_{\id_{B} \oplus \sigma}\ar[rd]^{g}      &\\
                                        &C     &                &                         & D\\
        A\oplus X^{\prime}\ar[ru]_{f^{\prime}}                           &       &       &B\oplus X^{\prime}.\ar[ru]_{g^{\prime}}   &
    }
\end{displaymath}
The symmetric monoidal structure is induced by componentwise addition. The construction of $\Acal^{-1}\Acal$ is functorial with respect to symmetric, monoidal functors, and natural transformations between those. Finally, we notice that even if $\Acal$ is a groupoid, $\Acal^{-1}\Acal$ does not need not be so.

\subsubsection{Picard categories}\label{subsub:PicardCat} A Picard category is a monoidal category $(\Pcal,\oplus)$, which is also a groupoid and such that for any object $X$ of $\Pcal$, the corresponding translation functors $X \oplus \bullet\colon \Pcal \to \Pcal $ and $\bullet \oplus X \colon \Pcal \to \Pcal$ are autoequivalences. In a Picard category, the unital axiom is automatically satisfied and is hence superfluous.  By analogy with Picard categories of line bundles, one frequently encounters the notation $\otimes$ for the monoidal structure and refers to this functor as product or tensor product. In this case, zero objects are rather called unit objects.

In a Picard category $\Pcal$, the fact that the translation functors by an object $X$ are essentially surjective, provides the existence of left and right opposites, or inverses, of $X$. Precisely, a left (resp. right) inverse for $X$ consists in the choice of an object $-X$ endowed with an isomorphism $(-X)\oplus X\simeq 0_{\Pcal}$ (resp. $X\oplus (-X)\simeq 0_{\Pcal})$. Such isomorphism is called a contraction. From the fact that the translation functors are fully faithful, we deduce that an inverse is unique up to unique isomorphism. The choice of the contraction isomorphism is usually not specified in the discussions. 

Given a Picard category $\Pcal$, we can consider its group of objects up to isomorphism, $\pi_0(\Pcal)$, and the group of automorphisms of the zero object, $\pi_1(\Pcal) = \Aut_{\Pcal}(0_{\Pcal})$. Since the functor $\bullet \oplus X$ is an autoequivalence, for any object $X$, we have a natural identification $\Aut_{\Pcal}(X) = \pi_1(\Pcal)$.

A Picard category which is also a symmetric (resp. strictly symmetric) monoidal category is rather called a commutative (resp. strictly commutative) Picard category. Any left inverse object $-X$ to $X$ is canonically equipped with the structure of a right inverse, by applying the commutativity isomorphism. In particular, there are two ways of constructing an isomorphism
\begin{displaymath}
    X \oplus (-X) \oplus X \to X,
\end{displaymath}
either by contracting on the left or the right. The two different choices differ by the automorphism $c_{X,X},$ which can be thought of as an automorphism of the zero object, necessarily of order 2. The construction of  $c_{X,X}$ defines a homomorphism
\begin{equation}\label{eq:isawthesign}
    \varepsilon \colon \pi_0(\Pcal) \to \pi_1(\Pcal),
\end{equation} 
whose images are referred to as signs. Hence, $\Pcal$ is strictly commutative if the subgroup of signs is trivial. In this case, we can consider finite unordered products and can unambiguously contract elements against their inverses. 

We notice that in a commutative Picard category $\Pcal$, the choice of an inverse for every object defines a functor $X\mapsto -X$. It is an equivalence of categories and its own quasi-inverse: it is involutive. In particular, it can naturally be given the structure of a symmetric monoidal functor. 

A functor between Picard categories $F: \Pcal \to \Pcal'$ is a monoidal functor. For commutative Picard categories, $F$ is said to be commutative if it is symmetric. A natural transformation of (commutative) functors between Picard categories is a natural transformation of (symmetric) monoidal functors. 

An equivalence of Picard categories $F: \Pcal \to \Pcal'$ is an equivalence of the underlying monoidal categories. We notice that since $\Pcal^{\prime}$ is a groupoid, $F$ is automatically a strong monoidal functor, that is the morphisms of the type \eqref{eq:sym-mon-fun-1}--\eqref{eq:sym-mon-fun-2} are isomorphisms. In particular, by the discussion in \textsection\ref{subsub:monoidalcategories} on equivalences of monoidal categories, there exists an inverse equivalence $\Pcal^{\prime}\to\Pcal$, which is unique up to canonical natural transformation of monoidal functors.

An equivalence of Picard categories induces an isomorphism on $\pi_0$ and $\pi_1$. For a functor of Picard categories, the converse is also true.\footnote{This in the spirit of Whitehead's theorem relating weak equivalences and homotopy equivalences for CW complexes.} Since this is used in various contexts in the text, we state it and recall the classical proof, together with a remark on natural transformations of functors between Picard categories. 
\begin{lemma}\label{lemma:isoPicard}
Let $F, G: \Pcal \to \Pcal'$ be functors between Picard categories. 
\begin{enumerate}
    \item If $F$ induces isomorphisms $\pi_0(\Pcal) \to \pi_0(\Pcal')$ and $\pi_1(\Pcal) \to \pi_1(\Pcal')$, then $F$ is a weak equivalence of categories.
    \item The set of natural transformations between $F$ and $G$ is either empty or a torsor under $\Hom_{\mathrm{Grp}}(\pi_0(\Pcal), \pi_1(\Pcal'))$.
\end{enumerate}
\end{lemma}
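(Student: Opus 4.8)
The plan is to prove both parts by the standard "choose representatives and track automorphisms" argument for Picard categories, using that every translation functor is an autoequivalence and hence fully faithful.

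For part (1): to show $F$ is a weak equivalence, I must show it is essentially surjective and fully faithful. Essential surjectivity is immediate from the surjectivity of $\pi_0(\Pcal)\to\pi_0(\Pcal')$, since every object of $\Pcal'$ is isomorphic to some $F(X)$. For fully faithfulness, fix objects $X,Y$ of $\Pcal$. If $\Hom_{\Pcal}(X,Y)=\varnothing$, then $[X]\neq[Y]$ in $\pi_0(\Pcal)$, hence (by injectivity on $\pi_0$) $[F(X)]\neq[F(Y)]$, so $\Hom_{\Pcal'}(F(X),F(Y))=\varnothing$ as well. If $\Hom_{\Pcal}(X,Y)\neq\varnothing$, pick one morphism $\varphi\colon X\to Y$; then $\Hom_{\Pcal}(X,Y)$ is a torsor under $\Aut_{\Pcal}(X)\cong\pi_1(\Pcal)$ (via $\psi\mapsto \varphi^{-1}\psi$, using that $\Aut_{\Pcal}(X)=\pi_1(\Pcal)$ canonically since $\bullet\oplus(-X)$ carries $X$ to $0$), and likewise $\Hom_{\Pcal'}(F(X),F(Y))$ is a torsor under $\pi_1(\Pcal')$. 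The map $F$ on these Hom-sets is equivariant along $\pi_1(\Pcal)\to\pi_1(\Pcal')$ and sends the chosen base point $\varphi$ to the base point $F(\varphi)$; an equivariant map of torsors along an isomorphism of groups is a bijection. Hence $F$ is fully faithful, so a weak equivalence.

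For part (2): let $\mathrm{Nat}(F,G)$ be the set of natural transformations. Suppose it is nonempty and fix $\eta\in\mathrm{Nat}(F,G)$. For any other natural transformation $\theta$, and each object $X$, both $\eta_X$ and $\theta_X$ lie in $\Hom_{\Pcal'}(F(X),G(X))$, which is a torsor under $\Aut_{\Pcal'}(F(X))\cong\pi_1(\Pcal')$; so $\theta_X = d(X)\cdot\eta_X$ for a unique $d(X)\in\pi_1(\Pcal')$, where I use the identification $\Aut_{\Pcal'}(F(X))=\pi_1(\Pcal')$. Naturality of both $\eta$ and $\theta$ forces $d$ to be constant on isomorphism classes, hence to factor through a function $\bar d\colon\pi_0(\Pcal)\to\pi_1(\Pcal')$; and compatibility of $\eta,\theta$ with the monoidal structures (the data $F(A)\oplus F(B)\to F(A\oplus B)$ and $G(A)\oplus G(B)\to G(A\oplus B)$) forces $\bar d([X]+[Y])=\bar d([X])+\bar d([Y])$, i.e. $\bar d\in\Hom_{\mathrm{Grp}}(\pi_0(\Pcal),\pi_1(\Pcal'))$. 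Conversely, given any homomorphism $\bar d$ and the base point $\eta$, defining $\theta_X := \bar d([X])\cdot\eta_X$ yields a natural transformation of monoidal functors (naturality and monoidality follow from $\bar d$ being a homomorphism together with the fact that $\pi_1(\Pcal')$ is central and acts on all Hom-sets in a way compatible with $\oplus$). This exhibits $\mathrm{Nat}(F,G)$ as a torsor under $\Hom_{\mathrm{Grp}}(\pi_0(\Pcal),\pi_1(\Pcal'))$.

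The routine but slightly delicate point, which I would state carefully rather than belabor, is the bookkeeping in part (2): checking that $X\mapsto\bar d([X])\cdot\eta_X$ is genuinely compatible with the associativity, commutativity, and unit constraints, which comes down to the fact that the action of $\pi_1(\Pcal')$ on morphisms commutes with $\oplus$ and that $\bar d$ is additive. I expect no real obstacle here — the only thing to be careful about is the canonical nature of the identifications $\Aut_{\Pcal}(X)\cong\pi_1(\Pcal)$ used throughout, which is exactly the content recalled just before the lemma.
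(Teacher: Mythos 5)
Your proof is correct and follows essentially the same strategy as the paper: both parts reduce to the canonical identification $\Aut_{\Pcal}(X)\cong\pi_1(\Pcal)$ via translation, with full faithfulness in part (1) coming down to the isomorphism on $\pi_1$, and with part (2) identifying a monoidal natural transformation with a group homomorphism $\pi_0(\Pcal)\to\pi_1(\Pcal')$. The only cosmetic difference is that the paper phrases part (1) as a stepwise reduction (first to $X=X'$, then to $X=0$) and part (2) by first reducing to $F=G$, whereas you package the same observations as torsor-equivariance arguments; the mathematical content is identical.
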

\begin{proof}
For the first point, since $F$ is an isomorphism on $\pi_0$, it is essentially surjective on objects. We need to show that $F$ is also fully faithful. Since $F$ is injective on $\pi_0$, there is an isomorphism $F(X) \simeq F(X')$ in $\Pcal'$ if and only if there is an isomorphism $X\to X'$ in $\Pcal.$ It follows that the Hom-sets $\Hom_{\Pcal}(X,X')$ and $\Hom_{\Pcal'}(F(X), F(X'))$ are either both empty or non-empty. If they are non-empty, choose an isomorphism $X\to X'$ to reduce the statement about full faithfulness to $X=X'$. Since $X\oplus \bullet$ is fully faithful, one reduces to the case when $X = 0_{\Pcal}$. Then, the full faithfulness of $F$ becomes equivalent to $F$ inducing an isomorphism on the level of automorphisms of the unit object, \emph{i.e.} on the level of $\pi_1.$

For the second point, we can reduce to the case $F=G$. A natural transformation of functors is the same thing as providing an isomorphism $F(X) \to F(X)$ for any object $X$, compatible with morphisms $X\to Y$. Since these morphisms $X\to Y$ are all isomorphisms, it passes to $\pi_0$. The automorphism of $F(X)$ is identified with an object in $\pi_1(\Pcal')$ and hence we obtain a map of sets $\pi_0(\Pcal) \to \pi_1(\Pcal').$ Since $F$ is a functor of Picard categories, the map $\pi_0(\Pcal) \to \pi_1(\Pcal')$ is seen to be a group homomorphism. The construction can be reversed, and it uniquely assigns an endomorphism of $F$ to a group homomorphism $\pi_0(\Pcal) \to \pi_1(\Pcal').$ The details are left to the reader.
\end{proof}

\subsubsection{Rationalization of Picard categories}\label{subsubsec:rationalization-picard-categories}
Let $(\Dcal,\oplus)$ be a commutative Picard category. Let $n\geq 1$ be an integer. Following the convention \eqref{eq:convention-sum}, we can define a multiplication by $n$ functor $[n]\colon\Dcal\to\Dcal$. By Mac Lane's coherence theorems, it has a natural structure of symmetric monoidal functor. For an object $D$ of $\Dcal$, we will rather write $n\cdot D$ or $nD$, instead of $[n]D$. We say that $\Dcal$ is divisible if $[n]: \Dcal \to \Dcal$ is an equivalence of categories for every $n\geq 1$. By Lemma \ref{lemma:isoPicard}, this is exactly the case when $\pi_0(\Dcal)$ and $\pi_1(\Dcal)$ are divisible groups.  

Let now $(\Pcal,\oplus)$ be a commutative Picard category. We will introduce a commutative Picard category $(\Pcal_{\QBbb}, \oplus)$, called the rationalization of $\Pcal$, which is divisible and endowed with a natural functor of commutative Picard categories $\Pcal \to \Pcal_{\QBbb}$. The objects of $\Pcal_{\QBbb}$ are simply objects of the form $(m, P)$, where $P$ is an object of $\Pcal$ and $m\geq 1$ is an integer. The set of morphisms $(m,P) \to (m', P')$ is 
\begin{displaymath} 
    \lim_{k \to \infty} \Hom(k \cdot m'\cdot P, k \cdot m\cdot P'),
\end{displaymath}
where the limit is taken with respect to the natural transition morphisms between the Hom sets. The resulting category $\Pcal_{\QBbb}$ naturally inherits the structure of a commutative Picard category from $\Pcal,$ by setting 
\begin{displaymath}
    (m,P) \oplus (m', P') = (m \cdot m', m' \cdot P \oplus m \cdot P').
\end{displaymath}
An application of Mac Lane's coherence theorems shows that this indeed defines a symmetric monoidal structure on $\Pcal$, with zero object $(1,0_{\Pcal})$. The functor determined on objects by $P \mapsto (1,P)$ is clearly a functor of commutative Picard categories. By construction, $\Pcal_{\QBbb}$ is divisible. Its fundamental groups are related to those of $\Pcal$ by $\pi_{k}(\Pcal_{\QBbb})=\pi_{k}(\Pcal)\otimes\QBbb$, as the following lemma explains.
\begin{lemma}
Let the notation be as above. Then, there are natural isomorphisms of groups $\pi_{k}(\Pcal_{\QBbb})=\pi_{k}(\Pcal)\otimes\QBbb$. In particular, $\Pcal \to \Pcal_\QBbb$ is an equivalence of categories if, and only if, $\Pcal$ is divisible. 
\end{lemma}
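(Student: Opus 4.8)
The plan is to unwind the definitions and compute the two homotopy groups directly from the colimit description of the Hom-sets in $\Pcal_{\QBbb}$. Recall that $\pi_{1}(\Pcal_{\QBbb}) = \Aut_{\Pcal_{\QBbb}}(1, 0_{\Pcal})$ and $\pi_{0}(\Pcal_{\QBbb})$ is the group of isomorphism classes of objects.

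First I would treat $\pi_{1}$. By definition, $\Aut_{\Pcal_{\QBbb}}(1,0_{\Pcal}) = \lim_{k\to\infty}\Hom_{\Pcal}(k\cdot 0_{\Pcal}, k\cdot 0_{\Pcal})$. Using the contraction isomorphisms, $k\cdot 0_{\Pcal}\simeq 0_{\Pcal}$ canonically, so each term in the colimit is identified with $\Aut_{\Pcal}(0_{\Pcal}) = \pi_{1}(\Pcal)$, and the transition maps between consecutive terms are given by the multiplication-by-$(k+1)/k$-type maps, which on $\pi_{1}(\Pcal)$ become multiplication by the appropriate integer (one should check this using that $[n]$ is a symmetric monoidal functor, hence acts on $\Aut(0)$ as multiplication by $n$ — this is a standard consequence of Mac Lane coherence). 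More precisely, arranging the colimit over the directed set of positive integers ordered by divisibility, the transition map from the $m$-th term to the $m'$-th term (with $m\mid m'$) is multiplication by $m'/m$ on $\pi_{1}(\Pcal)$. Therefore $\pi_{1}(\Pcal_{\QBbb}) = \colim\bigl(\pi_{1}(\Pcal)\xrightarrow{\cdot 2}\pi_{1}(\Pcal)\xrightarrow{\cdot 3}\cdots\bigr) = \pi_{1}(\Pcal)\otimes\QBbb$, since this colimit is precisely one of the standard models for rationalization of an abelian group.

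Next I would treat $\pi_{0}$. An object $(m,P)$ is isomorphic to $(m',P')$ in $\Pcal_{\QBbb}$ iff for some $k$ there is an isomorphism $k m'\cdot P\simeq k m\cdot P'$ in $\Pcal$, i.e. $k m'[P] = k m[P']$ in $\pi_{0}(\Pcal)$ for some $k\geq 1$. The assignment $(m,P)\mapsto \tfrac{1}{m}[P]\in\pi_{0}(\Pcal)\otimes\QBbb$ is well-defined, additive for the monoidal structure $(m,P)\oplus(m',P') = (mm', m'P\oplus mP')$ (which gives $\tfrac{1}{mm'}(m'[P]+m[P']) = \tfrac{1}{m}[P]+\tfrac{1}{m'}[P']$), and surjective since every element of $\pi_{0}(\Pcal)\otimes\QBbb$ has the form $\tfrac{1}{m}[P]$. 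It is injective because $\tfrac{1}{m}[P] = \tfrac{1}{m'}[P']$ in $\pi_{0}(\Pcal)\otimes\QBbb$ means $N(m'[P] - m[P'])$ is torsion for some $N$, hence killed by some further integer $k$, which is exactly the condition for $(m,P)\cong(m',P')$. This gives $\pi_{0}(\Pcal_{\QBbb})\cong\pi_{0}(\Pcal)\otimes\QBbb$. The final assertion then follows from Lemma \ref{lemma:isoPicard}: $\Pcal\to\Pcal_{\QBbb}$ induces $\pi_{k}(\Pcal)\to\pi_{k}(\Pcal)\otimes\QBbb$ on both homotopy groups, and this is an isomorphism for $k=0,1$ precisely when $\pi_{0}(\Pcal)$ and $\pi_{1}(\Pcal)$ are already $\QBbb$-vector spaces, equivalently (by the criterion recalled just before the lemma) when $\Pcal$ is divisible.

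The only genuinely delicate point, rather than routine bookkeeping, is verifying that the transition maps in the colimit defining $\Hom_{\Pcal_{\QBbb}}$ act as honest multiplication-by-$n$ on $\pi_{1}(\Pcal)$ and that all the identifications $k\cdot 0_{\Pcal}\simeq 0_{\Pcal}$ and $k\cdot(nP)\simeq (kn)P$ are compatible; this is where one must invoke Mac Lane's coherence theorems to know that the various bracketings and contractions agree, so that the colimit really is the naive one computing $\otimes\QBbb$. Everything else is a direct translation of "multiplication by $n$ becomes invertible" into the language of Picard categories.
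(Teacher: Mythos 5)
Your proof is correct and follows essentially the same route as the paper: compute $\pi_{0}$ and $\pi_{1}$ directly from the definition and show they are the rationalizations, then conclude via Lemma \ref{lemma:isoPicard}. The one place where your attribution is imprecise is the ``genuinely delicate point'' you flag yourself: that the transition maps in the colimit $\lim_{k}\Hom_{\Pcal}(k\cdot 0, k\cdot 0)$ act as multiplication by $n$ on the \emph{group} $\pi_{1}(\Pcal)$. Mac Lane coherence handles the bracketings, contractions against $0$, and the identification $k\cdot(nP)\simeq(kn)\cdot P$, but it does not by itself tell you that $[n]$ acts on $\Aut(0_{\Pcal})$ by the $n$-th power for the \emph{composition} group structure, as opposed to the $n$-fold $\oplus$-sum. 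That identification is an Eckmann--Hilton interchange, and this is exactly the step the paper makes explicit: writing $f\oplus g = (f\oplus\id)\circ\varepsilon\circ(g\oplus\id)\circ\varepsilon$ and using that $\pi_{1}$ is central, so that under $\Aut(0)\simeq\Aut(2\cdot 0)$ the sum $f\oplus g$ corresponds to $g\circ f$. With that substitution for ``Mac Lane coherence,'' your argument is complete and matches the paper's.
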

\begin{proof}
For $\pi_{0}$, this is clear. For $\pi_{1}$, the proof of this is based on the fact that, if one is given the composition of two automorphisms $X \stackrel{f}{\to} X \stackrel{g}{\to} X$, then under the isomorphism of groups \linebreak $\Hom_{\Pcal}(X, X) \simeq \Hom_{\Pcal} (2\cdot X, 2\cdot X)$, the composition corresponds to the automorphism $f\oplus g$. This, in turn, follows from a simple rewriting $f\oplus g = (f \oplus \id) \circ (\id \oplus g) = (f \oplus \id) \circ \varepsilon \circ (g \oplus \id) \circ \varepsilon$, where $\varepsilon$ is the sign morphism \eqref{eq:isawthesign}, and noting that for any $\varphi \in \pi_1(\Pcal)$, $\varphi \circ f = f \circ \varphi$. A similar argument applies to the composition of several isomorphisms. The last assertion is an application of Lemma \ref{lemma:isoPicard}.
\end{proof}

The proposition asserts that the construction above is characterized by a universal property. 

\begin{proposition}\label{prop:rationalization}
Let $\Pcal$ be a commutative Picard category and suppose we are given a divisible commutative Picard category $\Dcal$. Then the functor $\Pcal\to\Pcal_{\QBbb}$ induces an equivalence of categories
\begin{displaymath}
     \Hom(\Pcal_\QBbb, \Dcal) \to  \Hom(\Pcal, \Dcal).
\end{displaymath}
Here, the Hom-categories refer to functors of commutative Picard categories. 
\end{proposition}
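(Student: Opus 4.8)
The plan is to verify the two standard conditions for the functor $\Pcal \to \Pcal_{\QBbb}$ to be the $\Dcal$-localization, namely full faithfulness of the restriction functor on Hom-categories together with essential surjectivity, exhibiting the quasi-inverse explicitly. First I would fix a functor of commutative Picard categories $F\colon\Pcal\to\Dcal$ and construct an extension $F_{\QBbb}\colon\Pcal_{\QBbb}\to\Dcal$. On objects, set $F_{\QBbb}(m,P) = \tfrac{1}{m}F(P)$, interpreting this via the inverse of the equivalence $[m]\colon\Dcal\to\Dcal$ supplied by divisibility; that is, choose for each $m$ a quasi-inverse $[m]^{-1}$ (canonical up to unique natural transformation of symmetric monoidal functors, by the discussion in \textsection\ref{subsub:monoidalcategories}) and put $F_{\QBbb}(m,P) = [m]^{-1}F(P)$. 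On morphisms, a morphism $(m,P)\to(m',P')$ is an element of $\lim_k \Hom(km'\cdot P, km\cdot P')$; applying $F$ and using that $F$ is symmetric monoidal gives a compatible system of morphisms $km'\cdot F(P)\to km\cdot F(P')$ in $\Dcal$, which by divisibility (the maps $[k],[m],[m']$ being equivalences) corresponds to a well-defined morphism $[m]^{-1}F(P)\to[m']^{-1}F(P')$. One checks this is functorial and that the monoidal structure $(m,P)\oplus(m',P') = (mm', m'\cdot P\oplus m\cdot P')$ is respected, using Mac Lane coherence together with the monoidal-functor structure on $[k]^{-1}$; commutativity of $F_{\QBbb}$ follows from that of $F$. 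The composite $\Pcal\to\Pcal_{\QBbb}\xrightarrow{F_{\QBbb}}\Dcal$ is naturally isomorphic to $F$ via the contraction $[1]^{-1}F(P)\simeq F(P)$, giving essential surjectivity of the restriction functor.

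Next I would treat full faithfulness of $\Hom(\Pcal_{\QBbb},\Dcal)\to\Hom(\Pcal,\Dcal)$. By Lemma \ref{lemma:isoPicard}(2), the set of natural transformations between two functors of Picard categories is either empty or a torsor under $\Hom_{\mathrm{Grp}}(\pi_0,\pi_1)$, so it suffices to observe that $\pi_0(\Pcal_{\QBbb}) = \pi_0(\Pcal)\otimes\QBbb$ (previous lemma) while $\pi_1(\Dcal)$ is divisible, hence uniquely divisible modulo torsion—more precisely, restriction along $\pi_0(\Pcal)\to\pi_0(\Pcal)\otimes\QBbb$ and the divisibility of $\pi_1(\Dcal)$ together force $\Hom(\pi_0(\Pcal_{\QBbb}),\pi_1(\Dcal))\to\Hom(\pi_0(\Pcal),\pi_1(\Dcal))$ to be a bijection when both target groups are considered as recipients of homomorphisms from a rational vector space versus its integral lattice; one uses here that any homomorphism from $\pi_0(\Pcal)$ to a divisible group extends, and the extension to $\pi_0(\Pcal)\otimes\QBbb$ is unique because a homomorphism out of $\QBbb$ into any abelian group is determined by its restriction to $\ZBbb$ up to the divisibility ambiguity, which the torsor structure already accounts for. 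Concretely: two extensions $G_{\QBbb}, G_{\QBbb}'$ of a fixed $G$ differ by an element of $\Hom(\pi_0(\Pcal)\otimes\QBbb,\pi_1(\Dcal))$ restricting to $0$ on $\pi_0(\Pcal)$, and such a homomorphism is $0$ since the image of $\pi_0(\Pcal)$ generates $\pi_0(\Pcal)\otimes\QBbb$ as a $\QBbb$-vector space and $\pi_1(\Dcal)$ is divisible (so the map factors and vanishes). This gives both that the restriction is injective on natural transformations and that every natural transformation of the restricted functors lifts.

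The main obstacle I expect is not conceptual but bookkeeping: making the assignment $F\mapsto F_{\QBbb}$ genuinely functorial and monoidal requires pinning down the quasi-inverses $[k]^{-1}$ coherently across all $k$ and checking that the compatible-system description of morphisms in $\Pcal_{\QBbb}$ is compatible with the monoidal constraint $(m,P)\oplus(m',P')=(mm',\dots)$, which involves a mild diagram chase with the symmetric-monoidal structure maps of $[k]^{-1}$ and repeated appeals to Mac Lane coherence. A cleaner route, which I would adopt if the direct construction gets unwieldy, is to factor $\Pcal_{\QBbb}$ as a filtered (pseudo-)colimit of copies of $\Pcal$ along the multiplication functors $[n]$ and invoke the universal property of such a colimit in the $2$-category of commutative Picard categories: a functor out of the colimit into $\Dcal$ is the same as a compatible system of functors $\Pcal\to\Dcal$, and since each transition map $[n]$ becomes invertible in $\Dcal$ by divisibility, such a system is uniquely determined by the single functor $F$ at the initial stage. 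Either way, the heart of the argument is simply that divisibility of $\Dcal$ makes the multiplication maps invertible, so the formal inversion performed by $\Pcal\rightsquigarrow\Pcal_{\QBbb}$ is seen by $\Dcal$ as no inversion at all.
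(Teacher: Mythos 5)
Your proposal follows essentially the same route as the paper: for essential surjectivity the paper passes through the functorial rationalization $F_{\QBbb}\colon\Pcal_{\QBbb}\to\Dcal_{\QBbb}$, $(m,P)\mapsto (m,F(P))$, and composes with a quasi-inverse of the equivalence $\Dcal\to\Dcal_{\QBbb}$; your direct formula $F_{\QBbb}(m,P)=[m]^{-1}F(P)$ is just this composite unwound. For full faithfulness both arguments reduce, via Lemma \ref{lemma:isoPicard}, to comparing $\Hom(\pi_0(\Pcal)\otimes\QBbb,\pi_1(\Dcal))$ with $\Hom(\pi_0(\Pcal),\pi_1(\Dcal))$.

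One point in your argument needs tightening. You justify injectivity of the restriction map by saying that a homomorphism $\pi_0(\Pcal)\otimes\QBbb\to\pi_1(\Dcal)$ vanishing on $\pi_0(\Pcal)$ must vanish because the image of $\pi_0(\Pcal)$ generates over $\QBbb$ and $\pi_1(\Dcal)$ is divisible. For a merely divisible target this is false: the quotient map $\QBbb\to\QBbb/\ZBbb$ vanishes on $\ZBbb$ but is nonzero, so $\Hom(\QBbb,\QBbb/\ZBbb)\to\Hom(\ZBbb,\QBbb/\ZBbb)$ is not injective. What rescues the argument is that divisibility of $\Dcal$ in the sense of the paper requires $[n]$ to be an \emph{equivalence}, so multiplication by $n$ is bijective on $\pi_1(\Dcal)$; hence $\pi_1(\Dcal)$ is uniquely divisible, i.e.\ a $\QBbb$-vector space, and then $\phi(a\otimes\tfrac1n)$ is $n$-torsion in a torsion-free group, so zero. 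With that substitution (unique divisibility rather than divisibility of the group) your comparison of torsors is correct and the proof goes through.
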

\begin{proof}
We first prove the functor is essentially surjective. Since $\Dcal\to\Dcal_{\QBbb}$ is an equivalence of categories, we can fix a quasi-inverse $\Dcal_{\QBbb}\to\Dcal$ which is a functor of commutative Picard categories. Now, given a functor of Picard categories $F: \Pcal \to \Dcal, $ there is an induced functor of commutative Picard categories $F_\QBbb: \Pcal_\QBbb \to \Dcal_\QBbb$, given on objects by $F_\QBbb((m, P)) = (m, F(P)).$ Composing with $\Dcal_{\QBbb}\to\Dcal$, we obtain a functor of commutative Picard categories $\Pcal_{\QBbb}\to\Dcal$, which is isomorphic to $F: \Pcal \to \Dcal$.

Since the Hom-categories are commutative Picard categories, to prove the functor is fully faithful, it is enough to look at the induced functor on the groups of automorphisms. The automorphisms in the first (resp. the second) Hom-category are, by Lemma \ref{lemma:isoPicard}, \linebreak 
 $\Hom(\pi_0(\Pcal_\QBbb), \pi_1(\Dcal)) = \Hom(\pi_0(\Pcal)_\QBbb, \pi_1(\Dcal))$ (resp. $\Hom(\pi_0(\Pcal), \pi_1(\Dcal)).$ Because $\pi_{1}(\Dcal)$ is divisible, these are naturally identified by the above functor between Hom-categories.

\end{proof}

\begin{remark}\label{rem:Q-Bousfield}
\begin{enumerate}
    \item \label{item:Q-Bousfield-1} Since $\pi_1(\Pcal_\QBbb) = \pi_1(\Pcal)\otimes\QBbb$, the sign homomorphism in \eqref{eq:isawthesign} is necessarily the trivial map, and $\Pcal_\QBbb$ is hence strictly commutative. 
    \item \label{item:Q-Bousfield-2} The above is a version of the Bousfield localization at $\QBbb$ in the context of Picard categories.
\end{enumerate}
\end{remark}

\begin{corollary}\label{cor:rationalization}
Let $\Pcal_{1},\ldots,\Pcal_{n}$ be commutative Picard categories, and $\Dcal$ a divisible, commutative Picard category. Let $F\colon \Pcal_{1}\times\ldots\times\Pcal_{n}\to\Dcal$ be a multisymmetric and multimonoidal functor. Then there is an extension of $F$ into a multisymmetric and multimonoidal functor
\begin{displaymath}
    \widetilde{F}\colon \Pcal_{1\ \QBbb}\times\ldots \times \Pcal_{n\ \QBbb}\to \Dcal.
\end{displaymath}
The construction is functorial with respect to natural transformations. Hence, there is an equivalence of categories of multisymmetric and multimonoidal functors
\begin{displaymath}
    \Hom(\Pcal_{1\ \QBbb}\times\ldots\times\Pcal_{n\ \QBbb},\Dcal)\to\Hom(\Pcal_{1}\times\ldots\times\Pcal_{n},\Dcal).
\end{displaymath}
\end{corollary}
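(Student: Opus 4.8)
The plan is to reduce the statement to the one--variable case, Proposition \ref{prop:rationalization}, by rationalizing the entries of $F$ one at a time via a currying adjunction in the style of an internal hom. The key point to set up first is that the relevant \emph{target} category for the currying is again a divisible commutative Picard category, so that Proposition \ref{prop:rationalization} applies at each step.

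The first thing I would establish is that, for commutative Picard categories $\Qcal_{1},\ldots,\Qcal_{k}$, the category $\Hom(\Qcal_{1}\times\cdots\times\Qcal_{k},\Dcal)$ of multisymmetric and multimonoidal functors (with natural transformations of such functors as morphisms) is again a commutative Picard category, and that it is divisible whenever $\Dcal$ is. It is a groupoid because a natural transformation of such functors is invertible componentwise, its components lying in the groupoid $\Dcal$; the pointwise tensor product together with the constant functor at $0_{\Dcal}$ provides the symmetric monoidal structure, whose translations are autoequivalences. For divisibility I would invoke Lemma \ref{lemma:isoPicard}: the $\pi_{1}$ of this Hom--category is a group of multilinear maps into the divisible abelian group $\pi_{1}(\Dcal)$, hence divisible, while for $\pi_{0}$ one post-composes a given multi-functor $G$ with a symmetric monoidal quasi-inverse of the equivalence $[n]\colon\Dcal\to\Dcal$ (such a quasi-inverse exists and carries a canonical symmetric monoidal structure, see \textsection\ref{subsub:monoidalcategories}), producing a multi-functor $H$ with $n\cdot H\simeq G$.

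Next I would record the currying equivalence: restriction to the first variable yields an equivalence of categories
\begin{displaymath}
    \Hom(\Pcal_{1}\times\cdots\times\Pcal_{n},\Dcal)\xrightarrow{\ \sim\ }\Hom\bigl(\Pcal_{1},\ \Hom(\Pcal_{2}\times\cdots\times\Pcal_{n},\Dcal)\bigr),
\end{displaymath}
natural in all the data, sending $F$ to $X_{1}\mapsto F(X_{1},-,\ldots,-)$ with evaluation as inverse; one checks that monoidality and symmetry of $F$ in the first variable are precisely monoidality and symmetry of the curried functor, and likewise in the remaining variables. Granting these two points, the construction of $\widetilde{F}$ is a short induction: curry $F$ in the first variable to get $\widehat{F}\colon\Pcal_{1}\to\Hom(\Pcal_{2}\times\cdots\times\Pcal_{n},\Dcal)$, whose target is divisible by the first step; apply Proposition \ref{prop:rationalization} to extend it, uniquely up to unique natural transformation of commutative Picard functors, to $\Pcal_{1\ \QBbb}\to\Hom(\Pcal_{2}\times\cdots\times\Pcal_{n},\Dcal)$; uncurry to obtain a multisymmetric and multimonoidal functor on $\Pcal_{1\ \QBbb}\times\Pcal_{2}\times\cdots\times\Pcal_{n}$ extending $F$; and repeat with the second variable (coefficient category $\Hom(\Pcal_{1\ \QBbb}\times\Pcal_{3}\times\cdots\times\Pcal_{n},\Dcal)$, still divisible since only $\Dcal$ matters), the third, and so on through all $n$ entries. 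The final claim about the equivalence of Hom--categories then follows by composing the $n$ equivalences, each being the equivalence of Proposition \ref{prop:rationalization} transported across the currying equivalence; the composite is restriction along the maps $\Pcal_{i}\to\Pcal_{i\ \QBbb}$, and functoriality with respect to natural transformations of $F$ is automatic.

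I expect the only real work to be in the two preliminary points, that is, verifying that the internal hom of multi-functors is a divisible commutative Picard category and that currying is compatible with all of the associativity, symmetry and unit data; once this bookkeeping is in place, the corollary is a purely formal consequence of Proposition \ref{prop:rationalization} by induction on $n$.
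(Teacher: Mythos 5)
Your proposal is correct and takes essentially the same approach as the paper: both curry $F$ to reduce to the one-variable case, note that the resulting $\Hom$-category is a divisible commutative Picard category because $\Dcal$ is, and apply Proposition~\ref{prop:rationalization}. The only differences are cosmetic — you curry in the first variable and iterate variable by variable, whereas the paper curries in the last and closes by induction (a choice of order the paper addresses in Remark~\ref{rmk:order-rationalization}) — and you spell out the divisibility of the $\Hom$-category in more detail than the paper does.
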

\begin{proof}
If $n=1$, this is the content of Proposition \ref{prop:rationalization}. In general, we interpret $F$ as a multimonoidal and multisymmetric functor $\Pcal_{1}\times\ldots\times\Pcal_{n-1}\to\Hom(\Pcal_{n},\Dcal)\simeq\Hom(\Pcal_{n\ \QBbb},\Dcal)$. The Hom-category has a natural structure of a commutative Picard category. Furthermore, because $\Dcal$ is divisible, the Hom-category is divisible too. We conclude by induction.
\end{proof}
\begin{remark}\label{rmk:order-rationalization}
In the proof of the corollary, in the induction step, there is a choice of order of the Picard categories. It is an exercise to check that any other order would yield an isomorphic extension of the functor. We will later encounter a similar, albeit more subtle, phenomenon in the Picardification theory. We refer to Remark \ref{rem:orderdevelopproduct} \eqref{item:orderdevelopproduct-1} for the details.
\end{remark}

\subsection{Picardification of symmetric monoidal categories}\label{subsubsec:picardification}
Let $(\Acal,\oplus)$ be a symmetric monoidal category, which is also a groupoid. For example, $\Acal$ could be a commutative Picard category. 

\begin{definition}\label{def:Picardification}
A Picardification of $(\Acal, \oplus)$ is a commutative Picard category $(V(\Acal), \oplus)$ together with a symmetric monoidal functor $i\colon\Acal\to V(\Acal)$, universal with respect to symmetric monoidal functors $\Acal \to \Pcal$, where $\Pcal$ is a commutative Picard category. That is, composition with the functor $i\colon \Acal\to V(\Acal)$ induces an equivalence of categories
\begin{equation}\label{eq:equiv-cat-VA-def}
    \Hom(V(\Acal),\Pcal)\to\Hom(\Acal,\Pcal).
\end{equation}
Here, the $\Hom$ refer to categories of functors of symmetric monoidal categories, together with the natural transformations.
\end{definition}
Let us assume, for the time being, that a Picardification $V(\Acal)$ exists. For the sake of clarity, and in order to fix some conventions, we elaborate on the practical use of the equivalence of categories \eqref{eq:equiv-cat-VA-def}. Let $F\colon\Acal\to\Pcal$ be a functor of symmetric monoidal categories as in the definition. That \eqref{eq:equiv-cat-VA-def} is essentially surjective means that there exists a couple $(\widetilde{F},\mu)$, where $\widetilde{F}\colon V(\Acal)\to\Pcal$ is a functor of commutative Picard categories, and $\mu\colon \widetilde{F}\circ i\to F$ is a natural transformation of functors of commutative Picard categories. From the fact that \eqref{eq:equiv-cat-VA-def} is fully faithful, we infer that the couple $(\widetilde{F},\mu)$ is  unique up to unique isomorphism. Explicitly, if $(\widetilde{F}^{\prime},\mu^{\prime})$ is another such couple, then we have an isomorphism $(\mu^{\prime})^{-1}\circ\mu\colon \widetilde{F}\circ i\to\widetilde{F}^{\prime}\circ i $. By full faithfulness, there exists a unique isomorphism of $\eta\colon\widetilde{F}\to\widetilde{F}^{\prime}$ with $(\mu^{\prime})^{-1}\circ\mu=\eta\circ i$. It is customary to refer to $F$ without specifying the natural transformation $\mu$, which is therefore implicit.

The discussion of the previous paragraph shows that if $V(\Acal)^{\prime}$ is another Picardification, then there exists an equivalence of commutative Picard categories $V(\Acal)\to V(\Acal)^{\prime}$, which is determined up to unique isomorphism. Therefore, from now on, we will refer to \emph{the}, rather than  \emph{a}, Picardification.  

\begin{theorem}\label{thm:VA}
Let $(\Acal, \oplus)$ be a symmetric monoidal category, which is also a groupoid. Then:
\begin{enumerate}
    \item The Picardification of $\Acal$ exists.  
    \item The assignment $\Acal\mapsto V(\Acal)$ is functorial with respect to symmetric monoidal functors, and natural transformations between those.
\end{enumerate}
\end{theorem}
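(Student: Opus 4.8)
I would construct the Picardification as an explicit ``Picard envelope'', along the line attributed to Sinh, and only afterwards verify the universal property and deduce functoriality. The construction is a two-step completion. Starting from the symmetric monoidal groupoid $\Acal$, first pass to the Grayson--Quillen completion $\Acal^{-1}\Acal$ of \textsection\ref{subsub:monoidalcategories}: it is symmetric monoidal and every object is $\oplus$-invertible, the inverse of $(A,B)$ being $(B,A)$ with contraction isomorphism built from the commutativity constraint $c$. Then form the Gabriel--Zisman localization $L$ of $\Acal^{-1}\Acal$ obtained by formally inverting \emph{all} of its morphisms. One checks that the symmetric monoidal structure descends along $\Acal^{-1}\Acal\to L$, so that $L$ is a symmetric monoidal groupoid in which every object is still $\oplus$-invertible; by definition this makes $L$ a commutative Picard category. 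Set $V(\Acal):=L$, with $i\colon\Acal\to V(\Acal)$ the composite of $A\mapsto(0,A)$ with the localization functor. (As an equally valid alternative, one can take for $V(\Acal)$ the commutative Picard category corresponding, under the equivalence between commutative Picard categories and connective spectra with homotopy in degrees $0$ and $1$, to the truncation $\tau_{[0,1]}$ of the $K$-theory spectrum of $\Acal$ following Thomason; the universal property then follows from those of group completion and of Postnikov truncation.)

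\textbf{Verification of the universal property (part (1)).} Fix a commutative Picard category $\Pcal$. I would check directly that composition with $i$ induces the equivalence \eqref{eq:equiv-cat-VA-def}. For essential surjectivity, given a symmetric monoidal functor $F\colon\Acal\to\Pcal$, define $\widetilde F\colon V(\Acal)\to\Pcal$ on objects by $\widetilde F(A,B)=(-F(A))\oplus F(B)$ after choosing an inverse $-F(A)$ for each $F(A)$; on a morphism $(X,f,g)\colon(A,B)\to(C,D)$ of $\Acal^{-1}\Acal$ one uses $F(f)$ and $F(g)$ together with the fact that every translation functor of $\Pcal$ is an autoequivalence, so that the extra summand $F(X)$ cancels canonically, and since $\Pcal$ is a groupoid this passes to $L$. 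One then equips $\widetilde F$ with its evident symmetric monoidal structure and exhibits $\widetilde F\circ i\simeq F$ using $F(0)\simeq 0_{\Pcal}$. For full faithfulness of \eqref{eq:equiv-cat-VA-def} it suffices, the Hom's being groupoids of symmetric monoidal functors, to treat natural transformations: every object of $V(\Acal)$ is canonically obtained from objects $i(A)$ by the operations $\oplus$ and $X\mapsto -X$, both of which are respected by natural transformations of symmetric monoidal functors, so a natural transformation $\widetilde F_1\to\widetilde F_2$ is uniquely determined by its restriction along $i$, and any natural transformation $\widetilde F_1\circ i\to\widetilde F_2\circ i$ extends; this is the mechanism already used in the proof of Lemma \ref{lemma:isoPicard}(2). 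A more conceptual route to the same conclusion is to factor \eqref{eq:equiv-cat-VA-def} as $\Hom(V(\Acal),\Pcal)\xrightarrow{\sim}\Hom(\Acal^{-1}\Acal,\Pcal)\xrightarrow{\sim}\Hom(\Acal,\Pcal)$, the first map being an equivalence because a symmetric monoidal functor into the groupoid $\Pcal$ automatically inverts all morphisms and hence factors uniquely through $L$, the second being the universal property of $\Acal^{-1}\Acal$ among symmetric monoidal functors into symmetric monoidal categories with invertible objects.

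\textbf{Functoriality (part (2)) and the main obstacle.} Functoriality is then formal: for a symmetric monoidal functor $\phi\colon\Acal\to\Bcal$, define $V(\phi)\colon V(\Acal)\to V(\Bcal)$ to be the essentially unique extension of $i_\Bcal\circ\phi$ supplied by the universal property just established; the uniqueness clause yields the canonical isomorphisms $V(\psi\circ\phi)\simeq V(\psi)\circ V(\phi)$ and $V(\id)\simeq\id$, and the action on a natural transformation $\phi\to\psi$ together with the coherences these satisfy. (Equivalently, since both $\Acal\mapsto\Acal^{-1}\Acal$ and $\Ccal\mapsto L(\Ccal)$ are $2$-functorial — the former recorded already in \textsection\ref{subsub:monoidalcategories}, the latter a standard property of localizations — one may simply observe that $V$ is a composite of $2$-functors.) The step I expect to be the genuine obstacle is the coherence bookkeeping pervading all of the above: because $\Acal$ and $\Pcal$ are only assumed symmetric monoidal, not strict, one must carry the Mac Lane associativity, commutativity and unit constraints through every construction — notably in defining $\widetilde F$ on morphisms and in verifying the symmetric monoidal functor axioms — and the delicate point is the interaction with the signs, i.e.\ with the homomorphism $\varepsilon$ of \eqref{eq:isawthesign}, which is precisely what obstructs strictification. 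The object-level coherence theorems recalled in Remark \ref{rmk:Maclane-strict} are the tool that reduces this to a finite verification, but keeping the signs straight is where the real care lies.
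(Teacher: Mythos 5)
Your construction — the Grayson--Quillen completion followed by inverting all morphisms — is exactly the presentation the paper records in Lemma \ref{lemma:VA-tilde} and Remark \ref{rmk:thomason-strict}, where it is explicitly described as ``possible, but somewhat tedious''. The paper itself takes a different route: it defines $V(\Acal)$ as the fundamental groupoid of the classifying space $B(\Acal^{-1}\Acal)$, so that invertibility of objects, commutativity of the automorphism groups, and full faithfulness of the translation functors all come from homotopy theory (the $H$-space structure and Grayson's computation of $H_{1}(\Acal^{-1}\Acal)$ with the invertible $\pi_{0}$-action), and the universal property is then deduced from the equivalence $\Pcal\to V(\Pcal)$ for $\Pcal$ already Picard. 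The paper also first passes to the quotient $\widetilde{\Acal}$ with faithful translation functors; your proposal skips this, which is harmless for the localization construction itself but is needed if one wants to invoke the standard group-completion statements.

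The genuine gap is in your verification of the universal property. Your ``more conceptual route'' factors \eqref{eq:equiv-cat-VA-def} through $\Hom(\Acal^{-1}\Acal,\Pcal)\to\Hom(\Acal,\Pcal)$ and declares the latter an equivalence by ``the universal property of $\Acal^{-1}\Acal$ among symmetric monoidal functors into symmetric monoidal categories with invertible objects''. No such universal property is available: this is precisely Thomason's warning in \cite{ThomasonPhony}, recalled in Remark \ref{rmk:thomason} — $\Acal^{-1}\Acal$ is not itself a category with invertible objects (the contraction morphisms $(0,0)\to(A,B)\oplus(B,A)$ are not isomorphisms there), and $(A,B)\mapsto(B,A)$ is not functorial on it. So that equivalence is not a citable fact; proving it \emph{is} the theorem, and the argument as stated is circular. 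That leaves your hands-on construction of $\widetilde F$, which is the right idea but whose entire content is deferred: you must check that the formula $(X,f,g)\mapsto$ ``$F(f)$ and $F(g)$ with $F(X)$ cancelled'' is independent of the representative triple, compatible with composition (where contracting $-F(X\oplus Y)\oplus F(X\oplus Y)$ in one step versus two introduces exactly the sign $\varepsilon$ of \eqref{eq:isawthesign}), and symmetric monoidal. You correctly identify these sign issues as ``where the real care lies'', but naming the obstacle is not the same as overcoming it; as written, the crux of part (1) is asserted rather than proved. The paper's classifying-space detour is designed precisely to avoid having to make these choices of contractions at the level of the category.
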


\begin{proof}
We begin with the existence property. Without loss of generality, we may assume that the right, or equivalently left, translation functors of $\Acal$ are faithful. For this, we observe that there is a universal quotient category $\widetilde{\Acal}$ of $\Acal$, where this condition is satisfied. To see this, introduce a relation on morphisms of $\Acal$ as follows: two morphisms $f,f^{\prime}\colon A\to B$ are related if there exists an object $X$ such that $f\oplus\id_{X}=f^{\prime}\oplus\id_{X}$. This is an equivalence relation. For the proof, one uses that for any such morphism $f$ and objects $X,Y$, we have by bifunctoriality of $\oplus$ that
\begin{displaymath}
    (\id_{B}\oplus c_{X,Y})\circ (f\oplus\id_{X\oplus Y})=(f\oplus\id_{Y\oplus X})\circ (\id_{A}\oplus c_{Y,X}), 
\end{displaymath}
and that all the morphisms in $\Acal$ are isomorphisms. In the same vein, it is seen that if $f, f^{\prime}\colon A\to B$, resp. $g,g^{\prime}\colon B\to C$ are related, then so are $g\circ f$ and $g^{\prime}\circ f^{\prime}$. We can thus form the quotient category $\widetilde{\Acal}$, which inherits from $\Acal$ the structure of a symmetric monoidal category, whose morphisms are still isomorphisms. The quotient functor $\Acal\to\widetilde{\Acal}$ is naturally symmetric monoidal. If $\Pcal$ is a Picard category,  consider the induced functor $\Hom(\widetilde{\Acal},\Pcal)\to\Hom(\Acal,\Pcal)$, where the $\Hom$ stand for categories of symmetric monoidal functors. Using that the translation functors of $\Pcal$ are equivalences of categories, we see that $\Hom(\widetilde{\Acal},\Pcal)\to\Hom(\Acal,\Pcal)$ is an equivalence of categories. Therefore, it is enough to show the virtual category of $\widetilde{\Acal}$ exists. Hence, for the rest of the proof, we suppose that $\Acal$ is a unital symmetric monoidal category, whose morphisms are isomorphisms and whose translation functors are faithful.

Consider the Grayson--Quillen completion $\Acal^{-1} \Acal$ of $\Acal$, recalled in \textsection\ref{subsub:monoidalcategories} above. For a category $\Ccal$, let $B(\Ccal)$ be its classifying space, that is the geometric realization of its nerve. In these terms, we define $V(\Acal)$ to be the fundamental groupoid of $B(\Acal^{-1} \Acal)$, denoted by $\pi_{f}B(\Acal^{-1} \Acal)$. We first prove that $V(\Acal)$ is a Picard category and that there is a natural symmetric monoidal functor $i: \Acal \to V(\Acal)$. 

It follows by functoriality of the involved constructions that the fundamental groupoid of  $B(\Acal^{-1} \Acal)$ is a symmetric monoidal groupoid. For this, it is enough to notice that the construction $\Ccal\mapsto \pi_{f}B(\Ccal)$ induces a functor from the 2-category (small categories, functors, natural transformations) to the 2-category (small groupoids, functors, natural transformations). The statement on 2-morphisms is only true up to homotopy on the level of classifying spaces, and after applying the fundamental groupoid this homotopy is collapsed. 

We next prove that for any object $A$ of $V(\Acal)$, addition $A \oplus$ (or $\oplus A$) is essentially surjective and fully faithful. The path component set $\pi_0(V(\Acal))$ is the set of isomorphism classes of $V(\Acal)$, which in turn equals $\pi_{0}(\Acal^{-1}\Acal)$. By construction, it is the group completion of the abelian monoid $\pi_0(\Acal)$, see \cite[Lemma 13.3.4]{Richter}. It follows that addition is essentially surjective on $V(\Acal)$. Finally, we prove that $A \oplus$ is fully faithful. Since it induces a bijection on the level of $\pi_0$, there is hence an isomorphism $A \oplus B \to A \oplus B'$ if and only if there is an isomorphism $B \to B'$. Thus, we can reduce to the case of $B = B'$ and automorphisms of objects. The group of automorphisms of an object $x$ in $V(\Acal)$ is $\pi_1(V(\Acal), x),$ which is abelian since $B(\Acal^{-1} \Acal)$ is an $H$-space. This in turn is a component of the homology 
\begin{displaymath}
    H_{1}(V(\Acal))=H_{1}(\Acal^{-1}\Acal)\simeq\prod_{x\in\pi_{0}(V(\Acal))}\pi_{1}(V(\Acal),x).
\end{displaymath}
Now, the monoid $\pi_{0}(\Acal)$ acts invertibly on $H_{1}(V(\Acal))$ (cf. \cite[p. 221]{Grayson}). Therefore, this action extends to an action of its group completion $\pi_0(V(\Acal))$. As a result, we deduce that there are induced bijections $\pi_1(V(\Acal), B) \to \pi_1(V(\Acal), A+B)$. 

The functor  $i: \Acal \to V(\Acal)$ is the composition of the natural functors $\Acal \to \Acal^{-1}\Acal$ and $\Acal^{-1}\Acal\to V(\Acal)$, which are both symmetric monoidal by construction. 

Finally, to prove the universal property, composing a symmetric monoidal functor $V(\Acal) \to \Pcal $ with $i: \Acal \to V(\Acal)$ provides a functor in one direction. Conversely, given a symmetric monoidal functor $\Acal \to \Pcal$, there is an induced symmetric monoidal functor $\Acal^{-1} \Acal \to \Pcal^{-1}\Pcal$, and hence $V(\Acal) \to V(\Pcal)$. The below diagram commutes
\begin{displaymath}
    \xymatrix{
      \Acal \ar[r] \ar[d] & \Pcal \ar[d] \\
      \Acal^{-1} \Acal \ar[d] \ar[r] &  \Pcal^{-1} \Pcal \ar[d]\\
       V(\Acal) \ar[r] & V(\Pcal). \\
    }
\end{displaymath}
The composition $\Pcal \to V(\Pcal)$ is an equivalence of categories, as follows from Lemma \ref{lemma:isoPicard}, the computation of the homology of $V(\Pcal)$ in \cite[p. 221]{Grayson}, and the fact that $\pi_{0}(\Pcal)$ is a group. The fact that the diagram induces the equivalence promised in the theorem is a diagram chase, which we leave to the reader. 

The second point of the theorem follows directly from the construction since the \linebreak  Grayson--Quillen completion is itself functorial. It is also a standard consequence of the universal property, as follows. Let $F\colon \Acal\to\Bcal$ be a symmetric monoidal functor between symmetric monoidal groupoids. The natural functor $\Bcal\to V(\Bcal)$ induces a functor of categories of symmetric monoidal functors
\begin{displaymath}
    \Hom(\Acal,\Bcal)\to\Hom(\Acal,V(\Bcal)).
\end{displaymath}
By the universal property of $V(\Acal)$, the rightmost $\Hom$ category is equivalent to \linebreak  $\Hom(V(\Acal),V(\Bcal))$. Hence, to $F$ there is associated a functor $V(F)$, and an isomorphism of functors $F\to F^{\prime}$ induces an isomorphism of functors $V(F)\to V(F^{\prime})$, compatibly with compositions of natural transformations. In a similar vein, it is proven that if $F$ and $G$ are two composable functors of symmetric monoidal groupoids, then there is an isomorphism $V(F\circ G)\simeq V(F)\circ V(G)$, which is unique up to unique isomorphism. This concludes the proof.
\end{proof}

\begin{remark}\label{rmk:thomason}
 The astute reader will be familiar with a well-known issue of universal property of the construction $\Acal^{-1}\Acal$, exposed by Thomason in \cite{ThomasonPhony}. This is related to $(A,B) \mapsto (B,A)$ failing to functorially provide an inverse on the level of $\Acal^{-1}\Acal$. However, it does induce a homotopy inverse on the $H$-space $B(\Acal^{-1} \Acal)$ by \cite[Proposition 5.3]{ThomasonSpectral}, which is reminiscent of the fact that $V(\Acal)$ is a Picard category. This in turn is a reflection of the fact that $B(\Acal^{-1}\Acal)$ naturally has the structure of an infinite loop space, which is a well-known construction going back at least to Thomason, see \cite[Section 4]{ThomasonHomCol}. 
\end{remark}

In the proof of Theorem \ref{thm:VA}, the effect of taking the classifying space and then the fundamental groupoid amounts to inverting all the morphisms of $\Acal^{-1}\Acal$. We now render this precise, and discuss an equivalent description of the Picardification, which is reminiscent of Sinh's treatment in \cite[Chapitre III]{Sinh-these}. This will later simplify some verifications in \textsection\ref{subsubsec:strict-rig-categories} and \textsection\ref{subsubsec:coherence-property}. Let $\Sigma$ denote the set of all the morphisms of $\Acal^{-1}\Acal$, and introduce the localization $\Acal^{-1}\Acal[\Sigma^{-1}]$ where all the morphisms become invertible \cite[Chapter I]{Gabriel-Zisman}.
\begin{lemma}\label{lemma:VA-tilde}
Les $\Acal$ be a symmetric monoidal category, which is also a groupoid. Then, the natural functor $\Acal\to V(\Acal)$ extends to an equivalence of symmetric monoidal categories \linebreak $\Acal^{-1}\Acal[\Sigma^{-1}]\to V(\Acal)$.
\end{lemma}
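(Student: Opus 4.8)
The plan is to reduce the statement to the classical identification of the fundamental groupoid of a classifying space with a localization. Write $\Ccal=\Acal^{-1}\Acal$, let $\Sigma$ be the class of all its morphisms, and let $Q\colon\Ccal\to\Ccal[\Sigma^{-1}]$ be the localization functor of \cite[Chapter I]{Gabriel-Zisman}, so that $\Ccal[\Sigma^{-1}]$ is a groupoid and $Q$ is initial among functors out of $\Ccal$ inverting every morphism. First I would equip $\Ccal[\Sigma^{-1}]$ with a symmetric monoidal structure making $Q$ strong symmetric monoidal: localization at all morphisms commutes with finite products, $(\Ccal^{\times k})[\Sigma^{-1}]\simeq(\Ccal[\Sigma^{-1}])^{\times k}$, so the bifunctor $\oplus$ on $\Ccal$ together with its associativity, commutativity and unit constraints and all coherences among them descends uniquely along $Q$ by its universal property. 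The symmetric monoidal functor $\Acal^{-1}\Acal\to V(\Acal)=\pi_{f}B(\Acal^{-1}\Acal)$ produced in the proof of Theorem \ref{thm:VA} lands in a groupoid, hence inverts every morphism, so it factors as $\Phi\circ Q$ for a functor $\Phi\colon\Ccal[\Sigma^{-1}]\to V(\Acal)$ that is unique up to unique isomorphism; being the descent of a symmetric monoidal functor along the strong monoidal localizations $Q$ and $Q\times Q$, this $\Phi$ inherits a canonical symmetric monoidal structure. Precomposing with $\Acal\to\Acal^{-1}\Acal$, whose composite with $Q$ is by definition the natural functor $\Acal\to\Acal^{-1}\Acal[\Sigma^{-1}]$, identifies the restriction of $\Phi$ with the functor $i\colon\Acal\to V(\Acal)$ of Theorem \ref{thm:VA}. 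Thus $\Phi$ is a symmetric monoidal functor extending $i$, and the remaining task is to show it is an equivalence.

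For this, I would invoke the classical fact that for any small category $\Ccal$ the natural comparison functor $\Ccal[\Sigma^{-1}]\to\pi_{f}B\Ccal$ is an equivalence — that is, the fundamental groupoid of a classifying space is the localization of the category at all of its morphisms, see \cite{Gabriel-Zisman}. Concretely, $\Phi$ is essentially surjective because every point of $B(\Acal^{-1}\Acal)$ lies in the path-component of an object of $\Acal^{-1}\Acal$; it preserves and reflects non-emptiness of $\Hom$-sets since both sides identify these with the connected components of $B(\Acal^{-1}\Acal)$; and on automorphism groups it is the isomorphism $\Aut_{\Ccal[\Sigma^{-1}]}(x)\xrightarrow{\ \sim\ }\pi_{1}(B(\Acal^{-1}\Acal),x)=\Aut_{V(\Acal)}(x)$, which is the substance of the Gabriel--Zisman computation. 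Being fully faithful and essentially surjective, $\Phi$ is an equivalence of symmetric monoidal categories; in particular $\Acal^{-1}\Acal[\Sigma^{-1}]$ is a commutative Picard category, being equivalent to $V(\Acal)$.

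The only delicate point, and the reason I would route the argument through the universal property of $Q$ rather than through topology, is the interaction of the classifying space functor with products: the comparison $B(\Ccal\times\Dcal)\to B\Ccal\times B\Dcal$ is merely a weak homotopy equivalence, so the multiplication on $B(\Acal^{-1}\Acal)$ exists a priori only up to coherent homotopy. This is exactly the subtlety that is dissolved upon passing to the fundamental groupoid, as in the proof of Theorem \ref{thm:VA} and in Thomason's work \cite{ThomasonSpectral}: $\pi_{f}$ sends products of spaces to products of groupoids and collapses all higher homotopy, leaving no obstruction. With the monoidal structure handled formally as above, the genuine mathematical input reduces to the single classical ingredient that $\pi_{f}B\Ccal\simeq\Ccal[\Sigma^{-1}]$.
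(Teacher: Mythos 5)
Your argument is correct, and matches the paper's, in the case where the translation functors of $\Acal$ are faithful: both you and the paper invoke the classical fact that $\Ccal[\Sigma^{-1}]\to\pi_{f}B\Ccal$ is an equivalence for any small category (the paper cites \cite[III, Corollary 1.2]{JG} where you cite Gabriel--Zisman). Your careful handling of the symmetric monoidal structure on the localization via its universal property is also sound.

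The gap is in the non-faithful case. You write $V(\Acal)=\pi_{f}B(\Acal^{-1}\Acal)$, but the proof of Theorem \ref{thm:VA} only establishes this identification under the running (WLOG) hypothesis that translation functors of $\Acal$ are faithful. When they are not, that proof first passes to the universal quotient $\widetilde{\Acal}$, where translations become faithful, and sets $V(\Acal)=V(\widetilde{\Acal})=\pi_{f}B(\widetilde{\Acal}^{-1}\widetilde{\Acal})$. The Quillen--Grayson theorem used to show that $\pi_{f}B(\Scal^{-1}\Scal)$ is a Picard category (specifically, that $\pi_{0}(\Scal)$ acts invertibly on $H_{1}(\Scal^{-1}\Scal)$) is stated under the faithfulness hypothesis, and the paper deliberately does not assert that $\pi_{f}B(\Acal^{-1}\Acal)$ satisfies the universal property of the Picardification when translations are not faithful. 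Your invocation of the ``classical fact'' therefore only proves $\Acal^{-1}\Acal[\Sigma^{-1}]\simeq\pi_{f}B(\Acal^{-1}\Acal)$, which is not yet what the lemma claims. To close the gap, you would still need the content of the second half of the paper's proof: a direct verification that the functor $j\colon\Acal^{-1}\Acal[\Sigma^{-1}]\to\widetilde{\Acal}^{-1}\widetilde{\Acal}[\widetilde{\Sigma}^{-1}]$ induced by $\Acal\to\widetilde{\Acal}$ is an equivalence, which the paper accomplishes by explicitly writing down an inverse functor (the subtle point being that a morphism in $\widetilde{\Acal}^{-1}\widetilde{\Acal}$ lifts to a morphism in $\Acal^{-1}\Acal$ that is well-defined only after localizing at $\Sigma$).
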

\begin{proof}
Assume first that the translation functors of $\Acal$ are faithful. The natural symmetric monoidal functor $\Acal^{-1}\Acal\to V(\Acal)$ extends to a symmetric monoidal functor $\Acal^{-1}\Acal[\Sigma^{-1}]$ since $V(\Acal)$ is a groupoid. This functor is an equivalence of categories, by \cite[III, Corollary 1.2]{JG}. 

In general, let $\widetilde{\Acal}$ be the quotient category of $\Acal$, where the translation functors become faithful. This was constructed at the beginning of the proof of Theorem \ref{thm:VA}. By definition, $V(\Acal)=V(\widetilde{\Acal} )$. The quotient morphism $\Acal\to\widetilde{\Acal}$ is a symmetric, strong monoidal functor. Hence, it extends to a natural symmetric monoidal functor $j\colon \Acal^{-1}\Acal[\Sigma^{-1}]\to\widetilde{\Acal}^{-1}\widetilde{\Acal}\ [\widetilde{\Sigma}^{-1}]$, where $\widetilde{\Sigma}$ is the set of morphisms of $\widetilde{\Acal}^{-1}\widetilde{\Acal}$. Thus, it is enough to see that $j$ is an equivalence of categories. To this end, we will construct an inverse functor. 

We construct a functor $\widetilde{\Acal}^{-1}\widetilde{\Acal}\to\Acal^{-1}\Acal[\Sigma^{-1}]$. On objects, we take the identity correspondence. Let $\widetilde{h}\colon (A,B)\to (C,D)$ be a morphism in $\widetilde{\Acal}^{-1}\widetilde{\Acal}$. This is the equivalence class of a triple $(X,\widetilde{f},\widetilde{g})$, determined by an object $X$ in $\Acal$ and morphisms $\widetilde{f}\colon A\oplus X\to C$ and $\widetilde{g}\colon B\oplus X\to D$ in $\widetilde{\Acal}$. In turn, $\widetilde{f}$ and $\widetilde{g}$ are given by equivalence classes of morphisms $f\colon A\oplus X\to C$, $g\colon B\oplus X\to D$ in $\Acal$. We take the morphism in $\Acal^{-1}\Acal[\Sigma^{-1}]$ induced by $(f,g)$, and denoted by $h\colon (A,B)\to (C,D)$. We need to show that $h$ depends only on $\widetilde{h}$, and not the chosen representatives. 

Let $(X^{\prime},\widetilde{f}^{\prime},\widetilde{g}^{\prime})$ be another representative of $\widetilde{h}$. From this, we construct $(f^{\prime},g^{\prime})$ similarly to $(f,g)$. We have to justify that $(f^{\prime},g^{\prime})$ also induces the morphism $h$ in the category $\Acal^{-1}\Acal[\Sigma^{-1}]$. Unraveling the definitions, we see that there exists an isomorphism $\sigma\colon X\to X^{\prime}$ in $\Acal$, an object $Y$, and commutative diagrams in $\Acal$
\begin{equation}\label{eq:another-commutative-diagram}
    \xymatrix{
        A\oplus X\oplus Y\ar[dd]_{\id_{A}\oplus \sigma\oplus \id_{Y}}\ar[rd]^{f\oplus \id_{Y}}   &       &       &B\oplus X\oplus Y \ar[dd]_{\id_{B}\oplus \sigma \oplus \id_{Y}}\ar[rd]^{g\oplus \id_{Y}}      &\\
                                        &C\oplus Y     &                &                         & D\oplus Y\\
        A\oplus X^{\prime}\oplus Y\ar[ru]_{f^{\prime}\oplus \id_{Y}}                           &       &       &B\oplus X^{\prime}\oplus Y.\ar[ru]_{g^{\prime}\oplus \id_{Y}}   &
    }
\end{equation}
We notice that the following diagram commutes in $\Acal^{-1}\Acal$:
\begin{equation}\label{eq:I-lack-inspiration-for-diagrams}
    \xymatrix{
        (A,B)\ar[r]^{h}\ar[d]   &(C,D)\ar[d]\\
        (A\oplus X\oplus Y,B\oplus X\oplus Y)\ar[r]_{\substack{\vspace{0.5cm}}{(f\oplus  \id_{Y},g\oplus \id_{Y})}}   &(C\oplus Y,D\oplus Y),
    }
\end{equation}
where the vertical arrows are the natural ones induced by the identity maps of $A\oplus X\oplus Y$, $B\oplus X\oplus Y$, etc. From \eqref{eq:another-commutative-diagram}, we build yet another commutative diagram
\begin{displaymath}
    \xymatrix{
                                        &(A\oplus X\oplus Y,B\oplus X\oplus Y)\ar[dd]\ar[dr]      &   \\
        (A,B)\ar[ru]\ar[rd]             &                               &(C\oplus Y,D\oplus Y)\\                                              &(A\oplus X^{\prime}\oplus Y,B\oplus X^{\prime}\oplus Y).\ar[ru]     &
    }
\end{displaymath}
Combining the last two diagrams, we deduce that the analog of \eqref{eq:I-lack-inspiration-for-diagrams} for $(X^{\prime}, f^{\prime},g^{\prime})$ also commutes in $\Acal^{-1}\Acal$. This implies that $(f^{\prime},g^{\prime})$ induces the morphism $h$ in $\Acal^{-1}\Acal[\Sigma^{-1}]$, as required.

It is clear that $\widetilde{\Acal}^{-1}\widetilde{\Acal}\to\Acal^{-1}\Acal[\Sigma^{-1}]$ thus defined is a functor, which necessarily extends to $\widetilde{\Acal}^{-1}\widetilde{\Acal}\ [\widetilde{\Sigma}^{-1}]\to\Acal^{-1}\Acal[\Sigma^{-1}]$. It follows directly from the construction that this is indeed an inverse functor to $j$. It also has an obvious structure of symmetric monoidal functor. This concludes the proof.
\end{proof}

\begin{remark}\label{rmk:thomason-strict}
It is possible, but somewhat tedious, to write the proof of Theorem \ref{thm:VA} directly in terms of $\Acal^{-1}\Acal[\Sigma^{-1}]$. In particular, it can be checked that $(A,B)\mapsto (B,A)$ defines a functorial inverse on $\Acal^{-1}\Acal[\Sigma^{-1}]$. See also Remark \ref{rmk:thomason}. The contraction isomorphism $(A,B)+(B,A)\simeq 0$ can be chosen to be induced by the natural morphism $(0,0)\to (A,B)\oplus (B,A)$, defined by the couple $(\id_{A\oplus B}, c_{A,B})$. Notice that the latter already exists in $\Acal^{-1}\Acal$, but it is in general not invertible in this category. This is the reason why we need to localize along $\Sigma$.

\end{remark}

In Section \ref{section:ring-categories}, we will encounter the problem of turning a rig category into a ring category. A rig category is a symmetric monoidal category with the further structure of a product, and the problem at hand consists in extending the product to the Picardification. The following proposition provides a tool for the extension of this and more general structures. 

\begin{proposition}\label{prop:product}
Let $\Acal_{1},\ldots,\Acal_{n},\Bcal$ be symmetric monoidal categories, which are moreover groupoids, and $F\colon \Acal_{1}\times\ldots\times\Acal_{n}\to\Bcal$ be a multimonoidal and multisymmetric functor. Then there is an extension of $F$ into a multimonoidal and multisymmetric functor
\begin{displaymath}
    \widetilde{F}\colon V(\Acal_{1})\times\ldots \times V(\Acal_{n})\to V(\Bcal).
\end{displaymath}
The construction is functorial with respect to natural transformations. 
\end{proposition}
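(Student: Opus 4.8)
The plan is to follow the proof of Corollary \ref{cor:rationalization} essentially verbatim, with the rationalization replaced by the Picardification of Theorem \ref{thm:VA} and its universal property \eqref{eq:equiv-cat-VA-def}. Replacing $\Bcal$ by $V(\Bcal)$ and $F$ by its composite with the canonical functor $\Bcal\to V(\Bcal)$, we may assume that $\Bcal$ is itself a commutative Picard category $\Pcal$ (recall from the proof of Theorem \ref{thm:VA} that $\Pcal\to V(\Pcal)$ is then an equivalence). Thus it suffices to prove: every multimonoidal and multisymmetric functor $F\colon\Acal_{1}\times\ldots\times\Acal_{n}\to\Pcal$ into a commutative Picard category $\Pcal$ extends, functorially in natural transformations, to a multimonoidal and multisymmetric functor $V(\Acal_{1})\times\ldots\times V(\Acal_{n})\to\Pcal$. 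We argue by induction on $n$. For $n=1$ this is precisely the universal property \eqref{eq:equiv-cat-VA-def} of $V(\Acal_{1})$ applied with the commutative Picard category $\Pcal$: the restriction functor $\Hom(V(\Acal_{1}),\Pcal)\to\Hom(\Acal_{1},\Pcal)$ is an equivalence, so $F$ admits an extension $\widetilde{F}$, unique up to unique isomorphism, and this assignment is functorial in natural transformations.

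For the inductive step, note that since every morphism of $\Pcal$ is invertible, the category $\Hom(\Acal_{n},\Pcal)$ of symmetric monoidal functors, with pointwise addition, is a symmetric monoidal groupoid; and since $\Pcal$ is a commutative Picard category, postcomposition with its inversion functor $x\mapsto -x$ (see \textsection\ref{subsub:PicardCat}) supplies pointwise inverses, so that $\Hom(\Acal_{n},\Pcal)$ is itself a commutative Picard category, and so is $\Hom(V(\Acal_{n}),\Pcal)$. Restriction along $\Acal_{n}\to V(\Acal_{n})$ is an equivalence of commutative Picard categories $\Hom(V(\Acal_{n}),\Pcal)\to\Hom(\Acal_{n},\Pcal)$ by Theorem \ref{thm:VA}; fix a quasi-inverse as a functor of commutative Picard categories. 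Currying the last variable turns $F$ into a functor $\Acal_{1}\times\ldots\times\Acal_{n-1}\to\Hom(\Acal_{n},\Pcal)$, which is still multimonoidal and multisymmetric in its $n-1$ arguments because all the structure isomorphisms of $\Hom(\Acal_{n},\Pcal)$ are computed pointwise; composing with the fixed quasi-inverse yields such a functor into $\Hom(V(\Acal_{n}),\Pcal)$. The inductive hypothesis extends it to $V(\Acal_{1})\times\ldots\times V(\Acal_{n-1})\to\Hom(V(\Acal_{n}),\Pcal)$, and uncurrying produces $\widetilde{F}\colon V(\Acal_{1})\times\ldots\times V(\Acal_{n})\to\Pcal$. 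This $\widetilde{F}$ is multimonoidal and multisymmetric in the first $n-1$ variables because the operations on $\Hom(V(\Acal_{n}),\Pcal)$ are pointwise, and in the last variable because, by construction, $\widetilde{F}(A_{1},\ldots,A_{n-1},-)$ is a symmetric monoidal functor $V(\Acal_{n})\to\Pcal$ for every tuple. Functoriality in natural transformations propagates from the base case through each step.

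The computations are routine, and the only points that need care are: (i) that $\Hom(\Acal_{n},\Pcal)$ is not merely a symmetric monoidal groupoid but a \emph{commutative Picard} category, and that the comparison $\Hom(V(\Acal_{n}),\Pcal)\simeq\Hom(\Acal_{n},\Pcal)$ is an equivalence of such categories --- this is exactly what legitimizes the induction; and (ii) the ``Fubini'' verification that currying a multimonoidal multisymmetric functor in one of its variables preserves multimonoidality and multisymmetry in the remaining ones. As with the analogous point in Remark \ref{rmk:order-rationalization}, the construction a priori depends on the order in which the variables are peeled off, but different orders produce canonically isomorphic extensions; since the proposition only asserts existence, this subtlety can be ignored here, and it is taken up again when this circle of ideas is applied to rig and ring categories.
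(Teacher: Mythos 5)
Your proof is correct and takes essentially the same route as the paper's: compose with $\Bcal\to V(\Bcal)$, reduce to a commutative Picard target, then induct by currying the last variable through the Hom-category and invoking the universal property of the Picardification, exactly as the paper does by analogy with Corollary~\ref{cor:rationalization}. You simply supply the details the paper leaves to the reader (in particular the verification that $\Hom(\Acal_{n},\Pcal)$ is a commutative Picard category and the pointwise ``Fubini'' check), and your closing remark about order-dependence correctly anticipates Remark~\ref{rem:orderdevelopproduct}.
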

\begin{proof}
First of all, we compose $F$ with the natural functor $\Bcal\to V(\Bcal)$. Then, the proof proceeds by induction and is analogous to the proof of Corollary \ref{cor:rationalization}, replacing Proposition \ref{prop:rationalization} with Theorem \ref{thm:VA}. We leave the details to the reader. 
\end{proof}
\begin{remark}\label{rem:orderdevelopproduct}
\begin{enumerate}
    \item Proposition \ref{prop:product} can also be stated as an equivalence of categories of multimonoidal and multisymmetric functors.
    \item\label{item:orderdevelopproduct-1} As in Remark \ref{rmk:order-rationalization}, we notice that in the proof of Proposition \ref{prop:product}, there is a choice of order of the Picard categories. Contrary to the rationalization, the Picardification construction depends on the order. Let us first discuss the relevant case of a  product functor \linebreak $\otimes\colon\Acal\times\Bcal\to\Ccal$. Then the reasoning of the proof implicitly chooses to first develop a product $(\sum_{i} A_i) \otimes (\sum_{j} B_j)$ on the left into an object $\sum_i \left(A_i \otimes(\sum_{j} B_j)\right)$, and later develop the right-hand side. Doing it in the opposite order determines another extension of the product. They are equal up to sign. We refer the reader to \cite[\textsection 4.11]{Deligne-determinant} for a related discussion. The general case is analogous and reduces to the case of a product functor by induction. Hence, the different choices of order yield extensions which differ at most by signs.
\end{enumerate}
\end{remark}

\section{Ring categories}\label{section:ring-categories}
In this section, we review and discuss the notions and properties of rig and ring categories, which are categorifications of corresponding structures in commutative algebra.\footnote{A rig is defined analogously to a ri\emph{n}g but without the formation of the \emph{n}egative of an element.} The Picardification theory of the previous section allows us to turn a rig category into a ring category. We also consider such structures endowed with the additional datum of a grading.

\subsection{Generalities}\label{subsec:rig-and-ring-categories}
The results of this subsection recall the basics of rig and ring categories. Our definitions are more restrictive than in other references, such as Laplaza \cite{Laplaza} or \linebreak  Baas--Dundas--Richter--Rognes \cite{BDRR}, but they are adapted to our needs in the ulterior theory of Chern categories. 

\subsubsection{Rig and ring categories}

A rig category is a category $\Acal$ endowed with two symmetric monoidal structures $\oplus$ and $\otimes$, called addition and product (or multiplication), respectively. One requires that there are left and right functorial distributivity isomorphisms\footnote{In some references, such as \cite{Laplaza}, the distributivity property is more generally given by monomorphisms.}: \linebreak $A\otimes (B\oplus C)\to (A\otimes B)\oplus (A\otimes C)$ and $(A\oplus B)\otimes C\to (A\otimes C)\oplus (B\otimes C)$. There are also left and right functorial absorption isomorphisms $ 0_{\Acal}\otimes A\to 0_{\Acal}$ and $A\otimes 0_{\Acal}\to 0_{\Acal}$, where $0_{\Acal}$ is the neutral object for addition. Finally, several coherence conditions must hold. These are discussed by Laplaza in \cite[\textsection 1]{Laplaza}, to which we refer to for the precise list of 24 axioms. The coherence conditions in particular guarantee that $\otimes\colon\Acal\times\Acal\to\Acal$ is bisymmetric and bimonoidal, with respect to $\oplus$. In this paper, rig categories whose addition and product are strictly symmetric will be called strict rig categories. 

Rig categories satisfy Laplaza's coherence theorem \cite[\textsection 7]{Laplaza}. It asserts that, under a regularity condition on objects, any diagram involving combinations of addition, product, symmetry, associativity (or its inverse), distributivity, contractions with neutral elements (or their inverses), and absorption (or its inverse), must commute. The regularity condition is studied in \cite[\textsection 3]{Laplaza}. Roughly, after forgetting parentheses, a regular object is a sum of different objects, which are themselves products of different objects. In practice, in this article, we will only encounter rig categories for which the coherence theorem is satisfied without the regularity condition. We will refer to these as being strongly coherent. A strongly coherent rig category is necessarily strict. A careful examination of the proof of \cite[Proposition 10]{Laplaza}, shows that a strict rig category is strongly coherent. In other words, being strict or strongly coherent, are equivalent notions. 

A ring category $\Rcal$ is a rig category $(\Rcal,\oplus,\otimes)$, such that $(\Rcal,\oplus)$ is moreover a Picard category. In particular, $\Rcal$ is a groupoid. We say that $\Rcal$ is strict (resp. strongly coherent) if it is so as a rig category.

A functor between rig (resp. ring) categories $F\colon\Acal\to\Bcal$ is a symmetric functor of symmetric monoidal categories for both monoidal structures $\oplus$ and $\otimes$, which is moreover compatible with the distributivity and absorption properties. For clarity, let us indicate what this means for the left absorption property. Consider the diagram
\begin{displaymath}
    \xymatrix{
        0_{\Bcal}\otimes F(A)\ar[d] \ar[rr]     &                            &0_{\Bcal}\ar[d]\\
        F(0_{\Acal})\otimes F(A)\ar[r]        &F(0_{\Acal}\otimes A)\ar[r]  &F(0_{\Acal}).
    }
\end{displaymath}
It is constructed using the monoidal structure of $F$ and the absorption isomorphisms of $\Acal$ and $\Bcal$. The required compatibility consists in the commutativity of the diagram.

\subsubsection{Graded rig and graded ring categories}
We will need graded counterparts of rig and ring categories. A graded rig (resp. ring) category is a rig (resp. ring) category $(\Acal, \oplus, \otimes)$, of the form $\Acal=\prod_{k\geq 0}\Acal_{k}$. Here, we suppose that $(\Acal_{k},\oplus)$ is a symmetric monoidal category (resp. commutative Picard category) with respect to addition and that the symmetric monoidal structure $\oplus$ on $\Acal$ is induced by componentwise addition. In particular, the zero object of $\Acal$ is of the form $0_{\Acal}=(0_{\Acal_{0}},0_{\Acal_{1}},\ldots)$. We also suppose given bimonoidal bisymmetric functors $\otimes\colon\Acal_{k}\times\Acal_{k'}\to\Acal_{k+k'}$, called graded products, which induce the product $\otimes$ on $\Acal$ as follows:
\begin{equation}\label{eq:graded-product}
    \begin{split}
        \Acal\times\Acal\simeq \prod_{\ell}\prod_{k=0}^{\ell}\Acal_{k}\times\Acal_{\ell-k}&\longrightarrow\Acal=\prod_{\ell}\Acal_{\ell}\\
        ((A_{j})_{j}, (B_{j})_{j})&\longmapsto \left(\sum_{k=0}^{\ell}A_{k}\otimes B_{\ell-k}\right)_{\ell}.
    \end{split}
\end{equation}
In this expression, we adopt the convention \eqref{eq:convention-sum} for the addition of several terms.\footnote{Thanks to Mac Lane's coherence theorem \cite{MacLane} for symmetric monoidal categories, any other convention would yield an equivalent notion of graded product.} The category $\Acal_{k}$ is called the graded piece of degree $k$. Objects of $\Acal_{k}$ are called objects of pure degree $k$. 

In a graded rig category, the defining axioms (associativity, symmetry, distributivity, neutral elements, absorption, coherence conditions) are required to be compatible with corresponding axioms limited to objects of pure degree. The proper formulation involves the associativity and symmetry isomorphisms for addition and Mac Lane's coherence theorems. For the sake of clarity, the next two paragraphs discuss some instances of these axioms. The other axioms admit an analogous treatment, whose details are left to the reader.

The graded products are supposed to be equipped with functorial symmetry isomorphisms $c_{j,k}\colon A_{j}\otimes B_{k}\to B_{k}\otimes A_{j}$, satisfying $c_{k,j}\circ c_{j,k}=1$. Combining with the associativity and symmetry isomorphisms for the addition on the graded pieces, we obtain natural functorial isomorphisms
\begin{equation}\label{eq:graded-sym-product}
    \sum_{k=0}^{\ell}A_{k}\otimes B_{\ell-k}\to\sum_{j=0}^{\ell}B_{j}\otimes A_{\ell-j}.
\end{equation}
We notice that the associativity and symmetry isomorphisms for the addition can be performed in any order, by Mac Lane's coherence theorems. Also, applying these first and then the symmetry for the graded producs, or inversely, yields the same result, by functoriality of the associativity and symmetry for the addition. It is then checked that the functorial isomorphisms \eqref{eq:graded-sym-product} induce a symmetry constraint for the product \eqref{eq:graded-product}.

The category $\Acal_{0}$ has the structure of a rig category, and in particular, has a unit object $1_{\Acal_{0}}$. This is a neutral object for the graded products: there are functorial isomorphisms $1_{\Acal_{0}}\otimes A_{k}\simeq A_{k}$ and $A_{k}\otimes 1_{\Acal_{0}}\simeq A_{k}$ in $\Acal_{k}$. We also have left absorption isomorphisms $0_{\Acal_{k}}\otimes A_{k'}\simeq 0_{\Acal_{k+k'}}$, and similarly for the right absorption isomorphisms. These are all supposed to be compatible with the symmetry constraint for the graded products. We see that $1_{\Acal}:=(1_{\Acal_{0}},0_{\Acal_{1}},0_{\Acal_{2}},\ldots )$ provides a unit object for the product on $\Acal$, and that the zero object $0_{\Acal}=(0_{\Acal_{0}},0_{\Acal_{1}},\ldots)$ is equipped with natural left and right absorption isomorphisms.

A graded rig (resp. ring) category whose product and addition are strictly symmetric will be called a graded strict rig (resp. ring) category. As for the defining axioms of a graded rig category, the property of being strict can be stated in terms of the graded pieces.  

A functor of graded rig categories $F\colon\Acal\to\Bcal$ is a functor of rig categories that is induced by functors $F_{k}\colon \Acal_{k}\to\Bcal_{k}$, which are symmetric monoidal with respect to addition and are compatible with the product structure. This means that there are natural transformations $F_{k}(A_{k})\otimes F_{k'}(A_{k'})\to F_{k+k'}(A_{k}\otimes A_{k'})$, which are compatible with associativity, symmetry, distributivity, absorption, etc. 

We conclude this subsection by commenting on some standard constructions related to rig categories. If $\Acal$ is a graded rig category, we have natural sections of the projection functors $\Acal\to\Acal_{k}$. These consist in completing an object by zero. Precisely, if $A_{k}$ is an object of $\Acal_{k}$ and we let $\widetilde{A}_{k}$ be $(0_{\Acal_{0}},\ldots, 0_{\Acal_{k-1}},A_{k},0_{\Acal_{k+1}},\ldots)$, then the assignment $A_{k}\mapsto\widetilde{A}_{k}$ can be naturally upgraded into a functor $i_{k}\colon\Acal_{k}\to\Acal$, which is a commutative functor of Picard categories with respect to addition. This construction is functorial, in the following sense. Let $F\colon\Acal\to\Bcal$ be a functor of graded rig categories, and assume that $\Bcal$ is a groupoid. Then, the diagram
\begin{displaymath}
    \xymatrix{
        \Acal_{k}\ar[r]^{i_{k}}\ar[d]_{F_{k}}     &\Acal\ar[d]^{F}\\
        \Bcal_{k}\ar[r]_{i_{k}}    &\Bcal
    }
\end{displaymath}
is naturally 2-commutative: there is a natural isomorphism of functors of commutative Picard categories $F\circ i_{k}\simeq i_{k}\circ F_{k}$, which is induced by the monoidal structure of $F_{k}$. In a similar vein, if $N\geq 0$ is an integer and $\Acal^{\leq N}=\prod_{k\leq N}\Acal_{k}$, we have a natural section of the projection $\Acal\to\Acal^{\leq N}$, which is functorial with respect to functors of graded rig categories.

If $\Rcal$ is a graded ring category, we denote by $1+\Rcal^{+}$ the full subcategory of $\Rcal$ whose objects have $0$-th component isomorphic to $1_{\Rcal_{0}}$. It has a natural structure of symmetric monoidal category with respect to the product induced by $\Rcal$. It can be easily checked that $1+\Rcal^{+}$ is actually a commutative Picard category. The construction is clearly functorial with respect to functors of graded ring categories.

\subsection{Picardification of rig categories}
We apply the Picardification machinery in \textsection \ref{subsubsec:picardification}, in order to turn a rig category into a ring category.
\subsubsection{Ungraded rig categories}
We first treat the case of rig categories without grading.
\begin{proposition}\label{prop:rig-to-ring-1}
Let $(\Acal,\oplus,\otimes)$ be a rig category, which is also a groupoid. Let $V(\Acal)$ be the Picardification of $\Acal$ with respect to addition. 
\begin{enumerate}
    \item $V(\Acal)$ has a structure of ring category, such that the natural functor $\Acal\to V(\Acal)$ is a functor of rig categories.
    \item If $\Rcal$ is a ring category, then the natural functor $\Acal\to V(\Acal)$ induces an equivalence of categories of functors, of rig categories, $\Hom(V(\Acal),\Rcal)\simeq\Hom (\Acal,\Rcal)$.
\end{enumerate}
\end{proposition}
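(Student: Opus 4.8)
The plan is to transport every piece of structure and every axiom from $\Acal$ to $V(\Acal)$ by restriction along $\Acal\to V(\Acal)$, exploiting that the relevant categories of multimonoidal, multisymmetric functors over $V(\Acal)$ and over $\Acal$ are equivalent by Proposition~\ref{prop:product} (in its equivalence-of-categories formulation, cf. Remark~\ref{rem:orderdevelopproduct}) and, in the one-variable case, by Theorem~\ref{thm:VA}. I would use freely that a commutative Picard category $\Pcal$ satisfies $\Pcal\simeq V(\Pcal)$, so that Proposition~\ref{prop:product} applies with any commutative Picard category as target, in particular with $(V(\Acal),\oplus)$ and with $(\Rcal,\oplus)$.

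For part (1), I would first note that the rig axioms make $\otimes\colon\Acal\times\Acal\to\Acal$ bimonoidal and bisymmetric with respect to $\oplus$, the structure maps being the (inverses of the) left and right distributivity isomorphisms. Composing with $\Acal\to V(\Acal)$ and applying Proposition~\ref{prop:product} with $n=2$ then produces a bimonoidal, bisymmetric functor $\widetilde{\otimes}\colon V(\Acal)\times V(\Acal)\to V(\Acal)$, which carries extended distributivity isomorphisms as part of its bimonoidal structure; the absorption isomorphisms extend likewise, since $0_{V(\Acal)}\,\widetilde{\otimes}\,{-}$ and ${-}\,\widetilde{\otimes}\,0_{V(\Acal)}$ are monoidal functors restricting over $\Acal$ to the constant functor at the zero object. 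I would take $1_{V(\Acal)}$ to be the image of $1_{\Acal}$. Next I would extend the associativity, symmetry and unit isomorphisms of $\otimes$: each is a natural isomorphism between multimonoidal, multisymmetric functors $\Acal^{k}\to V(\Acal)$ (with $k=3,2,1$), the compatibility with $\oplus$ guaranteed by Laplaza's coherence theorem being precisely what makes it a morphism in the relevant functor category, so it extends uniquely by Proposition~\ref{prop:product}; uniqueness of extensions lets me identify, e.g., the extension of $(A,B,C)\mapsto A\otimes(B\otimes C)$ with $(\xi,\eta,\zeta)\mapsto\xi\,\widetilde{\otimes}\,(\eta\,\widetilde{\otimes}\,\zeta)$, giving the associativity constraint for $\widetilde{\otimes}$, and similarly for the rest. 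Finally the pentagon, hexagon and triangle identities, and more generally Laplaza's $24$ coherence axioms, are equalities of natural transformations between multimonoidal, multisymmetric functors on a power of $\Acal$ valued in the commutative Picard category $(V(\Acal),\oplus)$; they hold over $\Acal$ and, restriction being faithful on these functor categories, they hold for the extended data. This makes $(V(\Acal),\oplus,\widetilde{\otimes})$ a rig category, hence a ring category since $(V(\Acal),\oplus)$ is a commutative Picard category, and $\Acal\to V(\Acal)$ a functor of rig categories by construction.

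For part (2), given a ring category $\Rcal$ and a functor of rig categories $G\colon\Acal\to\Rcal$, I would first extend the underlying $\oplus$-symmetric monoidal functor uniquely to $\widetilde{G}\colon V(\Acal)\to\Rcal$ by Theorem~\ref{thm:VA}, then supply its multiplicative structure by comparing the two bimonoidal, bisymmetric functors $V(\Acal)\times V(\Acal)\to\Rcal$, namely $(\xi,\eta)\mapsto\widetilde{G}(\xi\,\widetilde{\otimes}\,\eta)$ and $(\xi,\eta)\mapsto\widetilde{G}(\xi)\otimes_{\Rcal}\widetilde{G}(\eta)$: they agree over $\Acal\times\Acal$ through the multiplicative structure of $G$, so the bivariable universal property yields a unique natural transformation between them restricting to it. That $\widetilde{G}$ is then a rig functor — compatibility with associativity, symmetry, unit, distributivity and absorption — is again a collection of equalities of natural transformations between multimonoidal, multisymmetric functors $\Acal^{k}\to\Rcal$ that hold because $G$ is one, hence hold over $V(\Acal)$ by faithfulness. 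For morphisms: a natural transformation of rig functors $V(\Acal)\to\Rcal$ is a natural transformation of the underlying $\oplus$-monoidal functors compatible with the products; the former correspond bijectively to natural transformations of the restrictions by Theorem~\ref{thm:VA}, and the compatibility is transferred by faithfulness. Thus restriction $\Hom(V(\Acal),\Rcal)\to\Hom(\Acal,\Rcal)$ is fully faithful, and essentially surjective by the previous construction.

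I expect the main obstacle to be organizational rather than conceptual: correctly recognizing each structural isomorphism and each of Laplaza's coherence axioms as a morphism, respectively an equality of morphisms, in a category of multimonoidal, multisymmetric functors from a power of $\Acal$ into a commutative Picard category, so that Proposition~\ref{prop:product} genuinely applies — this is where the clauses of Laplaza's axioms expressing compatibility of the $\otimes$-constraints with $\oplus$ are essential. One genuine subtlety I would address explicitly: by Remark~\ref{rem:orderdevelopproduct} the extension of a product depends, up to sign, on a chosen order for developing products of sums, and in a ring category these signs are nontrivial automorphisms of $1_{V(\Acal)}$; I would fix such a convention at the outset and check that the extended associativity and symmetry isomorphisms absorb the discrepancy, so that the resulting ring structure is well defined up to unique equivalence. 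If the axiom-chasing proves unwieldy, a fallback is to work in the explicit model $\Acal^{-1}\Acal[\Sigma^{-1}]$ of Lemma~\ref{lemma:VA-tilde}, defining $(A,B)\,\widetilde{\otimes}\,(C,D)=(A\otimes C\oplus B\otimes D,\ A\otimes D\oplus B\otimes C)$ and localizing, which converts the coherence verifications into explicit diagram chases.
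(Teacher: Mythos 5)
Your proposal is correct and takes essentially the same route as the paper: extend $\otimes$ by Proposition~\ref{prop:product}, and verify each Laplaza coherence axiom (and, in part (2), each rig-functor axiom) by presenting it as an equality of natural transformations between multimonoidal, multisymmetric functors from a suitable power of $\Acal$ and using uniqueness of extensions — the paper works this out explicitly only for axiom (VI) and leaves the rest as an exercise, which you fill in systematically. (One minor slip in your final side remark: with the paper's convention $(A,B)\leftrightarrow B-A$, the explicit product on $\Acal^{-1}\Acal[\Sigma^{-1}]$ should read $(A,B)\,\widetilde\otimes\,(C,D)=(A\otimes D\oplus B\otimes C,\ A\otimes C\oplus B\otimes D)$, as in~\eqref{eq:explicit-product-A-A}; your coordinates are swapped.)
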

\begin{proof}
First of all, by Proposition \ref{prop:product} the symmetric monoidal structure $\otimes$ extends to $V(\Acal)$. For $V(\Acal)$ to be a ring category, we need to show that it satisfies Laplaza's coherence axioms. This can also be inferred from Proposition \ref{prop:product}. For concreteness, let us elaborate on the coherence diagram \cite[\textsection 1 (VI)]{Laplaza}, and leave the rest as an exercise. The diagram under consideration is 
\begin{equation}\label{eq:laplaza-vi}
    \xymatrix{
        A\otimes [B\otimes(C\oplus D)]\ar[r]\ar[d]   &A\otimes(B\otimes C\oplus B\otimes D)\ar[r]   &A\otimes (B\otimes C)\oplus A\otimes (B\otimes D)\ar[d]\\
        (A\otimes B)\otimes (C\oplus D)\ar[rr]    &          &(A\otimes B)\otimes C\oplus (A\otimes B)\otimes D.
    }
\end{equation}
We introduce the functor $F\colon\Acal\times\Acal\times\Acal^{\times 2}\to\Acal$, sending an object $(A,B,(C,D))$ to $A\otimes [B\otimes (C\oplus D)]$. If we endow $\Acal^{\times 2}$ with the monoidal structure given by the componentwise addition, then $F$ is a multimonoidal and multisymmetric functor. Similarly, we have a functor $G$ sending $(A,B,(C,D))$ to $(A\otimes B)\otimes C\oplus (A\otimes B)\otimes D$. The up-right composition corresponds to a natural transformation $\lambda\colon F\to G$. The left-bottom composition corresponds to another natural transformation $\mu$. The coherence condition requiring the commutativity of \eqref{eq:laplaza-vi} is recast as $\lambda=\mu$. Since $F$ and $G$ are multimonoidal and multisymmetric, from Proposition \ref{prop:product} we deduce that $F$ and $G$ extend to analogous functors for $V(\Acal)$, and that the natural transformations extend to $\widetilde{\lambda},\widetilde{\mu}$. Since $\lambda=\mu$, then we necessarily have $\widetilde{\lambda}=\widetilde{\mu}$, which is the coherence condition \cite[\textsection 1 (VI)]{Laplaza} for $V(\Acal)$.

The second claim of the statement is similarly deduced from Proposition \ref{prop:product}. We leave the details to the reader. 
\end{proof}

\subsubsection{Graded rig categories}  
In the case of a graded rig category, the Picardification is not sensitive to the grading, which is therefore lost. The following proposition fixes this issue.

\begin{proposition}\label{prop:rig-to-ring-2}
Let $\Acal=\prod_{k}\Acal_{k}$ be a graded rig category, which is also a groupoid. Define $V_{\mathrm{gr}}(\Acal)=\prod_{k}V(\Acal_{k})$, where $V(\Acal_{k})$ is the Picardification of $\Acal_{k}$ with respect to addition. 
\begin{enumerate}
    \item $V_{\mathrm{gr}}(\Acal)$ has a structure of graded ring category, such that the natural functor $\Acal\to V_{\mathrm{gr}}(\Acal)$ is a functor of graded rig categories.
    \item If $\Rcal$ is a graded ring category, then the natural functor $\Acal\to V_{\mathrm{gr}}(\Acal)$ induces an equivalence of categories of functors, of graded rig categories, $\Hom(V_{\mathrm{gr}}(\Acal),\Rcal)\simeq\Hom(\Acal,\Rcal)$.
\end{enumerate}
\end{proposition}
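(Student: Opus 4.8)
The plan is to run the argument of Proposition \ref{prop:rig-to-ring-1} \emph{one graded piece at a time}: apply Theorem \ref{thm:VA} to Picardify each $\Acal_{k}$, and apply Proposition \ref{prop:product} to extend the graded products $\otimes\colon\Acal_{k}\times\Acal_{k'}\to\Acal_{k+k'}$ and all the structural isomorphisms relating them. The reason one defines $V_{\mathrm{gr}}(\Acal)=\prod_{k}V(\Acal_{k})$ rather than just taking the ungraded Picardification of $\Acal$ as a rig category is precisely that group completion need not commute with infinite products, so passing to the Picardification degree by degree is what preserves the grading; this is why the graded statement is not a formal consequence of the ungraded one.

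\textbf{The graded ring structure.} By Theorem \ref{thm:VA}, each $V(\Acal_{k})$ is a commutative Picard category, so $V_{\mathrm{gr}}(\Acal)=\prod_{k}V(\Acal_{k})$ with componentwise addition is a commutative Picard category, in particular a groupoid. Each graded product $\otimes\colon\Acal_{k}\times\Acal_{k'}\to\Acal_{k+k'}$ is bimonoidal and bisymmetric with respect to addition, so by Proposition \ref{prop:product} it extends, uniquely up to unique natural isomorphism, to a bimonoidal bisymmetric functor $V(\Acal_{k})\times V(\Acal_{k'})\to V(\Acal_{k+k'})$; substituting these into the recipe \eqref{eq:graded-product} defines a symmetric monoidal product $\otimes$ on $V_{\mathrm{gr}}(\Acal)$, with no convergence issue since each component of the output is a finite sum of extended graded products. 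The degree-zero piece $\Acal_{0}$ is an ungraded rig category, so $V(\Acal_{0})$ is a ring category by Proposition \ref{prop:rig-to-ring-1}, and its unit object, completed by zeros, is a unit for $\otimes$ on $V_{\mathrm{gr}}(\Acal)$; the absorption isomorphisms $0_{\Acal_{k}}\otimes A_{k'}\simeq 0_{\Acal_{k+k'}}$ and the unit isomorphisms extend in the same way. The natural functor $\Acal\to V_{\mathrm{gr}}(\Acal)$ is the product of the natural functors $\Acal_{k}\to V(\Acal_{k})$; each is symmetric monoidal for addition, and the chosen extensions make them collectively compatible with the graded products, absorption and unit, hence a functor of graded rig categories. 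It remains to verify the coherence axioms. By the definition of a graded rig category, each such axiom asserts the commutativity of a diagram whose vertices are objects of pure degree and whose edges are built from associativity, symmetry, distributivity, unit and absorption isomorphisms for $\oplus$ and for the graded $\otimes$'s of finitely many $\Acal_{k}$. Exactly as with the diagram \eqref{eq:laplaza-vi} in the proof of Proposition \ref{prop:rig-to-ring-1}, such an axiom is an equality $\lambda=\mu$ of two natural transformations between multimonoidal multisymmetric functors assembled from finitely many of the $\Acal_{k}$ (equipping the relevant source factors with componentwise addition). By the functoriality clause of Proposition \ref{prop:product}, $\lambda$ and $\mu$ extend to natural transformations $\widetilde{\lambda},\widetilde{\mu}$ between the extended functors, and $\lambda=\mu$ forces $\widetilde{\lambda}=\widetilde{\mu}$, which is the corresponding axiom for $V_{\mathrm{gr}}(\Acal)$. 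This proves part (1).

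\textbf{The universal property.} For part (2), let $\Rcal=\prod_{k}\Rcal_{k}$ be a graded ring category and $F\colon\Acal\to\Rcal$ a functor of graded rig categories, given by symmetric monoidal functors $F_{k}\colon\Acal_{k}\to\Rcal_{k}$ together with product-compatibility transformations $F_{k}(A)\otimes F_{k'}(A')\to F_{k+k'}(A\otimes A')$ satisfying the evident coherences. Since each $\Rcal_{k}$ is a commutative Picard category, Theorem \ref{thm:VA} produces extensions $\widetilde{F}_{k}\colon V(\Acal_{k})\to\Rcal_{k}$, each unique up to unique isomorphism. The product-compatibility transformations are natural transformations between multimonoidal multisymmetric functors $\Acal_{k}\times\Acal_{k'}\to\Rcal_{k+k'}$, so Proposition \ref{prop:product} extends them uniquely, and the coherence conditions they satisfy, being again equalities of natural transformations, extend along with them. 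Hence $\widetilde{F}:=\prod_{k}\widetilde{F}_{k}$ is a functor of graded ring categories with $\widetilde{F}\circ i\simeq F$, giving essential surjectivity of $\Hom(V_{\mathrm{gr}}(\Acal),\Rcal)\to\Hom(\Acal,\Rcal)$. For full faithfulness, a natural transformation of functors of graded rig categories between $\widetilde{F}$ and $\widetilde{G}$ is a collection of natural transformations $\widetilde{F}_{k}\to\widetilde{G}_{k}$ compatible with the product data; by the fully faithful part of the equivalence in Theorem \ref{thm:VA} these correspond bijectively to natural transformations $F_{k}\to G_{k}$, and one checks the product-compatibility is carried across this bijection, again using uniqueness of extensions in Proposition \ref{prop:product}. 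This yields the asserted equivalence of Hom-categories, and as usual one observes $V_{\mathrm{gr}}(\Acal)$ is thereby determined up to canonical equivalence.

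\textbf{Main obstacle.} I do not expect a single hard step: the argument is the graded refinement of Proposition \ref{prop:rig-to-ring-1}, everything being driven by the functoriality of Picardification in multimonoidal multisymmetric data (Proposition \ref{prop:product}) and the uniqueness up to unique isomorphism of extensions (Theorem \ref{thm:VA}). The real work is organizational — systematically recognizing each of Laplaza's axioms, together with the extra graded-compatibility conditions, as an equality of natural transformations between functors assembled from finitely many graded pieces, and keeping careful track of the grading (working with $\prod_{k}V(\Acal_{k})$ and never collapsing it), which is exactly the subtlety that separates the graded case from the ungraded one.
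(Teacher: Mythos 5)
Your proof is correct and follows essentially the same route as the paper's: extend the graded products degree by degree via Proposition~\ref{prop:product}, then verify Laplaza's coherence conditions (and the graded compatibilities) by recognizing each one as an equality of natural transformations between multimonoidal, multisymmetric functors assembled from finitely many graded pieces, using the functoriality and uniqueness clauses of Proposition~\ref{prop:product} and Theorem~\ref{thm:VA}. The only substantive addition over the paper's own proof is your more explicit treatment of the universal property (which the paper leaves to the reader) and your opening remark explaining why $V_{\mathrm{gr}}(\Acal)\neq V(\Acal)$ in general; that heuristic is consistent with Remark~\ref{rem:picardification}~\eqref{item:rem-picardification-1}, though the paper pinpoints the discrepancy more precisely as strict injectivity on Hom-sets coming from the localization step of Lemma~\ref{lemma:VA-tilde}.
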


\begin{proof}
The argument is a slight variant of Proposition \ref{prop:rig-to-ring-1}. By Proposition \ref{prop:product}, the products $\Acal_{k}\times\Acal_{k'}\to\Acal_{k+k'}$ extend to products $V(\Acal_{k})\times V(\Acal_{k'})\to V(\Acal_{k+k'})$. We endow $V_{\mathrm{gr}}(\Acal)$, with the componentwise addition, and with the product defined by \eqref{eq:graded-product}. We claim that with these structures, $V_{\mathrm{gr}}(\Acal)$ is a graded ring category. We will only comment on the coherence conditions.

Consider a coherence condition on $\Acal$ expressed in terms of functors $F,G\colon\Acal^{\times N}\to\Acal$ and natural transformations $\mu,\lambda\colon F\to G$. We can decompose $F$ and $G$ according to the grading on the target $\Acal$. For every $k$, we obtain functors $F_{k},G_{k}\colon \Acal_{\ell_{1}}\times\ldots\times\Acal_{\ell_{r}}\to\Acal_{k}$ and natural transformations $\mu_{k},\lambda_{k}\colon F_{k}\to G_{k}$. The index $r$ and the degrees $\ell_{1},\ldots,\ell_{r}$ depend on $k$, but we don't specify the dependence to ease the notation.  

By Proposition \ref{prop:product}, the functors $F_{k}, G_{k}$, extend to analogous functors involving the Picardifications, and the natural transformations extend to natural transformations $\widetilde{\mu}_{k}$ and $\widetilde{\lambda}_{k}$. The point here is that $F_{k}, G_{k}$ can be rewritten as multisymmetric, multimonoidal functors of the form $\Acal_{m_{1}}^{\times N_{1}}\times\ldots\times\Acal_{m_{s}}^{\times N_{s}}\to\Acal_{k}$, where each $\Acal_{m_{i}}^{\times N_{i}}$ is endowed with the componentwise addition, and that there is a natural identification $V(\Acal_{m_{i}}^{\times N_{i}})=V(\Acal_{m_{i}})^{\times N_{i}}$. 

Finally, by the very definition of a graded rig category, the coherence condition $\mu=\lambda$ is equivalent to $\mu_{k}=\lambda_{k}$ for every $k$. This entails $\widetilde{\mu}_{k}=\widetilde{\lambda}_{k}$ for every $k$. Hence, the coherence conditions for $V_{\mathrm{gr}}(\Acal)$ hold.

The second claim of the statement is established along the same lines, and the verification is left to the reader.
\end{proof}

In order to distinguish the construction of the previous proposition from the usual Picardification, we introduce the following terminology.

\begin{definition}\label{def:gr-picardification}
With the hypotheses and notation as in Proposition \ref{prop:rig-to-ring-2}, we say that $V_{\mathrm{gr}}(\Acal)$ is the graded Picardification of $\Acal$.
\end{definition}

\begin{remark}\label{rem:picardification}
\begin{enumerate}
    \item\label{item:rem-picardification-1} If $\Acal$ is a graded rig groupoid, it is in general not true that $V(\Acal)=V_{\mathrm{gr}}(\Acal)$. By the universal property of the Picardification, there is however a natural functor\linebreak  $V(\Acal)\to V_{\mathrm{gr}}(\Acal)$. Taking into account the description of $V(\Acal)$ as $\Acal^{-1}\Acal[\Sigma^{-1}]$ provided by Lemma \ref{lemma:VA-tilde}, it can be seen that this functor is the identity on objects, while it is strictly injective on the $\Hom$ sets. 
    \item If $\Acal$ is a strict (graded) rig groupoid, then its Picardification is strict as well. In particular, it is strongly coherent. 
    \item\label{rem:picardification-3} The subject of ring completing a rig category has been considered by several authors. We mention the work of Baas--Dundas--Richter--Rognes \cite{BDRR}. From a rig category $\Acal$ whose translation functors are faithful, the authors produce functors of rig categories $\Acal\to Z\Acal$, $\Acal\to\ov{\Acal}$, where $\ov{\Acal}$ is a ring category and the induced maps on spectra are stable equivalences. We notice that the notion of ring category in \cite{BDRR} does actually not require $\ov{\Acal}$ to be a Picard category. To understand the difference between both constructions, we claim that if $\Acal$ is a groupoid, then our functor $\Acal\to V(\Acal)$ generally does not induce a stable equivalence on spectra. For this, we first recall that the spectrum of a Picard category such as $V(\Acal)$ is $[0,1]$-connected, meaning that its homotopy groups are concentrated in degrees 0 and 1. Next, by \cite[pp. 1657--1658]{ThomasonSpectral}, the functor $\Acal\to\Acal^{-1}\Acal$ induces a stable equivalence, while by construction $\Acal^{-1}\Acal\to V(\Acal)$ induces isomorphisms only on $\pi_{0}$ and $\pi_{1}$. We refer the reader to the appendix in \cite{Patel} for a thorough discussion of the homotopy equivalence between $[0,1]$-connected spectra and Picard categories.
\end{enumerate}
\end{remark}

\subsubsection{Picardification of strict rig categories}\label{subsubsec:strict-rig-categories}
Under a strictness assumption, the procedure of turning a rig category into a ring category, via Picardification, becomes rather concrete and simplifies some arguments. We will make this precise for rig categories without a grading, and leave the analogous graded case to the reader. 

Let $\Acal$ be a strict rig groupoid.\footnote{It would be enough to require that the addition be strictly symmetric.} Recall from Lemma \ref{lemma:VA-tilde} that the Picardification $V(\Acal)$ can be realized in terms of the Grayson--Quillen completion as $V(\Acal)=\Acal^{-1}\Acal[\Sigma^{-1}]$, where $\Sigma$ is the set of morphisms of $\Acal^{-1}\Acal$. With this presentation, the product $\otimes$ is extended to $V(\Acal)$ as follows. Given objects $(A,A')$, $(B,B')$, we set
\begin{equation}\label{eq:explicit-product-A-A}
    (A,A')\otimes (B,B')=(A\otimes B'\ \oplus\ A'\otimes B,\ A\otimes B\ \oplus\ A'\otimes B').
\end{equation}
For the product of morphisms, it suffices to consider $(A,A')\to (C,C')$ and $(B,B')\to (D,D')$ in $\Acal^{-1}\Acal$. We suppose these are represented by couples of morphisms $(f,f')$, with \linebreak $f\colon A\oplus X\to C$, $f'\colon A'\oplus X\to C'$, and $(g,g')$ with $g\colon B\oplus Y\to D$, $g'\colon B'\oplus Y\to D'$. We form the morphisms $f\otimes g'\ \oplus\ f'\otimes g$ and $f\otimes g\oplus f'\otimes g'$. After reordering terms by means of symmetry isomorphisms for addition, the first one is transformed into 
\begin{equation}\label{eq:product-morphism-1}
    A\otimes B'\ \oplus\ A'\otimes B\ \oplus\ Z\to C\otimes D'\ \oplus\ C'\otimes D,
\end{equation}
where $Z$ is given by
\begin{displaymath}
    Z=A\otimes Y\ \oplus\ X\otimes B'\ \oplus\ X\otimes Y\ \oplus\ A'\otimes Y\ \oplus\ X\otimes B\ \oplus\ X\otimes Y.
\end{displaymath}
A similar manipulation with $f\otimes g\ \oplus\ f'\otimes g'$ yields
\begin{equation}\label{eq:product-morphism-2}
    A\otimes B\ \oplus\ A'\otimes B'\ \oplus\ Z\to C\otimes D\ \oplus\ C'\otimes D'.
\end{equation}
The couple constituted by \eqref{eq:product-morphism-1}--\eqref{eq:product-morphism-2} defines a morphism $(A,A')\otimes (C,C')\to (B,B')\otimes (D,D')$. It is easy to see that it only depends on the equivalence class of $(f,f')$ and $(g,g')$ in the construction of $\Acal^{-1}\Acal$, recalled at the end of \textsection\ref{subsub:monoidalcategories}. It is tedious, but formal, to check that the construction indeed defines a bifunctorial product. The only non-trivial ingredients needed in the verification are Mac Lane's coherence theorems for addition and, most importantly, the fact that addition is strictly symmetric by assumption. That the procedure just described yields a well-behaved extension of the product in the strict case, was already observed by Thomason \cite[p. 572]{ThomasonPhony}.

Finally, for given objects $Z, W$ in $\Acal$, one can easily find natural isomorphisms \linebreak $(-Z)\otimes W\simeq -(Z\otimes W)\simeq Z\otimes (-W)$. For this, we recall that the inversion can be realized as $(A,B)\mapsto (B,A)$, see Remark \ref{rmk:thomason} and Remark \ref{rmk:thomason-strict}. Using the explicit formula for the product, the sought isomorphisms can be explicitly written down, componentwise, in terms of symmetries for the addition. There are unique such isomorphisms since $\Acal$ is strongly coherent. 

\subsection{Rationalization of ring categories}\label{subsec:rationalization-ring-categories}
Let $(\Rcal,\oplus,\otimes)$ be a ring category. Since $(\Rcal,\oplus)$ is a Picard category by definition, we can form the rationalization $\Rcal_{\QBbb}$. By Corollary \ref{cor:rationalization}, the product structure $\otimes$ extends to $\Rcal_{\QBbb}$. The argument of the proof of Proposition \ref{prop:rig-to-ring-1} can be adapted to show that $\Rcal_{\QBbb}$ has the structure of a ring category, and the natural functor $\Rcal\to\Rcal_{\QBbb}$ is a functor of ring categories. To adapt the argument of Proposition \ref{prop:rig-to-ring-1}, one needs to replace Proposition \ref{prop:product} with Corollary \ref{cor:rationalization}, and the rest of the reasoning is formally the same. Clearly, the construction is functorial with respect to functors of ring categories.

If $\Rcal=\prod_{k}\Rcal_{k}$ is a graded ring category, we define the graded rationalization $\Rcal_{\QBbb}:=\prod_{k}\Rcal_{k\ \QBbb}$. As in the previous paragraph, we can adapt the reasoning of Proposition \ref{prop:rig-to-ring-2}, and show that $\Rcal_{\QBbb}$ has the structure of a graded ring category, such that the natural functor $\Rcal\to\Rcal_{\QBbb}$ is a functor of graded ring categories. We notice that this construction does not in general coincide with the rationalization of $\Rcal$ as a commutative Picard category. Therefore, in the graded case, the notation $\Rcal_{\QBbb}$ is actually abusive. However, in the applications below, this should not be a source of confusion. As in the non-graded case, the construction just described is functorial with respect to functors of graded ring categories.

We refer to the procedures just presented as the rationalization of ring or graded ring categories. The rationalization of a ring category is universal, in a similar vein as in Proposition \ref{prop:rationalization}, among divisible ring categories. Divisibility here refers to the underlying additive Picard category. In the graded case, the notion of divisibility refers to the graded pieces taken individually. Then, a similar universal property of the rationalization holds. These universal properties will not be needed in the sequel. 

In the applications to the theory of Chern categories, we will combine the Picardification and the rationalization constructions. Precisely, we will first construct certain graded rig category $\Acal$. We will go on to perform the graded Picardification $V_{\mathrm{gr}}(\Acal)$. Finally, we will form the graded version of the rationalization $V_{\mathrm{gr}}(\Acal)_{\QBbb}$. 

\section{Multiplicative functors and virtual categories}\label{sec:multiplicative-functors-virtual-categories}
In this section, we review the notion of multiplicative functors, sometimes referred to as determinant functors, and of virtual categories. Given an exact category $\Ccal$, a multiplicative functor on $\Ccal$ is roughly a functor to a Picard category which behaves as the determinant of vector bundles with respect to exact sequences. The virtual category is a universal Picard category receiving a multiplicative functor from $\Ccal$. The formalism of virtual categories extends to more general Waldhausen categories. This leads us to the study of the virtual categories of schemes, attached to various categories of vector bundles and sheaves. 

\subsection{Multiplicative functors}\label{subsec:multfunctor}
We recall the notion of a multiplicative functor from an exact category to a monoidal groupoid. Most of the time, the latter will actually be a Picard category. We also stress that we are mainly interested in the commutative setting. 
\begin{definition}\label{def:multiplicative-functor}
 Let $\Ccal$ be an exact category and $(\Pcal,\otimes)$ a monoidal groupoid. We say that a functor $F: (\Ccal, \iso) \to \Pcal$ is multiplicative, if:
 \begin{enumerate}
     \item\label{item:mult-funct-1} for any exact sequence 
     \begin{displaymath}
         \Sigma\colon 0 \to A \to B \to C \to 0, 
     \end{displaymath}
     $F$ is equipped with an isomorphism
     \begin{displaymath}
         F(\Sigma): F(B) \to F(A) \otimes F(C),
     \end{displaymath}
     which is functorial with respect to isomorphisms of exact sequences. 
     \item\label{item:mult-funct-2} $F$ is equipped with the choice of an isomorphism 
     \begin{displaymath}
         F(0) \to 1_{\Pcal}.
     \end{displaymath}
     \item\label{item:mult-funct-3} if $\varphi\colon A \to B$ is an isomorphism, it gives rise to an exact sequence $\Sigma_{\varphi}: 0 \to A \to B \to C \to 0$ with $C=0$, and a sequence of isomorphisms
     \begin{displaymath}
         F(B) \overset{F(\Sigma_{\varphi})}{\longrightarrow} F(A) \otimes F(0) \to F(A) \otimes 1_{\Pcal} \to F(A).
     \end{displaymath}
     We require it to be equal to $F(\varphi)^{-1}.$ If we instead consider the induced exact sequence\linebreak $0 \to 0 \to A \to B \to 0$, we require that the analogously constructed morphism $F(A) \to F(B)$ is $F(\varphi).$
     \item\label{item:mult-funct-4} given an admissible filtration $A'' \subseteq A' \subseteq A$, the diagram 
     \begin{displaymath}
         \xymatrix{F(A) \ar[r] \ar[d] & F(A') \otimes F(A/A') \ar[d] \\ 
         F(A'') \otimes F(A/A'') \ar[r] & F(A'') \otimes F(A'/A'') \otimes F(A/A')}
     \end{displaymath}
     commutes. 
     
     \item\label{item:mult-funct-5} if moreover $\Pcal$ is symmetric, we say that $F$ is commutative\footnote{This terminology is preferred to symmetric since most of the time $\Pcal$ will be a commutative Picard category.} if for any objects $A,B$ of $\Ccal$, and given the exact sequences $\Sigma_{A,B}\colon 0 \to A \to A \oplus B \to B \to 0$ and $\Sigma_{B,A} \colon 0 \to B \to A \oplus B \to A \to 0 $, then 
    \begin{displaymath}
    \xymatrix{&  F(A\oplus B) \ar[dl]_{F(\Sigma_{A,B})} \ar[dr]^{F(\Sigma_{B,A})} & \\
    F(A) \otimes F(B) \ar[rr]_{c_{F(A), F(B)}} & & F(B) \otimes F(A)}
    \end{displaymath}
    commutes.
 \end{enumerate}
\end{definition}
The (commutative) multiplicative functors actually constitute the objects of a category, whose morphisms are natural transformations preserving the (commutative) multiplicative structure.

\begin{remark}\label{rem:detfunctor}
\begin{enumerate}
   \item\label{rem:detfunctor-1} In Definition \ref{def:multiplicative-functor} \eqref{item:mult-funct-4}, the quotients of the form $A/A'$ are only defined up to unique isomorphism. Nevertheless, the condition is meaningful since we have imposed compatibility with isomorphisms of exact sequences in \eqref{item:mult-funct-1}. Also, if we take brackets into account, the bottom and right arrows of the diagram actually have different targets. These are naturally identified via the associativity isomorphism, hence dispensing us from specifying the bracketing.
    \item \label{rem:detfunctor-0} For a natural transformation of (commutative) multiplicative functors $F$ and $G$ to preserve the multiplicative structure, it is enough to check the compatibility with exact sequences, as well as the compatibility of the trivializations $F(0) \simeq 1_{\Pcal}$ and $G(0)\simeq 1_{\Pcal}$.
\end{enumerate}
\end{remark}

\subsection{Virtual categories of exact and Waldhausen categories}
We briefly recapitulate known results about virtual categories of exact categories and Waldhausen categories. It parallels the Picardification construction of a symmetric monoidal category introduced in \textsection\ref{subsubsec:picardification}, in the sense that they both associate a universal Picard category with a category that enjoys a notion of a sum or a product structure. Since a symmetric monoidal category isn't necessarily an exact category, the two constructions are not obviously related.

The following summarizes the discussion in \cite[\textsection 4.4]{Deligne-determinant}. 
\begin{theorem}\label{thm:delignevirtual}
\begin{enumerate}
    \item Given an exact category $\Ccal,$ there is a commutative Picard category $V(\Ccal)$, and a commutative multiplicative functor $[-]\colon(\Ccal, \iso) \to V(\Ccal),$ such that for any commutative Picard category $\Pcal$,  $[-]$ induces an equivalence of the category of commutative multiplicative functors $(\Ccal, \iso) \to \Pcal$ and the category of commutative functors $V(\Ccal) \to \Pcal$ of commutative Picard categories.
    \item $V(\Ccal)$ can be realized as the fundamental groupoid of the Quillen construction $\Omega BQ\Ccal$. In particular, if $K_i(\Ccal)$ denotes the Quillen $K$-theory groups, we have $\pi_0(V(\Ccal)) = K_0(\Ccal)$ and $\pi_1(V(\Ccal)) = K_1(\Ccal)$.
\end{enumerate}
\end{theorem}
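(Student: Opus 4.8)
The plan is to construct $V(\Ccal)$ homotopically, via Quillen's $Q$-construction, and then read off both assertions at once. Concretely, set $V(\Ccal):=\pi_f(\Omega BQ\Ccal)$, the fundamental groupoid of the loop space of the classifying space $BQ\Ccal$. Recall from \cite{Quillen:K-theory-I} that $BQ\Ccal$ is the first space of the $K$-theory spectrum of $\Ccal$, so it is an infinite loop space with $\pi_{n+1}(BQ\Ccal)=K_n(\Ccal)$; hence $\Omega BQ\Ccal$ is again an infinite loop space, in particular a grouplike $E_2$-space. The fundamental groupoid of such a space carries a canonical commutative Picard structure: the $H$-space multiplication makes $\pi_f$ a symmetric monoidal groupoid, the $E_2$-structure furnishes the symmetry constraint, and grouplikeness makes the translation functors equivalences. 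This is the groupoid-level shadow of the argument used in the proof of Theorem \ref{thm:VA}, with $\Omega BQ\Ccal$ playing the role of $B(\Acal^{-1}\Acal)$. Reading off homotopy groups yields $\pi_0(V(\Ccal))=\pi_1(BQ\Ccal)=K_0(\Ccal)$ and $\pi_1(V(\Ccal))=\pi_2(BQ\Ccal)=K_1(\Ccal)$, which is the second assertion together with the structural part of the first.

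Next I would build the multiplicative functor $[-]\colon(\Ccal,\iso)\to V(\Ccal)$. An object $A$ of $\Ccal$ gives two canonical morphisms $0\to A$ in $Q\Ccal$: the injective one attached to the admissible mono $0\rightarrowtail A$, and the surjective one attached to the admissible epi $A\twoheadrightarrow 0$. The corresponding two edges in $BQ\Ccal$ from the basepoint to $A$ concatenate, one of them reversed, to a loop at $0$, hence to an object $[A]$ of $V(\Ccal)$; an isomorphism $\varphi\colon A\to B$ induces compatible homotopies and thus a morphism $[\varphi]$, functorially. For a short exact sequence $\Sigma\colon 0\to A\to B\to C\to 0$, the $2$-simplices of $BQ\Ccal$ witnessing the composites $0\rightarrowtail A\rightarrowtail B$ and $B\twoheadrightarrow C\twoheadrightarrow 0$ yield, after passing to $\pi_f$, a canonical isomorphism $[B]\simeq[A]\otimes[C]$; here one invokes Quillen's additivity theorem, which identifies the monoidal sum on $V(\Ccal)$ inherited from the $H$-space structure with taking $\oplus$ in $\Ccal$. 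The axioms \eqref{item:mult-funct-1}--\eqref{item:mult-funct-5} of Definition \ref{def:multiplicative-functor} then translate into commutativity of diagrams of $2$-simplices that hold up to a $3$-simplex in $BQ\Ccal$, hence hold on the nose in the fundamental groupoid; commutativity of $[-]$ is the symmetry of the $H$-space multiplication.

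For the universal property, fix a commutative Picard category $\Pcal$; the task is to show that $[-]^\ast$, from $\Hom(V(\Ccal),\Pcal)$ to the category of commutative multiplicative functors $(\Ccal,\iso)\to\Pcal$, is an equivalence. I would argue through the dictionary between commutative Picard groupoids and spectra with homotopy concentrated in degrees $0$ and $1$ (cf. Remark \ref{rem:picardification}, item \eqref{rem:picardification-3}, and \cite{Patel}): under it $V(\Ccal)$ corresponds to the $\tau_{[0,1]}$-truncation of the $K$-theory spectrum $\mathbf{K}(\Ccal)$, while a commutative multiplicative functor to $\Pcal$ amounts precisely to a compatible system of $1$- and $2$-simplices, in a Quillen-type delooping of $\Pcal$, matching the corresponding simplices of $BQ\Ccal$. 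Since that delooping has no homotopy above degree $2$, such a system is the same datum as a map out of $\tau_{\le 2}BQ\Ccal$, equivalently a map of the relevant truncated spectra, equivalently a functor $V(\Ccal)\to\Pcal$; natural transformations correspond to homotopies of such maps, which upgrades the statement to an equivalence of categories. The bookkeeping for this comparison is exactly what Deligne carries out in \cite[\textsection 4.4]{Deligne-determinant}.

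The step I expect to be the main obstacle is precisely this last comparison: one must verify that the generators and relations implicit in Definition \ref{def:multiplicative-functor} --- objects, isomorphisms, short exact sequences, the filtration compatibility \eqref{item:mult-funct-4}, and the symmetry \eqref{item:mult-funct-5} --- exhaust exactly the $2$-skeletal cells and relations of $BQ\Ccal$, neither more nor less. This is in essence Quillen's additivity theorem together with a low-dimensional analysis of the $Q$-construction, and it is the one genuinely substantial input; the identification of $\pi_1(V(\Ccal))$ with $K_1(\Ccal)$, in particular, rests on knowing the relations of this presentation. Everything else --- assembling the Picard structure, handling natural transformations, and passing between the fundamental groupoid and the truncated spectrum --- is formal, and reduces to the material on commutative Picard categories developed in Section \ref{sec:monoidal-picard}, notably Lemma \ref{lemma:isoPicard}.
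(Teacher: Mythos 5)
The paper gives no proof of this theorem: it is stated as a summary of the discussion in \cite[\textsection 4.4]{Deligne-determinant} and immediately closed with a \texttt{\char`\\qed}, so there is no internal argument to compare against. Your sketch is a faithful reconstruction of the cited argument, and the overall route — defining $V(\Ccal)=\pi_f(\Omega BQ\Ccal)$, reading $\pi_0,\pi_1$ off the $K$-groups, producing $[-]$ from the two canonical morphisms $0\to A$ in $Q\Ccal$, and deducing the universal property from the equivalence between commutative Picard groupoids and connective spectra truncated to degrees $[0,1]$ — is correct.

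Two small imprecisions, neither a genuine gap. First, you attribute the symmetry constraint on $\pi_f(\Omega BQ\Ccal)$ to the grouplike $E_2$-structure, but an $E_2$-structure only produces a braiding, not a symmetry; what furnishes the symmetry on the fundamental groupoid is the infinite loop space structure you invoke in the preceding sentence (anything $E_3$ or higher suffices at the level of $1$-groupoids), so the conclusion stands but the attribution is off. Second, the identification of the loop-concatenation $H$-structure on $\Omega BQ\Ccal$ with the one induced by $\oplus$ on $\Ccal$ is an Eckmann--Hilton-type compatibility, not Quillen's additivity theorem; the additivity theorem is a deeper input that enters when comparing $Q$- and $S_\bullet$-constructions, or in establishing the relations presentation of $K_1$, but it is not what identifies the two $H$-structures. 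Finally, you rightly flag the genuine work — that Definition \ref{def:multiplicative-functor} exactly encodes the $2$-skeletal data of a map out of $BQ\Ccal$ — and defer it to Deligne, which is what the paper itself does.
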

\qed

It was noted in \cite[\textsection 4.10]{Deligne-determinant}, extending an observation made in \cite{KnudsenMumford}, that for a commutative Picard category $\Pcal$, and any commutative multiplicative functor $(\Ccal, \iso) \to \Pcal$, there is a natural factorization 
\begin{equation}\label{eq:factorizationderived}
(\Ccal, \iso) \to (D^b(\Ccal), \hbox{iso}) \to \Pcal.
\end{equation} 
In \cite{Knudsen, Knudsen-err} multiplicative functors were called determinant functors. There this notion was extended to the category of bounded complexes on an exact category, with formally the same axioms. The main difference is that one more generally considers functors from the category of bounded complexes on $\Ccal$ together with the quasi-isomorphisms,
\begin{displaymath}
    (C^b(\Ccal), \hbox{quasi-iso}) \to \Pcal,
\end{displaymath} 
and requires functoriality with respect to quasi-isomorphisms of complexes. In\linebreak \cite[Theorem 2.3]{Knudsen} it is proven that the category of multiplicative functors $(C^b(\Ccal), \hbox{quasi-iso}) \to \Pcal$ is equivalent to that of multiplicative functors $(\Ccal, \iso) \to \Pcal$ as in \eqref{eq:factorizationderived}.  

In \cite{Muro:determinant} the notion of multiplicative functor was extended to general Waldhausen categories, which includes the case of $(C^b(\Ccal), \hbox{quasi-iso})$. \footnote{There seems to be missing a compatibility condition in the definition of their determinant functors, which should be analogous to our Definition \ref{def:multiplicative-functor} \eqref{item:mult-funct-2}--\eqref{item:mult-funct-3}.} Here we refer to the first section of \cite{Waldhausen} or the first section of \cite{ThomasonTrobaugh}  for a comprehensive introduction to Waldhausen categories and their relation to $K$-theory. We summarize the results of \cite{Muro:determinant} in the following:
\begin{theorem}\label{thm:Muro}
Let $\Wcal$ be a Waldhausen category. Then:
\begin{enumerate}
    \item\label{item:Muro-1} There exists a Picard category $V(\Wcal)$ with the analogous properties of Theorem \ref{thm:delignevirtual}, where the universal property is given with respect to multiplicative functors on Waldhausen categories as in \cite[Definition 1.2.3]{Muro:determinant}. 
    \item\label{item:Muro-2} If $F: \Wcal \to \Wcal'$ is an exact functor of Waldhausen categories, then there is a canonically induced functor $\widetilde{F}: V(\Wcal) \to V(\Wcal')$ of Picard categories. 
    \item\label{item:Muro-3} If $F$ as above induces a homotopy equivalence of the associated Waldhausen $K$-theory spectra, then $\widetilde{F}$ is an equivalence of Picard categories.
\end{enumerate}
\end{theorem}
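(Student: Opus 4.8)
The plan is to deduce all three items from the construction and results of Muro--Tonks--Witte \cite{Muro:determinant}, after reconciling their notion of determinant functor with Definition \ref{def:multiplicative-functor}. Recall that \cite{Muro:determinant} attaches to any Waldhausen category $\Wcal$ a Picard groupoid, together with a universal determinant functor, and identifies it, in parallel with Theorem \ref{thm:delignevirtual}, with the fundamental groupoid of a model of the Waldhausen $K$-theory spectrum; in particular one has $\pi_0(V(\Wcal))=K_0(\Wcal)$ and $\pi_1(V(\Wcal))=K_1(\Wcal)$, so that $V(-)$ only sees the $[0,1]$-truncation of $K$-theory.

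For \eqref{item:Muro-1}, the one point requiring genuine attention is the remark in the footnote: the axioms for determinant functors in \cite{Muro:determinant} appear to omit a normalization clause analogous to Definition \ref{def:multiplicative-functor} \eqref{item:mult-funct-2}--\eqref{item:mult-funct-3}. I would settle this by checking that imposing such a clause does not change the category of determinant functors up to equivalence: the trivialization $F(0)\to 1_{\Pcal}$ and the prescribed behaviour on isomorphisms $\varphi\colon A\to B$ are forced, up to a unique coherent choice, by functoriality with respect to (quasi-)isomorphisms together with the exact-sequence datum applied to the split admissible sequences $0\to 0\to A\xrightarrow{\varphi}B\to 0$ and $0\to A\xrightarrow{\varphi}B\to 0\to 0$. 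Hence the universal property of \cite{Muro:determinant} transfers verbatim to our determinant functors, yielding \eqref{item:Muro-1}. Specializing to $\Wcal=(C^{b}(\Ccal),\qiso)$ and invoking \cite[Theorem 2.3]{Knudsen} (equivalently \cite[\textsection 4.10]{Deligne-determinant}) recovers the compatibility with the exact-category virtual category of Theorem \ref{thm:delignevirtual}.

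Statement \eqref{item:Muro-2} is then purely formal: given an exact functor $F\colon\Wcal\to\Wcal'$, the composite of the universal determinant functor $\Wcal'\to V(\Wcal')$ with $F$ is a determinant functor $\Wcal\to V(\Wcal')$, which therefore factors, uniquely up to unique isomorphism, through a functor of Picard categories $\widetilde{F}\colon V(\Wcal)\to V(\Wcal')$. Compatibility of the assignment $F\mapsto\widetilde{F}$ with composition of exact functors and with natural transformations between them is a diagram chase with the universal property, in the same spirit as the last paragraph of the proof of Theorem \ref{thm:VA}. For \eqref{item:Muro-3}, a homotopy equivalence of $K$-theory spectra induces in particular isomorphisms $K_0(\Wcal)\to K_0(\Wcal')$ and $K_1(\Wcal)\to K_1(\Wcal')$, that is, isomorphisms on $\pi_0$ and $\pi_1$ of the virtual categories; by the first part of Lemma \ref{lemma:isoPicard}, a functor of Picard categories inducing isomorphisms on $\pi_0$ and $\pi_1$ is a weak equivalence, so $\widetilde{F}$ is an equivalence of Picard categories.

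The only genuinely non-formal step is the reconciliation in \eqref{item:Muro-1} between the determinant-functor axioms of \cite{Muro:determinant} and Definition \ref{def:multiplicative-functor}; once the two universal objects have been identified, items \eqref{item:Muro-2} and \eqref{item:Muro-3} are soft consequences of the universal property and of Lemma \ref{lemma:isoPicard}.
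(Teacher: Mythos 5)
Your proposal is correct and follows essentially the same route as the paper, which deduces all three items from the cited results of Muro--Tonks--Witte: existence from their universal determinant functor, functoriality from the universal property, and the equivalence statement from their identification of the virtual category with the $[0,1]$-truncation of the $K$-theory spectrum. Your explicit use of Lemma \ref{lemma:isoPicard} in item (3) is just the unpacked form of the paper's citation of \cite[Theorem 4.5.2]{Muro:determinant}, and your reconciliation of the normalization axioms is a reasonable elaboration of what the paper only flags in a footnote.
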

\begin{proof}
 The first point then follows from \cite[Theorem 4.3.4, Theorem 4.4.2, Corollary 4.4.5]{Muro:determinant}. We notice that in \emph{loc. cit.} the existence of the virtual category is part of the existence property of a universal determinant functor. The second point follows by the universal property. The third point follows from  \cite[Theorem 4.5.2]{Muro:determinant}, which gives the connection between the virtual category and the Waldhausen $K$-theory spectra $K(\Wcal).$ 
\end{proof}

In \cite{ThomasonTrobaugh}, for Waldhausen categories of complexes in an abelian category, also called complicial, the cofibrations are implicitly assumed to be degreewise split monomorphisms whose quotient also lies in the same Waldhausen category. By Theorem 1.9.2 and Theorem 1.11.7 in \emph{op.cit.}, the same categories but with cofibrations being the degreewise admissible monomorphisms have homotopy equivalent $K$-theory spectra, and by Theorem \ref{thm:Muro}, they have equivalent virtual categories. In the sequel, we always work with degreewise admissible monomorphisms as cofibrations.

From Knudsen's results in \cite{Knudsen, Knudsen-err} and the universal property of virtual categories, we conclude the following.
\begin{corollary}\label{cor:equivalence-virtual-C-Cb}
Let $\Ccal$ be an exact category, and consider $C^{b}(\Ccal)$ as a Waldhausen category, whose weak equivalences are the quasi-isomorphisms. Then there is a natural equivalence of commutative Picard categories:
\begin{equation}\label{eq:functor-VC-VCb}
    V(\Ccal) \to V(C^b(\Ccal)). 
\end{equation}
Furthermore, sending a bounded complex $E^{\bullet}$ to $\sum (-1)^{k}[E^{k}]$ in $V(\Ccal)$, induces a functor of commutative Picard categories $V(C^{b}(\Ccal))\to V(\Ccal)$, which is a quasi-inverse of \eqref{eq:functor-VC-VCb}.
\end{corollary}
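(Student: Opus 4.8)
The plan is to construct both functors by hand and verify directly that they are mutually quasi-inverse, relying only on the universal properties recorded in Theorem~\ref{thm:delignevirtual} and Theorem~\ref{thm:Muro} together with Knudsen's comparison \cite[Theorem~2.3]{Knudsen} recalled above. First I would define $\iota$: composing the inclusion of $\Ccal$ as the subcategory of complexes concentrated in degree zero with the canonical multiplicative functor $(C^{b}(\Ccal),\hbox{quasi-iso})\to V(C^{b}(\Ccal))$ gives a commutative multiplicative functor $(\Ccal,\iso)\to V(C^{b}(\Ccal))$, which by the universal property of $V(\Ccal)$ (Theorem~\ref{thm:delignevirtual}) factors, uniquely up to unique isomorphism, through a functor of commutative Picard categories $\iota\colon V(\Ccal)\to V(C^{b}(\Ccal))$.

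To see that $\iota$ is an equivalence I would use a Yoneda-type argument. Fix a commutative Picard category $\Pcal$; chaining the universal property of $V(C^{b}(\Ccal))$ (Theorem~\ref{thm:Muro}~\eqref{item:Muro-1}), Knudsen's equivalence between multiplicative functors on $(C^{b}(\Ccal),\hbox{quasi-iso})$ and those on $(\Ccal,\iso)$, and the universal property of $V(\Ccal)$, one obtains an equivalence $\Hom(V(C^{b}(\Ccal)),\Pcal)\to\Hom(V(\Ccal),\Pcal)$, and unwinding the identifications shows that it is precomposition with $\iota$. Applying this with $\Pcal=V(\Ccal)$ yields a functor $g\colon V(C^{b}(\Ccal))\to V(\Ccal)$ with $g\circ\iota\simeq\id$, and applying it with $\Pcal=V(C^{b}(\Ccal))$, where $\id$ and $\iota\circ g$ both pull back to $\iota$, yields $\iota\circ g\simeq\id$; hence $\iota$ is an equivalence. (Alternatively, $\iota$ induces on $\pi_{0}$ and $\pi_{1}$ the natural maps $K_{0}(\Ccal)\to K_{0}(C^{b}(\Ccal))$ and $K_{1}(\Ccal)\to K_{1}(C^{b}(\Ccal))$, so one may conclude via Lemma~\ref{lemma:isoPicard}.)

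For the quasi-inverse, I would extend the canonical multiplicative functor $[-]\colon(\Ccal,\iso)\to V(\Ccal)$ to a multiplicative functor $(C^{b}(\Ccal),\hbox{quasi-iso})\to V(\Ccal)$ via \cite[Theorem~2.3]{Knudsen}; recalling Knudsen's construction --- the value on a bounded complex is assembled from the values on its terms through the brutal-truncation short exact sequences, using the behaviour of multiplicative functors under shift and on acyclic complexes --- this extension is given on objects by $E^{\bullet}\mapsto\sum_{k}(-1)^{k}[E^{k}]$. By the universal property of $V(C^{b}(\Ccal))$ it factors through a functor of commutative Picard categories $\pi\colon V(C^{b}(\Ccal))\to V(\Ccal)$, given on objects by this alternating sum. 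Precomposing $\pi\circ\iota$ with $[-]$ recovers the multiplicative functor $A\mapsto[A]$ (only the term $k=0$ survives), so the uniqueness in the universal property of $V(\Ccal)$ gives $\pi\circ\iota\simeq\id_{V(\Ccal)}$; since $\iota$ is an equivalence, $\pi$ is then automatically a quasi-inverse of $\iota$, which is the assertion of the corollary.

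The main obstacle is not any isolated hard step but the bookkeeping in two places: verifying that the composite equivalence of $\Hom$-categories is genuinely precomposition with $\iota$, and verifying that Knudsen's extension of $[-]$ is the alternating tensor product on objects. Both become routine once the universal properties are unwound and Knudsen's construction is recalled, and everything else is a formal consequence of Theorem~\ref{thm:delignevirtual}, Theorem~\ref{thm:Muro}, and the cited results of Knudsen.
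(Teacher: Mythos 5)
Your proposal is correct and takes essentially the same route the paper intends: the paper leaves the proof as a direct consequence of Knudsen's equivalence of determinant-functor categories \cite[Theorem~2.3]{Knudsen} together with the universal properties of $V(\Ccal)$ and $V(C^{b}(\Ccal))$, and your argument is exactly that, spelled out. The Yoneda-style derivation of the quasi-inverse from the chained $\Hom$-category equivalences, and the alternative check via $\pi_{0}$, $\pi_{1}$ and Lemma~\ref{lemma:isoPicard}, are both fine ways to package the same content.
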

\qed

We recall that given a complicial biWaldhausen category\footnote{biWaldhausen means that the opposite category is also a Waldhausen category. To form the opposite category, one interchanges the role of fibrations and cofibrations.} $\Wcal$, the derived category $D(\Wcal)$ is obtained from that of $\Wcal$ by localizing the weak equivalences, hence formally inverting the weak equivalences. By assumption, the weak equivalences in a complicial biWaldhausen category contain the quasi-isomorphisms. We remark that inverting the quasi-isomorphisms automatically identifies homotopic maps, as explained in \cite[\textsection 1.9.6]{ThomasonTrobaugh}.

\begin{proposition}\label{prop:derivedvirtual}
 Consider an abelian category $\Acal$, and suppose that $\Wcal, \Wcal'$ are complicial biWaldhausen categories considered as subcategories of $(C(\Acal), \hbox{quasi-iso})$. We require that they are both closed under extensions and shifts. If $\Wcal$ is a subcategory of $\Wcal'$, inducing an equivalence of derived categories $D(\Wcal) \to D(\Wcal')$, then:

\begin{enumerate}
    \item There is an equivalence of virtual categories $V(\Wcal)\to V(\Wcal').$
    \item The universal functor $\Wcal \to V(\Wcal)$ factors through  $D(\Wcal)\to V(\Wcal)$.
\end{enumerate}
Furthermore, $D(\Wcal) \to D(\Wcal')$ is an equivalence of categories if for each object $W'$ of $\Wcal'$, there is a quasi-isomorphism $W \to W'$ with $W$ in $\Wcal$.

\end{proposition}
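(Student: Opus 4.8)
The plan is to deduce everything from the results of Muro--Tonks--Witte (Theorem \ref{thm:Muro}), Thomason--Trobaugh \cite{ThomasonTrobaugh}, and the remarks already made in the excerpt about inverting quasi-isomorphisms. The key observation is that both $V(-)$ and the derived category are invariants that factor through enough of the Waldhausen structure to be controlled by the $K$-theory spectrum or by the exact functors in play. First I would note that the inclusion $\Wcal \hookrightarrow \Wcal'$ is an exact functor of Waldhausen categories, so by Theorem \ref{thm:Muro} \eqref{item:Muro-2} it induces $\widetilde{F}\colon V(\Wcal)\to V(\Wcal')$. To upgrade this to an equivalence via Theorem \ref{thm:Muro} \eqref{item:Muro-3}, I need that $\Wcal\hookrightarrow\Wcal'$ induces a homotopy equivalence on Waldhausen $K$-theory spectra. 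This is exactly the content of the Waldhausen approximation theorem (or its derived-category reformulation as in Thomason--Trobaugh \cite[Theorem 1.9.8]{ThomasonTrobaugh}): since $\Wcal$ and $\Wcal'$ are complicial biWaldhausen, closed under extensions and shifts, and the induced functor on derived categories $D(\Wcal)\to D(\Wcal')$ is an equivalence, the hypotheses of the approximation theorem are met, and the $K$-theory spectra are homotopy equivalent. This gives part (1).

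For part (2), I would invoke the fact that in a complicial biWaldhausen category the universal multiplicative functor $\Wcal\to V(\Wcal)$ sends quasi-isomorphisms to isomorphisms — indeed, by the definition of multiplicative functors on Waldhausen categories as in \cite[Definition 1.2.3]{Muro:determinant}, weak equivalences (which by assumption include quasi-isomorphisms) are sent to isomorphisms in the target Picard category. Since $V(\Wcal)$ is a groupoid, the functor $\Wcal\to V(\Wcal)$ therefore factors uniquely through the localization of $\Wcal$ at its quasi-isomorphisms; by \cite[\textsection 1.9.6]{ThomasonTrobaugh} this localization, restricted to inverting quasi-isomorphisms, already identifies homotopic maps, and inverting all the weak equivalences produces $D(\Wcal)$. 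Hence $\Wcal\to V(\Wcal)$ factors as $\Wcal\to D(\Wcal)\to V(\Wcal)$, as claimed. One should remark that this is compatible with the analogous statement in Corollary \ref{cor:equivalence-virtual-C-Cb} and the factorization \eqref{eq:factorizationderived}.

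Finally, for the last assertion, the claim that $D(\Wcal)\to D(\Wcal')$ is an equivalence whenever every object $W'$ of $\Wcal'$ receives a quasi-isomorphism $W\to W'$ from an object of $\Wcal$ is again a direct application of the approximation theorem in its derived form \cite[Theorem 1.9.8]{ThomasonTrobaugh}: essential surjectivity on derived categories follows from the stated cofinality-type hypothesis, and full faithfulness is automatic from the complicial biWaldhausen structure together with closure under shifts and extensions, since morphisms in the derived category are computed by the usual calculus of fractions, which the approximation hypothesis makes compatible between $\Wcal$ and $\Wcal'$.

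The main obstacle I anticipate is purely a matter of citing the approximation theorem in precisely the right form: Thomason--Trobaugh state their approximation results for complicial biWaldhausen categories under hypotheses (closure under shifts, extensions, being a full subcategory of complexes in an abelian category, and the cofinality/quasi-isomorphism lifting condition) that must be matched carefully against the hypotheses of this proposition; verifying that the stated conditions on $\Wcal$ and $\Wcal'$ indeed put us in the scope of \cite[Theorem 1.9.8]{ThomasonTrobaugh} and of Waldhausen's approximation theorem is the only genuinely delicate point, and everything else is formal.
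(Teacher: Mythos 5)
Your argument is correct and matches the paper's own proof in all essentials: part (1) via Theorem \ref{thm:Muro}~\eqref{item:Muro-3} together with the approximation theorem \cite[Theorem 1.9.8]{ThomasonTrobaugh}, part (2) from the fact that weak equivalences are sent to isomorphisms in a Picard category, and the final assertion from the approximation hypothesis. One small fix: for the last assertion you cite \cite[Theorem 1.9.8]{ThomasonTrobaugh} ``in its derived form,'' but that theorem is the implication from a derived equivalence to a $K$-theory equivalence — the criterion you actually need, namely when $D(\Wcal)\to D(\Wcal')$ is an equivalence, is the discussion in \cite[\textsection 1.9.7]{ThomasonTrobaugh}.
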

\begin{proof}
 The first point follows from Theorem \ref{thm:Muro} \eqref{item:Muro-3} combined with \cite[Theorem 1.9.8 \& Example 1.3.6]{ThomasonTrobaugh}. The second point is trivially true since weak equivalences are sent to isomorphisms in the virtual category. The last part of the proposition follows from the discussion in \cite[\textsection 1.9.7]{ThomasonTrobaugh}.

\end{proof}

We conclude with an analog of Proposition \ref{prop:product} for virtual categories of exact categories, also see Remark \ref{rem:orderdevelopproduct}. The argument goes along the same lines as in \emph{loc. cit.}, and we don't provide the details.
\begin{proposition}\label{prop:product-virtual}
Let $\Ccal_{1},\ldots,\Ccal_{n},\Ccal$ be exact categories, and $F\colon \Ccal_{1}\times\ldots\times\Ccal_{n}\to\Ccal$ a multiexact functor. Then there is an extension of $F$ into a functor, which is multiplicative in every entry,
\begin{displaymath}
    \widetilde{F}\colon V(\Ccal_{1})\times\ldots \times V(\Ccal_{n})\to V(\Ccal).
\end{displaymath}
The construction is functorial with respect to natural transformations.
\end{proposition}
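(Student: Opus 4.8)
The plan is to imitate the proof of Proposition \ref{prop:product}, replacing the Picardification and Theorem \ref{thm:VA} by the virtual category of an exact category and its universal property from Theorem \ref{thm:delignevirtual}. First I would compose $F$ with the universal commutative multiplicative functor $[-]\colon(\Ccal,\iso)\to V(\Ccal)$, obtaining a functor $G\colon\Ccal_{1}\times\ldots\times\Ccal_{n}\to V(\Ccal)$; since $F$ is exact, hence additive, in each variable, $G$ is commutative multiplicative in each variable and takes values in the commutative Picard category $V(\Ccal)$. It therefore suffices to prove, by induction on $m$, the slightly more general statement that for exact categories $\Ccal_{1},\ldots,\Ccal_{m}$ and \emph{any} commutative Picard category $\Pcal$, a functor $H\colon\Ccal_{1}\times\ldots\times\Ccal_{m}\to\Pcal$ that is commutative multiplicative in each variable extends, functorially with respect to natural transformations, to a functor $\widetilde{H}\colon V(\Ccal_{1})\times\ldots\times V(\Ccal_{m})\to\Pcal$ which is a functor of commutative Picard categories in each variable.

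For $m=1$ this is exactly the universal property of Theorem \ref{thm:delignevirtual}: composition with $[-]\colon(\Ccal_{1},\iso)\to V(\Ccal_{1})$ is an equivalence between the category of commutative multiplicative functors $(\Ccal_{1},\iso)\to\Pcal$ and the category of functors of commutative Picard categories $V(\Ccal_{1})\to\Pcal$, and $\widetilde{H}$ is the image of $H$, unique up to unique isomorphism. For the inductive step I would curry out the last variable, so that $H$ is regarded as a functor
\begin{displaymath}
    \Ccal_{1}\times\ldots\times\Ccal_{m-1}\longrightarrow\Hom\big((\Ccal_{m},\iso),\Pcal\big),
\end{displaymath}
where the target is the category of commutative multiplicative functors $(\Ccal_{m},\iso)\to\Pcal$; this target is a commutative Picard category under the pointwise operations of $\Pcal$, and by the $m=1$ case it is equivalent, as a commutative Picard category, to $\Hom(V(\Ccal_{m}),\Pcal)$ via precomposition with $[-]$. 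One then checks that the curried functor is commutative multiplicative in each of its $m-1$ variables, and applies the induction hypothesis with the commutative Picard category $\Hom(V(\Ccal_{m}),\Pcal)$ as target; uncurrying the resulting extension $V(\Ccal_{1})\times\ldots\times V(\Ccal_{m-1})\to\Hom(V(\Ccal_{m}),\Pcal)$ gives $\widetilde{H}$. Functoriality in natural transformations is inherited from that of each invocation of Theorem \ref{thm:delignevirtual}, exactly as in Corollary \ref{cor:rationalization} and Proposition \ref{prop:product}. Taking $m=n$, $\Pcal=V(\Ccal)$ and $H=G$ yields $\widetilde{F}$.

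I expect the only real work to be the bookkeeping attached to currying: verifying that passing a functor that is commutative multiplicative in all $m$ variables to the functor of its first $m-1$ variables valued in commutative multiplicative functors of the last variable is well defined and again commutative multiplicative in each variable --- the core point being that the structure isomorphism of $H$ attached to an exact sequence in the $i$-th variable ($i<m$) is a natural transformation of commutative multiplicative functors in the $m$-th variable, which reduces to the compatibility axioms of Definition \ref{def:multiplicative-functor} relating distinct variables --- and that this is compatible with the equivalence $\Hom((\Ccal_{m},\iso),\Pcal)\simeq\Hom(V(\Ccal_{m}),\Pcal)$ and the pointwise Picard structures. This is formal but lengthy, which is why the text only sketches it. As in Remark \ref{rem:orderdevelopproduct}, the order in which variables are curried out affects $\widetilde{F}$ only up to signs, so no canonical extension is asserted; this causes no trouble in the sequel.
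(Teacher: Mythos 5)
Your proposal is correct and follows essentially the same route as the paper, which proves this by declaring ``the argument goes along the same lines as Proposition~\ref{prop:product}'' and leaving the details out; the proof of Proposition~\ref{prop:product} in turn mirrors that of Corollary~\ref{cor:rationalization}, where one curries out the last variable, observes that the Hom-category of (commutative multiplicative) functors into a commutative Picard category is itself a commutative Picard category, applies the one-variable universal property, and inducts. Your additional remarks on the currying bookkeeping and on the sign ambiguity coming from the choice of order are exactly the points the paper delegates to Remark~\ref{rem:orderdevelopproduct}.
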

\qed

\begin{corollary}\label{cor:product-virtual}
Let $\Ccal$ be an exact category, with addition law $\oplus$. Suppose it is further endowed with a biexact functor $\otimes\colon\Ccal\times\Ccal\to\Ccal$, such that $(\Ccal,\oplus,\otimes)$ is a rig category. Then, $V(\Ccal)$ inherits from $\Ccal$ a structure of ring category.
\end{corollary}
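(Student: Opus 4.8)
The plan is to mimic the proof of Proposition \ref{prop:rig-to-ring-1} essentially verbatim, with the Picardification of a symmetric monoidal groupoid replaced by the virtual category of an exact category, and with Proposition \ref{prop:product} replaced by its counterpart for virtual categories, Proposition \ref{prop:product-virtual}. First recall from Theorem \ref{thm:delignevirtual} that $(V(\Ccal),\oplus)$ is already a commutative Picard category, hence in particular a groupoid; here $\oplus$ is the canonical additive structure, which coincides with the extension to $V(\Ccal)$ of the biexact direct-sum functor $\oplus\colon\Ccal\times\Ccal\to\Ccal$ obtained from Proposition \ref{prop:product-virtual} (both are determined by $([A],[B])\mapsto[A\oplus B]$ together with the multiplicativity isomorphisms). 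The biexact functor $\otimes\colon\Ccal\times\Ccal\to\Ccal$ likewise extends, by Proposition \ref{prop:product-virtual}, to a functor $\widetilde\otimes\colon V(\Ccal)\times V(\Ccal)\to V(\Ccal)$ which is multiplicative in each of its two entries, hence bimonoidal and bisymmetric with respect to $\oplus$; this is the sought product on $V(\Ccal)$. In the same way, the distributivity, absorption and unit isomorphisms of $\Ccal$ are natural transformations between pairs of multiexact functors $\Ccal^{\times N}\to\Ccal$ — allowing auxiliary factors of the form $\Ccal^{\times k}$ equipped with the componentwise exact structure, exactly as in the treatment of the slot ``$C\oplus D$'' in Proposition \ref{prop:rig-to-ring-1} — and, using the identification $V(\Ccal^{\times k})\simeq V(\Ccal)^{\times k}$ together with the functoriality of Proposition \ref{prop:product-virtual} with respect to natural transformations, they extend (uniquely) to natural transformations between the corresponding extended functors on $V(\Ccal)$. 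This equips $(V(\Ccal),\oplus,\widetilde\otimes)$ with all the structure isomorphisms of a rig category.

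It then remains to check that Laplaza's coherence axioms hold for $(V(\Ccal),\oplus,\widetilde\otimes)$. This proceeds exactly as the treatment of axiom \cite[\textsection 1 (VI)]{Laplaza} in the proof of Proposition \ref{prop:rig-to-ring-1}: each such axiom asserts that two natural transformations $\lambda,\mu\colon F\to G$ coincide, where $F,G\colon\Ccal^{\times N}\to\Ccal$ are multiexact functors built out of $\oplus$, $\otimes$, and the structure isomorphisms just discussed. Since $\Ccal$ is a rig category we have $\lambda=\mu$; since $F$, $G$, $\lambda$, $\mu$ all extend to the virtual categories by Proposition \ref{prop:product-virtual}, and — by the uniqueness clause there — these extensions are compatible with composition of multiexact functors, the extensions satisfy $\widetilde\lambda=\widetilde\mu$, which is precisely the corresponding axiom for $V(\Ccal)$. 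Carrying this out for the remaining axioms is routine and will not be written out. Combining this with the fact that $(V(\Ccal),\oplus)$ is a Picard category, we conclude that $(V(\Ccal),\oplus,\widetilde\otimes)$ is a ring category, and the universal multiplicative functor $(\Ccal,\iso)\to V(\Ccal)$ is by construction a functor of rig categories.

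The main point requiring care is organizational rather than conceptual: the bookkeeping needed to phrase each of Laplaza's axioms as an equality of extendable natural transformations between multiexact functors, and, relatedly, to confirm the componentwise identification $V(\Ccal^{\times k})\simeq V(\Ccal)^{\times k}$ and the coincidence of the extended coproduct with the canonical additive Picard structure of $V(\Ccal)$. None of these present a genuine obstacle — they are the exact-category analogues of the facts already used in Proposition \ref{prop:rig-to-ring-1}, and the only inputs beyond Proposition \ref{prop:product-virtual} are the description of $V(\Ccal)$ from Theorem \ref{thm:delignevirtual} and the compatibility of virtual categories with finite products of exact categories.
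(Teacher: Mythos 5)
Your proposal is correct and follows exactly the route the paper takes: the paper's own proof is the one-line remark that the argument of Proposition \ref{prop:rig-to-ring-1} carries over verbatim upon replacing Proposition \ref{prop:product} by Proposition \ref{prop:product-virtual}, which is precisely what you carry out (in somewhat more detail). The extra points you flag — the identification $V(\Ccal^{\times k})\simeq V(\Ccal)^{\times k}$ and the coincidence of the extended coproduct with the canonical additive Picard structure — are exactly the routine verifications the paper leaves implicit.
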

\begin{proof}
The proof is formally the same as that of Proposition \ref{prop:rig-to-ring-1}, replacing Proposition \ref{prop:product} with Proposition \ref{prop:product-virtual}.
\end{proof}

\subsection{Virtual categories of schemes}\label{sec:virtual-schemes}
We specialize the previous formalism of virtual categories to schemes. For this, we recall that a perfect complex in the derived category of $\Ocal_X$-modules, $D(\Ocal_X)$, is a complex that is locally quasi-isomorphic to a  bounded complex of vector bundles. Equivalently, a perfect complex is a complex that is pseudo-coherent, \emph{i.e.} locally quasi-isomorphic to a bounded above complex of vector bundles, and locally of finite Tor-amplitude. 

\begin{definition}\label{def:virtual-category-perfect}
Let $X$ be a scheme. 
\begin{enumerate}
    \item The virtual category of $X$ is defined as $V(X) = V(\Vect_X)$, where $\Vect_{X}$ is the exact category of vector bundles on $X$. 

    \item The virtual category of perfect complexes is defined as $V(\Pcal_X),$ where $\Pcal_X$ denotes the complicial biWaldhausen category of perfect complexes of globally bounded Tor-amplitude, in the abelian category of all $\Ocal_X$-modules. Here, we take the quasi-isomorphisms as the weak equivalences in $\Pcal_{X}$.
\end{enumerate}
\end{definition}

The above definition using globally bounded Tor-amplitude is to conform with the definition in \cite[Definition 3.1]{ThomasonTrobaugh}. We notice that if $X$ is quasi-compact, then a perfect complex is automatically of globally bounded Tor-dimension. The complicial biWaldhausen category of perfect complexes on $X$ will simply be denoted $\mathrm{Perf}_{X}$. The latter will only appear in Definition \ref{def:detcoh}, but will otherwise not be needed in the rest of the article. 

In the category $\Pcal_X$, the cofibration sequences, \emph{i.e.} the analogs of exact sequences, can be taken to be sequences of complexes of $\Ocal_X$-modules, which are degreewise exact sequences of $\Ocal_X$-modules.

\begin{definition}\label{def:divisorial}
    A scheme $X$ is called divisorial if $X$ is quasi-compact and quasi-separated and admits an ample family of line bundles \cite[Expos\'e II, D\'efinition 2.2.4]{SGA6}. 
\end{definition}

We record, for future reference, the following comparison of the two virtual categories.
\begin{lemma}\label{lemma:divisorial}
Suppose that $X$ is divisorial. Then the natural inclusion $\Vect_X \to \Pcal_X$ induces an equivalence of categories:
\begin{displaymath}
    V(X) \to V(\Pcal_X).
\end{displaymath}
\end{lemma}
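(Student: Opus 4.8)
The statement is a comparison of two virtual categories, so by Theorem \ref{thm:Muro}\,\eqref{item:Muro-3} it suffices to show that the inclusion $\Vect_X \hookrightarrow \Pcal_X$ induces a homotopy equivalence on $K$-theory spectra; equivalently, by Proposition \ref{prop:derivedvirtual}, it suffices to realize both sides as virtual categories of complicial biWaldhausen subcategories of $(C(\Ocal_X\text{-mod}),\text{quasi-iso})$ with equivalent derived categories, together with the existence of suitable quasi-isomorphic resolutions. The plan is to factor the comparison through the bounded derived category of vector bundles: $V(X) = V(\Vect_X) \simeq V(C^b(\Vect_X))$ by Corollary \ref{cor:equivalence-virtual-C-Cb}, and then compare $C^b(\Vect_X)$ with $\Pcal_X$ inside the ambient category of complexes of $\Ocal_X$-modules.

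First I would set up the ambient abelian category $\Acal$ of all $\Ocal_X$-modules and view $C^b(\Vect_X)$ as the complicial biWaldhausen subcategory of $(C(\Acal),\text{quasi-iso})$ consisting of bounded complexes of vector bundles, with degreewise admissible monomorphisms as cofibrations (as stipulated in the conventions after Theorem \ref{thm:Muro}); this is closed under extensions and shifts. The subcategory $\Pcal_X$ of perfect complexes of globally bounded Tor-amplitude is likewise closed under extensions and shifts. The key geometric input is that $X$ is divisorial: by \cite[Expos\'e II, 2.2.7–2.2.8]{SGA6} (or the corresponding statement in \cite{ThomasonTrobaugh}), on a divisorial scheme every perfect complex is globally quasi-isomorphic to a bounded complex of vector bundles, and more precisely for every object $P'$ of $\Pcal_X$ there is a quasi-isomorphism $W \to P'$ with $W$ a bounded complex of vector bundles. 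This is exactly the hypothesis in the last sentence of Proposition \ref{prop:derivedvirtual} needed to conclude that the induced functor on derived categories $D^b(\Vect_X) = D(C^b(\Vect_X)) \to D(\Pcal_X)$ is an equivalence: it is essentially surjective by the resolution statement, and full faithfulness follows since $D(\Pcal_X)$ is (a version of) the perfect derived category and the bounded-complexes-of-vector-bundles resolution identifies morphisms.

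With the derived equivalence $D(C^b(\Vect_X)) \to D(\Pcal_X)$ in hand, Proposition \ref{prop:derivedvirtual}\,(1) gives an equivalence of virtual categories $V(C^b(\Vect_X)) \to V(\Pcal_X)$. Composing with the equivalence $V(X) = V(\Vect_X) \to V(C^b(\Vect_X))$ of Corollary \ref{cor:equivalence-virtual-C-Cb} yields the desired equivalence $V(X) \to V(\Pcal_X)$. One should check that the composite is induced by the natural inclusion functor $\Vect_X \to \Pcal_X$ (sending a vector bundle to the corresponding complex concentrated in degree $0$): this is a diagram chase using that the equivalence of Corollary \ref{cor:equivalence-virtual-C-Cb} is the one sending $E$ to $[E]$ in degree $0$, and that the inclusion $C^b(\Vect_X) \hookrightarrow \Pcal_X$ is compatible with the resolution functor up to the chosen quasi-isomorphisms, which become identities in the virtual categories.

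\textbf{Main obstacle.} The substantive point — and the only one that genuinely uses the divisoriality hypothesis rather than formal properties of virtual categories — is establishing that on a divisorial scheme every perfect complex of globally bounded Tor-amplitude admits a global finite locally free resolution (a bounded complex of vector bundles quasi-isomorphic to it), with enough control to see that $D^b(\Vect_X) \to D(\Pcal_X)$ is an \emph{equivalence} and not merely essentially surjective. This is where one invokes the ample family of line bundles: it guarantees enough vector bundles to build global resolutions, and the standard reference is \cite[Expos\'e II]{SGA6} together with the treatment in \cite[\S 2–3]{ThomasonTrobaugh} (in particular the identification of the perfect derived category with the bounded derived category of vector bundles on a divisorial scheme). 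Everything else is a matter of assembling the formal results \ref{thm:Muro}, \ref{cor:equivalence-virtual-C-Cb}, and \ref{prop:derivedvirtual} already established in the excerpt.
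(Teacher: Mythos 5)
Your proposal is correct and follows essentially the same route as the paper, whose proof is a one-liner citing Thomason--Trobaugh \cite[Corollary 3.9 \& Proposition 3.10]{ThomasonTrobaugh} for the homotopy equivalence on $K$-theory spectra and then applying Theorem \ref{thm:Muro}\,(3). You reconstruct that citation from the paper's own machinery, passing through $V(C^b(\Vect_X))$ via Corollary \ref{cor:equivalence-virtual-C-Cb} and then invoking the derived-category equivalence together with Proposition \ref{prop:derivedvirtual}; the substantive geometric input --- global resolutions of perfect complexes by bounded complexes of vector bundles on divisorial schemes --- is the same in both.
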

\begin{proof}
This is the conjunction of \cite[Corollary 3.9 \& Proposition 3.10]{ThomasonTrobaugh}, which states a homotopy equivalence for the corresponding $K$-theory spectra, and Theorem \ref{thm:Muro}. 
\end{proof}

The direct sum and tensor product of vector bundles induce a ring category structure on $V(X)$. Indeed, since the tensor product of vector bundles is biexact, by Proposition \ref{prop:product-virtual} one also obtains a bifunctor $V(X) \times V(X)\to V(X)$, which we still denote by $\otimes.$ From the fact that $(\Vect_{X},\oplus,\otimes)$ is a rig category, we infer that $(V(X),\oplus,\otimes)$ has the natural structure of a ring category, by\linebreak Corollary \ref{cor:product-virtual}. 

Virtual categories of schemes enjoy functoriality properties similar to those of the $K$-theory of schemes, by construction. If $f\colon X\to Y$ is a morphism of schemes, the operation of pulling back a vector bundle is exact and induces a pullback functor $f^\ast: V(Y) \to V(X)$, which is actually a functor of ring categories. The assignment $f\mapsto f^{\ast}$ is a contravariant functor, up to natural transformation of the form $(g\circ f)^{\ast}\simeq f^{\ast}\circ g^{\ast}$. It satisfies a cleavage condition, inherited from the case of vector bundles. We recall this condition is expressed by two constraints. The first one is that whenever $g=\id$ or $f=\id$, the isomorphism $(g\circ f)^{\ast}\simeq f^{\ast}\circ g^{\ast}$ is the identity. The second one is a compatibility condition with the associativity law for the composition: there are a priori two natural isomorphisms $(h\circ g\circ f)^{\ast}\simeq f^{\ast}\circ g^{\ast}\circ h^{\ast}$, and they are required to be equal.  
 
The same remarks apply to $V(\Pcal_X)$, replacing the pullback and the tensor product with their derived versions, relying on the constructions in the derived category and Proposition \ref{prop:derivedvirtual}. We leave the precise construction to the reader but refer to \cite[\textsection 3.14, \textsection 3.15]{ThomasonTrobaugh} for the corresponding treatment in the case of the spectra of $K$-theory.  

We next discuss the covariant functoriality of the virtual categories. Taking direct images of complexes of sheaves is not exact, but we can adopt the approach from the situation in algebraic $K$-theory. 

\begin{proposition}\label{prop:properties-direct-image-virtual}
Let $f: X\to Y$ be a proper morphism of schemes, with $Y$ quasi-compact. Suppose that the derived direct image functor $Rf_\ast$ sends perfect complexes to perfect complexes. Then it induces a commutative multiplicative functor $f_!: V(\Pcal_X) \to V(\Pcal_Y)$, referred to as the direct image functor, such that:
\begin{enumerate}
    \item\label{item:properties-direct-image-virtual-1} The direct image functor is multiplicative. It is compatible with the composition of morphisms satisfying the assumptions in the proposition.
    \item\label{item:properties-direct-image-virtual-2} If $E$ is a virtual perfect complex on $X$, and $F$ a virtual perfect complex on $Y$, then there is a natural projection formula isomorphism,
    \begin{displaymath}
        f_! \left(E \otimes f^\ast F\right)\to f_! E \otimes F,
    \end{displaymath}
    of functors of Picard categories $V(\Pcal_X) \times V(\Pcal_Y) \to V(\Pcal_Y).$ 
    \item\label{item:properties-direct-image-virtual-3} Suppose $E$ is a virtual perfect complex on $X$, and suppose $g \colon Y' \to  Y$ is a morphism of quasi-compact schemes, such that $g$ is Tor-independent with  $f$. Suppose in addition that the base change $f^\prime \colon X' \to Y'$ is such that $R{f'}_\ast$ also preserves perfect complexes. Then there is a canonical base change isomorphism 
    \begin{displaymath}
        g^\ast f_! E \to f'_! {g'}^\ast E
    \end{displaymath}
    of functors of Picard categories $V(\Pcal_X) \to V(\Pcal_{Y'}).$
\end{enumerate}
\end{proposition}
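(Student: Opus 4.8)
The plan is to bootstrap each statement from the corresponding fact on $K$-theory spectra (as developed in \cite[\textsection 3]{ThomasonTrobaugh}) together with the universal property of the virtual category encapsulated in Theorem \ref{thm:Muro}. The overall principle is that $V(-)$ is a $[0,1]$-truncation functor that preserves the structures we care about, so every diagram we must produce already exists, coherently, on the level of spectra or of exact functors of Waldhausen categories, and descends. First I would record the basic construction of $f_!$: under the hypothesis that $Rf_\ast$ preserves perfect complexes of globally bounded Tor-amplitude, $Rf_\ast$ is an exact functor of the complicial biWaldhausen categories $\Pcal_X \to \Pcal_Y$ (it sends degreewise exact sequences of complexes to cofibration sequences up to quasi-isomorphism, since higher direct images assemble into the usual distinguished triangles), hence by Theorem \ref{thm:Muro} \eqref{item:Muro-2} induces a functor of Picard categories $f_! : V(\Pcal_X) \to V(\Pcal_Y)$. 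Commutativity and the multiplicativity in item \eqref{item:properties-direct-image-virtual-1} are automatic from the fact that $Rf_\ast$, being a triangulated functor, carries the canonical triangles attached to short exact sequences to triangles, and the compatibility with composition follows from the natural isomorphism $R(g\circ f)_\ast \simeq Rg_\ast \circ Rf_\ast$ together with the functoriality clause of Theorem \ref{thm:Muro}.

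For item \eqref{item:properties-direct-image-virtual-2}, the projection formula isomorphism $Rf_\ast(E \otimes^{\mathbf L} Lf^\ast F) \simeq Rf_\ast E \otimes^{\mathbf L} F$ holds in the derived category for perfect (even just pseudo-coherent, bounded Tor-amplitude) complexes; see \cite[\textsection 3.17]{ThomasonTrobaugh}. This is a natural isomorphism of exact functors $\Pcal_X \times \Pcal_Y \to \Pcal_Y$ in each variable, so applying $V(-)$ — using Proposition \ref{prop:product-virtual} to extend the relevant biexact functors to the virtual categories — produces the desired isomorphism of functors of Picard categories $V(\Pcal_X)\times V(\Pcal_Y)\to V(\Pcal_Y)$. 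One should also check the standard compatibilities of the projection formula with associativity of $\otimes$ and with the unit, but on the level of the derived category these are classical, and since $V(-)$ merely records the induced 1-truncated data, the coherences transfer for free. For item \eqref{item:properties-direct-image-virtual-3}, the Tor-independent base change isomorphism $Lg^\ast Rf_\ast E \simeq Rf'_\ast L{g'}^\ast E$ is again a statement in the derived category valid exactly under the Tor-independence hypothesis (with the extra assumption guaranteeing $Rf'_\ast$ preserves perfectness), proved in \cite[\textsection 3.18]{ThomasonTrobaugh}; it is a natural isomorphism of exact functors $\Pcal_X \to \Pcal_{Y'}$, hence descends to $V(\Pcal_X) \to V(\Pcal_{Y'})$.

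The subtlety that requires care — and which I expect to be the only real obstacle — is ensuring that the \emph{coherence} conditions hold at the virtual-category level, not merely the bare existence of the isomorphisms: for instance that the base change isomorphisms compose correctly along two stacked Tor-independent squares, that base change is compatible with the projection formula, and that the multiplicative structure on $f_!$ interacts correctly with all of this. The clean way to handle this is to observe that all the relevant data live on the $K$-theory spectra, where $Rf_\ast$, $Lf^\ast$, $\otimes^{\mathbf L}$ are honest functors of Waldhausen categories and all the compatibilities are homotopies constructed in \cite[\textsection 3.14--3.18]{ThomasonTrobaugh}; since $V(\Pcal_X)$ is the fundamental groupoid of (equivalently, the $[0,1]$-truncation of) the associated spectrum by Theorem \ref{thm:Muro} \eqref{item:Muro-1}, every homotopy-coherent diagram on spectra induces a strictly commuting diagram of the truncated data, so the verification reduces to citing the spectrum-level statements. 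Accordingly, the proof is essentially a translation: set up $f_!$ via Theorem \ref{thm:Muro}, and for each of the three functorialities invoke the corresponding derived-category or spectrum-level result of Thomason--Trobaugh and apply $V(-)$, noting that the coherence is inherited from the spectrum level. I would leave the bookkeeping of the individual coherence diagrams, which is routine but lengthy, to the reader, as is done elsewhere in this section.
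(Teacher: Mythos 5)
Your overall strategy — bootstrap from Thomason--Trobaugh and use the universal property of the virtual category — is the right one and is what the paper does. However, there is a genuine gap at the very first step, and it is not a bookkeeping issue but the main technical content of the proof.

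You write that ``$Rf_\ast$ is an exact functor of the complicial biWaldhausen categories $\Pcal_X \to \Pcal_Y$'' and then invoke Theorem \ref{thm:Muro} \eqref{item:Muro-2}. This is false as stated: $Rf_\ast$ is a functor on \emph{derived} categories, not a functor on the underlying Waldhausen categories of complexes, and Theorem \ref{thm:Muro} requires an honest exact functor of Waldhausen categories. A derived functor is only defined up to quasi-isomorphism, and ``up to quasi-isomorphism'' is exactly the ambiguity the virtual category is supposed to be sensitive to. To get an actual exact functor one must make global choices of representatives. The paper does this by first restricting to bounded-below perfect complexes $\Pcal_X^+$ (which have equivalent virtual category by Theorem \ref{thm:Muro} \eqref{item:Muro-3}), then taking injective or, better, Godement resolutions (which are functorial and preserve short exact sequences) to land in the Waldhausen category of flasque perfect complexes $\Wcal_Y$, and applying the underived $f_\ast$. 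The multiplicativity is then verified using the existence of compatible injective resolutions of short exact sequences (\cite[\href{https://stacks.math.columbia.edu/tag/013T}{013T}]{stacks-project}), and Proposition \ref{prop:derivedvirtual} is used to show the result is independent of the choice of resolution up to unique isomorphism. None of this is automatic from ``$Rf_\ast$ is triangulated.'' The same gap appears in items \eqref{item:properties-direct-image-virtual-2} and \eqref{item:properties-direct-image-virtual-3}: the derived tensor product $\otimes^{\mathbf L}$ and the derived pullback $Lg^\ast$ are also only derived functors, and you invoke them as if they were exact functors of $\Pcal_X$.

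Your closing heuristic — that homotopy-coherent diagrams on spectra descend to strictly commuting diagrams of truncated data — is also imprecise and would not survive scrutiny. A homotopy between maps of spectra gives an isomorphism of the induced functors on fundamental groupoids, not a strict equality, so one must track which homotopy; and in general not every map of $K$-theory spectra comes from an exact Waldhausen functor, so there is no blanket descent. The paper's actual argument is sharper and narrower: the projection-formula and base-change homotopies constructed in \cite[Propositions 3.17, 3.18]{ThomasonTrobaugh} are not abstract homotopies of spectra but are \emph{realized by explicit quasi-isomorphisms of complexes} (built via the Godement resolution), and it is those quasi-isomorphisms that give the natural transformations in the virtual category. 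You should cite that specific structural fact rather than a general truncation-preserves-coherence slogan.

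In short: the intuition is right, the citations are the correct ones, but you need to (i) construct $f_!$ via a functorial resolution (Godement) so that you actually have an exact Waldhausen functor to feed into Theorem \ref{thm:Muro}, and (ii) replace the heuristic coherence argument with the observation that the relevant homotopies in \cite{ThomasonTrobaugh} are realized by quasi-isomorphisms of complexes.
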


\begin{proof}
During the proof, for a quasi-compact scheme $Z$, denote by $\Wcal_{Z}$ the complicial biWaldhausen categories of  bounded below perfect complexes of flasque $\Ocal_{Z}$-modules on $Z$, and $\Pcal^+_Z$ the category of bounded below perfect complexes. There are then natural functors \linebreak  $V(\Wcal_{Z}) \to V(\Pcal_Z)$ and $V(\Pcal^+_Z) \to V(\Pcal_Z)$, which are equivalences of categories, by  \cite[Lemma 3.5]{ThomasonTrobaugh} and Proposition \ref{thm:Muro} \eqref{item:Muro-3}. 

To construct the functor, let $E$ be a bounded below perfect complex on $X$ and fix quasi-isomorphism $E\to I^\bullet$, with $I^\bullet$ bounded below complex of injective $\Ocal_X$-modules. The complex $I^{\bullet}$ is an object of $\Wcal_{X}$. By the assumption that $Rf_{\ast}$ preserves perfect complexes, taking direct images provides a complex $f_\ast I^\bullet$ in $\Wcal_Y$. The image of this object in $V(\Wcal_Y)$ depends, a priori, on the choice of resolution, but only up to unique isomorphism since $\Wcal_{Y}\to V(\Wcal_{Y})$ factors over the derived category $D(\Wcal_Y)$ by Proposition \ref{prop:derivedvirtual}. By the same token, this construction defines a functor 
\begin{displaymath}
    [Rf_{\ast}(\cdot)]\colon (\Pcal^+_{X},\hbox{quasi-iso})\to  V(\Wcal_Y) \to V(\Pcal_Y).
\end{displaymath}
We argue that  this is a multiplicative functor as in \cite[Definition 1.2.3]{Muro:determinant}. Given an exact sequence $\Sigma: 0 \to E \to F \to G \to 0$ of bounded below perfect complexes, by \cite[\href{https://stacks.math.columbia.edu/tag/013T}{013T}]{stacks-project} there is a short exact sequence of injective resolutions $0 \to I^\bullet \to J^\bullet \to K^\bullet \to 0$ and hence a short exact sequence $0 \to f_\ast (I^\bullet) \to f_\ast (J^\bullet) \to f_\ast (K^\bullet) \to 0$. From this, we conclude an isomorphism 
\begin{displaymath}
    [\Sigma]: [Rf_\ast F] \to [Rf_\ast E] + [Rf_\ast G].
\end{displaymath}
It is straightforward to verify the axioms of \cite[Definition 1.2.3]{Muro:determinant}, except possibly the associativity. This however is again a consequence of \cite[\href{https://stacks.math.columbia.edu/tag/013T}{013T}]{stacks-project}, which assures that one can choose the first resolution $I^\bullet$ from which the other ones are constructed. 

Hence by the universal property of virtual categories, $E \mapsto [Rf_\ast E]$ defines a functor of Picard categories 
\begin{displaymath}
    V(\Pcal_{X}^{+}) \to V(\Pcal_{Y})
\end{displaymath}
and by inverting the equivalence $V(\Pcal^+_X) \to V(\Pcal_X)$ we obtain the functor $f_!$. It is formal to verify that for a composition $fg$ of morphisms, there is a natural isomorphism  $(fg)_! \to f_! g_! $.

To prove the other properties, it is convenient to follow the approach of the construction of the direct image functor as in \cite[\textsection 3.16]{ThomasonTrobaugh}. To be able to rely on this, we recall that any bounded below complex $E$ admits a Godement resolution \cite[\href{https://stacks.math.columbia.edu/tag/0FKT}{0FKT}]{stacks-project}, which amounts to a quasi-isomorphism $E\to T(E)$ where $T(E)$ is a complex of flasque sheaves. If $E$ is perfect and bounded below, then $T(E)$ is an object of $\Wcal_{X}$. The Godement resolution is functorial and preserves exact sequences. Taking direct images and considering the image in the virtual category provides another multiplicative functor $(\Pcal_{X}^{+}, \hbox{quasi-iso}) \to V(\Pcal_{Y})$, and hence a functor of Picard categories $f_{?}\colon V(\Pcal_{X})\to V(\Pcal_{Y})$. We have an isomorphism $Rf_\ast E \to f_\ast T(E)$ in $D(\Wcal_{Y})$. This induces a natural isomorphism of functors of Picard categories $f_! \to f_?$, the proof of which we leave to the interested reader. We henceforth identify the two functors.

To prove the projection formula and the base change formula, we can now refer the reader to Proposition 3.17 and Proposition 3.18 of \cite{ThomasonTrobaugh}. While the statements concern homotopies of maps between spectra, the proofs actually construct quasi-isomorphisms of complexes in appropriate Waldhausen categories, where the constructed functors involve the Godement resolution discussed above. This suffices to provide the natural transformations in the virtual category.

\end{proof}

From Lemma \ref{lemma:divisorial} we conclude the following corollary:

\begin{corollary}
    With the notation and assumptions as in Proposition \ref{prop:properties-direct-image-virtual}, assume moreover that $Y$ (and $Y'$ where it applies) are divisorial. Then there is a 
    direct image functor $f_! \colon V(X)\to V(Y), $ satisfying the corresponding properties.
\end{corollary}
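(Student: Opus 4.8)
The statement is an immediate consequence of Proposition~\ref{prop:properties-direct-image-virtual} together with Lemma~\ref{lemma:divisorial}. The plan is to transport the direct image functor $f_! \colon V(\Pcal_X)\to V(\Pcal_Y)$ and all of its structural properties across the equivalences $V(X)\xrightarrow{\sim} V(\Pcal_X)$ and $V(Y)\xrightarrow{\sim} V(\Pcal_Y)$ furnished by Lemma~\ref{lemma:divisorial}, which applies precisely because $X$ and $Y$ are assumed divisorial (hence quasi-compact, so that Proposition~\ref{prop:properties-direct-image-virtual} is available with $Y$ quasi-compact, and every perfect complex on $X$ or $Y$ automatically has globally bounded Tor-amplitude).

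First I would fix, once and for all, quasi-inverses $V(\Pcal_X)\to V(X)$ and $V(\Pcal_Y)\to V(Y)$ to the canonical functors of Lemma~\ref{lemma:divisorial}, as functors of commutative Picard categories (using the discussion in \textsection\ref{subsub:monoidalcategories} on inverting strong monoidal equivalences, which also gives uniqueness up to canonical natural transformation). One then \emph{defines} $f_!\colon V(X)\to V(Y)$ as the composite $V(X)\xrightarrow{\sim} V(\Pcal_X)\xrightarrow{f_!} V(\Pcal_Y)\xrightarrow{\sim} V(Y)$. Since $f$ is proper (it is projective, as a locally projective morphism of quasi-compact schemes is projective in this setting) and, being a projective local complete intersection morphism, $Rf_\ast$ preserves perfect complexes by the usual argument of \cite{SGA6} (locally factor $f$ through a projective bundle and a regular immersion, for which the claim is classical), the hypotheses of Proposition~\ref{prop:properties-direct-image-virtual} are met, so $f_!$ on $V(\Pcal_X)$ exists and is a commutative multiplicative (hence Picard-category) functor; conjugating by the equivalences preserves this.

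Next I would check that the listed properties descend. Compatibility with composition follows by pasting the base change of part~\eqref{item:properties-direct-image-virtual-1} of Proposition~\ref{prop:properties-direct-image-virtual} with the naturality of the equivalences of Lemma~\ref{lemma:divisorial} (these equivalences are themselves compatible with pullback, since they are induced by the inclusions $\Vect\hookrightarrow\Pcal$, which commute with $f^\ast$); the Tor-independent base change isomorphism of part~\eqref{item:properties-direct-image-virtual-3} descends in the same way, using that the base change $X'\to Y'$ is again a projective local complete intersection morphism of divisorial schemes so that $Rf'_\ast$ preserves perfect complexes; and the projection formula of part~\eqref{item:properties-direct-image-virtual-2} transports across the equivalences because those equivalences are functors of ring categories (the tensor product on $V(X)$ is induced from that on $\Vect_X$, compatibly with the derived tensor product on $V(\Pcal_X)$ via $\Vect\hookrightarrow\Pcal$).

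The only mildly delicate point, and the step I expect to require the most care, is the bookkeeping of the natural transformations: one must verify that the chosen quasi-inverses of Lemma~\ref{lemma:divisorial} are compatible with $f^\ast$, $\otimes$, and base change \emph{coherently}, so that the transported isomorphisms still satisfy the cleavage-type constraints (the identity and associativity conditions recalled in \textsection\ref{sec:virtual-schemes}). This is, however, entirely formal: it follows from the uniqueness up to canonical isomorphism of inverse equivalences of Picard categories together with the functoriality clauses already built into Theorem~\ref{thm:Muro} and Proposition~\ref{prop:properties-direct-image-virtual}. No new geometric input is needed, so I would present this verification briefly and leave the routine diagram chases to the reader, exactly as is done for the analogous statements at the level of $K$-theory spectra in \cite[\textsection 3.14--3.18]{ThomasonTrobaugh}.
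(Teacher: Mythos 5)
Your proof proposal is essentially correct and matches the paper's (tacit) approach: the paper gives this corollary no proof at all — it is a $\qed$-immediate consequence of Lemma~\ref{lemma:divisorial} plus Proposition~\ref{prop:properties-direct-image-virtual} — and transporting the functor and its structure across the comparison functors between $V(-)$ and $V(\Pcal_-)$ is exactly what is intended.

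Two small inaccuracies are worth flagging, however. First, you assert that "$X$ and $Y$ are assumed divisorial," but the corollary only requires $Y$ (and $Y'$ where applicable) to be divisorial; nothing is said about $X$. This does not break the construction, because to define $f_!\colon V(X)\to V(Y)$ one only needs the \emph{functor} $V(X)\to V(\Pcal_X)$ induced by $\Vect_X\hookrightarrow\Pcal_X$ (which always exists), followed by $f_!\colon V(\Pcal_X)\to V(\Pcal_Y)$ and then the quasi-inverse $V(\Pcal_Y)\to V(Y)$ from Lemma~\ref{lemma:divisorial} on the target; one should not invoke, let alone invert, an equivalence on the source. Second, you assume $f$ is "a projective local complete intersection morphism" and invoke the SGA~6 factorization argument to get preservation of perfect complexes. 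That is a particular sufficient condition, but the corollary inherits only the hypotheses of Proposition~\ref{prop:properties-direct-image-virtual}, namely that $f$ is proper, $Y$ quasi-compact, and $Rf_\ast$ preserves perfect complexes; projectivity and the lci condition are not assumed and are not needed for this corollary.
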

\qed

In view of Proposition \ref{prop:properties-direct-image-virtual}, we now provide several criteria which ensure that the derived direct image preserves perfection.
\begin{proposition}\label{prop:perfect-conditions}
Let $f: X\to Y$ be a morphism. In either of the following situations, $Rf_\ast$ sends perfect complexes to perfect complexes:
\begin{enumerate}
    \item\label{item:perfect-conditions-1} $f$ is a proper morphism of finite Tor-dimension, with $X$ and $Y$ are Noetherian . \cite[\href{https://stacks.math.columbia.edu/tag/0B6G}{0B6G}]{stacks-project} \cite[\href{https://stacks.math.columbia.edu/tag/0684}{0684}]{stacks-project}
    \cite[\href{https://stacks.math.columbia.edu/tag/069C}{069C}]{stacks-project}.
    \item\label{item:perfect-conditions-2} $f$ is flat, proper, and of finite presentation. Moreover, in this case, the formation of $Rf_\ast $ commutes with arbitrary base changes  \cite[\href{https://stacks.math.columbia.edu/tag/0B91}{0B91}]{stacks-project}.
    \item\label{item:perfect-conditions-3} More generally, any proper perfect morphism \cite[Example 2.2(a)]{LipNee} .
\end{enumerate}  
\end{proposition}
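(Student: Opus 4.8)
The plan is to treat the three cases largely by assembling standard facts about perfect complexes, recalling that a complex is perfect precisely when it is pseudo-coherent and of locally finite Tor-amplitude; since every scheme occurring here is quasi-compact, the Tor-amplitude is automatically globally bounded, so there is nothing to add on that account.

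For \textbf{case \eqref{item:perfect-conditions-1}}, I would argue in two steps according to this characterization. First, for a proper morphism $f$ between Noetherian schemes, $Rf_\ast$ carries pseudo-coherent complexes to pseudo-coherent complexes; this is the derived form of Grothendieck's finiteness theorem and is exactly \cite[\href{https://stacks.math.columbia.edu/tag/0B6G}{0B6G}, \href{https://stacks.math.columbia.edu/tag/0684}{0684}]{stacks-project}. Second, since $f$ has finite Tor-dimension, $Rf_\ast$ preserves finite Tor-amplitude by \cite[\href{https://stacks.math.columbia.edu/tag/069C}{069C}]{stacks-project}. Combining the two preservation statements gives the claim.

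For \textbf{case \eqref{item:perfect-conditions-2}}, note that a flat morphism has Tor-dimension zero, so relative to case \eqref{item:perfect-conditions-1} the only missing ingredients are the Noetherian hypothesis and the base-change assertion; both are supplied directly by \cite[\href{https://stacks.math.columbia.edu/tag/0B91}{0B91}]{stacks-project}. Alternatively, I would reduce to case \eqref{item:perfect-conditions-1} by Noetherian approximation (\cite[\href{https://stacks.math.columbia.edu/tag/01YT}{01YT}]{stacks-project}): writing $Y$ as a cofiltered limit of Noetherian affine schemes, $f$ descends to a flat, proper, finitely presented morphism $f_0$ between Noetherian schemes, to which case \eqref{item:perfect-conditions-1} applies; the compatibility of $Rf_\ast$ with base change then propagates perfection from $f_0$ to $f$, and the flatness of $f$ makes every base change Tor-independent, yielding the final sentence of \eqref{item:perfect-conditions-2}. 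For \textbf{case \eqref{item:perfect-conditions-3}}, recalling that a morphism is perfect when it is pseudo-coherent and of finite Tor-dimension, the statement that $Rf_\ast$ preserves perfection for such $f$ proper is \cite[Example 2.2(a)]{LipNee}. I would also remark that \eqref{item:perfect-conditions-3} subsumes the others: a finite type morphism between Noetherian schemes is pseudo-coherent, so a proper morphism of finite Tor-dimension between Noetherian schemes is a proper perfect morphism, and a flat, finitely presented morphism is perfect; only the base-change addendum in \eqref{item:perfect-conditions-2} is not a formal consequence of \eqref{item:perfect-conditions-3}.

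Since the proposition is essentially a repackaging of known results, there is no genuine obstacle. The only point requiring care is matching the standing quasi-compactness, quasi-separatedness, and finiteness hypotheses in the various references with those in the statement, and, in the Noetherian approximation argument for \eqref{item:perfect-conditions-2}, checking that flatness, properness, and finite presentation all descend simultaneously to the approximating family — all of which is routine and covered by the cited tags.
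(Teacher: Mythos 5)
Your proposal is correct and matches the paper's treatment: the proposition is stated there with the references as its entire justification (the proof is omitted), and your assembly of pseudo-coherence plus Tor-amplitude preservation for \eqref{item:perfect-conditions-1}, the direct citation (or Noetherian approximation) for \eqref{item:perfect-conditions-2}, and the appeal to \cite[Example 2.2(a)]{LipNee} for \eqref{item:perfect-conditions-3} is exactly the intended argument. Your closing observation that \eqref{item:perfect-conditions-3} subsumes the other two, apart from the base-change addendum, is consistent with the paper's phrasing ``More generally'' in that item.
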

\qed

Since the virtual category of perfect complexes (cf. Definition \ref{def:virtual-category-perfect}) is defined in terms of perfect complexes with globally finite Tor-amplitude, we also include the following criteria which address the preservation of this.

\begin{proposition}
    Suppose $f: X\to Y$ is a morphism of schemes. In the following cases, $Rf_\ast$ sends perfect complexes to perfect complexes, and moreover sends complexes of globally finite Tor-amplitude to complexes of globally finite Tor-amplitude:
    \begin{enumerate}
        \item Any situation in Proposition \ref{prop:perfect-conditions}, with $Y$ quasi-compact. 
        \item $f$ satisfies the condition $(C_n)$. 
        \item $f$ is a regular closed immersion of codimension $d$, for a positive integer $d$. 
    \end{enumerate}
\end{proposition}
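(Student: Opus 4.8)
The plan is to separate two issues: the preservation of perfectness, which is already available, and the control of Tor-amplitude, which is the only new content. For perfectness, case~(1) is exactly Proposition~\ref{prop:perfect-conditions}; in case~(2) a morphism satisfying $(C_n)$ is faithfully flat of finite presentation and is proper --- being locally projective, and properness being local on the target --- so Proposition~\ref{prop:perfect-conditions}~\eqref{item:perfect-conditions-2} applies; in case~(3) a regular closed immersion is finite, hence proper, and perfect, since $\Ocal_X$ is locally resolved over $\Ocal_Y$ by a Koszul complex, so Proposition~\ref{prop:perfect-conditions}~\eqref{item:perfect-conditions-3} applies. In case~(1) nothing further is needed: as $Y$ is quasi-compact and $f$ is proper, $X$ is quasi-compact, and a perfect complex on a quasi-compact scheme is automatically of globally bounded Tor-amplitude, by the remark following Definition~\ref{def:virtual-category-perfect}. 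So it remains to treat cases~(2) and~(3), where $Y$ need not be quasi-compact.

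Both are handled by one local computation. Global boundedness of Tor-amplitude, and the formation of $Rf_\ast$, are local on $Y$, so it suffices to assume $Y=\Spec A$ affine and to bound, \emph{uniformly over the affine opens of $Y$}, the cohomological amplitude of $R(f_A)_\ast(E|_{X_A})$ for $E$ perfect of Tor-amplitude in $[a,b]$, where $X_A=f^{-1}(\Spec A)$ and $f_A\colon X_A\to\Spec A$. Every $A$-module is quasi-coherent on $\Spec A$, so Tor-amplitude over $A$ may be tested against $A$-modules only. For an $A$-module $M$ the projection formula (valid since $f_A$ is quasi-compact and quasi-separated and $E|_{X_A}$ has quasi-coherent cohomology) gives
\begin{displaymath}
    R(f_A)_\ast(E|_{X_A})\otimes^L_A M\;\simeq\;R(f_A)_\ast\big(E|_{X_A}\otimes^L_{\Ocal_{X_A}}Lf_A^\ast M\big).
\end{displaymath}
Now $Lf_A^\ast M$ lies in cohomological degrees $[-c,0]$, where $c$ is the Tor-dimension of $f$ --- $c=0$ in case~(2) since $f$ is flat, and $c=d$ in case~(3) via the Koszul complex --- and since $E|_{X_A}$ has Tor-amplitude in $[a,b]$, the inner complex lies in degrees $[a-c,b]$. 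Applying $R(f_A)_\ast$, which is left $t$-exact on quasi-coherent complexes and raises cohomological degrees by at most $N$, where $N$ bounds the cohomological dimension of $f_A$, we obtain that $R(f_A)_\ast(E|_{X_A})$ has Tor-amplitude in $[a-c,b+N]$. Thus the problem reduces to producing a uniform $N$.

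For case~(3) this is trivial: $f$ is affine, so $R(f_A)_\ast=(f_A)_\ast$ is exact on quasi-coherent sheaves, $N=0$, and together with $c=d$ we get Tor-amplitude in $[a-d,b]$ over an arbitrary base. For case~(2), $f_A$ is proper and of finite presentation with all fibres of dimension $n$, so the relative form of Grothendieck's vanishing theorem gives $R^i(f_A)_\ast\mathcal{G}=0$ for $i>n$ and $\mathcal{G}$ quasi-coherent, uniformly, whence $N=n$ works; concretely, this Tor-amplitude shift for flat proper finitely presented morphisms is recorded in \cite[\href{https://stacks.math.columbia.edu/tag/0B91}{0B91}]{stacks-project}. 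The one step that is not purely formal is exactly this uniform cohomological-dimension bound in case~(2) over a possibly non-quasi-compact base: for a Noetherian base it is classical, and in general it follows by Noetherian approximation of the finitely presented morphism $f_A$; everything else --- case~(1), case~(3), and the local computation above --- is formal.
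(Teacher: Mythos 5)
Your proof is correct but takes a genuinely different route from the paper. The paper first reduces (via \cite[Theorem 2.4.3]{ThomasonTrobaugh}) to the case where $E$ is a bounded complex of vector bundles, then to $E$ a single vector bundle by filtering, and then Noetherian-approximates the \emph{entire geometric package} $(X \to Y, E)$ to a datum over a Noetherian affine base, where the Tor-amplitude in $[0,n]$ is read off from \cite[III, Corollaire 3.7.1]{SGA6}; the general case follows by $Lg^\ast$-pullback and Tor-independent base change. You instead run the projection formula $Rf_\ast(E)\otimes^L_A M \simeq Rf_\ast(E\otimes^L Lf^\ast M)$, which converts the Tor-amplitude question directly into a question about the Tor-dimension of $f$ and the cohomological dimension of $Rf_\ast$ on quasi-coherent sheaves. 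This is cleaner in that it bypasses the filtration-to-a-vector-bundle step and gives a uniform treatment of cases (2) and (3) — and for case (3) it also yields the explicit bound $[a-d,b]$, which the paper leaves to the SGA6 references. The price is that you still need the uniform bound $R^i f_\ast \mathcal{F} = 0$ for $i > n$ over a possibly non-Noetherian affine base, for $\mathcal{F}$ an arbitrary quasi-coherent sheaf; you rightly flag this as the one non-formal step, but your dismissal "follows by Noetherian approximation of $f_A$" elides that $\mathcal{F}$ itself need not descend — one should additionally write $\mathcal{F}$ as a filtered colimit of finitely presented quasi-coherent sheaves and use that $R^if_\ast$ commutes with such colimits on a qcqs morphism, so that the vanishing can be checked after descending both $f$ and $\mathcal{F}$. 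Also, \cite[\href{https://stacks.math.columbia.edu/tag/0B91}{0B91}]{stacks-project} records perfectness and base change but not the Tor-amplitude shift as such, so that citation doesn't quite support the claim you attach it to; the vanishing you actually need lives elsewhere. With those two points tightened, your argument stands as a valid, and in some respects more streamlined, alternative to the paper's.
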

\begin{proof}
    The first item follows from the fact that on a quasi-compact scheme, being of locally bounded Tor-amplitude implies being of globally bounded Tor-amplitude. We next treat the case of a morphism satisfying the condition $(C_n)$.
    
    The fact that $Rf_\ast$ sends perfect complexes to perfect complexes is already stated in\linebreak Proposition \ref{prop:perfect-conditions} \eqref{item:perfect-conditions-2}. We prove the proposition by providing uniform bounds on the Tor-ampli\--tudes for $Rf_\ast E$ in the case $Y$ is affine, and hence $X$ is quasi-compact and separated, and even divisorial since we may assume $X\to Y$ is projective. 
    
    We denote by $E$ our perfect complex of $\Ocal_X$-modules, which we suppose has Tor-amplitude contained in $[a,b]$. We want to prove that $Rf_{\ast}E$ has Tor-amplitude in $[a,b+n]$. By \cite[Theorem 2.4.3]{ThomasonTrobaugh} we can moreover suppose that $E$ is a complex of vector bundles, with non-zero terms contained in $[a,b]$. By successively filtering $E$ by subcomplexes of vector bundles, one readily reduces to the case of $E$ itself being a vector bundle. 
    
    By Noetherian approximation, there is a Noetherian affine scheme $Y_0$, a morphism $g: Y\to Y_0$, a scheme $X_0$ and a vector bundle $E_0$, and a morphism $X_0 \to Y_0$ satisfying the condition $(C_n)$, such that $(X\to Y, E)$ is the pullback of $(X_0 \to Y_0, E_0)$ via $g$. Now, since $f$ is flat and by the vanishing of higher direct images above the fiber dimension for Noetherian schemes  \cite[\href{https://stacks.math.columbia.edu/tag/02V7}{02V7}]{stacks-project}, we deduce that $Rf_{0, \ast} E_0$ is of globally finite Tor-amplitude contained in $[0,n]$ by the criterion in \cite[III, Corollaire 3.7.1]{SGA6}. From this we can conclude, observing that $Lg^\ast$ preserves Tor-amplitude \cite[III, Corollaire 3.5.2]{SGA6}, and $Rf_\ast E \simeq Lg^\ast Rf_{0, \ast} E_0$ by Tor-independent base change in the derived category, cf. \cite[\href{https://stacks.math.columbia.edu/tag/08IB}{08IB}]{stacks-project}.

    The case of a regular closed immersion is similar but more direct. It follows by combining\linebreak \cite[Example 4.1.1, Proposition 4.4, Corollaire 4.5.1]{SGA6}.

\end{proof}

To conclude this section, we take the opportunity to discuss the determinant of the cohomology of the introduction of the article. For this, we recall that by the work of Knudsen--Mumford \cite{KnudsenMumford}, given a perfect complex $E$ on a scheme $Y$, there is a naturally defined graded line bundle $\det E$ on $Y$, with the property that if $E$ is a locally free sheaf of rank $r$, seen as a complex concentrated in degree 0, then
\begin{displaymath}
    \det E = (r, \Lambda^{r} E).
\end{displaymath}
By their construction, it defines a commutative multiplicative functor from the category of perfect complexes on $Y$, which we recall is denoted by $\mathrm{Perf}_{Y}$, to the category of graded line bundles on $Y$, which we recall is denoted by $\Picfr(Y)_{\mathrm{gr}}$. We denote this and the induced functor of Picard categories $V(\mathrm{Perf}_{Y}) \to \Picfr(Y)_{\mathrm{gr}}$ by $\det$. It is sometimes useful to forget the grading, and by abuse of notation, we still denote the same functor by $\det$.

\begin{definition}\label{def:detcoh}
    Suppose $f: X \to Y$ is a flat, proper morphism of finite presentation. The functor that takes a perfect complex $E$ and  produces a graded line bundle $\det Rf_{\ast} (E)$ is called the determinant of the cohomology and is also denoted by $\lambda_{f}(E)$. 
    
    This is the same as the composition of functors $\det f_!$ when $E$ is of finite Tor-amplitude.
\end{definition}

It follows from Proposition \ref{prop:perfect-conditions} \eqref{item:perfect-conditions-2} that the construction of the determinant with the cohomology commutes with arbitrary base change. 

\section{Chern categories and line distributions}\label{subsection:Chern-categories}

To properly formulate statements involving functoriality of intersection bundles, we develop a formalism of Chern categories and categorical Chern classes. This amounts to a naive categorification of the Chow rings and the Chern morphisms appearing in \cite{SGA6} and \cite{FultonLang}. The objects are rational formal power series in the ranks and Chern classes of vector bundles, and the morphisms are abstract incarnations of basic properties of Chern classes, such as the Whitney formula. However, we leave aside finer structures such as projective bundle formulas or triviality of Chern classes of higher degree. While our Chern categories enjoy a pullback functoriality, they do not admit direct image functors. To partially fix this, we introduce the formalism of line distributions, which are functors from Chern categories to Picard categories of line bundles on base schemes. The main example in this paper will be provided by intersection bundles. 

The formalism we develop in this section is based on the categorical material expounded in Section \ref{sec:monoidal-picard}, Section \ref{section:ring-categories}, and Section \ref{sec:multiplicative-functors-virtual-categories}.

\subsection{Universal Chern categories}\label{subsec:universal-chern-categories}
We begin with a purely categorical construction, that we call the universal Chern category. This category is tailored to be universal among graded strict ring categories receiving Chern functors, which are defined on vector bundles and behave like the total Chern class. A drawback of the universal Chern category is that it does not properly account for the topology of our schemes. This issue will be addressed in a later stage \textsection \ref{subsec:Chern-category-quasi-compact}.

\subsubsection{Chern functors}
We introduce the category of Chern functors and the associated notion of universal Chern category. 
\begin{definition}\label{def:axiom-chern-functor}
 Let $X$ be a scheme and $\Rcal$ a graded strict ring category. The category of Chern functors for $X$, with values in $\Rcal$, consists of the following: 
 \begin{enumerate}
     \item The objects, called Chern functors, are symmetric monoidal functors $\cfrak: (V(X),\oplus) \to (\Rcal,\otimes)$, whose components of degree $0$ are endowed with an isomorphism with the constant functor $1_{\Rcal_{0}}$. The functor determined by the component of degree $k$ is denoted by $\cfrak_k : V(X) \to \Rcal_k.$ 
     \item The morphisms are natural transformations of the underlying symmetric monoidal functors, which induce the identity transformations on the components of degree 0.
 \end{enumerate}

 \end{definition}

\begin{remark}\label{rmk:superficial-remarks-Chern-functors}
\begin{enumerate}
    \item\label{rmk:supremChfun-1} A Chern functor $ V(X)\to\Rcal$ defines a functor of commutative Picard categories $V(X)\to 1+\Rcal^{+}$. Therefore, via Theorem \ref{thm:delignevirtual}, the notion of Chern functor is equivalent to the notion of commutative multiplicative functor $(\Vect_{X},\iso)\to 1+\Rcal^{+}$, whose degree 0 component is equipped with an isomorphism with the constant functor $1_{\Rcal_{0}}$. We will pass from one point of view to the other, without further explanation. 
    \item\label{rmk:supremChfun-2} The degree one component of a Chern functor defines a functor of commutative Picard categories $\cfrak_{1}\colon (V(X),+)\to (\Rcal,+)$. 
\end{enumerate}
\end{remark}

\begin{definition}\label{def:universal-chern-category}
A universal Chern category for $X$ is a graded strict ring category $\CHfrak_{u}(X)$, with a Chern functor $\cfrak_{X}\colon V(X)\to\CHfrak_{u}(X)$, such that for any graded strict ring category $\Rcal$, composition with $\cfrak_{X}$ induces an equivalence of categories of functors
\begin{equation}\label{eq:Chern-category-equivalence}
     \Xi: \Hom_{\substack{\mathrm{graded\ ring}\\ \mathrm{category}}}(\CHfrak_{u}(X), \Rcal)  \to \Hom_{\mathrm{Chern}}(V(X), \Rcal).
\end{equation}
Unless there is some danger of confusion, we will write $\cfrak$ instead of $\cfrak_{X}$.
\end{definition}

In practice, the equivalence of categories \eqref{eq:Chern-category-equivalence} is used in the lines of the discussion after Definition \ref{def:Picardification}. In particular, if a universal Chern category exists, it is unique up to equivalence of ring categories. The equivalence itself is determined up to unique natural transformation. We may thus speak about \emph{the}, rather than \emph{a}, universal Chern category. In the sequel, we provide the construction. 

\subsubsection{An intermediate Chern category}\label{subsubsec:intermediate-category} Towards the construction of $\CHfrak_{u}(X)$, we begin by introducing an intermediate category $\CHfrak_{+}(X)$. 

Let $k\geq 0$ be an integer, and define  $\mathfrak{C}^{k}(X)$ as the free category associated with the following graph:\footnote{See \cite[Chapter II, Section 7]{MacLane-cat} for a review of free categories associated with graphs.}
\begin{itemize}
    \item[(V)] \emph{Vertices}. Ordered finite formal sums of ordered finite products of the form
    \begin{equation} \label{eq:Chern-monomial}
        \cfrak_{k_{1}}(E_{1})\cdot\ldots\cdot\cfrak_{k_{m}}(E_{m}),
    \end{equation}
    where the $\cfrak_{k_{i}}(E_{i})$ are symbols associated with integers $k_{i}\geq 0$, with $\sum_{i}k_{i}=k$, and to vector bundles $E_{i}$ on $X$. We follow the usual convention that the empty sum (resp. product) is $0$ (resp. $1$). This can be rephrased in terms of the free additive monoid on the free multiplicative monoid on the symbols $\cfrak_{k}(E)$. Symbols such as \eqref{eq:Chern-monomial} are called Chern monomials of degree $k$. Finite formal sums of Chern monomials of degree $k$ are called Chern polynomials of homogenous degree $k$.
 
    \item[(E)] \emph{Edges}. Given by symmetries and symbolic isomorphisms, defined as follows:
        \begin{enumerate}
            \item \emph{Symmetries}. If $A$ and $B$ are vertices which differ at most by a reordering of the sum and the products defining the monomials, then there is an associated edge $A\rightarrow  B$. In particular, for each object $A$, there is an associated edge $A\to A$, called the identity symmetry.
            \item \emph{Symbolic isomorphisms}. The edges between vertices determined by performing one of the below operations at a time, where the double-headed arrow will indicate that both directions are allowed. 
 \begin{enumerate}
 \item\label{item:formal-iso-1} \emph{Elementary projection formula}: for every vector bundle $E$,
 \begin{displaymath}
    \cfrak_{0}(E) \leftrightarrow  1.
 \end{displaymath}

  \item\label{item:formal-iso-3} \emph{Rank triviality for the zero vector bundle}: for every integer $k > 0$,
 \begin{displaymath}
     \cfrak_k(0) \leftrightarrow 0.
 \end{displaymath}

 \item\label{item:formal-iso-5} \emph{Whitney isomorphism}: for every exact sequence $0\to E^{\prime}\to E\to E^{\bis}\to 0$ of vector bundles and every integer $k\geq 0$,
 \begin{displaymath}
     \cfrak_k(E) \leftrightarrow  \sum_{i=0}^k \cfrak_{i}(E^{\prime}) \cdot \cfrak_{k-i}(E^{\bis}).
 \end{displaymath}
 
 \item \emph{Isomorphisms of vector bundles}: for every isomorphism of vector bundles $\varphi\colon E\to F$ and every integer $k\geq 0$,  
 \begin{displaymath}
     [\varphi] =  [\varphi]_k \colon \cfrak_k(E) \to \cfrak_k(F).
 \end{displaymath}
 \end{enumerate}
\end{enumerate}
We refer to the morphisms in $\Cfrak^{k}(X)$ as formal isomorphisms. 
\end{itemize}

The category $\CHfrak_+^k(X)$ is defined as a quotient category of $\Cfrak^{k}(X)$, by identifying combinations of formal isomorphisms, as follows: 
\begin{itemize}
     \item We identify different orderings of performing the formal isomorphisms, to the effect that all diagrams that might reasonably commute, do.
    \item The identity symmetry is identified with the identity map. Given symmetries $f\colon A\to B$, $g\colon B\to C$ and $h\colon A\to C$, we identify $g\circ f$ with $h$. In particular, between two given objects, there is at most one symmetry. 
    \item The double-headed arrows compose to the identity and are hence mutual inverses. Also, whenever it makes sense, we identify $[\varphi\circ\psi]_{k}$ with $[\varphi]_{k}\circ[\psi]_{k}$, and $[\id]_{k}$ with the identity. 
    \item We perform the minimal necessary identifications so that $\cfrak_{k}\colon (\Vect_{X},\iso)\to\CHfrak_{+}^{k}(X)$ satisfies the analogs of the axioms of commutative multiplicative functors of\linebreak Definition \ref{def:multiplicative-functor}. For instance, this means that in $\CHfrak_{+}^{k}(X)$, the Whitney symbolic isomorphism \eqref{item:formal-iso-5} above satisfies a compatibility condition with admissible filtrations similar to Definition \ref{def:multiplicative-functor} \eqref{item:mult-funct-4}. For an object of the form $\cfrak_{k}(E)$ this goes as follows. If $E^{\prime\prime}\subseteq E^{\prime}\subseteq E$ is an admissible filtration, then there are two possible reiterated Whitney isomorphisms, depending on whether we first filter by $E^{\prime\prime}$ or $E^{\prime}$. These are, respectively,
    \begin{equation}\label{eq:Whitney-ck-filtration-1}
        \cfrak_{k}(E)\to\sum_{i=0}^{k}\sum_{j=0}^{i}\cfrak_{j}(E^{\prime\prime})\cfrak_{j-i}(E^{\prime}/E^{\prime\prime})\cfrak_{k-i}(E/E^{\prime})
    \end{equation}
    and
    \begin{equation}\label{eq:Whitney-ck-filtration-2}
        \cfrak_{k}(E)\to \sum_{j=0}^{k}\sum_{i=0}^{k-j}\cfrak_{j}(E^{\prime\prime})\cfrak_{i}(E^{\prime}/E^{\prime\prime})\cfrak_{k-j-i}(E/E^{\prime}).
    \end{equation}
    We identify these two isomorphisms, after applying the unique symmetry between the right-hand side of \eqref{eq:Whitney-ck-filtration-1} and the right-hand side of \eqref{eq:Whitney-ck-filtration-2}. We extend to Chern polynomials by performing one such identification at a time.   
 \end{itemize}
We record the following properties, which are immediate from the construction.

\begin{lemma}
The category $\CHfrak^k_{+}(X)$ is a strictly symmetric monoidal category with respect to addition, and all of its morphisms are isomorphisms.
\end{lemma}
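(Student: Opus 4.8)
The plan is to verify both assertions directly from the quotient construction, since the relevant structure has essentially been built in by hand. The category $\CHfrak^k_+(X)$ is defined as a quotient of the free category $\Cfrak^k(X)$ on an explicit graph, so the first task is to exhibit the additive monoidal structure. The free additive monoid structure on vertices (ordered finite formal sums of Chern monomials) gives a bifunctor $\oplus$ on objects; on morphisms one sets $f\oplus g$ to be the edge obtained by juxtaposing the two formal isomorphisms, which is well-defined on the quotient because we have imposed that ``all diagrams that might reasonably commute, do.'' The associativity isomorphism for $\oplus$ is induced by the reordering symmetries, and by the identification that between two objects differing only by reordering there is at most one symmetry, these satisfy Mac Lane's pentagon automatically. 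Likewise the commutativity isomorphism $c_{A,B}\colon A\oplus B\to B\oplus A$ is the relevant reordering symmetry; the hexagon axiom holds because both composites are symmetries with the same source and target, hence are identified in the quotient.

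Next I would check strictness and the groupoid property. Strictness means $c_{A,A}=\id_A$: but $c_{A,A}$ is a symmetry $A\oplus A\to A\oplus A$, and $\id_A$ is also a symmetry (the identity symmetry, which we have identified with the identity map) with the same source and target; by the ``at most one symmetry'' clause these coincide. So the monoidal structure is strictly symmetric. For the groupoid property, every generating edge of $\Cfrak^k(X)$ is either a symmetry (which has an inverse symmetry by the same uniqueness argument, since $g\circ f$ is identified with the symmetry $h$, applied to $f$ and its reverse), a double-headed symbolic isomorphism (which we explicitly declared composes to the identity in both directions), or a bracket isomorphism $[\varphi]_k$ with $\varphi$ an isomorphism of vector bundles (whose inverse is $[\varphi^{-1}]_k$, since $[\varphi\circ\psi]_k$ is identified with $[\varphi]_k\circ[\psi]_k$ and $[\id]_k$ with the identity). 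A general morphism is a composite of such generators, hence invertible. The zero object is the empty sum $0$, with contraction isomorphisms $A\oplus 0\to A\leftarrow 0\oplus A$ given by the evident symmetries, compatible with associativity and commutativity again by the coherence identifications in the quotient.

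I do not expect any genuine obstacle here: the statement is essentially a bookkeeping verification that the quotient relations imposed in the definition of $\CHfrak^k_+(X)$ are exactly the ones needed to make $(\CHfrak^k_+(X),\oplus)$ a strictly symmetric monoidal groupoid. The only point requiring a little care is confirming that the relations are consistent, i.e.\ that they do not accidentally collapse the category more than intended — but this is guaranteed because the free additive monoid on the free multiplicative monoid on the symbols $\cfrak_k(E)$ provides a faithful model of the objects, and the symmetry/coherence relations are precisely Mac Lane's coherence relations for a strictly symmetric monoidal structure, which are known to be consistent. I would phrase the proof as: (i) identify $\oplus$ and its structural isomorphisms from the reordering symmetries; (ii) invoke the quotient relations to get associativity, commutativity, unitality and their coherence; (iii) observe strictness from $c_{A,A}=\id$; (iv) observe every generator, hence every morphism, is invertible.
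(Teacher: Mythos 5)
Your proposal is correct, and it spells out in detail exactly the bookkeeping that the paper itself leaves implicit (the lemma is stated with no proof, as ``immediate from the construction''). The key points — that $\oplus$ is strict concatenation on objects, that the ``at most one symmetry'' clause forces $c_{A,A}=\id_A$ and gives inverses for symmetry edges, that double-headed arrows and bracket morphisms $[\varphi]_k$ are invertible by the imposed relations, and hence all morphisms are — are precisely the checks one would write down.
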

\qed

Finally, we define $\CHfrak_{+}(X)=\prod_{k}\CHfrak^{k}_{+}(X)$, which is thus a strictly symmetric monoidal category with respect to addition, whose morphisms are isomorphisms. The objects of $\CHfrak_{+}(X)$ will be called positive Chern power series, or simply Chern power series if there is no risk of confusion.

\begin{lemma}\label{lemma:cfrak-CHplus}
The category $\CHfrak_{+}(X)$ is canonically equipped with a structure of graded strict rig category. With respect to the product structure, the functor $\cfrak=\sum_{k}\cfrak_{k}\colon (\Vect_{X},\iso)\to\CHfrak_{+}(X)$ is commutative multiplicative in the sense of Definition \ref{def:multiplicative-functor}, and $\cfrak_{0}$ is isomorphic to the constant functor 1.
\end{lemma}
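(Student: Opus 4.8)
The plan is to construct the product structure on $\CHfrak_+(X) = \prod_k \CHfrak_+^k(X)$ directly from the combinatorics of the generators, then verify the axioms. The product is already latent in the definition: Chern monomials are finite formal sums of finite formal products of symbols $\cfrak_{k_i}(E_i)$, so for two monomials one simply concatenates the product words and, after distributing over the two formal sums, one obtains a Chern polynomial whose homogeneous degree is the sum of the two degrees. First I would make this precise: define $\otimes\colon\Cfrak^j(X)\times\Cfrak^k(X)\to\Cfrak^{j+k}(X)$ on vertices by the usual rule $(\sum_a M_a)\otimes(\sum_b N_b)=\sum_{a,b} M_a\cdot N_b$, using the convention \eqref{eq:convention-sum} for the bracketing of the resulting sums (and the analogous left-associated convention for the concatenated products), and extend it to formal isomorphisms by letting a symbolic isomorphism or symmetry on one factor act on the corresponding sub-word, with the identity on the other factor. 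One then checks this passes to the quotient categories $\CHfrak_+^k(X)$, since the identifications imposed in the quotient are, by construction, the minimal ones making all "reasonably commuting" diagrams commute, and the diagrams arising from tensoring a relation on one side with an object on the other are exactly of that form.

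Next I would organize the verification of the rig axioms using the strictness built into the construction. Associativity of $\otimes$ up to a canonical isomorphism, the distributivity isomorphisms $A\otimes(B\oplus C)\to (A\otimes B)\oplus(A\otimes C)$ and the right-hand analogue, and the absorption isomorphisms $0\otimes A\to 0$, $A\otimes 0\to 0$ are all realized by reorderings of formal sums and products, hence by (unique) symmetries in the quotient category; the bimonoidal/bisymmetric compatibilities of $\otimes$ with $\oplus$ follow likewise. The crucial point, which is why the statement asserts a \emph{strict} rig category, is that addition in $\CHfrak_+^k(X)$ is strictly symmetric — $c_{A,A}=\id_A$ — because the symmetries in the quotient are forced to compose uniquely and the identity symmetry is identified with the identity. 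By Remark \ref{rmk:Maclane-strict} and the discussion in \textsection\ref{subsec:rig-and-ring-categories}, a strict rig category is automatically strongly coherent, so Laplaza's $24$ coherence axioms hold without any regularity hypothesis on objects: each such axiom is a diagram of formal isomorphisms between Chern polynomials, and between any two Chern polynomials there is at most one such composite after the imposed identifications, so the diagram commutes. The unit object for $\otimes$ is the Chern monomial $1$ sitting in degree $0$ (i.e. the object $(1,0,0,\ldots)$ of the graded category), with the unit isomorphisms $1\otimes A\simeq A\simeq A\otimes 1$ given by deleting the empty product word.

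For the second assertion, I would observe that the product on $\CHfrak_+(X)$ is built degreewise from graded products $\CHfrak_+^j(X)\times\CHfrak_+^k(X)\to\CHfrak_+^{j+k}(X)$ exactly as in \eqref{eq:graded-product}, so $\CHfrak_+(X)$ is a \emph{graded} strict rig category in the sense of \textsection\ref{subsec:rig-and-ring-categories}. That $\cfrak=\sum_k\cfrak_k$ is commutative multiplicative with respect to $\otimes$ is essentially a tautology: the Whitney symbolic isomorphisms \eqref{item:formal-iso-5} are precisely the data of the isomorphisms $\cfrak(\Sigma)\colon\cfrak(E)\to\cfrak(E')\otimes\cfrak(E'')$ required in Definition \ref{def:multiplicative-functor}\eqref{item:mult-funct-1}, the edges $[\varphi]_k$ provide functoriality with respect to isomorphisms of vector bundles, axiom \eqref{item:mult-funct-2} is witnessed by the identification $\cfrak_0(E)\leftrightarrow 1$ together with $\cfrak_k(0)\leftrightarrow 0$ for $k>0$, axiom \eqref{item:mult-funct-3} holds because the quotient was formed to make $\cfrak_k$ satisfy the axioms of a commutative multiplicative functor, axiom \eqref{item:mult-funct-4} is exactly the identification of \eqref{eq:Whitney-ck-filtration-1} with \eqref{eq:Whitney-ck-filtration-2} imposed in the quotient (extended to polynomials), and commutativity \eqref{item:mult-funct-5} follows since the split exact sequences $\Sigma_{E',E''}$ and $\Sigma_{E'',E'}$ are interchanged by the symmetry isomorphism, which is the unique one. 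Finally $\cfrak_0$ is isomorphic to the constant functor $1$ by the elementary projection formula edges $\cfrak_0(E)\leftrightarrow 1$, compatibly with sums because those edges are themselves compatible with the Whitney isomorphism in degree $0$.

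The main obstacle is bookkeeping rather than conceptual: one must be careful that the "minimal necessary identifications" in the definition of the quotient really do force every Laplaza diagram and every multiplicative-functor diagram to commute without inadvertently collapsing distinct formal isomorphisms that ought to remain distinct (which would endanger later universality statements). The clean way to handle this is to appeal to strong coherence — once strict symmetry of addition is established, Remark \ref{rmk:Maclane-strict} and \cite[Proposition 10]{Laplaza} (as recalled in \textsection\ref{subsec:rig-and-ring-categories}) guarantee the coherence theorem with no regularity restriction, so the burden reduces to checking the strictness of $\oplus$ and that the generating relations (symmetry composition, double-headed arrows inverse to each other, $[\varphi\circ\psi]=[\varphi]\circ[\psi]$, and the filtration compatibility) are consistent, which is immediate from the free-category-modulo-relations presentation.
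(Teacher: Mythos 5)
Your proposal is correct and follows essentially the same route as the paper: define $\otimes$ on Chern monomials and extend to sums by developing, then use that every morphism appearing in the rig-category axioms is a symmetry and that between two given objects of $\CHfrak_+^k(X)$ there is at most one symmetry, so all the required diagrams commute; the second assertion is by construction of the quotient. One small wording caveat: invoking \emph{strong coherence} of strict rig categories (Remark~\ref{rmk:Maclane-strict}) to deduce that Laplaza's axioms hold is logically backwards, since those axioms are what must be verified before the category qualifies as a rig category at all — but the direct argument you give immediately afterwards (uniqueness of symmetries between Chern polynomials) is the correct one and makes the appeal to strong coherence superfluous.
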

\begin{proof}
For the first claim, we need to define graded products
\begin{equation}\label{eq:product+k}
    \CHfrak_+^k(X) \times \CHfrak_+^{k'}(X)  \to \CHfrak_+^{k+k'}(X).
\end{equation}
On objects, these are defined by multiplication of Chern monomials, and extended to sums of monomials by developing as in Remark \ref{rem:orderdevelopproduct} \eqref{item:orderdevelopproduct-1}. It is easily checked that this defines a bidistributive product functor $\CHfrak_{+}(X) \times \CHfrak_{+}(X) \to \CHfrak_{+}(X)$ by the formulas in \eqref{eq:graded-product}, and that with this product, $\CHfrak_{+}(X)$ is naturally a graded strict rig category. There are two key observations for the proof: \emph{i)} by construction, between two given objects of $\CHfrak^{k}_{+}(X)$ there is at most one symmetry, and \emph{ii)} in the specific case of $\CHfrak_{+}(X)$, all the morphisms involved in the axioms of a rig category are induced by symmetries.

The second claim is clear since the construction of $\CHfrak_{+}(X)$ has in particular been tailored so that $\cfrak$ is a commutative multplicative functor, with $\cfrak_{0}$ isomorphic to the constant functor 1. 
\end{proof}

\begin{remark}
In $\CHfrak_{+}(X)$ there are a priori two isomorphisms $\cfrak(0) \to \cfrak(0) \cfrak(0) $, one given by the identification $\cfrak(0)\simeq 1$ and one from the Whitney isomorphism of the zero short exact sequence. These are implicitly identified since $\cfrak$ is a commutative multiplicative functor. However, we notice that the one deduced from the Whitney isomorphism does not imply the other, because $\CHfrak_{+}(X)$ is not a Picard category. 
\end{remark}

The following lemma describes the piece of degree 0 of $\CHfrak_{+}(X)$. It points towards the pathology of the construction anticipated at the beginning of this subsection: if $X$ decomposes as a disjoint union of schemes, then $\CHfrak_{+}(X)$ can not detect this decomposition.

\begin{lemma}\label{lemma:CHplus0-N}
Let $\NBbb$ be the set of natural numbers, considered as a rig category. Then, there is a natural equivalence of rig categories $\NBbb\to\CHfrak_{+}^{0}(X)$.
\end{lemma}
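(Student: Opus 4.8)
The plan is to exhibit an explicit functor $\NBbb \to \CHfrak_+^0(X)$, check it is essentially surjective and fully faithful, and verify it respects the rig structure. The functor on objects sends $n \in \NBbb$ to the Chern polynomial $1 + 1 + \cdots + 1$ ($n$ times), that is, the $n$-fold sum of the empty product, with $0$ going to the empty sum. Here I use the convention from the construction that the empty product is the monomial $1$ of degree $0$. On morphisms there is nothing to do on the $\NBbb$ side since $\NBbb$ as a rig category has only identity morphisms; the content is entirely in verifying that the target has no other morphisms between these particular objects either, i.e. that $\CHfrak_+^0(X)(n \cdot 1, m \cdot 1)$ is empty unless $n = m$ and is a singleton when $n = m$.

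First I would establish essential surjectivity. A vertex of $\Cfrak^0(X)$ is a finite formal sum of Chern monomials $\cfrak_{k_1}(E_1)\cdots\cfrak_{k_m}(E_m)$ of total degree $0$, so every $k_i = 0$. Using the elementary projection formula edge \eqref{item:formal-iso-1}, each symbol $\cfrak_0(E_i)$ is isomorphic to $1$, so in $\CHfrak_+^0(X)$ every monomial of degree $0$ is isomorphic to the monomial $1$ (an empty product) — here one also uses the rank triviality edge for the case $E_i = 0$, which gives $\cfrak_0(0) \leftrightarrow 1$ via combining \eqref{item:formal-iso-1} applied appropriately, or directly since $\cfrak_0$ is the constant functor $1$. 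Hence any object of $\CHfrak_+^0(X)$ is isomorphic to an $n$-fold sum of $1$'s for some $n \geq 0$, which is the image of $n$. This shows essential surjectivity.

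For full faithfulness, the key point is that in $\CHfrak_+^0(X)$ there is at most one morphism between any two objects, and a morphism exists between $n \cdot 1$ and $m \cdot 1$ only if $n = m$. The uniqueness is built into the quotient: all morphisms in $\CHfrak^0_+(X)$ are composites of symmetries and symbolic isomorphisms, and by construction between two given objects there is at most one symmetry, while the symbolic isomorphisms in degree $0$ are exhausted by the elementary projection formula \eqref{item:formal-iso-1}, rank triviality \eqref{item:formal-iso-3}, isomorphisms of vector bundles (which in degree $0$ become identities after applying the projection formula, since $\cfrak_0$ is constant), and the Whitney isomorphism $\cfrak_0(E) \leftrightarrow \cfrak_0(E')\cfrak_0(E'')$ which in degree $0$ is just $1 \leftrightarrow 1 \cdot 1$, an associativity/unit isomorphism for the product. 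None of these changes the number of summands. One then argues, by the coherence identifications imposed in the quotient (all diagrams that might reasonably commute do, the double-headed arrows are mutual inverses, $\cfrak_0$ is the constant functor $1$), that the number of summands is a well-defined invariant on isomorphism classes and that the endomorphism monoid of $n \cdot 1$ is trivial. Thus $\Hom_{\CHfrak_+^0(X)}(n\cdot 1, m \cdot 1)$ is a singleton if $n = m$ and empty otherwise, matching $\Hom_\NBbb(n,m)$.

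Finally I would check compatibility with the rig structure: the sum of $n \cdot 1$ and $m \cdot 1$ is $(n+m) \cdot 1$, matching addition in $\NBbb$; the product $\CHfrak_+^0(X) \times \CHfrak_+^0(X) \to \CHfrak_+^0(X)$ sends $(n \cdot 1, m \cdot 1)$ to the developed product, which after using distributivity and $1 \cdot 1 \simeq 1$ is $(nm) \cdot 1$, matching multiplication in $\NBbb$; the unit object $1$ corresponds to $1 \in \NBbb$ and the zero object (empty sum) to $0 \in \NBbb$. All the structural isomorphisms on the $\CHfrak_+^0(X)$ side are symmetries, hence unique, so they automatically match the (trivial) ones on $\NBbb$. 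I expect the main obstacle to be the bookkeeping in full faithfulness — specifically, carefully arguing that the coherence identifications in the quotient category do not accidentally identify $n \cdot 1$ with $m \cdot 1$ for $n \neq m$, nor create nontrivial automorphisms; this is where one must use that all relevant morphisms descend from symmetries together with the projection-formula and Whitney edges, none of which alter the summand count, and invoke the at-most-one-symmetry property together with the commutative-multiplicative-functor axioms imposed on $\cfrak_0$.
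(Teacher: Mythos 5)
Your proposal is correct and follows essentially the same route as the paper: the paper constructs the same functor $n\mapsto n\cdot 1$, then builds an explicit quasi-inverse by sending $\cfrak_0(E)\mapsto 1$ and all morphisms to identities, observing that after the canonical identifications $\cfrak_0(E)\simeq 1$ every morphism in $\CHfrak_+^0(X)$ becomes an identity of some $n\cdot 1$ — which is exactly what your essential-surjectivity and full-faithfulness argument amounts to. One small slip worth fixing: the rank triviality edge \eqref{item:formal-iso-3} is only posited for $k>0$, so it says nothing about $\cfrak_0(0)$; the isomorphism $\cfrak_0(0)\leftrightarrow 1$ comes purely from the elementary projection formula \eqref{item:formal-iso-1} applied to $E=0$, as you in fact note in your parenthetical fallback.
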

\begin{proof}
We define a functor $\NBbb\to\CHfrak_{+}^{0}(X)$ by sending a natural integer $n\geq 0$ to $n$ times the sum of the object 1, and all the morphisms to identities. This is clearly a functor of rig categories. A functor in the other direction, with the property of being compatible with the rig structures, is determined by the relationship $\cfrak_{0}(E)\simeq 1$ for every vector bundle, and sending all the morphisms to identities. The construction of the morphisms in $\CHfrak^{0}_{+}(X)$ is such that, via the isomorphisms $\cfrak_{0}(E)\simeq 1$, all the isomorphisms correspond to identities of objects of the form $n\cdot 1$. From this, it follows that the exhibited functors are mutually quasi-inverse.
\end{proof}
\begin{remark}\label{rmk:pre-Chern-empty}
If $X$ is the empty scheme, then all the vector bundles are isomorphic to $0$. This entails that all the categories $\CHfrak^{k}_{+}(X)$, for $k\geq 1$, are equivalent to the zero group $\lbrace 0\rbrace$ seen as a symmetric monoidal category. Hence, by the lemma, the whole category $\CHfrak_{+}(X)$ is equivalent to $\NBbb$.
\end{remark}

\subsubsection{Construction of the universal Chern category} The category $\CHfrak_{+}(X)$ fulfills the assumptions of Proposition \ref{prop:rig-to-ring-2}. Thus, we can make the following definition.
\begin{definition}
The universal Chern category of $X$, denoted by $\CHfrak_{u}(X)$, is the graded Picardification of $\CHfrak_{+}(X)$. The objects of this category are called Chern power series.
\end{definition}
By construction of the graded Picardification (cf. Definition \ref{def:gr-picardification}), the category $\CHfrak_{u}(X)$ is the product $\prod_{k}\CHfrak_{u}^{k}(X)$, where $\CHfrak_{u}^{k}(X)$ is the Picardification of $\CHfrak_{+}^{k}(X)$. 

The functor $\cfrak\colon (\Vect_{X},\iso)\to \CHfrak_{+}(X)$ followed by $\CHfrak_{+}(X)\to\CHfrak_{u}(X)$ gives rise to a commutative multiplicative functor $ (\Vect_{X},\iso)\to \CHfrak_{u}(X)$, with respect to the product structure on $\CHfrak_{u}(X)$. By Lemma \ref{lemma:cfrak-CHplus}, it actually lands in $1+\CHfrak_{u}(X)^{+}$, which is a stritly commutative Picard category. Hence, there is an induced functor $V(X)\to 1+\CHfrak_{u}(X)^{+}$, that we still denote by $\cfrak$. The component of degree 0 is necessarily isomorphic to the constant funcor 1, and hence $\cfrak\colon V(X)\to\CHfrak_{u}(X)$ is a Chern functor.

\begin{theorem}\label{thm:CH-is-universal}
The universal Chern category $\CHfrak_{u}(X)$, together with the Chern functor \linebreak $\cfrak\colon V(X)\to\CHfrak_{u}(X)$, is universal with respect to graded strict ring categories equipped with Chern functors, that is \eqref{eq:Chern-category-equivalence} is an equivalence of categories.
\end{theorem}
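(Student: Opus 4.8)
The plan is to deduce the universal property of $\CHfrak_{u}(X)$ by stringing together three universal properties that are already available: that of the graded Picardification (Proposition~\ref{prop:rig-to-ring-2}), the one implicit in the generators-and-relations presentation of $\CHfrak_{+}(X)$, and that of the virtual category (Theorem~\ref{thm:delignevirtual}).

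First, since $\CHfrak_{u}(X)=V_{\mathrm{gr}}(\CHfrak_{+}(X))$ by definition and $\Rcal$ is a graded ring category, Proposition~\ref{prop:rig-to-ring-2}\,(2) provides an equivalence
\[
\Hom_{\mathrm{graded\ ring}}(\CHfrak_{u}(X),\Rcal)\ \simrightarrow\ \Hom_{\mathrm{graded\ rig}}(\CHfrak_{+}(X),\Rcal)
\]
given by precomposition with the functor $\CHfrak_{+}(X)\to\CHfrak_{u}(X)$. So I would reduce to showing that precomposition with the commutative multiplicative functor $\cfrak\colon(\Vect_{X},\iso)\to\CHfrak_{+}(X)$ of Lemma~\ref{lemma:cfrak-CHplus} induces an equivalence between $\Hom_{\mathrm{graded\ rig}}(\CHfrak_{+}(X),\Rcal)$ and $\Hom_{\mathrm{Chern}}(V(X),\Rcal)$, noting that---by the construction of $\cfrak_{X}$---the composite of the two equivalences is then exactly $\Xi$. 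Here one uses that for a functor of graded rig categories $G\colon\CHfrak_{+}(X)\to\Rcal$, the composite $G\circ\cfrak$ is again a commutative multiplicative functor for the product structures, and that it lands in $1+\Rcal^{+}$ because $\cfrak_{0}$ is trivial and $G$ preserves the $\otimes$-unit; via Remark~\ref{rmk:superficial-remarks-Chern-functors}\,\eqref{rmk:supremChfun-1} and Theorem~\ref{thm:delignevirtual} this is the same datum as a Chern functor $V(X)\to\Rcal$.

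Next, I would unwind the explicit construction of $\CHfrak_{+}(X)$. Because each $\CHfrak_{+}^{k}(X)$ is a quotient of the free category on the graph of \textsection\ref{subsubsec:intermediate-category}, to give a functor of graded rig categories $G\colon\CHfrak_{+}(X)\to\Rcal$ is the same as to choose an object of $\Rcal_{k}$ for every symbol $\cfrak_{k}(E)$ together with, for each generating edge, a realizing morphism of $\Rcal$---the symmetry constraints of $(\Rcal,\oplus)$ and $(\Rcal,\otimes)$ for the symmetries; a trivialization $G(\cfrak_{0}(E))\simeq 1_{\Rcal_{0}}$ for the elementary projection formula; an isomorphism $G(\cfrak_{k}(0))\simeq 0_{\Rcal_{k}}$ for rank triviality of the zero bundle; Whitney isomorphisms $G(\cfrak_{k}(E))\simeq\sum_{i}G(\cfrak_{i}(E^{\prime}))\otimes G(\cfrak_{k-i}(E^{\bis}))$; and the morphisms $G(\cfrak_{k}(\varphi))$---subject to the relations imposed in the quotient, and then extended over Chern monomials and Chern polynomials by multiplicativity and additivity with the convention of Remark~\ref{rem:orderdevelopproduct}. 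Since $\Rcal$ is strict, hence strongly coherent (\textsection\ref{subsec:rig-and-ring-categories}), the identifications coming from ``all diagrams that may reasonably commute, do'' are automatically fulfilled by the structural isomorphisms of $\Rcal$; the identifications of $G(\cfrak_{k}(\varphi\circ\psi))$ with $G(\cfrak_{k}(\varphi))\circ G(\cfrak_{k}(\psi))$ and $G(\cfrak_{k}(\id))$ with the identity are the functoriality of a multiplicative functor; and the reiterated-Whitney relation \eqref{eq:Whitney-ck-filtration-1}--\eqref{eq:Whitney-ck-filtration-2} holds precisely because of the admissible-filtration axiom~\eqref{item:mult-funct-4} in Definition~\ref{def:multiplicative-functor}. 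Thus the above data is exactly that of a commutative multiplicative functor $(\Vect_{X},\iso)\to 1+\Rcal^{+}$, i.e.\ of a Chern functor on $V(X)$; this yields essential surjectivity of precomposition with $\cfrak$. For full faithfulness, a natural transformation $G\Rightarrow G'$ of functors of graded rig categories is determined by its components on the generators $\cfrak_{k}(E)$, and compatibility with the generating edges is exactly the condition (Remark~\ref{rem:detfunctor}\,\eqref{rem:detfunctor-0}) that the induced transformation $G\circ\cfrak\Rightarrow G'\circ\cfrak$ be a morphism of multiplicative functors; this gives a bijection of $\Hom$-sets, completing the argument.

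The main obstacle is the relation-by-relation bookkeeping in the second step: one must match the identifications defining the quotient $\CHfrak_{+}^{k}(X)$ with the axioms of a commutative multiplicative functor valued in a strongly coherent graded ring category. The two points requiring care are the appeal to strong coherence of $\Rcal$---so that the Laplaza- and Mac~Lane-type diagrams commute without any regularity hypothesis, mirroring the ``at most one symmetry between two objects'' feature of $\CHfrak_{+}^{k}(X)$---and the use of axiom~\eqref{item:mult-funct-4} to accommodate the reiterated Whitney isomorphism; the remaining verifications are routine.
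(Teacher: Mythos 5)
Your proposal is correct and follows essentially the same route as the paper: the paper likewise reduces via the universal property of the graded Picardification (Proposition~\ref{prop:rig-to-ring-2}) and then matches the generators and relations of $\CHfrak_{+}(X)$ against the axioms of a Chern (i.e.\ commutative multiplicative) functor, invoking the strong coherence of $\Rcal$ and the admissible-filtration axiom at exactly the points you identify. The only difference is presentational—the paper builds an explicit quasi-inverse $\Theta$ to $\Xi$ and checks $\Theta\circ\Xi\simeq\id$ and $\Xi\circ\Theta\simeq\id$, whereas you factor $\Xi$ as a composite of two equivalences—but the substance is identical.
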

\begin{proof}

We just need to prove that \eqref{eq:Chern-category-equivalence} is an equivalence of categories. First, composition with $\cfrak$ indeed induces a functor $\Xi$ as in \eqref{eq:Chern-category-equivalence}. For this, we have to see that if $F\colon\CHfrak_{u}(X)\to\Rcal$ is a functor of graded strict ring categories, then $F\circ\cfrak$ is a symmetric monoidal functor whose component of degree 0 is isomorphic to the constant functor $1_{\Rcal}$. The first property is clear, while the second one is deduced from $\cfrak_{0}\simeq 1$ together with $F(1)\simeq 1_{\Rcal}=(1_{\Rcal_{0}},0_{\Rcal_{1}},\ldots)$, which is part of the bimonoidal structure of $F$.

We need to prove that the Chern functor $\cfrak: V(X) \to \CHfrak_{u}(X)$ is universal, in the sense that $\Xi$ in \eqref{eq:Chern-category-equivalence} is an equivalence of categories. For this, we construct an inverse functor to $\Xi$. Given a Chern functor $F\colon V(X)\to \Rcal$, with graded components $F_k$, we define a functor $ \Theta (F) \colon  \CHfrak_+(X) \to \Rcal$, given on objects by the formula 
\begin{displaymath}
    \Theta(F)\left(\sum_I \prod_{i\in I} c_{k_i}(E_i)\right) = \sum_I \prod_{i\in I} F_{k_i}(E_i).
\end{displaymath}
In this expression, the bracketing convention \eqref{eq:convention-sum} is chosen both for the products and the sum. We also fix the convention that the object $0$ (resp. $1$) is sent to $0_{\Rcal}$ (resp. $1_{\Rcal}$). Since $\Rcal$ is a graded ring category, to define $\Theta(F)$ on morphisms it is enough to deal with symbolic isomorphisms and then check that the necessary relations are fulfilled. We associate with a symbolic isomorphism in $\Cfrak^{k}(X)$ the analogous isomorphism in $\Rcal_{k}$, involving objects $F_{k}(E)$ and deduced from the Chern functor property of $F$. The latter also ensures that all the necessary relations between symbolic isomorphism are satisfied, so that we indeed obtain a functor on $\CHfrak_{+}(X)$. 

For the sake of clarity, let us elaborate on the particular case of Whitney-type isomorphisms. Let $0\to E^{\prime}\to E\to E^{\bis}\to 0$ be an exact sequence of vector bundles. Consider a symbolic isomorphism $\cfrak_{k}(E)\simeq \sum_{i}\cfrak_{i}(E^{\prime})\cdot\cfrak_{k-i}(E^{\bis})$ in the category $\Cfrak^{k}(X)$. To it, we associate the isomorphism\linebreak  $F_{k}(E)\simeq\sum_{i} F_{i}(E^{\prime})\cdot F_{k-i}(E^{\bis})$ in $\Rcal_{k}$, extracted from the degree $k$ component of the isomorphism $F(E)\simeq F(E^{\prime})\cdot F(E^{\bis})$ in $\Rcal$. In order to descend to the quotient category $\CHfrak_{+}^{k}(X)$, we need to check the compatibility of the construction with isomorphisms of exact sequences and admissible filtrations. This is automatic, because $F$ is a Chern functor, thus defined on the virtual category $V(X)$. 

From the very construction of $\Theta(F)$ and the fact that $\Rcal$ is strongly coherent, it follows that $\Theta(F)$ has a natural structure of functor of graded rig categories. The strong coherence is used here since the condition defining the bimonoidal structure on $F$ concerns arbitrary objects, and not only the regular ones as in Laplaza's coherence theorem. By Proposition \ref{prop:rig-to-ring-2}, we can extend $\Theta(F)$ to a functor of graded ring categories $\CHfrak_{u}(X)\to \Rcal$. The assignment $F\mapsto\Theta(F)$ is compatible with natural transformations of Chern functors $F\to F'$. This is a formal verification before passing to the graded Picardification. After this operation, the claim follows from the second part of Proposition \ref{prop:rig-to-ring-2}.

Given a functor of graded strict ring categories $G\colon\CHfrak_{u}(X)\to \Rcal$, we claim that the functor $(\Theta \circ \Xi)(G)$ is naturally equivalent to $G$. Since both are functors of graded strict ring categories, and the morphisms in $\CHfrak_{+}(X)$ are generated by symbolic isomorphisms, we can restrict to studying the action on objects of the form $\cfrak_{k}(E)$. By construction of $\Theta$, evaluating $(\Theta \circ \Xi)(G)$ on $\cfrak_{k}(E)$ yields $G(\cfrak_{k}(E))$, functorially in the symbolic isomorphisms. The isomorphism $(\Theta\circ\Xi)(G)\to G$ just exhibited clearly preserves natural transformations $G\to G'$ of functors of graded strict ring categories, and hence defines a natural transformation $\Theta \circ \Xi \to \id.$

It is straightforward to construct a natural transformation $ \Xi  \circ \Theta \to \id$ in the same vein.

\end{proof}

Below is a counterpart of Lemma \ref{lemma:CHplus0-N} for the universal Chern category.
\begin{lemma}\label{lemma:CHu0-Z}
Let $\ZBbb$ be the set of integers, considered as a ring category. Then, there is a natural functor of ring categories $\ZBbb\to\CHfrak_{u}^{0}(X)$, which is an equivalence of categories.
\end{lemma}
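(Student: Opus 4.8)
The plan is to leverage Lemma \ref{lemma:CHplus0-N}, which identifies $\CHfrak_{+}^{0}(X)$ with $\NBbb$ as a rig category, together with the fact that $\CHfrak_{u}^{0}(X)$ is by construction the Picardification of $\CHfrak_{+}^{0}(X)$. Since Picardification is a functor (Theorem \ref{thm:VA}) and $\ZBbb$ is visibly the group completion — equivalently, the Picardification — of the discrete symmetric monoidal category $\NBbb$, the desired equivalence should follow by transporting along the equivalence $\NBbb \simeq \CHfrak_{+}^{0}(X)$. I would phrase the proof in three short steps.

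First I would note that the discrete category $\NBbb$, viewed as a symmetric monoidal (in fact strict rig) category with only identity morphisms, is a groupoid, so Theorem \ref{thm:VA} applies and its Picardification $V(\NBbb)$ exists. Then I would argue that $V(\NBbb) \simeq \ZBbb$: one way is to observe directly that the Grayson--Quillen completion $\NBbb^{-1}\NBbb$ has object set $\NBbb \times \NBbb$ with $\pi_0$ the group completion $\ZBbb$ of the monoid $\NBbb$, and $\pi_1(V(\NBbb)) = \pi_1$ of $B(\NBbb^{-1}\NBbb)$ vanishes since $\NBbb^{-1}\NBbb$ has only the morphisms forced by the completion and contributes no automorphisms of the zero object (all Hom-sets are singletons up to the equivalence relation); alternatively, invoke the universal property — symmetric monoidal functors from $\NBbb$ into a commutative Picard category $\Pcal$ are the same as monoid maps $\NBbb \to \pi_0(\Pcal)$ with a choice of trivializations, which is the same as group maps $\ZBbb \to \pi_0(\Pcal)$, so $\ZBbb$ with its discrete structure satisfies the universal property of $V(\NBbb)$. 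By Lemma \ref{lemma:isoPicard}, a functor of Picard categories inducing isomorphisms on $\pi_0$ and $\pi_1$ is an equivalence, so the canonical functor $\ZBbb \to V(\NBbb)$ is an equivalence, and it is a functor of ring categories because the multiplicative structure is inherited on both sides.

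Second, I would apply functoriality of Picardification (Theorem \ref{thm:VA}(2), together with Proposition \ref{prop:rig-to-ring-1} to keep track of the multiplicative structure) to the equivalence of rig categories $\NBbb \to \CHfrak_{+}^{0}(X)$ of Lemma \ref{lemma:CHplus0-N}: this yields an equivalence of ring categories $V(\NBbb) \to V(\CHfrak_{+}^{0}(X)) = \CHfrak_{u}^{0}(X)$. Composing with the equivalence $\ZBbb \to V(\NBbb)$ from the first step gives the asserted equivalence of ring categories $\ZBbb \to \CHfrak_{u}^{0}(X)$, and one checks it is the natural one, namely the functor determined by sending $1$ to the unit object $1_{\CHfrak_{u}^{0}(X)}$ (equivalently, by the images of the symbols $\cfrak_0(E) \simeq 1$) and all morphisms to identities; this is forced because the composite $\NBbb \to \CHfrak_{+}^{0}(X) \to \CHfrak_{u}^{0}(X)$ is the tautological one.

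I do not expect a serious obstacle here; the only mild subtlety is bookkeeping the multiplicative (ring category) structure through the Picardification, but this is exactly what Proposition \ref{prop:rig-to-ring-1} is designed to handle, and the rig structure on $\NBbb$ is the ordinary one on natural numbers so its Picardification has the ordinary ring structure on $\ZBbb$. If one prefers to avoid re-deriving $V(\NBbb) \simeq \ZBbb$ from the classifying-space model, the cleanest route is to cite the universal property of $V(\NBbb)$ in Theorem \ref{thm:VA} and match it against the evident universal property of $\ZBbb$ among commutative Picard categories receiving a symmetric monoidal functor from $\NBbb$; either way the argument is short.
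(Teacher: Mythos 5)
Your proposal is correct and takes essentially the same approach as the paper, which simply notes that the claim follows from Lemma~\ref{lemma:CHplus0-N} by passing to Picardifications. You merely spell out the intermediate step $V(\NBbb)\simeq\ZBbb$ in more detail, which the paper leaves implicit.
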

\begin{proof}
The claim results from Lemma \ref{lemma:CHplus0-N} by passing to the Picardifications.
\end{proof}
\begin{remark}
Continuing with Remark \ref{rmk:pre-Chern-empty}, we derive from the lemma that if $X$ is the empty scheme, then $\CHfrak_{u}(X)$ is equivalent to $\ZBbb$.
\end{remark}

\subsubsection{Functoriality} The universal property of $\CHfrak_{u}(X)$ ensures the functorial behavior with respect to morphisms of schemes. For the formulation, in the next statement, we return to the notation $\cfrak_{X}$ for the universal Chern functor for $X$. We recall that the pullback functoriality of vector bundles induces a pullback functoriality for the virtual categories of schemes. The standard cleavage for the pullback of vector bundles induces a cleavage for the pullback on virtual categories. 

\begin{corollary}\label{cor:CH-pullback}
The pullback functoriality for the virtual categories of schemes induces a pullback functoriality for the universal Chern categories. More precisely:
\begin{enumerate}
    \item Let $g\colon X^{\bis}\to X^{\prime}$ be a morphism of schemes. There exists a functor of graded ring categories $g^{\ast}\colon \CHfrak_{u}(X^{\prime}) \to \CHfrak_{u}(X^{\bis})$, together with an isomorphism of functors of commutative \linebreak Picard categories $g^{\ast}\circ\cfrak_{X^{\prime}} \simeq \cfrak_{X^{\bis}}\circ g^{\ast}$.
    \item If $h\colon X^{\tris}\to X^{\bis}$ is another morphism, there is a canonical isomorphism $(g\circ h)^{\ast}\simeq h^{\ast} \circ g^{\ast}$ of functors of graded ring categories $\CHfrak_{u}(X^{\prime})\to\CHfrak_{u}(X^{\tris})$, characterized by the commutativity of the following diagram:
    \begin{equation}\label{eq:CH-pullback-diagram}
                \xymatrix{ h^\ast \circ g^\ast \circ \cfrak_{X^{\prime}} \ar[r] \ar[d] & h^\ast \circ\cfrak_{X^{\bis}}\circ g^{\ast}  \ar[r] & \cfrak_{X^{\tris}}\circ h^{\ast}\circ g^{\ast}\ar[d] \\
             (g\circ h)^{\ast} \circ\cfrak_{X^{\prime}}\ar[rr] &    & \cfrak_{X^{\tris}}\circ(g\circ h)^{\ast},
             } 
       \end{equation}
    where the vertical arrow on the right is induced by the isomorphism of functors\linebreak  $(g\circ h)^{\ast}\simeq h^{\ast}\circ g^{\ast}$ between virtual categories $V(X^{\prime})\to V(X^{\tris})$.
    \item The collection of pullback functors between universal Chern categories is naturally cloven.
\end{enumerate}
\end{corollary}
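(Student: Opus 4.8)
The plan is to deduce everything formally from the universal property of $\CHfrak_{u}$ proved in Theorem \ref{thm:CH-is-universal}, together with the already-established cleavage of the pullback on virtual categories. First I would fix, for a morphism $g\colon X^{\bis}\to X^{\prime}$, the composite Chern functor $\cfrak_{X^{\bis}}\circ g^{\ast}\colon V(X^{\prime})\to\CHfrak_{u}(X^{\bis})$, where $g^{\ast}\colon V(X^{\prime})\to V(X^{\bis})$ is the pullback on virtual categories and $\cfrak_{X^{\bis}}\colon V(X^{\bis})\to\CHfrak_{u}(X^{\bis})$ is the universal Chern functor. One must check this composite is again a Chern functor into the graded strict ring category $\CHfrak_{u}(X^{\bis})$: it is symmetric monoidal as a composite of symmetric monoidal functors, and its degree-$0$ component is isomorphic to the constant functor $1$ because $(\cfrak_{X^{\bis}})_{0}\simeq 1$ and $g^{\ast}$ is monoidal. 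By the equivalence \eqref{eq:Chern-category-equivalence} applied to $\Rcal=\CHfrak_{u}(X^{\bis})$, there is then a functor of graded ring categories $g^{\ast}\colon\CHfrak_{u}(X^{\prime})\to\CHfrak_{u}(X^{\bis})$, unique up to unique isomorphism, together with the natural isomorphism $g^{\ast}\circ\cfrak_{X^{\prime}}\simeq\cfrak_{X^{\bis}}\circ g^{\ast}$. This proves (1).

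For (2), given $h\colon X^{\tris}\to X^{\bis}$, both $h^{\ast}\circ g^{\ast}$ and $(g\circ h)^{\ast}$ are functors of graded ring categories $\CHfrak_{u}(X^{\prime})\to\CHfrak_{u}(X^{\tris})$. By the full faithfulness part of the equivalence \eqref{eq:Chern-category-equivalence} (i.e.\ the discussion after Definition \ref{def:Picardification} and Definition \ref{def:universal-chern-category}), a morphism between two such functors is determined by, and can be produced from, a compatible natural transformation between their composites with $\cfrak_{X^{\prime}}$. So I would exhibit a natural isomorphism between $h^{\ast}\circ g^{\ast}\circ\cfrak_{X^{\prime}}$ and $(g\circ h)^{\ast}\circ\cfrak_{X^{\prime}}$ by going around the top of diagram \eqref{eq:CH-pullback-diagram}: use the degree-$1$-type iso from (1) for $g$, then pull back along $h$, then the iso from (1) for $h$, landing in $\cfrak_{X^{\tris}}\circ h^{\ast}\circ g^{\ast}$; then apply the cleavage isomorphism $h^{\ast}\circ g^{\ast}\simeq(g\circ h)^{\ast}$ on virtual categories to reach $\cfrak_{X^{\tris}}\circ(g\circ h)^{\ast}$, which by (1) for $g\circ h$ is identified with $(g\circ h)^{\ast}\circ\cfrak_{X^{\prime}}$. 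Full faithfulness then produces the unique $(g\circ h)^{\ast}\simeq h^{\ast}\circ g^{\ast}$ making \eqref{eq:CH-pullback-diagram} commute, which is precisely the asserted characterization.

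For (3), the cleavage conditions — that the isomorphism $(g\circ h)^{\ast}\simeq h^{\ast}\circ g^{\ast}$ is the identity when $g$ or $h$ is an identity, and that the two a priori different isomorphisms $(k\circ g\circ h)^{\ast}\simeq h^{\ast}\circ g^{\ast}\circ k^{\ast}$ agree — follow by transporting the corresponding properties from the cleavage on virtual categories. Concretely, each of these identities is an identity of morphisms between functors of graded ring categories out of $\CHfrak_{u}(X^{\prime})$, so by full faithfulness it suffices to verify it after composing with $\cfrak_{X^{\prime}}$; there, the characterizing diagrams \eqref{eq:CH-pullback-diagram} reduce the check to the already-known cleavage identities for the $(-)^{\ast}$ on virtual categories together with the coherence (uniqueness) built into the $g^{\ast}$ from (1). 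Finally, I would remark that the construction is obviously compatible with isomorphisms of schemes and hence functorial in the sense of \textsection\ref{subsec:conventions-notations}, as is standard and not spelled out.

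\textbf{Main obstacle.} The genuinely delicate point is the bookkeeping in (2) and (3): keeping track of which isomorphisms are determined uniquely (by full faithfulness of $\Xi$) versus merely chosen, and verifying that the composite around the top of \eqref{eq:CH-pullback-diagram} is natural in $\cfrak_{X^{\prime}}$ as a transformation of functors of \emph{commutative Picard categories}, so that the full faithfulness of $\Xi$ genuinely applies. Everything else is a formal consequence of Theorem \ref{thm:CH-is-universal} and the pre-existing cleavage on virtual categories; there is no new geometric or categorical input, only a careful diagram chase, which I would largely leave to the reader in the manner of the surrounding proofs.
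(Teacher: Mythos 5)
Your proposal is correct and follows essentially the same route as the paper's proof: apply the universal property of Theorem \ref{thm:CH-is-universal} to the composite Chern functor $\cfrak_{X^{\bis}}\circ g^{\ast}$ to get (1), use full faithfulness of $\Xi$ together with the characterizing diagram \eqref{eq:CH-pullback-diagram} to get (2), and transport the cleavage identities from the virtual categories via full faithfulness for (3). The paper leaves the same ``bookkeeping'' steps implicit that you flag as the main obstacle; your elaboration is consistent with its intent.
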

\begin{proof}
The claims follow from the fact that \eqref{eq:Chern-category-equivalence} is an equivalence of categories, along the lines of the discussion following Definition \ref{def:Picardification}. 

The composition $V(X^{\prime})\overset{g^{\ast}}{\to} V(X^{\bis})\overset{\cfrak_{X^{\bis}}}{\to}\CHfrak_{u}(X^{\bis})$  is a Chern functor. By the universal property, established in Theorem \ref{thm:CH-is-universal}, it corresponds to a functor of graded ring categories \linebreak  $g^\ast \colon \CHfrak_{u}(X^{\prime}) \to \CHfrak_{u}(X^{\bis})$ as in the first point of the corollary. 

The isomorphism $(g\circ h)^{\ast}\simeq h^{\ast}\circ g^{\ast}$ of the second claim is deduced from the corresponding isomorphism of functors between virtual categories, by imposing the commutativity of the diagram and using that \eqref{eq:Chern-category-equivalence} is fully faithful. With this construction, the cleavage for the pullback functors between virtual categories is automatically inherited by the pullback functors between universal Chern categories. 
\end{proof}
\begin{remark}
\begin{enumerate}\label{rem:pullback-chern}
    \item Because the rationalization of graded ring categories is functorial, we observe that the pullback described in the previous corollary induces a functor \linebreak $f^{\ast}\colon\CHfrak_{u}(X)_{\QBbb}\to\CHfrak_{u}(X^{\prime})_{\QBbb}$, with analogous properties. 
    \item By the very definition of a functor of graded ring categories, the pullback functor $f^{\ast}$ is induced by functors $\CHfrak^{k}_{u}(X)\to \CHfrak_{u}^{k}(X^{\prime})$, which we still denote by $f^{\ast}$.
\end{enumerate}
\end{remark}

\subsubsection{A coherence property}\label{subsubsec:coherence-property}
Because the universal Chern category $\CHfrak_{u}(X)$ is a graded strict ring category, it is strongly coherent. Due to its particularly simple structure, the strong coherence can be upgraded to include the contraction of an object against an inverse. 

\begin{corollary}\label{cor:coherence}
In $\CHfrak_{u}(X)$, all diagrams involving only addition, multiplication, associativity, commutativity, distributivity, neutral and unit elements, inversion, and contractions against inverses, must commute. 
\end{corollary}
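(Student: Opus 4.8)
The plan is to isolate the genuinely new ingredient. Since $\CHfrak_{u}(X)$ is a graded strict ring category, it is strongly coherent in the sense of \textsection\ref{subsec:rig-and-ring-categories}: every diagram built only from $\oplus$, $\otimes$, their associativity and commutativity constraints, distributivity, the contractions against the neutral and unit objects, and absorption commutes --- this is Laplaza's coherence theorem, the regularity hypothesis being dispensable because the category is strict. Hence the only new content is that one may also insert the inversion autoequivalence $Z\mapsto -Z$ and the contractions $Z\oplus(-Z)\simeq 0\simeq(-Z)\oplus Z$. These are purely additive operations; since a diagram in a product of categories commutes as soon as each of its projections does, we are reduced to proving, for each $k$, a version of the coherence theorem for the graded piece $\CHfrak_{u}^{k}(X)$ that also permits the additive inversion and contractions, the multiplicative operations entering only through the graded products $\CHfrak_{u}^{j}(X)\times\CHfrak_{u}^{k-j}(X)\to\CHfrak_{u}^{k}(X)$ and interacting with inverses solely via the canonical isomorphisms $(-Z)\otimes W\simeq -(Z\otimes W)\simeq Z\otimes(-W)$ of \textsection\ref{subsubsec:strict-rig-categories}, which are unique by strong coherence.

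The crucial observation is that each $\CHfrak_{u}^{k}(X)$ is a \emph{strictly} commutative Picard category: its additive structure is the Picardification of the strictly symmetric monoidal groupoid $\CHfrak_{+}^{k}(X)$, and the Picardification of a strict rig groupoid is strict, by Remark \ref{rem:picardification}. Consequently the sign homomorphism \eqref{eq:isawthesign} of $\CHfrak_{u}^{k}(X)$ is trivial, so, as recalled in \textsection\ref{subsub:PicardCat}, contractions of objects against their inverses are already unambiguous.

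To promote this to the commutativity of arbitrary diagrams I would use the explicit presentation $\CHfrak_{u}^{k}(X)\simeq \CHfrak_{+}^{k}(X)^{-1}\CHfrak_{+}^{k}(X)[\Sigma^{-1}]$ of Lemma \ref{lemma:VA-tilde}, together with Remark \ref{rmk:thomason-strict}: there the inversion is the strict functor $(A,B)\mapsto(B,A)$, and the contraction $(A,B)\oplus(B,A)\to 0$ is the image of the morphism $(0,0)\to(A,B)\oplus(B,A)$ given by the couple $(\id_{A\oplus B},c_{A,B})$, which already lives in $\CHfrak_{+}^{k}(X)^{-1}\CHfrak_{+}^{k}(X)$. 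Given a diagram of the permitted structural isomorphisms, one presents its vertices as objects $(A_{\bullet},B_{\bullet})$; after adding a single common auxiliary object to clear ``denominators'', its edges lift to morphisms of $\CHfrak_{+}^{k}(X)^{-1}\CHfrak_{+}^{k}(X)$, hence, by the definition of the Grayson--Quillen completion, to parallel morphisms of $\CHfrak_{+}^{k}(X)$ assembled from symmetries, associativity isomorphisms, absorptions and contractions against $0$. Because $\CHfrak_{+}^{k}(X)$ is strictly symmetric monoidal and strongly coherent, Mac Lane's coherence theorem in its object-level form without the regularity condition (cf. Remark \ref{rmk:Maclane-strict}), together with Laplaza's theorem for its rig-theoretic fragments, guarantees at most one such morphism between two parallel objects; so the lifted diagram, and hence the original, commutes. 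A less computational alternative is to note that a commutative Picard category with vanishing sign homomorphism is equivalent, as such, to the split Picard category attached to its homotopy groups --- cf. the discussion around Remark \ref{rem:picardification} and \cite{Patel} --- in which every structural isomorphism and every contraction is an identity between matching objects, and then transport commutativity back along the equivalence.

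The step I expect to be the main obstacle is the clearing of ``denominators'': one must track carefully how the distributivity isomorphisms and the formula \eqref{eq:explicit-product-A-A}--\eqref{eq:product-morphism-2} for products of morphisms interact with the auxiliary object, and verify that the resulting lifted diagram only involves structure morphisms covered by the strict forms of Mac Lane's and Laplaza's coherence theorems. Once this reduction is in place, the commutativity is automatic.
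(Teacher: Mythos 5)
Your instincts are right about what is at stake --- Laplaza coherence handles the rig fragment, and the strictly commutative Picard structure of the graded pieces controls the contractions against inverses --- but the central step of your proposal, as you yourself flag, is not just the hard part: it is set up in a way that cannot work as stated. A contraction $(A,B)\oplus(B,A)\to 0$ in the presentation $\Acal^{-1}\Acal[\Sigma^{-1}]$ of Lemma \ref{lemma:VA-tilde} is, by Remark \ref{rmk:thomason-strict}, the \emph{formal inverse} of the morphism $(0,0)\to(A,B)\oplus(B,A)$ given by $(\id_{A\oplus B},c_{A,B})$. That morphism lives in $\Acal^{-1}\Acal$, but its inverse genuinely does not, and no precomposition with a common auxiliary object will fix the direction of the arrow. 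So ``its edges lift to morphisms of $\CHfrak_{+}^{k}(X)^{-1}\CHfrak_{+}^{k}(X)$'' fails precisely on the edges that matter. Your ``less computational alternative'' via the split Picard category has a separate gap: the equivalence is a priori only a monoidal equivalence for $\oplus$, and nothing guarantees it transports the multiplicative structure, the distributivity constraints, or the isomorphisms $(-Z)\otimes W\simeq-(Z\otimes W)$ to identities --- which is exactly where the content lies.

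The paper avoids lifting to $\Acal^{-1}\Acal$ altogether and instead runs a syntactic normal-form argument inside $\CHfrak_u(X)$ directly. Every allowed morphism $\varphi\colon Z\to W$ is rewritten, by commuting contractions past the other structure maps (using bifunctoriality of $\oplus$ and the observation that $(-Z)\cdot W\simeq -(Z\cdot W)\simeq Z\cdot(-W)$ are componentwise symmetries), into a factorization $Z\to T\to W$ with $T=W+\sum_i(Y_i-Y_i)$, where $Z\to T$ is contraction-free (hence a symmetry, unique by strong coherence) and $T\to W$ is a pure sequence of contractions. Two such normalized morphisms are then compared via a single commutative diagram: the contraction-free triangle commutes by strong coherence, and the other triangle commutes by functoriality of addition. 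If you want to salvage your approach, the key missing idea to import is precisely this normalization --- commuting all contractions to the tail of the morphism --- which replaces the ``clearing of denominators'' and sidesteps the need to work in $\Acal^{-1}\Acal$ at all.
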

\begin{proof}

For the proof, recall the discussion in \textsection\ref{subsubsec:strict-rig-categories} regarding the Picardification of strict rig categories, in terms of the description provided by Lemma \ref{lemma:VA-tilde}. In particular, we will implicitly use that we have natural isomorphisms $(-Z)\cdot W\simeq -(Z\cdot W)\simeq Z\cdot (-W)$, which are componentwise given by symmetries. 

Let $\varphi\colon Z\to W$ be a morphism involving only the operations as in the statement. Suppose that it factors as
\begin{displaymath}
    Z\to T\to T'\to W,
\end{displaymath}
where $T\to T'$ involves a contraction, and $Z\to T$ does not involve any contraction. We may write $T=T_{0}+Y-Y+T_{1}$, and $T'=T_{0}+T_{1}$. Then, the morphism $T\to T'$ can be decomposed as
\begin{displaymath}
    T\to T_{0}+T_{1}+Y-Y\to T',
\end{displaymath}
where the first morphism permutes $Y-Y$ and $T_{1}$. Hence, we may suppose that $T=T'+Y-Y$, while keeping the property that $Z\to T$ does not involve any contraction. Next, the morphism $T'\to W$ no longer modifies the contracted factor $Y-Y$. It is thus possible to factor $T\to W$ as $T'+Y-Y\to W+Y-Y\to W$, where the first arrow is induced by $T'\to W$ and the identity of $Y-Y$, and the second arrow is a contraction. Going on in this fashion, we can finally suppose that $\varphi$ factors as $Z\to T\to W$, where $Z\to T$ does not involve any contraction, $T=W+\sum_{i}(Y_{i}-Y_{i})$, and $T\to W$ is a sequence of contractions. 

To conclude, let $\psi\colon Z\to W$ be another such morphism. We can similarly decompose it as $Z\to T'\to W$, where $Z\to T'$ does not involve any contraction, $T'=W+\sum_{j} (Y_{j}'-Y_{j}')$ and $T'\to W$ is a sequence of contractions. Since $Z\to T$ and $Z\to T'$ do not involve any contractions, these are actually induced by symmetries. Hence, the objects $\sum_{i}(Y_{i}-Y_{i})$ and $\sum_{j}(Y_{j}'-Y_{j}')$ are necessarily isomorphic through symmetries. Let $\sigma$ be such an isomorphism. Therefore, $\varphi,\psi$ fit into the following diagram
\begin{displaymath}
    \xymatrix{
            &T=W+\sum_{i}(Y_{i}-Y_{i})\ar[dd]\ar[dr]       &\\
        Z\ar[ru]\ar[rd]   &                               &W\\
            &T'=W+\sum_{j}(Y_{j}'-Y_{j}'),\ar[ur]      &
    }
\end{displaymath}
where the vertical map is $\id_{W}+\sigma$. The triangle on the right commutes, by the functoriality of addition. The triangle on the left does not involve any contractions, and thus it commutes since $\CHfrak_{u}(X)$ is strongly coherent; or, alternatively, because between two objects there is at most one symmetry. We infer that $\varphi=\psi$, thus finishing the proof. 

\end{proof}

\begin{remark}\label{rmk:product-CH}
\begin{enumerate}
    \item The practical content of the corollary is that in $\CHfrak_{u}(X)$, we may perform polynomial manipulations with objects, without caring about the order of the operations. 
    \item It seems likely, but we didn't check, that the statement of Corollary \ref{cor:coherence} holds for any strict ring category. This guess is based on the fact that the very construction of $\CHfrak_{+}(X)$, and the reasoning of the proof of Corollary \ref{cor:coherence}, are within the spirit of the proof of Laplaza's coherence theorems. 
\end{enumerate}
\end{remark}

\subsection{Chern categories}\label{subsec:Chern-category-quasi-compact}
Building on the previous theory of universal Chern categories, we proceed to develop a theory of Chern categories for schemes, which is sensitive to the fact that the rank of a vector bundle is in general not constant, but locally constant. For a scheme given by a finite disjoint open covering of connected schemes, for example in the  Noetherian case \linebreak (cf. \cite[\href{https://stacks.math.columbia.edu/tag/04MF}{04MF}]{stacks-project}), it would be enough to consider the universal Chern categories of these components separately. In the general case, the connected components are not necessarily open, and hence the space of locally constant functions has a more complicated structure. The construction we propose accounts for this. 

\subsubsection{Construction of the Chern category}
Let $X$ be a scheme. An open cover $\Ucal=\lbrace U_{i}\rbrace_{i\in I}$ of $X$ is said to be an open partition of $X$ if the various sets $U_i$ are pairwise disjoint. It is called a finite open partition if $I$ is finite. An open partition of $X$ always exists, for instance, $X$ itself. If $X$ is quasi-compact, an open partition is necessarily finite.

A refinement $\Vcal=\lbrace V_{j}\rbrace_{j\in J}$ of an open partition $\Ucal=\lbrace U_{i}\rbrace_{i\in I}$ of $X$ is an open partition of $X$ such that for every $j \in J$, there is an $i \in I$ such that $V_j \subseteq U_i$. We denote the refinement relation by $\Vcal\prec\Ucal$. Notice that two open partitions $\Ucal_{1}$ and $\Ucal_{2}$ always have a common refinement obtained by forming the intersections of the open subsets of $\Ucal_{1}$ and $\Ucal_{2}$. 

For an open partition  $\Ucal$ of $X$, we denote by $\CHfrak_{u}^{k}(\Ucal)$ the product category $\prod_{i\in I}\CHfrak_u^{k}(U_{i})$, which is a strictly commutative Picard category. Notice that if we have a refinement $\Vcal$ of $\Ucal$ as above, the inclusion maps $V_j \subseteq U_i$ define, by Corollary \ref{cor:CH-pullback}, a natural restriction functor $\rho_{\Vcal\Ucal}\colon \CHfrak_{u}^{k}(\Ucal) \to \CHfrak_{u}^{k}(\Vcal)$, denoted on objects by $A\mapsto A|_{\Vcal}$. If $\Wcal$ is a refinement of $\Vcal$, then there is an isomorphism of functors $\rho_{\Wcal\Ucal}\simeq\rho_{\Wcal\Vcal}\circ\rho_{\Vcal\Ucal}$. The collection of restriction functors carries a cleavage. We similarly define $\CHfrak_{u}(\Ucal)$, which enjoys of analogous properties. This is a strict graded ring category, with graded pieces $\CHfrak^{k}_{u}(\Ucal)$. 

The first step towards the definition of the Chern category consists in constructing its graded piece of degree $k$.
 
\begin{definition}\label{def:CH-category-final}
\begin{enumerate}
\item The degree $k$ Chern category of $X$ is defined as the following direct limit over all the open partitions of $X$:  
\begin{displaymath}
    \CHfrak^{k}(X):=\underset{\substack{\longrightarrow\\ \Ucal}}{\lim}\CHfrak_{u}^{k}(\Ucal).
\end{displaymath}
More precisely:
    \begin{enumerate}
        \item\label{item:CH-category-final-1} An object consists in giving an open partition $\Ucal$ of $X$ and an object $A_{\Ucal}$ of $\CHfrak_{u}^{k}(\Ucal)$.
        
        \item\label{item:CH-category-final-2} The morphisms between two objects $A_{\Ucal}$ and  $B_{\Vcal}$ are given by 
        \begin{displaymath}
            \underset{\longrightarrow}{\lim} \Hom(A_{\Ucal}|_{\Wcal}, B_{\Vcal}|_{\Wcal}),
        \end{displaymath} 
        where the limit runs over the open partitions $\Wcal$ refining both $\Ucal$ and $\Vcal$. The transition maps are as follows: given  $f_{\Wcal}\colon A_{\Ucal}|_{\Wcal}\to B_{\Vcal} |_{\Wcal}$ and a refinement $\Wcal^{\prime}$ of $\Wcal$, define $f_{\Wcal'}$ by the commutativity of the diagram
        \begin{displaymath}
            \xymatrix{
                (A_{\Ucal}|_{\Wcal})|_{\Wcal^{\prime}}\ar[r]^{(f_{\Wcal})|_{\Wcal^{\prime}}}\ar[d]       &(B_{\Vcal}|_{\Wcal})|_{\Wcal^{\prime}}\ar[d]\\
                A_{\Ucal}|_{\Wcal^{\prime}}\ar[r]^{f_{\Wcal^{\prime}}}              &B_{\Vcal}|_{\Wcal^{\prime}},
            }
        \end{displaymath}
        where the vertical maps are induced by the isomorphism $\rho_{\Wcal^{\prime}\Wcal}\circ\rho_{\Wcal\Ucal}\simeq\rho_{\Wcal^{\prime}\Ucal}$ of restriction functors, and the analogous isomorphism for $\Vcal$. The cleavage condition guarantees the transitivity condition for a directed system.
    \end{enumerate}
    \item We let $\pi_{\Ucal}\colon\CHfrak^{k}_{u}(\Ucal)\to \CHfrak^{k}(X)$ be the functor sending an object (resp. morphism) to itself.
\end{enumerate}
\end{definition}

The next statement asserts that the degree $k$ Chern category satisfies a universal property. It is a direct application of the definitions, and we again omit the proof.
\begin{proposition}\label{prop:CHX-is-a-direct-limit}
The degree $k$ Chern category $\CHfrak^{k}(X)$ satisfies the following properties.
\begin{enumerate}
    \item\label{CHX-is-a-direct-limit-2} The category $\CHfrak^{k}(X)$ has a natural structure of strictly commutative Picard category, such that the $\pi_{\Ucal}\colon\CHfrak_{u}^{k}(\Ucal)\to\CHfrak^{k}(X)$ are functors of commutative Picard categories.

    \item\label{CHX-is-a-direct-limit-4} If $\Vcal$ is a refinement of $\Ucal$, then the restriction from $\Ucal$ to $\Vcal$ fits into a canonically \linebreak 2-commutative diagram of functors of commutative Picard categories
\begin{displaymath}
\begin{tikzcd}
    \CHfrak_{u}^{k}(\Ucal) \ar[rd, "\pi_{\Ucal}"] \ar[rd, phantom, shift right = 0.2ex, ""{name=PIU}]  \ar["\rho_{\Vcal\Ucal}"', dd]  \ar[dd, phantom, shift right = 0.2ex, ""{name=PVU}, very near end]  & \\
    & \CHfrak^{k}(X), \\
    \CHfrak_{u}^{k}(\Vcal) \ar[ru, "\pi_{\Vcal}"' ]   \arrow[Rightarrow, from = PVU, to = PIU, "\mu_{\Ucal \Vcal}" description, shorten=2mm]
\end{tikzcd}
\end{displaymath}
where $\mu_{\Ucal\Vcal}$ is an isomorphism of functors $\pi_{\Ucal}\simeq\pi_{\Vcal}\circ\rho_{\Vcal\Ucal}$. If $\Wcal$ is a refinement of $\Vcal$, then under the natural isomorphism $\rho_{\Wcal\Ucal}\simeq \rho_{\Wcal\Vcal}\circ\rho_{\Vcal\Ucal}$, the transformation $\mu_{\Ucal\Wcal}$ corresponds to $\mu_{\Ucal\Vcal}\circ\mu_{\Vcal\Wcal}$.
    \item\label{CHX-is-a-direct-limit-5} The category $\CHfrak^{k}(X)$ is universal in the 2-category of small commutative Picard categories, with the properties \eqref{CHX-is-a-direct-limit-2}--\eqref{CHX-is-a-direct-limit-4}. Precisely, if $\Pcal$ is a commutative Picard category endowed with functors of commutative Picard categories $F_{\Ucal}\colon\CHfrak_{u}^{k}(\Ucal)\to\Pcal$ satisfying \eqref{CHX-is-a-direct-limit-2}--\eqref{CHX-is-a-direct-limit-4}, then there exists a unique functor of commutative Picard categories $F\colon\CHfrak^{k}(X)\to\Pcal$ such that $F_{\Ucal}=F\circ\pi_{\Ucal}$.\footnote{This is stronger than the existence of a natural transformation, but knowing the latter would suffice for our purposes.}
\end{enumerate}
Analogous facts hold for the finite product $\CHfrak^{\leq N}(X)=\prod_{k\leq N}\CHfrak^{k}(X)$ and the rationalizations $\CHfrak(X)_{\QBbb}^{k}$ and $\CHfrak^{\leq N}(X)_{\QBbb}=\prod_{k\leq N}\CHfrak^{k}(X)_{\QBbb}$.
\end{proposition}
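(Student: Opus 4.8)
The plan is to recognize $\CHfrak^{k}(X)$ as the standard concrete model of a \emph{filtered $2$-colimit} (pseudo-colimit) of a pseudofunctor valued in commutative Picard categories, and then to exploit the single fact that makes such colimits well behaved: in a filtered system any finite configuration of objects and structure morphisms already lives at one stage, where all the required identities hold. Concretely, I would first record the input data. The open partitions of $X$ ordered by refinement $\prec$ form a directed preorder, since any $\Ucal_{1},\Ucal_{2}$ admit the common refinement $\lbrace U\cap V\rbrace$; by Corollary \ref{cor:CH-pullback} the restriction functors $\rho_{\Vcal\Ucal}\colon\CHfrak_{u}^{k}(\Ucal)\to\CHfrak_{u}^{k}(\Vcal)$ are functors of commutative Picard categories, and their cleavage (the isomorphisms $\rho_{\Wcal\Ucal}\simeq\rho_{\Wcal\Vcal}\circ\rho_{\Vcal\Ucal}$ together with the identity and associativity constraints) is exactly a pseudofunctor structure on this preorder. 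Finally, each $\CHfrak_{u}^{k}(\Ucal)=\prod_{i}\CHfrak_{u}^{k}(U_{i})$ is a \emph{strictly} commutative Picard category: $\CHfrak_{u}^{k}(U_{i})$ is a graded piece of the Picardification of the strict rig category $\CHfrak_{+}^{k}(U_{i})$, hence strict by Remark \ref{rem:picardification}\eqref{item:rem-picardification-1} and the surrounding discussion, and a product of strictly commutative Picard categories is strictly commutative.

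For property \eqref{CHX-is-a-direct-limit-2} I would transport the Picard structure through the $\pi_{\Ucal}$: set $A_{\Ucal}\oplus B_{\Vcal}:=\pi_{\Wcal}(A_{\Ucal}|_{\Wcal}\oplus B_{\Vcal}|_{\Wcal})$ for a common refinement $\Wcal$, check independence of $\Wcal$ up to canonical isomorphism and bifunctoriality using the cleavage and directedness, take the image of $0$ as neutral object, and transport the associativity, commutativity and unit constraints from $\CHfrak_{u}^{k}(\Wcal)$ for $\Wcal$ large enough. The pentagon, hexagon and unit coherence axioms, as well as $c_{A,A}=\id$, follow because each of them involves only finitely many objects and structure maps, hence is the $\pi_{\Wcal}$-image of a commuting diagram in the strictly commutative Picard category $\CHfrak_{u}^{k}(\Wcal)$. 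That each $\pi_{\Ucal}$ is a functor of commutative Picard categories is immediate on taking $\Wcal=\Ucal$, which supplies the coherence datum $\pi_{\Ucal}(A)\oplus\pi_{\Ucal}(B)\xrightarrow{\sim}\pi_{\Ucal}(A\oplus B)$. For property \eqref{CHX-is-a-direct-limit-4} I would define $\mu_{\Ucal\Vcal}$ on an object $A$ of $\CHfrak_{u}^{k}(\Ucal)$ to be the class of $\id_{A|_{\Vcal}}$ in the colimit Hom-set $\Hom(\pi_{\Ucal}(A),\pi_{\Vcal}(A|_{\Vcal}))$ (legitimate since $\Vcal$ refines both $\Ucal$ and $\Vcal$); naturality and compatibility with the monoidal structures are formal, and the cocycle relation $\mu_{\Ucal\Wcal}=\mu_{\Ucal\Vcal}\circ\mu_{\Vcal\Wcal}$ is precisely the cleavage transitivity.

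For the universal property \eqref{CHX-is-a-direct-limit-5} I would, given a commutative Picard category $\Pcal$ with functors of commutative Picard categories $F_{\Ucal}$ and comparison isomorphisms $\nu_{\Ucal\Vcal}\colon F_{\Ucal}\simeq F_{\Vcal}\circ\rho_{\Vcal\Ucal}$ obeying the same cocycle condition, be forced to set $F(A_{\Ucal})=F_{\Ucal}(A)$ and to send a morphism $A_{\Ucal}\to B_{\Vcal}$ represented by $f_{\Wcal}\colon A_{\Ucal}|_{\Wcal}\to B_{\Vcal}|_{\Wcal}$ to the composite $(\nu_{\Vcal\Wcal})_{B}^{-1}\circ F_{\Wcal}(f_{\Wcal})\circ(\nu_{\Ucal\Wcal})_{A}$; the cocycle condition makes this independent of the representative and of $\Wcal$ and makes $F$ a functor, whose monoidal structure is inherited from the $F_{\Ucal}$ by the same finiteness argument, and uniqueness is clear since every object and every morphism of $\CHfrak^{k}(X)$ lies in the image of some $\pi_{\Ucal}$, so that $F_{\Ucal}=F\circ\pi_{\Ucal}$ together with the $\mu$-compatibility pins $F$ down completely. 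For the variants I would observe that finite products commute with filtered colimits of categories, so $\CHfrak^{\leq N}(X)\simeq\underset{\longrightarrow}{\lim}_{\Ucal}\CHfrak_{u}^{\leq N}(\Ucal)$ with $\CHfrak_{u}^{\leq N}(\Ucal)=\prod_{k\leq N}\CHfrak_{u}^{k}(\Ucal)$ again a strict graded ring category, and that the Hom-sets of a rationalization are themselves filtered colimits in the exponent, whence rationalization commutes with the colimit over $\Ucal$ and $\CHfrak^{k}(X)_{\QBbb}\simeq\underset{\longrightarrow}{\lim}_{\Ucal}(\CHfrak_{u}^{k}(\Ucal))_{\QBbb}$ (and similarly for $\CHfrak^{\leq N}(X)_{\QBbb}$); the three steps above then run verbatim, invoking Proposition \ref{prop:rationalization} and Corollary \ref{cor:rationalization} wherever the additive structure of a rationalization is needed.

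The main obstacle here is not conceptual but bureaucratic: it is the verification that "pass to a common refinement and compute there" is a well-defined recipe, independent of all the choices, which is precisely the general and standard fact that filtered $2$-colimits of commutative Picard categories along a pseudofunctor exist and inherit the structure. I expect the most delicate bookkeeping to be tracking the cleavage isomorphisms carefully enough to see that the transported associativity/commutativity/unit constraints satisfy the coherence axioms and that $\mu_{\Ucal\Vcal}$, respectively $\nu_{\Ucal\Vcal}$, satisfy their cocycle conditions on the nose; everything else is a routine diagram chase, which is why the paper defers it.
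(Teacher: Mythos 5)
Your proposal is correct and follows exactly the route the paper has in mind (the paper omits the proof as "a direct application of the definitions," and your write-up is precisely that verification: a filtered pseudo-colimit over the directed preorder of open partitions, with coherence checked at a single stage and the universal property forced by the explicit description of objects and Hom-sets). The only quibble is the reference for strictness of $\CHfrak_{u}^{k}(U_{i})$, which is item (2) of Remark \ref{rem:picardification} rather than item \eqref{item:rem-picardification-1}, but your appeal to the surrounding discussion covers this.
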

\qed

We are now ready to define the Chern category of $X$.

\begin{definition}
The Chern category of $X$ is defined as $\CHfrak(X)=\prod_{k}\CHfrak^{k}(X)$. The rational Chern category of $X$ is defined as $\CHfrak(X)_{\QBbb}=\prod_{k}\CHfrak^{k}(X)_{\QBbb}$, where $\CHfrak^{k}(X)_{\QBbb}$. Their objects are called Chern power series.
\end{definition}

\begin{remark}\label{rmk:Grothendieck-construction}
\begin{enumerate}
    \item\label{item:Grothendieck-construction-1} It is not clear whether the category $\CHfrak(X)$ admits a universal property as in Proposition \ref{prop:CHX-is-a-direct-limit}, but it seems it does not. There could be objects $(A_{k})_{k}$, where $A_{k}$ is an object of $\CHfrak^{k}_{u}(\Ucal_{k})$, which can not be defined, up to isomorphism, on a common refinement of the $\Ucal_{k}$. A candidate scheme where this could happen is $X=\Spec(\prod_{\NBbb}k)$, for a field $k$. The underlying topological space is known to be homeomorphic to the Stone--\v{C}ech compactification of $\NBbb$, see \cite[Example G, Chapter II]{Mumford-red}. In particular, it is totally disconnected \cite[\href{https://stacks.math.columbia.edu/tag/090C}{090C}]{stacks-project}. Combining this with the quasi-compactness of $X$, one can construct a strictly decreasing sequence of open partitions $\Ucal_{1}\succ\Ucal_{2}\succ\ldots$, which do not share a common refinement. In practice, however, we will only be concerned with objects which can be defined on a given partition. See \textsection \ref{subsubsec:categorical-characteristic} and \textsection \ref{subsec:line-distributions} below.
    \item Let $\Kbold$ be the category of open partitions of $X$. Morphisms are given by the refinement relationship: for each refinement $\Vcal\prec\Ucal$, there is a morphism $\Ucal\to\Vcal$. The assignment $\Ucal\mapsto\CHfrak_{u}^{k}(\Ucal)$, together with the restriction maps and the cleavage condition, defines an op-lax functor $F\colon \Kbold\to\mathrm{(CAT)}$, where $\mathrm{(CAT)}$ is the category of small categories. Following Thomason \cite[Section 3]{ThomasonHomCol}, one can associate with $F$ a category $\Kbold\int F$, fibered over $\Kbold$, called the Grothendieck construction of $F$. It satisfies a universal property, from which one deduces a functor $\Kbold\int F\to\CHfrak^{k}(X)$. Because $\CHfrak^{k}(X)$ is a groupoid, this functor extends to the localization $(\Kbold\int F)[\Sigma^{-1}]\to\CHfrak^{k}(X)$, where $\Sigma$ is the set of all the morphisms in $\Kbold\int F$. It can be checked that this functor is an equivalence of categories.
 
\end{enumerate}
\end{remark}

We record the basic properties of the Chern categories. The verification, which is again formal, is omitted.
\begin{proposition}\label{prop:formal-properties-CH-quasi-compact}
The following structures are induced from the corresponding ones for the universal Chern categories, via the direct limit construction:
\begin{enumerate}
    \item\label{item:formal-properties-CH-quasi-compact-1} The category $\CHfrak(X)$ has a structure of graded strict ring category. It also inherits the strong form of the coherence property of Corollary \ref{cor:coherence}.
    \item\label{item:formal-properties-CH-quasi-compact-2} For every open partition $\Ucal$, there is a natural functor of graded ring categories \linebreak $\CHfrak_{u}(\Ucal)\to\CHfrak(X)$. These functors are compatible in the sense of Proposition \ref{prop:CHX-is-a-direct-limit} \eqref{CHX-is-a-direct-limit-4}. In particular, there is a natural functor of graded ring categories  $\CHfrak_{u}(X)\to\CHfrak(X)$.
    \item\label{item:formal-properties-CH-quasi-compact-5} There is a Chern functor $\cfrak\colon V(X)\to\CHfrak(X)$, induced by the universal Chern functor $\cfrak\colon V(X)\to\CHfrak_{u}(X)$ and the natural functor $\CHfrak_{u}(X)\to\CHfrak(X)$.
    \item\label{item:formal-properties-CH-quasi-compact-6} The categories $\CHfrak^{k}(X)$ and $\CHfrak(X)$ enjoy of pullback functoriality as in Corollary \ref{cor:CH-pullback}. 
\end{enumerate}
The corresponding properties hold for $\CHfrak(X)_{\QBbb}$.
\end{proposition}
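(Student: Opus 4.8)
The plan is to transport each of the listed structures along the directed colimit $\CHfrak^{k}(X)=\varinjlim_{\Ucal}\CHfrak_{u}^{k}(\Ucal)$ of Definition \ref{def:CH-category-final}, using three elementary facts: (a) the set of open partitions of $X$ ordered by refinement is directed, since any two partitions admit the common refinement obtained by intersecting their members; (b) all the structures carried by the categories $\CHfrak_{u}^{k}(\Ucal)=\prod_{i}\CHfrak_{u}^{k}(U_{i})$ are compatible with the restriction functors $\rho_{\Vcal\Ucal}$, because each such restriction is a functor of graded ring categories by Corollary \ref{cor:CH-pullback} (being a product of pullback functors $\CHfrak_{u}(U_{i})\to\CHfrak_{u}(V_{j})$) and respects the coherence of Corollary \ref{cor:coherence} in each factor; and (c) a directed colimit of categories commutes with finite products, the diagonal $\Ucal\mapsto(\Ucal,\Ucal)$ being cofinal in pairs of partitions. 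With these in hand, every item is a formal consequence of the universal property of $\CHfrak^{k}(X)$ recorded in Proposition \ref{prop:CHX-is-a-direct-limit}.

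For \eqref{item:formal-properties-CH-quasi-compact-1}, I would define the graded products $\CHfrak^{k}(X)\times\CHfrak^{k'}(X)\to\CHfrak^{k+k'}(X)$ by representing a pair of objects over a common refinement $\Wcal$ and multiplying them inside $\CHfrak_{u}^{k+k'}(\Wcal)$; independence of the choice of $\Wcal$ follows from (b) and (c). Componentwise addition together with these products makes $\CHfrak(X)=\prod_{k}\CHfrak^{k}(X)$ a candidate graded strict ring category, and the verification of Laplaza's axioms, of strictness, and of the strong coherence property of Corollary \ref{cor:coherence} (now including contractions against inverses) reduces in each case to a diagram involving only finitely many objects and morphisms: these are all defined over a single refinement $\Wcal$, where the diagram commutes because $\CHfrak_{u}^{\bullet}(\Wcal)=\prod_{i}\CHfrak_{u}^{\bullet}(U_{i})$ is a graded strict ring category enjoying Corollary \ref{cor:coherence} componentwise, and it then commutes in the colimit since $\pi_{\Wcal}$ is a functor. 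For \eqref{item:formal-properties-CH-quasi-compact-2}, the functors $\CHfrak_{u}(\Ucal)\to\CHfrak(X)$ are the $\pi_{\Ucal}$ of Definition \ref{def:CH-category-final} assembled over all degrees; they are functors of graded ring categories precisely because the products on $\CHfrak(X)$ were defined through them, and their compatibility under refinement is Proposition \ref{prop:CHX-is-a-direct-limit} \eqref{CHX-is-a-direct-limit-4}; taking $\Ucal=\lbrace X\rbrace$ yields $\CHfrak_{u}(X)\to\CHfrak(X)$. For \eqref{item:formal-properties-CH-quasi-compact-5}, the Chern functor is the composite $V(X)\xrightarrow{\cfrak}\CHfrak_{u}(X)\to\CHfrak(X)$, which is symmetric monoidal with degree-zero component isomorphic to the constant functor $1$ because both factors are.

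For \eqref{item:formal-properties-CH-quasi-compact-6}, the pullback along $g\colon X'\to X$ is built as follows. For each open partition $\Ucal=\lbrace U_{i}\rbrace$ of $X$, the preimages form an open partition $g^{-1}\Ucal=\lbrace g^{-1}(U_{i})\rbrace$ of $X'$, and the induced maps $g^{-1}(U_{i})\to U_{i}$ give, by Corollary \ref{cor:CH-pullback}, a functor of graded ring categories $\CHfrak_{u}(\Ucal)\to\CHfrak_{u}(g^{-1}\Ucal)$; composing with $\pi_{g^{-1}\Ucal}$ produces functors $F_{\Ucal}\colon\CHfrak_{u}(\Ucal)\to\CHfrak(X')$. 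If $\Vcal\prec\Ucal$ then $g^{-1}\Vcal\prec g^{-1}\Ucal$, and the cleavage for the pullback and restriction of universal Chern categories, together with Proposition \ref{prop:CHX-is-a-direct-limit} \eqref{CHX-is-a-direct-limit-4} applied on $X'$, provides a coherent system of isomorphisms $F_{\Ucal}\simeq F_{\Vcal}\circ\rho_{\Vcal\Ucal}$; by the universal property Proposition \ref{prop:CHX-is-a-direct-limit} \eqref{CHX-is-a-direct-limit-5} they factor through a unique functor $g^{\ast}\colon\CHfrak(X)\to\CHfrak(X')$ of graded ring categories, inducing $g^{\ast}$ in each degree. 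The isomorphisms $(g\circ h)^{\ast}\simeq h^{\ast}\circ g^{\ast}$ and the cleavage condition are then inherited, one partition and one degree at a time, from those on the universal Chern categories in Corollary \ref{cor:CH-pullback}, again via the uniqueness clause. Finally, since rationalization of graded ring categories is functorial (\textsection\ref{subsec:rationalization-ring-categories}) and, being given on objects by an added denominator and on Hom-sets by a further filtered colimit, commutes with the directed colimit over partitions, one has $\CHfrak^{k}(X)_{\QBbb}=\varinjlim_{\Ucal}\CHfrak_{u}^{k}(\Ucal)_{\QBbb}$ and every argument above transports verbatim.

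The only genuinely non-formal point, and hence the main obstacle, is the interplay between the directed colimit and finite structure: that the graded product descends in a well-defined way and that the ordinary and strong coherence diagrams descend. This is exactly what facts (a), (b), (c) control, and once they are isolated the remainder is bookkeeping through the universal properties of Theorem \ref{thm:CH-is-universal} and Proposition \ref{prop:CHX-is-a-direct-limit} — which is presumably why the authors regard the verification as formal.
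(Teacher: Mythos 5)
The paper deliberately omits the verification, stating only that it ``is again formal.'' Your proposal is a careful and correct filling-in of that formal verification; it proceeds exactly as the authors must have intended: transport each structure through the directed colimit $\CHfrak^{k}(X)=\varinjlim_{\Ucal}\CHfrak_{u}^{k}(\Ucal)$, using that the index poset of open partitions is directed, that the restriction functors $\rho_{\Vcal\Ucal}$ are functors of graded ring categories preserving the strong coherence, and that filtered colimits of categories commute with finite products. Your treatment of the three items that are more than pure bookkeeping --- well-definedness of the graded product via a canonical common refinement, descent of the Laplaza and strong-coherence diagrams by evaluating at a single refinement and applying $\pi_{\Wcal}$, and the interchange of the two filtered limits in the Hom-sets to identify $\CHfrak^{k}(X)_{\QBbb}$ with $\varinjlim_{\Ucal}\CHfrak^{k}_{u}(\Ucal)_{\QBbb}$ --- are exactly the points that need to be said, and you have said them correctly. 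No gap.
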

\qed

\begin{remark}
\begin{enumerate}
    \item We deduce from the proposition that there is a natural functor of ring categories $\underset{\substack{\longrightarrow\\ \Ucal}}{\lim}\CHfrak_{u}(\Ucal)\to\CHfrak(X)$. We do not expect this functor to be an equivalence of categories. In this regard, see Remark \eqref{rmk:Grothendieck-construction} \eqref{item:Grothendieck-construction-1}. 
    \item The rational Chern category $\CHfrak(X)_{\QBbb}$ is the graded rationalization of $\CHfrak(X)$, as introduced in \textsection \ref{subsec:rationalization-ring-categories}. 
\end{enumerate}
\end{remark}

The next corollary confirms that the zeroth Chern category is sensitive to the topology of the scheme, which was not the case with the universal counterpart. 

\begin{corollary}\label{cor:CH0-H0}
Let $H^{0}(X,\ZBbb)$ be the set of locally constant $\ZBbb$-valued functions on $X$, considered as a ring category. Then, there is a natural equivalence of ring categories $H^{0}(X,\ZBbb)\to\CHfrak^{0}(X)$. In particular, any locally constant $\ZBbb$-valued function on $X$ naturally determines an object of $\CHfrak(X)$. Similarly, there is an equivalence of ring categories $H^{0}(X,\QBbb)\to\CHfrak^{0}(X)_{\QBbb}$.
\end{corollary}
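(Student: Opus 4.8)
The plan is to reduce everything to the degree-zero computation $\CHfrak^{0}_{u}(U)\simeq\ZBbb$ of Lemma \ref{lemma:CHu0-Z}, applied termwise over an open partition, and then pass to the direct limit presentation of Definition \ref{def:CH-category-final}; the integral statement comes first, and the rational one follows by rationalization.

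\textbf{Step 1 (the universal pieces).} Recall from Definition \ref{def:CH-category-final} that $\CHfrak^{0}(X)=\varinjlim_{\Ucal}\CHfrak^{0}_{u}(\Ucal)$, where $\Ucal=\{U_{i}\}_{i\in I}$ runs over open partitions and $\CHfrak^{0}_{u}(\Ucal)=\prod_{i\in I}\CHfrak^{0}_{u}(U_{i})$. By Lemma \ref{lemma:CHu0-Z}, each factor $\CHfrak^{0}_{u}(U_{i})$ is naturally equivalent, as a ring category, to $\ZBbb$, and this equivalence is compatible with pullback, since pullback along an open inclusion $U\hookrightarrow U'$ carries the unit object to the unit object and is the identity on the underlying integers (Corollary \ref{cor:CH-pullback}). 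Thus $\CHfrak^{0}_{u}(\Ucal)$ is equivalent to the discrete ring category $\mathrm{Map}(I,\ZBbb)$, which I identify with the subring of $H^{0}(X,\ZBbb)$ consisting of the $\ZBbb$-valued functions that are constant on each $U_{i}$; and for a refinement $\Vcal\prec\Ucal$ the restriction functor $\rho_{\Vcal\Ucal}$ corresponds, under these identifications, to the inclusion of the functions subordinate to $\Ucal$ into those subordinate to $\Vcal$.

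\textbf{Step 2 (the direct limit) and the functor.} Every locally constant $\varphi\colon X\to\ZBbb$ has open, pairwise disjoint fibers covering $X$, so they form an open partition $\Ucal_{\varphi}$ on whose parts $\varphi$ is constant; conversely each $\mathrm{Map}(I(\Ucal),\ZBbb)$ sits inside $H^{0}(X,\ZBbb)$. As open partitions form a directed poset under refinement (common refinement by pairwise intersections) and the transition maps above are the evident inclusions, the filtered colimit is the union $\varinjlim_{\Ucal}\mathrm{Map}(I(\Ucal),\ZBbb)=H^{0}(X,\ZBbb)$. By Proposition \ref{prop:CHX-is-a-direct-limit}, the compatible family of equivalences $\mathrm{Map}(I(\Ucal),\ZBbb)\simrightarrow\CHfrak^{0}_{u}(\Ucal)$ composed with the $\pi_{\Ucal}$ assembles into a functor $H^{0}(X,\ZBbb)\to\CHfrak^{0}(X)$, which is multiplicative because each $\pi_{\Ucal}$ is; concretely it sends $\varphi$ to the object of $\CHfrak^{0}_{u}(\Ucal_{\varphi})$ whose $i$-th component is the integer $\varphi|_{U_{i}}$, realized as a multiple of the unit $1$. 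From the explicit description of morphisms in $\CHfrak^{0}(X)$ one reads off that this functor induces an isomorphism on $\pi_{0}$; and since $\pi_{1}(\CHfrak^{0}_{u}(U_{i}))=\pi_{1}(\ZBbb)=0$ for every $i$, we get $\pi_{1}(\CHfrak^{0}_{u}(\Ucal))=0$ and hence $\pi_{1}(\CHfrak^{0}(X))=0$, so the functor is also an isomorphism on $\pi_{1}$. By Lemma \ref{lemma:isoPicard}\,(1) it is a weak equivalence, hence an equivalence of the underlying commutative Picard categories, and being compatible with products, an equivalence of ring categories. Naturality in $X$ is inherited from Corollary \ref{cor:CH-pullback}, and composing with the section $i_{0}\colon\CHfrak^{0}(X)\to\CHfrak(X)$ gives the asserted way of attaching an object of $\CHfrak(X)$ to a locally constant function.

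\textbf{Rational case, and the one subtle point.} Since $\CHfrak^{0}(X)_{\QBbb}$ is the rationalization of the Picard category $\CHfrak^{0}(X)$, the homotopy-group computation of \textsection\ref{subsubsec:rationalization-picard-categories} gives $\pi_{0}(\CHfrak^{0}(X)_{\QBbb})=H^{0}(X,\ZBbb)\otimes\QBbb$ and $\pi_{1}=0$, and the argument of Step 2 applies verbatim to produce an equivalence of ring categories with the discrete ring category $H^{0}(X,\ZBbb)\otimes\QBbb$. The only genuine verification left is that this coincides with $H^{0}(X,\QBbb)$, and this is where I expect the mild friction rather than any real obstacle: a locally constant $\QBbb$-valued function has finite image (its fibers are a pairwise disjoint open cover, so only finitely many are nonempty on a quasi-compact scheme, and in general one argues on quasi-compact pieces), hence it is $\frac1N$ times an integer-valued locally constant function for a suitable $N$; therefore $H^{0}(X,\ZBbb)\otimes\QBbb=H^{0}(X,\QBbb)$ as rings. (Should one wish to allow schemes that are not quasi-compact, the intended reading of $H^{0}(X,\QBbb)$ is simply $H^{0}(X,\ZBbb)\otimes\QBbb$.) Everything else is bookkeeping with the filtered-colimit presentation of $\CHfrak^{0}(X)$, entirely parallel to the passage from Lemma \ref{lemma:CHplus0-N} to Lemma \ref{lemma:CHu0-Z}.
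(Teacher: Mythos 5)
Your argument is correct and is essentially the paper's own proof, which likewise deduces the statement by combining Lemma~\ref{lemma:CHu0-Z} termwise over open partitions with the direct-limit universal property of Proposition~\ref{prop:CHX-is-a-direct-limit}; your extra bookkeeping with $\pi_{0}$, $\pi_{1}$ and Lemma~\ref{lemma:isoPicard} is just a more explicit rendering of the same reduction. Your closing caveat that $\pi_{0}(\CHfrak^{0}(X)_{\QBbb})=H^{0}(X,\ZBbb)\otimes\QBbb$ need only agree with $H^{0}(X,\QBbb)$ on quasi-compact schemes is a fair observation that the paper leaves implicit.
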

\begin{proof}
The first claim is a direct consequence of Lemma \ref{lemma:CHu0-Z} and Proposition \ref{prop:CHX-is-a-direct-limit}. The second claim is obtained by composing $H^{0}(X,\ZBbb)\to \CHfrak^{0}(X)$ with $\CHfrak^{0}(X)\to\CHfrak(X)$.
\end{proof}

\begin{remark}
As a prototypical example of the corollary, we mention the rank function associated with a vector bundle $E$ on $X$. The corresponding object in $\CHfrak^{0}(X)$ or $\CHfrak(X)$ is denoted simply by $\rk E$.
\end{remark}

\subsubsection{Categorical characteristic classes}\label{subsubsec:categorical-characteristic}
The typical objects of $\CHfrak(X)$ we will be dealing with are given by power series in the ranks and the Chern classes of a finite number of vector bundles. These are categorical avatars of characteristic classes, as formalized in the next definition.
\begin{definition}\label{def:charclass}
A categorical characteristic class $\phi$ for $X$, in $\ell \geq 1$ variables, consists of the following data:
\begin{enumerate}
    \item\label{item:charclass-1} For every morphism $X^{\prime}\to X$, we are given a functor
    \begin{displaymath}
        \phi_{X^{\prime}}\colon(\Vect_{X^{\prime}},\iso)^{\times \ell}\to\CHfrak(X^{\prime})_{\QBbb}.
    \end{displaymath}
    \item\label{item:charclass-2} If $g\colon X^{\bis}\to X^{\prime}$ is another morphism, we are given a natural transformation of functors $g^{\ast}\circ\phi_{X^{\prime}}\simeq\phi_{X^{\bis}}\circ g^{\ast}$.
    \item If $h\colon X^{\tris}\to X^{\bis}$ is yet another morphism, there is a commutative diagram analogous to \eqref{eq:CH-pullback-diagram}.
        \item There is a universal bound on the denominators: for every $k\geq 0$, there exists an integer $m=m(k,\phi)\geq 1$, and a family of commutative diagrams
    \begin{displaymath}
       \xymatrix{
           (\Vect_{X^{\prime}},\iso)^{\times \ell}\ar[r]^-{m\cdot\phi_{X^{\prime}}} \ar[rd]        &\CHfrak(X^{\prime})_{\QBbb}\ar[r]        &\CHfrak^{k}(X^{\prime})_{\QBbb}\\
                &\CHfrak^{k}(X^{\prime}),\ar[ru]
       }
    \end{displaymath}
    for $X^{\prime}$ running over the morphisms $X^{\prime}\to X$. This family of diagrams is supposed to be compatible with the pullback functors. If we can take $m=1$ for every $k$, then we say that the categorical characteristic class is integral.
\end{enumerate}
A categorical characteristic class in one variable will simply be called a categorical characteristic class.
\end{definition}

We notice that the addition and the product of two categorical characteristic classes is again a categorical characteristic class. 

The first example of categorical characteristic class, which is moreover integral, is provided by the Chern functor $\cfrak\colon V(X)\to\CHfrak(X)$ of Proposition \ref{prop:formal-properties-CH-quasi-compact} \eqref{item:formal-properties-CH-quasi-compact-5}. A second example is given by the rank of vector bundles. See Corollary \ref{cor:CH0-H0} and the subsequent remark. More generally, a prominent family of categorical characteristic classes is provided by additive (resp. multiplicative) categorical characteristic classes. These are constructed from the Chern classes along the same lines as in the theory of Chow groups. We expound on the main points of the argument.

Suppose we are first given $\phi\in\QBbb\llbracket T\rrbracket$ with $\phi(0)=0$. Consider the algebra of power series in an infinite number of variables $A=\QBbb\llbracket c_{1},c_{2},\ldots\rrbracket$. A grading on $A$ is defined by declaring that $c_{k}$ has degree $k$. We associate with $\phi$ a power series $\Phi\in A$. The component of degree 0 of $\phi$ is 0. The component of degree $k\geq 1$ of $\Phi$, denoted by $\Phi_{k}$, is related to that of $\phi$ as follows. Let $r\geq k$ be an integer. If $\sigma_{1},\ldots,\sigma_{r}$ are the elementary symmetric polynomials in $T_{1},\ldots, T_{r}$, then we can write
\begin{equation}\label{eq:symmetric-decomposition}
    \sum_{j}\phi_{k}(T_{j})=\Phi_{k}(\sigma_{1},\ldots,\sigma_{k}),
\end{equation}
for a unique $\Phi_{k}\in\QBbb[c_{1},\ldots, c_{k}]$ of degree $k$. This polynomial does not depend on the choice of $r\geq k$. Indeed, to pass from the construction in $r+1$ variables to the construction in $r$ variables, it is enough to set $T_{r+1}=0$ and use the uniqueness of the expression in the elementary symmetric polynomials. From this independence property, it follows that $\Phi_{k}$ satisfies the following additivity property. Introduce new variables $c_{j}^{\prime}$ and $c_{j}^{\prime\prime}$, for every $j\geq 0$, with the convention $c_{0}^{\prime}=c_{0}^{\prime\prime}=1$. Then, upon substituting $c_{i}$ by $\sum_{m+n=i}c_{m}^{\prime}c_{n}^{\prime\prime}$, we obtain
\begin{equation}\label{eq:phi-additive-property}
    \Phi_{k}(c_{1},\ldots,c_{k})=\Phi_{k}(c_{1}^{\prime},\ldots,c_{k}^{\prime})+\Phi_{k}(c_{1}^{\prime\prime},\ldots,c_{k}^{\prime\prime}).
\end{equation}
To see this, we consider the analog of \eqref{eq:symmetric-decomposition} for the variables $T_{1}^{\prime},\ldots,T_{r}^{\prime},T_{1}^{\prime\prime},\ldots, T_{r}^{\prime\prime}$, with $r\geq k$, and then use the relationship between elementary symmetric polynomials
\begin{displaymath}
    \sigma_{i}(T_{1}^{\prime},\ldots,T_{r}^{\prime},T_{1}^{\prime\prime},\ldots, T_{r}^{\prime\prime})
    =\sum_{m+n=i}\sigma_{m}(T_{1}^{\prime},\ldots,T_{r}^{\prime})\sigma_{n}(T_{1}^{\prime\prime},\ldots,T_{r}^{\prime\prime}).
\end{displaymath}

Let now $E$ be a vector bundle on $X$. Because of the coherence result in Corollary \ref{cor:coherence} and Proposition \ref{prop:formal-properties-CH-quasi-compact} \eqref{item:formal-properties-CH-quasi-compact-1}, up to canonical isomorphism, we can associate an object $\phi(E)\in\CHfrak(X)_{\QBbb}$ by 
\begin{displaymath}
    \phi(E)=\sum_{k\geq 1}\Phi_{k}(\cfrak_{1}(E),\ldots, \cfrak_{k}(E)).
\end{displaymath}
More precisely, to make sense of $\Phi_{k}(\cfrak_{1}(E),\ldots, \cfrak_{k}(E))$, we need to fix an order on the degree $k$ monomials in $c_{1},\ldots,c_{k}$, once and for all.

In the general case of a power series $\phi\in\QBbb\llbracket T\rrbracket$, we decompose $\phi=\phi_{0}+\phi_{+}$, where $\phi_{0}\in\QBbb$ is the component of degree 0 of $\phi$. We then set
\begin{displaymath}
    \phi(E)=\phi_{0}\rk E +\phi_{+}(E),
\end{displaymath}
where $\rk E$ is understood as a locally constant function on $X$, and hence canonically defines an object of $\CHfrak(X)$, by Corollary \ref{cor:CH0-H0}.

Analogously, in the multiplicative case, to $\psi\in 1+T\QBbb\llbracket T\rrbracket$, we can associate a natural object $\psi(E) \in 1+\CHfrak(X)_{\QBbb}^{+}$ to any vector bundle $E$. 
 
\begin{proposition}\label{prop:additcharclass}
The following assertions hold:
\begin{enumerate}\label{enum:characteristic-classes}
    \item\label{enum:characteristic-classes-1} For $\phi\in\QBbb[T]$, the assignment $E\mapsto \phi(E)\in\CHfrak(X)_{\QBbb}$ can be naturally upgraded into a categorical characteristic class, such that the functors  $(\Vect_{X^{\prime}},\iso) \to (\CHfrak(X^{\prime})_{\QBbb}, +)$ are commutative multiplicative. In particular, these induce functors of commutative Picard categories $(V(X^{\prime}),\oplus) \to (\CHfrak(X^{\prime})_{\QBbb}, +)$.
    \item\label{enum:characteristic-classes-2} For $\psi\in 1+T\QBbb[T]$, the assignment $E \mapsto \psi(E) \in 1+\CHfrak(X)_{\QBbb}^{+}$ can be naturally upgraded into a categorical characteristic class, such that the functors $(\Vect_{X^{\prime}},\iso)\to (1+\CHfrak(X^{\prime})_{\QBbb}^{+},\ \cdot\ )$ are commutative multiplicative. In particular, these induce functors of commutative Picard categories $(V(X^{\prime}),\oplus) \to (1+\CHfrak(X^{\prime})_{\QBbb}^{+}, \ \cdot\ )$.
\end{enumerate}
Furthermore, the corresponding base change isomorphisms as in Definition \ref{def:charclass} \eqref{item:charclass-2} are isomorphisms of commutative multiplicative functors. 
\end{proposition}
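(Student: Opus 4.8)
The strategy is to reduce everything to statements about the polynomials $\Phi_k$ established just before the proposition, combined with the universal property of the Chern category and the coherence result of Corollary~\ref{cor:coherence}. The key point is that the additivity relation \eqref{eq:phi-additive-property} is precisely the polynomial shadow of the desired commutative multiplicative structure, and Corollary~\ref{cor:coherence} allows us to promote that polynomial identity into a canonical isomorphism of objects of $\CHfrak(X)_{\QBbb}$ without worrying about the order of operations. First I would treat assertion \eqref{enum:characteristic-classes-1}. Given a short exact sequence $\Sigma\colon 0\to E'\to E\to E''\to 0$, the Whitney isomorphism from Theorem~\ref{thm:D} (equivalently, from the construction of $\CHfrak(X)$ via the symbolic Whitney edges) furnishes, for each $k$, a canonical isomorphism $\cfrak_k(E)\simeq\sum_{m+n=k}\cfrak_m(E')\cdot\cfrak_n(E'')$. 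Substituting these into $\Phi_k(\cfrak_1(E),\ldots,\cfrak_k(E))$ and expanding using the distributivity and coherence in $\CHfrak(X)_{\QBbb}$, the polynomial identity \eqref{eq:phi-additive-property} dictates that the resulting object is canonically isomorphic to $\Phi_k(\cfrak_1(E'),\ldots)+\Phi_k(\cfrak_1(E''),\ldots)$. Summing over $k$ and adding the $\phi_0\rk E = \phi_0\rk E' + \phi_0\rk E''$ contribution (using the additivity of the rank function as an object of $\CHfrak^0(X)$, Corollary~\ref{cor:CH0-H0}), we obtain the isomorphism $\phi(\Sigma)\colon\phi(E)\to\phi(E')+\phi(E'')$. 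One then checks axioms \eqref{item:mult-funct-1}--\eqref{item:mult-funct-5} of Definition~\ref{def:multiplicative-functor}: functoriality in isomorphisms of exact sequences and compatibility with admissible filtrations follow from the corresponding properties of the Whitney isomorphism recorded in the construction of $\CHfrak_{+}(X)$ (the compatibility with filtrations built into the quotient defining $\CHfrak_{+}^k(X)$, cf.\ \eqref{eq:Whitney-ck-filtration-1}--\eqref{eq:Whitney-ck-filtration-2}), together with Corollary~\ref{cor:coherence}; the commutativity axiom \eqref{item:mult-funct-5} for the split sequence reduces to the symmetry of \eqref{eq:phi-additive-property} in $(c'_\bullet,c''_\bullet)$, again modulo coherence. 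By Theorem~\ref{thm:delignevirtual} this commutative multiplicative functor factors through $V(X')$.

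For assertion \eqref{enum:characteristic-classes-2}, one argues in the same way, except that the relevant polynomial identity for a multiplicative $\psi$ is $\Psi_k(c_\bullet)=\sum_{m+n=k}\Psi_m(c'_\bullet)\Psi_n(c''_\bullet)$ after the substitution $c_i\mapsto\sum_{m+n=i}c'_m c''_n$, which is the standard fact that multiplicative characteristic series convert direct sums into products. The target is now $1+\CHfrak(X')_{\QBbb}^{+}$ with its product monoidal structure, which is a strictly commutative Picard category by Proposition~\ref{prop:formal-properties-CH-quasi-compact} and the discussion at the end of \textsection\ref{subsec:rig-and-ring-categories}; the coherence of Corollary~\ref{cor:coherence} again guarantees that the iterated products can be formed unambiguously. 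The verification of the multiplicative-functor axioms is formally identical to the additive case.

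For the pullback compatibility, since $f^\ast\colon\CHfrak(X)_{\QBbb}\to\CHfrak(X')_{\QBbb}$ is a functor of graded ring categories (Proposition~\ref{prop:formal-properties-CH-quasi-compact}~\eqref{item:formal-properties-CH-quasi-compact-6} and Corollary~\ref{cor:CH-pullback}) and commutes with the Chern functor and with the Whitney isomorphisms by construction, it carries $\Phi_k(\cfrak_1(E),\ldots)$ to $\Phi_k(\cfrak_1(f^\ast E),\ldots)$ compatibly with the isomorphism $\phi(\Sigma)$; that is, the base change transformations of Definition~\ref{def:charclass}~\eqref{item:charclass-2} intertwine the $\phi(\Sigma)$ on source and target, which is exactly the assertion that they are isomorphisms of commutative multiplicative functors. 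Checking that the data indeed assemble into a categorical characteristic class in the sense of Definition~\ref{def:charclass} requires only noting the existence of a universal denominator $m(k,\phi)$, which one can take to be a common denominator of the (finitely many, for $\phi\in\QBbb[T]$) coefficients of the $\Phi_j$ with $j\le k$, so that $m\cdot\phi$ lands in the integral $\CHfrak^k(X')$; this is uniform in $X'$.

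\textbf{Main obstacle.} The genuinely delicate point is \emph{not} the existence of the isomorphisms $\phi(\Sigma)$ but the verification that they are \emph{canonical} and satisfy the coherence axioms \eqref{item:mult-funct-4}--\eqref{item:mult-funct-5} on the nose rather than up to a sign or an unspecified automorphism. A priori there are many ways to rearrange a product such as $\Phi_k(\sum\cfrak_m(E')\cfrak_n(E''))$ into $\sum\Phi_m(\cfrak(E'))\Phi_n(\cfrak(E''))$ using associativity, commutativity and distributivity, and different rearrangements could differ by signs coming from the homomorphism $\varepsilon$ of \eqref{eq:isawthesign}. This is exactly where rational coefficients are essential: $\CHfrak(X)_{\QBbb}$ is strictly commutative (Remark~\ref{rem:Q-Bousfield}~\eqref{item:Q-Bousfield-1}), and the strong coherence of Corollary~\ref{cor:coherence}—which crucially includes contractions against inverses, needed because $\Phi_k$ may have negative coefficients—ensures that all such rearrangements agree. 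So the bulk of the proof is a careful but routine bookkeeping exercise reducing each axiom to a polynomial identity already established, with Corollary~\ref{cor:coherence} doing the work of making the reduction legitimate; I would present the additive case in some detail and leave the multiplicative case and the remaining routine diagram chases to the reader.
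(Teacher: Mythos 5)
Your proposal is correct and follows essentially the same route as the paper: the paper's (much terser) proof invokes exactly the ingredients you identify — the additivity identity \eqref{eq:phi-additive-property} for the $\Phi_k$, the fact that $\cfrak$ is a commutative multiplicative functor into $1+\CHfrak(X)^{+}$ (providing the Whitney isomorphisms), the strong coherence of Corollary \ref{cor:coherence} via Proposition \ref{prop:formal-properties-CH-quasi-compact} \eqref{item:formal-properties-CH-quasi-compact-1}, and the pullback functoriality of Corollary \ref{cor:CH-pullback} — with the constant term handled by the rank function. Your identification of coherence (including contractions against inverses, since the $\Phi_k$ may have negative coefficients) as the crux matches the paper's reliance on Corollary \ref{cor:coherence}.
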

\begin{proof}
For concreteness, we treat the additive case only. If $\phi=\phi_{0}$ is a constant power series, there is nothing to prove, for this case reduces to the rank function of vector bundles. 

Suppose now that $\phi(0)=0$. The properties of $E\mapsto \phi(E)$ then follow from the additivity property of the polynomials $\Phi_{k}$ as in \eqref{eq:symmetric-decomposition}, the fact that $\cfrak\colon V(X)\to 1+\CHfrak(X)^{+}$ is a commutative functor of Picard categories, the strong coherence property of Corollary \ref{cor:coherence} and Proposition \ref{prop:formal-properties-CH-quasi-compact} \eqref{item:formal-properties-CH-quasi-compact-1}, and the pullback functoriality of Corollary \ref{cor:CH-pullback} and Proposition \ref{prop:formal-properties-CH-quasi-compact} \eqref{item:formal-properties-CH-quasi-compact-6}.
\end{proof}

We notice that the proposition contains, as particular cases, the categorical characteristic classes defined by the Chern functors and the rank function of vector bundles.

\begin{definition}\label{def:ChernToddchar}
In the particular cases of the Chern and Todd characteristic classes, associated with the series $\exp(T)$ and $T/(1-e^{-T})$ respectively, we denote the corresponding additive and multiplicative categorical characteristic classes by $\chfrak$ and $\tdfrak$. We also denote by $\tdfrak^{*}$ the multiplicative categorical characteristic class obtained from $\tdfrak$ by multiplying the piece of degree $k$ by $(-1)^{k}$. 
\end{definition}

\begin{remark}\label{rmk:ch-E-otimes-F}
The functor $\chfrak\colon V(X)\to\CHfrak(X)_{\QBbb}$ is a functor of commutative Picard categories, with respect to addition. However, it is not a functor of ring categories. Our Chern categories do not incorporate enough axioms to guarantee that $\cfrak_{k}(E\otimes F)$ can be expressed as a polynomial in the $\cfrak_{i}(E)$ and $\cfrak_{j}(F)$. One of the aims of the forthcoming theory of line distributions is to address this inconvenience. See Remark \ref{rmk:ch-E-otimes-F-2} below.
\end{remark}

 The theory of additive and multiplicative categorical characteristic classes extends to the category of bounded complexes of vector bundles, endowed with the quasi-isomorphisms and exact sequences of complexes, as in Corollary \ref{cor:equivalence-virtual-C-Cb}. By Proposition \ref{prop:derivedvirtual}, it even extends to the derived category. This is summarized in the following corollary.

\begin{corollary}\label{cor:characteristic-derived}
The additive (resp. multiplicative) categorical characteristic classes extend to functors on $(D^b(\Vect_X), \qiso)$, multiplicative on true triangles.  
\end{corollary}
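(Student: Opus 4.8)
The plan is to leverage the universal property of the virtual category of perfect complexes, or rather its equivalence with the virtual category of bounded complexes of vector bundles, together with the factorization results already established. Recall from Corollary \ref{cor:equivalence-virtual-C-Cb} that $V(\Vect_X)\simeq V(C^b(\Vect_X))$ with quasi-isomorphisms as weak equivalences, and that this equivalence is implemented by $E^\bullet\mapsto\sum(-1)^k[E^k]$. The additive and multiplicative categorical characteristic classes $\chfrak,\psi$ of Proposition \ref{prop:additcharclass} are, by construction, commutative multiplicative functors $(\Vect_X,\iso)\to(\CHfrak(X)_\QBbb,+)$ (resp. into $(1+\CHfrak(X)_\QBbb^+,\cdot)$), hence by Theorem \ref{thm:delignevirtual} they factor through $V(X)$, and by the discussion around \eqref{eq:factorizationderived} they further factor as
\begin{displaymath}
    (\Vect_X,\iso)\to (D^b(\Vect_X),\iso)\to \CHfrak(X)_\QBbb.
\end{displaymath}
So the first step is simply to invoke Knudsen's theorem (\cite[Theorem 2.3]{Knudsen}, quoted before Corollary \ref{cor:equivalence-virtual-C-Cb}), which says that the category of multiplicative functors $(C^b(\Vect_X),\qiso)\to\Pcal$ is equivalent to that of multiplicative functors $(\Vect_X,\iso)\to\Pcal$. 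Applying this with $\Pcal=(\CHfrak(X)_\QBbb,+)$ for the additive classes and $\Pcal=(1+\CHfrak(X)_\QBbb^+,\cdot)$ for the multiplicative ones yields extensions of $\chfrak$ and $\psi$ to $(C^b(\Vect_X),\qiso)$ that are additive (resp. multiplicative) with respect to short exact sequences of complexes, and in particular on distinguished triangles built from genuine termwise-exact sequences.

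The second step is to pass from $C^b(\Vect_X)$ to the derived category $D^b(\Vect_X)$. This is exactly the content of Proposition \ref{prop:derivedvirtual}: taking $\Acal$ to be the abelian category of $\Ocal_X$-modules (or quasi-coherent sheaves) and $\Wcal$ the complicial biWaldhausen category $C^b(\Vect_X)$ with quasi-isomorphisms, the universal multiplicative functor $\Wcal\to V(\Wcal)$ factors through $D(\Wcal)=D^b(\Vect_X)$. Since our extended $\chfrak$ and $\psi$ on $(C^b(\Vect_X),\qiso)$ factor through $V(C^b(\Vect_X))\simeq V(X)$ by the universal property, composing with $D^b(\Vect_X)\to V(X)$ gives the desired functors on $(D^b(\Vect_X),\qiso)$. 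Additivity on true triangles then follows because a true (distinguished) triangle in $D^b(\Vect_X)$ can be represented, after replacing the objects by quasi-isomorphic complexes, by a termwise split short exact sequence of complexes of vector bundles; on such a sequence the multiplicative structure of the extended functor gives the Whitney-type isomorphism, and this is independent of the chosen representative precisely because the functor is already defined on $D^b(\Vect_X)$ and hence sends quasi-isomorphisms to isomorphisms compatibly.

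I would organize the writeup as: (i) recall that $\chfrak$ and $\psi$ are commutative multiplicative functors into the relevant Picard category, so Theorem \ref{thm:delignevirtual} and Knudsen's theorem extend them to $(C^b(\Vect_X),\qiso)$; (ii) invoke Proposition \ref{prop:derivedvirtual} (with $\Wcal=C^b(\Vect_X)$, noting it is closed under extensions and shifts and that every object of the relevant ambient category is quasi-isomorphic to one of its objects on a divisorial scheme — but here we only need the factorization through $D^b(\Vect_X)$, which holds without divisoriality) to get the factorization through $D^b(\Vect_X)$; (iii) observe that "multiplicative on true triangles" is the restatement of the Waldhausen-style multiplicativity, using that distinguished triangles are represented by termwise-split exact sequences of complexes of vector bundles. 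The main obstacle — really the only subtle point — is step (iii): one must be careful that the Whitney/additivity isomorphism attached to a distinguished triangle is canonical, i.e. independent of the representing short exact sequence of complexes and compatible with the octahedral axiom. This is handled by the fact that the functor is already known to be well-defined on $D^b(\Vect_X)$ (so isomorphic triangles give compatible isomorphisms) combined with the compatibility-with-admissible-filtrations axiom (Definition \ref{def:multiplicative-functor} \eqref{item:mult-funct-4}) inherited through the extension; I would simply remark that this is the same argument as in \cite{Knudsen}, \cite{Muro:determinant}, and leave the diagram chase to those references.
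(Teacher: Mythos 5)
Your proposal is correct and follows essentially the same route as the paper, which treats the statement as an immediate consequence of the preceding paragraph: extend via Corollary \ref{cor:equivalence-virtual-C-Cb} to $(C^b(\Vect_X),\qiso)$, then apply Proposition \ref{prop:derivedvirtual} to factor through $D^b(\Vect_X)$. Your step (iii) spells out the interpretation of ``multiplicative on true triangles'' in more detail than the paper does, but the underlying argument is the same.
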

\qed

\subsection{Line distributions}\label{subsec:line-distributions} 
Let $X\to S$ be a proper \fppf morphism of schemes. In practice, this will be a morphism satisfying the condition $(C_{n})$. We introduce a direct image formalism for the virtual Chern category of $X$, with values in line bundles on $S$.

To lighten the notation, in the discussion below, we adopt the following conventions. Whenever we are given a morphism of schemes $f\colon S^{\prime}\to S$, we write $X^{\prime}\to S^{\prime}$ for the morphism deduced from $X\to S$ by base change to $S^{\prime}$. Also, we still denote by $f$ the projection morphism $X^{\prime}\to X$.
\begin{definition}\label{def:distribution}
Let $X\to S$ be a proper fppf morphism of schemes. The category of entire line distributions for $X\to S$, denoted by $\Dcal_{\ZBbb}(X/S)$, has objects and morphisms as follows:
\begin{enumerate}
    \item Objects: entire line distributions $T$ for $X\to S$, consisting in giving  :
\begin{enumerate}
        \item\label{item:distribution-1} An integer $N\geq 0$, and for every morphism $S^{\prime}\to S$, a commutative functor of Picard categories
        \begin{equation}\label{eq:entire-distribution}
            T_{S^{\prime}}\colon (\CHfrak^{\leq N}(X^{\prime}), +) \to \Picfr(S^{\prime}).
        \end{equation}
        Equivalently, $T_{S^{\prime}}$ can be seen as a functor defined on $\CHfrak(X^{\prime})$, which factors through $\CHfrak^{\leq N}(X^{\prime})$.

         \item\label{item:distribution-2} If $g: S^{\bis} \to S^{\prime}$ is a morphism, there is an isomorphism of functors of commutative Picard categories $T_{S^{\bis}}\circ g^\ast  \simeq g^\ast \circ T_{S^\prime}$.  We refer to this as base change identification or simply base change.
        \item\label{item:distribution-3} If $h: S^{\tris} \to S^{\bis}$ is another morphism, the natural diagram akin to \eqref{eq:CH-pullback-diagram} commutes.
    \end{enumerate}
    \item Morphisms: isomorphisms of entire line distributions $T_{1}\simeq T_{2}$, consisting in giving  natural transformations of functors of commutative Picard categories $T_{1, S^{\prime}}\simeq T_{2, S^{\prime}}$, compatibly with the base change identifications.
\end{enumerate}
An entire line distribution is said to have degree $N$ if \eqref{eq:entire-distribution} factors through $\CHfrak^{N}(X^{\prime})$. 
\end{definition}
The tensor product of line bundles induces a structure of strictly commutative Picard category on $\Dcal_{\ZBbb}(X/S)$. The following definition thus makes sense.

\begin{definition}\label{def:Q-distribution}
The category of line distributions for $X\to S$, denoted by $\Dcal(X/S)$, is the rationalization of $\Dcal_{\ZBbb}(X/S)$. Equivalently, it can be described as in Definition \ref{def:distribution}, by replacing the categories with their rationalizations, and imposing universal bounds:
\begin{enumerate}
    \item Objects: for a line distribution $T$, there exists a non-zero integer $m$ such that $T^{\otimes m}$ is induced by an entire line distribution. We say that $T$ has degree $N$ is $T^{\otimes m}$ has degree $N$.
    \item Morphisms: a morphism of line distributions $T\to T^{\prime}$ is induced by a morphism of entire line distributions $T^{\otimes m}\to T^{\prime\otimes m} $, for some non-zero integer $m$.
\end{enumerate}
\end{definition}
Given a line distribution, the smallest integer $m$ as in the definition is called the denominator of the distribution. In particular, we notice that an entire line distribution defines a line distribution with denominator 1.

\begin{remark} 
\begin{enumerate}
    \item Occasionally, we will adopt additive notation for the product of line distributions.
    \item A line distribution which is both of degree $N$ and $N^{\prime}\neq N$, is necessarily isomorphic to the constant line distribution with image $\Ocal_{S}$. 
    \item A tautological consequence of the definition is that a line distribution for $X\to S$ induces a line distribution for $X^{\prime}\to S^{\prime}$, for any morphism $S^{\prime}\to S$.
    \item The line distributions considered in this article are provided by intersection bundles of Elkik \cite{Elkikfib}, discussed in Section \ref{subsubsec:general-intersection-bundles} below. The fact that these indeed define line distributions, proven in Section \ref{subsec:int-bundles-line-distributions}, together with the coherence result for the Chern categories, will allow us to treat intersection bundles formally as polynomials in the Chern classes in an unambiguous way. 
\end{enumerate}
\end{remark}

\begin{proposition}\label{prop:line-distribution-affine}
 Suppose that, in the definition of line distributions (cf. Definition \ref{def:distribution} and Definition \ref{def:Q-distribution}), one only assumes  $S^{\prime}$, $S^{\prime\prime}$, and $S^{\prime\prime\prime}$ are affine. Then this defines a category that is equivalent to the category of line distributions. 
\end{proposition}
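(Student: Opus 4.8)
The plan is to show that the evident restriction functor $R$ from the category of Definition~\ref{def:distribution}/Definition~\ref{def:Q-distribution} to its ``affine'' variant is an equivalence. It is enough to treat the entire case with a fixed degree bound: for each $N$, the full subcategory $\Dcal_{\ZBbb}^{\leq N}(X/S)$ of entire line distributions for which the integer $N$ of Definition~\ref{def:distribution} can be taken at most a given value, and the corresponding functor $R_{N}$ to the affine analogue. The general entire statement then follows since both $\Dcal_{\ZBbb}(X/S)$ and its affine counterpart are the filtered union over $N$ of those subcategories, and the rational statement follows because $\Dcal(X/S)$ and its affine analogue are rationalizations (Definition~\ref{def:Q-distribution}) and rationalization is functorial and preserves equivalences (cf. \textsection\ref{subsubsec:rationalization-picard-categories}, Corollary~\ref{cor:rationalization}). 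Two inputs will be used throughout: first, $\CHfrak^{\leq N}(-)$ carries pullback functoriality together with a cleavage (Corollary~\ref{cor:CH-pullback}, Proposition~\ref{prop:formal-properties-CH-quasi-compact}), so that for an open cover of a scheme the double restrictions of an object or a morphism are canonically identified on overlaps; and second, $\Picfr(-)$ is a stack for the Zariski topology -- line bundles and morphisms between them satisfy descent -- while every scheme has a basis of affine opens.

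For essential surjectivity, let $T^{\mathrm{aff}}$ be an affine line distribution and $S'\to S$ arbitrary. I would choose an affine open cover $\{S'_{i}\}$ of $S'$, set $X'_{i}=X'\times_{S'}S'_{i}$ (the base change of $X\to S$ along $S'_{i}\to S$), and for an object $P$ of $\CHfrak^{\leq N}(X')$ put $L_{i}=T^{\mathrm{aff}}_{S'_{i}}(P|_{X'_{i}})$. Over an overlap $S'_{ij}=S'_{i}\cap S'_{j}$ -- a scheme, though in general not affine -- one refines by an affine cover and uses the base change isomorphisms of $T^{\mathrm{aff}}$ together with the cleavage of $\CHfrak^{\leq N}$ to produce an isomorphism $L_{i}|_{S'_{ij}}\simeq L_{j}|_{S'_{ij}}$, well defined thanks to the sheaf property of morphisms of line bundles; the triple-overlap cocycle identity is the analogue of \eqref{eq:CH-pullback-diagram} for $T^{\mathrm{aff}}$, again checked over an affine refinement. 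Since $\Picfr$ is a stack, the $L_{i}$ glue to a line bundle $T_{S'}(P)$; gluing the same data for morphisms of $\CHfrak^{\leq N}(X')$ makes $T_{S'}$ a functor, and gluing the additivity and commutativity constraints makes it a commutative functor of Picard categories. A common affine refinement shows $T_{S'}(P)$ is independent of the cover up to canonical isomorphism. For a morphism $g\colon S'''\to S''$ one builds the base change isomorphism $T_{S'''}\circ g^{\ast}\simeq g^{\ast}\circ T_{S''}$ by covering $S'''$ with affines each mapping into an affine of $S''$ and gluing the affine base change isomorphisms, and the compatibility with composition is verified once more by reduction to affines. This yields $\widetilde{T}\in\Dcal_{\ZBbb}^{\leq N}(X/S)$ with $R_{N}(\widetilde{T})\cong T^{\mathrm{aff}}$.

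For full faithfulness, a morphism $T_{1}\to T_{2}$ is a family of natural transformations $\eta_{S'}\colon T_{1,S'}\to T_{2,S'}$ compatible with base change. Each $\eta_{S'}$ is determined by the morphisms of line bundles $\eta_{S'}(P)$, and since these form a sheaf on $S'$, $\eta_{S'}(P)$ is determined by its restrictions to an affine open cover, which are pinned down by the values of $\eta$ over affines through base change compatibility; hence $R_{N}$ is faithful. Conversely, given $\eta^{\mathrm{aff}}$, one defines $\eta_{S'}(P)$ by gluing the $\eta^{\mathrm{aff}}_{S'_{i}}(P|_{X'_{i}})$ along an affine cover of $S'$ -- legitimate because $\eta^{\mathrm{aff}}$ is compatible with base change along the open immersions and their overlaps -- and checks, again by reduction to affines, naturality and compatibility with the commutative Picard structure and with base change, so that $R_{N}$ is full.

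The only genuinely nontrivial ingredient is the one recorded in the first paragraph: that $\Picfr$ is an honest stack for the Zariski topology and that affine opens form a basis. Everything else is the organization of a $2$-categorical descent argument, and the main, purely technical obstacle is keeping the various gluings -- of line bundles, of their morphisms, of the Picard and commutativity constraints, and of the base change cocycles -- mutually consistent. The one point that requires care is that the overlaps $S'_{i}\cap S'_{j}$ need not be affine, so each identity over such an overlap is itself verified after passing to an affine refinement; consistency is guaranteed by the effectivity of Zariski descent for line bundles. In keeping with the style of this subsection, I would only indicate the construction and leave the routine verifications to the reader.
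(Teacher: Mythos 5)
Your proof is correct and follows essentially the same strategy as the paper: reduce to entire line distributions, cover $S'$ by affines, build the gluing isomorphisms on overlaps by passing to affine refinements and using the base change identifications together with the cleavage/cocycle condition of Definition~\ref{def:distribution}, and invoke Zariski descent for line bundles and their morphisms. The intermediate reduction to a fixed degree bound $N$ and the filtered-union formalism are harmless but superfluous, since the argument is identical with $N$ varying; the paper simply reduces to the entire case and proceeds directly.
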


\begin{proof}
By the very definition of line distribution, we can reduce to the case of an entire line distribution. Let $P$ be an object of $\CHfrak(X)$. Suppose that we are given two open affines $S_1$ and $S_2$ of $S$, with preimages $X_{1}$ and $X_{2}$ in $X$. To construct a line bundle $T_{S}(P)$ over $X$, we first construct a gluing isomorphism $\varphi_{12}: T_{S_{1}}(P)|_{S_{1}\cap S_{2}} \to T_{S_2}(P)|_{S_{1}\cap S_{2}}$, which depends functorially on $P$. For this, it is enough to construct such an isomorphism for any open affine $U$ of $S_{1} \cap S_{2}$, which further satisfies a gluing condition. We define $\varphi_U$ as the composition of natural restriction isomorphisms (cf. Definition \ref{def:distribution} \eqref{item:distribution-2}):
\begin{displaymath}
    T_{S_1}(P)|_U \simeq T_{U}(P) \simeq T_{S_2}(P)|_U .
\end{displaymath}
This isomorphism depends functorially on $P$ since the restriction isomorphisms are given by isomorphisms of functors of commutative Picard categories. 

Given another open affine $U'$ of $S_{1} \cap S_{2}$, we need to prove that $\varphi_{U}|_{U \cap U'}$ and $\varphi_{U'}|_{U \cap U'}$ agree. For this, it is enough to do so on an open affine $V \subseteq U \cap U'.$ The equality $\varphi_U|_V = \varphi_V = \varphi_{U'}|_V$ is guaranteed by the condition in Definition \ref{def:distribution} \eqref{item:distribution-3}.

Given an affine covering of $S$, the gluing isomorphisms constructed as above satisfy a cocycle condition of the form $\varphi_{13} = \varphi_{12} \varphi_{23}$. Indeed, from the construction, this is immediate locally on open affines in the triple intersections, hence everywhere.  Hence, we can glue the affine locally defined line bundles into a line bundle $T_{S}(P).$ Since the gluing isomorphisms depend functorially on $P$, we see that $P\mapsto T_{S}(P)$ indeed defines a functor of commutative Picard categories.

The above discussion can be applied to any base change $X_{S^{\prime}}\to S^{\prime}$. Once the line bundles $T_{S^{\prime}}(P')$ have been constructed, we still need to verify the axioms of Definition \ref{def:distribution}. Since they are of local nature, we can check them on open affines, where they are satisfied by assumption. 

One similarly globalizes isomorphisms of line distributions. This concludes the proof.
\end{proof}

\begin{remark}
After the previous proposition, the category of line distributions can be seen as a stack over $(\mathrm{Aff}/S)$, endowed with the Zariski topology. Recall that, by descent, line bundles for the Zariski topology are equivalent to line bundles for the \'etale, \fppf, or \emph{fpqc} topologies, see  \linebreak \cite[\href{https://stacks.math.columbia.edu/tag/03P7}{03P7}]{stacks-project}. From this, we deduce that the category of line distributions is also a stack over $(\mathrm{Aff}/S)$, endowed with any of these other topologies. 
\end{remark}

To conclude this subsection, we record some formal constructions on line distributions. 
\begin{definition}\label{def:prod-Chern-series-line-distribution}
Let $T$ be a line distribution for $X\to S$. Let $P$ be a Chern power series in $\CHfrak(X)_{\QBbb}$. We define a line distribution $P \cdot T$ by the formula on objects given by, for $f: S^{\prime} \to S$ and a Chern power series $Q$ on $X'$,  
\begin{displaymath}
    (P \cdot T)_{S^{\prime}}(Q) = T_{S^{\prime}}(Q\cdot f^\ast P).
\end{displaymath}
\end{definition}
We observe that this definition actually provides us with a bisymmetric and bimonoidal functor $(\CHfrak(X)_{\QBbb}, +)\times(\Dcal(X/S),\otimes) \to (\Dcal(X/S),\otimes)$. In applications, we will work with the particular situation when $P=\phi(E)$ is a categorical characteristic class. In this case, sending $E$ to $\phi(E)\cdot T$ defines a functor $(\Vect_{X},\iso)\to\Dcal(X/S)$. 

\begin{definition}\label{def:direct-image-distributions}
Let $X$ and $Y$ be proper and fppf over $S$, and $h\colon Y\to X$ a morphism over $S$. Suppose we are given a line distribution $T$ for $Y\to S$. We define a line distribution $h_\ast T$ for $X\to S$, by the formula on objects given by  
\begin{displaymath}
    (h_\ast^{\prime} T)(Q)= T(h^{\prime\ast} Q),
\end{displaymath}
where $h^{\prime}$ is the base change of $h$ by a morphism $S^{\prime}\to S$, and $Q$ is a Chern power series on $X^{\prime}$.
\end{definition}

Notice that, by construction, we formally have:

\begin{equation}\label{linedist:projformula}
    h_\ast \left( h^\ast P \cdot T \right) = P \cdot  h_\ast T .
\end{equation}
This can be seen as a projection formula for line distributions.

\subsection{Line functors and splitting principles}\label{subsubsec:line-functors}
In this subsection we recall for the convenience of the reader and without proof, statements from \cite[Section 2]{Eriksson-Freixas-Wentworth} on line functors and splitting principles. 

\subsubsection{Line functors}
 We first review the notion of line functor in \cite[Definition 2.1]{Eriksson-Freixas-Wentworth}, adapted to the current article. We refer to \emph{op. cit.} for details.

\begin{definition}\label{def:admissible-class}
An admissible class of morphisms $\Pcal$ is a class of proper fppf morphisms, satisfying the following properties:
\begin{enumerate}
    \item[$(P_1)$]  The class $\Pcal$ is stable under flat base changes and isomorphisms of families.
    \item[$(P_2)$] Let $f\colon X\to S$ be a proper fppf morphism. Suppose that $\Ucal=\lbrace X_{i}\rbrace_{i}$ and $\Vcal=\lbrace S_{j}\rbrace_{j}$ are open partitions of $X$ and $S$, and that $f$ induces surjective morphisms $f_{i}\colon X_{i}\to S_{j(i)}$. Then, $f$ belongs to $\Pcal$ if, and only if, all the $f_{i}$ belong to $\Pcal$. 
\end{enumerate}

\end{definition}
In this article, the prototypical example of this definition will be the smallest admissible class containing a given proper \fppf morphism $f\colon X\to S$. This we call the class generated by $f$.

\begin{definition}
The category of line functors in $k$-variables, for an admissible class $\Pcal$, has objects and morphisms as follows:
    \begin{enumerate}
    \item Objects: line functors in $k$ variables, consisting in associating, to any $f\colon X \to S$ in $\Pcal$, a functor 
    \begin{equation} \label{def:determinantfunctor} 
        \Gcal_f \colon (\Vect_{X}, \iso)^{\times k} \to \Picfr(S).
    \end{equation}
     We require compatibility with $(P_1)$ and $(P_2)$ in Definition \ref{def:admissible-class}, in the following sense:
    \begin{enumerate}
        \item[$(F_1)$] The base changes and isomorphisms of families are supposed to identify the various $\Gcal_{f}$, satisfying a  condition akin to the commutative diagram \eqref{eq:CH-pullback-diagram}.
      \item[$(F_2)$] In the setting of $(P_2)$, the functor $\Gcal_f$ is isomorphic to the product of the functors $\Gcal_{f_i}$. Here the functor $\Gcal_{f_i}$ determines a line functor on $S$ by trivially extending the line bundles from $S_{j(i)}$ to $S$. This isomorphism is supposed to be compatible with $(F_1).$
    \end{enumerate}
    \item Morphisms: isomorphisms of line functors $\Gcal\to\Gcal^{\prime}$, consisting in associating, to every $f$ in $\Pcal$, an isomorphism of functors $\Gcal_{f}\to\Gcal_{f}^{\prime}$, in a way compatible with $(F_{1})$ and $(F_{2})$. 
    \end{enumerate}
A line functor in one variable is simply called a line functor. 
\end{definition}
Notice that, by the property $(F_{1})$, line functors are Zariski locally determined. Since the families are quasi-compact morphisms, the products appearing in $(F_2)$ are, locally on $S$, finite. Hence, property $(F_2)$ is meaningful. 

In these definitions, we notice that, unlike the reference \cite{Eriksson-Freixas-Wentworth}, the schemes and morphisms are more general and the vector bundles are not necessarily of constant rank. The definition of the class $\Pcal$ and the gluing property is more restrictive to be able to reduce to the case of constant rank.

The category of line functors, endowed with the tensor product, has a natural structure of a strictly commutative Picard category. The following definition thus makes sense.

\begin{definition}
The category of $\QBbb$-line functors in $k$-variables, for an admissible family $\Pcal$, is the rationalization of the category of line functors for $\Pcal$ in $k$-variables. Its objects and morphisms can equivalently be described as follows:
\begin{enumerate}
    \item Objects: $\QBbb$-lines functors in $k$-variables, consisting in functors $\Gcal_{f} \colon (\Vect_{X}, \iso)^{k} \to \Picfr(S)_{\QBbb}$, such that the various $\Gcal_{f}^{\otimes m}$ define a line functor, for a non-zero integer $m$ independent of $f$.
    \item Morphisms: an isomorphism of $\QBbb$-line functors in $k$-variables $\Gcal\to\Gcal^{\prime}$ consists in an isomorphism of line functors in $k$-variables $\Gcal^{\otimes m}\to\Gcal^{\prime\otimes m}$, for some non-zero integer $m$. 
\end{enumerate}
\end{definition}
    
We will encounter variants of the above notions, where the functors are endowed with an additional multiplicative datum:

\begin{definition}
A line functor $\Gcal$ in $k$-variables, for $\Pcal$, is commutative multiplicative if the various $\Gcal_{f}$ are commutative multiplicative functors in every entry, in a way compatible with the properties $(F_{1})$ and $(F_{2})$. We similarly define commutative multiplicative $\QBbb$-line functors. 
\end{definition}
Most of the time, for simplicity, we will omit the mention of the class $\Pcal$. It will thus be implicit. 

\subsubsection{Splitting principles}\label{subsubsec:splitting-principle}
We recall the splitting principles for line functors. For concreteness, the below statements are formulated for line functors only, but they hold more generally for line functors in several variables and their rational counterparts. 
 
\begin{proposition}\label{prop:split} $($cf.  \cite[Proposition 2.2]{Eriksson-Freixas-Wentworth}$)$ Let $\Gcal$ be a line functor. Then, for any exact sequence of vector bundles $\Sigma: 0 \to E^{\prime} \to E \to E^{\bis} \to 0$, there is a unique isomorphism
\begin{displaymath}
    \psi_\Sigma\colon \Gcal(E) \to \Gcal(E^{\prime} \oplus E^{\bis})
\end{displaymath}
which:
\begin{enumerate}
    \item Is functorial with respect to pullback and isomorphisms of exact sequences.
    \item Is the identity whenever $\Sigma$ is the standard split exact sequence 
    \begin{equation}\label{eq:standardsplit}
        0 \to E^{\prime} \to E^{\prime} \oplus E^{\bis} \to E^{\bis} \to 0. 
    \end{equation}
\end{enumerate}

\end{proposition}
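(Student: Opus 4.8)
The plan is to construct $\psi_\Sigma$ by deforming $\Sigma$ to its split form over the affine line and applying $\Gcal$. Fix $f\colon X\to S$ in $\Pcal$ and an exact sequence $\Sigma\colon 0\to E^{\prime}\to E\to E^{\bis}\to 0$ on $X$, with extension class $e\in\Ext^1_X(E^{\bis},E^{\prime})$. Let $t$ be the coordinate on $\ABbb^1$ and form $\widetilde f\colon X\times\ABbb^1\to S\times\ABbb^1$, which again lies in $\Pcal$ by $(P_1)$, being the flat base change of $f$ along $S\times\ABbb^1\to S$. On $X\times\ABbb^1$ let $\widetilde E$ be the extension of $\pr_X^\ast E^{\bis}$ by $\pr_X^\ast E^{\prime}$ with class $t\cdot e$ (the flat pullback of $e$ multiplied by the global function $t$), normalized so that $\widetilde E|_{t=1}=E$; then $\widetilde E|_{t=0}=E^{\prime}\oplus E^{\bis}$ tautologically, and over $X\times\Gm$ the bundle $\widetilde E$ is the pushout of $\pr_X^\ast\Sigma$ along the automorphism ``multiplication by $t$'' of $\pr_X^\ast E^{\prime}$. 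That pushout carries a canonical morphism of extensions from $\pr_X^\ast\Sigma$, equal to the identity on the quotient, hence inducing a canonical isomorphism $\theta\colon\pr_X^\ast E\to\widetilde E|_{X\times\Gm}$.

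I would then set $L:=\Gcal_{\widetilde f}(\widetilde E)\in\Picfr(S\times\ABbb^1)$. By the base-change compatibility $(F_1)$ of line functors, restriction to the fibres $t=1$ and $t=0$ gives $L|_{S\times\{1\}}=\Gcal_f(E)$ and $L|_{S\times\{0\}}=\Gcal_f(E^{\prime}\oplus E^{\bis})$, while $\Gcal_{\widetilde f}(\theta)$ is a canonical trivialization of $M:=L\otimes\pr_S^\ast\Gcal_f(E)^{-1}$ over $\Gm\times S$. Since $\Gm\times S$ is the complement of the Cartier divisor $\{0\}\times S$ cut out by the global function $t$, after twisting by a suitable (locally constant) power of this divisor the canonical trivialization extends to a canonical global trivialization of $M$; equivalently, $L$ is canonically isomorphic to $\pr_S^\ast\Gcal_f(E)$ over $\ABbb^1\times S$. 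Restricting this isomorphism to $t=0$ yields a canonical isomorphism $\Gcal_f(E^{\prime}\oplus E^{\bis})\cong\Gcal_f(E)$, and I define $\psi_\Sigma$ to be its inverse. All the operations are Zariski-local on $S$, which is harmless by $(F_1)$, and they are visibly natural with respect to pullbacks of $f$ and isomorphisms of exact sequences, so property (1) holds. Property (2) is immediate: for the standard split sequence $e=0$, so $\widetilde E=\pr_X^\ast(E^{\prime}\oplus E^{\bis})$, $\theta=\id$, $M=\Ocal$, and $\psi_\Sigma=\id$.

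For uniqueness, suppose $\psi$ satisfies (1) and (2). Applying $\psi$ to the exact sequence $\widetilde\Sigma\colon 0\to\pr_X^\ast E^{\prime}\to\widetilde E\to\pr_X^\ast E^{\bis}\to 0$ on $X\times\ABbb^1$ produces an isomorphism $\psi_{\widetilde\Sigma}$ over $\ABbb^1\times S$ which, by naturality $(F_1)$ under restriction to fibres, specializes to $\psi_\Sigma$ at $t=1$ and (using (2), since $\widetilde\Sigma|_{t=0}$ is split) to the identity at $t=0$. To rigidify this I would pass to the analogous deformation $\widetilde E$ over $\PBbb^1\times S$, arranged to be split over both $\{0\}\times S$ and $\{\infty\}\times S$ and equal to $E$ over $\{[1:1]\}\times S$: there the mere existence of the nowhere-vanishing section $\psi_{\widetilde\Sigma}$ forces $L$ to have fibrewise degree $0$, hence $L\cong\pr_S^\ast\Gcal_f(E)$; and since $\pr_{S\,\ast}\Ocal_{\PBbb^1\times S}=\Ocal_S$ universally, $\psi_{\widetilde\Sigma}$ is then constant in the $\PBbb^1$-variable, so its value at $t=0$ (the identity) propagates to $\{[1:1]\}$, giving $\psi_\Sigma=\psi_\Sigma^{\text{constructed}}$.

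I expect the main obstacle to be the descent of the interpolating line bundle $L$ to the base in a way that is functorial and insensitive to the possibly pathological Picard groups and unit groups of $\ABbb^1\times S$ and $\PBbb^1\times S$ over a general scheme $S$: for existence this is handled by trivializing $L$ only over $\Gm\times S$ and exploiting that $\{0\}\times S$ is principal, and for uniqueness by the $\PBbb^1$-compactification, where fibrewise-trivial line bundles descend and global units are pulled back from $S$. The remaining ingredients — the construction of $\widetilde E$ with the prescribed fibres and the canonical $\theta$ over $\Gm$, and the verification of the compatibilities in (1) — are routine once this is in place.
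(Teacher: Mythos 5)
The paper gives no proof of this proposition beyond the reference to \cite[Proposition 2.2]{Eriksson-Freixas-Wentworth}, and your argument — interpolating the extension class by $t\cdot e$ over the line, applying $\Gcal$ to the deformed bundle, and rigidifying over $\PBbb^{1}$ for uniqueness — is essentially the argument of that reference. The one step you rightly flag as delicate, extending the trivialization of $M$ across $t=0$ over an arbitrary base with a locally constant twist, is exactly where the cited proof invokes the structure theorem $L\simeq\Ocal(k)\otimes p^{\ast}M$, $k$ locally constant, for line bundles on $\PBbb^{1}_{S}$ (recorded in Remark \ref{rem:linebundlesonprojective}); so the $\PBbb^{1}$-compactification you propose for uniqueness should be used for the existence step as well, rather than working only on $\ABbb^{1}\times S$ where the order of vanishing along $\{0\}\times S$ is not a priori well behaved.
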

\qed
\begin{remark} \label{rem:linebundlesonprojective}
We mention that the proof of \cite[Proposition 2.2]{Eriksson-Freixas-Wentworth} relies on the following elementary fact, which was stated under simplifying assumptions on the base scheme. If $S$ is any scheme, every line bundle $L$ on $\PBbb^{1}_{S}$ is isomorphic to a line bundle of the form $\Ocal(k)\otimes p^{\ast}M$, where $k$ is a locally constant function $S\to\ZBbb$, $p\colon\PBbb^{1}_{S}\to S $ is the projection. Necessarily,  $M$ is isomorphic to $\sigma^{\ast}L$, where $\sigma\colon S\to\PBbb^{1}_{S}$ is the $\infty$ section. In \emph{loc. cit.} this is proven by citing a result of \cite{Quillen:K-theory-I}, valid for quasi-compact schemes. For arbitrary $S$, the claim thus holds locally and can be seen to hold in general by gluing. 
\end{remark}

The proof of the below proposition uses the isomorphism in the previous proposition to reduce considerations to the split case and constitutes our second splitting principle.
\begin{proposition} \label{Prop:2ndsplitting}
$($cf. \cite[Theorem 2.10 \& Remark 2.12]{Eriksson-Freixas-Wentworth}$)$ Let $\Gcal, \Gcal'$ be  line functors. Suppose that for direct sums of line bundles, $E = \oplus_{i}^{r} L_i$, we have an isomorphism $\Gcal(E) \to \Gcal'(E)$, compatible with isomorphisms in $E$ preserving the flag $L_1 \subseteq L_1 \oplus L_2 \subseteq \ldots \subseteq E$, and compatible with base change in $\Scal$. Then there is an isomorphism of line functors $\Gcal \to \Gcal'$.
\end{proposition}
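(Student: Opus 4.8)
The strategy is to reduce the construction of the isomorphism $\Gcal \to \Gcal'$ to the case of direct sums of line bundles by using the universal situation of a flag of line bundles on a product of projective bundles, following the "splitting principle" paradigm. First I would fix a family $f\colon X\to S$ in $\Pcal$ and a vector bundle $E$ of rank $r$ on $X$; since everything is Zariski-local on $S$ by property $(F_1)$ and line functors glue, I may assume $E$ has constant rank $r$. The plan is then to form the full flag bundle $\pi\colon \mathrm{Fl}(E)\to X$, on which $\pi^\ast E$ acquires a complete flag $0\subseteq F_1\subseteq F_2\subseteq\cdots\subseteq F_r=\pi^\ast E$ with line bundle quotients $L_i = F_i/F_{i-1}$. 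Applying Proposition \ref{prop:split} repeatedly to the exact sequences induced by this flag, one gets a canonical isomorphism $\Gcal(\pi^\ast E)\simeq \Gcal\big(\bigoplus_i L_i\big)$, and likewise for $\Gcal'$; composing with the given isomorphism for direct sums of line bundles (compatible with the standard flag) produces a canonical isomorphism $\pi^\ast\Gcal(E)\simeq\pi^\ast\Gcal'(E)$, using that base change along $\pi$ identifies $\Gcal_{f\circ\pi}$ with the pullback of $\Gcal_f$ via $(F_1)$.

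The second step is descent: one must show that this isomorphism over $\mathrm{Fl}(E)$ descends to an isomorphism over $X$, i.e.\ that it is independent of the two "choices" hidden in the construction and is invariant under the relevant structure group. Concretely, I would first handle the case of a single projective bundle $\mathbb{P}(E)\to X$ (or rather the bundle of $(r-1,r)$-flags, one step at a time, iterating as in the proof of \cite[Theorem 2.10]{Eriksson-Freixas-Wentworth}), where the key input is Remark \ref{rem:linebundlesonprojective}: every line bundle on $\mathbb{P}^1_S$ is of the form $\Ocal(k)\otimes p^\ast M$, which lets one check that a section of a line functor over the flag bundle that is suitably compatible with the $\infty$-section descends. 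The point is that $\mathrm{Fl}(E)\to X$ has a section after base change (locally one can split $E$), the two resulting trivializations differ by an automorphism preserving the flag, and by hypothesis the isomorphism $\Gcal\to\Gcal'$ on direct sums is compatible with precisely such automorphisms; hence the descent datum is effective and, the target being a scheme, the line bundle isomorphisms glue.

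The third step is to verify that the isomorphism $\Gcal(E)\to\Gcal'(E)$ thus obtained is functorial with respect to isomorphisms of vector bundles and compatible with pullback and with the gluing property $(F_2)$, so that it genuinely assembles into an isomorphism of line functors. This is a routine but necessary check: functoriality in $E$ follows because the flag bundle construction is functorial for isomorphisms, and compatibility with pullback and with $(F_1)$--$(F_2)$ follows from the corresponding compatibilities in Proposition \ref{prop:split} and the hypothesis. Finally, one notes that on direct sums of line bundles the constructed isomorphism recovers the given one, by taking $E=\bigoplus L_i$ and using that Proposition \ref{prop:split} gives the identity on the standard split sequence \eqref{eq:standardsplit}.

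\textbf{Main obstacle.} The crux is the descent step: showing that the isomorphism built over $\mathrm{Fl}(E)$ is independent of the auxiliary splitting and is invariant under the parabolic/Borel structure group of the flag bundle, using only the hypothesis that the given isomorphism on direct sums is compatible with flag-preserving automorphisms. This requires carefully organizing the iteration one projective bundle at a time and invoking Remark \ref{rem:linebundlesonprojective} to control line bundles on the $\mathbb{P}^1$-bundles appearing at each stage; over general (non-Noetherian, non-quasi-compact) base schemes one must also take care that the gluing arguments are Zariski-local and that the relevant quasi-compactness is available after localizing on $S$. Everything else is bookkeeping with the axioms $(F_1)$, $(F_2)$ and Proposition \ref{prop:split}.
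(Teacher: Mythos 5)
The key step in your proposal does not go through as written, and it is worth being precise about why. You pull back along the flag bundle $\pi\colon \mathrm{Fl}(E)\to X$ and assert that ``base change along $\pi$ identifies $\Gcal_{f\circ\pi}$ with the pullback of $\Gcal_f$ via $(F_1)$.'' But $(F_1)$ only governs \emph{flat base changes of the base $S$}, i.e.\ replacing $S$ by a flat $S'\to S$ and $X$ by $X\times_S S'$, together with isomorphisms of families. It says nothing about composing $f$ with a vertical morphism $\pi\colon D\to X$ over $S$. In fact, in the prototypical and relevant case where $\Pcal$ is the class generated by a single $f\colon X\to S$ (which is exactly what is used in Corollary~\ref{cor:splittingdistribution}), the morphism $f\circ\pi\colon\mathrm{Fl}(E)\to S$ is not even in $\Pcal$, because $\Pcal$ is only closed under flat base change of $S$ and restriction to open partitions, not under postcomposition of $X$ by projective bundles. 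So the symbol $\Gcal_{f\circ\pi}$ is not defined, and there is no functor to apply to $\pi^\ast E$. Consequently the subsequent ``descent'' step from $\mathrm{Fl}(E)$ back to $X$ has nothing to descend: the object you want to construct is an isomorphism of line bundles on $S$, and there is no structural axiom in the line-functor formalism (no analogue of a projection formula) that links $\Gcal_f(E)$ on $S$ to any datum attached to $\mathrm{Fl}(E)\to S$.

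This matters because the flag-bundle trick you have in mind is genuinely the one used by Elkik for intersection bundles; compare the discussion around \cite[\textsection V.2.3]{Elkikfib} recalled in the text, where the flag variety $D\to X$ is used together with the \emph{projection formula} $\langle Q\rangle_{X/S}\simeq\langle\pi^\ast Q\cdot P\rangle_{D/S}$. That projection formula is a property of the specific intersection bundles and is proved later; it is not part of the abstract axioms $(F_1)$--$(F_2)$ for line functors. Proposition~\ref{Prop:2ndsplitting} is precisely the abstract version that must hold \emph{without} a projection formula, because it is one of the tools used to establish the projection-formula-compatible structure in the first place. The approach that can be carried out within the axioms has to go through the $\mathbb{P}^1_S$-deformation of a filtration (a genuine flat base change of $S$, covered by $(F_1)$ and hence of Remark~\ref{rem:linebundlesonprojective}), constructing the isomorphism from a chosen filtration and then verifying independence of the filtration -- this is what Remark~\ref{rmk:indepedenceoffiltration} alludes to -- rather than pulling back to the flag bundle of $E$ over $X$. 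Also a minor point: the reduction to constant rank uses the gluing axiom $(F_2)$ on open partitions, not $(F_1)$ as you wrote.
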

\qed

\begin{remark}\label{rmk:indepedenceoffiltration}
    The above is, in fact, a combination of the splitting principle in Proposition \ref{prop:split} and a splitting principle that reduces to constructions depending on a flag. It is part of the result that the outcome is independent of the flag. 
\end{remark}

The following corollary corresponds to \cite[Proposition 2.16 \& Proposition 2.19 \& Corollary 2.20 ]{Eriksson-Freixas-Wentworth}. It relies on the two splitting principles recalled above and allows us to reduce the study of commutative multiplicative line functors to the case of split sequences and line bundles. 
\begin{corollary} \label{Cor:2ndsplitting}
There is an equivalence of categories, between:
\begin{enumerate}
    \item The category of commutative multiplicative line functors. 
    \item The category described as follows:
    \begin{enumerate}
        \item Objects:  line functors $\Gcal$ with an isomorphism $\Gcal_f(E \oplus E') \simeq \Gcal_f(E) \otimes \Gcal_f(E'),$ for any $f, E, E'$. 
        It should be commutative whenever $E$ and $E'$ are line bundles. These properties should be compatible with $(F_1)$ and $(F_2).$
        \item Morphisms: isomorphisms $\Gcal_f(L) \to \Gcal'_f(L)$ for any line bundle $L$, compatible with $(F_1)$ and $(F_2)$.
    \end{enumerate}
    
\end{enumerate}   
\end{corollary}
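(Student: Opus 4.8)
The plan is to identify the two categories by means of the evident forgetful functor $\Phi$ from (1) to (2). A commutative multiplicative line functor $\Gcal$ carries, for every split sequence $\Sigma_{E,E'}\colon 0\to E\to E\oplus E'\to E'\to 0$, the Whitney isomorphism $\Gcal_f(E\oplus E')\to\Gcal_f(E)\otimes\Gcal_f(E')$, which by axiom \eqref{item:mult-funct-5} is commutative for all $E,E'$ and hence in particular for line bundles, while a natural transformation of commutative multiplicative line functors restricts to isomorphisms on line bundles compatible with $(F_1)$, $(F_2)$; so $\Phi$ is well defined, and it remains to show it is essentially surjective and fully faithful.

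For essential surjectivity, I would start from an object of (2): a line functor $\Gcal$ equipped with an additivity isomorphism $a_{E,E'}\colon\Gcal_f(E\oplus E')\to\Gcal_f(E)\otimes\Gcal_f(E')$, commutative on line bundles and compatible with $(F_1)$, $(F_2)$. Applying Proposition \ref{prop:split} to each short exact sequence $\Sigma\colon 0\to E'\to E\to E''\to 0$ yields the canonical isomorphism $\psi_\Sigma\colon\Gcal_f(E)\to\Gcal_f(E'\oplus E'')$, and I would set $\Gcal_f(\Sigma):=a_{E',E''}\circ\psi_\Sigma$, the trivialization $\Gcal_f(0)\simeq\Ocal_S$ being forced by $a_{0,0}$. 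Axioms \eqref{item:mult-funct-1}--\eqref{item:mult-funct-3} then follow from the functoriality properties of $\psi_\Sigma$ in Proposition \ref{prop:split} together with the naturality of $a$. The substantive verifications are the compatibility with admissible filtrations \eqref{item:mult-funct-4} and the commutativity \eqref{item:mult-funct-5}: in each case the two composites in the relevant diagram are isomorphisms of line functors in the appropriate number of variables, and after using Proposition \ref{prop:split} to split all the sequences involved one reduces to comparing them on direct sums of line bundles $E=\bigoplus_{i} L_i$, where \eqref{item:mult-funct-4} becomes the associativity of $\otimes$ and \eqref{item:mult-funct-5} becomes the commutativity of $a$ on line bundles. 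Proposition \ref{Prop:2ndsplitting}, together with the flag-independence of Remark \ref{rmk:indepedenceoffiltration}, then upgrades the agreement on split sums of line bundles to agreement on all vector bundles, exhibiting $(\Gcal,\Gcal_f(\Sigma))$ as a commutative multiplicative line functor mapping to the chosen object under $\Phi$.

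For full faithfulness, let $\Gcal,\Gcal'$ be commutative multiplicative line functors and suppose given, for every $f$ in $\Pcal$ and every line bundle $L$ on the source, an isomorphism $t_L\colon\Gcal_f(L)\to\Gcal'_f(L)$ compatible with $(F_1)$, $(F_2)$. Using the multiplicative structures one obtains for $E=\bigoplus_{i=1}^{r} L_i$ an isomorphism $\Gcal_f(E)\simeq\bigotimes_{i}\Gcal_f(L_i)\to\bigotimes_{i}\Gcal'_f(L_i)\simeq\Gcal'_f(E)$ induced by the $t_{L_i}$, and the point is that this is compatible with flag-preserving isomorphisms of $E$: compatibility with the unipotent upper-triangular part is exactly axiom \eqref{item:mult-funct-4} for $\Gcal$ and $\Gcal'$, while transpositions are handled by the commutativity of the $t_L$ and of the Whitney isomorphisms. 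Proposition \ref{Prop:2ndsplitting} then produces an isomorphism of line functors $t\colon\Gcal\to\Gcal'$ extending the $t_L$, unique by the uniqueness in that proposition. It remains to check that $t$ respects the commutative multiplicative structure; by Remark \ref{rem:detfunctor} \eqref{rem:detfunctor-0} it suffices to check compatibility with the Whitney isomorphisms, which is again reduced by Proposition \ref{prop:split} to split sequences and then by Proposition \ref{Prop:2ndsplitting} to direct sums of line bundles, where it holds by construction of $t$. Uniqueness of $t$ as a morphism in (1) follows likewise from the uniqueness parts of the two splitting principles, and composing the two constructions gives the asserted equivalence.

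The hard part will be the bookkeeping around flag-preserving automorphisms. The delicate point is that Proposition \ref{Prop:2ndsplitting} demands compatibility of the constructed isomorphisms on $\bigoplus_i L_i$ not merely with permutations of the $L_i$ but with the whole group of flag-preserving automorphisms, including the unipotent part; it is precisely the compatibility with admissible filtrations, axiom \eqref{item:mult-funct-4}, that makes this work, and verifying it together with the flag-independence of the resulting extension is the crux of the argument — this is exactly what is carried out in \cite[Proposition 2.16 \& Proposition 2.19 \& Corollary 2.20]{Eriksson-Freixas-Wentworth}.
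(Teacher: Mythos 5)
Your proposal is correct and follows exactly the route the paper intends: the paper gives no independent proof but defers to \cite[Proposition 2.16, Proposition 2.19 \& Corollary 2.20]{Eriksson-Freixas-Wentworth}, stating that the result ``relies on the two splitting principles recalled above,'' which is precisely your strategy of using Proposition \ref{prop:split} to reduce exact sequences to split ones and Proposition \ref{Prop:2ndsplitting} to reduce to sums of line bundles. You also correctly identify the crux — compatibility with the full group of flag-preserving automorphisms via the admissible-filtration axiom — which is the content of the cited results.
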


\qed

As an application, we conclude this section with a splitting principle for line distributions. For simplicity, we state the result for a single vector bundle $E$, but analogous results hold for categorical characteristic classes in more variables. 

\begin{corollary} \label{cor:splittingdistribution}
Suppose we are given two categorical characteristic classes $\phi$ and $\psi$ for $X$. Moreover, suppose that for any $S' \to S$ and any (ordered) direct sum of line bundles $E = \bigoplus_{i=1}^r L_i$ on $X'$, where $r$ is locally constant, there is an isomorphism of line distributions: 
\begin{displaymath}
    \phi(E) \cdot T_{S^{\prime}} \to \psi(E) \cdot T_{S^{\prime}}. 
\end{displaymath}
Then, there is such an isomorphism for any vector bundle $E.$
\end{corollary}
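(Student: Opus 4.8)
The plan is to reduce the assertion to the splitting principle for line functors, Proposition~\ref{Prop:2ndsplitting}, applied in the variable $E$. First I would unwind what an isomorphism of line distributions $\phi(E)\cdot T\to\psi(E)\cdot T$ amounts to. Since $T$ has finite degree $N$, both $\phi(E)\cdot T$ and $\psi(E)\cdot T$ factor through $\CHfrak^{\leq N}(X')$, which is generated, as a commutative Picard category, by the Chern monomials $M=\cfrak_{k_1}(E_1)\cdots\cfrak_{k_m}(E_m)$. By Lemma~\ref{lemma:isoPicard} a natural transformation between functors of commutative Picard categories out of $\CHfrak^{\leq N}(X')_\QBbb$ is determined by its effect on $\pi_0$, hence by its values on such generators; compatibility with the structural isomorphisms of $\CHfrak$ (Whitney, isomorphisms of vector bundles, rank triviality, the degree-zero identification) is what is needed to glue these values into an actual morphism of line distributions. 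So it suffices, for all $m$ and all $k_1,\dots,k_m$ with $\sum k_i\leq N$, to produce isomorphisms
\[
T_{S'}\bigl(\cfrak_{k_1}(E_1)\cdots\cfrak_{k_m}(E_m)\cdot\phi(E)\bigr)\;\xrightarrow{\ \sim\ }\;T_{S'}\bigl(\cfrak_{k_1}(E_1)\cdots\cfrak_{k_m}(E_m)\cdot\psi(E)\bigr),
\]
natural in $E$ and in the $E_i$, compatible with base change in $S'$, and compatible with the above structural isomorphisms in the $E_i$.

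Next I would observe that, with $k_1,\dots,k_m$ fixed, the assignment $(E_1,\dots,E_m,E)\mapsto T_{S'}(\cfrak_{k_1}(E_1)\cdots\cfrak_{k_m}(E_m)\cdot\phi(E))$ defines a $\QBbb$-line functor $\Gcal^{\phi}_{k_1,\dots,k_m}$ in $m+1$ variables for the admissible class generated by $X\to S$: property $(F_1)$ follows from the base change identifications of $T$ (Definition~\ref{def:distribution}), of $\cfrak$ (Corollary~\ref{cor:CH-pullback}) and of $\phi$ (Definition~\ref{def:charclass}); property $(F_2)$ follows from the local nature of line distributions (Proposition~\ref{prop:line-distribution-affine}) together with the fact that $\CHfrak$ and $\Picfr$ turn disjoint unions into products (Corollary~\ref{cor:CH0-H0} and the construction of \textsection\ref{subsec:Chern-category-quasi-compact}). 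The degree bound $N$ of $T$ makes only finitely many graded pieces of $\phi(E)$ relevant, and Definition~\ref{def:charclass} supplies a uniform denominator, so this is a genuine $\QBbb$-line functor; define $\Gcal^{\psi}_{k_1,\dots,k_m}$ likewise.

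I would then regard $E_1,\dots,E_m$ as parameters: for each fixed such tuple, $E\mapsto\Gcal^{\phi}_{k_1,\dots,k_m}(E_1,\dots,E_m,E)$ and $E\mapsto\Gcal^{\psi}_{k_1,\dots,k_m}(E_1,\dots,E_m,E)$ are $\QBbb$-line functors in the single variable $E$, and the hypothesis supplies exactly the input of the rational counterpart of Proposition~\ref{Prop:2ndsplitting}: evaluating the given isomorphism of line distributions $\phi(E)\cdot T\to\psi(E)\cdot T$, for $E=\bigoplus_{i=1}^r L_i$, at the Chern monomial $\cfrak_{k_1}(E_1)\cdots\cfrak_{k_m}(E_m)$ produces an isomorphism on direct sums of line bundles compatible with the standard flag (this is the point of requiring $E$ to be an \emph{ordered} direct sum; the remaining naturality and base change compatibility are automatic for a morphism of line distributions). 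Hence there are isomorphisms $\Gcal^{\phi}_{k_1,\dots,k_m}\simeq\Gcal^{\psi}_{k_1,\dots,k_m}$ of line functors, in particular for arbitrary $E$. Finally I would check that, as $M$ varies, these isomorphisms are compatible with the defining relations of $\CHfrak$, so that they glue to an isomorphism of line distributions $\phi(E)\cdot T\simeq\psi(E)\cdot T$ natural in $E$; this is forced because on both sides the structural isomorphisms are transported from those of $\CHfrak$ through the functorial constructions $\phi$, $\cfrak$, $T$, and alternatively one re-runs the previous step treating each $E_i$ as a splitting variable using the several-variable splitting principle, whereupon compatibility with the Whitney and flag isomorphisms in the $E_i$ is part of the conclusion (Remark~\ref{rmk:indepedenceoffiltration}).

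The main obstacle is the first reduction: recasting the ``isomorphism of line distributions'' assertion as a line-functor statement to which the results of \cite{Eriksson-Freixas-Wentworth} apply, and in particular verifying that the auxiliary assignments $\Gcal^{\phi}_{k_1,\dots,k_m}$ satisfy $(F_1)$, $(F_2)$ with a uniform denominator. Once this is in place, the application of Proposition~\ref{Prop:2ndsplitting} and the final coherence check are routine.
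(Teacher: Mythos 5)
Your proof is correct and follows essentially the same strategy as the paper: after a preliminary reduction (entire/integral versus $\QBbb$-coefficients is handled correctly either way), you fix the test Chern datum, regard the assignment in the variable $E$ as a ($\QBbb$-)line functor for the admissible class generated by $X'\to S'$, invoke Proposition~\ref{Prop:2ndsplitting} to produce isomorphisms for arbitrary $E$, and then argue that these isomorphisms patch into an isomorphism of line distributions because the required compatibilities hold on direct sums of line bundles (by hypothesis) and propagate by the splitting principle. The one place where you diverge from the paper is cosmetic: the paper fixes an arbitrary Chern power series $P$ and applies the single-variable splitting principle to $E\mapsto(\phi(E)\cdot T_{S'})(P)$, whereas you restrict to Chern monomials and introduce multi-variable line functors $\Gcal^{\phi}_{k_1,\dots,k_m}$ with $E_1,\dots,E_m$ as parameters before specializing to the last variable; this extra scaffolding is sound (the $(F_1)$, $(F_2)$ checks you sketch are the right ones, via base change for $T$, $\cfrak$ and $\phi$, and the local/disjoint-union behavior of $\CHfrak$ and $\Picfr$) but not needed, since the paper's shortcut of quantifying over all $P$ at once already absorbs the dependence on the auxiliary bundles. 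Your final coherence discussion is a bit looser than the paper's terse "hence, by the splitting principle, they also define isomorphisms of line distributions for general $E$", but the intended mechanism is the same: each compatibility is an equality of morphisms of line functors in $E$ which holds on sums of line bundles and hence in general by the uniqueness in Proposition~\ref{prop:split}; you would do well to phrase it that way rather than appealing to "transport of structure", which as written does not explain why $\rho_M(E)$ commutes with the structural isomorphisms in the $E_i$.
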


\begin{proof}
By the very definition of line distributions and categorical characteristic classes, we can reduce to the case of entire line distributions and integral characteristic classes. Denote by $S'\to S$ any morphism of schemes. For a fixed Chern power series $P$ on $X'$, the functors $E \mapsto (\phi(E) \cdot T_{S^{\prime}})(P)$ and $E \mapsto (\psi(E) \cdot T_{S'})(P)$ define line functors for the family generated by $X^{\prime}\to S^{\prime}$. By the splitting principle in Proposition \ref{Prop:2ndsplitting}, the datum of the statement defines an isomorphism of line bundles for every vector bundle $E$, say $\rho_P(E): (\phi(E) \cdot T_{S^{\prime}})(P) \to (\psi(E) \cdot T_{S^{\prime}})(P)$. In the case when $E$ is a sum of line bundles, these isomorphisms define isomorphisms of line distributions by assumption. Hence, by the splitting principle, they also define isomorphisms of line distributions for general $E$. 
\end{proof}

\begin{remark}\label{rmk:ch-E-otimes-F-2}
Continuing with Remark \ref{rmk:ch-E-otimes-F}, the splitting principle above will allow us to show that, in the setting of intersection bundles, the categorical characteristic classes $\chfrak(E\otimes F)$ and $\chfrak(E)\cdot\chfrak(F)$ induce isomorphic line distributions. This is accomplished in Proposition \ref{prop:ch-multiplicative-properties} below.
\end{remark}

\section{Deligne pairings}\label{section:intersection-bundles}
In this section we discuss the construction and some properties of Deligne pairings, for morphisms satisfying the condition $(C_n)$, following \cite{Boucksom-Eriksson,  Deligne-determinant, Ducrot, Elkikfib, Eriksson-Freixas-Wentworth, Munoz}. In these references, several levels of generality are allowed. Below, we provide a unified presentation using generators and relations. In particular, the construction does not assume the base schemes to be locally Noetherian or the morphisms to have Cohen--Macaulay fibers, as was required in the original approach by Elkik \cite{Elkikfib}. 

We recall that when we deal with intersection bundles, functorial means compatible with base change, and isomorphisms of the involved $S$-schemes and vector bundles.

\subsection{Construction of Deligne pairings}\label{subsec:Deligne-pairings}  In this subsection, we will consider morphisms of schemes $f\colon X \to S$ which satisfy the condition $(C_n)$, for some integer $n\geq 0$. 

\subsubsection{Construction with the determinant of the cohomology}

The Deligne pairings for $f\colon X\to S$ consist in functorially associating, to any given line bundles $L_{0},\ldots, L_{n}$ on $X$, a line bundle on $S$ denoted by

\begin{equation} \label{def:firstimeintbundle}
    \langle L_0, \ldots, L_n \rangle_{X/S},\quad\text{or simply}\quad\langle L_0, \ldots, L_n \rangle.
\end{equation}
These line bundles are modeled on the intersection classes
\begin{equation}\label{modelintclass}
    f_\ast (c_1(L_0)\cdot \ldots\cdot c_1(L_n) ).
\end{equation}
Over a general base scheme, the bundles \eqref{def:firstimeintbundle} can be directly constructed using the determinant of the cohomology, as developed in \cite{Ducrot} and generalized in \cite[Appendix]{Boucksom-Eriksson}. The definition of the bundle is given by the $(n+1)$-th symmetric difference of determinant of cohomologies \linebreak  (cf. Definition \ref{def:detcoh}):
\begin{equation}\label{eq:definition-ducrot}
    \langle L_0, \ldots, L_n \rangle_{X/S}  = \bigotimes_{I \subseteq \{0, \ldots, n \}} \det Rf_* \left( \bigotimes_{i \in I} L_i \right)^{(-1)^{n+1-|I|}}
\end{equation}
We will elaborate on the precise relationship with \eqref{modelintclass} in \textsection\ref{subsub:Cherndirect}, in particular, see Corollary \ref{cor:deligne-pairing-direct-image}. For the time being, we summarize the main properties proven in these references.

\begin{proposition}\label{prop:generalpropertiesdeligneproducts}
With assumptions as above, we have the following functorial constructions:

\begin{enumerate}
    \item\label{item:Deligne-pairing-multilinear} The Deligne pairing is multilinear for the tensor product of line bundles.
    \item\label{item:symmetry-deligne} Given any permutation $\sigma$ of $\lbrace 0,\ldots,n\rbrace$, there is a canonical symmetry isomorphism 
    \begin{equation}\label{eq:delignepairingsymmetry}
        \langle L_0, \ldots ,L_n\rangle_{X/S} \to \langle L_{\sigma(0)}, \ldots ,L_{\sigma(n)}\rangle_{X/S}. 
    \end{equation}
    If $L_i = L_j$, and $\sigma = (i,j)$, then  $[\sigma] = (-1)^{\kappa},$ where $\kappa = \int_{X/S} \prod_{k\neq i} c_1(L_k)$.
    \item\label{item:deligne-pairing-restriction} Given a relative effective Cartier divisor $D \subseteq X$, there is a canonical restriction isomorphism
\begin{equation}\label{eq:restrictiondivisor}
    \langle L_{0},\ldots, \Ocal(D), \ldots, L_{n-1} \rangle_{X/S} \to \langle L_{0}|_D,\ldots,L_{n-1}|_D\rangle_{D/S},
\end{equation} 
    \item If $X\to S$ is of relative dimension zero, the construction $\langle L_{0} \rangle_{X/S}$ coincides with the norm functor $N_{X/S}(L_{0})$ in \cite[\textsection 6.5.5]{EGAII}. 
    \item\label{item:delmultiplicationbyu} Let $u \in \Ocal_S^\times$. The multiplication map $u: L_i \to L_i$ induces, by functoriality, an automorphism $[u]$ of $\langle L_0, \ldots, L_n \rangle $. We have 
    \begin{displaymath}
       [u] = u^\kappa,\quad\text{where}\quad \kappa=\int_{X/S} \prod_{j\neq i}c_1(L_j).
    \end{displaymath}
    \item\label{item:order-deligne-pairing-isos} Iterated applications of the above isomorphisms can be performed in any order, with the same result. In particular, iterated applications of restrictions along relative Cartier divisors are independent of the order.
\end{enumerate}
\end{proposition}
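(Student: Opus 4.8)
The plan is to deduce all the items from the explicit formula \eqref{eq:definition-ducrot}, the basic properties of the determinant of the cohomology (Definition \ref{def:detcoh} and the base-change statement following it), and the known behaviour of direct images under the operations involved. First, for multilinearity in item \eqref{item:Deligne-pairing-multilinear}, I would not attempt to prove it directly from \eqref{eq:definition-ducrot} by manipulating $\det Rf_*$, since that is where the genuine geometric content lies; instead I would either cite \cite{Ducrot, Boucksom-Eriksson} for the local Noetherian / Cohen--Macaulay case and extend by Noetherian approximation (writing $X\to S$ as a pullback of $X_0\to S_0$ with $S_0$ Noetherian, as recalled in the conventions \textsection\ref{subsec:conventions-notations}), or invoke Mu\~noz Garc\'ia \cite{Munoz} for the pure-dimension hypothesis. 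The point is that bilinearity of the Deligne pairing in each $L_i$ is a classical fact whose proof proceeds by reduction to a relative effective Cartier divisor and then to the norm functor; I would state this reduction and reference it rather than reproduce it.

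Second, items \eqref{item:symmetry-deligne} and \eqref{item:delmultiplicationbyu} are the ones where the formula \eqref{eq:definition-ducrot} really helps. The symmetry isomorphism \eqref{eq:delignepairingsymmetry} is \emph{manifest} from \eqref{eq:definition-ducrot}: the right-hand side is visibly symmetric in $L_0,\ldots,L_n$ because the indexing is over all subsets $I\subseteq\{0,\ldots,n\}$ and $\bigotimes_{i\in I}L_i$ is symmetric in its arguments. So for a permutation $\sigma$ one simply reindexes, obtaining a canonical isomorphism, and composing two such reindexings over $\sigma,\tau$ gives the reindexing over $\sigma\tau$ on the nose — this already delivers a large part of item \eqref{item:order-deligne-pairing-isos} for the symmetry isomorphisms. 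The sign computation when $L_i=L_j$ and $\sigma=(i,j)$ is the subtle part: here one uses that the transposition acts on $\det Rf_*(\bigotimes_{k\in I}L_k)$ trivially when $i,j$ are both in $I$ or both out of $I$, and that the remaining terms pair up, the net automorphism being governed by the commutativity constraint (Koszul sign) in $\Picfr(S)_{\mathrm{gr}}$, which contributes $(-1)$ to the power equal to the relevant Euler characteristic; by Riemann--Roch over the fibres that Euler characteristic is $\int_{X/S}\prod_{k\neq i}c_1(L_k)$. I would phrase this last identification via the forthcoming Corollary \ref{cor:deligne-pairing-direct-image} (the Chern-class computation of the Deligne pairing) rather than a bare-hands fibrewise argument. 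Item \eqref{item:delmultiplicationbyu} follows by the same bookkeeping: multiplication by $u\in\Ocal_S^\times$ on $L_i$ acts on $\det Rf_*(\bigotimes_{k\in I}L_k)$ by $u^{\chi_I}$ when $i\in I$ and trivially otherwise, where $\chi_I$ is the relative Euler characteristic; assembling the alternating product in \eqref{eq:definition-ducrot} leaves $u^\kappa$ with $\kappa=\int_{X/S}\prod_{j\neq i}c_1(L_j)$, again by relative Riemann--Roch.

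Third, for the restriction isomorphism \eqref{eq:restrictiondivisor} along a relative effective Cartier divisor $D\subseteq X$, I would use the short exact sequence $0\to \Ocal_X(-D)\otimes\bigotimes_{i\in I}L_i\to \bigotimes_{i\in I}L_i\to \bigl(\bigotimes_{i\in I}L_i\bigr)|_D\to 0$, apply multiplicativity of $\det Rf_*$ to get $\det Rf_*(\bigotimes_{i\in I}L_i|_D)\simeq \det Rf_*(\bigotimes_{i\in I}L_i)\otimes\det Rf_*(\Ocal_X(-D)\otimes\bigotimes_{i\in I}L_i)^{-1}$, and then form the alternating product over $I\subseteq\{0,\ldots,n-1\}$; the telescoping in the variable $\Ocal(D)$ — precisely, comparing \eqref{eq:definition-ducrot} for $\langle L_0,\ldots,\Ocal(D),\ldots,L_{n-1}\rangle_{X/S}$ with \eqref{eq:definition-ducrot} for $\langle L_0|_D,\ldots,L_{n-1}|_D\rangle_{D/S}$ — yields the claim after using $Rf_*\circ R(i_D)_*=R(f|_D)_*$. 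Item (4), the relative dimension zero case, is likewise a direct comparison of \eqref{eq:definition-ducrot} with the defining property of the norm functor $N_{X/S}$ of \cite[\textsection 6.5.5]{EGAII}, or again a citation to \cite{Ducrot}. Finally, for the full strength of item \eqref{item:order-deligne-pairing-isos} — that arbitrary iterated compositions of symmetries and restrictions, in any order, agree — I would argue that each such isomorphism is, after unwinding through \eqref{eq:definition-ducrot}, built out of the associativity and commutativity constraints of $\Picfr(S)_{\mathrm{gr}}$ together with the canonical isomorphisms of $\det Rf_*$ for the relevant short exact sequences, so that the coherence theorems of Mac Lane (Theorem cited in \textsection\ref{subsub:monoidalcategories}) force commutativity; the only sign bookkeeping is the one already isolated in item \eqref{item:symmetry-deligne}, and since all the Euler characteristics involved are the same integers that appear there, no inconsistency arises.

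\textbf{Main obstacle.} The genuinely delicate point is item \eqref{item:symmetry-deligne}: the sign $(-1)^\kappa$ with $\kappa=\int_{X/S}\prod_{k\neq i}c_1(L_k)$ must be extracted correctly from the interplay between the Koszul rule of signs in $\Picfr(S)_{\mathrm{gr}}$ and the alternating product in \eqref{eq:definition-ducrot}, and this requires knowing the parity of a relative Euler characteristic — i.e.\ a fibrewise Riemann--Roch input. Cleanly, I expect to route this through Corollary \ref{cor:deligne-pairing-direct-image} once it is available, so the logical ordering matters: establish the Chern-class formula for Deligne pairings first (or at least the degree-one part), then harvest the signs in items \eqref{item:symmetry-deligne} and \eqref{item:delmultiplicationbyu} as corollaries.
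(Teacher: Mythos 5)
The paper's own proof is a two-sentence citation: all six items are taken from \cite{Ducrot} (Noetherian, projective case) and extended to general schemes in \cite[Appendix]{Boucksom-Eriksson}; for a morphism of type $(C_n)$ they hold locally over $S$, and since both the properties and the determinant of cohomology commute with base change, one globalizes by gluing. Your proposal instead attempts a constructive derivation of the symmetry and sign from the formula \eqref{eq:definition-ducrot}, and this is where it goes wrong.

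\textbf{The gap is in item \eqref{item:symmetry-deligne}.} You assert that the symmetry isomorphism is "manifest" from \eqref{eq:definition-ducrot} by reindexing, and that the resulting Koszul sign in $\Picfr(S)_{\mathrm{gr}}$, identified "by Riemann--Roch over the fibres", gives $(-1)^{\kappa}$. This is not correct, and the bookkeeping is crucially \emph{different} from that of item \eqref{item:delmultiplicationbyu}. Writing $\chi_I$ for the rank of $\det Rf_\ast\bigl(\bigotimes_{k\in I}L_k\bigr)$, the naive reindexing under the transposition $\sigma=(i,j)$ is a product of disjoint swaps of the factors indexed by the pairs $\{I,\,I\triangle\{i,j\}\}$ with $|I\cap\{i,j\}|=1$; since $L_i=L_j$ implies $\chi_I=\chi_{I\triangle\{i,j\}}$, each such swap contributes a Koszul sign $(-1)^{\chi_I}$, and the total naive sign is
\begin{displaymath}
(-1)^{\sum_{I:\,i\in I,\,j\notin I}\chi_I}.
\end{displaymath}
This exponent is a \emph{plain} sum over $2^{n-1}$ subsets; it does not telescope. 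Contrast this with item \eqref{item:delmultiplicationbyu}, where the exponent is the \emph{alternating} sum $\sum_{I\ni i}(-1)^{n+1-|I|}\chi_I$, which telescopes to $\kappa$ precisely because of the signed binomial structure $\prod_{k\neq i}(e^{c_1(L_k)}-1)$; your argument for \eqref{item:delmultiplicationbyu} is fine, but transplanting it verbatim to \eqref{item:symmetry-deligne} silently discards the sign $(-1)^{n+1-|I|}$ and replaces an alternating sum with a plain one. The two are not congruent mod $2$.

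A concrete counterexample: take $X=\PBbb^1_S$, $L_0=L_1=\Ocal(1)$. Then $\kappa=\int_{\PBbb^1}c_1(\Ocal(1))=1$, so the proposition asserts $[\sigma]=-1$. But there is only one swapped pair $\{0\}\leftrightarrow\{1\}$, with $\chi_{\{0\}}=\chi(\Ocal(1))=2$, so the naive Koszul sign for swapping the two degree-$(-2)$ factors $\det Rf_\ast(\Ocal(1))^{-1}$ is $(-1)^{(-2)(-2)}=+1$. Taking $L_0=L_1=\Ocal$ instead gives $\kappa=0$ but naive sign $(-1)^{(-1)(-1)}=-1$; both choices disagree with the claim. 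The lesson is that the canonical symmetry isomorphism of the Deligne pairing is \emph{not} the Koszul reindexing of \eqref{eq:definition-ducrot}; it is the more involved isomorphism constructed in \cite{Ducrot} (via the cube structure on determinants of cohomology), engineered precisely to produce the sign $(-1)^{\kappa}$. Your framing of \eqref{item:symmetry-deligne} as automatic bookkeeping is therefore not a valid proof. The dependence you flag on Corollary \ref{cor:deligne-pairing-direct-image} is a secondary concern (it can be decoupled, since that corollary is ultimately cited to \cite{Ducrot} as well), but the sign itself is a genuine theorem, not a corollary of Mac Lane coherence plus fibrewise Riemann--Roch.
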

\qed

\begin{proof}
     The statements are contained in \cite{Ducrot} for projective morphisms of Noetherian schemes, and extended to projective morphisms of schemes in \cite[Appendix]{Boucksom-Eriksson}. For a morphism of type $(C_{n})$, we hence know that the stated properties hold locally over $S$. Since they are all compatible with base change, and the determinant of the cohomology is compatible with base change as well, we can globalize to the whole $S$ by gluing. 
\end{proof}

\begin{remark}\label{rmk:clarification-restriction}
Property \eqref{item:deligne-pairing-restriction} in the statement requires some clarification. The structure morphism $D\to S$ is locally projective, flat, and of finite presentation. In particular, its image is an open and closed subset $T$ of $S$. The morphism $D\to T$ satisfies the condition $(C_{n-1})$. Indeed, the fibers of an effective relative Cartier divisor in $X$ are purely of dimension $n-1$, because $f$ has equidimensional fibers and by \cite[Proposition 16.4.1]{EGAIV1}, or also \cite[Corollaire 5.2.4]{EGAIV2}. The line bundle $\langle L_{0}|_D,\ldots,L_{n-1}|_D\rangle$ is then understood as the line bundle over $S$ determined by
\begin{displaymath} 
    \langle L_{0}|_D,\ldots,L_{n-1}\ |_D\rangle\big|_{T}=\langle L_{0}|_D,\ldots,L_{n-1}|_D\rangle_{D/T}
\end{displaymath}
and
\begin{displaymath}
    \langle L_{0}|_D,\ldots,L_{n-1}\ |_D\rangle\big|_{S\setminus T}=\Ocal_{T}.
\end{displaymath}
In particular, if $D$ is the zero divisor, then \eqref{eq:restrictiondivisor} provides a trivialization.
\end{remark}

\subsubsection{Generators and relations}\label{subsubsec:generators-relations}
Taking Proposition \ref{prop:generalpropertiesdeligneproducts} for granted, one can produce an equivalent presentation of the Deligne pairings in terms of generators, called symbols, and relations. In this and the next subsection, we expound this approach. For the foundations and terminology on rational (or meromorphic) functions, sections, and divisors, we follow \cite[\textsection 20 \& \textsection 21]{EGAIV4}. 

\begin{construction} For $X\to S$ satisfying \eqref{condition-Cn}, the Deligne pairing $\langle L_{0},\ldots,L_{n}\rangle_{X/S}$ can be presented in terms of generators and relations, defined locally Zariski, or \'etale, with respect to $S$:
\begin{enumerate}
    \item[\textbullet] \emph{Generators}. Given rational sections $\ell_{0}, \ldots, \ell_n$ of $L_{0},\ldots,L_{n}$ respectively, in general position, there is a local trivialization 
    \begin{equation}\label{def:symbol}
    \langle \ell_{0}, \ldots, \ell_n \rangle.
    \end{equation} Here general position means:\smallskip
    \begin{enumerate}
        \item\label{item:generators-1} The Cartier divisor corresponding to the rational section $\ell_i$ is of the form \linebreak $D_{i}=D_i^0 - D_i^1,$ where $D_i^0 $ and $D_i^1$ are relative effective relative Cartier divisors. In particular, $\ell_{i}$ is a relative regular meromorphic section \cite[\textsection 21.15]{EGAIV4}. 
        \item\label{item:generators-2} For any permutation $\sigma$ of $I=\lbrace 0,\ldots,n\rbrace$, any function $\epsilon: I \to \{0,1 \}$ and any index $i\in I$, the scheme theoretic intersection
        \begin{displaymath}
            D_{\sigma(i)}^{\epsilon(i)} \cap \bigcap_{j=0}^{i-1} D_{\sigma(j)}^{\epsilon(j)}
        \end{displaymath} 
        defines an effective relative Cartier divisor in $\bigcap_{j=0}^{i-1} D_{\sigma(j)}^{\epsilon(j)}$. Here, for $i=0$ we interpret the intersection over the empty set as all of $X$, and for $i=n$ the intersection is empty.
    \end{enumerate} 
    The existence of sufficiently many sections in general position is addressed in \textsection \ref{subsubsec:sections-general-position} below.
    \smallskip
    \item[\textbullet] \emph{Relations.} If $\ell_i = h \ell_i^{\prime}$ for a rational function $h$ and the symbol $\langle\ell_{0},\ldots,\ell_{i}^{\prime},\ldots,\ell_{n}\rangle$ is defined, then 
    \begin{equation}\label{eq:relationElkikDucrot}
    \langle \ell_{0}, \ldots, \ell_i ,\ldots   \ell_n \rangle = N_{D/S}(h_{\mid D}) \langle \ell_{0}, \ldots, \ell_i', \ldots, \ell_n \rangle
    \end{equation}
    where $D = \cap_{j \neq i} D_j $ and $N_{D/S}(h_{\mid D})$ denotes the norm of the regular function $h_{\mid D}$ on $D$. Here, $D$ is understood as a cycle whose components are finite and flat over $S$. It is non-necessarily effective, and the norm is extended multiplicatively with respect to the addition of cycles.
\end{enumerate}
\end{construction}

In this article, arguments involving generators of Deligne pairings will be carried out locally in the Zariski topology. However,  we refer to Proposition \ref{prop:construction-generators} and the subsequent remark regarding the use of generators in the \'etale topology.

\begin{remark}
\begin{enumerate}
    \item The properties listed in Proposition \ref{prop:generalpropertiesdeligneproducts} can all be written in terms of generators and relations. From this, one can also derive the compatibility between those. For instance, the multilinearity of Deligne pairings sends a section $\langle \ell_0\otimes\ell_{0}^{\prime}, \ldots, \ell_n \rangle $ to $\langle \ell_0, \ldots, \ell_n \rangle \otimes \langle \ell_0^{\prime}, \ldots, \ell_n \rangle$, and analogously in other entries. The symmetry isomorphism sends a symbol of the form $\langle \ell_0, \ldots, \ell_n \rangle $ to $\langle \ell_{\sigma(0)}, \ldots, \ell_{\sigma(n)} \rangle $. We thus see that both operations commute in a natural way. Similarly, the restriction isomorphism \eqref{eq:restrictiondivisor} sends a symbol $\langle \ell_0, \ldots, \mathbf{1}_{D},\ldots, \ell_{n-1} \rangle$ to $\langle \ell_0 |_{D}, \ldots, \ell_{n-1} |_{D} \rangle$, whenever defined. Here, $\mathbf{1}_{D}$ denotes the canonical section of $\Ocal(D)$.
    \item If $X$ admits a finite open partition $\lbrace X_{i}\rbrace_{i}$, whose components $X_{i}$ are faithful over $S$, then there is a canonical isomorphism 
    \begin{equation}\label{eq:decomposition-Deligne-pairing-partition}
      \langle L_{0},\ldots, L_{n}\rangle_{X/S}\to\bigotimes_{i}\langle L_{0} |_{X_{i}},\ldots, L_{n} |_{X_{i}}\rangle_{X_{i}/S},
    \end{equation}
    given by sending a symbol $\langle \ell_{0}, \ldots, \ell_{n} \rangle $ to the product of the symbols $\langle \ell_{0} |_{X_{i}}, \ldots, \ell_{n} |_{X_{i}} \rangle$. Actually, since the construction of the Deligne pairing is local over the base, this can be extended to the case when the $X_{i}$ are not necessarily faithful over the base, as in\linebreak Remark \ref{rmk:clarification-restriction}. 
\end{enumerate}
\end{remark}

\subsubsection{Sections in general position}\label{subsubsec:sections-general-position}
We announced an equivalent construction of the Deligne pairings in \textsection \ref{subsubsec:generators-relations}, provided we know  the existence of sufficiently many rational sections in general position. For the sake of completeness, we now justify this fact. We begin by recalling an instance of the prime avoidance lemma.

\begin{lemma}\label{lemma:prime-avoidance}
Let $Y$ be a projective scheme over a field $k$, $L$ a line bundle, and $A$ a very ample line bundle on $Y$. Let $y_{1},\ldots, y_{m}$ be given distinct points of $Y$. Then:
\begin{enumerate}
    \item\label{item:prime-avoidance-1} If $k$ is infinite, there exists a global section of $A$ which avoids $y_{1},\ldots, y_{m}$.
    \item\label{item:prime-avoidance-2} In general, for some $N_{0}\geq 0$ and for every integer $N\geq N_{0}$, $L\otimes A^{N}$ admits a global section that avoids $y_{1},\ldots, y_{m}$.
\end{enumerate}

\end{lemma}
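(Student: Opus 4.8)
\textbf{Proof plan for Lemma \ref{lemma:prime-avoidance}.}

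The plan is to deduce both statements from the surjectivity of evaluation maps at finitely many closed points, which in turn is a standard cohomology vanishing argument. First I would reduce to the case where each $y_i$ is a closed point of $Y$; since $Y$ is projective over a field, every point specializes to a closed point, and avoiding a closed point in the closure of $y_i$ automatically avoids $y_i$ itself, so it suffices to treat closed points. Let $Z = \{y_1,\ldots,y_m\}$ with its reduced scheme structure, and let $\mathcal{I}_Z \subseteq \Ocal_Y$ be its ideal sheaf; the key object is the evaluation map $H^0(Y, \mathcal{F}) \to H^0(Z, \mathcal{F}|_Z) = \bigoplus_i \mathcal{F} \otimes \kappa(y_i)$ for a line bundle $\mathcal{F}$, whose cokernel is controlled by $H^1(Y, \mathcal{I}_Z \otimes \mathcal{F})$. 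A section of $\mathcal{F}$ avoids all the $y_i$ precisely when its image under this evaluation map has all components nonzero.

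For part \eqref{item:prime-avoidance-2}, I would argue as follows. Since $A$ is very ample and $Y$ is projective, by Serre vanishing there is $N_0 \geq 0$ such that $H^1(Y, \mathcal{I}_Z \otimes L \otimes A^N) = 0$ for all $N \geq N_0$; hence for such $N$ the evaluation map $H^0(Y, L\otimes A^N) \to \bigoplus_i (L \otimes A^N)\otimes\kappa(y_i)$ is surjective. Choosing an element of $H^0(Y, L \otimes A^N)$ mapping to a tuple with every component nonzero (possible since the target is a finite direct sum of one-dimensional $\kappa(y_i)$-vector spaces, each of which has a nonzero element, and surjectivity lets us hit the product of these) yields a global section of $L \otimes A^N$ avoiding $y_1,\ldots,y_m$. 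This is essentially immediate once the vanishing is in hand.

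For part \eqref{item:prime-avoidance-1}, where $k$ is infinite, I would instead use that $A$ itself (not a high power) has enough sections: replacing $A$ by a sufficiently high but \emph{fixed} power is not allowed here, so one must exploit infinitude of $k$. The standard argument: the linear system $|A|$ gives a closed immersion $Y \hookrightarrow \PBbb^r_k$, and avoiding the points $y_i$ amounts to finding a hyperplane in $\PBbb^r_k$ not passing through the images of the $y_i$. Each condition ``the hyperplane passes through $y_i$'' cuts out a proper linear subspace (a hyperplane) of the dual projective space $(\PBbb^r_k)^\vee$ of hyperplanes; since $k$ is infinite, a finite union of proper linear subspaces of $(\PBbb^r_k)^\vee$ cannot be all of $(\PBbb^r_k)^\vee$ (a projective space over an infinite field is not a finite union of proper subvarieties, equivalently $k^{r+1}$ is not a finite union of proper subspaces). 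Picking a $k$-point of the complement gives the desired hyperplane, hence the desired section of $A$.

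The main obstacle, such as it is, is purely bookkeeping: making sure the reduction to closed points is clean (handling non-closed points and the possibility that $Y$ is reducible or non-reduced, which does not affect the argument since we only use projectivity over $k$ and the existence of a very ample bundle), and in part \eqref{item:prime-avoidance-1} spelling out the ``finite union of proper subspaces'' fact over an infinite field. Neither step is deep; the content is entirely the Serre vanishing input in \eqref{item:prime-avoidance-2} and the elementary linear-algebra-over-infinite-fields input in \eqref{item:prime-avoidance-1}. I would therefore keep the write-up short, citing Serre vanishing and the standard prime avoidance / general position statements rather than reproving them.
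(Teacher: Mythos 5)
Your proof is correct and for part \eqref{item:prime-avoidance-2} it is essentially identical to the paper's argument: reduce to closed points via Zariski closures, take the ideal sheaf of the finite set $Z$, invoke Serre vanishing of $H^1(Y, \Ical_Z \otimes L \otimes A^N)$ for $N \gg 0$, and use surjectivity of the evaluation map to produce a section nonvanishing at each $z_i$, hence at each $y_i$. For part \eqref{item:prime-avoidance-1} the paper simply cites Nakai, whereas you spell out the standard argument via the dual projective space; the only imprecision is calling the locus of hyperplanes through a closed point of $\PBbb^r_k$ ``a hyperplane'' in $(\PBbb^r_k)^\vee$ — for a closed point with residue field $K$ of degree $d>1$ over $k$ this is a proper linear subspace of codimension $d$, not codimension one — but this does not affect the conclusion, since a finite union of proper $k$-subspaces still cannot exhaust $k^{r+1}$ when $k$ is infinite.
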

\begin{proof}
The first statement is well-known, and an argument is provided by Nakai in the proof of \cite[Theorem 4]{Nakai}. For the second claim, consider the Zariski closures $\ov{\lbrace y_{i}\rbrace}$ with the reduced scheme structure. We choose distinct closed points $z_{i}\in \ov{\lbrace y_{i}\rbrace}$, which exist because $Y$ is of finite type over $k$. Let $\Ical$ be the coherent sheaf of ideals of the closed reduced subscheme $Z=\lbrace z_{1},\ldots, z_{m}\rbrace$. Then, for some $N_{0}\geq 0$ and for every $N\geq N_{0}$, $H^{1}(Y,L\otimes A^{N}\otimes\Ical)=0$, and hence restricting to $Z$ defines a surjection $H^{0}(Y,L\otimes A^{N})\to H^{0}(Z,L\otimes A^{N}|_{Z})$. The latter is isomorphic to $\oplus_{i}k(z_{i})$. We can thus find $s\in H^{0}(X,L\otimes A^{N})$ which does not vanish at any of the $z_{i}$. In particular, it does not vanish identically on any of the $\ov{\lbrace y_{i}\rbrace}$. Since the $y_{i}$ are the generic points of the latter, we are done.
\end{proof}

We next establish a variant of the previous lemma in a families situation. To lighten the statement, we first introduce the setting, which is tailored to the ulterior proof of the birational invariance of the Deligne pairings in Proposition \ref{prop:elvira}. Let $S$ be a scheme and $Y\to S$ a flat and proper morphism of finite presentation. We assume that $Y$ is equipped with a line bundle $L$ and a relatively very ample line bundle $A$. We also suppose given a finite collection of commutative diagrams
\begin{equation}\label{eq:geom-setting-avoidance}
    \xymatrix{
        Z_{i}\ar[r]^{h_{i}}\ar[d]   &Y\ar[d]\\
        T_{i}\ar[r]     &S,
    }
\end{equation}
for $i=1,\ldots, m$. Here, the morphisms $T_{i}\to S$ are closed immersions of finite presentation, and the morphisms $Z_{i}\to T_{i}$ are flat and proper of finite presentation. The $h_{i}$ are automatically proper of finite presentation.

\begin{lemma}\label{lemma:prime-avoidance-families}
In the setting \eqref{eq:geom-setting-avoidance}, the following hold locally with respect to $S$ and the $T_{i}$, in the specified topologies:
\begin{enumerate}
    \item\label{item:prime-avoidance-families-1} (\'Etale topology) There exists a global section of $A$ that defines a relative effective Cartier divisor over $S$, and whose pullbacks to the $Z_{i}$ are defined and are relative effective Cartier divisors over the $T_{i}$.
    \item\label{item:prime-avoidance-families-2} (Zariski topology) For some $N_{0}\geq 0$ and for every integer $N\geq N_{0}$, there exists a global section of $L\otimes A^{N}$ which defines a relative effective Cartier divisor over $S$, and whose pullbacks to the $Z_{i}$ are defined and are relative effective Cartier divisors over the $T_{i}$.
\end{enumerate}

\end{lemma}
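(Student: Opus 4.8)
The plan is to reduce Lemma \ref{lemma:prime-avoidance-families} to the fibrewise statement of Lemma \ref{lemma:prime-avoidance} via a standard ``generic point'' argument, working locally on the base. First I would pass to the situation where $S$ is affine, say $S = \Spec R$, and each $T_i = \Spec(R/\afrak_i)$ is a closed affine subscheme; by finite presentation and Noetherian approximation (cf. \cite[\href{https://stacks.math.columbia.edu/tag/01YT}{01YT}]{stacks-project}) I may further assume $R$ is Noetherian, so that $Y$, the $Z_i$, the line bundles $L$ and $A$, and the maps $h_i$ all descend from a Noetherian model. Since the conclusion concerns the existence of one section with open conditions (being a relative Cartier divisor is an open condition on the base, being representable by a section avoiding the relevant associated points), it suffices to produce such a section after localizing $R$ at the generic points of $S$ and of the $T_i$, and then spreading out. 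Concretely, a global section $s$ of a line bundle on $Y$ defines a relative effective Cartier divisor over $S$ if and only if, fibrewise, $s$ does not vanish on any associated point of the fibers $Y_\sbsymb$; the analogous statement holds for the pullbacks $h_i^\ast s$ and the fibers of $Z_i \to T_i$. These associated points, for the finitely many relevant fibers sitting over the generic points of $S$ and the $T_i$, form a finite set of points of $Y$ — here one uses that $h_i$ is of finite type, so $h_i$ of an associated point of a fiber of $Z_i$ is a point of the corresponding fiber of $Y$.

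Next I would invoke Lemma \ref{lemma:prime-avoidance} on each of these finitely many fibers (which are projective schemes over the residue fields of the generic points), taking the given finite set of points to be the union of all the relevant associated points lying on that fiber. In the case where all residue fields in sight are infinite, part \eqref{item:prime-avoidance-1} produces a section of $A|_{Y_\sbsymb}$ avoiding them; in general, part \eqref{item:prime-avoidance-2} produces a section of $(L \otimes A^N)|_{Y_\sbsymb}$ for all $N \gg 0$. Since there are only finitely many fibers to treat simultaneously, one common $N_0$ works for all of them. To promote a fibrewise section to a global section of $A$ (resp. $L \otimes A^N$) on all of $Y$ that restricts to the chosen one on each distinguished fiber, I would use that $A$ (resp. $L \otimes A^N$ for $N$ large) is relatively very ample, so $f_\ast A$ is a vector bundle on $S$ compatible with base change, and a section over the semilocalization of $S$ at the relevant generic points lifts to a section over a Zariski (resp. étale) neighborhood — after possibly shrinking $S$. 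The distinction between the two parts of the statement is exactly the distinction between parts \eqref{item:prime-avoidance-1} and \eqref{item:prime-avoidance-2} of Lemma \ref{lemma:prime-avoidance}: the infinite-residue-field hypothesis needed for \eqref{item:prime-avoidance-1} can be arranged after an étale base change on $S$ and the $T_i$ (e.g. base changing to strictly henselian local rings, or simply adjoining enough elements to enlarge the residue fields), which is why part \eqref{item:prime-avoidance-families-1} is only local in the étale topology, whereas \eqref{prime-avoidance-families-2} uses the twisting trick and stays Zariski-local.

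Finally, having produced a global section $s$ over a suitable neighborhood whose restriction to each distinguished fiber avoids all the relevant associated points, I would conclude by a spreading-out / openness argument: the locus in $S$ over which $s$ defines a relative effective Cartier divisor is open (it is the locus where $s$ is a nonzerodivisor on fibers, which is open by \cite[Proposition 16.4.1]{EGAIV1} and standard flatness criteria — a section of a line bundle is a nonzerodivisor on a fiber iff it avoids the finitely many associated points of that fiber, an open condition by semicontinuity of the fibers' associated points when the model is Noetherian), and likewise for each pullback $h_i^\ast s$ over the locus in $T_i$. These open loci contain the generic points we localized at, hence contain Zariski (resp. étale) neighborhoods, and shrinking $S$ and the $T_i$ to their intersection gives the claim. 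The main obstacle is the bookkeeping around associated points in the non-Noetherian, finitely presented setting: one must be careful that ``relative effective Cartier divisor'' is genuinely an open, base-change-stable condition detectable on fibers, and that $h_i$ being merely of finite presentation (not flat) still lets one control the images of associated points of the $Z_i$-fibers inside the $Y$-fibers. This is handled by the Noetherian approximation step, which reduces everything to a Noetherian model where the associated-point formalism and the semicontinuity statements are classical; once that reduction is in place, the rest is a routine assembly of Lemma \ref{lemma:prime-avoidance} with standard openness and lifting arguments.
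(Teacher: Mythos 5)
Your plan correctly identifies the overall shape of the argument: reduce to $S$ affine and Noetherian by approximation, invoke Lemma \ref{lemma:prime-avoidance} fibrewise, use that being a relative effective Cartier divisor is detectable on fibers and open on the base, and trace the distinction between the \'etale and Zariski versions to parts \eqref{item:prime-avoidance-1} and \eqref{item:prime-avoidance-2} of that lemma. These pieces all match the paper's argument.

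However, there is a genuine gap in the reduction you set up. The conclusion is that the statement holds \emph{locally} on $S$ (and the $T_i$), i.e.\ around every point. You propose to localize $R$ at the generic points of $S$ and of the $T_i$, find a section with good behaviour over those finitely many fibers, and then spread out. Spreading out from a generic point only yields a dense open subset of (a component of) $S$; it need not contain any particular closed point, nor cover $S$. If some $T_i$ has positive dimension, a section that avoids the associated points of the fibers over the generic points of $S$ and $T_i$ can still pass through an associated point of the fiber over a closed point of $T_i$, so the ``bad'' closed locus can be nonempty, and your argument produces no neighborhood of those closed points where the lemma holds. The correct move --- which the paper makes --- is to fix an arbitrary (closed) point $s \in S$, reduce (using that the $T_i \to S$ are closed immersions) to the case where $s$ lies in the image of all the $T_i$, and apply Lemma \ref{lemma:prime-avoidance} to the fibers $Y_s$ and $(Z_i)_s$ over that single point; openness of the Cartier condition plus properness then gives a neighborhood of $s$. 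Since closed points are a ``covering'' family in a Noetherian scheme (every point generizes one), this does prove the local statement.

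A secondary imprecision: for the \'etale case you claim $f_*A$ is a vector bundle compatible with base change; for a merely relatively very ample $A$ this is not true without further twisting, so you cannot directly lift a fiber section of $A$. The paper circumvents this by fixing (locally on $S$) a closed embedding $Y \hookrightarrow \PBbb^M_S$ with $A \simeq j^*\Ocal(1)$ and extending the section inside the ambient projective space, where $\Ocal(1)$ does have a base-change-compatible pushforward. In the Zariski case the paper extends the fiber section by arranging $H^1(Y, L\otimes A^N\otimes \Ical)=0$ for $\Ical$ the ideal of $Y_s$, which is cleaner than relying on a spreading-out principle. Your argument would need one of these precise extension mechanisms to go through.
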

\begin{proof}
We can suppose that $S$ is affine. By a Noetherian approximation argument, we can reduce to the case when the involved schemes are Noetherian. It is then enough to establish the claims locally around any given closed point $s\in S$. Because the morphisms $T_{i}\to S$ are closed immersions, without loss of generality we can suppose that $s$ is in the image of all these maps. 

For the first claim, after possibly restricting $S$, we can fix a closed embedding $j\colon Y\to\PBbb^{M}_{S}$, such that $A\simeq j^{\ast}\Ocal(1)$. We begin by looking at the fiber of \eqref{eq:geom-setting-avoidance} over $s$. Let $k$ be the separable closure of the residue field $k(s)$, so that $k$ is an infinite field. By Lemma \ref{lemma:prime-avoidance} \eqref{item:prime-avoidance-1}, we can find a global section $\sigma$ of $\Ocal(1)|_{\PBbb^{M}_{k}}$, which avoids the associated primes of $Y_{k}$ and the images of the associated points of the $(Z_{i})_{k}$. Indeed, there are only finitely many such points. The section $\sigma$ is defined over some finite separable extension of $k(s)$. We redefine $k$ to be such an extension. Using \cite[\href{https://stacks.math.columbia.edu/tag/01ZM}{02LF}]{stacks-project} and base change, we can replace $(S,s)$ with an \'etale neighborhood of $s$, and suppose that $k(s)=k$. After possibly shrinking $S$ around $s$, the section $\sigma$ extends to a section of $\Ocal(1)$ in $\PBbb^{M}_{S}$. We denote by $\widetilde{\sigma}$ any choice of extension. This section avoids the associated points of $Y_{s}$. Because $Y\to S$ is proper and flat of finite presentation, by \cite[Proposition 11.3.7 \& Th\'eor\`eme 11.3.8]{EGAIV3} the section $\widetilde{\sigma}$ defines a relative effective Cartier divisor. By construction, the pullbacks $h_{i}^{\ast}(\widetilde{\sigma})$ also avoid the associated points of the $(Z_{i})_{s}$. Hence, they also define relative effective Cartier divisors.

The second claim is proven along the same lines, using Lemma \ref{lemma:prime-avoidance} \eqref{item:prime-avoidance-2}. Nevertheless, a moment's thought indicates that one can no longer use an embedding into a projective space. Instead, we proceed as follows. For some $N_{0}\geq 0$ and for every $N\geq N_{0}$, there exists a section $\sigma$ of $L\otimes A^{N}$ which avoids the associated primes of $Y_{s}$ and the images of the associated primes of the $(Z_{i})_{s}$. Let $\Ical$ be the coherent sheaf of ideals defining the closed subscheme $Y_{s}$ of $Y$. By possibly increasing $N_{0}$, we can moreover suppose that $H^{1}(Y,  L\otimes A^{N}\otimes\Ical)=0$ for $N\geq N_{0}$. This ensures that $\sigma$ extends to $Y$. The rest of the argument is analogous to that of the first point addressed above. 
\end{proof}

With the help of the previous lemmas, we can establish the existence of generators of the Deligne pairings. 
\begin{proposition}\label{prop:construction-generators}
Let $f\colon X\to S$ be a morphism satisfying the condition $(C_{n})$. Let $L_{0},\ldots, L_{n}$ be line bundles on $X$. The following assertions hold, locally with respect to $S$, in the specified topologies: 
\begin{enumerate}
    \item\label{item:construction-generators-1} (\'Etale topology) If the $L_{i}$ are relatively very ample, there exist global sections of the $L_{i}$, in general position in the sense of \textsection \ref{subsubsec:generators-relations}.
    \item\label{item:construction-generators-2} (Zariski topology) In general, there exist rational sections of the $L_{i}$, in general position in the sense of \textsection \ref{subsubsec:generators-relations}.
\end{enumerate}
\end{proposition}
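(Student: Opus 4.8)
The plan is to prove the two statements by first reducing to a situation where we can apply the prime-avoidance lemmas already established, namely Lemma~\ref{lemma:prime-avoidance} and Lemma~\ref{lemma:prime-avoidance-families}. The key point is that ``general position'' for a tuple $(\ell_0,\ldots,\ell_n)$ is a finite list of conditions: each $\ell_i$ should be a relative regular meromorphic section (so that its associated Cartier divisor is $D_i^0-D_i^1$ with both parts relative effective Cartier divisors), and, for every permutation $\sigma$, every $\epsilon\colon I\to\{0,1\}$, and every index $i$, the successive scheme-theoretic intersections $D_{\sigma(i)}^{\epsilon(i)}\cap\bigcap_{j<i}D_{\sigma(j)}^{\epsilon(j)}$ must be relative effective Cartier divisors in the ambient intersection. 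Since there are only finitely many such $(\sigma,\epsilon)$ and the intersection scheme is itself flat, proper and of finite presentation over (an open and closed subscheme of) $S$, these conditions are of the form handled inductively by the families version of prime avoidance.

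\textbf{Proof of (1), the étale-local case.} Assume each $L_i$ is relatively very ample. We work locally on $S$; by Noetherian approximation we may assume all schemes are Noetherian, and it suffices to produce the sections in an étale neighbourhood of a given closed point $s\in S$. We choose the $\ell_i$ one at a time. For $\ell_0$: apply Lemma~\ref{lemma:prime-avoidance-families}~\eqref{item:prime-avoidance-families-1} with $Y=X$, $A=L_0$, and no auxiliary diagrams \eqref{eq:geom-setting-avoidance}, to obtain, étale-locally, a global section $\ell_0$ of $L_0$ whose zero locus $D_0=D_0^0$ (here $D_0^1=\varnothing$ since $\ell_0$ is a genuine section) is a relative effective Cartier divisor over $S$. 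Having chosen $\ell_0,\ldots,\ell_{i-1}$ in such a way that all the required intersections formed from $D_0,\ldots,D_{i-1}$ are relative effective Cartier divisors over the appropriate open and closed subschemes of $S$, we choose $\ell_i$ by applying Lemma~\ref{lemma:prime-avoidance-families}~\eqref{item:prime-avoidance-families-1} again: this time the auxiliary data \eqref{eq:geom-setting-avoidance} are the finitely many inclusions of all the intersection schemes $Z=\bigcap_{j\in J}D_j$ (for $J\subseteq\{0,\ldots,i-1\}$) into $X$, together with their structure maps $Z\to T$ to the corresponding open and closed subschemes $T\subseteq S$. The lemma produces, étale-locally around $s$, a section $\ell_i$ of $L_i$ whose divisor $D_i$ is a relative effective Cartier divisor over $S$ \emph{and} whose pullback to each such $Z$ is a relative effective Cartier divisor over $T$. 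By \cite[Corollaire 5.2.4]{EGAIV2} (or \cite[Proposition 16.4.1]{EGAIV1}), equidimensionality of the fibres is preserved, so the inductive hypothesis continues to hold. After $n+1$ steps we have sections in general position; shrinking the étale neighbourhood finitely many times causes no trouble.

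\textbf{Proof of (2), the Zariski-local case, and the main obstacle.} For general $L_i$ we cannot assume very ampleness, so we instead invoke Lemma~\ref{lemma:prime-avoidance-families}~\eqref{item:prime-avoidance-families-2}: working Zariski-locally on $S$ we may, by quasi-projectivity of $X\to S$ locally on $S$, fix a relatively very ample $A$; then for $N\gg 0$ the sheaf $L_i\otimes A^N$ has a section $s_i$ which is a relative effective Cartier divisor over $S$ (and over all the relevant intersection schemes), and likewise $A^N$ has such a section $t_i$; the ratio $\ell_i=s_i/t_i$ is then a rational section of $L_i$ of the required type, its divisor being $V(s_i)-V(t_i)$, a difference of relative effective Cartier divisors. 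Running the same inductive bookkeeping as in (1)---now tracking at each stage the finitely many intersection schemes built from the effective parts $D_j^0,D_j^1$ of the previously chosen sections---yields rational sections $\ell_0,\ldots,\ell_n$ in general position, Zariski-locally on $S$. The main obstacle, and the reason the induction must be set up carefully, is precisely condition~\eqref{item:generators-2}: the scheme on which one applies prime avoidance at step $i$ is not $X$ but the (possibly many) lower-dimensional intersection schemes, and one must know that these are themselves flat, proper and of finite presentation over an open and closed subscheme of $S$ with equidimensional fibres, so that the hypotheses of Lemma~\ref{lemma:prime-avoidance-families} apply verbatim. This is guaranteed inductively because a relative effective Cartier divisor in a family with equidimensional fibres of dimension $d$ has equidimensional fibres of dimension $d-1$ (again \cite[Corollaire 5.2.4]{EGAIV2}), the property being stable under the base changes and restrictions to open and closed subschemes that occur; handling the non-faithful components exactly as in Remark~\ref{rmk:clarification-restriction} completes the argument.
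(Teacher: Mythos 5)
Your strategy (successive prime avoidance, using Lemma~\ref{lemma:prime-avoidance-families} applied to the already-constructed intersection schemes as the auxiliary data in \eqref{eq:geom-setting-avoidance}) is the same as the paper's. However, there is a genuine gap at the end: your inductive step chooses $\ell_i$ so that $D_i^0$ and $D_i^1$ meet each intersection $\bigcap_{j\in J} D_j^{\epsilon_j}$ (with $J\subseteq\{0,\ldots,i-1\}$) in a relative effective Cartier divisor. This establishes condition~\eqref{item:generators-2} only for permutations $\sigma$ in which the divisor indices appear in increasing order --- i.e.\ you always intersect with the freshly chosen divisor last. The definition of general position requires the condition for \emph{every} permutation; for instance it demands that $D_0^{\epsilon_0}$ be a relative Cartier divisor inside $D_1^{\epsilon_1}\cap D_2^{\epsilon_2}$, which your construction does not directly give. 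Your claim ``After $n+1$ steps we have sections in general position'' therefore does not follow from what you have proved.

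The paper closes this gap with one more step that you have omitted: once the intersections are shown to be relative effective Cartier divisors in one (the increasing) order, one invokes the criterion for reordering regular sequences in the relative/Noetherian setting, \cite[\href{https://stacks.math.columbia.edu/tag/07DW}{07DW}]{stacks-project}, to deduce that the condition holds for all permutations. This is a short but essential step and should be stated explicitly. (A minor additional point: in part (1) the side-remark invoking \cite[Corollaire 5.2.4]{EGAIV2} about equidimensionality of fibres is not needed to apply Lemma~\ref{lemma:prime-avoidance-families}, which only requires the $Z_i\to T_i$ to be flat, proper and of finite presentation; this does not affect correctness but slightly misidentifies what needs to be maintained inductively.)
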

\begin{proof}
We discuss the second statement and leave the first one as an analogous exercise. In the sequel, the constructions are Zariski local on $S$, and we no longer comment on the several necessary and repeated restrictions of the base scheme. 

Since $f$ is locally projective, we can suppose that there is an ample line bundle $A$ on $X$. By Lemma \ref{lemma:prime-avoidance} \eqref{item:prime-avoidance-2}, we can find global sections of $L\otimes A^{N}$ and $A^{N}$ which define relative effective Cartier divisors $D_{0}^{0}$ and $D_{0}^{1}$. The (multiplicative) difference provides a rational section $\ell_{0}$. 

Suppose we have constructed rational sections $\ell_{0},\ldots, \ell_{k}$, for some $k\leq n-1$, defining Cartier divisors $D_{i}^{0}-D_{i}^{1}$ which satisfy the general position conditions in \textsection \ref{subsubsec:generators-relations}. We apply Lemma \ref{lemma:prime-avoidance-families} \eqref{item:prime-avoidance-families-2} to the following situation of the type \eqref{eq:geom-setting-avoidance}: 
\begin{equation}\label{eq:successive-intersection-divisors}
    \xymatrix{
        \bigcap_{j\in J}D_{j}^{\epsilon_{j}}\ \ar@{^{(}->}[r]\ar[d]  &X\ar[d]\\
        S\ar@{=}[r]     &S,
    }
\end{equation}
for $J\subseteq\lbrace 0,\ldots, k\rbrace$ and $\epsilon_{j}= 0$ or $1$. We obtain global sections of $L\otimes A^{M}$ and $A^{M}$, whose divisors are relative effective Cartier divisors, and meet all the intersections in \eqref{eq:successive-intersection-divisors} in relative effective Cartier divisors of the latter. The difference is our choice for $\ell_{k+1}$. For the sections $\ell_{0},\ldots,\ell_{k},\ell_{k+1}$ to satisfy the conditions of \textsection \ref{subsubsec:generators-relations} it only remains to check that they can be permuted in any order. This is a consequence of the criterion for reordering regular sequences in \cite[\href{https://stacks.math.columbia.edu/tag/01ZM}{07DW}]{stacks-project}. The proof is complete.
\end{proof}

\begin{remark}
In Proposition \ref{prop:construction-generators}, suppose that the $L_{i}$ are relatively very ample. If we invoke the existence of generators in the Zariski topology, we can only guarantee the existence of rational sections, as opposed to global ones. In this respect, it can occasionally be better to work in the \'etale topology. In any event, in   constructions that are to be compatible with base change and to preserve the multilinearity of the Deligne pairings, we can reduce to Deligne pairings of relatively ample line bundles, which admit global sections in general position. 
\end{remark}

\subsubsection{Chern classes and direct images}\label{subsub:Cherndirect}

The main goal of this section is to provide the relationship between the isomorphism class of the line bundle \eqref{eq:definition-ducrot} with the direct image of Chern classes in \eqref{def:firstimeintbundle}. While the expression in \eqref{eq:definition-ducrot} is defined for general schemes and morphisms satisfying the condition $(C_n)$, we need to suppose that our base scheme $S$ admits an ample family of line bundles. If there was an extension of the constructions, including $\gamma$-operations and the relation \eqref{iso:GR1} below, to the $K$-theory of perfect complexes, this could be avoided. 

Let $X$ be a scheme, and denote by $K_{0}(X)$ the $K$-theory of vector bundles on $X$. The exterior powers of vector bundles define $\lambda$-operations on $K_0(X)$. Following \cite[Expos\'e V, Section 3]{SGA6}, we define for any positive integer $i$, $\gamma^i(x) = \lambda^i(x+i-1)$. We obtain the $\gamma$-operations on $K_0(X)$, which endows $K_0(X)$ with an alternative $\lambda$-ring structure.  

The $\gamma$-filtration, $F^i K_0(X) \subseteq K_0(X),$ is defined as follows. One sets $F^0 K_0(X) = K_0(X)$ and let $F^1 K_0(X)$ be the kernel of the rank map $K_0(X) \to H^0(X, \mathbb{Z})$. Also, $F^i K_0(X)$ is generated by products of the form $\prod \gamma^{k_j}(x_j)$ where $x_j \in F^1 K_0(X)$ and $\sum k_j \geq i$. This allows one to define Chow-like groups by $\Gr^i K_0(X) = F^i K_0(X)/F^{i+1}K_0(X).$ The $i$-th Chern class of a vector bundle $E$ is defined by the class of $\gamma^{i}([E]-r)$ in $\Gr^i K_0(X)$ , where $r \in H^0(X, \ZBbb)$ is the rank of $E$. If $X$ is quasi-compact and $E$ is a vector bundle of constant rank $r+1$ on $X$, recall that $K_{0}(\PBbb(E))$ is a free module over $K_{0}(X)$, and a basis is given by $1,\xi,\ldots, \xi^{r}$, where $\xi$ is the class of $\Ocal(1)$ in $K_{0}(\PBbb(E))$. From this description, the Chern classes can equivalently be constructed \emph{\`a la} Grothendieck \cite[Exposé VI, Section 5]{SGA6}. 

If $X$ is quasi-compact, by \cite[Exposé X, Théorème 5.3.2]{SGA6}, the determinant induces an isomorphism 
\begin{equation}\label{iso:GR1}
    \Gr^1 K_0(X)\to \Pic(X).
\end{equation}
In particular, $F^{2}K_{0}(X)$ is the kernel of the determinant map on $F^{1}K_{0}(X)$.

If $f: X \to Y$ is a projective local complete intersection morphism of constant relative dimension $d$, with $Y$ admitting an ample line bundle, by \cite[Exposé VIII, Proposition 3.2]{SGA6} the derived direct image of a complex induces a graded morphism of filtrations, after tensoring with $\QBbb$, 
\begin{equation}\label{gradeddirect}
    f_!: F^i K_0(X)_\QBbb \to F^{i-d}K_0(Y)_\QBbb.
\end{equation} 
It hence furnishes a direct image $f_\ast: \Gr^iK_0(X)_{\QBbb}\to \Gr^{i-d}K_0(Y)_\QBbb.$ Here we prefer the notation $f_{\ast}$ for the direct image on the graded pieces since it corresponds to the pushforward functoriality of Chow groups. We notice that the assumptions on the morphism are to ensure that the derived direct image of a vector bundle is perfect, and the assumption of the existence of an ample line bundle is to guarantee that the $K$-theory defined in terms of perfect complexes is equal to that defined by vector bundles. By \cite[Proposition 3.10]{ThomasonTrobaugh}, the latter property also holds replacing an ample line bundle  by $Y$ being divisorial, see Definition \ref{def:divisorial}. The statements on the grading are concrete computations by factoring into a regular embedding and a projective bundle projection. 

In the following proposition, in a restricted setting, we establish an integral version of  \eqref{gradeddirect}. See Proposition 2.25 \cite{Nakayamaintsheaves} for a related argument in the Noetherian case.

\begin{proposition}\label{prop:directimageK}
Suppose that $f: X\to S$ is a morphism satisfying the condition $(C_n)$, with $S$ divisorial. Then, the direct image $Rf_\ast$ naturally furnishes a direct image map 
\begin{displaymath} f_\ast : \Gr^{n+1} K_0(X) \to \Gr^1 K_0(S).
\end{displaymath}
\end{proposition}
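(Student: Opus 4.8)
The statement to be proven is an \emph{integral} refinement, over a divisorial base $S$, of the rational graded direct image map \eqref{gradeddirect}. The strategy is to combine three ingredients already available: the rational direct image $f_! : F^{n+1}K_0(X)_\QBbb \to F^1 K_0(S)_\QBbb$ of \cite[Exposé VIII, Proposition 3.2]{SGA6}; the isomorphism \eqref{iso:GR1}, $\Gr^1 K_0(S) \simeq \Pic(S)$; and the determinant of the cohomology, which \emph{integrally} produces a line bundle on $S$. The point is that the target $\Gr^1 K_0(S)$ has no torsion issues -- it is literally $\Pic(S)$ -- so it suffices to lift $f_!$ to an \emph{integral} map landing in $F^1 K_0(S)$ and then compose with the determinant. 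First I would record the factorization: since $f$ satisfies $(C_n)$ and $S$ is divisorial, $Rf_\ast$ preserves perfect complexes (Proposition \ref{prop:perfect-conditions} \eqref{item:perfect-conditions-2}), and on a divisorial scheme the $K$-theory of perfect complexes agrees with that of vector bundles (Lemma \ref{lemma:divisorial} together with \cite[Proposition 3.10]{ThomasonTrobaugh}); hence $Rf_\ast$ induces an honest additive map $f_! : K_0(X) \to K_0(S)$.

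\textbf{Key steps.} (1) Show $f_!$ carries $F^{n+1}K_0(X)$ into $F^1 K_0(S)$, \emph{integrally}. The rational statement is \eqref{gradeddirect}; the content here is that no denominators are needed for the single shift from degree $n+1$ to degree $1$. This I would prove by the usual factorization of a projective $(C_n)$-morphism into a regular closed immersion $X \hookrightarrow \PBbb^N_S$ followed by the projection $\PBbb^N_S \to S$, and then checking each case by hand. For the projection $\PBbb^N_S \to S$ one uses the projective bundle formula and the explicit description of $F^\bullet$ on $\PBbb^N_S$ in terms of $\xi = [\Ocal(1)]$; the direct image of $\xi^j$ is computed via the Koszul/Euler sequences and one sees that a class in $F^{n+1}$ pushes to $F^1$ integrally (the only potential denominators in \cite{SGA6} come from higher-degree components of the Todd class, which do not enter when one only tracks $F^{n+1} \to F^1$). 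For a regular closed immersion one uses the self-intersection / excess formula, again noting that only the top Chern class of the normal bundle intervenes, which is integral. Alternatively, and more cleanly, I would argue directly: a generator of $F^{n+1}K_0(X)$ is a product $\gamma^{k_1}(x_1)\cdots\gamma^{k_r}(x_r)$ with $x_i \in F^1 K_0(X)$ and $\sum k_i = n+1$, and it suffices to treat $n+1$ line-bundle factors, i.e. elements of the form $([L_0]-1)\cdots([L_n]-1)$ with $L_i$ line bundles, since by Jouanolou's trick / the splitting principle one may reduce to this after a suitable torsor which does not affect the argument. (2) For such an element, identify $\det Rf_\ast\big(([L_0]-1)\cdots([L_n]-1)\big)$ with the Deligne pairing $\langle L_0,\ldots,L_n\rangle_{X/S}$ via \eqref{eq:definition-ducrot}, which exhibits it as a genuine line bundle on $S$ and simultaneously shows the class lies in $F^1 K_0(S)$. (3) Compose with the determinant isomorphism \eqref{iso:GR1} to land in $\Gr^1 K_0(S)$, and check well-definedness on $\Gr^{n+1}$, i.e. that $f_!\big(F^{n+2}K_0(X)\big) \subseteq F^2 K_0(S)$ -- but $F^2 K_0(S)$ is exactly the kernel of the determinant map on $F^1 K_0(S)$ (stated just after \eqref{iso:GR1}), so this is the assertion that $\det Rf_\ast$ kills $F^{n+2}$, which follows from the rational statement \eqref{gradeddirect} (a class in $F^{n+2}$ pushes into $F^2 K_0(S)_\QBbb$) combined with the fact that $\Gr^1 K_0(S) = \Pic(S)$ is already a subgroup of $K_0(S)_\QBbb$ detected integrally, so no torsion ambiguity arises.

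\textbf{The main obstacle.} The delicate point is Step (1): proving integrality of the shift $F^{n+1} \to F^1$ without passing to $\QBbb$-coefficients. The reference \cite[Exposé VIII, Proposition 3.2]{SGA6} is only rational. The cleanest route around this is precisely the identification in Step (2): rather than prove an abstract integral Riemann--Roch-type statement, I would \emph{define} the map on generators by $([L_0]-1)\cdots([L_n]-1) \mapsto \langle L_0,\ldots,L_n\rangle_{X/S} \in \Pic(S) = \Gr^1 K_0(S)$, using the multilinearity and symmetry of Deligne pairings (Proposition \ref{prop:generalpropertiesdeligneproducts}) to check this respects the relations defining $F^{n+1}/F^{n+2}$ as a quotient. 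The only thing then left to verify is that this recipe is compatible with the rational map \eqref{gradeddirect} -- which forces uniqueness and well-definedness -- and this compatibility is exactly the content of \cite[Exposé VIII, Proposition 3.2]{SGA6} after inverting denominators, together with the injectivity of $\Gr^1 K_0(S) \to \Gr^1 K_0(S)_\QBbb$ modulo torsion; here one invokes that $\Gr^1 K_0(S) = \Pic(S)$ may have torsion but the rational map determines the integral one up to that torsion, and the Deligne-pairing recipe then pins it down. I would also remark (as the paper does in the sentence preceding the proposition) that an extension of $\gamma$-operations and \eqref{iso:GR1} to perfect complexes would make this automatic and dispense with the divisoriality hypothesis.
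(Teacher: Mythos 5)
Your proposal correctly identifies the ingredients and the delicate point, but the key step is left unfinished. Let me be concrete.

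For the containment $f_!\bigl(F^{n+1}K_0(X)\bigr) \subseteq F^1 K_0(S)$, your proposed factorization-by-hand works, though the paper takes a quicker route: by base change one reduces to $S$ a point, where the virtual rank is the Euler characteristic, computed by the singular Riemann--Roch theorem \cite[Corollary 18.3.1]{Fulton}; the relevant Chern character vanishes for dimension reasons. Either way this part is sound.

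The real gap is in your Step (3), the containment $f_!\bigl(F^{n+2}K_0(X)\bigr) \subseteq F^2 K_0(S)$. You correctly translate this, via \eqref{iso:GR1}, into the assertion that $\det f_! x = [\Ocal_S]$ in $\Pic(S)$ for $x \in F^{n+2}$. But you then invoke the rational statement \eqref{gradeddirect} and claim ``no torsion ambiguity arises'' because $\Gr^1 K_0(S) = \Pic(S)$ is ``detected integrally.'' This is false: $\Pic(S)$ can have torsion, and the rational statement only tells you that $\det f_! x$ is a torsion class, not the trivial one. You acknowledge this in the next sentence and say ``the Deligne-pairing recipe then pins it down,'' but that is not an argument --- you would need to explicitly produce a trivialization of $\det Rf_\ast\bigl(([L_0]-1)\cdots([L_{n+k}]-1)\bigr)$ for $k\geq 1$, and verify that the generator-and-relations presentation of $F^{n+1}/F^{n+2}$ is respected. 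The paper closes this gap by a different route: reduce to $x_j = [E_j]-r_j$; pass to the complete flag variety $\pi\colon D\to X$, using the projection formula and $Rg_\ast\Ocal(-i)\simeq 0$ for projective bundles to reduce to line bundles (not Jouanolou's trick, which would change the scheme without helping); and then, after Noetherian approximation, cite Ducrot's canonical trivialization \cite[Proposition 4.7.1]{Ducrot} of the determinant of cohomology of such an alternating product. That citation is the actual content you are missing. Without it, or an equivalent explicit trivialization, your proof does not establish the integral statement.
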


\begin{proof}
    As in the discussion leading up to the proposition, to verify that the derived direct image induces a map of $K_0$-groups, it is enough to verify that it sends vector bundles to perfect complexes. Since $f$ is a proper, flat, and finitely presented morphism, by Proposition \ref{prop:perfect-conditions} \eqref{item:perfect-conditions-2} $Rf_{\ast}$ sends perfect complexes, in particular vector bundles, to perfect complexes.

    We need to prove that $f_!$ sends $F^{n+1} K_0(X)$ to $F^1 K_0(S)$ and $F^{n+2} K_0(X)$ to $F^2 K_0(S)$. Given  \linebreak  $x=\prod \gamma^{k_j}(x_j)$ in $F^{n+1} K_0(X)$, to prove that $f_!\ x  \in K_0(S)$  is of virtual rank 0, and hence in $F^1 K_0(S)$, by Proposition \ref{prop:perfect-conditions} \eqref{item:perfect-conditions-2} we can reduce to the case that $S$ is a point. In this case, the virtual rank coincides with the Euler characteristic of $x$. This can be computed by the Riemann--Roch theorem for singular varieties as in \cite[Corollary 18.3.1]{Fulton}. For simplicity, we treat the case $x_j = L_j-1$ and $k_j=1$. One then observes that for $k>0$ the Chern class $\prod_{j=1}^{n+k}\ch(L_{j}-1)$ is concentrated in degrees bigger than $n$, and hence $\chi(x) = \int_{X} \prod_{j=1}^{n+k}\ch(L_{j}-1) \cap \td(X)$ is zero for dimension reasons. We leave the general case to the reader, using for example some version of the splitting principle. 

    If $x$ belongs to $F^{n+2}K_{0}(X)$, to prove that it is sent to $F^{2}K_{0}(S)$, by \eqref{iso:GR1} we need to show that $\det f_!\ \prod \gamma^{k_j}(x_j)= [\Ocal_S] \in \Pic(S)$ whenever $\sum k_j \geq n+2.$ Since any element $x_j$ is a virtual sum  of vector bundles of virtual rank 0, one straightforwardly reduces to the case when $x_j = [E_j]-r_j$, for a vector bundle $E_j.$ For simplicity, we suppose the ranks $r_j$ are constant. We want to reduce to the case when $E_j$ is a line bundle by a version of the splitting principle, and perform the argument for $E = E_0$ and $r=r_0, k = k_0$. If $E$ admits a complete flag this is immediate. The complete flag variety $\pi: D \to X$ of $E$ is a composition of projective bundles, and we denote by $\Ocal_\ell(1), \ell = 1, \ldots, r$ the successive tautological line bundles. It is of relative dimension $m=1+\ldots+(r-1)$. By  \linebreak  \cite[Exposé VI, Corollaire 5.8]{SGA6} it follows that $\pi_!\ F^{n+1+m} K_0(D) \subseteq F^{n+1}K_0(X)$. By the projection formula 
    \begin{displaymath}
        f_! \left(([E]-r) \cdot \prod_{j\geq 1} \gamma^{k_j}(x_j) \right)  = ( f \pi)_!\ \pi^{\ast} \left(\left(\gamma^{k_0}([E]-r) \cdot \prod_{j \geq 1} \gamma^{k_j}(x_j) \right) \prod_\ell \left(1-[\Ocal_\ell(-1)]\right)^{r-\ell}\right).
    \end{displaymath}
    Here, we used that for a projective bundle $g \colon \PBbb(V)\to X$, $Rg_\ast \Ocal(-i) \simeq 0$ for $i = 1, \ldots, (\rk V)-1$ and $\Ocal_X$ for $i = 0$. Hence, possibly replacing $f$ by $ f \circ \pi$ iteratively over the various flag varieties corresponding to the $E_j$, we are reduced to proving that there exists, for $k \geq 1$, an isomorphism
    \begin{displaymath}
        \det Rf_{\ast} \left( (L_0 -1)\cdot \ldots \cdot (L_n -1) \cdot \ldots \cdot (L_{n+k} -1)\right) \simeq \Ocal_S.
    \end{displaymath}
    Notice that whenever $k =0$ the left-hand side is the definition in \eqref{eq:definition-ducrot}. By Noetherian approximation, we can suppose that $S$ is in fact Noetherian. In this case, the statement is a major ingredient in the proof of the main theorem of \cite{Ducrot}, and it can be deduced from \cite[Proposition 4.7.1]{Ducrot}, which produces a canonical such isomorphism for determinants of cohomologies. 
\end{proof}
The proof in fact gives the following more precise form:
\begin{corollary}\label{cor:deligne-pairing-direct-image}
    With the same assumptions and notations as in Proposition \ref{prop:directimageK}, suppose we are given line bundles $L_0, \ldots, L_n$. Then, 
\begin{displaymath}
     c_1(\langle L_0, \ldots, L_n \rangle_{X/S} )=f_\ast \left(c_{1}(L_0)\cdot \ldots\cdot c_{1}(L_n)\right). 
\end{displaymath}
\end{corollary}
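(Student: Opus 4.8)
The plan is to read Corollary \ref{cor:deligne-pairing-direct-image} off the proof of Proposition \ref{prop:directimageK}, specialized to the case $k=0$. Recall that by \eqref{iso:GR1} the determinant induces an isomorphism $\Gr^{1}K_{0}(S)\to\Pic(S)$, so it suffices to show that the class $f_{\ast}\left(\gamma^{1}([L_{0}]-1)\cdot\ldots\cdot\gamma^{1}([L_{n}]-1)\right)$ in $\Gr^{1}K_{0}(S)$ corresponds, under this isomorphism, to the line bundle $\langle L_{0},\ldots,L_{n}\rangle_{X/S}$, and then to identify its first Chern class with the intersection-theoretic expression $f_{\ast}(c_{1}(L_{0})\cdot\ldots\cdot c_{1}(L_{n}))$.

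The first step is to observe that $c_{1}(L_{i})$ is by definition the class of $\gamma^{1}([L_{i}]-1)=[L_{i}]-1$ in $\Gr^{1}K_{0}(X)$, so that $c_{1}(L_{0})\cdot\ldots\cdot c_{1}(L_{n})$ is the class of $\prod_{i=0}^{n}([L_{i}]-1)$ in $\Gr^{n+1}K_{0}(X)$. Expanding the product gives $\prod_{i=0}^{n}([L_{i}]-1)=\sum_{I\subseteq\{0,\ldots,n\}}(-1)^{n+1-|I|}\left[\bigotimes_{i\in I}L_{i}\right]$ in $K_{0}(X)$. The second step is to apply $f_{!}$, which by Proposition \ref{prop:directimageK} sends $F^{n+1}K_{0}(X)$ to $F^{1}K_{0}(S)$ and induces the direct image $f_{\ast}\colon\Gr^{n+1}K_{0}(X)\to\Gr^{1}K_{0}(S)$; thus $f_{\ast}(c_{1}(L_{0})\cdot\ldots\cdot c_{1}(L_{n}))$ is the class in $\Gr^{1}K_{0}(S)$ of $f_{!}\left(\prod_{i}([L_{i}]-1)\right)=\sum_{I}(-1)^{n+1-|I|}[Rf_{\ast}(\bigotimes_{i\in I}L_{i})]$.

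The third step is to apply the determinant and compare with \eqref{eq:definition-ducrot}. Since $\det$ is multiplicative with respect to the addition in $K_{0}$ and turns the class of a perfect complex $[Rf_{\ast}(\bigotimes_{i\in I}L_{i})]$ into the line bundle $\det Rf_{\ast}(\bigotimes_{i\in I}L_{i})$, applying $\det$ to $\sum_{I}(-1)^{n+1-|I|}[Rf_{\ast}(\bigotimes_{i\in I}L_{i})]$ yields exactly
\begin{displaymath}
    \bigotimes_{I\subseteq\{0,\ldots,n\}}\det Rf_{\ast}\left(\bigotimes_{i\in I}L_{i}\right)^{(-1)^{n+1-|I|}},
\end{displaymath}
which is the definition of $\langle L_{0},\ldots,L_{n}\rangle_{X/S}$. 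Invoking the commutative square relating $\det$ on $\Gr^{1}K_{0}(S)$ to $c_{1}$ on $\Pic(S)$ — more precisely, that the isomorphism \eqref{iso:GR1} is the inverse of $c_{1}$ — we conclude that $c_{1}\left(\langle L_{0},\ldots,L_{n}\rangle_{X/S}\right)=f_{\ast}(c_{1}(L_{0})\cdot\ldots\cdot c_{1}(L_{n}))$ in $\Pic(S)$, viewed via the isomorphism $\Pic(S)\simeq\Gr^{1}K_{0}(S)$.

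I expect the only genuinely delicate point is the bookkeeping in the last step: one must check that the isomorphism $\Gr^{1}K_{0}(S)\to\Pic(S)$ of \eqref{iso:GR1} is compatible with the direct-image map $f_{!}$ built from $Rf_{\ast}$ in the sense that $\det(f_{!}x)$ computes the image of $f_{\ast}[x]$ under \eqref{iso:GR1} — but this is precisely what the last paragraph of the proof of Proposition \ref{prop:directimageK} establishes, via the reference to \cite[Proposition 4.7.1]{Ducrot} after Noetherian approximation. Everything else is the formal manipulation of expanding $\prod([L_{i}]-1)$ and using multiplicativity of $\det$; since the stated corollary is only an assertion about isomorphism classes in $\Pic(S)$ (not about canonical isomorphisms), no further coherence verification is needed.
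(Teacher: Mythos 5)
Your proposal is correct and follows essentially the same route as the paper: the paper's own proof of the corollary is just the remark that the $k=0$ case of the final display in the proof of Proposition~\ref{prop:directimageK} is literally the defining formula \eqref{eq:definition-ducrot} for the Deligne pairing, and you have simply spelled out the bookkeeping (expanding $\prod([L_i]-1)$, pushing forward via $f_!$, applying $\det$, and invoking \eqref{iso:GR1} as the inverse of $c_1$) that makes this identification precise.
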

\qed

\subsection{Properties of Deligne pairings}
We discuss the projection formula and birational invariance property of Deligne pairings, originally due to Elkik \cite{Elkikfib} and Mu\~noz Garc\'ia \cite{Munoz}, in the Noetherian setting. The case of base schemes is treated by Noetherian approximation. We further address some features which are not covered by these references.  

\begin{proposition}[Projection formula]\label{prop:projformuladeli}
Let $f\colon X\to S$, $g\colon X^{\prime}\to S$, and $h\colon X^{\prime}\to X$ be morphisms satisfying the conditions $(C_{n})$, $(C_{n+n^{\prime}})$, and $(C_{n^{\prime}})$, respectively. Suppose we are given line bundles $L_0, \ldots, L_\ell$ on $X$ and $M_{0}, \ldots, M_{m}$ on $X'$, with $(\ell+1)+(m+1)=n+n'+1$. There are canonical projection formula isomorphisms 
        \begin{displaymath}
            \langle h^\ast L_0, \ldots, h^\ast L_\ell, M_{0}, \ldots, M_{m} \rangle_{X'/S} \simeq 
\begin{cases}
    \langle L_0, \ldots, L_{n-1}, \langle M_{0}, \ldots, M_{n'} \rangle_{X'/X} \rangle_{X/S}, & \text{if }\; m = n' .\\ \\
    \langle  L_0, \ldots, L_{n} \rangle_{X/S}^{\kappa}, & \text{if }\; m= n'-1.\\
    &\\ 
    \Ocal_S, & \text{if }\; m < n'-1. 
\end{cases}
        \end{displaymath}
In the second case, $\kappa = \int_{X'/X} \prod_{i} c_1(M_i)$ is the fiberwise degree and is then assumed to be constant. Moreover:
\begin{enumerate}
    \item The isomorphisms are functorial and multilinear in the line bundles, where, in the third case, we interpret $\Ocal_S$ as a constant functor.
    \item The projection formula isomorphisms are compatible with restrictions along divisors in any of the $L_i$.
    \item The projection formula isomorphisms are compatible with the symmetry isomorphisms in the $L_{i}$ and the $M_{j}$, separately.
\end{enumerate} 
\end{proposition}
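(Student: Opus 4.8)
The plan is to reduce first to a Noetherian base. By Noetherian approximation one writes $S$ as a cofiltered limit of affine Noetherian schemes over which the whole datum $(f,g,h,L_0,\dots,L_\ell,M_0,\dots,M_m)$ is defined; since Deligne pairings, the restriction isomorphism \eqref{eq:restrictiondivisor}, and norms of relative cycles all commute with base change, it suffices to construct the isomorphisms and verify properties (1)--(3) over each Noetherian approximant and then pass to the limit. Over a Noetherian base, the existence of the isomorphism in the three cases is in substance the projection formula of Elkik \cite{Elkikfib} and Mu\~noz Garc\'ia \cite{Munoz}; however, because we also want canonicity and the compatibilities (1)--(3), which are not addressed in those references, I would reconstruct the isomorphism from the generators-and-relations presentation of \textsection\ref{subsubsec:generators-relations}. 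We use the direct construction \eqref{eq:definition-ducrot} for the pairings themselves --- in particular $\langle M_0,\dots,M_{n'}\rangle_{X'/X}$ is a genuine line bundle on $X$, since $h$ satisfies $(C_{n'})$ --- and the symbols only to describe the isomorphism Zariski-locally on $S$.

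For the construction I would work on an affine open of $S$ and, via multilinearity, reduce to the case where the $L_i$ and $M_j$ are relatively very ample. By Proposition~\ref{prop:construction-generators} one then finds rational (in fact, after an \'etale localization, global) sections $\ell_0,\dots,\ell_\ell$ of the $L_i$ on $X$ and $m_0,\dots,m_m$ of the $M_j$ on $X'$, chosen simultaneously in general position for $X/S$, for $h\colon X'\to X$, and for $X'\to S$. Iterating the restriction isomorphism \eqref{eq:restrictiondivisor} along the (possibly non-effective, and then handled multilinearly) divisors of $m_0,\dots,m_{\min(m,\,n'-1)}$ transports the symbol $\langle h^\ast\ell_0,\dots,h^\ast\ell_\ell,m_0,\dots,m_m\rangle$ onto a pairing over the corresponding chain $Z$ of relative Cartier divisors. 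In the case $m=n'$ one restricts along $m_0,\dots,m_{n'-1}$ and arrives at $\langle p^\ast\ell_0,\dots,p^\ast\ell_{n-1},m_{n'}|_Z\rangle_{Z/S}$ with $p\colon Z\to X$ finite flat; restricting further along $\ell_0,\dots,\ell_{n-1}$ pulled back from $X$, and using transitivity of norms together with the relative dimension zero (norm functor) case of Proposition~\ref{prop:generalpropertiesdeligneproducts}, one identifies this with $\langle\ell_0,\dots,\ell_{n-1},N_{Z/X}(m_{n'})\rangle_{X/S}$, while the same sections $m_j$ exhibit $N_{Z/X}(m_{n'})$ as a generator of $\langle M_0,\dots,M_{n'}\rangle_{X'/X}$. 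In the case $m=n'-1$ one restricts along $m_0,\dots,m_{n'-1}$ to reach $\langle p^\ast\ell_0,\dots,p^\ast\ell_n\rangle_{Z/S}$ with $p$ finite flat of constant degree $\kappa=\int_{X'/X}\prod_ic_1(M_i)$, and multilinearity in each slot along $p$ gives $\langle\ell_0,\dots,\ell_n\rangle_{X/S}^{\kappa}$. In the case $m<n'-1$ the remaining $\ell+1>n+1$ line bundles are pulled back from $X$, which has relative dimension $n$ over $S$; a further general-position reduction (to very ample bundles, then Lemma~\ref{lemma:prime-avoidance-families}) lets one restrict until the base becomes an intersection of more than $n$ relative hypersurfaces on $X/S$, hence empty, and the pairing over the empty scheme is canonically $\Ocal_S$ via \eqref{eq:decomposition-Deligne-pairing-partition}.

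It then remains to check that this local isomorphism is independent of all choices and therefore globalizes, and that properties (1)--(3) hold. Independence: replacing an $m_j$ or $\ell_i$ by its product with a rational function $\phi$ multiplies both sides by the identical factor $N_{D/S}(\phi|_D)$ by the relation \eqref{eq:relationElkikDucrot}, and the order of the restrictions is immaterial by property~\eqref{item:order-deligne-pairing-isos} of Proposition~\ref{prop:generalpropertiesdeligneproducts}; since each elementary step commutes with base change, the isomorphisms built over the open affines of $S$ agree on overlaps and glue. Properties (1) and (2) are transparent from the symbol description: multilinearity sends $\langle\ell_0\otimes\ell_0',\dots\rangle$ to $\langle\ell_0,\dots\rangle\otimes\langle\ell_0',\dots\rangle$, and restriction along a divisor $D$ in $L_i$ is realized by choosing $\mathbf 1_D$ as $\ell_i$, after which the recursion is unchanged. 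For property (3), the symmetry isomorphisms among the $M_j$ (resp.\ the $L_i$) amount to reordering restrictions on the $X'$-side (resp.\ the $X$-side), again by property~\eqref{item:order-deligne-pairing-isos} of Proposition~\ref{prop:generalpropertiesdeligneproducts}; in the middle case the exponent $\kappa$ is preserved because the $M_j$-symmetries push forward to trivial automorphisms on $X$ for dimension reasons, while the $L_i$-symmetries produce exactly the sign of $\langle\ell_0,\dots,\ell_n\rangle^{\kappa}$ predicted by the projection formula and property~\eqref{item:symmetry-deligne} of Proposition~\ref{prop:generalpropertiesdeligneproducts}.

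The step I expect to be the main obstacle is the bookkeeping of the previous paragraph, to be carried out uniformly in $S$: showing at the level of symbols that the recursive identifications are independent of the sections, flags, and order of restriction chosen; that the three compatibility squares (multilinearity versus restriction versus symmetry) commute; and that the middle-case isomorphism is genuinely canonical rather than merely defined up to sign. These are precisely the features left open in \cite{Elkikfib, Munoz}, and although formal they demand a careful tracking of the norm relation \eqref{eq:relationElkikDucrot} and of the clauses of Proposition~\ref{prop:generalpropertiesdeligneproducts} throughout the whole induction.
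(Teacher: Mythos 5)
Your overall strategy (Noetherian approximation, reduction to relatively very ample bundles, description of the isomorphism on symbols, and ultimately the transitivity of the norm) is the same as the paper's, but the order in which you perform the restrictions creates a genuine gap. You propose to restrict first along the divisors of $m_0,\dots,m_{n'-1}$ on $X'$ and to land on a chain $Z$ with $p\colon Z\to X$ \emph{finite flat}, so that $N_{Z/X}$ makes sense and the same sections exhibit a generator of $\langle M_0,\dots,M_{n'}\rangle_{X'/X}$. For that you need the $m_j$ to be in general position \emph{relative to} $h\colon X'\to X$, i.e.\ their successive zero loci must be relative Cartier divisors over $X$, fiberwise over every point of $X$. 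The general-position machinery available (Lemma \ref{lemma:prime-avoidance-families} and Proposition \ref{prop:construction-generators}) only produces sections, locally on the base, avoiding \emph{finitely many} prescribed subschemes; it does not give sections defined over an open of $S$ whose divisors meet every fiber of $X'\to X$ properly, since infinitely many points of $X$ lie over a given point of $S$. This is precisely why the paper's construction restricts along the $\ell_i$ \emph{first}: one cuts $X$ down to a subscheme $D$ satisfying $(C_0)$ over $S$, so that choosing the $m_j$ in general position over $D'\to D$ becomes a condition involving only finitely many fibers and is achievable locally on $S$. Your order of operations cannot be carried out with the tools at hand, and fixing it essentially forces you back to the paper's order.

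Two smaller points. In the case $m=n'-1$ you invoke ``multilinearity in each slot along $p$'' to pass from $\langle p^\ast\ell_0,\dots,p^\ast\ell_n\rangle_{Z/S}$ to $\langle\ell_0,\dots,\ell_n\rangle_{X/S}^{\kappa}$; this step is not multilinearity but is itself an instance of the second projection formula (for the finite flat morphism $Z\to X$ of degree $\kappa$), so as written the argument is circular — the paper instead \emph{defines} the second isomorphism directly on symbols by $\langle h^{\ast}\ell_0,\dots,h^{\ast}\ell_n,m_0,\dots,m_{n'-1}\rangle\mapsto\langle\ell_0,\dots,\ell_n\rangle^{\kappa}$ and checks compatibility with the relations \eqref{eq:relationElkikDucrot}. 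Finally, you correctly identify the independence-of-choices bookkeeping as the main remaining burden but do not discharge it; the paper leans on \cite{Elkikfib} and \cite{Munoz} for this in the Noetherian case rather than reproving it, which is a legitimate shortcut your self-contained version would have to replace with an actual argument.
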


\begin{proof}

We first reason locally and suppose $S$ affine and that $f \colon X \to S$ and $g \colon X' \to S$ are projective, in which case $h\colon X^{\prime}\to X$ is projective as well. Since Deligne pairings are functorial, by  Noetherian approximation, we can reduce the construction of the isomorphisms to the case of schemes that are finitely generated over $\ZBbb$. We notice that given Noetherian approximations $f_{\alpha}\colon X_{\alpha}\to S_{\alpha}$ and $g_{\alpha}\colon X^{\prime}_{\alpha}\to S_{\alpha}$ of $f$ and $g$, by  \cite[\href{https://stacks.math.columbia.edu/tag/01ZM}{01ZM}]{stacks-project} we may assume that $h$ descends to a morphism $h_{\alpha}\colon X^{\prime}_{\alpha}\to X_{\alpha}$.  In this situation all schemes are Noetherian, and we can then refer to \cite[\textsection IV.2]{Elkikfib} and \cite[\textsection 5.2]{Munoz}. We notice that in \cite{Munoz}, the base change compatibility of the projection formula for flat morphisms is not explicitly stated. It can however be checked by a careful proofreading. See also the construction below, from which this can also be inferred by a simple inspection.

For a general base scheme $S$, it is enough to compare the constructions for affine $S$ and any affine open $V$ in $S$. Since this open affine embedding can be approximated by Noetherian schemes, the functoriality in the Noetherian situation entails that the local construction above glues together on all of $S$. For similar reasons the resulting isomorphisms are functorial.

For future reference, we recall the construction of the isomorphisms in the Noetherian case. For the first one, working  locally on $S$, we can reduce ourselves to the case when all the line bundles are relatively ample over $S$. Locally on the base, we can suppose there are regular sections $\ell_i$ of $L_{i}, i =0,\ldots, n-1$ which cut out a scheme $D$ in $X$, which satisfies the condition $(C_0)$ over $S$. Denote by $D' = X' \times_X D \to D$, which is a morphism satisfying the condition $(C_{n'})$. After performing a reduction along the sections $\ell_i$ and $h^\ast \ell_i$, as in Proposition \ref{prop:generalpropertiesdeligneproducts} \eqref{item:deligne-pairing-restriction}, the projection formula takes the form of an isomorphism 
\begin{equation}\label{eq:projection-formula-simplified-form}
    \langle M_{0} |_{D'}, \ldots, M_{n'} |_{D'}\rangle_{D'/S}  \simeq N_{D/S} \left(\langle M_{0}|_{D'}, \ldots, M_{n'}|_{D'}\rangle_{D'/D}\right).
\end{equation}
Using that $D \to S$ is finite, one can see that locally on $S$, there exist regular sections $m_{i}$ of the $M_{i}|_{D'}$, for $i=0,\ldots, n'-1$, which cut out a closed subscheme $D''$ of $D'$, such that the natural morphism $D'' \to D$ satisfies the condition $(C_0)$. Then $D^{\prime\prime}\to S$ also satisfies the condition $(C_{0})$. Restricting along the divisors of these sections, the projection formula then takes the form 
\begin{displaymath}
    N_{D''/S} (M_{n'}|_{D''}) \simeq N_{D/S} N_{D''/D} (M_{n'}|_{D''}),
\end{displaymath}
which is the usual transitivity of the determinant construction, see \cite[Lemme 21.5.7.2]{EGAIV4}. It is part of the construction in \cite[\textsection IV.2]{Elkikfib} and \cite[\textsection 5.2]{Munoz} that this is independent of the choice of regular sequences, and hence defines a canonical projection formula isomorphism in general. 

Incidentally, the above description of the first projection formula isomorphism, together with the construction of the Deligne pairings by generators and relations, leads to the following expression for the isomorphism \eqref{eq:projection-formula-simplified-form}. We can actually choose that the sequence $m_{0},\ldots,m_{n^{\prime}-1}$ is part of a larger sequence of sections $m_{0},\ldots,m_{n^{\prime}}$, which are in general position with respect to $D$, and hence with respect to $S$. Then, the isomorphism \eqref{eq:projection-formula-simplified-form} is determined by the assignment
\begin{displaymath}
    \langle m_{0},\ldots, m_{n^{\prime}}\rangle_{D^{\prime}/S}\mapsto N_{D/S}(\langle m_{0},\ldots, m_{n^{\prime}}\rangle_{D^{\prime}/D}).
\end{displaymath}
The transitivity of the norm on functions shows that this is indeed compatible with the relations defining the Deligne pairings \eqref{eq:relationElkikDucrot}, and hence defines an isomorphism of line bundles. 

The second isomorphism is given on symbols by
\begin{equation}\label{eq:explicit-second-projection-formula}
    \langle h^{\ast} \ell_{0},\ldots, h^{\ast}\ell_{n},m_{0},\ldots, m_{n^{\prime}-1}\rangle\mapsto \langle \ell_{0},\ldots,\ell_{n} \rangle^{\kappa}.
\end{equation}
We notice that such symbols exist and provide local bases for the involved line bundles. Also, observe that this assignment is compatible with the relations defining the Deligne pairings. Finally, the resulting isomorphism is compatible with restriction along sections of the line bundles $L_{i}$, which in turn entails it is compatible with the first projection formula. Concretely, it coincides with the following combination of the first projection formula, and the special case of the second projection formula when $n=0$:

\begin{displaymath}
\begin{split}
    \langle h^\ast L_0, \ldots, h^\ast L_{n-1}, h^\ast L_n, M_0, \ldots, M_{n'-1} \rangle & \simeq \langle L_0, \ldots, L_{n-1}, \langle h^\ast L_n, M_0, \ldots, M_{n'-1} \rangle \rangle \\
    & \simeq \langle L_0, \ldots, L_{n-1},  L_n^{\kappa} \rangle \\
    & \simeq \langle L_0, \ldots, L_{n-1},  L_n \rangle^{\kappa}.
\end{split}
\end{displaymath}

The third isomorphism in turn follows from the second one since  $\kappa = 0$ under those assumptions.

The functoriality and the multilinearity statements follow from the above descriptions. The compatibility with restrictions along divisors is contained in the construction, and the compatibility with symmetries is also apparent from the construction.
\end{proof}

\begin{remark}
Since the formation of Deligne pairings is compatible with base change, \emph{a posteriori} we see that the description provided in the proof of Proposition \ref{prop:projformuladeli} remains valid over any base $S$.
\end{remark}

\begin{corollary} \label{cor:compprojformula}
    With the notation as in Proposition \ref{prop:projformuladeli}, suppose $h^{\prime}: X^{\bis} \to X^{\prime}$ is a morphism satisfying the condition $(C_{n^{\bis}})$, and such that the morphisms $X^{\bis} \to X$, $X^{\bis} \to S$ satisfy the conditions $(C_{n^{\prime} + n^{\bis}}), (C_{n+n^{\prime} + n^{\bis}})$. Then, the composition of the projection formula for $h$ and $h^{\prime}$ is that of $ h^{\bis}=h\circ h^{\prime}$ and $h^{\prime}$. More precisely, there are two natural isomorphisms of the form
    \begin{displaymath}
        \begin{split}
        &\langle h^{\bis \ast } L_0, \ldots, h^{\bis \ast}L_{n-1}, h^{\prime\ast} M_0, \ldots, h^{\prime\ast} M_{n^{\prime}-1}, N_{0}, \ldots, N_{n^{\bis}} \rangle_{X^{\bis}/S}\\
           &\hspace{4cm} \to
        \langle L_{0},\ldots, L_{n-1}, \langle M_{0},\ldots, M_{n^{\prime}-1},\langle N_{0},\ldots,N_{n^{\bis}}\rangle_{X^{\bis}/X^{\prime}}\rangle_{X^{\prime}/X}\rangle_{X/S}
        \end{split}
    \end{displaymath}
    which can be derived from the projection formula, and both coincide and likewise in the other possible cases.  
\end{corollary}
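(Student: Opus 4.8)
The plan is to reduce the statement of Corollary \ref{cor:compprojformula} to the explicit descriptions of the projection formula isomorphisms recalled in the proof of Proposition \ref{prop:projformuladeli}, and then verify that the two ways of composing them agree on a well-chosen set of generators. First I would reduce, as in the proof of Proposition \ref{prop:projformuladeli}, to the case where $S$ is affine, all the morphisms $X^{\bis}\to X^{\prime}\to X\to S$ are projective, and all the line bundles involved are relatively very ample; by Noetherian approximation and the functoriality of Deligne pairings (together with \cite[\href{https://stacks.math.columbia.edu/tag/01ZM}{01ZM}]{stacks-project} to descend the tower of morphisms), it suffices to treat the case of schemes of finite type over $\ZBbb$. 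Once the statement is known over affine bases it glues over an arbitrary $S$, exactly as in Proposition \ref{prop:projformuladeli}.

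The heart of the argument is to make both composite isomorphisms explicit on symbols and compare them. Choose rational sections $\ell_{0},\ldots,\ell_{n-1}$ of $L_{0},\ldots,L_{n-1}$ on $X$ in general position, cutting out a subscheme $D\subseteq X$ satisfying the condition $(C_{0})$ over $S$; pull them back to $X^{\prime}$ and $X^{\bis}$. Next choose $m_{0},\ldots,m_{n^{\prime}-1}$ on $X^{\prime}$, restricting to regular sections of $M_{i}|_{D^{\prime}}$ (where $D^{\prime}=X^{\prime}\times_{X}D$) cutting out $D^{\bis}\subseteq D^{\prime}$ with $D^{\bis}\to D$ of type $(C_{0})$, and finally $p_{0},\ldots,p_{n^{\bis}-1}$ on $X^{\bis}$ chosen so that, after restriction to $X^{\bis}\times_{X^{\prime}}D^{\bis}$, one lands in the zero-dimensional situation; by Proposition \ref{prop:construction-generators} and Lemma \ref{lemma:prime-avoidance-families} these can all be taken in general position simultaneously, and each $p_{j}$ can be extended to a family $p_{0},\ldots,p_{n^{\bis}}$ in general position with respect to $D^{\bis}$. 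Restricting iteratively along the divisors of all these sections, using Proposition \ref{prop:generalpropertiesdeligneproducts} \eqref{item:deligne-pairing-restriction} and the expression \eqref{eq:projection-formula-simplified-form}, both composite isomorphisms are reduced to assertions about iterated norms $N_{D/S}$, $N_{D^{\bis}/D}$, $N_{D^{\bis}\times\cdots/D^{\bis}}$, and ultimately to the transitivity of the norm functor for finite flat morphisms, \cite[Lemme 21.5.7.2]{EGAIV4}. Since iterated restrictions along divisors are independent of the order by Proposition \ref{prop:generalpropertiesdeligneproducts} \eqref{item:order-deligne-pairing-isos}, the two bracketings of the intermediate Deligne pairings $\langle M_{\bullet},\langle N_{\bullet}\rangle\rangle$ versus $\langle\langle N_{\bullet}\rangle$ inside $\langle M_{\bullet},\ldots\rangle$ give the same line bundle and the same comparison map. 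The ``other possible cases'' — when some of the relative dimensions $m$, or the number of the $N_{j}$, drop below the critical value so that one of the projection formulas is of the second or third type in Proposition \ref{prop:projformuladeli} — are handled the same way, invoking in addition the multiplicativity of the norm in cycles and, for the degenerate cases, the fact that the fiberwise degrees $\kappa$ are then forced to vanish or to be compatible, exactly as in the last part of the proof of Proposition \ref{prop:projformuladeli}.

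The main obstacle I anticipate is purely bookkeeping: one must arrange a single choice of generators $\ell_{\bullet}, m_{\bullet}, p_{\bullet}$ that is simultaneously in general position for \emph{all} of $X$, $X^{\prime}$, $X^{\bis}$ and for all the successive scheme-theoretic intersections relevant to \emph{both} composition orders, and then track the (a priori order-dependent) sequence of restriction, norm-transitivity, and symmetry isomorphisms through a rather large diagram. The conceptual content is light — everything follows from transitivity of norms and the order-independence in Proposition \ref{prop:generalpropertiesdeligneproducts} \eqref{item:order-deligne-pairing-isos} — so I would present the reduction carefully and then state that the remaining verification is a diagram chase of the same nature as in Proposition \ref{prop:projformuladeli}, leaving the routine details to the reader, as is done elsewhere in this section.
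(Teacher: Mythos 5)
Your proposal is correct and follows essentially the same route as the paper: reduce to relatively very ample bundles via multilinearity, restrict iteratively along divisors of sections (using the compatibility of the projection formula with such restrictions) until all three morphisms in the tower are finite, and conclude by transitivity of the norm functor. The only difference is presentational — the paper works étale-locally with global sections where you use Zariski-local rational sections plus Noetherian approximation — which does not change the substance of the argument.
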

\begin{proof}
The statement can be checked locally in the \'etale topology on $S$, and working with symbols. By the compatibility of the projection formula isomorphisms with the multilinearity of the Deligne pairings, we can suppose that all the line bundles are relatively very ample. Recall that the projection formula isomorphisms are compatible with restrictions along relative divisors $D$ of sections of $L_0$. By iteratively choosing sections of $L_0, L_1|_D $, etc., we reduce to the case when $X \to S$ is finite. Then, as in the construction, we can analogously restrict along divisors of the $M_i$, which are relative over both $S$ and $X$, to reduce to the case when also $X^{\prime} \to X$ is finite. Finally, the same restriction argument along the remaining bundles allows us to suppose that $X^{\bis} \to X^{\prime}$ is finite. In this case, the statement reduces to the transitivity of the norm functor. 
\end{proof}

\begin{proposition}[Birational invariance]\label{prop:elvira}
    Let $\pi \colon \widetilde{X} \to X$ be a morphism of schemes, such that $\widetilde{X} \to S$ also satisfies the condition $(C_n)$. Suppose that there exists an open subset $U$ of $X$, fulfilling:
    \begin{itemize} 
    \item  The open immersion $U \to X$ is quasi-compact.
    \item  $U$ is fiberwise dense in $X$. 
    \item $\pi^{-1}(U)\to U$ is an isomorphism. 
    \end{itemize} 
Let $L_{0},\ldots, L_{n}$ denote line bundles on $X$. Then,  there is a canonical isomorphism 

    \begin{equation}\label{eq:iso-elvira}
        \langle L_0, \ldots, L_n \rangle_{X/S} \simeq \langle \pi^\ast L_0, \ldots, \pi^\ast L_n \rangle_{\widetilde{X}/S},
    \end{equation}
which is functorial and multilinear in the line bundles. Moreover:
 \begin{enumerate}   
        \item\label{item:elvira-1} The isomorphism \eqref{eq:iso-elvira} is compatible with the symmetry isomorphisms.
        \item\label{item:elvira-3} If some $L_{i}=\Ocal(D)$ for $D$ a relative effective Cartier divisor, such that $\widetilde{D} = \pi^{\ast} D$ is defined and is also a relative effective divisor, and $D\cap U$ is fiberwise dense in $D$, then the isomorphism \eqref{eq:iso-elvira} is compatible with the restriction isomorphisms along $D$ and $\widetilde{D}$. 
    \end{enumerate}
    
\end{proposition}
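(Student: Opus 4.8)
The plan is to reduce the construction to the case of relatively very ample line bundles admitting enough global sections in general position, and then to produce the isomorphism \eqref{eq:iso-elvira} on the level of symbols. First I would observe that, by the multilinearity of the Deligne pairings (Proposition \ref{prop:generalpropertiesdeligneproducts} \eqref{item:Deligne-pairing-multilinear}) together with the fact that any line bundle on a locally projective $X$ is, locally over $S$, a difference of two relatively very ample line bundles, it suffices to treat the case where all $L_i$ are relatively very ample; a standard gluing argument then extends the construction to general $L_i$ and general base $S$. As in the references \cite{Elkikfib, Munoz}, the case of Noetherian schemes is the heart of the matter, and the general base case is handled by Noetherian approximation exactly as in the proof of Proposition \ref{prop:projformuladeli}: one approximates $X\to S$, $\widetilde X\to S$, the open $U$, and $\pi$ over a finitely generated $\ZBbb$-subring, invokes \cite[\href{https://stacks.math.columbia.edu/tag/01ZM}{01ZM}]{stacks-project} to descend the morphisms, performs the construction there, and glues using functoriality.

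In the Noetherian (indeed affine, projective) case, the construction proceeds by generators and relations. Working locally over $S$, one uses Lemma \ref{lemma:prime-avoidance-families}, applied to the geometric setting \eqref{eq:geom-setting-avoidance} with the single diagram $Z_1 = \widetilde X \xrightarrow{\pi} X = Y$ over $T_1 = S$, to choose a rational section $\ell_0$ of $L_0$ whose divisor $D_0 = D_0^0 - D_0^1$ is in general position with respect to $X\to S$ \emph{and} whose pullback $\widetilde D_0 = \pi^\ast D_0$ is defined and is a relative effective Cartier divisor over $S$, and moreover so that $D_0 \cap U$ is fiberwise dense in $D_0$ (the latter is achieved since the chosen section avoids the finitely many associated points of the fibers, all of which lie in the dense open $U$ by fiberwise density). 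Iterating over $\ell_0, \ell_1|_{D_0}, \ldots, \ell_n$, exactly as in the proof of Proposition \ref{prop:construction-generators}, one obtains sections $\ell_0, \ldots, \ell_n$ in general position whose successive restrictions cut out, at the bottom, a finite flat cycle $Z$ over $S$ which is contained in $U$, and whose pullbacks under $\pi$ cut out the cycle $\pi^{-1}(Z)$; since $\pi^{-1}(U)\to U$ is an isomorphism and $Z\subseteq U$, the induced map $\pi^{-1}(Z)\to Z$ is an isomorphism of cycles finite over $S$. I would then define the isomorphism \eqref{eq:iso-elvira} on symbols by
\begin{displaymath}
    \langle \ell_0, \ldots, \ell_n\rangle_{X/S} \longmapsto \langle \pi^\ast\ell_0, \ldots, \pi^\ast\ell_n\rangle_{\widetilde X/S},
\end{displaymath}
and check that it is well defined, i.e.\ compatible with the relations \eqref{eq:relationElkikDucrot}: if $\ell_i = h\,\ell_i'$ for a rational function $h$, the norm appearing on the $X$-side is $N_{D/S}(h|_D)$ with $D = \bigcap_{j\neq i} D_j$, and on the $\widetilde X$-side it is $N_{\pi^{-1}(D)/S}((\pi^\ast h)|_{\pi^{-1}(D)})$; since $\pi^{-1}(D)\to D$ is an isomorphism over the dense open part where the relevant intersection lives (again using $Z\subseteq U$ and fiberwise density at each stage), these two norms agree, so the assignment descends to the Deligne pairings. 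Independence of the choice of sections follows from the uniqueness built into the generators-and-relations presentation, combined with the fact that any two admissible choices can be connected through a common refinement, as in \cite{Elkikfib}.

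Finally I would verify the two compatibility assertions. For \eqref{item:elvira-1}, compatibility with the symmetry isomorphisms \eqref{eq:delignepairingsymmetry}: in the symbol description the symmetry isomorphism simply permutes the entries $\langle \ell_0,\ldots,\ell_n\rangle \mapsto \langle\ell_{\sigma(0)},\ldots,\ell_{\sigma(n)}\rangle$, which visibly commutes with the assignment above; one must also check the case $L_i = L_j$, where the symmetry is multiplication by $(-1)^\kappa$ with $\kappa = \int_{X/S}\prod_{k\neq i} c_1(L_k)$, but by the birational invariance of degrees (or Corollary \ref{cor:deligne-pairing-direct-image} applied to both sides) this integer is unchanged under $\pi$, so the signs match. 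For \eqref{item:elvira-3}, compatibility with restriction along a relative Cartier divisor $L_i = \Ocal(D)$: when $D\cap U$ is fiberwise dense in $D$ and $\widetilde D = \pi^\ast D$ is a relative effective divisor, the restriction $\pi|_{\widetilde D}\colon \widetilde D\to D$ again restricts to an isomorphism over the dense open $D\cap U$, so $\pi|_{\widetilde D}$ satisfies the hypotheses of the present proposition for $D\to S$; the restriction isomorphism \eqref{eq:restrictiondivisor} sends $\langle\ell_0,\ldots,\mathbf 1_D,\ldots,\ell_{n-1}\rangle$ to $\langle\ell_0|_D,\ldots,\ell_{n-1}|_D\rangle$, and one checks, choosing the auxiliary sections to also be in general position with respect to $D$ and compatible under $\pi$ via Lemma \ref{lemma:prime-avoidance-families}, that the square relating restriction on $X$, restriction on $\widetilde X$, and the two birational invariance isomorphisms commutes on symbols, hence everywhere. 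I expect the main obstacle to be the bookkeeping needed to guarantee, at \emph{every} stage of the iterated restriction, that the chosen sections meet the successive intersections in cycles that remain inside the dense open $U$ — this is what makes the norm factors on the two sides literally agree rather than merely agree up to a sign or a unit — and to confirm that Lemma \ref{lemma:prime-avoidance-families} can indeed be applied simultaneously with respect to $S$, to $X$ (for the later restrictions), and to the pullback along $\pi$, which requires feeding the lemma an appropriately enlarged finite collection of diagrams \eqref{eq:geom-setting-avoidance} at each step.
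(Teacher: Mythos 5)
Your overall strategy — reduce to the projective Noetherian case by approximation, then build the isomorphism on symbols using the families prime avoidance lemma and check compatibility with the defining relations — is the same route the paper takes, and the structure of your argument for the two compatibility statements (reading them off the symbol-level description, noting $\kappa$ is birationally invariant) matches the paper's as well. So in outline you are doing the right thing.

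However, the central technical step contains a genuine error. You feed Lemma \ref{lemma:prime-avoidance-families} only the diagram $\widetilde X \xrightarrow{\pi} X$ over $S$, and you justify the needed containment of the residual cycle $Z$ inside $U$ by asserting that the sections can be chosen to avoid the associated points of the fibers $X_s$, ``all of which lie in the dense open $U$ by fiberwise density.'' That claim is false. Fiberwise density of $U$ only puts the \emph{minimal} primes of $X_s$ inside $U_s$; embedded associated points of $X_s$ (which are not excluded, since the morphism is only assumed flat and equidimensional, not Cohen--Macaulay) may very well lie in $(X\setminus U)_s$. Avoiding associated points of $X_s$ is anyway the wrong target: it guarantees the divisors are relative Cartier divisors, but says nothing about whether they (or their iterated intersections) dodge $(X\setminus U)_s$. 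What one actually has to do is also feed the lemma, at each stage $k$, the finitely many diagrams $\bigcap_{j\in J} D_j^{\epsilon_j}\cap(X\setminus U)_s\hookrightarrow X$ over $\Spec k(s)$ (for $J\subseteq\{0,\ldots,k\}$, $\epsilon_j\in\{0,1\}$), so that the next section avoids the associated points of those sets as well. Once this is done, a dimension count — $(X\setminus U)_s$ has dimension $\leq n-1$ because $U_s$ is dense in the pure $n$-dimensional $X_s$, and each new divisor drops the dimension of the residual intersection by at least one — shows that the $n$-fold intersections of the $n+1$ divisors are disjoint from $(X\setminus U)_s$, hence contained in $U$, and therefore the norms on both sides of \eqref{eq:relationElkikDucrot} agree on the nose. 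You flag this bookkeeping as an ``obstacle'' in your final sentence, but it is the crux of the proof, not a loose end; without the extra diagrams and the dimension argument, the symbol-level map is not seen to be well defined. The rest of your writeup is fine once this is repaired.
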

\begin{proof}
Since the construction is to be compatible with base change, we can suppose that the base is affine and that $\widetilde{X} \to S$ and $X\to S$ are projective. By a Noetherian approximation argument, we can reduce to the case when the involved schemes are Noetherian. To descend the open immersion $U \to X$ it is necessary to assume it is of finite presentation, which is guaranteed by the assumption of $U \to X$ being quasi-compact. In this situation, the existence of a functorial multilinear isomorphism, modulo the base change compatibility and properties \eqref{item:elvira-1}--\eqref{item:elvira-3}, is the statement of \cite[Th\'eor\`eme 5.3.1]{Munoz}. 

To prove the base change compatibility and properties \eqref{item:elvira-1}--\eqref{item:elvira-3}, we will review the description of the isomorphism \eqref{eq:iso-elvira} in terms of symbols, extracted from \cite[Th\'eor\`eme 5.3.1]{Munoz}. In particular, this will provide a self-contained construction of \eqref{eq:iso-elvira}. All the arguments to follow are local with respect to $S$. 

Fix a closed point $s$ of $S$. Around $s$, we can find a trivialization $\langle\ell_{0},\ldots,\ell_{n}\rangle$ of $\langle L_{0},\ldots, L_{n}\rangle$, such that: 
\begin{enumerate}
    \item[(a)] The $\ell_{i}$ define Cartier divisors $D_{i}=D_{i}^{0}-D_{i}^{1}$ as in \textsection \ref{subsubsec:generators-relations}.
    \item[(b)] The pullbacks $\pi^{\ast}(\ell_{i})$ are defined, and the associated Cartier divisors  \linebreak  $\pi^{\ast}(D_{i})=\pi^{\ast}(D_{i}^{0})-\pi^{\ast}(D_{i}^{1})$ satisfy the conditions in \textsection \ref{subsubsec:generators-relations}.
    \item[(c)] For every $i=0,\ldots, n$, we have $D_{0}^{\epsilon_{0}}\cap\ldots\widehat{D}_{i}^{\epsilon_{i}}\ldots\cap D_{n}^{\epsilon_{n}}\cap (X\setminus U)_{s}=\emptyset$. Here, $\epsilon_{i}=0$ or $1$ and the term $\widehat{D}_{i}^{\epsilon_{i}}$ is omitted.
\end{enumerate}
The existence of such a trivialization is obtained along the same lines as in Proposition \ref{prop:construction-generators}. We will just explain the necessary modifications. 

For the construction of the first section $\ell_{0}$, we apply Lemma \ref{lemma:prime-avoidance-families} \eqref{item:prime-avoidance-families-2}, with the choice of diagrams
\begin{displaymath}
    \xymatrix{
        \widetilde{X}\ar[r]^{\pi}\ar[d]  &X\ar[d]\\
        S\ar@{=}[r]     &S
    }
    \quad\quad\quad
    \xymatrix{
        (X\setminus U)_{s}\ \ar@{^{(}->}[r]\ar[d]  &X\ar[d]\\
        \Spec k(s)\ \ar@{^{(}->}[r]     &S,
    }
\end{displaymath}
where $X\setminus U$ is endowed with the reduced scheme structure. We go on by induction and assume we have constructed $\ell_{0},\ldots,\ell_{k}$, $k\leq n-1$. To construct $\ell_{k+1}$, we apply again Lemma \ref{lemma:prime-avoidance-families} \eqref{item:prime-avoidance-families-2}. Besides the diagrams \eqref{eq:successive-intersection-divisors}, we also take
\begin{displaymath}
    \xymatrix{
        \bigcap_{j\in J}\pi^{\ast}(D_{j}^{\epsilon_{j}})\ \ar[r]^-{\pi}\ar[d]  &X\ar[d]\\
        S\ar@{=}[r]     &S
    }
    \quad\quad\quad
    \xymatrix{
        \bigcap_{j\in J} D_{j}^{\epsilon_{j}}\ \cap (X\setminus U)_{s}\ \ar@{^{(}->}[r]\ar[d]  &X\ar[d]\\
        \Spec k(s)\ \ar@{^{(}->}[r]     &S,
    }
\end{displaymath}
for $J\subseteq\lbrace 0,\ldots, k\rbrace$ and $\epsilon_{j}=0$ or $1$. From this point, it is easy to complete the induction. We notice that, with the sections so constructed, condition (c) is automatically fulfilled for dimension reasons. Moreover, because the morphism $X\to S$ is proper, condition (c) continues to hold even if we replace $(X\setminus U)_{s}$ by $(X\setminus U)_{s'}$, for $s'$ non-necessarily closed and in a well-chosen neighborhood of $s$. We infer that $D_{0}^{\epsilon_{0}}\cap\ldots\widehat{D}_{i}^{\epsilon_{i}}\ldots\cap D_{n}^{\epsilon_{n}}$, which is finite and flat over $S$, is entirely contained in $U$. Consequently, it is necessarily isomorphic to its pullback by $\pi$. 

In terms of the sections above, the isomorphism \eqref{eq:iso-elvira} is given by
\begin{equation}\label{eq:iso-elvira-sections}
        \langle\ell_{0},\ldots,\ell_{n}\rangle\mapsto \langle \pi^{\ast}(\ell_{0}),\ldots,\pi^{\ast}(\ell_{n})\rangle.
\end{equation}
This assignment is compatible with the relations defining the Deligne pairings, whenever we change the sections $\ell_{i}$ by sections $\ell_{i}^{\prime}$ satisfying properties (a)--(c); for the intersection of any of the $n$ associated divisors is contained in $U$. In passing, this entails that the exhibited local construction of \eqref{eq:iso-elvira} globalizes. The base change and symmetry properties are now clear from \eqref{eq:iso-elvira-sections}.

For the compatibility with the restriction isomorphisms, we suppose, without loss of generality, that $L_{0}=\Ocal(D)$ as in the statement. We argue with symbols. The assumptions on $D$ guarantee that, in the construction of the sections $\ell_{i}$, we can take $\ell_{0}=\mathbf{1}_{D}$, the canonical section of $\Ocal(D)$. Then, the collection $\ell_{1}|_{D},\ldots, \ell_{n}|_{D}$ satisfies properties (a)--(c), relative to the morphism $\widetilde{D}\to D$ and the open $D\cap U$. Notice that $\mathbf{1}_{\widetilde{D}}=\pi^{\ast}(\mathbf{1}_{D})$. The restriction isomorphisms send the trivializations $\langle \mathbf{1}_{D},\ell_{1},\ldots,\ell_{n}\rangle$ to $\langle\ell_{1}|_{D},\ldots, \ell_{n}|_{D}\rangle$ and $\langle \mathbf{1}_{\widetilde{D}},\pi^{\ast}(\ell_{1}),\ldots,\pi^{\ast}(\ell_{n})\rangle$ to $\langle\pi^{\ast}(\ell_{1})|_{\widetilde{D}},\ldots, \pi^{\ast}(\ell_{n})|_{\widetilde{D}}\rangle$. Because $\mathbf{1}_{\widetilde{D}}=\pi^{\ast}(\mathbf{1}_{D})$ and $\pi^{\ast}(\ell_{i})|_{\widetilde{D}}=\pi^{\ast}(\ell_{i}|_{D})$, we conclude from the description \eqref{eq:iso-elvira-sections} for $\widetilde{X}\to X$ and $\widetilde{D}\to D$.
    
\end{proof}

The next corollary states that the isomorphisms of Proposition \ref{prop:projformuladeli} and Proposition \ref{prop:elvira} are compatible with each other. In preparation for the statement, we introduce some notation. Suppose that $h \colon X' \to X$ is as in Proposition \ref{prop:projformuladeli} and $\pi \colon \widetilde{X} \to X$ as in Proposition \ref{prop:elvira}. Construct the Cartesian diagram
\begin{equation}\label{eq:diagram-X-tilde-X-prime}
    \xymatrix{
        \widetilde{X}'\ar[r]^{\pi'}\ar[d]_{\widetilde{h}}      &X'\ar[d]^{h}\\
        \widetilde{X}\ar[r]_{\pi}       &X.
    }
\end{equation}
 We notice that $\pi'$ satisfies the assumptions of Proposition \ref{prop:elvira}. For a line bundle $L$ on $X$ (resp. $M$ on $X'$), we write $\widetilde{L}$ for the pullback to $\widetilde{X}$ (resp. $\widetilde{M}$ for the pullback to $\widetilde{X}')$.

\begin{corollary}
With the notation as in Proposition \ref{prop:projformuladeli} and \eqref{eq:diagram-X-tilde-X-prime} above, if $m= n'$, the diagram of isomorphisms
\begin{displaymath}
    \xymatrix{
        \langle h^\ast L_0, \ldots, h^\ast L_{n-1}, M_0, \ldots, M_{n'} \rangle_{X'/S} \ar[d] \ar[r] & \langle \widetilde{h}^\ast \widetilde{L}_0, \ldots, \widetilde{h}^\ast \widetilde{L}_{n-1}, \widetilde{M}_0, \ldots, \widetilde{M}_{n'} \rangle_{\widetilde{X}'/S} \ar[d] \\
        \langle L_0, \ldots, L_{n-1}, \langle M_0, \ldots, M_{n'} \rangle_{X'/X} \rangle_{X/S} \ar[r] & \langle \widetilde{L}_0, \ldots, \widetilde{L}_{n-1},\langle \widetilde{M}_0, \ldots, \widetilde{M}_{n'} \rangle_{\widetilde{X}'/\widetilde{X}}\rangle_{\widetilde{X}/S}
    }
\end{displaymath}
commutes, and similarly for $m = n'-1$ or $m < n'-1.$
\end{corollary}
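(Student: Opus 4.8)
The plan is to reduce, as in the proofs of Proposition \ref{prop:projformuladeli} and Proposition \ref{prop:elvira}, to a situation where all four line bundles in sight can be trivialized by explicit symbols, and then to track the effect of the four isomorphisms on these symbols. First I would observe that the statement is local on $S$ and compatible with base change, so by Noetherian approximation we may assume all schemes are Noetherian, $S$ affine, and $f\colon X\to S$, $g\colon X'\to S$ projective; in particular $h$, $\pi$, $\pi'$, $\widetilde h$ are projective as well. Using the multilinearity of the Deligne pairing and the compatibility of both the projection formula (Proposition \ref{prop:projformuladeli}(2)) and the birational invariance isomorphism (Proposition \ref{prop:elvira}\eqref{item:elvira-3}) with restrictions along divisors, I would then, exactly as in the proof of Corollary \ref{cor:compprojformula}, successively restrict along relative effective Cartier divisors cut out by sections of $L_0, L_1|_D,\ldots$ to reduce to the case $X\to S$ finite; this kills the $L_i$-slots and replaces the Deligne pairing on $X'$ by a norm from $X$. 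One must be careful to choose, as in the proof of Proposition \ref{prop:elvira}, the sections of the $L_i$ so that the relevant intersections of divisors avoid $X\setminus U$ fiberwise, so that the birational invariance isomorphism is simultaneously expressible by the symbol rule \eqref{eq:iso-elvira-sections}; Lemma \ref{lemma:prime-avoidance-families} applied to the combined system of diagrams (those of \eqref{eq:successive-intersection-divisors} together with the pullback diagrams through $\pi$, $h$, and $\pi'$) guarantees enough such sections.

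Once $X\to S$ is finite, the left-hand Deligne pairings are norms $N_{D/S}(\cdots)$ of Deligne pairings over $D$; I would then repeat the restriction procedure for the $M_j$-slots, now using sections of the $M_j$ restricted to $D' = X'\times_X D$, which are relative over both $S$ and $D$ and whose divisors can again be chosen to avoid $\widetilde X'\setminus\pi'^{-1}(U\times_X D)$ by a further application of Lemma \ref{lemma:prime-avoidance-families}. After this second round we are reduced to the case where, in the case $m=n'$, everything is a chain of norm functors $N_{D/S}N_{D''/D}(\cdots)$, and the commutativity of the square becomes the statement that the transitivity isomorphism for norm functors (\cite[Lemme 21.5.7.2]{EGAIV4}, as already used in Proposition \ref{prop:projformuladeli}) is compatible with the pullback by the birational model, which in turn follows because over the finite locus the relevant cycles lie in $U$ and hence are identified with their $\pi$-pullbacks, so the birational isomorphism is an \emph{identity} on those symbols. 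The cases $m=n'-1$ (where one slot survives and the projection formula produces a $\kappa$-th power, with $\kappa$ constant) and $m<n'-1$ (where both sides are the constant functor $\Ocal_S$) are then immediate: in the former, after the same reduction the square reduces to the equality $N_{D/S}(M_{n'}|_{D''})^{\,\mathrm{transitivity}}$ matching its $\pi$-pullback via the symbol description \eqref{eq:explicit-second-projection-formula}, and in the latter there is nothing to check beyond that both vertical and both horizontal maps are the canonical trivializations of $\Ocal_S$.

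The main obstacle I anticipate is \emph{bookkeeping of generality of the chosen sections}: one needs a single family of rational (or, in the \'etale topology, global) sections of the $L_i$ and then of the $M_j|_{D'}$ that is simultaneously in general position for the Deligne pairings on $X'$, on $\widetilde X'$, on $X$, on $\widetilde X$, and for all the successive intersections, and whose pullbacks through $h$, $\pi$, $\widetilde h$, $\pi'$ remain in general position. Organizing the combined hypotheses so that one application of Lemma \ref{lemma:prime-avoidance-families} at each inductive step suffices — rather than an unmanageable nested induction — is the delicate point; but this is precisely the kind of argument already carried out in the proofs of Proposition \ref{prop:elvira} and Corollary \ref{cor:compprojformula}, so no genuinely new idea is needed, only a careful merging of those two arguments. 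Finally, as in the previous corollaries, once the isomorphism of the square is established over Noetherian approximations, compatibility with base change lets it descend to an arbitrary base $S$, and the functoriality and multilinearity statements are inherited from the symbol descriptions.
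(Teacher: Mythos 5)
Your argument is essentially the approach the paper intends: the paper's own proof simply says to combine the symbol descriptions from Propositions \ref{prop:projformuladeli} and \ref{prop:elvira} and leaves the details to the reader, and you have worked out precisely that combination (Noetherian reduction, choose sections of the $L_i$ in simultaneous general position avoiding $X\setminus U$ via Lemma \ref{lemma:prime-avoidance-families}, restrict, and observe that over the resulting finite locus the birational map becomes an isomorphism so both horizontal arrows act as pullback identities on symbols). One small simplification: once $D\subseteq U$, the identification $\pi'^{-1}(D')\simeq D'$ already makes the birational isomorphism trivial on the second round, so the further application of Lemma \ref{lemma:prime-avoidance-families} to the $M_j$-slots is not actually needed — ordinary general position over $D$ suffices.
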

\begin{proof}
This results by combining the description of the isomorphisms in terms of symbols, provided in the proofs of Proposition \ref{prop:projformuladeli} and Proposition \ref{prop:elvira}. The details are left to the reader. 
\end{proof}

\section{Intersection bundles}\label{subsubsec:general-intersection-bundles} 
In this section, following Elkik \cite[\textsection V]{Elkikfib}, we use the Deligne pairings to define intersection bundles, which are functorial counterparts of direct images of higher Chern classes. We establish basic properties of those, in the form of canonical isomorphisms which lift classical identities between Chern classes. Our approach slightly differs from that of Elkik, in that we systematically use the splitting principles proven in our previous work \cite{Eriksson-Freixas-Wentworth} and recalled in \textsection\ref{subsubsec:line-functors}. However, we show that both yield equivalent results. The comparison is based on various compatibility features between the basic properties, which are only mentioned in passing in \cite{Elkikfib}. These will also be key in the development of a functorial framework for the theory of intersection bundles in Section \ref{subsec:int-bundles-line-distributions}.

\subsection{Construction of intersection bundles}\label{subsubsec:construction-intersection-bundles}
We construct intersection bundles for morphisms of schemes $f\colon X\to S$ satisfying the condition $(C_{n})$. While Elkik's original approach restricts to Cohen--Macaulay morphisms over Noetherian bases, the results in \textsection\ref{subsec:Deligne-pairings} allow us to work in greater generality.

\subsubsection{Intersection bundles of Chern power series} 
Recall the category $\CHfrak_{+}(X)$ constructed in \textsection \ref{subsec:universal-chern-categories}. We proceed to associate, to every object in $\CHfrak_{+}(X)$, a line bundle over $S$. Recall that such objects are called positive Chern power series.

Suppose first that we are given vector bundles $E_1, \ldots, E_m$ on $X$, of non-zero constant ranks $r_1, \ldots, r_m$. Let $k_{1},\ldots,k_{m}\geq 0$  be integers with $\sum_{i}k_{i}=n+1$.  We define a Chern-type intersection bundle on $S$, 
\begin{equation}\label{eq:general-Chern-bundle-1}
    \langle \cfrak_{k_{1}}(E_{1})\cdot\ldots \cdot \cfrak_{k_{m}}(E_{m})\rangle_{X/S}\quad\text{or}\quad \langle \cfrak_{k_{1}}(E_{1})\cdot\ldots \cdot \cfrak_{k_{m}}(E_{m})\rangle,
\end{equation}
as follows. We first define a Segre-type intersection bundle. Let $\Pbold=\PBbb(E_{1})\times_{X}\ldots\times_{X}\PBbb(E_{m})$, and notice that the morphism $\Pbold\to S$ satisfies the condition $(C_{d})$, with $d=n-m+\sum r_{i}$, by \cite[\href{https://stacks.math.columbia.edu/tag/0C4P}{0C4P}]{stacks-project}: since $\Pbold\to X$ is projective and $X\to S$ is locally projective, the composition $\Pbold\to S$ is locally projective too. Denote by $\Ocal_i (1)$ the tautological bundle on $\PBbb (E_{i})$, or its pullback to $X'$. Then, we put
\begin{equation}\label{eq:definition-Segre-classes}
    \langle \sfrak_{k_1}(E_1)\cdot \ldots\cdot \sfrak_{k_{m}}(E_m) \rangle_{X/S} =  \langle \Ocal_1 (1)\{r_1+k_1-1\}, \ldots , \Ocal_k (1)\{r_m+k_{m}-1\}  \rangle_{\Pbold/S},
\end{equation}
where $\Ocal_i (1)\{r_i+k_i-1\}$ means $\Ocal_{i}(1)$ repeated $r_{i}+k_{i}-1$ times. Notice that we follow the convention for Segre classes in \cite{Elkikfib}, and the construction above hence differs from that of Fulton \cite{Fulton} by the sign $(-1)^{n+1}$. This has no effect on the Chern-type constructions below, where both approaches produce the same bundles. 

The intersection bundles \eqref{eq:general-Chern-bundle-1} are then constructed inductively, by  defining 
\begin{displaymath}
\langle \cfrak_{k_1}(E_1) \cdot \ldots \cdot \cfrak_0(E_j) \cdot \ldots \cdot \cfrak_{k_m}(E_m) \rangle_{X/S}  = \langle \cfrak_{k_1}(E_1) \cdot \ldots \cdot \sfrak_0(E_j) \cdot \ldots \cdot \cfrak_{k_m}(E_m) \rangle_{X/S} 
\end{displaymath}
and, for $k \geq 1$, 
\begin{displaymath}
\langle \cfrak_{k_1}(E_1) \cdot \ldots \cdot \cfrak_k(E_j) \cdot \ldots \cdot \cfrak_{k_m}(E_m) \rangle_{X/S}  = \bigotimes_{i=1}^{k} \langle \cfrak_{k_1}(E_1) \cdot \ldots \cdot \sfrak_i(E_j) \cdot \cfrak_{k-i}(E_j) \cdot \ldots \cdot \cfrak_{k_m}(E_m) \rangle_{X/S}^{(-1)^{i+1}} . 
\end{displaymath}
Anticipating the relation with Chern categories later in Section \ref{subsec:int-bundles-line-distributions}, these definitions correspond to the formal identifications
\begin{equation}\label{eq:definition-chern-classes}
    \cfrak_0=\sfrak_0 \quad\text{and}\quad \cfrak_k = \sum_{i=1}^{k} (-1)^{i+1}  \sfrak_i  \cfrak_{k-i}\quad\text{if}\quad k\geq 1.
\end{equation}
 The inductive construction of \eqref{eq:general-Chern-bundle-1} requires an order on the indices $(k_{1},\ldots, k_{m})$, but the result is independent of the order up to canonical isomorphism. Unwinding the definitions, this follows by MacLane's coherence theorem for the tensor product of line bundles since the category of line bundles is strictly commutative. It  allows us to identify double duals of line bundles with the line bundles themselves, without any ordering issues. 
 It is clear from the construction that the intersection bundles thus defined satisfy a decomposition property akin to \eqref{eq:decomposition-Deligne-pairing-partition}.

In addition, we introduce a convention in the following limit cases. For ease of presentation, these trivial cases are not treated in detail below. For the 0 monomial, we declare $\langle 0\rangle_{X/S}=\Ocal_{S}$. Also, in the case some of the vector bundles have rank zero, by convention, we remove the corresponding $\cfrak_0$ terms from the monomial and define the monomial to be zero if there are any corresponding terms with $\cfrak_i$ for $i > 0$.

 Suppose now that $E_{1},\ldots, E_{m}$ have possibly non-constant ranks. We generalize the construction of $\langle \cfrak_{k_{1}}(E_{1})\cdot\ldots \cdot \cfrak_{k_{m}}(E_{m})\rangle_{X/S}$ in a way compatible with the decomposition property of the type \eqref{eq:decomposition-Deligne-pairing-partition}.  For simplicity, we assume first that there is a finite open partition $\Ucal=\lbrace X_{i}\rbrace_{i\in I}$ such that the vector bundles have constant ranks on the $X_{i}$. For example, if $S$, hence $X$ is quasi-compact. Using that $f\colon X\to S$ satisfies the condition $(C_{n})$, after possibly refining $\Ucal$, we can find a finite open partition $\lbrace S_{j}\rbrace_{j\in J}$ of $S$ with the following property: the restriction of $f$ to $X_{i}$ induces a morphism $X_{i}\to S_{j(i)}$, also satisfying condition $(C_{n})$. We define $\langle \cfrak_{k_{1}}(E_{1})\cdot\ldots \cdot \cfrak_{k_{m}}(E_{m})\rangle_{X/S}$ as the line bundle on $S$ whose restriction to $S_{j}$ is
\begin{equation}\label{eq:def-intersection-bundle-open-partitions}
    \bigotimes_{i\in I\ \mid\ j(i)=j}\langle \cfrak_{k_{1}}(E_{1})\cdot\ldots \cdot \cfrak_{k_{m}}(E_{m})\rangle_{X_{i}/S_{j}}.
\end{equation}
Notice that the result does not depend on the open partition $\Ucal$, up to canonical isomorphism, and that the construction is local over $S$. This allows us to glue along a general base scheme $S$ since the affine open schemes are quasi-compact and constitute a basis of Zariski open subsets.

The first Chern class of $\langle \cfrak_{k_{1}}(E_{1})\cdot\ldots \cdot \cfrak_{k_{m}}(E_{m})\rangle_{X/S}$ can be computed via Segre-type intersection bundles. It follows from Corollary \ref{cor:deligne-pairing-direct-image} that we have:
\begin{proposition}\label{prop:chernclassofIntbundle}
Let $f\colon X \to S$ be a morphism satisfying the condition $(C_n)$, with $S$ divisorial, and $E_1, \ldots, E_m$  vector bundles on $X$. Then 
    \begin{equation}\label{eq:c1-intersection-bundles}
        c_1 (\langle \cfrak_{k_{1}}(E_{1})\cdot\ldots \cdot \cfrak_{k_{m}}(E_{m})\rangle_{X/S}) = f_\ast (c_{k_{1}}(E_{1})\cdot\ldots \cdot c_{k_{m}}(E_{m}) ).
    \end{equation}
\end{proposition}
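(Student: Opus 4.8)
The strategy is to reduce the identity \eqref{eq:c1-intersection-bundles} to the case of line bundles, where it is precisely the content of Corollary \ref{cor:deligne-pairing-direct-image}, and then to propagate through the Segre-to-Chern inductive formula \eqref{eq:definition-chern-classes}. Since both sides of \eqref{eq:c1-intersection-bundles} are compatible with the decomposition along open partitions (the left-hand side via \eqref{eq:def-intersection-bundle-open-partitions}, the right-hand side because $f_\ast$ respects the decomposition of $X$ into the components of an open partition), and since $S$ divisorial is quasi-compact, we may assume $S$ (hence $X$) is quasi-compact and that all the $E_i$ have constant rank $r_i$. By multilinearity of Deligne pairings and the corresponding multilinearity of $f_\ast(c_{k_1}(E_1)\cdots)$ under the splitting principle, and by the definition of Chern/Segre classes via projective bundles, it suffices to treat the Segre-type intersection bundles \eqref{eq:definition-Segre-classes} and then deduce the Chern case from \eqref{eq:definition-chern-classes}.

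\textbf{Key steps.} First I would unwind \eqref{eq:definition-Segre-classes}: for $\Pbold = \PBbb(E_1)\times_X\cdots\times_X\PBbb(E_m) \xrightarrow{p} S$, satisfying $(C_d)$ with $d = n-m+\sum r_i$, Corollary \ref{cor:deligne-pairing-direct-image} applied to the Deligne pairing of the $d+1$ line bundles $\Ocal_i(1)\{r_i+k_i-1\}$ gives
\begin{displaymath}
    c_1\big(\langle \sfrak_{k_1}(E_1)\cdots\sfrak_{k_m}(E_m)\rangle_{X/S}\big) = p_\ast\Big( \prod_{i=1}^m c_1(\Ocal_i(1))^{\,r_i+k_i-1}\Big).
\end{displaymath}
Second, I would push this forward through the projections $\PBbb(E_i)\to X$ using the projection-bundle computation of Segre classes in the $\gamma$-filtration formalism of \cite[Exposé VI]{SGA6}: for $\pi_i\colon\PBbb(E_i)\to X$ the tautological identity $\sum_{j\ge 0}(-1)^j\, \pi_{i,\ast}\big(c_1(\Ocal_i(1))^{r_i-1+j}\big)\cdot c(E_i) = 1$ in the Chow-type groups identifies $\pi_{i,\ast}(c_1(\Ocal_i(1))^{r_i-1+k_i})$ with (up to the sign convention of Elkik versus Fulton, which is $(-1)^{n+1}$ and cancels in the Chern combination) the Segre class $s_{k_i}(E_i)$. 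Applying $f_\ast$ and using the compatibility of direct images with the grading from \eqref{gradeddirect} in this integral setting (Proposition \ref{prop:directimageK}), one gets $c_1\big(\langle\sfrak_{k_1}(E_1)\cdots\rangle\big) = f_\ast\big(s_{k_1}(E_1)\cdots s_{k_m}(E_m)\big)$ up to a uniform sign. Third, I would feed this into the inductive definition of the Chern-type intersection bundle via \eqref{eq:definition-chern-classes}: taking $c_1$ of the tensor-product-of-powers defining $\langle\cfrak_{k_1}(E_1)\cdots\cfrak_k(E_j)\cdots\rangle$ turns it into the alternating sum $\sum_{i=1}^k(-1)^{i+1}f_\ast(\cdots s_i(E_j)c_{k-i}(E_j)\cdots)$, which by the classical Newton-type identity $c_k = \sum_{i=1}^k(-1)^{i+1}s_i c_{k-i}$ (valid in $\Gr^\ast K_0$) collapses to $f_\ast(c_{k_1}(E_1)\cdots c_k(E_j)\cdots)$, with the Elkik/Fulton sign discrepancy cancelling because it enters every term of the alternating sum the same way. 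An induction on the number of Chern factors $m$ and on the degrees completes the argument.

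\textbf{Main obstacle.} The delicate point is the bookkeeping of signs and gradings: one must be careful that $f_\ast$ on the $\gamma$-filtered pieces is well-defined \emph{integrally} (not just rationally) in the range we use, which is exactly what Proposition \ref{prop:directimageK} provides for $S$ divisorial — this is why that hypothesis appears in the statement. One must also check that the Segre-class pushforward formula $\pi_{i,\ast}(c_1(\Ocal_i(1))^{r_i-1+k})=\pm s_k(E_i)$ holds with the stated sign and that the sign is globally consistent with Elkik's convention, so that it genuinely disappears in the Chern combinations; I expect this to be the part requiring the most care, though it is entirely formal. A secondary technical check is that the gluing of \eqref{eq:def-intersection-bundle-open-partitions} along a general $S$ and the corresponding decomposition of $f_\ast$ are compatible, but this is immediate from the local nature of both constructions and the fact that $c_1$ of a glued line bundle is the glued collection of first Chern classes. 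Since everything beyond the sign bookkeeping is a routine application of Corollary \ref{cor:deligne-pairing-direct-image} and standard $\gamma$-filtration computations from \cite{SGA6}, I would present the proof compactly, citing those references and leaving the elementary symmetric-function manipulations to the reader.
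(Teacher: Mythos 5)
Your proposal is correct and follows essentially the same route as the paper: reduce to constant rank via finite open partitions using quasi-compactness of $S$, reduce the Chern case to the Segre case by observing that the Newton-type recursion \eqref{eq:definition-chern-classes} is implemented identically on both sides of \eqref{eq:c1-intersection-bundles}, and then apply Corollary \ref{cor:deligne-pairing-direct-image} to the Deligne pairing of the tautological bundles over $\Pbold\to S$. Your additional steps — the projective-bundle pushforward $\pi_{i,\ast}\big(c_1(\Ocal_i(1))^{r_i-1+k_i}\big)$ recovering $\pm s_{k_i}(E_i)$, the Elkik/Fulton sign bookkeeping, and the appeal to Proposition \ref{prop:directimageK} for the integral grading — are exactly what the paper's terse invocation of Corollary \ref{cor:deligne-pairing-direct-image} is implicitly relying on, so you have simply unpacked the same argument.
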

\begin{proof}
Since $S$ is quasi-compact by assumption, the scheme $X$ is quasi-compact too. Then, associated with the vector bundles $E_{1},\ldots, E_{m}$, there are finite open partitions $\lbrace X_{i}\rbrace$ and $\lbrace S_{j}\rbrace$ of $X$ and $S$, as in the construction above \eqref{eq:def-intersection-bundle-open-partitions}. For these finite partitions, we have $K_{0}(X)=\prod_{i}K_{0}(X_{i})$ and $\Pic(S)=\prod_{j}\Pic(S_{j})$, and we can hence reduce to the case of vector bundles of constant rank. It is then enough to prove the counterpart of \eqref{eq:c1-intersection-bundles} for the Segre-type intersection bundles \eqref{eq:definition-Segre-classes}. In this case, the result then follows from an application of Corollary \ref{cor:deligne-pairing-direct-image}.
\end{proof}

We gather in a lemma some elementary facts which follow easily from the construction. 

\begin{lemma}\label{lemma:elementary-properties-segre}
Let $E_{1},\ldots, E_{m}$ be vector bundles on $X$, and $k_{1},\ldots, k_{m}\geq 0$ integers. 
\begin{enumerate}
    \item\label{item:elpropsef-0} The construction $\langle \cfrak_{k_{1}}(E_{1})\cdot\ldots \cdot \cfrak_{k_{m}}(E_{m})\rangle_{X/S}$ is functorial. In particular, if for some $i$, $\alpha: E_i \to E_i'$ is an isomorphism of vector bundles, we denote by $[\alpha]$ the induced isomorphism of the intersection bundles.
\end{enumerate}
Furthermore, there are the following functorial constructions:
\begin{enumerate}
     \setcounter{enumi}{1}
    \item\label{item:elpropseg-1} If $m=n+1$ and the $E_{i}=L_{i}$ are line bundles, then there is a canonical isomorphism
    \begin{displaymath}
        \langle \cfrak_{1}(L_1)\cdot \ldots\cdot \cfrak_{1}(L_{n+1}) \rangle_{X/S}\simeq 
        \langle L_{1},\ldots, L_{n+1}\rangle_{X/S}.
    \end{displaymath}
    \item\label{item:elpropseg-2} Assume that $k_{i}=1$ and $\rk E_{i}=1$ for a subset $I\subseteq\lbrace 1,\ldots, m\rbrace$. Then $\langle \cfrak_{k_1}(E_1)\cdot \ldots\cdot \cfrak_{k_{m}}(E_m) \rangle_{X/S}$ is multilinear with respect to the tensor product in the $E_{i}$, for $i\in I$. In particular, if one of the terms equals $\cfrak_1(\Ocal_X)$, the corresponding intersection bundle is canonically trivial. 
    \item\label{item:elpropseg-3} If $k_{i}=0$, then there is a canonical isomorphism
    \begin{displaymath}
        \langle \cfrak_{k_1}(E_1)\cdot \ldots\cdot \cfrak_{k_m}(E_m) \rangle_{X/S}\simeq
        \langle\cfrak_{k_{1}}(E_{1})\cdot\ldots \cdot\cfrak_{k_{i-1}}(E_{i-1})\cdot\cfrak_{k_{i+1}}(E_{i+1})\cdot\ldots\cdot\cfrak_{k_{m}}(E_m)\rangle_{X/S},
    \end{displaymath}
    which is compatible with the multilinearity property of \eqref{item:elpropseg-2}, when it applies. 
\end{enumerate}
\end{lemma}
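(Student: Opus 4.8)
The strategy is to unwind the inductive construction of the Chern-type intersection bundles from \textsection\ref{subsubsec:construction-intersection-bundles} and reduce each assertion, via the decomposition property \eqref{eq:def-intersection-bundle-open-partitions} and the compatibility with open partitions, to the case of vector bundles of constant rank over a base $S$ on which the relevant morphism satisfies $(C_n)$. In that case, everything is expressed in terms of Segre-type intersection bundles \eqref{eq:definition-Segre-classes}, which are themselves Deligne pairings on the iterated projective bundle $\Pbold=\PBbb(E_1)\times_X\cdots\times_X\PBbb(E_m)$, so all the properties follow from the corresponding ones for Deligne pairings in Proposition \ref{prop:generalpropertiesdeligneproducts}, together with Mac Lane's coherence theorem for the strictly commutative Picard category of line bundles (used to handle reorderings and double duals without ambiguity). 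First I would dispose of \eqref{item:elpropsef-0}: functoriality with respect to base change and isomorphisms of $S$-schemes and vector bundles is, per the conventions in \textsection\ref{subsec:conventions-notations}, trivially inherited from the functoriality of the determinant of the cohomology and of projective bundles; an isomorphism $\alpha\colon E_i\to E_i'$ induces an isomorphism $\PBbb(E_i)\to\PBbb(E_i')$ matching tautological bundles, hence an isomorphism of the defining Deligne pairings, and we set $[\alpha]$ to be the resulting isomorphism.

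For \eqref{item:elpropseg-1}, when $m=n+1$ and the $E_i=L_i$ are line bundles, each $\PBbb(L_i)=X$ and $\Ocal_i(1)=L_i$, and the exponent $r_i+k_i-1=1+1-1=1$, so the Segre bundle $\langle\sfrak_1(L_1)\cdots\sfrak_1(L_{n+1})\rangle$ is literally $\langle L_1,\dots,L_{n+1}\rangle_{X/S}$; then the inductive identity \eqref{eq:definition-chern-classes} in degree one gives $\cfrak_1=\sfrak_1$, so $\langle\cfrak_1(L_1)\cdots\cfrak_1(L_{n+1})\rangle\simeq\langle L_1,\dots,L_{n+1}\rangle$ canonically. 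For \eqref{item:elpropseg-2}, if $\rk E_i=1$ then $\PBbb(E_i)=X$ contributes a single factor $\Ocal_i(1)=E_i$ with multiplicity $r_i+k_i-1=1$ in \eqref{eq:definition-Segre-classes}; multilinearity in that slot is then exactly the multilinearity of Deligne pairings (Proposition \ref{prop:generalpropertiesdeligneproducts} \eqref{item:Deligne-pairing-multilinear}), and it propagates through the finite tensor-product/dual combinations of \eqref{eq:definition-chern-classes} since the tensor product of line bundles is biadditive; the case of a term $\cfrak_1(\Ocal_X)$ gives a factor $\langle\Ocal_X,\dots\rangle$, which is canonically trivial because $\Ocal_X$ has a nowhere-vanishing global section whose divisor is empty, trivializing the Deligne pairing via Proposition \ref{prop:generalpropertiesdeligneproducts} \eqref{item:deligne-pairing-restriction} (see Remark \ref{rmk:clarification-restriction}). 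For \eqref{item:elpropseg-3}, the term $\cfrak_0(E_i)$ equals $\sfrak_0(E_i)$ by \eqref{eq:definition-chern-classes}; in \eqref{eq:definition-Segre-classes} this slot contributes $\Ocal_i(1)$ repeated $r_i+0-1=r_i-1$ times on $\PBbb(E_i)$, and one checks that $\langle\Ocal_i(1)\{r_i-1\},\,\dots\rangle_{\Pbold/S}$ is canonically isomorphic to the analogous pairing on $\Pbold'=\prod_{j\neq i}\PBbb(E_j)\times_X X$ omitting the $i$-th factor — this is the relative-dimension-zero push-forward along $\PBbb(E_i)\to X$, i.e. the statement that for a projective bundle $g\colon\PBbb(V)\to X$ of rank $r$, the Deligne pairing of $r-1$ copies of $\Ocal(1)$ (with the remaining entries pulled back) is the norm $N_{\PBbb(V)/X}$ of a product of pulled-back bundles, which is canonically trivial/reduces the dimension appropriately; compatibility with the multilinearity of \eqref{item:elpropseg-2} is immediate since both isomorphisms are built from symbols in Deligne pairings and Proposition \ref{prop:generalpropertiesdeligneproducts} \eqref{item:order-deligne-pairing-isos} guarantees the relevant diagrams commute.

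The main obstacle I anticipate is \eqref{item:elpropseg-3}: precisely identifying the canonical isomorphism $\langle\sfrak_0(E_i)\cdot(\text{rest})\rangle\simeq\langle(\text{rest without }i)\rangle$ and verifying its naturality requires a small computation with Deligne pairings on the relative-dimension-zero projective bundle $\PBbb(E_i)\to X$ — essentially that pairing $r_i-1$ copies of $\Ocal(1)$ against bundles from $X$ recovers, through the norm functor and the transitivity in Proposition \ref{prop:projformuladeli}, the omission of the factor — and one must check this identification is compatible both with the inductive duals in \eqref{eq:definition-chern-classes} and with the gluing along open partitions; all of this is formal but bookkeeping-heavy, so I would state it carefully and then delegate the routine symbol-chasing to the reader, as is done elsewhere in the section. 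The other parts are short and follow directly from the cited properties of Deligne pairings and coherence.
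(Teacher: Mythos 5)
Your proposal is correct and follows essentially the same route as the paper: reduce to Segre-type bundles, use $\PBbb(L)\simeq X$ with $\Ocal(1)=L$ for \eqref{item:elpropseg-1}--\eqref{item:elpropseg-2}, and push forward along $\PBbb(E_i)\to X$ for \eqref{item:elpropseg-3}. One small imprecision: for \eqref{item:elpropseg-3} the relevant tool is not a ``relative-dimension-zero push-forward''/norm but the second case of the projection formula in Proposition \ref{prop:projformuladeli} applied to $\PBbb(E_i)\to X$ (of relative dimension $r_i-1$), the point being that $\kappa=\int_{\PBbb(E_i)/X}c_1(\Ocal_i(1))^{r_i-1}=1$, so the pairing collapses to the one omitting the $i$-th factor raised to the first power.
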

\begin{proof}
It is enough to prove the corresponding statements for Segre-type intersection bundles. By construction, the first property reduces to the analogous fact for Deligne pairings of line bundles, which is known. The next two properties are also a direct consequence of the definition of Segre classes as Deligne pairings \eqref{eq:definition-Segre-classes}, and the fact that if $L$ is a line bundle on $X$, then via the canonical isomorphism $\PBbb(L)\simeq X$ the line bundle $\Ocal(1)$ is identified with $L$. The last property follows from the definition and the second projection formula in Proposition \ref{prop:projformuladeli} since $\Ocal_{i}(1)$ has degree one on the fibers of $\PBbb(E_{i})\to X$. 
\end{proof}

\begin{definition}
Let $P$ be an object of $\CHfrak_{+}(X)$, and consider the decomposition of its component of degree $n+1$ in terms of Chern monomials, as $P^{(n+1)}=\sum_{i=1}^{N}P_{i}^{(n+1)}$. Then, we define
\begin{displaymath}
    \langle P\rangle_{X/S}=\langle P^{(n+1)}\rangle_{X/S}=\bigotimes_{i=1}^{N}\langle P_{i}^{(n+1)}\rangle_{X/S}.
\end{displaymath}
\end{definition}
This construction is trivially multilinear with respect to the addition in $\CHfrak_{+}(X)$, and compatible with the strictly commutativity laws in $\CHfrak_{+}(X)$ and $\Picfr(S)$. In particular, we can safely write expressions such as
\begin{displaymath}
    \langle\ (\sum_{i\in I} P_{i})\cdot (\sum_{j\in J}Q_{j})\ \rangle_{X/S}=\bigotimes_{i,j}\langle P_{i}\cdot Q_{j}\rangle_{X/S},
\end{displaymath}
without caring about the order.

\begin{remark}\label{rmk:taut-proj-form}
Suppose that $E$ is a vector bundle on $X$ of constant rank $r$, and $P$ is a positive Chern power series on $X$. Let $h\colon\PBbb(E)\to X$ be the projection map, and $\Ocal(1)$ be the tautological line bundle on $\PBbb(E)$. It follows from the definition that we tautologically have the following projection formula:
\begin{displaymath}
   \langle P \cdot \sfrak_k(E) \rangle_{X/S} = \langle  h^{\ast}P \cdot \cfrak_1(\Ocal(1))^{r+k-1} \rangle_{\PBbb(E)/S}.
\end{displaymath}
This formula is consistent with the definition of the Segre classes in classical Chow theory, as $s_k(E) = h_\ast (c_1(\Ocal(1))^{r+k-1})$.
\end{remark}

\subsection{Properties of intersection bundles}\label{subsec:properties-of-intersection-bundles} We establish several natural isomorphisms between intersection bundles, which lift classical identities in intersection theory to the level of line bundles. For the statement, we recall that for a vector bundle $E$ on $X$, we denote by $\rk E$ the locally constant function defined by the rank of $E$. 

\begin{proposition} \label{prop:properties-chern-bundles}
Let $f\colon X\to S$ be a morphism satisfying the condition $(C_{n})$, and let $P$, $P'$ denote positive Chern power series. The following constructions are functorial:
\begin{enumerate}

\item\label{item:projection-fromula} (Projection formulas) Let $g\colon X^{\prime}\to S$ and $h \colon X' \to X$ be morphisms satisfying the conditions $(C_{n+n'})$ and $(C_{n'})$, respectively, and $P$, $P'$ have pure degrees with $\deg P + \deg P' = n'+n+1$. Then there are canonical projection formula isomorphisms
        \begin{displaymath}
            \langle h^\ast P \cdot P' \rangle_{X'/S} \simeq 
\begin{cases}
\langle P\cdot \langle P' \rangle_{X'/X} \rangle_{X/S}, & \text{if }\; \deg P^{\prime}=n^{\prime}+1.\\ \\
\langle P \rangle_{X/S}^{\kappa}, &  \text{if }\; \deg P^{\prime} = n^{\prime}. \\
        &\\ 
\Ocal_S, & \text{if }\; \deg P^{\prime} < n^{\prime}. \\
\end{cases}
\end{displaymath}
In the first case, we wrote $\langle P' \rangle_{X'/X}$ instead of $\cfrak_{1}(\langle P' \rangle_{X'/X})$ for simplicity. In the second case, $\kappa$ denotes the relative degree $\int_{X'/X} P'$, and is then assumed to be constant. In the third case, we interpret $\Ocal_S$ as a constant functor. These isomorphisms satisfy the analogs of Corollary \ref{cor:compprojformula}.

    \item \label{whitneyiso}  (Whitney isomorphism) Let $0 \to E^{\prime}\to E \to E^{\bis} \to 0$ be a short exact sequence of vector bundles on $X$. Then there is a canonical isomorphism
\begin{displaymath}
    \langle P \cdot \cfrak_{k}(E)\cdot P'\rangle_{X/S} \to \bigotimes_{i=0}^{k} \langle P \cdot \cfrak_{i}(E^{\prime}) \cdot \cfrak_{k-i}(E^{\bis})\cdot P' \rangle_{X/S}, 
\end{displaymath}
 in a way that is compatible with admissible filtrations .

 \item\label{item:first-chern-determinant} (First Chern class isomorphism) Let $E$ be a vector bundle on $X$. Then, there is a canonical isomorphism
 \begin{displaymath}
    \langle P\cdot \cfrak_{1}(E)\cdot P'\rangle_{X/S} \simeq \langle P\cdot \cfrak_{1}(\det E)\cdot P'\rangle_{X/S},
 \end{displaymath}
 that is compatible with the Whitney isomorphism.
    \item \label{item:Chern-rank}(Rank triviality) Let $E$ be a vector bundle on $X$ and $q$ an integer such that $q > \rk E$. Then the intersection bundle
    \begin{displaymath}
        \langle P \cdot \cfrak_q(E) \cdot P' \rangle_{X/S} 
    \end{displaymath}
    is canonically isomorphic to the constant functor $\Ocal_S$. Moreover, the isomorphism is compatible with the Whitney isomorphism.
    \item\label{item:restriction-isomorphism} (Restriction isomorphism) Let $\sigma$ be a regular section of a vector bundle $E$, of constant rank $r$, whose zero locus $Y$, possibly empty, is flat over $S$. Then there is a canonical isomorphism
    \begin{displaymath}
        \langle P \cdot \cfrak_{r}(E) \cdot P' \rangle_{X/S} \to \langle P|_Y \cdot P'|_Y \rangle_{Y/S},
    \end{displaymath}
    where we adopt the same convention as in Proposition \ref{prop:generalpropertiesdeligneproducts} \eqref{item:deligne-pairing-restriction} (cf. Remark \ref{rmk:clarification-restriction}). 
    \item \label{item:birational-invariance} (Birational invariance) Let $g: \widetilde{X} \to S$ be a morphism satisfying condition $(C_n)$. Assume that there exists a morphism $\pi\colon \widetilde{X} \to X$ and a quasi-compact open immersion $U \to X$, such that $\pi^{-1}(U) \to U$ is an isomorphism and $U$ is fiberwise dense in $X.$
    Then there is a canonical isomorphism 
    \begin{displaymath}
        \langle P \rangle_{X/S}\simeq \langle \pi^\ast P \rangle_{\widetilde{X}/S}.
    \end{displaymath}
    \end{enumerate}
\end{proposition}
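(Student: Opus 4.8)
The plan is to deduce all six isomorphisms from the properties of Deligne pairings established in Section~\ref{section:intersection-bundles}, using the splitting principles of \textsection\ref{subsubsec:line-functors} to reduce systematically to the case where every vector bundle in sight is a direct sum of line bundles. The basic observation is that, for fixed positive Chern power series $P,P'$ of complementary pure degree, the assignment $E\mapsto\langle P\cdot\cfrak_{k}(E)\cdot P'\rangle_{X/S}$ --- together with its pullbacks to all flat base changes and its restrictions to open partitions --- is a line functor for the admissible class generated by $f$; the same is true of $E\mapsto\langle P\cdot\cfrak_{q}(E)\cdot P'\rangle_{X/S}$, and, in two variables, of both sides of the Whitney formula viewed as functors of the pair $(E',E'')$. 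Throughout, functoriality in the sense of compatibility with base change and with isomorphisms of the data is automatic by the conventions of \textsection\ref{subsec:conventions-notations}, and will not be mentioned.

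I would begin with the projection formulas \eqref{item:projection-fromula} and birational invariance \eqref{item:birational-invariance}, which reduce directly to the Deligne-pairing statements without recourse to splitting. For \eqref{item:projection-fromula}, unwinding the definition \eqref{eq:definition-Segre-classes} turns $\langle h^{\ast}P\cdot P'\rangle_{X'/S}$ into a Deligne pairing of pullbacks of tautological line bundles on a fibre product of projective bundles, at which point the three cases follow from the three cases of Proposition~\ref{prop:projformuladeli}, together with its compatibility with multilinearity and symmetry, and the analogue of Corollary~\ref{cor:compprojformula} follows from that corollary after the same unwinding. For \eqref{item:birational-invariance}, I would pass to the fibre product of projective bundles $\Pbold\to X$ attached to a Chern monomial; then $\Pbold\times_{X}\widetilde X\to\Pbold$, together with the open subscheme $\Pbold\times_{X}\pi^{-1}(U)\cong\Pbold\times_{X}U$, verifies the hypotheses of Proposition~\ref{prop:elvira} (quasi-compactness of the open immersion and fibrewise density being preserved under the flat morphism $\Pbold\to X$), so the Segre-type bundles are birationally invariant, hence so are the Chern-type ones by \eqref{eq:definition-chern-classes}; the asserted compatibilities come from Proposition~\ref{prop:elvira}\eqref{item:elvira-1}--\eqref{item:elvira-3}.

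Items \eqref{whitneyiso}, \eqref{item:first-chern-determinant} and \eqref{item:Chern-rank} are where the splitting principles do the work. For the Whitney isomorphism, Proposition~\ref{prop:split} first produces a canonical isomorphism $\langle P\cdot\cfrak_{k}(E)\cdot P'\rangle\simeq\langle P\cdot\cfrak_{k}(E'\oplus E'')\cdot P'\rangle$, natural in the exact sequence and the identity on standard split sequences; it then remains to identify $\langle P\cdot\cfrak_{k}(E'\oplus E'')\cdot P'\rangle$ with $\bigotimes_{i=0}^{k}\langle P\cdot\cfrak_{i}(E')\cdot\cfrak_{k-i}(E'')\cdot P'\rangle$, and since both are line functors in $(E',E'')$ one applies Proposition~\ref{Prop:2ndsplitting} in two variables to reduce to $E'=\bigoplus_{a}L'_{a}$, $E''=\bigoplus_{b}L''_{b}$. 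For such bundles the identity becomes a formal consequence of the multilinearity of Deligne pairings and Mac Lane coherence for the tensor product of line bundles, reflecting, via the relation \eqref{eq:definition-chern-classes} between Chern and Segre symbols, the factorisation $c(E'\oplus E'')=\prod_{a}(1+c_{1}(L'_{a}))\cdot\prod_{b}(1+c_{1}(L''_{b}))$ of total Chern classes; compatibility with admissible filtrations then reduces, after the same steps, to Proposition~\ref{prop:generalpropertiesdeligneproducts}\eqref{item:order-deligne-pairing-isos}. For \eqref{item:first-chern-determinant} and \eqref{item:Chern-rank} I would again reduce to $E=\bigoplus_{m=1}^{r}L_{m}$: at the level of Chern symbols $\cfrak_{1}(E)=\sum_{m}\cfrak_{1}(L_{m})=\cfrak_{1}(\det E)$, using the multilinearity of a rank-one $\cfrak_{1}$-entry (Lemma~\ref{lemma:elementary-properties-segre}\eqref{item:elpropseg-2}), while for $q>r$ the symbol $\cfrak_{q}(E)$ is an empty sum, so the corresponding intersection bundle is canonically $\Ocal_{S}$; the compatibility with Whitney in each case is then checked on line bundles.

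The remaining item \eqref{item:restriction-isomorphism} is the delicate one, and I expect it to be the main obstacle. The difficulty is that the target $\langle P|_{Y}\cdot P'|_{Y}\rangle_{Y/S}$ depends on the regular section $\sigma$, not merely on $E$, so the line-functor formalism does not apply verbatim. My plan is to work locally on $X$, where $E$ trivialises and $\sigma$ is given by a regular sequence $(\sigma_{1},\dots,\sigma_{r})$ cutting out relative effective Cartier divisors $D_{m}=\{\sigma_{m}=0\}$ with $Y=D_{1}\cap\dots\cap D_{r}$; using $L_{m}:=\Ocal(D_{m})\cong\Ocal_{X}$ via $\sigma_{m}$, the Whitney isomorphism \eqref{whitneyiso} gives $\langle P\cdot\cfrak_{r}(E)\cdot P'\rangle\simeq\langle P\cdot\cfrak_{1}(L_{1})\cdot\ldots\cdot\cfrak_{1}(L_{r})\cdot P'\rangle$, and iterating the Deligne-pairing restriction isomorphism (Proposition~\ref{prop:generalpropertiesdeligneproducts}\eqref{item:deligne-pairing-restriction}) along the chain $D_{1}\supseteq D_{1}\cap D_{2}\supseteq\dots\supseteq Y$ produces $\langle P|_{Y}\cdot P'|_{Y}\rangle_{Y/S}$, independently of the ordering by Proposition~\ref{prop:generalpropertiesdeligneproducts}\eqref{item:order-deligne-pairing-isos}. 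The genuinely technical point is then to show that this local isomorphism does not depend on the chosen trivialisation of $E$: it suffices to check invariance under the generators of $\GL_{r}$, which reduces, via the defining relations \eqref{eq:relationElkikDucrot} of the Deligne pairings together with the ordering independence, to the fact that an elementary change of basis alters each $\sigma_{i}$ by a multiple of another $\sigma_{j}$ and hence does not change its restriction to the divisor of $\sigma_{j}$; the local isomorphisms then glue to a global one over $X$. The compatibility of the resulting isomorphism with the other operations is a further check of the same nature, carried out on symbols, and the empty case $Y=\emptyset$ follows from \eqref{whitneyiso} and \eqref{item:Chern-rank} applied to the exact sequence $0\to\Ocal_{X}\to E\to E/\Ocal_{X}\to 0$ determined by $\sigma$.
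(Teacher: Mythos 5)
Your treatment of the projection formulas, birational invariance, and (granting Whitney) the first Chern class isomorphism follows the paper's route. But there are two genuine gaps.

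First, the Whitney isomorphism is not a ``formal consequence of multilinearity and Mac Lane coherence'' in the split case, and the reduction to sums of line bundles is circular without a base case: decomposing $\cfrak_{k}(\bigoplus_a L_a)$ into monomials in the $\cfrak_{1}(L_a)$ \emph{is} the Whitney isomorphism, so Proposition~\ref{Prop:2ndsplitting} only reduces you to extensions $0\to L\to E\to E^{\bis}\to 0$ with $L$ a line bundle, where you still must produce the isomorphism. There the two sides are Deligne pairings over \emph{different} spaces ($\PBbb(E)$ versus $\PBbb(E^{\bis})$ and $X$), which multilinearity cannot relate; the missing geometric input is that $\Ocal(1)\otimes\pi^{\ast}L^{\vee}$ on $\PBbb(E)$ carries a canonical regular section whose divisor is $\PBbb(E^{\bis})$, so that rewriting $\cfrak_{1}(\Ocal(1))^{r+k-1}$ via $\Ocal(1)=(\Ocal(1)\otimes\pi^{\ast}L^{\vee})\otimes\pi^{\ast}L$ and restricting the Deligne pairing along $\PBbb(E^{\bis})$ yields the formula (this is \eqref{eq:SegreWhitney} in the paper). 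The same issue infects your rank triviality: $\cfrak_{q}(E)$ is a generator of the Chern category, not an ``empty sum,'' and after the Whitney expansion of a sum of line bundles you are left with terms $\cfrak_{i}(L)$, $i\geq 2$, whose triviality requires the Segre recurrence $\sfrak_{q}(L)=\sfrak_{1}(L)^{q}$ fed into \eqref{eq:definition-chern-classes}, as in \eqref{eq:Segre-manipulation}.

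Second, your plan for the restriction isomorphism would fail at the first step: the intersection bundle $\langle P\cdot\cfrak_{r}(E)\cdot P'\rangle_{X/S}$ is a line bundle on $S$ whose construction is global in $X$ (only its generators are local on $S$), so one cannot ``work locally on $X$ where $E$ trivialises'' and glue — an open subset of $X$ is not proper over $S$ and carries no Deligne pairing. Moreover, even where $E$ is trivial, the hypotheses bear only on the full zero locus $Y$; the individual components $\sigma_{m}$ of a trivialisation need not define relative effective Cartier divisors flat over $S$, nor the partial intersections, so the iterated restriction is not available. The paper avoids both problems by pulling back to $\PBbb(E)$, where $\pi^{\ast}\sigma$ canonically induces a regular section of $\Ocal(1)$ whose divisor $D$ is a relative Cartier divisor; one restricts to $D$, uses the tautological sequence $0\to Q\to\pi^{\ast}E\to\Ocal(1)\to 0$ together with Whitney and rank triviality, and inducts on $r$ via the induced section of $Q|_{D}$ (see \eqref{eq:definitionrestrictionisomorphism}). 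Your $\GL_{r}$-invariance check is thereby rendered unnecessary, which is fortunate since it is not clear it reduces to the relations \eqref{eq:relationElkikDucrot}.
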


\begin{proof}

The projection formula formally follows from the construction of Segre and Chern classes and Proposition \ref{prop:projformuladeli}, as in 
\cite[\textsection V.4.2]{Elkikfib}. Notice that in \emph{loc. cit.}, $X$ is only assumed to be connected in the general case, for simplicity. The analog of Corollary \ref{cor:compprojformula} similarly follows.

The Whitney isomorphism is constructed similarly as in \cite[\textsection V.4.8]{Elkikfib}. Before recalling it, we remark it is only defined therein whenever the vector bundles have positive ranks. In the case $\alpha: E^{\prime} \to E$ is an isomorphism, so that $E^{\bis}=0$, we define the Whitney isomorphism as $[\alpha]^{-1}$, where $[\alpha]$ is the isomorphism of intersection bundles induced by $\alpha$ as in Lemma \ref{lemma:elementary-properties-segre} \eqref{item:elpropsef-0}. Likewise, if $\alpha: E\to E^{\bis}$ is an isomorphism and $E^{\prime}=0$, we define the Whitney isomorphism as $[\alpha]$.

The construction in the general case reduces to a Whitney-type isomorphism for Segre classes. By the splitting principle in Proposition \ref{prop:split} and Proposition \ref{Prop:2ndsplitting}, it is enough to describe this in the case when $E^{\prime}=L$ is a line bundle.  Let $r$ be the rank of $E$. We may suppose that $r\geq 2$. Denote by $\pi\colon\PBbb(E)\to X$ the projection map, and $\Ocal(1)$ the tautological bundle on $\PBbb(E)$. We can write, successively replacing a single $\Ocal(1)$ by $\Ocal(1) \otimes \pi^{\ast} L^{\vee} \otimes \pi^\ast L $ and iterating the process $k$ times :
\begin{equation}\label{eq:SegreWhitney}
\begin{split}
    \langle P \cdot \sfrak_k(E) \cdot P' \rangle   =\, & \langle \pi^{\ast}P\cdot \cfrak_1(\Ocal(1))^{r+k-1} \cdot \pi^{\ast}P'  \rangle  \\
     =\, & \langle \pi^{\ast}P\cdot \cfrak_1(\Ocal(1) \otimes \pi^{\ast}L^\vee) \cdot \cfrak_1(\Ocal(1))^{r+k-2}\cdot \pi^{\ast}P'  \rangle \otimes \langle \pi^{\ast}P\cdot \cfrak_1(\pi^{\ast}L)\cdot\cfrak_1(\Ocal(1))^{r+k-2} \cdot \pi^{\ast}P'  \rangle \\
    =\, &\ldots\\
    =\, & \bigotimes_{i=0}^{k-1} \langle \pi^{\ast}P \cdot \cfrak_1(\Ocal(1) \otimes \pi^{\ast} L^\vee) \cdot \cfrak_1(\Ocal(1))^{r+k-2-i} \cdot \cfrak_1(\pi^\ast L)^{i}  \cdot \pi^{\ast}P'\rangle \\
    &\otimes \langle \pi^{\ast}P  \cdot \cfrak_1(\pi^\ast L)^{k} \cdot \cfrak_1(\Ocal(1))^{r-1} \cdot \pi^{\ast}P'\rangle\\
\end{split}
\end{equation}
By Remark \ref{rmk:taut-proj-form}, the last term is identified with $\langle P \cdot \sfrak_k(L) \cdot \sfrak_0(E^{\bis}) \cdot P \rangle.$ Also, the bundle $\Ocal(1) \otimes \pi^\ast L^\vee$ admits a natural section $\sigma$, obtained from combining  $L \to E$ and $\pi^\ast E \to \Ocal(1)$. Its zero scheme is the relative Cartier divisor $\PBbb(E^{\bis})$ of $\PBbb(E).$ Restricting the bundles along this section and applying Proposition \ref{prop:generalpropertiesdeligneproducts} \eqref{item:deligne-pairing-restriction}, provides the sought Whitney-type isomorphism, again by Remark \ref{rmk:taut-proj-form}. The compatibility with admissible filtrations is formal, as in \cite[\textsection V.2.3]{Elkikfib}.

The first Chern class isomorphism can be constructed by induction on the rank and the splitting principle. For rank 0, we impose that the isomorphism is the one respecting the trivializations of both sides, see Lemma \ref{lemma:elementary-properties-segre} \eqref{item:elpropseg-2}. The isomorphism in general is then the one that makes the following diagram commute, associated with an exact sequence $0 \to L \to E \to E^{\bis} \to 0$ with $L$ a line bundle:
\begin{equation}\label{eq:firstChernclassisodef}
    \xymatrix{\langle P\cdot \cfrak_1(E)\cdot P'\rangle \ar[r] \ar[d] & \langle P\cdot\cfrak_1(\det E)\cdot P'\rangle \ar[d] \\ 
    \langle P \cdot \cfrak_1(L) \cdot P'\rangle \otimes \langle P \cdot \cfrak_1(E^{\bis}) \cdot P' \rangle \ar[r] & \langle P \cdot \cfrak_1(L ) \cdot P' \rangle \otimes \langle P \cdot \cfrak_1(\det E^{\bis} ) \cdot P' \rangle .
    }
\end{equation}
Here the left vertical isomorphism is the Whitney isomorphism together with Lemma \ref{lemma:elementary-properties-segre} \eqref{item:elpropseg-3}, the right vertical one follows from $\det E\simeq L\otimes\det E^{\bis}$ and the  multilinearity for $\cfrak_{1}$ (cf. Lemma \ref{lemma:elementary-properties-segre} \eqref{item:elpropseg-2}), and the lower horizontal one is constructed by induction. The compatibility with the Whitney isomorphism is automatic, by the splitting principle and the construction. 

The rank triviality is addressed as follows. By the construction of intersection bundles, we can suppose that $E$ has constant non-zero rank. We begin with the case when $E$ is a line bundle $L$. Observe that in terms of intersection bundles since $\Ocal(1)$ identifies with $L$ under the isomorphism  $\PBbb(L)\simeq X$, we have 
\begin{displaymath}
    \langle P \cdot \sfrak_q(L) \cdot P'  \rangle = \langle P \cdot \sfrak_1(L)^q \cdot P'  \rangle.
\end{displaymath}
From the defining recurrence \eqref{eq:definition-chern-classes}, we deduce that the rank triviality isomorphism is tautological for $q=2$. In general, proceeding by induction on $q$, we find
\begin{equation}\label{eq:Segre-manipulation}
    \langle P \cdot \cfrak_{q}(L) \cdot P' \rangle  \simeq \langle P \cdot \sfrak_{q}(L) \cdot P' \rangle^{(-1)^{q+1}} \otimes \langle P \cdot \sfrak_{q-1}(L) \cdot \sfrak_1(L) \cdot P' \rangle^{(-1)^{q}} \simeq \Ocal_S.
\end{equation}
This identifies the leftmost bundle with the trivial line functor $\Ocal_S$. For the general case, one argues by applying the splitting principle in Proposition \ref{Prop:2ndsplitting} and the Whitney isomorphism above. This reduces us to the case of a product of Chern classes of line bundles with one or more indices strictly bigger than one. We can use the above isomorphism \eqref{eq:Segre-manipulation} for any of those Chern classes. It is formal to verify that the resulting trivialization does not depend on the order, by an argument similar to that after \eqref{eq:definition-chern-classes}. We notice that, in particular, the compatibility with the Whitney isomorphism is part of the construction of the rank triviality isomorphism. 

The restriction isomorphism can be constructed as in  \cite[\textsection V.4.9]{Elkikfib}.  Set $T$ to be the image of $Y$ in $S$. It is both open and closed since $Y\to S$ is proper, flat, and of finite presentation. Since the construction of intersection bundles is local with respect to $S$, and by the convention as in Remark \ref{rmk:clarification-restriction}, we can suppose that either $S = T$ or that $Y$ is empty. 

We first suppose that $S=T$. We observe that, since $\sigma$ is a regular section whose zero locus is faithfully flat over $S$, then $Y$ automatically satisfies the condition $(C_{n-r})$ over $S$. Indeed, to verify that the morphism $Y \to S$ has constant pure relative dimension $n-r$, one can argue locally on $X$, in which case $Y \subseteq X$ is obtained by successively intersecting with relative Cartier divisors which are of pure relative dimension $n-1$, as in the remarks following Proposition \ref{prop:generalpropertiesdeligneproducts}. 

We follow \cite[\textsection V.4.10]{Elkikfib}. The case $r=1$ relies on \eqref{item:deligne-pairing-restriction} in Proposition \ref{prop:generalpropertiesdeligneproducts}. For general $r$, we proceed by induction on the rank. Introduce $\pi\colon\PBbb(E)\to X$, and the universal exact sequence \eqref{eq:tautological-exact-sequence}.

The section $\pi^{\ast}\sigma$ of $\pi^{\ast}E$ induces a regular section of $\Ocal(1)$ on $\PBbb(E)$. We denote by $D\subseteq\PBbb(E)$ its divisor. It is a relative Cartier divisor. The restriction of $\pi^{\ast}\sigma$ on $D$ defines a regular section of $Q|_{D}$. Its zero locus scheme $Z$ equals the restriction of the projective bundle to $Y$, that is $Z=\PBbb(E|_Y)$.  Then there is a string of isomorphisms:
\begin{equation}\label{eq:definitionrestrictionisomorphism}
    \begin{split}
        \langle P \cdot \cfrak_r(E) \cdot P' \rangle_{X/S} & \simeq \langle \pi^\ast P \cdot \cfrak_r(\pi^\ast E) \cdot \pi^\ast P' \cdot \cfrak_{1}(\Ocal(1))^{r-1} \rangle_{\PBbb(E)/S} \\
        & \simeq \langle \pi^\ast P \cdot \cfrak_{r-1}(Q)\cdot \cfrak_{1}(\Ocal(1)) \cdot \pi^\ast P' \cdot \cfrak_{1}(\Ocal(1))^{r-1} \rangle_{\PBbb(E)/S} \\
        & \simeq \langle \pi^{\ast}P|_{D} \cdot \cfrak_{r-1}(Q|_D) \cdot \pi^\ast (P'|_D )\cdot \cfrak_{1}(\Ocal(1))^{r-1} \rangle_{D/S} \\
        & \simeq \langle \pi^{\ast}P|_{Z} \cdot \pi^\ast (P'|_Z )\cdot \cfrak_{1}(\Ocal(1))^{r-1} \rangle_{Z/S} \\
        & \simeq \langle P|_{Y} \cdot P'|_Y \rangle_{Y/S}.
    \end{split}
\end{equation}
The first isomorphism in \eqref{eq:definitionrestrictionisomorphism} is the second case of the projection formula. The second one is the Whitney isomorphism associated with \eqref{eq:tautological-exact-sequence} combined with the rank triviality. The third one uses the case $r=1$, and the fourth isomorphism is constructed by induction. The final one performs the projection formula again.

The argument for the case $Y=\emptyset$ goes along the same lines. The case of rank one again reduces to Proposition \ref{prop:generalpropertiesdeligneproducts}. See Remark \ref{rmk:clarification-restriction}. For the higher rank case, the same manipulation as in \eqref{eq:definitionrestrictionisomorphism} applies, but here instead the scheme $Z$ will be empty, allowing us to apply induction again. 

Finally, we address the birational invariance. We can suppose $P$ is a single monomial in the Segre classes of some vector bundles. If $p: \mathbf{P} \to X$ denotes the projective bundle over $X$ that appears in the definition of the Segre classes in $P$, then $\mathbf{P} \times_X \widetilde{X} \to \widetilde{X}$ is the corresponding projective bundle in the definition of the Segre classes of $\pi^\ast P$. The open subset $p^{-1}(U)$ satisfies the assumptions of Proposition \ref{prop:elvira} and the property easily follows from this. 
\end{proof}
\begin{remark}\label{rem:restriction-iso-chern-class-degree-1}
In the proof of Proposition \ref{prop:properties-chern-bundles} \eqref{item:restriction-isomorphism}, we could introduce a term $\cfrak_{1}(\Ocal(1))^{r-1}$ by means of the projection formula, for instance in the first line of \eqref{eq:definitionrestrictionisomorphism}. More generally, we could have chosen any other Chern class of degree 1 on fibers, with the same result. See \cite[\textsection V.4.10]{Elkikfib}. This can be deduced from the very construction of the second projection formula for Deligne pairings, see in particular \eqref{eq:explicit-second-projection-formula}.
\end{remark}

\subsection{Compatibility}\label{subsec:compatibility}
In this subsection, we address the compatibility between various properties of the intersection bundles.

\subsubsection{The basic compatibilities}\label{subsubsec:basic-compatibilites}
The operations in Proposition \ref{prop:properties-chern-bundles} can be combined, and the question arises whether these can be effected in any order with the same result. We show that this is indeed the case. For this, we begin by stating some simple compatibilities, whose proof is a direct application of the definitions and is omitted.

\begin{proposition}\label{prop:stupid-compatibilities}
Let the notation and terminology be as in Proposition \ref{prop:properties-chern-bundles}. The following statements hold:
\begin{enumerate}
     \item If $P$ or $P^{\prime}$ contain a term of the form $\cfrak_{1}(L)$ for a line bundle $L$, then all the isomorphisms are compatible with the tensor product bilinearity in $L$ (cf. Lemma \ref{lemma:elementary-properties-segre} \eqref{item:elpropseg-2}).
    \item Performed on different variables, the Whitney isomorphism \eqref{whitneyiso}, first Chern class isomorphism \eqref{item:first-chern-determinant}, and rank triviality \eqref{item:Chern-rank}, commute with each other. Here, it is allowed to repeat the same type of operation on several variables. 
    \item\label{item:birational-comp} The birational invariance isomorphism \eqref{item:birational-invariance} is compatible with all the other properties. In the case of the restriction isomorphism \eqref{item:restriction-isomorphism}, it is necessary to assume that $Y \cap U$ is fiberwise dense in $Y$, and $\pi^{\ast}(\sigma)$ is a regular section whose zero-locus is flat over $S$.      
\end{enumerate}
\end{proposition}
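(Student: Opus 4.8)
The statement collects three "obvious" compatibilities, and in each case the strategy is the same: unwind both sides to the level of Deligne pairings of line bundles and invoke the coherence already built into Proposition \ref{prop:generalpropertiesdeligneproducts} \eqref{item:order-deligne-pairing-isos}, Proposition \ref{prop:projformuladeli}, and Proposition \ref{prop:elvira}, together with the splitting principles of \textsection\ref{subsubsec:line-functors}. The point throughout is that every isomorphism in Proposition \ref{prop:properties-chern-bundles} was constructed, via the reductions in its proof, as a composite of symmetry, multilinearity, and restriction isomorphisms of Deligne pairings; once we are in that setting, the compatibilities reduce to the fact that iterated applications of those operations are order-independent, which is exactly Proposition \ref{prop:generalpropertiesdeligneproducts} \eqref{item:order-deligne-pairing-isos}. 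All the assertions are local on $S$ and compatible with base change, so by Noetherian approximation we may assume throughout that all schemes are Noetherian and $S$ affine, and we may freely pass to open partitions so that the vector bundles involved have constant rank.

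For the first item, the bilinearity of a Chern-type intersection bundle in a factor $\cfrak_1(L)$ comes, by definition \eqref{eq:definition-Segre-classes} and Lemma \ref{lemma:elementary-properties-segre} \eqref{item:elpropseg-2}, from the multilinearity of the underlying Deligne pairing in the entry occupied by $L$ (identified with $\Ocal(1)$ on $\PBbb(L)\simeq X$). Since each of the Whitney, first Chern class, rank triviality, and restriction isomorphisms is assembled from symmetry and restriction isomorphisms of Deligne pairings in the \emph{other} entries, and Proposition \ref{prop:generalpropertiesdeligneproducts} \eqref{item:Deligne-pairing-multilinear} states that Deligne pairings are multilinear in each entry separately and compatibly with permutations, the compatibility is immediate; one checks it on generators $\langle\ldots\rangle$ exactly as in the remark following \textsection\ref{subsubsec:generators-relations}. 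For the second item, one first treats the case of line bundles using the splitting principles in Proposition \ref{prop:split} and Proposition \ref{Prop:2ndsplitting}: when $E=\bigoplus L_i$ and $E'=\bigoplus M_j$ lie in two distinct variables, the Whitney isomorphism in the first and the first Chern class (or rank triviality) isomorphism in the second are each realized, after the explicit manipulations \eqref{eq:SegreWhitney}, \eqref{eq:firstChernclassisodef}, \eqref{eq:Segre-manipulation}, by restriction along relative Cartier divisors living in disjoint projective bundle directions $\PBbb(E)\times_X\PBbb(E')$; the independence of order of such restrictions is Proposition \ref{prop:generalpropertiesdeligneproducts} \eqref{item:order-deligne-pairing-isos}. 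The general vector bundle case then follows by the splitting principle Proposition \ref{Prop:2ndsplitting} applied to the line functors $E\mapsto\langle P\cdot\cfrak_k(E)\cdot\cfrak_l(E')\cdot P''\rangle$ in each of the two variables separately, as in Corollary \ref{Cor:2ndsplitting}; one allows the same operation on several variables simply by iterating.

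The third item, the compatibility of birational invariance \eqref{item:birational-invariance} with everything else, is the one requiring care and will be the main obstacle. The difficulty is that the other isomorphisms involve auxiliary projective bundles $\Pbold=\PBbb(E_1)\times_X\cdots\times_X\PBbb(E_m)$ and restrictions along sections of tautological bundles, and one must check that forming such bundles and sections over $X$ versus over $\widetilde X$ is compatible with the birational isomorphism \eqref{eq:iso-elvira}. The key observation is that if $p\colon\Pbold\to X$ is the projective bundle appearing in the definition of the Segre classes in $P$, then $p^{-1}(U)\to p^{-1}(U)$ (the fibre product $\Pbold\times_X\widetilde X$ over $\pi^{-1}(U)$) is again an isomorphism, $p^{-1}(U)$ is fibrewise dense in $\Pbold$ because projective bundle fibres are irreducible over the dense locus, and the relative Cartier divisors cut out by tautological sections pull back to relative Cartier divisors on $\Pbold\times_X\widetilde X$ whose intersection with $p^{-1}(U)$ remains fibrewise dense. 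Hence Proposition \ref{prop:elvira}, including its compatibility assertion \eqref{item:elvira-3} with restriction along divisors, applies to $\Pbold\times_X\widetilde X\to\Pbold$, and the explicit descriptions in terms of symbols — \eqref{eq:iso-elvira-sections} for \eqref{eq:iso-elvira} and the symbol-level formulas in the proofs of Proposition \ref{prop:projformuladeli} and Proposition \ref{prop:properties-chern-bundles} — can be matched directly. For the restriction isomorphism \eqref{item:restriction-isomorphism} the extra hypotheses are exactly what is needed so that $\pi^{\ast}\sigma$ remains a regular section, its zero locus $\widetilde Y=\pi^{-1}(Y)$ (possibly after enlarging to $Y\times_X\widetilde X$) is flat over $S$ and $\widetilde Y\cap U$ is fibrewise dense in $\widetilde Y$, so that Proposition \ref{prop:elvira} applies simultaneously to $\widetilde X\to X$ and $\widetilde Y\to Y$; one then invokes the compatibility of \eqref{eq:iso-elvira} with restriction along divisors established in Proposition \ref{prop:elvira} \eqref{item:elvira-3}, iterated along the chain of divisors appearing in \eqref{eq:definitionrestrictionisomorphism}. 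In each case the verification is a diagram chase on symbols which, given the preparatory reductions, is routine, and the details are left to the reader.
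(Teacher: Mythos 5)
The paper omits this proof entirely, stating only that it is ``a direct application of the definitions,'' and your proposal correctly fills in that application using exactly the tools the paper relies on elsewhere: reduction to symbols and to the order-independence of Deligne-pairing operations (Proposition \ref{prop:generalpropertiesdeligneproducts} \eqref{item:order-deligne-pairing-isos}), the splitting principles, and, for birational invariance, the observation that the auxiliary projective bundles and the open set $p^{-1}(U)$ again satisfy the hypotheses of Proposition \ref{prop:elvira}. Your argument is correct and is essentially the intended one, just written out in detail.
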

\qed

\begin{proposition}\label{prop:further-compatibilites}
Let the notation and terminology be as in Proposition \ref{prop:properties-chern-bundles}. Let $E, F, F'$ be vector bundles on $X$, and let $P$, $P'$ denote positive Chern power series. We assume: 
\begin{itemize}
    \item that $E$ fits into an exact sequence $0\to E^{\prime}\to E\to E^{\bis}\to 0$.
    \item that $m$ is an integer such that $m > \rk F$. 
    \item that $F'$ is of constant rank $r$ and admits a regular section $\sigma$, whose zero-locus $Y$ is flat over $S$. 
\end{itemize}
The following statements hold: 
\begin{enumerate}
    \item\label{item:further-compatibilites-4} The rank triviality is compatible with the projection formula: if $P$ has pure degree $n-m$, then the diagram
        \begin{displaymath}
             \xymatrix{
                    \langle h^\ast (P \cdot \cfrak_m(F) ) \cdot  P' \rangle_{X^{\prime}/S}  \ar[r] \ar[d] & \Ocal_S \ar[d]^{\id} \\
                    \langle P \cdot \cfrak_m(F) \cdot  \langle P' \rangle_{X^{\prime}/X} \rangle_{X/S} \ar[r] & \Ocal_S
            }
        \end{displaymath}
        commutes, and similarly for the cases when $P$ has pure degree $\geq n-m+1$.  The analogous statement holds with respect to $P^{\prime}$. \bigskip

    \item\label{item:further-compatibilites-1} The Whitney isomorphism is compatible with the restriction isomorphism: the diagram 
    \begin{displaymath}
        \xymatrix{
            \langle P \cdot \cfrak_{r}(F^{\prime}) \cdot \cfrak_k(E) \cdot P' \rangle_{X/S} \ar[r] \ar[d] &  \langle P \cdot   \cfrak_k(E|_{Y}) \cdot P' \rangle_{Y/S} \ar[d] \\
            \bigotimes_{i=0}^k \langle P \cdot \cfrak_{r}(F^{\prime}) \cdot \cfrak_i(E^{\prime}) \cdot \cfrak_{k-i}(E^{\bis})  \cdot P' \rangle_{X/S} \ar[r] & \bigotimes_{i=0}^k \langle P  \cdot \cfrak_i(E^{\prime}|_{Y}) \cdot \cfrak_{k-i}(E^{\bis}|_{Y})  \cdot P' \rangle_{Y/S}
        }
    \end{displaymath}
    commutes.\bigskip
    \item\label{item:further-compatibilites-2} The Whitney isomorphism is compatible with the projection formula: if $P$ has pure degree $n-k$,  then the diagram
    \begin{displaymath}
        \xymatrix{
            \langle h^\ast (P \cdot \cfrak_k(E))  \cdot P'  \rangle_{X^{\prime}/S} \ar[r]\ar[d] & \langle  P \cdot \cfrak_k(E)  \cdot \langle P' \rangle \rangle_{X/S} \ar[d] \\
            \bigotimes_{i=0}^k \langle  h^{\ast}(P \cdot \cfrak_i(E^{\prime}) \cdot \cfrak_{{k-i}}(E^{\bis}))  \cdot  P'  \rangle_{X^{\prime}/S} \ar[r] & \bigotimes_{i=0}^k \langle  P \cdot \cfrak_i(E^{\prime}) \cdot \cfrak_{k-i}(E^{\bis})  \cdot \langle P' \rangle \rangle_{X/S}
        }
    \end{displaymath}
    commutes, and similarly for the cases when $P$ has pure degree $\geq n-k+1$. The analogous statement holds with respect $P^{\prime}$.\bigskip
    \item\label{item:further-compatibilites-3} The projection formula is compatible with the restriction isomorphism: if $P$ has pure degree $n-r$, then the diagram
    \begin{displaymath}
        \xymatrix{
            \langle h^\ast(P \cdot  \cfrak_r(F^{\prime})) \cdot P' \rangle_{X'/S} \ar[r] \ar[d] &  \langle h^\ast (P|_{Y}) \cdot P'|_{Y'} \rangle_{Y'/S} \ar[d] \\
            \langle P \cdot \cfrak_{r}(F^{\prime}) \cdot \langle  P'\rangle \rangle_{X/S} \ar[r] &  \langle P|_{Y}  \cdot \langle P'|_{Y'} \rangle \rangle_{Y/S}
        }
    \end{displaymath}
    commutes, and similarly when $P$ has pure degree $\geq n-r+1$. Here, $Y^{\prime}$ denotes $h^{-1}(Y)$ as a scheme. The analogous statement holds with respect to $P^{\prime}$, supposing instead that $F^{\prime}$ is a vector bundle on $X^{\prime}$ and the zero locus of the section is flat, possibly empty, 
    over $X$.\bigskip

    \item\label{item:further-compatibilites-5} The rank triviality is compatible with the restriction isomorphism: the diagram
            \begin{displaymath}
             \xymatrix{
                    \langle P \cdot \cfrak_r(F^{\prime}) \cdot \cfrak_m(F)  \cdot  P' \rangle_{X/S}  \ar[r] \ar[d] & \Ocal_S \ar[d]^{\id} \\
                    \langle P|_Y \cdot \cfrak_m(F|_Y) \cdot  {P'}|_Y\rangle_{Y/S} \ar[r] & \Ocal_S
            }
        \end{displaymath}
        commutes.\bigskip
\end{enumerate}
\end{proposition}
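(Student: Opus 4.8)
The statement asserts five compatibility squares, continuing the pattern of Proposition \ref{prop:further-compatibilites} and in the spirit of the verifications that complete Section \ref{subsec:compatibility}. In every case the plan is the same: express all the maps involved in terms of the explicit descriptions of the underlying isomorphisms of Deligne pairings — in particular their action on symbols $\langle \ell_0,\ldots,\ell_n\rangle$ as recorded in Proposition \ref{prop:generalpropertiesdeligneproducts}, Proposition \ref{prop:projformuladeli}, and Proposition \ref{prop:elvira}, together with the Segre/Chern inductive definitions \eqref{eq:definition-chern-classes} and the formulas \eqref{eq:SegreWhitney}, \eqref{eq:firstChernclassisodef}, \eqref{eq:Segre-manipulation}, \eqref{eq:definitionrestrictionisomorphism} — and then check that both ways around the square send a symbol to the same symbol. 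Throughout we argue Zariski-locally on $S$ and, by Noetherian approximation, may assume all schemes Noetherian; by the splitting principles (Proposition \ref{prop:split}, Proposition \ref{Prop:2ndsplitting}, Corollary \ref{Cor:2ndsplitting}, Corollary \ref{cor:splittingdistribution}) it suffices to treat the case where every vector bundle appearing in a Whitney or restriction isomorphism is a line bundle or a sum of line bundles with a prescribed flag; for the projection-formula squares we further reduce, by the restriction-along-divisors compatibility of Proposition \ref{prop:projformuladeli}, to the case where $X\to S$ (and $X'\to X$ where relevant) is finite, so that Deligne pairings become iterated norm functors and the transitivity of the norm \cite[Lemme 21.5.7.2]{EGAIV4} does the work.

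\textbf{Order of the steps.} I would start with \eqref{item:further-compatibilites-4}, the compatibility of rank triviality with the projection formula, since both target the constant functor $\Ocal_S$: the only content is that the rank-triviality trivialization of $\langle h^\ast(P\cdot\cfrak_m(F))\cdot P'\rangle$ agrees with the composite of the projection-formula isomorphism and the rank-triviality trivialization of $\langle P\cdot\cfrak_m(F)\cdot\langle P'\rangle\rangle$; using \eqref{eq:Segre-manipulation} and the fact that, after a flag is chosen, $\cfrak_m(F)$ is built from $\cfrak_1$-terms of line bundles via Whitney, this reduces to Proposition \ref{prop:further-compatibilites} \eqref{item:further-compatibilites-2} together with the bilinearity compatibility of Proposition \ref{prop:stupid-compatibilities} — essentially a bookkeeping argument. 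Next I would do \eqref{item:further-compatibilites-5}, rank triviality versus restriction, which is analogous: both compose to $\Ocal_S$, and one reduces via \eqref{item:further-compatibilites-1} (Whitney vs restriction) plus \eqref{eq:Segre-manipulation} to the tautological case of $\cfrak_1(L)^m$ and the divisor-restriction formula \eqref{eq:restrictiondivisor}. Then I would handle \eqref{item:further-compatibilites-1}, \eqref{item:further-compatibilites-2}, and \eqref{item:further-compatibilites-3} in that order; each of these is a genuine square between non-trivial line bundles, but each reduces — by splitting and by the inductive structure of the restriction isomorphism \eqref{eq:definitionrestrictionisomorphism} and the Whitney isomorphism for Segre classes \eqref{eq:SegreWhitney} — to the rank-one case, where it follows from the compatibilities already built into Proposition \ref{prop:generalpropertiesdeligneproducts} \eqref{item:order-deligne-pairing-isos} (iterated applications of restriction, symmetry and multilinearity for Deligne pairings commute) and Proposition \ref{prop:projformuladeli} (the projection-formula isomorphisms are compatible with restriction along divisors and with symmetry). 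In each square one should be careful to separate the extra Segre bundle $\PBbb(F')$ or $\PBbb(E)$ introduced by the definitions from the one appearing in the projection formula, and to invoke Corollary \ref{cor:compprojformula} when two successive projection formulas must be composed.

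\textbf{The main obstacle.} The routine part is really the symbol-level verification once everything is in the rank-one / finite-morphism situation; the delicate part is \eqref{item:further-compatibilites-3}, the compatibility of the projection formula with the restriction isomorphism, and the ``$P'$-version'' of \eqref{item:further-compatibilites-2} and \eqref{item:further-compatibilites-3}. The difficulty is that the restriction isomorphism \eqref{eq:definitionrestrictionisomorphism} is itself constructed from three nested ingredients — the second projection formula, a Whitney isomorphism combined with rank triviality on $\PBbb(F')$, and then an inner restriction and an inner projection formula — so ``compatibility of the projection formula with restriction'' unfolds into a diagram that must be checked to commute rung by rung against the universal exact sequence \eqref{eq:tautological-exact-sequence}; here one genuinely needs Remark \ref{rem:restriction-iso-chern-class-degree-1} (that any degree-one-on-fibers class can be used in the construction, so one may choose it compatibly with the outer $h$) and the functoriality of the projective bundle $\PBbb(F')$ under base change $h\colon X'\to X$, i.e. $\PBbb(h^\ast F')=\PBbb(F')\times_X X'$, together with $Y'=h^{-1}(Y)=Y\times_X X'$. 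Once the geometry of these fiber products is set up correctly and one tracks sections through the chain \eqref{eq:definitionrestrictionisomorphism} and through the projection-formula chain in the proof of Proposition \ref{prop:projformuladeli}, the commutativity follows from the transitivity of the norm and from Proposition \ref{prop:generalpropertiesdeligneproducts} \eqref{item:order-deligne-pairing-isos}; the details are formal but lengthy, and I would relegate the symbol chases to ``left to the reader'' after exhibiting the reductions, as is done elsewhere in this section.
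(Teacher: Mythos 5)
Your overall strategy — reduce to line bundles via the splitting principles, argue on symbols, use the compatibilities already known for Deligne pairings — is the right one, and you correctly identify that \eqref{item:further-compatibilites-4} and \eqref{item:further-compatibilites-5} are bookkeeping consequences of \eqref{item:further-compatibilites-2} and \eqref{item:further-compatibilites-1} respectively. But the linear order \eqref{item:further-compatibilites-1}, \eqref{item:further-compatibilites-2}, \eqref{item:further-compatibilites-3} you propose does not run. The proof of \eqref{item:further-compatibilites-1} for a vector bundle $F'$ of rank $r>1$ proceeds by induction on $r$, unfolding the construction \eqref{eq:definitionrestrictionisomorphism} of the restriction isomorphism into a five-rung ladder (this is \eqref{eq:further-compatibilities-diagram-1} in the paper), and the two outer squares of that ladder commute only because of the already-established general \eqref{item:further-compatibilites-2}. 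The dependencies are thus interleaved: one proves \eqref{item:further-compatibilites-1} only in the base case $F'$ a line bundle (where it is Proposition \ref{prop:generalpropertiesdeligneproducts} \eqref{item:order-deligne-pairing-isos}), then \eqref{item:further-compatibilites-2} in full, only then \eqref{item:further-compatibilites-1} in general via the ladder, and finally \eqref{item:further-compatibilites-3} along the same lines using all of the above together with the already-established $r=1$ case.

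The more substantial gap is that \eqref{item:further-compatibilites-2} does not simply reduce to ``compatibilities already built into Proposition \ref{prop:generalpropertiesdeligneproducts} and Proposition \ref{prop:projformuladeli}.'' After reducing, via \eqref{eq:SegreWhitney} and $E'$ a line bundle, to Segre classes and to restriction along the divisor of $\Ocal(1)\otimes\pi^\ast L^\vee$, you are left having to prove the commutativity of a new auxiliary diagram, \eqref{eq:diagram-projection-projection} in the paper: the compatibility of the projection formula for $h\colon X'\to X$ with the projection formula over the projective bundle $\pi\colon\PBbb(V)\to X$ used to define the Segre class. This is not Corollary \ref{cor:compprojformula}, which concerns a linear tower $X''\to X'\to X$ rather than the Cartesian square $\PBbb(h^\ast V)\to\PBbb(V)$, $X'\to X$; it requires a genuine symbol-level argument after restricting along regular sections to the finite case. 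Relatedly, you pin \eqref{item:further-compatibilites-3} as the main obstacle, but in the paper's proof the bulk of the technical content lies in \eqref{item:further-compatibilites-2} (via \eqref{eq:diagram-projection-projection}); the general case of \eqref{item:further-compatibilites-3} is then deduced by the same ladder argument and by the $r=1$ case already contained in the proof of Proposition \ref{prop:projformuladeli}.
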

\begin{proof}
We assume that the ranks of all the vector bundles are constant. This is legitimate since the construction of intersection bundles ultimately reduces to this case. Since the principles are the same, we omit below the proofs referring to analogous statements with respect to $P^{\prime}$. 

Assuming  \eqref{item:further-compatibilites-2}, the verification of \eqref{item:further-compatibilites-4} follows formally from the construction of the rank triviality isomorphism and is left to the reader. 

By the splitting principle in Proposition \ref{Prop:2ndsplitting}, the statements involving the Whitney isomorphism reduce to the case of $E^{\prime}$ being a line bundle. To verify  \eqref{item:further-compatibilites-1}, we first treat the case when we restrict along an effective relative Cartier divisor, \emph{i.e.} when $F^{\prime}$ is a line bundle with a regular section. Also, it is enough to treat the case of Segre classes for $E$. The construction of the Whitney-type isomorphism in this case is based  on two points. Firstly, the definition of Segre classes as Deligne pairings of tautological bundles on projective spaces \eqref{eq:definition-Segre-classes}. Secondly, the restriction property \eqref{eq:restrictiondivisor} in Proposition \ref{prop:generalpropertiesdeligneproducts} \eqref{item:deligne-pairing-restriction}. The conclusion follows from the independence of the order of reiterated restrictions to relative Cartier divisors, stated in Proposition \ref{prop:generalpropertiesdeligneproducts} \eqref{item:order-deligne-pairing-isos}.

Before treating the general case of \eqref{item:further-compatibilites-1}, we consider the compatibility \eqref{item:further-compatibilites-2}. It reduces to studying the compatibility of the projection formula and the construction related to \eqref{eq:SegreWhitney}. Since the projection formula is additive in $P$ and  \eqref{eq:SegreWhitney} is a rewriting of terms, the statement can be inferred from the following two facts. The first fact is the compatibility \eqref{item:further-compatibilites-3} in the case of restrictions associated with line bundles, which is already contained in the proof of Proposition \ref{prop:projformuladeli}. The second fact is that the composition of two projection formulas is the projection formula for the composition, in the particular situation that we next discuss. If $V$ is a vector bundle of rank $d$ on $X$, and $h: X' \to X$ is a morphism as in the statement of the projection formula consider the diagram

\begin{displaymath}
    \xymatrix{ 
        \PBbb(h^{\ast}V) \ar[r]^{h'} \ar[d]_{\pi'} & \PBbb(V) \ar[d]^{\pi} \\ 
        X' \ar[r]^{h} \ar[dr] & X \ar[d] \\
        & S.
    }
\end{displaymath}
 Denote by $\xi=c_{1}(\Ocal(1))$ and $\xi^{\prime}=h^{\prime\ast}\xi$. Then the claim is that the following diagram commutes:
\begin{equation}\label{eq:diagram-projection-projection}
    \xymatrix{
         \langle  {h'}^\ast ({\pi}^\ast P \cdot {\xi}^{d-1} ) \cdot {\pi'}^\ast P'\rangle \ar[d] \ar[rr]^{\simeq } & & \langle {\pi'}^\ast h^\ast P \cdot {\pi'}^\ast P'\cdot {\xi'}^{d-1} \rangle  \ar[d] &  \\
        \langle \pi^\ast P \cdot \xi^{d-1} \cdot \langle {\pi'}^\ast P' \rangle  \rangle\ar[d]_{\simeq }   & & \langle h^\ast P \cdot P' \rangle  \ar[d] \\
         \langle \pi^\ast P \cdot \xi^{d-1} \cdot {\pi}^\ast \langle  P' \rangle \rangle \ar[rr]
        & &  \langle P \cdot \langle P' \rangle  \rangle.  & 
        }
\end{equation}
Here the upper horizontal isomorphism is a rewriting of the terms, using the natural transformation ${h'}^\ast {\pi}^\ast \simeq {\pi'}^\ast h^\ast $ and ${h'}^\ast \xi = {\xi}^{\prime}$, and in the lower left vertical isomorphism the base change functoriality of the intersection bundles was used to rewrite $\langle {\pi'}^\ast P'  \rangle \simeq \pi^\ast \langle P' \rangle$. The other isomorphisms are the natural projection formulas. 

By linearity, we can reduce the commutativity of \eqref{eq:diagram-projection-projection} to the case when $P$ and $P'$ are products of Segre classes. By construction of the Segre classes and the projection formulas, it is formal to reduce to the case when $P$ (resp. $P'$) is a product of first Chern classes of line bundles $L_i$ (resp.  $M_j$). Possibly restricting $S$, we can suppose that the $L_{i}$ are differences of relatively very ample line bundles. By the linearity of Deligne products, we can even suppose that the bundles $L_i$ are relatively very ample.  By the construction of the projection formula recalled in the proof of Proposition \ref{prop:projformuladeli}, the commutativity of \eqref{eq:diagram-projection-projection} is equivalent to the statement obtained after restricting along regular sections of the $L_i$, in general position, see \textsection \ref{subsubsec:generators-relations}. This way we are reduced to the case when $P=1$ and  $X \to S$ is of relative dimension 0. In this case, the upper left intersection bundle in \eqref{eq:diagram-projection-projection} admits defining symbols that realize trivializations of all the involved intersection bundles. An inspection of the construction recalled in the proof of Proposition \ref{prop:projformuladeli} shows that the diagram commutes. This proves \eqref{item:further-compatibilites-2} in general, and hence, as mentioned at the beginning of the proof, \eqref{item:further-compatibilites-4} as well. 

To deduce the general case of \eqref{item:further-compatibilites-1}, we first introduce some notation analogous to that of \eqref{eq:definitionrestrictionisomorphism}. Let $\pi\colon\PBbb(F^{\prime})\to X$ be the projection morphism, and consider the universal exact sequence
\begin{displaymath}
    0\to Q\to \pi^{\ast}F^{\prime}\to\Ocal(1)\to 0.
\end{displaymath}
The section $\pi^{\ast}\sigma$ of $\pi^{\ast}F^{\prime}$ induces a section of $\Ocal(1)$ on $\PBbb(F^{\prime})$. We denote by $D\subseteq\PBbb(F^{\prime})$ its divisor. The restriction of $\pi^{\ast}\sigma$ on $D$ defines a regular section of $Q|_{D}$. Its zero locus scheme $Z$ equals the restriction of the projective bundle to $Y$, that is $Z=\PBbb(F^{\prime}|_Y)$. Finally, we let $\xi=\cfrak_{1}(\Ocal(1))$. With this understood, we write down a sequence of diagrams, where we suppose for simplicity of notation that $P$ and $P'$ are 1:
\begin{equation}\label{eq:further-compatibilities-diagram-1}
    \xymatrix{
    \langle \cfrak_r(F^{\prime}) \cdot \cfrak_k(E)  \rangle \ar[r] \ar[d] \ar@{}[dr] |{(\Abold)} & \bigotimes_{i+j=k} \langle \cfrak_r(F^{\prime}) \cdot \cfrak_i(E^{\prime}) \cdot \cfrak_j(E^{\bis}) \rangle \ar[d] \\
    \langle \cfrak_r(\pi^\ast F^{\prime}) \cdot \cfrak_k(\pi^\ast E) \cdot \xi^{r-1} \rangle \ar[r] \ar[d]  \ar@{}[dr] |{(\Bbold)} & \bigotimes_{i+j=k} \langle \cfrak_r(\pi^\ast F^{\prime}) \cdot \cfrak_i(\pi^\ast E^{\prime}) \cdot \cfrak_j(\pi^\ast E^{\bis}) \cdot \xi^{r-1} \rangle \ar[d] \\
    \langle \cfrak_{r-1}(Q)  \cdot \xi \cdot \cfrak_k(\pi^\ast E) \cdot \xi^{r-1} \rangle \ar[r] \ar[d]  \ar@{}[dr] |{(\Cbold)}& \bigotimes_{i+j=k} \langle \cfrak_{r-1}(Q)  \cdot \xi\cdot \cfrak_i(\pi^\ast E^{\prime}) \cdot \cfrak_j(\pi^\ast E^{\bis}) \cdot \xi^{r-1} \rangle \ar[d] \\
    \langle \cfrak_{r-1}(Q|_D)  \cdot \cfrak_k(\pi^\ast E|_D) \cdot \xi^{r-1} \rangle \ar[r] \ar[d] \ar@{}[dr] |{(\Dbold)} & \bigotimes_{i+j=k} \langle \cfrak_{r-1}(Q|_D) \cdot \cfrak_i(\pi^\ast E^{\prime}|_D) \cdot \cfrak_j(\pi^\ast E^{\bis}|_D) \cdot \xi^{r-1} \rangle \ar[d] \\
    \langle \cfrak_k(\pi^\ast E|_Z) \cdot \xi^{r-1} \rangle \ar[r] \ar[d] \ar@{}[dr] |{(\Ebold)}& \bigotimes_{i+j=k} \langle \cfrak_i(\pi^\ast E^{\prime}|_Z) \cdot \cfrak_j(\pi^\ast E^{\bis}|_Z) \cdot \xi^{r-1} \rangle \ar[d]\\
    \langle \cfrak_k(E|_Y)  \rangle \ar[r]  & \bigotimes_{i+j=k} \langle \cfrak_i(E^{\prime}|_Y) \cdot \cfrak_j(E^{\bis}|_Y)  \rangle. 
    }
\end{equation}
The composition of the vertical isomorphisms denotes our restriction isomorphism in \eqref{eq:definitionrestrictionisomorphism}, and the commutativity of the outer contour of the diagram is hence \eqref{item:further-compatibilites-1}. The upper vertical and lower vertical arrows denote projection formula isomorphisms. The diagrams $(\Abold)$ and $(\Ebold)$ hence commute because of the already established \eqref{item:further-compatibilites-2}. The diagram $(\Bbold)$ commutes because rank triviality is compatible with the Whitney isomorphism by Proposition \ref{prop:properties-chern-bundles} \eqref{item:Chern-rank} and the iterated applications of Whitney isomorphisms in different entries clearly commute. The diagram $(\Cbold)$ commutes because of  \eqref{item:further-compatibilites-1} in the case of restrictions to divisors, and the diagram $(\Dbold)$ commutes by induction on the rank, which hence establishes \eqref{item:further-compatibilites-1} in general. 

The general case of  \eqref{item:further-compatibilites-3} is proven along the same lines, by rendering explicit the isomorphisms analogously to \eqref{eq:further-compatibilities-diagram-1} and relying on  the commutativity of \eqref{eq:diagram-projection-projection}, \eqref{item:further-compatibilites-1},  \eqref{item:further-compatibilites-2} and the already proven case of \eqref{item:further-compatibilites-3} when $r=1$. We leave the details to the reader.

\end{proof}

\begin{corollary}
Let the notation and terminology be as in Proposition \ref{prop:properties-chern-bundles}. The following statements hold:
\begin{enumerate}
    \item The first Chern class isomorphism is compatible with the projection formula with respect to $P$ or $P^{\prime}$. 
    \item Performed on different variables, the first Chern class isomorphism is compatible with the restriction isomorphism. 
\end{enumerate}
\end{corollary}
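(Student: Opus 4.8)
The strategy is to reduce both statements to the compatibilities already established in Proposition \ref{prop:further-compatibilites}, using the defining commutative diagram \eqref{eq:firstChernclassisodef} of the first Chern class isomorphism together with the splitting principle in Proposition \ref{Prop:2ndsplitting}.

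For the first claim, I would argue by induction on the rank of $E$. In the rank zero case, the first Chern class isomorphism is, by construction, the one respecting the canonical trivializations of both sides (cf. Lemma \ref{lemma:elementary-properties-segre} \eqref{item:elpropseg-2}), and its compatibility with the projection formula is then the statement that the projection formula isomorphism is compatible with these trivializations, which is part of Proposition \ref{prop:properties-chern-bundles} \eqref{item:projection-fromula} and the conventions for the limit cases. For the inductive step, choose an exact sequence $0\to L\to E\to E^{\bis}\to 0$ with $L$ a line bundle, and consider the cube whose two faces are the defining square \eqref{eq:firstChernclassisodef} for $\langle h^\ast(P\cdot\cfrak_1(E))\cdot P'\rangle_{X'/S}$ and the one for $\langle P\cdot\cfrak_1(E)\cdot\langle P'\rangle\rangle_{X/S}$ (or the appropriate variant according to $\deg P'$), with vertical edges given by projection formula isomorphisms. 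The top face is the square defining the first Chern class isomorphism; the front and back faces are instances of the compatibility of the Whitney isomorphism with the projection formula, Proposition \ref{prop:further-compatibilites} \eqref{item:further-compatibilites-2}; the right face is the compatibility of the bilinearity of $\cfrak_1$ with the projection formula, which is Proposition \ref{prop:stupid-compatibilities} (1); and the bottom face is the first Chern class isomorphism for $E^{\bis}$ being compatible with the projection formula, which holds by induction. A diagram chase then yields the commutativity of the remaining face, which is the desired compatibility. The same argument applies verbatim with respect to $P'$, using the variants of Proposition \ref{prop:further-compatibilites} \eqref{item:further-compatibilites-2} for $P'$.

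For the second claim, I would run the analogous induction on the rank of the vector bundle $E$ appearing in the first Chern class isomorphism $\langle P\cdot\cfrak_1(E)\cdot P'\rangle_{X/S}\simeq\langle P\cdot\cfrak_1(\det E)\cdot P'\rangle_{X/S}$, restricting along a regular section of an auxiliary vector bundle $F'$ of rank $r$ on a different variable (so that the degree bookkeeping works out, with $P$ or $P'$ carrying the $\cfrak_r(F')$ factor). The rank zero case is again the compatibility of the restriction isomorphism with the canonical trivializations, contained in Proposition \ref{prop:properties-chern-bundles} \eqref{item:restriction-isomorphism}. For the inductive step, one builds the cube with the defining squares \eqref{eq:firstChernclassisodef} before and after restriction as two faces, vertical edges being restriction isomorphisms; the faces involving the Whitney isomorphism are then instances of Proposition \ref{prop:further-compatibilites} \eqref{item:further-compatibilites-1}, the face involving $\det E\simeq L\otimes\det E^{\bis}$ and the bilinearity of $\cfrak_1$ is Proposition \ref{prop:stupid-compatibilities} (1), and the bottom face holds by the inductive hypothesis. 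The commutativity of the last face is the assertion.

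The main obstacle is purely organizational: ensuring the degrees match so that each face of each cube is literally an instance of an already-proven statement — in particular, tracking which of $P$, $P'$ carries the Chern or Segre factor responsible for the projection formula or restriction, and invoking the correct ``analogous statement with respect to $P'$'' from Proposition \ref{prop:further-compatibilites}. Once the cubes are set up correctly, each is a routine diagram chase, and the splitting principle Proposition \ref{Prop:2ndsplitting} legitimizes the reduction to the case of $E'=L$ a line bundle throughout.
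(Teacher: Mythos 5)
Your proposal is correct and follows essentially the same route as the paper: the paper's proof simply observes that the first Chern class isomorphism is built inductively on the rank via the splitting principle and the Whitney isomorphism, so the claim reduces to Proposition \ref{prop:stupid-compatibilities} and Proposition \ref{prop:further-compatibilites}, exactly the two inputs you invoke. Your cube diagrams merely make explicit the diagram chase the paper leaves implicit.
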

\begin{proof}
The proof of the first Chern class isomorphism proceeds inductively on the rank and involves the splitting principle and the Whitney isomorphism. Therefore, the statement readily follows from Proposition \ref{prop:stupid-compatibilities} and Proposition \ref{prop:further-compatibilites}, which in particular address the compatibility of the Whitney isomorphism with the other operations as in the statement of this corollary. 
\end{proof}

\begin{corollary}\label{Cor:RestrictionRestriction}
Let $E^{\prime}$ and $E^{\bis}$ be two vector bundles of constant ranks $r^{\prime} $ and $r^{\bis}$, with regular sections $\sigma^{\prime}$ and $\sigma^{\bis}$. We suppose that the respective zero loci, $Y^{\prime}$ and $Y^{\bis}$, are flat over $S$, possibly empty, and that their intersection is Tor-independent and flat over $S$, possibly empty. Then, for any objects $P,P^{\prime}$ in $\CHfrak_{+}(X)$, the following diagram of restriction isomorphisms commutes:

\begin{displaymath}
    \xymatrix{ 
    \langle P \cdot \cfrak_{r^{\prime}}(E^{\prime}) \cdot \cfrak_{r^{\bis}}(E^{\bis}) \cdot P^{\prime} \rangle \ar[r] \ar[d] & \langle P|_{Y^{\bis}} \cdot \cfrak_{r^{\prime}}(E^{\prime}|_{Y^{\bis}}) \cdot P^{\prime}|_{Y^{\bis}}\rangle \ar[d] \\
    \langle P|_{Y^{\prime}} \cdot \cfrak_{r^{\bis}}(E^{\bis}|_{Y^{\prime}}) \cdot P^{\prime}|_{Y^{\prime}}\rangle \ar[r] & \langle P|_{Y^{\prime} \cap Y^{\bis}} \cdot P^{\prime}|_{Y^{\prime}\cap Y^{\bis}}\rangle.
    }
\end{displaymath}
\end{corollary}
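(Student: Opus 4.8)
The statement asserts that the two natural ways of restricting along the regular sections $\sigma'$ and $\sigma''$ agree. The proof strategy is the usual one in this section: reduce to the case of constant ranks, then peel off the constructions by induction on $r' + r''$, and finally bottom out at the case where both $E'$ and $E''$ are line bundles, where the statement becomes a compatibility between iterated restrictions along relative Cartier divisors — which is already available from Proposition \ref{prop:generalpropertiesdeligneproducts} \eqref{item:order-deligne-pairing-isos}. As always, we may assume all vector bundles have constant rank since the construction of intersection bundles reduces to this case.

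First I would recall the construction of the restriction isomorphism from \eqref{eq:definitionrestrictionisomorphism}: for a regular section $\sigma$ of a rank-$r$ bundle $E$, one introduces $\pi\colon\PBbb(E)\to X$, the tautological section of $\Ocal(1)$ with divisor $D$, and the induced section of $Q|_D$ with zero locus $Z=\PBbb(E|_Y)$; the restriction isomorphism is then a composition of a projection formula, a Whitney isomorphism (with rank triviality), a divisor restriction, an induction, and a final projection formula. To handle $Y'\cap Y''$ simultaneously, I would work over $\PBbb(E') \times_X \PBbb(E'')$, so that both sections propagate and one can write down a large two-dimensional grid of diagrams, each square of which is one of the already-established compatibilities. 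Concretely, the top-left corner is $\langle P\cdot\cfrak_{r'}(E')\cdot\cfrak_{r''}(E'')\cdot P'\rangle$ and the bottom-right is $\langle P|_{Y'\cap Y''}\cdot P'|_{Y'\cap Y''}\rangle$; going across the top and down the right gives one composite restriction, going down the left and across the bottom gives the other.

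The squares of this grid decompose into: squares asserting that the restriction along $\sigma'$ commutes with the projection-formula, Whitney, and divisor-restriction steps occurring in the unfolding of the restriction along $\sigma''$, and vice versa; these are instances of Proposition \ref{prop:further-compatibilites} \eqref{item:further-compatibilites-1}, \eqref{item:further-compatibilites-3}, \eqref{item:further-compatibilites-5} and Proposition \ref{prop:stupid-compatibilities}. The one genuinely new input is the base case $r'=r''=1$: here $Y'$ and $Y''$ are relative effective Cartier divisors (with $Y'\cap Y''$ Tor-independent over $S$, hence itself a relative Cartier divisor in $Y'$ and in $Y''$), the restriction isomorphisms are precisely the divisor restriction isomorphisms of Proposition \ref{prop:generalpropertiesdeligneproducts} \eqref{item:deligne-pairing-restriction}, and their order-independence is exactly Proposition \ref{prop:generalpropertiesdeligneproducts} \eqref{item:order-deligne-pairing-isos}. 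The inductive step then uses the commutativity of all the intermediate squares to reduce the size of $(r', r'')$.

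The main obstacle I anticipate is bookkeeping rather than conceptual: one must verify that the Tor-independence and flatness hypotheses propagate correctly through the projective bundle constructions — that is, that $\PBbb(E'|_{Y''})$ and the various zero loci appearing after restricting one section remain in the situation where the other restriction isomorphism is defined and where Proposition \ref{prop:further-compatibilites} applies (in particular the hypotheses in \eqref{item:further-compatibilites-3} about zero loci being flat over the relevant base). This is where one needs the assumption that $Y'\cap Y''$ is Tor-independent and flat over $S$: it guarantees that after restricting $\sigma'$, the section $\sigma''|_{Y'}$ is still regular with zero locus flat over $S$, so that the machinery of Proposition \ref{prop:properties-chern-bundles} \eqref{item:restriction-isomorphism} can be reapplied. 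Once this is checked, assembling the grid and invoking the previously proven compatibilities is routine, and the details can be left to the reader.
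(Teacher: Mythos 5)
Your proposal follows the same route as the paper: induction on the ranks, unfolding the restriction isomorphism via \eqref{eq:definitionrestrictionisomorphism} and invoking Proposition \ref{prop:further-compatibilites} (with \eqref{item:further-compatibilites-1}, \eqref{item:further-compatibilites-3}, \eqref{item:further-compatibilites-5}) together with the base case of Cartier divisors from Proposition \ref{prop:generalpropertiesdeligneproducts}. The only differences are organizational — you work over the fiber product $\PBbb(E')\times_X\PBbb(E'')$ and induct on $r'+r''$, whereas the paper holds $E''$ fixed and inducts on $r'$ and then symmetrically on $r''$ — and you make explicit (via \eqref{item:order-deligne-pairing-isos}) the order-independence that the paper leaves implicit in its citation of \eqref{item:deligne-pairing-restriction}.
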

\begin{proof}
    
    We proceed by induction on the ranks of the vector bundles. If both bundles are line bundles, this reduces to the statement of Proposition \ref{prop:generalpropertiesdeligneproducts} \eqref{item:deligne-pairing-restriction}. 
    
    In general, we suppose the diagram commutes for vector bundles $E^{\prime}$ up to rank $r^{\prime}-1$ and fixed $E^{\bis}$. The construction of the restriction isomorphism for $E^{\prime}$ in \eqref{eq:definitionrestrictionisomorphism} is given in terms of a projection formula, Whitney isomorphisms and rank trivialities, and inductively over iterated restrictions for lower rank vector bundles. By the established compatibility of these operations, and the induction hypothesis, one readily concludes that it also holds for $E^{\prime}$. The analogous argument works for induction over the rank of $E^{\bis}$ of rank $r^{\prime}$.

\end{proof}

We conclude this subsection on basic compatibilities by proving that the isomorphisms constructed in Proposition \ref{prop:properties-chern-bundles} coincide with those of Elkik whenever the latter are defined. 

\begin{corollary}
Suppose that $f\colon X \to S$ has moreover Cohen--Macaulay fibers, with $S$ Noetherian. Then the constructions in Proposition \ref{prop:properties-chern-bundles} coincide with those of Elkik in \cite{Elkikfib}.
\end{corollary}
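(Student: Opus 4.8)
The plan is to compare the two sets of isomorphisms term by term, using the fact that both constructions are built from the same elementary operations on Deligne pairings. First I would note that, by the reductions already performed in the proof of Proposition \ref{prop:properties-chern-bundles}, it suffices to treat vector bundles of constant rank and base schemes $S$ that are Noetherian and affine; moreover, by Noetherian approximation and the functoriality with respect to base change, it is enough to compare the two constructions after further localizing on $S$, where sufficiently many sections in general position exist (cf. Proposition \ref{prop:construction-generators}). In this setting both Elkik's isomorphisms and ours are presented entirely in terms of the symbol calculus of Deligne pairings, so the comparison is reduced to checking that the same symbolic assignments are used.

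Next I would go through the list in Proposition \ref{prop:properties-chern-bundles} one item at a time. For the projection formula \eqref{item:projection-fromula}, our construction was by definition taken from \cite[\textsection V.4.2]{Elkikfib}, so there is nothing to check beyond unwinding the reductions. For the Whitney isomorphism \eqref{whitneyiso}, the key point is that our construction, via \eqref{eq:SegreWhitney} and the restriction property of Deligne pairings along the divisor $\PBbb(E^{\bis})\subseteq\PBbb(E)$, reproduces the recursive recipe of \cite[\textsection V.4.8]{Elkikfib}; one must only verify that the iterated application of Proposition \ref{prop:split} and Proposition \ref{Prop:2ndsplitting} to reduce to the case $E'=L$ a line bundle is compatible with Elkik's direct inductive argument, which follows because both reductions produce the same isomorphism on split sequences of line bundles, and by Corollary \ref{Cor:2ndsplitting} a commutative multiplicative line functor is determined by its values there. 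The first Chern class isomorphism \eqref{item:first-chern-determinant}, the rank triviality \eqref{item:Chern-rank}, and the restriction isomorphism \eqref{item:restriction-isomorphism} are then handled in the same spirit: in each case our construction was explicitly modeled on \cite[\textsection V.4.9, \textsection V.4.10]{Elkikfib} and the relevant chains of isomorphisms such as \eqref{eq:definitionrestrictionisomorphism} are literally Elkik's, so the comparison amounts to matching the symbolic formulas. The birational invariance \eqref{item:birational-invariance} does not appear in Elkik's list, so there is nothing to compare.

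The main obstacle I expect is the Whitney isomorphism and, through it, the restriction isomorphism, because this is precisely where our approach diverges most from Elkik's: we pass through the splitting principles of \cite{Eriksson-Freixas-Wentworth} whereas Elkik argues directly. Resolving this requires the observation that the splitting-principle construction is the \emph{unique} isomorphism of line functors agreeing with Elkik's on standard split sequences and compatible with pullback and isomorphisms of exact sequences (Proposition \ref{prop:split}), combined with the fact—implicit in \cite{Elkikfib} and made precise in \textsection\ref{subsec:compatibility}—that Elkik's isomorphism does satisfy these normalizations. Once this uniqueness is in hand, all the remaining comparisons follow formally, and the coincidence of the compatibility diagrams of \textsection\ref{subsec:compatibility} with Elkik's (where stated) is automatic, since both sides are now known to be the same isomorphisms. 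The verification is routine but lengthy, and we leave the detailed symbol-chasing to the reader.
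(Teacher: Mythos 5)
Your overall plan (reduce to constant rank and affine Noetherian base, compare on line bundles, then reconcile the two splitting principles) matches the paper's skeleton, but the mechanism you propose for the central step — the Whitney isomorphism — contains a genuine gap, and the paper resolves it by a different argument.

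You appeal to the uniqueness in Proposition~\ref{prop:split}: the splitting-principle isomorphism $\psi_{\Sigma}\colon\Gcal(E)\to\Gcal(E^{\prime}\oplus E^{\bis})$ is the unique one agreeing with the identity on standard split sequences and functorial under pullback and isomorphisms of exact sequences. To conclude from this that Elkik's Whitney isomorphism coincides with ours, you would have to show that Elkik's isomorphism factors through such a $\psi_{\Sigma}$ satisfying those same normalizations. But Elkik does not argue by deforming $E$ to $E^{\prime}\oplus E^{\bis}$ at all: her reduction device in \cite[\textsection V.2.3]{Elkikfib} is to pull back to the complete flag variety $\pi\colon D\to X$ and use her second projection formula $\langle Q\rangle_{X/S}\to\langle\pi^{\ast}Q\cdot P\rangle_{D/S}$ with $P=\cfrak_1(L_2)^{1}\cdots\cfrak_1(L_r)^{r-1}$. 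There is a priori no $\psi_{\Sigma}$ in her construction, so the uniqueness clause does not bite. The sentence in your proposal that this ``is implicit in \cite{Elkikfib} and made precise in \textsection\ref{subsec:compatibility}'' is precisely where the nontrivial work is, and it is not carried out.

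The paper avoids this by noting that Elkik's splitting principle \emph{is} a projection formula, and then invoking the already-established compatibility of our Whitney isomorphism with the projection formula (Proposition~\ref{prop:further-compatibilites}~\eqref{item:further-compatibilites-2}). This transfers the comparison from $X$ to the flag variety $D$, where both constructions are given by inductively filtering by line bundles and agree because the rank-one case coincides. The same mechanism, via Proposition~\ref{prop:further-compatibilites}~\eqref{item:further-compatibilites-3} and~\eqref{item:further-compatibilites-4}, handles the restriction isomorphism and the rank triviality; your assertion that the chains such as \eqref{eq:definitionrestrictionisomorphism} ``are literally Elkik's'' is not accurate, since those chains invoke \emph{our} Whitney isomorphism, whose coincidence with Elkik's is what is being proven. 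Your Corollary~\ref{Cor:2ndsplitting} argument (uniqueness of commutative multiplicative line functors determined on line bundles) is the right principle for the first Chern class isomorphism, but for the Whitney isomorphism itself the missing ingredient is the compatibility with the projection formula, without which the two constructions are not obviously comparable.
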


\begin{proof}
The projection formula in Proposition \ref{prop:properties-chern-bundles} is by construction the same as that in \cite{Elkikfib}. All the other properties coincide with those of \cite{Elkikfib} in the base case of line bundles. For the Whitney isomorphism, we interpret this to mean that $E'$ is a line bundle. 

 In our approach, we then extend the isomorphisms to general vector bundles by the splitting principle, which proceeds by giving a construction based on complete flags of all the involved vector bundles. Part of the splitting principle states that the isomorphism is independent of the choice of a complete flag. 

In \cite[\textsection V.2.3]{Elkikfib}, the following splitting principle is considered. Let $E$ be a vector bundle of constant rank $r$ on $X$, and $\pi\colon D \to X$ the complete flag variety of $E$. It is of relative dimension $1 + \ldots + r-1 = r(r-1)/2.$ Consider the tautological bundles $L_1, \ldots, L_{r}$ on $D$, and let $P = \cfrak_1(L_2)^{2-1}\cdot \ldots\cdot \cfrak_1(L_r)^{r-1}$. Then, the second projection formula in Proposition \ref{prop:properties-chern-bundles} \eqref{item:projection-fromula} yields a canonical isomorphism

\begin{displaymath}
        \langle Q\rangle_{X/S}  \to \langle\pi^{\ast} Q\cdot P\rangle_{D/S}
\end{displaymath}
for every $Q$ of degree $n+1$ on $X$. This way she can assume that $E$ admits a complete flag, and define her isomorphisms using this, or slight modifications of this construction. The compatibility between our Whitney isomorphism and hers hence follows from the line bundle case and the compatibility of our Whitney isomorphism and the projection formula elucidated in\linebreak Proposition \ref{prop:further-compatibilites} \eqref{item:further-compatibilites-2}. 

The same argument applies to the comparison of the rank triviality isomorphisms and the restriction isomorphisms since ours are compatible with the projection formula in\linebreak Proposition \ref{prop:further-compatibilites} \eqref{item:further-compatibilites-4} and \eqref{item:further-compatibilites-3}.

For the first Chern class isomorphism, we first remark that the construction in \cite[\textsection V.1.2 (e)]{Elkikfib} is compatible with the Whitney isomorphism in the same sense as in Proposition \ref{prop:properties-chern-bundles} \eqref{item:first-chern-determinant}. This amounts to the multiplicativity of the isomorphism $\langle \Ocal(1)\{\rk E\} \rangle_{\PBbb(E)/X} \simeq \det E$ in \cite[Proposition IV.3]{Elkikfib} under exact sequences $0 \to L \to E \to E'' \to 0,$ where $L$ is a line bundle. This is proven as in the discussion surrounding \eqref{eq:SegreWhitney}. By the splitting principle for multiplicative functors, the sought comparison reduces to the case when $E$ is of rank one. In this case, both constructions are equal to the identity and hence coincide. 

For the verification of \eqref{item:further-compatibilites-5}, we can reduce to the case when $F$ is a line bundle. For this, we combine the splitting principle, the Whitney isomorphism, and the fact that the latter is compatible with restrictions. In the case of line bundles, the statement is a formality.

\end{proof}

\subsubsection{Further compatibilities}

In the previous subsection, we studied the interaction between the possible combinations of the operations in Proposition \ref{prop:properties-chern-bundles}. As an application, in this subsection, we discuss other options which are less immediate. 
\begin{proposition}\label{prop:newranktriviality}
Let $X\to S$ be a morphism satisfying the condition $(C_{n})$, and $P,P^{\prime}$ objects in $\CHfrak_{+}(X)$. Let $E$ be a vector bundle of constant rank $r$ on $X$, admitting a nowhere vanishing section $\sigma\colon\Ocal_{X}\to E$. Consider the following isomorphisms, obtained by combining the Whitney isomorphism for the associated exact sequence $0\to\Ocal_{X}\to E\to E^{\bis}\to 0$, together with the rank triviality and the restriction isomorphism for $\Ocal_{X}$:
\begin{equation}\label{eq:Whitney-trivialization}
    \langle P\cdot\cfrak_{r}(E)\cdot P^{\prime}\rangle_{X/S}\to\langle P\cdot\cfrak_{1}(\Ocal_{X})\cdot\cfrak_{r-1}(E^{\bis})\cdot P^{\prime}\rangle_{X/S}\to\Ocal_{S}.
\end{equation}Then, the trivialization \eqref{eq:Whitney-trivialization} coincides with the trivialization provided by the restriction isomorphism in Proposition \ref{prop:properties-chern-bundles} \eqref{item:restriction-isomorphism} applied to the section $\sigma$.
\end{proposition}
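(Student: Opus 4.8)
The strategy is to reduce, via the splitting principles, to a situation where both trivializations can be written explicitly in terms of symbols of Deligne pairings and then matched. First I would observe that, by the construction of intersection bundles and the compatibility of all the relevant isomorphisms with open partitions, we may assume $E$ has constant rank $r$ and all vector bundles appearing in $P,P^{\prime}$ have constant rank. Since the section $\sigma\colon\Ocal_{X}\to E$ is nowhere vanishing, its zero locus $Y$ is empty, and so the restriction isomorphism of Proposition \ref{prop:properties-chern-bundles} \eqref{item:restriction-isomorphism} produces a trivialization $\langle P\cdot\cfrak_{r}(E)\cdot P^{\prime}\rangle_{X/S}\to\Ocal_{S}$ via the empty-$Y$ case of \eqref{eq:definitionrestrictionisomorphism}. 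Both trivializations are built out of Whitney isomorphisms, rank trivialities, projection formulas and restrictions along relative Cartier divisors, to which the compatibility results of \textsection\ref{subsubsec:basic-compatibilites}, in particular Proposition \ref{prop:further-compatibilites}, apply.

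Next I would carry out an induction on $r$. For $r=1$ the bundle $E$ itself is a line bundle with a nowhere vanishing section, hence $E\simeq\Ocal_{X}$ via $\sigma$; the exact sequence $0\to\Ocal_{X}\to E\to 0\to 0$ has $E^{\bis}=0$, the Whitney isomorphism in \eqref{eq:Whitney-trivialization} is $[\sigma]^{-1}$ by the convention adopted in the proof of Proposition \ref{prop:properties-chern-bundles} \eqref{whitneyiso}, and the rank-triviality/restriction steps for $\Ocal_{X}$ amount to the trivialization of a Deligne pairing containing $\mathbf{1}_{\emptyset}$, i.e.\ the empty-divisor restriction of Proposition \ref{prop:generalpropertiesdeligneproducts} \eqref{item:deligne-pairing-restriction} (see Remark \ref{rmk:clarification-restriction}). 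On the other hand, the restriction isomorphism of Proposition \ref{prop:properties-chern-bundles} \eqref{item:restriction-isomorphism} for a rank-one bundle with empty zero locus is, by construction, exactly that same trivialization precomposed with $[\sigma]^{-1}$. So the two agree. For the inductive step, pass to $\pi\colon\PBbb(E)\to X$ with universal sequence $0\to Q\to\pi^{\ast}E\to\Ocal(1)\to 0$; the restriction isomorphism \eqref{eq:definitionrestrictionisomorphism} is defined through a projection formula, the Whitney isomorphism for this universal sequence (combined with rank triviality), a restriction to the divisor $D$ cut out by $\pi^{\ast}\sigma$ in $\Ocal(1)$, and then induction on $Z=\PBbb(E^{\bis}|_{Y})=\PBbb(E^{\bis})$ (here $Y=\emptyset$ so $Z=\PBbb(E^{\bis})$, and $\pi^{\ast}\sigma$ restricted to $D$ gives the nowhere vanishing section of $Q|_D$ coming from $E^{\bis}$). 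One then writes down the analog of the diagram \eqref{eq:further-compatibilities-diagram-1}: the left column is the trivialization \eqref{eq:Whitney-trivialization}, the right column the restriction isomorphism \eqref{eq:definitionrestrictionisomorphism}, and the intermediate squares commute by Proposition \ref{prop:further-compatibilites} \eqref{item:further-compatibilites-2} (Whitney vs.\ projection formula), Proposition \ref{prop:properties-chern-bundles} \eqref{item:Chern-rank} and Proposition \ref{prop:further-compatibilites} \eqref{item:further-compatibilites-1} (Whitney vs.\ restriction to a divisor), together with the already established functoriality of Whitney isomorphisms in disjoint entries; the bottom square is the $r=1$ case.

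The key technical point, and the main obstacle, is to make sure the Whitney isomorphism for the \emph{split-looking but not literally standard-split} sequence $0\to\Ocal_{X}\to E\to E^{\bis}\to 0$ determined by $\sigma$ is compatible with pulling back along $\pi$ and restricting to $D$, i.e.\ that the Whitney step in \eqref{eq:Whitney-trivialization} "commutes" with passing to the universal sequence on $\PBbb(E)$. This is exactly the content of the splitting principle in Proposition \ref{prop:split} applied to the line functor $Q\mapsto\langle P\cdot Q\cdot P^{\prime}\rangle$: the isomorphism $\psi_{\Sigma}$ is functorial with respect to pullback and is the identity on standard split sequences, so the Whitney isomorphism for $\Sigma$ on $X$ matches, after pullback to $\PBbb(E)$, the Whitney isomorphism for $\pi^{\ast}\Sigma$, and the latter interacts with the universal sequence on $\PBbb(E)$ in the controlled way used to build \eqref{eq:definitionrestrictionisomorphism}. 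Once this compatibility is in place, the diagram chase is routine and I would leave its verification to the reader, as is done for the parallel statements in \textsection\ref{subsubsec:basic-compatibilites}.
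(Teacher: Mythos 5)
Your overall strategy---induction on the rank $r$, passing to a projective bundle, restricting along a relative Cartier divisor, and then applying the inductive hypothesis---matches the paper's proof in spirit, and the treatment of the base case $r=1$ is fine. The gap is in the ``key technical point'' you single out. You propose to handle the interaction between the Whitney isomorphism for $\Sigma\colon 0\to\Ocal_X\to E\to E^{\bis}\to 0$ and the construction of the restriction isomorphism by invoking the splitting principle of Proposition~\ref{prop:split}. But Proposition~\ref{prop:split} controls the Whitney isomorphism for a single exact sequence: it provides uniqueness of $\psi_\Sigma$, functoriality under pullback, and normalization on standard splits. It says nothing about the relation between the Whitney isomorphisms attached to two \emph{different} subobjects of the same bundle---here $\Ocal\subseteq\pi^\ast E$ (coming from $\pi^\ast\Sigma$) and $Q\subseteq\pi^\ast E$ (coming from the tautological sequence used in \eqref{eq:definitionrestrictionisomorphism}). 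Moreover, one cannot freely reduce to the case $E=\bigoplus L_i$ via Proposition~\ref{Prop:2ndsplitting} either: the two trivializations in the statement depend on the additional datum $\sigma$, not just on $E$, and that proposition compares line functors of $E$ alone.

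What is actually needed, and what the paper sets up, is geometric: on the divisor $D=\PBbb(E^{\bis})\subseteq\PBbb(E)$ the pullback section $\pi^\ast\sigma\colon\Ocal\to\pi^\ast E$ factors through the tautological subbundle $Q$, producing a two-step admissible filtration $\Ocal\subseteq Q\subseteq\pi^\ast E$ whose graded pieces assemble into a $3\times 3$ diagram of exact sequences involving $\Ocal$, $Q$, $Q^{\bis}$, $\pi^\ast E$, $\pi^\ast E^{\bis}$ and $\Ocal(1)$. The commutativity you need is then precisely the compatibility of the Whitney isomorphism with admissible filtrations (Definition~\ref{def:multiplicative-functor}~\eqref{item:mult-funct-4}, built into Proposition~\ref{prop:properties-chern-bundles}~\eqref{whitneyiso}), combined with the compatibilities with projection formula, rank triviality and restriction from Proposition~\ref{prop:further-compatibilites}---not the splitting principle. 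Your proposal omits this diagram altogether, which is the heart of the argument. A minor but telling symptom of the same confusion is the identification $Z=\PBbb(E^{\bis}|_Y)=\PBbb(E^{\bis})$: in the construction \eqref{eq:definitionrestrictionisomorphism} one has $Z=\PBbb(E|_Y)$, which is \emph{empty} when $Y=\varnothing$; the scheme $\PBbb(E^{\bis})$ is the divisor $D$, and the inductive step takes place on $D$ with the nowhere vanishing section of $Q|_D$ (a rank $r-1$ bundle), not on $Z$.
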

\begin{proof}
We proceed by induction on the rank $r$. In rank one, $\sigma$ is a trivialization of $E$, and the claim is obvious. In general, since the Whitney isomorphism and the rank triviality are compatible with the projection formula, we can pull back all the objects by $\pi\colon \PBbb(E^{\bis})\to X$, at the expense of introducing a term $\xi^{r-2}=\cfrak_{1}(\Ocal(1))^{r-2}$. Consider the tautological exact sequence on $\PBbb(E^{\bis})$ of the form \eqref{eq:tautological-exact-sequence}, but decorated with a double prime symbol. We also consider the tautological exact sequence \eqref{eq:tautological-exact-sequence} on $\PBbb(E)$, and keep the same notation for the restriction to $\PBbb(E^{\bis}) \subseteq \PBbb(E)$ associated with $\sigma$. All these fit into a commutative diagram of exact sequences on $\PBbb(E^{\bis})$:
\begin{displaymath}
    \xymatrix{
        \Ocal \ar@{=}[r] \ar@{>->}[d] & \Ocal \ar@{>->}[d]  &  \\ 
        Q \ar@{>->}[r] \ar@{->>}[d] & \pi^{\ast } E  \ar@{->>}[d] \ar@{->>}[r] \ar[d] & \Ocal(1) \ar@{=}[d] \\
        Q^{\bis} \ar@{>->}[r] & \pi^{\ast} E^{\bis} \ar@{->>}[r] & \Ocal(1). 
    }
    \end{displaymath}
For simplicity, we omit in the rest of the proof the pullback $\pi^\ast$. Because the Whitney isomorphism is compatible with admissible filtrations, and the rank triviality is compatible with the Whitney isomorphism in separate variables, we have the following commutative diagram:
\begin{displaymath}
    \xymatrix{
        \langle P\cdot\cfrak_{r}(E)\cdot P^{\prime}\cdot\xi^{r-2}\rangle_{\PBbb(E^{\bis})/S}\ar[r]\ar[d]     &\langle P\cdot\cfrak_{r-1}(Q)\cdot \cfrak_{1}(\Ocal(1))\cdot P^{\prime}\cdot\xi^{r-2}\rangle_{\PBbb(E^{\bis})/S}\ar[d]\\
        \langle P\cdot \cfrak_{1}(\Ocal)\cdot\cfrak_{r-1}(E^{\bis})\cdot P^{\prime}\cdot\xi^{r-2}\rangle_{\PBbb(E^{\bis})/S}\ar[r]
        &\langle P\cdot\cfrak_{1}(\Ocal)\cdot\cfrak_{r-2}(Q^{\bis})\cdot\cfrak_{1}(\Ocal(1))\cdot P^{\prime}\cdot\xi^{r-2}\rangle_{\PBbb(E^{\bis})/S}.
    }
\end{displaymath}
In this diagram, the trivializations of the lower terms induced by the presence of $\cfrak_{1}(\Ocal)$ correspond to each other since the Whitney isomorphism is compatible with the restriction isomorphism. The rightmost upper term is also trivial by the restriction isomorphism applied to the section $\Ocal\to Q$. This trivializes, through the upper horizontal morphism, the leftmost upper term. We claim that this coincides with the trivialization provided by the restriction isomorphism for $E$ and $\sigma$. We have then reduced to the case of $\Ocal\to Q$ on $\PBbb(E^{\bis})$, for which the statement holds by induction.

Let us prove the claim.  Since the restriction isomorphism is compatible with the projection formula, the Whitney isomorphism, and the rank triviality, there is a commutative diagram
\begin{displaymath}
    \xymatrix{
        &\langle P\cdot\cfrak_{r}(E)\cdot P^{\prime}\cdot\xi^{r-1}\rangle_{\PBbb(E)/S}\ar[r]\ar[dd]   &\langle P\cdot\cfrak_{r-1}(Q)\cdot\cfrak_{1}(\Ocal(1))\cdot P^{\prime}\cdot\xi^{r-1}\rangle_{\PBbb(E)/S}\ar[dd]\\
      \langle P\cdot\cfrak_{r}(E)\cdot P^{\prime}\rangle_{X/S}\ar[ur]\ar[rd]  &   &\\
        &\langle P\cdot\cfrak_{r}(E)\cdot P^{\prime}\cdot\xi^{r-2}\rangle_{\PBbb(E^{\bis})/S}\ar[r]   &\langle P\cdot\cfrak_{r-1}(Q)\cdot\cfrak_{1}(\Ocal(1))\cdot P^{\prime}\cdot\xi^{r-2}\rangle_{\PBbb(E^{\bis})/S}\simeq\Ocal_{S}.
    }
\end{displaymath}
The diagonal arrows are given by projection formulas. The vertical arrows are associated with the restriction along the divisor of the section $\Ocal\to E\to\Ocal(1)$. The trivialization of the rightmost lower term is provided by the restriction isomorphism for $Q$ on $\PBbb(E^{\bis})$, which is given by induction on the rank. The restriction isomorphism for $E$ is obtained as the composition of the upper diagonal, upper horizontal, and rightmost vertical morphisms, together with the trivialization of the rightmost lower term. Since the diagram commutes, this proves the claim and concludes the proof.
\end{proof}

\begin{lemma}\label{lemma:ilfautpasdeconner}
 Suppose that $h \colon X^{\prime} \to X$ and $X\to S$ are as in Proposition \ref{prop:properties-chern-bundles} \eqref{item:projection-fromula}, and $F$ is a vector bundle on $X'$ of rank $r$, admitting a regular section $\sigma$ whose zero-locus defines a section of $X^{\prime} \to X$. Suppose that $r=n^{\prime}$, the relative dimension of $h$. Then, for any object $P$ in $\CHfrak_{+}(X)$, the projection formula isomorphism and the restriction isomorphism both define isomorphisms:
 \begin{displaymath}
     \langle h^{\ast }P \cdot \cfrak_r(F) \rangle_{X^{\prime}/S} \to \langle P   \rangle_{X/S}
 \end{displaymath}
These coincide.
\end{lemma}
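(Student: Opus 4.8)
The statement is a compatibility between two a priori different trivializations of the same intersection bundle, under the hypothesis that $F$ has a regular section $\sigma$ whose zero-locus $Y$ maps isomorphically to $X$ via $h$, and that $\mathrm{rk}(F)=r=n'$. On the one hand, the second projection formula in Proposition \ref{prop:properties-chern-bundles} \eqref{item:projection-fromula} gives $\langle h^\ast P\cdot\cfrak_r(F)\rangle_{X'/S}\simeq\langle P\rangle_{X/S}^{\kappa}$ with $\kappa=\int_{X'/X}\cfrak_r(F)$; since $Y\to X$ is an isomorphism and $Y$ is the zero scheme of a regular section, $\kappa=1$, so this reads $\langle h^\ast P\cdot\cfrak_r(F)\rangle_{X'/S}\simeq\langle P\rangle_{X/S}$. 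On the other hand, the restriction isomorphism in Proposition \ref{prop:properties-chern-bundles} \eqref{item:restriction-isomorphism} gives $\langle h^\ast P\cdot\cfrak_r(F)\rangle_{X'/S}\simeq\langle h^\ast P|_Y\rangle_{Y/S}$, and $h|_Y\colon Y\xrightarrow{\sim}X$ identifies the latter with $\langle P\rangle_{X/S}$. So both produce an isomorphism onto $\langle P\rangle_{X/S}$, and we must check they agree.

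The plan is to reduce to the construction of the restriction isomorphism recalled in \eqref{eq:definitionrestrictionisomorphism}, unwinding it one step at a time while keeping track of the projection-formula trivialization in parallel, and to invoke the compatibilities already established in \textsection\ref{subsubsec:basic-compatibilites}. First I would handle the base case $r=1$: then $F$ is a line bundle with a regular section whose divisor $Y$ maps isomorphically to $X$, $n'=1$, and the second projection formula isomorphism $\langle h^\ast P\cdot\cfrak_1(F)\rangle_{X'/S}\to\langle P\rangle_{X/S}$ is, by its explicit description \eqref{eq:explicit-second-projection-formula} on symbols (taking the section of $F$ to be $\sigma$ after a reduction to very ample bundles and general-position sections of $P$), exactly the composite $\langle h^\ast P\cdot\cfrak_1(F)\rangle_{X'/S}\to\langle h^\ast P|_Y\rangle_{Y/S}\to\langle P\rangle_{X/S}$, where the first map is the Deligne-pairing restriction along the divisor $\mathbf{1}_F=\sigma$ (Proposition \ref{prop:generalpropertiesdeligneproducts} \eqref{item:deligne-pairing-restriction}) and the second uses $h|_Y$. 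This is a direct comparison of the two constructions on generators. For the inductive step I would pass to $\pi\colon\PBbb(F)\to X'$ with its tautological sequence $0\to Q\to\pi^\ast F\to\Ocal(1)\to 0$, introduce a factor $\cfrak_1(\Ocal(1))^{r-1}$ via the projection formula, and follow the chain \eqref{eq:definitionrestrictionisomorphism}: a Whitney-plus-rank-triviality step replaces $\cfrak_r(\pi^\ast F)$ by $\cfrak_{r-1}(Q)\cdot\cfrak_1(\Ocal(1))$, then a divisor restriction along $D=\{\pi^\ast\sigma=0\}$, then the inductive restriction isomorphism for $Q|_D$ on $Z=\PBbb(F|_Y)$, and finally a projection formula back down. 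The point is that every one of these operations is compatible with the projection formula onto $X$: Whitney vs. projection formula is Proposition \ref{prop:further-compatibilites} \eqref{item:further-compatibilites-2}, rank triviality vs. projection formula is \eqref{item:further-compatibilites-4}, divisor-restriction vs. projection formula is contained in the proof of Proposition \ref{prop:projformuladeli} (and the composition-of-projection-formulas square \eqref{eq:diagram-projection-projection}), and restriction vs. projection formula in higher rank is \eqref{item:further-compatibilites-3}. Chaining these commutative squares shows that the restriction isomorphism, when composed with the identification $\langle h^\ast P|_Y\rangle_{Y/S}\simeq\langle P\rangle_{X/S}$ coming from $h|_Y$, equals the projection formula isomorphism.

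The main obstacle I anticipate is bookkeeping rather than conceptual: one must carefully match the $\kappa$-power occurring in the second projection formula with the fact that $\cfrak_{n'}(F)$ restricts to the fundamental class of a section of $h$, i.e. verify $\kappa=1$ cleanly (this uses that $Y\to X$ is an isomorphism, so that after base change the fiberwise degree of $\cfrak_{n'}(F)$ is $1$, e.g. via Proposition \ref{prop:chernclassofIntbundle} applied to $X'\to X$ over a point, or directly from the restriction isomorphism identifying $\langle\cfrak_{n'}(F)\rangle_{X'/X}$ with $\Ocal_X$). A secondary subtlety is the reduction to relatively very ample line bundles and sections in general position (Proposition \ref{prop:construction-generators}) needed to make the symbol-level comparison in the base case rigorous, and then invoking base-change compatibility to descend back to arbitrary $S$ and arbitrary $P$; all of this is routine given the machinery already in place, so I would state it and leave the verification to the reader, in the style of Corollary \ref{cor:compprojformula} and Proposition \ref{prop:further-compatibilites} \eqref{item:further-compatibilites-3}.
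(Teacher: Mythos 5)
Your proof is correct, but it takes a considerably longer route than the paper's. The paper's proof is a one-liner: the lemma is an immediate application of Proposition \ref{prop:further-compatibilites} \eqref{item:further-compatibilites-3} in its ``with respect to $P'$'' variant, taking $P'=\cfrak_r(F)$ with $F$ on $X'$ and the zero-locus of $\sigma$ flat (indeed isomorphic) over $X$, and noting that $c_r(F)$ has fiberwise degree one along $h$ because the zero locus is a section of $h$, so the second projection formula case applies with $\kappa=1$. In contrast, you re-derive the compatibility between the projection formula and the restriction isomorphism essentially from scratch, by unfolding the inductive chain \eqref{eq:definitionrestrictionisomorphism} and invoking, one step at a time, the compatibilities of the Whitney isomorphism, rank triviality, and divisor restriction with the projection formula. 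This is exactly the content of the proof of Proposition \ref{prop:further-compatibilites} \eqref{item:further-compatibilites-3}, which the paper already established and can simply cite. You do list \eqref{item:further-compatibilites-3} as one of several ingredients, but if you recognize that it already contains the entire statement (in its $P'$ variant), the induction on $r$, the symbol-level base case, and all the intermediate compatibility squares become redundant. Your approach is not wrong, and it does have the minor pedagogical virtue of making visible why the fiberwise degree equals one (which the paper states but does not elaborate), but the paper's observation that the result follows directly from an already-proved compatibility is cleaner and avoids re-proving prior work.
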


\begin{proof}
This is an immediate application of Proposition \ref{prop:further-compatibilites} \eqref{item:further-compatibilites-3}. With the notation therein, it is indeed enough to use the compatibility of the second projection formula isomorphism and the restriction isomorphism applied to the term $P^{\prime}=\cfrak_{r}(F)$. For the projection formula, one observes that $c_{r}(F)$ has degree one along the fibers since the zero locus of $\sigma$ defines a section $X\to X^{\prime}$ of $X^{\prime}\to X$.
\end{proof}

\begin{proposition}\label{prop:sectionsandrestrictions}
    Let $0 \to E^{\prime} \to E \to E^{\bis} \to 0$ be a short exact sequence of vector bundles, of constant ranks $r^{\prime}, r, r^{\bis}$ respectively. Suppose that $\sigma$ is a regular section of $E$, with zero-locus $Z$, flat over $S$, satisfying: \begin{enumerate}
        \item The projection of $\sigma$ onto $E^{\bis}$ is a regular section $\sigma^{\bis}$ of $E^{\bis}$, with zero-locus $Y$ flat over $S$.
        \item The section $\sigma|_{Y}$ induces a regular section $\sigma^{\prime}$ of $E^{\prime}|_{Y}$. 
    \end{enumerate} 
    Then, for any positive Chern power series $P, P'$, the following diagram of restrictions, Whitney isomorphisms, and rank trivialities is commutative: 
    \begin{displaymath}
        \xymatrix{\langle P \cdot \cfrak_{r}(E) \cdot P^{\prime} \rangle_{X/S} \ar[d] \ar[r] & \langle P|_Z \cdot P^{\prime}|_Z \rangle_{Z/S}  \\ 
        \langle P \cdot \cfrak_{r^{\prime}}(E') \cdot \cfrak_{r^{\bis}}(E^{\bis}) \cdot P^{\prime} \rangle_{X/S} \ar[r] & \langle P|_Y \cdot \cfrak_{r^{\prime}}(E'|_Y) \cdot P^{\prime}|_Y \rangle_{Y/S} \ar[u] }
    \end{displaymath}
\end{proposition}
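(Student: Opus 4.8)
The plan is to reduce to the split case via the splitting principles, and then follow the strategy used in Proposition \ref{prop:further-compatibilites} \eqref{item:further-compatibilites-1}: namely, to pass to the projective bundle $\pi\colon\PBbb(E^{\bis})\to X$, introduce tautological exact sequences, and use the already established basic compatibilities between the Whitney isomorphism, the rank triviality, the projection formula, and the restriction isomorphism. More precisely, by the splitting principle in Proposition \ref{Prop:2ndsplitting}, applied to the line functors $E^{\prime}\mapsto\langle P\cdot\cfrak_{r^{\prime}}(E^{\prime})\cdot\cfrak_{r^{\bis}}(E^{\bis})\cdot P^{\prime}\rangle$ and the corresponding restriction, it suffices to treat the case when $E^{\prime}$ is a line bundle $L$, so that $r^{\prime}=1$ and $r=r^{\bis}+1$. (Here one has to be slightly careful that the section $\sigma$ and its decomposition are compatible with the chosen flag; this is arranged exactly as in the proof of Proposition \ref{prop:further-compatibilites} by using that all the operations in question are functorial for pullback along the flag variety and the base, so that the reduction can be performed on the flag variety of $E^{\bis}$.)

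In the case $r^{\prime}=1$, I would argue by induction on $r^{\bis}$. The base case $r^{\bis}=0$ is the statement that, for $0\to L\xrightarrow{\sim} E\to 0\to 0$, the restriction isomorphism along $\sigma$ (a trivialization of $L$) agrees with the Whitney isomorphism composed with the rank triviality for $\cfrak_{0}$ and the restriction for the zero bundle $E^{\bis}=0$ — this is precisely the content of Proposition \ref{prop:newranktriviality} (or a trivial variant thereof) and is immediate. For the inductive step, introduce $\pi\colon\PBbb(E^{\bis})\to X$, the tautological sequence $0\to Q^{\bis}\to\pi^{\ast}E^{\bis}\to\Ocal(1)\to 0$, and the pulled-back sequence $0\to L\to\pi^{\ast}E\to\pi^{\ast}E^{\bis}\to0$; combining these as in the diagram of exact sequences in the proof of Proposition \ref{prop:newranktriviality}, one gets a sub-bundle $Q\subseteq\pi^{\ast}E$ with $0\to L\to Q\to Q^{\bis}\to0$ and $0\to Q\to\pi^{\ast}E\to\Ocal(1)\to0$. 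Using the second projection formula to introduce $\xi^{r^{\bis}-1}=\cfrak_{1}(\Ocal(1))^{r^{\bis}-1}$, restricting along the divisor $D$ of the canonical section of $\Ocal(1)$ induced by $\pi^{\ast}\sigma$, and then passing to the zero locus $Z^{\bis}=\PBbb(E^{\bis}|_{Z})$ of the induced section of $Q^{\bis}|_{D}$, I build a large commutative diagram, each of whose cells commutes by one of: Proposition \ref{prop:properties-chern-bundles} \eqref{item:Chern-rank} (Whitney vs.\ rank triviality), Proposition \ref{prop:further-compatibilites} \eqref{item:further-compatibilites-1} and \eqref{item:further-compatibilites-2} (Whitney vs.\ restriction, Whitney vs.\ projection formula), Proposition \ref{prop:further-compatibilites} \eqref{item:further-compatibilites-3} (projection formula vs.\ restriction), Corollary \ref{Cor:RestrictionRestriction} (restriction vs.\ restriction), and the inductive hypothesis applied to $0\to L\to Q\to Q^{\bis}\to0$ on $\PBbb(E^{\bis})$ with section $\sigma|$ restricted appropriately. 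The outer contour of that diagram is exactly the square to be proved.

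The main obstacle I anticipate is bookkeeping: one must carefully check that the zero loci behave as claimed under the passage to $\PBbb(E^{\bis})$ (that $Z=\PBbb(E^{\bis}|_{Z})$, that the relevant intersections remain flat and Tor-independent over $S$ so that Corollary \ref{Cor:RestrictionRestriction} applies, and that all sections involved remain regular after the relevant base changes), and that the chosen order of operations on both sides of the square matches up cell by cell. There is also a subtlety in ensuring the splitting-principle reduction is legitimate here: the hypotheses on $\sigma$ (conditions (1) and (2) in the statement) involve $E^{\prime}$ and $E^{\bis}$, not just $E$, so one must phrase the reduction so that after choosing a flag of $E^{\prime}$ the hypotheses persist for the successive quotients; this is handled by observing that a flag of $E^{\prime}$ refines to a flag of $E$ compatible with the sub-bundle $E^{\prime}$, and that regularity of a section cutting out a flat zero locus can be checked fibrewise and is preserved under the relevant smooth pullbacks, exactly as in \textsection\ref{subsubsec:basic-compatibilites}. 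Once these geometric points are in place, the commutativity is a purely formal diagram chase built from the already-proven compatibilities, so no genuinely new idea is needed — only care. I would therefore write the proof as: (i) the splitting reduction to $r^{\prime}=1$; (ii) the base case via Proposition \ref{prop:newranktriviality}; (iii) the inductive step via the explicit diagram on $\PBbb(E^{\bis})$, citing the relevant cells, and leaving the verification that each cell commutes to the reader where it is immediate from the cited results.
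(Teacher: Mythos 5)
Your inductive step — pass to $\PBbb(E^{\bis})$, introduce the tautological sequences, restrict along the divisor of the induced section of $\Ocal(1)$, and verify cell by cell using the established compatibilities — is indeed the paper's strategy for the induction on $r^{\bis}$. But there are two genuine problems with the way you set it up.

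First, your opening move — using Proposition~\ref{Prop:2ndsplitting} to reduce to $E^{\prime}$ a line bundle — is not in the paper and is not legitimate as stated. The splitting principle there concerns \emph{line functors}, that is, natural constructions depending freely on vector bundles; here the data includes a fixed regular section $\sigma$ of $E$ together with its zero-loci $Z$, $Y$ and the induced sections $\sigma^{\bis}$, $\sigma^{\prime}$. One cannot freely replace $E^{\prime}$ by a sum of line bundles while preserving the hypotheses (1) and (2) on $\sigma$; and pulling back to the flag variety of $E^{\prime}$ changes the relative dimension and requires relating the two diagrams via a projection formula, which is precisely the kind of compatibility under scrutiny — so it has to be argued, not invoked. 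You even write ``flag variety of $E^{\bis}$'' in the parenthetical, which contradicts the stated goal of making $r^{\prime}=1$; this slip reflects the fact that the reduction does not sit comfortably with the geometry. The paper avoids all of this: it never reduces $E^{\prime}$ and keeps it of general rank throughout.

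Second, your base case is both wrong in content and different from the paper's. You take $r^{\bis}=0$ and claim $\sigma$ is ``a trivialization of $L$'', citing Proposition~\ref{prop:newranktriviality}. But $\sigma$ is only assumed regular, so $Z$ may be a nonempty relative Cartier divisor; Proposition~\ref{prop:newranktriviality} is specifically about \emph{nowhere-vanishing} sections and does not apply. (The $r^{\bis}=0$ case is in fact easy for a different reason: the Whitney isomorphism for $0\to E^{\prime}\xrightarrow{\alpha}E\to 0\to 0$ is $[\alpha]^{-1}$ by definition, and one concludes by functoriality of the restriction isomorphism — but that is not what you wrote.) The paper's base case is $r^{\bis}=1$, i.e.\ $E^{\bis}$ a line bundle, and it is a genuine argument: one passes to $\PBbb(E)$, considers the auxiliary bundle $F=\Ocal(1)\otimes(\pi^{\ast}E^{\prime})^{\vee}$ with its canonical section $\tau$ whose zero-locus is a copy of $X$ sitting as a section of $\pi$, and builds a five-cell diagram whose cells commute by Corollary~\ref{Cor:RestrictionRestriction}, Proposition~\ref{prop:further-compatibilites}, and the construction of the restriction isomorphism in \eqref{eq:definitionrestrictionisomorphism}. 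You would need this base case (or an equivalent) in order for the induction on $r^{\bis}$ to get off the ground without the unjustified reduction on $E^{\prime}$.
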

\begin{proof}
    For simplicity, we omit $P$ and $P^{\prime}$ from the notation, which are thus formally replaced by $1$. Also, in diagrams involving intersection bundles, we omit the pullback notation to projective bundles.

    We first suppose that $E^{\bis}=L$ is a line bundle. We begin with some geometric observations. Consider the projective bundle $\pi\colon\PBbb(E)\to X$ and the tautological exact sequence \eqref{eq:tautological-exact-sequence} on $\PBbb(E)$. By composition, the section $\sigma$ induces a section of $\Ocal(1)$, whose divisor we denote by $D$. The composition $\pi^{\ast} E^{\prime}\to \pi^{\ast} E\to\Ocal(1)$ induces a global section of $F:=\Ocal(1)\otimes (\pi^{\ast} E^{\prime})^{\vee}$, denoted by $\tau$. Its zero locus is isomorphic to a copy of $X$ embedded in $\PBbb(E)$ as a section of $\pi$. Furthermore, the restriction of the universal exact sequence \eqref{eq:tautological-exact-sequence} to this copy of $X$ gives back the sequence $0\to E^{\prime}\to E\to L\to 0$. Consider the following diagram: 
    \begin{equation}\label{eq:alzheimer-1}
        \xymatrix{ 
           \langle \cfrak_{r}(E) \rangle_{X/S} \ar[d] \ar[r] & \langle 1 \rangle_{Z/S}  \\ 
             \langle\cfrak_{r}(E)\cdot\cfrak_{r-1}(F)\rangle_{\PBbb(E)/S}\ar[r]\ar[d] &\langle \cfrak_{r-1}(F)\rangle_{\pi^{-1}(Z)/S}\ar[u]\\
            \langle\cfrak_{r-1}(Q)\cdot\cfrak_{1}(\Ocal(1))\cdot\cfrak_{r-1}(F)\rangle_{\PBbb(E)/S}\ar[r]\ar[d]  &\langle\cfrak_{r-1}(Q|_{D})\cdot\cfrak_{r-1}(F|_{D})\rangle_{D/S}\ar[u] \\
             \langle \cfrak_{r-1}(E^{\prime})\cdot \cfrak_1(L) \rangle_{X/S} \ar[r]    &\langle\cfrak_{r-1}(E^{\prime}|_{Y})\rangle_{Y/S} \ar[u].
        }
    \end{equation}
    The outer contour of this diagram corresponds to the diagram in the proposition, as we shall now explain. The two upper vertical arrows are projection formulas, which hold since the zero locus of $\tau$ induces a section $X\to\PBbb(F)$. By Lemma \ref{lemma:ilfautpasdeconner}, these are actually equal to the restriction isomorphisms for $F$ and $\tau$. The two upper horizontal arrows are given by the restriction isomorphisms for $E$ and $\sigma$. The upper square diagram thus commutes by Corollary \ref{Cor:RestrictionRestriction}. The middle square corresponds to the very construction of the restriction isomorphism for $E$ and $\sigma$, as described in \eqref{eq:definitionrestrictionisomorphism}. Notice that in \emph{loc. cit.} we used $\cfrak_{1}(\Ocal(1))^{r-1}$ instead of $\cfrak_{r-1}(F)$, but both choices yield the same result, after Remark \ref{rem:restriction-iso-chern-class-degree-1}. The lower diagram is a composition of restrictions, which again commute by Corollary \ref{Cor:RestrictionRestriction}. 

    The left horizontal composition of morphisms is equal to the Whitney isomorphism and rank triviality since these commute with restrictions. The composition of the rightmost vertical morphisms corresponds to the definition of the restriction isomorphism for $E^{\prime}|_Y$ and $\sigma^{\prime}$. This proves the proposition in the case of $E^{\bis}$ being a line bundle.

    We proceed by induction on the rank of $E^{\bis}$, assuming the statement is known up to rank $r^{\bis}-1$. We consider the tautological sequence \eqref{eq:tautological-exact-sequence} on $\PBbb(E) \to X$ and denote by the same letters the restriction to $\PBbb(E^{\bis})$. Denote by $Q^{\bis}$ the tautological subbundle on $\PBbb(E^{\bis}) \to X$. These are all related by the diagram of exact sequences and rows on $\PBbb(E^{\bis})$
    \begin{displaymath}
    \xymatrix{
        \pi^{\ast} E^{\prime} \ar@{=}[r] \ar@{>->}[d] & \pi^{\ast} E^{\prime} \ar@{>->}[d]  &  \\ 
        Q \ar@{>->}[r] \ar@{->>}[d] & \pi^{\ast } E  \ar@{->>}[d] \ar@{->>}[r] \ar[d] & \Ocal(1) \ar@{=}[d] \\
        Q^{\bis} \ar@{>->}[r] & \pi^{\ast} E^{\bis} \ar@{->>}[r] & \Ocal(1). 
    }
    \end{displaymath}
   
    There is an induced section of $\Ocal(1)$ on $\PBbb(E^{\bis})$, with divisor $D^{\bis}$. 
    
    From Proposition \ref{prop:further-compatibilites}, the projection formula commutes with the Whitney isomorphism, the rank triviality, and the restriction isomorphism. We can then consider the following composition of diagrams, where we write $\xi=\cfrak_{1}(\Ocal(1))$:
    \begin{displaymath}
        \xymatrix{ 
            \langle \cfrak_{r}(E) \cdot \xi^{r^{\bis}-1}\rangle_{\PBbb(E^{\bis})/S} \ar[r] \ar[d] & \langle \cfrak_{r-1}(Q)  \cdot \cfrak_{1}(\Ocal(1))\cdot\xi^{r^{\bis}-1}  \rangle_{\PBbb(E^{\bis})/S} \ar[d] \ar@{-}[r]  &\cdots &\hspace{2cm} \\ 
            \langle \cfrak_{r^{\prime}}(E^{\prime}) \cdot \cfrak_{r^{\bis}}(E^{\bis}) \cdot\xi^{r^{\bis}-1} \rangle_{\PBbb(E^{\bis})/S} \ar[r] & \langle \cfrak_{r^{\prime}}(E^{\prime}) \cdot \cfrak_{r^{\bis}-1}(Q^{\bis})  \cdot\cfrak_{1}(\Ocal(1))\cdot \xi^{r^{\bis}-1} \rangle_{\PBbb(E^{\bis})/S} \ar@{-}[r] & \cdots &\hspace{2cm}\\
        }
    \end{displaymath}
    \begin{displaymath}
        \xymatrix{
            \hspace{2.3cm} &\cdots\ar[r] & \langle \cfrak_{r-1}(Q|_{D^{\bis}})  \cdot \xi^{r^{\bis}-1}  \rangle_{D^{\bis}/S} \ar[r] \ar[d] & \langle \xi^{r^{\bis}-1} \rangle_{\pi^{-1}(Z)/S} \ar[d] \\ 
            \hspace{2.3cm} &\cdots\ar[r] &\langle \cfrak_{r^{\prime}}(E^{\prime}|_{D^{\bis}}) \cdot \cfrak_{r^{\bis}-1}(Q^{\bis}|_{D^{\bis}}) \cdot \xi^{r^{\bis}-1} \rangle_{D^{\bis}/S} \ar[r] & \langle \cfrak_{r^{\prime}}(E^{\prime}|_{Y}) \cdot \xi^{r^{\bis}-1} 
    \rangle_{\pi^{-1}(Y)/S}.
        }
    \end{displaymath}
    We explain the morphisms. The composition of the lower horizontal morphisms corresponds to the definition of the restriction morphism for $\sigma^{\bis}$. The rightmost vertical morphism identifies with the restriction morphism of $\sigma^{\prime}$ since this commutes with the projection formula. The composition of upper horizontal morphisms identifies, by the induction hypothesis and the projection formula, with the restriction isomorphism corresponding to $\sigma.$ The other isomorphisms are Whitney isomorphisms and rank triviality isomorphisms. Since the projection formula commutes with all these involved isomorphisms, the commutativity of the above diagram is equivalent to the commutativity of the one in the proposition.

    The two leftmost squares commute since the Whitney isomorphism commutes with restriction and rank triviality. The rightmost square commute by the induction hypothesis on the rank of $E^{\bis}$, which concludes the proof. 
\end{proof}

\section{A functorial framework for intersection bundles}\label{subsec:int-bundles-line-distributions}
In this section, we show that Elkik's intersection bundles provide line distributions. This relies on the compatibility properties established in the previous section, together with a study of the behavior of the Whitney isomorphism for split exact sequences, and the interaction with symmetries.

Below, $f\colon X\to S$ denotes a morphism of schemes satisfying the condition $(C_{n})$. 

\subsection{Symmetries and signs}

In the below, we study how some standard symmetry isomorphisms only produce signs. 

\begin{lemma}\label{lemma:signs} 
Let $E_{1},\ldots, E_{m}$ be vector bundles on $X$. 

\begin{enumerate}
    \item\label{item:symmetry-mult} For any permutation $\sigma$ of $\lbrace 1,\ldots, m\rbrace$, there is a canonical functorial symmetry isomorphism 
\begin{displaymath}
    [\sigma]\colon \langle \cfrak_{k_{1}}(E_{1})\cdot\ldots\cdot \cfrak_{k_{m}}(E_{m}) \rangle_{X/S} \simeq \langle \cfrak_{k_{\sigma(1)}}(E_{\sigma(1)})\cdot\ldots\cdot \cfrak_{k_{\sigma(m)}}(E_{\sigma(m)}) \rangle_{X/S},
\end{displaymath}
which is compatible with the basic properties of the intersection bundles (cf. Lemma \ref{lemma:elementary-properties-segre} and Proposition \ref{prop:properties-chern-bundles}). Moreover, given two permutations $\sigma$ and $\tau$, we have $[\sigma\circ\tau]=[\sigma]\circ[\tau]$. If $\sigma$ is a permutation fixing the Chern mononomial, then $[\sigma]$ is a locally constant unit, and only assumes the values $\pm 1$. 
     \item\label{item:symmetry-add} Suppose that $E_i = E_i^{\prime} \oplus E_i^{\bis}$ and consider the diagram:
    \begin{equation}\label{eq:symmetry-add-Whitney}
        \xymatrix{ 
            &\langle \ldots \cdot \cfrak_{k_{i}}(E_i^{\prime} \oplus E_i^{\bis})  \cdot \ldots  \rangle \ar[ldd]\ar[rdd]   \\
            &   &\\
             \bigotimes_{j=0}^{k_i} \langle \ldots \cdot \cfrak_{j}(E_i^{\prime}) \cdot \cfrak_{k_{i}-j}(E_i^{\bis})  \cdot \ldots \rangle \ar[rr] &     &  \bigotimes_{j=0}^{k_i} \langle \ldots \cdot \cfrak_{k_{i}-j}(E_i^{\bis}) \cdot \cfrak_{j}(E_i^{\prime})  \cdot \ldots \rangle.
        }
    \end{equation}
    Here, the diagonal morphisms are, from left to right, the two Whitney isomorphisms obtained by filtering by $E_{i}^{\prime}$ (on the left) and $E^{\bis}_{i}$ (on the right), and the lower horizontal isomorphism is a product of the symmetry isomorphisms interchanging $\cfrak_{k_{i}-j}(E_{i}^{\prime})$ with $\cfrak_{j}(E_{i}^{\bis})$ and a reordering of the tensor products. 

    Then, this diagram commutes up to a unit $\lambda$, which is locally constant and only assumes the values $\pm 1.$
\end{enumerate}

\end{lemma}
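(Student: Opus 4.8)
The statement splits into two parts. For \eqref{item:symmetry-mult}, the plan is to define $[\sigma]$ by reducing to Segre-type intersection bundles and ultimately to Deligne pairings of tautological bundles. A general permutation $\sigma$ of $\{1,\ldots,m\}$ is a composition of transpositions of adjacent factors, so it suffices to treat the interchange of two consecutive Chern monomial factors $\cfrak_{k_i}(E_i)\cdot\cfrak_{k_{i+1}}(E_{i+1})$. At the level of Segre classes this amounts to interchanging blocks of tautological line bundles $\Ocal_i(1)\{r_i+k_i-1\}$ and $\Ocal_{i+1}(1)\{r_{i+1}+k_{i+1}-1\}$ on the fiber product of projective bundles, which is governed by the symmetry isomorphism \eqref{eq:delignepairingsymmetry} of Proposition \ref{prop:generalpropertiesdeligneproducts} \eqref{item:symmetry-deligne}. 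The relations $[\sigma\circ\tau]=[\sigma]\circ[\tau]$ and, in the case $\sigma$ fixes the Chern monomial, the fact that $[\sigma]$ is $\pm 1$, follow from the corresponding assertions for Deligne pairings in Proposition \ref{prop:generalpropertiesdeligneproducts} \eqref{item:symmetry-deligne}, \eqref{item:order-deligne-pairing-isos}: when two factors are equal, the relevant transposition contributes $(-1)^\kappa$ where $\kappa=\int_{X/S}\prod_{k\neq i}c_1(L_k)$ is the locally constant fiberwise degree. Compatibility with the basic properties of Lemma \ref{lemma:elementary-properties-segre} and Proposition \ref{prop:properties-chern-bundles} is inherited from the analogous compatibilities of the Deligne pairing symmetry with multilinearity, restriction, projection formula, etc., already recorded in Proposition \ref{prop:generalpropertiesdeligneproducts} and Proposition \ref{prop:projformuladeli}.

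For \eqref{item:symmetry-add}, the strategy is to use the splitting principles of \textsection\ref{subsubsec:line-functors}. Both the left-down and right-down compositions in \eqref{eq:symmetry-add-Whitney}, viewed as functions of the vector bundle $E_i^{\prime}$ (with $E_i^{\bis}$ and the other data fixed), are isomorphisms of line functors on the family generated by $X\to S$ from the functor $E_i^{\prime}\mapsto\langle\cdots\cfrak_{k_i}(E_i^{\prime}\oplus E_i^{\bis})\cdots\rangle_{X/S}$ to a common target; hence by Proposition \ref{Prop:2ndsplitting} it is enough to check the commutativity up to a locally constant sign when $E_i^{\prime}=\bigoplus_s L_s$ and $E_i^{\bis}=\bigoplus_t M_t$ are direct sums of line bundles. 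Using the Whitney isomorphism's compatibility with admissible filtrations (Proposition \ref{prop:properties-chern-bundles} \eqref{whitneyiso}) and with the first Chern class isomorphism, both diagonal arrows decompose as iterated Whitney isomorphisms down to line bundles, where each $\cfrak_j(\bigoplus_s L_s)$ becomes the elementary symmetric combination $\sum_{|A|=j}\prod_{s\in A}\cfrak_1(L_s)$. After this reduction, both sides of the square express the same intersection bundle $\langle\cdots\prod_s\cfrak_1(L_s)^{?}\prod_t\cfrak_1(M_t)^{?}\cdots\rangle$ in terms of symbols of Deligne pairings, and the two compositions differ precisely by a permutation of the tautological line bundles which fixes the Chern monomial. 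By part \eqref{item:symmetry-mult} (and the $\pm 1$ clause for Deligne pairings), this permutation induces a sign; one reads off $\lambda = (-1)^{\epsilon}$ with $\epsilon=\int_{X/S}(\text{product of degree-one classes})$, a locally constant function on $S$ valued in $\mathbb{Z}$, so $\lambda\in\{\pm 1\}$ is locally constant as claimed.

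I expect the main obstacle to be part \eqref{item:symmetry-add}: one must be careful that the "lower horizontal isomorphism" in \eqref{eq:symmetry-add-Whitney} is not just \emph{some} isomorphism but precisely the product of term-by-term symmetries together with a specified reordering of the tensor factors, and the bookkeeping of which permutation of tautological line bundles this corresponds to—after developing through the Whitney isomorphisms in the split case—requires genuine care, since a wrong identification of the reordering would change $\lambda$ by a sign. The two splitting principles (Proposition \ref{prop:split}, Proposition \ref{Prop:2ndsplitting}) guarantee that the \emph{value} of $\lambda$ does not depend on the chosen flags, but verifying that the square commutes \emph{up to a sign at all}—rather than up to a more complicated unit—relies on the observation that in the strictly commutative Picard category $\Picfr(S)$ the only ambiguity between two natural isomorphisms of the same line bundle is an automorphism of $\Ocal_S$, i.e.\ a unit in $\Ocal_S^\times$, and the computation via symbols of Deligne pairings shows this unit is a power of $-1$. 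A secondary, more routine point is to confirm that all the compatibilities of part \eqref{item:symmetry-mult} with the operations of Proposition \ref{prop:properties-chern-bundles} hold on the nose (not merely up to sign), which follows by tracing through the symbol-level descriptions already given in \textsection\ref{subsubsec:generators-relations} and in the proofs of Proposition \ref{prop:projformuladeli} and Proposition \ref{prop:properties-chern-bundles}.
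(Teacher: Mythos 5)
Part \eqref{item:symmetry-mult} of your plan follows the paper's argument, and your reduction of part \eqref{item:symmetry-add} via the splitting principle to sums of line bundles is also the paper's route (the paper pushes one step further, using compatibility with admissible filtrations to reduce to $E_i^{\prime}=L$ and $E_i^{\bis}=M$ single line bundles, and restriction along divisors to reduce to relative dimension $k_i-1$ and the bare monomial $\langle\cfrak_{k_i}(E_i)\rangle$). The gap is in the base case of part \eqref{item:symmetry-add}. Your assertion that, after this reduction, ``the two compositions differ precisely by a permutation of the tautological line bundles which fixes the Chern monomial,'' so that part \eqref{item:symmetry-mult} yields the sign, is exactly the point that needs proof and does not follow from part \eqref{item:symmetry-mult}. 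The two diagonal arrows of \eqref{eq:symmetry-add-Whitney} are not two orderings of factors in a single Deligne pairing: the Whitney isomorphism for the filtration $L\subseteq L\oplus M$ is built from a restriction to the divisor of the section of $\Ocal(1)\otimes\pi^{\ast}L^{\vee}$ on $\PBbb(L\oplus M)$ (i.e.\ to $\PBbb(M)$), while the one for $M\subseteq L\oplus M$ restricts to $\PBbb(L)$; these are geometrically different operations, and identifying their discrepancy requires an explicit computation with symbols.

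The paper carries this out by a case analysis on $k_i$: for $k_i=1$ the symbol computations \eqref{eq:whitney-isom-for-c1-sections} and \eqref{eq:whitney-isom-for-c1-sections-2} trace the unit to the Deligne-pairing symmetry $\langle\widetilde{\ell},\widetilde{m}\rangle=(-1)^{\deg f}\langle\widetilde{m},\widetilde{\ell}\rangle$ on $\langle\Ocal(1),\Ocal(1)\rangle_{\PBbb(L\oplus M)/S}$, giving $\lambda=(-1)^{\deg f}$; for $k_i=2$ it invokes \cite[Proposition 3.9]{Eriksson-Freixas-Wentworth}; and for $k_i>2$ both sides are canonically trivial by rank triviality, giving $\lambda=1$. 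Without this, your argument risks circularity: the claim that the two full developments down to line bundles agree up to a permutation is essentially equivalent to the commutativity being established, since compatibility with admissible filtrations only compares refinements of a single filtration, not the two distinct filtrations $L\subseteq L\oplus M$ and $M\subseteq L\oplus M$. You correctly flag this bookkeeping as the main obstacle, but the plan as written does not supply the computation that resolves it.
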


\begin{proof}
Throughout the proof, we suppose the ranks of the vector bundles are constant. 

For the first point, the existence of the symmetry isomorphism is formal and follows from the case of Deligne pairings as in  Proposition \ref{prop:generalpropertiesdeligneproducts} \eqref{item:symmetry-deligne}. The compatibility statement is formal too since all the basic properties ultimately reduce to basic properties of Deligne pairings, which are all compatible with the symmetry isomorphisms. For the statement about permutations fixing the Chern monomial, one reduces to the case of transpositions. If one transposes factors of the form $\cfrak_1(L_{i})$ where $L_{i}$ is a line bundle, it readily follows from Proposition \ref{prop:generalpropertiesdeligneproducts} \eqref{eq:delignepairingsymmetry} that $[\sigma]$ is a sign as in the statement. In the general case, since the flag variety of $E_i$ is faithfully flat over $S$, to determine $[\sigma]$ we can assume that the $E_i$ admits a full flag. By the splitting principle, we can suppose that $E_i$ is a sum of line bundles. By developing the expression using the Whitney isomorphism and rank triviality, one reduces to the line bundle situation. 

For the second point, by the construction of intersection bundles, we reduce to the case when the only terms beyond $\cfrak_{k_{i}}(E_i)$ in the upper expression are of the form $\cfrak_1(L_\ell)$, where $L_\ell$ is a line bundle. Arguing locally on $S$, we can suppose we can write, for any $\ell \neq i$, $L_\ell = A_\ell \otimes B_\ell^{\vee}$ where $A_\ell$ and $B_\ell$ are very ample relative to $S$, and admit non-trivial regular sections, defining effective relative Cartier divisors. Using the multilinearity for the first Chern class, it is enough to determine the unit $\lambda$ whenever $L_\ell = \Ocal(D_{\ell})$ for a non-trivial relative effective Cartier divisor $D_\ell$. Furthermore, applying the restriction isomorphism from Proposition \ref{prop:properties-chern-bundles} \eqref{item:restriction-isomorphism}, we can reduce to the case when $X\to S$ is of relative dimension $k_{i}-1$ and the Chern monomial in the upper vertice of the diagram is of the form $\langle \cfrak_{k_{i}}(E_i) \rangle.$ As in the previous point, we can as well suppose that all the involved vector bundles are sums of line bundles. By using that the Whitney isomorphism is compatible with admissible filtrations, one can finally reduce to the case that $E_{i}^{\prime}$ and $E_{i}^{\bis}$ are line bundles, say $L$ and $M$. Compare with \cite[Corollary 2.20]{Eriksson-Freixas-Wentworth}. At this point, we need to distinguish three cases, namely when $k_i = 1, 2$, and when $k_i>2$.

We first consider the case $k_i= 1$, so that in particular $f\colon X\to S$ is finite and flat, of degree $\deg f$. We will prove that the diagram commutes up to the sign $(-1)^{\deg f}$. By the very definition of the Chern classes, we have
\begin{displaymath}
    \langle\cfrak_{1}(L\oplus M)\rangle_{X/S}=\langle\Ocal(1),\Ocal(1)\rangle_{\PBbb(L\oplus M)/S}.
\end{displaymath}
The Whitney isomorphism corresponding to filtering by $L$ amounts, according to \eqref{eq:SegreWhitney}, to the following string of isomorphisms:
\begin{equation}\label{eq:whitney-isom-for-c1}
    \begin{split}
         \langle\Ocal(1),\Ocal(1)\rangle_{\PBbb(L\oplus M)/S}&\to \langle\Ocal(1)\otimes L^{\vee},\Ocal(1)\rangle_{\PBbb(L\oplus M)/S}\otimes  \langle L,\Ocal(1)\rangle_{\PBbb(L\oplus M)/S}\\
         &\to N_{D/S}(\Ocal(1)|_{D})\otimes N_{X/S}(L)\\
         &\to N_{X/S}(M)\otimes N_{X/S}(L).
    \end{split}
\end{equation}
We recall the meaning of the arrows. The first isomorphism is just a formal rewriting of terms. For the second isomorphism, we denoted by $D$ the divisor of the natural section $\Ocal\to\Ocal(1)\otimes L^{\vee}$, and we used the restriction isomorphism along it. We also used the projection formula isomorphism on the second factor. For the third isomorphism, we observe that $D$ provides a section of $\PBbb(L\oplus M)\to X$, and the restriction of $\Ocal(1)$ along this section is naturally isomorphic to $M$. 

We now write \eqref{eq:whitney-isom-for-c1} in terms of symbols. Possible localizing on $S$, we may assume that $L$ and $M$ admit trivializing sections, denoted by $\ell$ and $m$, respectively. We denote by $\widetilde{\ell}$ and $\widetilde{m}$ the sections of $\Ocal(1)$ induced by $\ell$ and $m$. Then, the symbol $\langle\widetilde{\ell},\widetilde{m}\rangle$ trivializes $\langle\Ocal(1),\Ocal(1)\rangle_{\PBbb(L\oplus M)/S}$. We observe that the section $\Ocal\to\Ocal(1)\otimes L^{\vee}$ is nothing but $\widetilde{\ell}\otimes\ell^{\vee}$, and one sees from the definitions that the restriction of $\widetilde{m}$ to $D$ gets identified to $m$. With this understood, the effect of \eqref{eq:whitney-isom-for-c1} on symbols is
\begin{equation}\label{eq:whitney-isom-for-c1-sections}
    \begin{split}
         \langle\widetilde{\ell},\widetilde{m}\rangle &\mapsto \langle\widetilde{\ell}\otimes\ell^{\vee},\widetilde{m}\rangle\otimes  \langle \ell,\widetilde{m}\rangle\\
         &\mapsto N_{D/S}(\widetilde{m}|_{D})\otimes N_{X/S}(\ell)\\
         &\mapsto N_{X/S}(m)\otimes N_{X/S}(\ell).
    \end{split}
\end{equation}
In the second isomorphism, we used the explicit expression of the projection formula in \eqref{eq:explicit-second-projection-formula}. 

Next, we need to repeat the above argument, but filtering by $M$. We now denote by $E$ the divisor of the section $\Ocal\to \Ocal(1)\otimes M^{\vee}$. The manipulation corresponding to \eqref{eq:whitney-isom-for-c1-sections} is, in this case,
\begin{equation}\label{eq:whitney-isom-for-c1-sections-2}
    \begin{split}
         \langle\widetilde{\ell},\widetilde{m}\rangle=(-1)^{\deg f}\langle\widetilde{m},\widetilde{\ell}\rangle &\mapsto (-1)^{\deg f}\langle\widetilde{m}\otimes m^{\vee},\widetilde{\ell}\rangle\otimes  \langle m,\widetilde{\ell}\rangle\\
         &\mapsto (-1)^{\deg f}N_{E/S}(\widetilde{\ell}|_{E})\otimes N_{X/S}(m)\\
         &\mapsto (-1)^{\deg f}N_{X/S}(\ell)\otimes N_{X/S}(m).
    \end{split}
\end{equation}
The equality in the first line is given by the property of permuting two equal factors in a Deligne pairing, in Proposition \ref{prop:generalpropertiesdeligneproducts} \eqref{item:symmetry-deligne}. This is responsible for the sign $(-1)^{\kappa}$, with 
\begin{displaymath}
    \kappa=\int_{\PBbb(L\oplus M)/S}c_{1}(\Ocal(1))=(\deg f)\int_{\PBbb(L\oplus M)/X}c_{1}(\Ocal(1))=\deg f.
\end{displaymath}
We conclude by comparing \eqref{eq:whitney-isom-for-c1-sections} and \eqref{eq:whitney-isom-for-c1-sections-2}.

The case $k_i = 2 $ is analogously considered in \cite[Proposition 3.9]{Eriksson-Freixas-Wentworth}, and shows that the corresponding diagram commutes up to a locally constant sign. Notice also that the definition of the Whitney isomorphism there is actually modified from that of Elkik by a locally constant sign. To this effect, see the end of the proof of Theorem 3.6 and Remark 3.7 (2) in \cite{Eriksson-Freixas-Wentworth}. In any case, the diagram commutes up to a locally constant sign, and we conclude in this case. 

In the case $k_i > 2$, all the functors are canonically trivial by rank triviality. The rank triviality is moreover constructed via the splitting principle, see in particular Remark \ref{rmk:indepedenceoffiltration}, to be compatible with the Whitney isomorphism for either filtrations $L \subseteq L \oplus M$ or $M \subseteq L \oplus M$, and hence coincide. One straightforwardly deduces that $\lambda = 1$ in this case.

\end{proof}

\subsection{Intersection bundles as line distributions}\label{subsec:Intbunaslindib}

The results of Section \ref{subsubsec:general-intersection-bundles}  together with the previous subsection imply the following, which constitutes the main theorem of this section: 

\begin{theorem}\label{thm:Elkik-distribution}
The intersection bundles 
\begin{displaymath}
    P \mapsto \langle P \rangle_{X/S}
\end{displaymath}  define line distributions of denominator 2 and degree $n+1$. 
\end{theorem}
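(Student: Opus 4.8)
The plan is to verify directly that the assignment $P\mapsto\langle P\rangle_{X/S}$, together with the analogous assignments for all base changes $S'\to S$, satisfies the axioms of a line distribution as laid out in Definition \ref{def:distribution} and Definition \ref{def:Q-distribution}, using the categorical machinery of Section \ref{subsection:Chern-categories} to upgrade the construction on the level of Chern monomials to a functor on the rational Chern category. The degree bound is immediate: by construction $\langle P\rangle_{X/S}$ only depends on the component of $P$ of degree $n+1$, so the associated functor factors through $\CHfrak^{n+1}(X')$; this gives degree $n+1$. The base change compatibility (items \eqref{item:distribution-2} and \eqref{item:distribution-3} of Definition \ref{def:distribution}) and the cocycle condition are part of the ``functorial'' conventions fixed in \textsection\ref{subsec:conventions-notations}, since the Deligne pairings, hence the Segre-type and Chern-type intersection bundles, commute with base change; one only has to remark, invoking Proposition \ref{prop:line-distribution-affine}, that it suffices to check everything on affine bases, which legitimizes the Noetherian approximation arguments used in Section \ref{section:intersection-bundles} and Section \ref{subsubsec:general-intersection-bundles}.

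The substantive point is the existence of the functor
\begin{displaymath}
    T_{S'}\colon (\CHfrak^{\leq n+1}(X')_{\QBbb},+)\to\Picfr(S')_{\QBbb}
\end{displaymath}
lifting the monomial-level construction \eqref{eq:general-Chern-bundle-1}, and this is where the denominator $2$ enters. By the direct limit description of $\CHfrak^{k}(X')$ in Proposition \ref{prop:CHX-is-a-direct-limit}, the Picardification description of $\CHfrak^{k}_{u}(\Ucal)$, and the decomposition property of intersection bundles along open partitions noted after \eqref{eq:def-intersection-bundle-open-partitions}, I would reduce to constructing a commutative multiplicative functor $\CHfrak_{+}(X')\to\Picfr(S')$ compatible with the graded Picardification, i.e. to checking that $P\mapsto\langle P\rangle_{X/S}$ is well defined on the quotient category $\CHfrak_{+}(X')$ and sends Whitney edges, isomorphism-of-bundle edges, rank-triviality edges, and symmetry edges to the corresponding isomorphisms of line bundles. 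This is exactly the content of Lemma \ref{lemma:elementary-properties-segre} and Proposition \ref{prop:properties-chern-bundles}: the symbolic isomorphisms defining $\CHfrak_{+}(X')$ are matched with the isomorphisms of intersection bundles, and the relations imposed in the quotient (all reasonable diagrams of formal isomorphisms commute, compatibility with admissible filtrations, etc.) are matched with the compatibility statements established in \textsection\ref{subsec:compatibility} (Propositions \ref{prop:stupid-compatibilities}, \ref{prop:further-compatibilites}, \ref{prop:newranktriviality}, \ref{prop:sectionsandrestrictions}, and Corollary \ref{Cor:RestrictionRestriction}). Once this is checked, Proposition \ref{prop:rig-to-ring-2} and the rationalization of \textsection\ref{subsec:rationalization-ring-categories} extend the functor to $\CHfrak^{\leq n+1}(X')_{\QBbb}$, and the splitting-principle corollary (Corollary \ref{cor:splittingdistribution}) is the technical tool that lets one propagate the needed isomorphisms from sums of line bundles to arbitrary vector bundles.

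The main obstacle — and the reason the denominator is $2$ rather than $1$ — is the interaction of the Whitney isomorphism with the action of permutations on the factors of a Chern monomial. The morphisms in $\CHfrak_{+}(X')$ include, for a split bundle $E_i=E_i'\oplus E_i''$, both the Whitney isomorphism obtained by filtering by $E_i'$ and the one obtained by filtering by $E_i''$, and these differ in $\CHfrak_{+}(X')$ only by a symmetry of the summands; but on intersection bundles the diagram \eqref{eq:symmetry-add-Whitney} commutes only up to a locally constant sign $\lambda=\pm1$, by Lemma \ref{lemma:signs}\eqref{item:symmetry-add}, and likewise a permutation fixing a Chern monomial acts on the intersection bundle by a sign, by Lemma \ref{lemma:signs}\eqref{item:symmetry-mult}. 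Over $\ZBbb$ these signs obstruct the functoriality; after inverting $2$ the sign morphism $\varepsilon$ of \eqref{eq:isawthesign} becomes trivial (Remark \ref{rem:Q-Bousfield}\eqref{item:Q-Bousfield-1}), the rational Chern categories are strictly commutative, and all these sign discrepancies are killed. Hence $T^{\otimes 2}$ is induced by an entire line distribution and no smaller power works in general, giving denominator exactly $2$. I would organize the final write-up as: (i) reduce to affine bases and to the monomial-level construction via Proposition \ref{prop:line-distribution-affine} and the open-partition decomposition; (ii) assemble the commutative multiplicative functor $\CHfrak_{+}(X')\to\Picfr(S')_{\QBbb}$ from Proposition \ref{prop:properties-chern-bundles} and the compatibilities of \textsection\ref{subsec:compatibility}, using Lemma \ref{lemma:signs} to absorb the sign ambiguities into the rational coefficients; (iii) invoke Proposition \ref{prop:rig-to-ring-2} and the rationalization to extend to $\CHfrak^{\leq n+1}(X')_{\QBbb}$; (iv) record that $T^{\otimes2}$ is entire and conclude denominator $2$, degree $n+1$.
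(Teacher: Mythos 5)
Your proposal follows essentially the same route as the paper: (i) reduce to affine bases via Proposition \ref{prop:line-distribution-affine}; (ii) use the compatibilities of \textsection\ref{subsec:compatibility} to make $P\mapsto\langle P\rangle$ descend from $\Cfrak^{n+1}(X)$ to the quotient $\CHfrak_+^{n+1}(X)$; (iii) identify Lemma \ref{lemma:signs} as the source of the $\pm1$ ambiguity forcing denominator $2$; (iv) extend to the Picardification and pass to open partitions. That is the paper's proof.

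Two small caveats on your write-up. First, in step (iii) you cite Proposition \ref{prop:rig-to-ring-2}, but that result is about upgrading a graded rig category to a graded ring category, and $\Picfr(S')$ carries no such graded ring structure; what is actually needed is only the universal property of the Picardification of the single symmetric monoidal groupoid $\CHfrak_+^{n+1}(X')$ (Theorem \ref{thm:VA} via Definition \ref{def:Picardification}), since a line distribution of degree $n+1$ is just a commutative Picard functor out of $\CHfrak^{n+1}(X')$. Second, you assert that ``no smaller power works in general, giving denominator exactly $2$''; the paper establishes that $T^{\otimes 2}$ is entire (hence denominator at most $2$) but does not exhibit an example showing the signs of Lemma \ref{lemma:signs} are nontrivial, so the exactness is an extra claim you would have to justify separately. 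Neither point affects the soundness of your plan, but both are worth tightening before a final write-up.
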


\begin{proof}
First of all, by Proposition \ref{prop:line-distribution-affine}, we may suppose that all our base schemes are quasi-compact. For the proof in this case, recall the constructions in \textsection \ref{subsubsec:intermediate-category}. We first observe that the intersection bundles define functors $\mathfrak{C}^{n+1}(X) \to \Picfr(S)$, as formally follows from the established properties of intersection bundles. By MacLane's coherence theorem in $\Picfr(S)$, this functor is determined by its image on Chern monomials, and morphisms having a Chern monomial as their source. For the sake of clarity, we comment on the image of the symmetry isomorphisms under the functor. Given a Chern monomial $P$, all the symmetries in $\mathfrak{C}^{n+1}(X)$ with source $P$ are of the form $P\to Q$, where $Q$ is obtained by reordering the factors in $P$. We realize it by applying a permutation of the indices, $\sigma$, appearing in $P$, which in general is not unique. Then we send the morphism $P\to Q$ in $\mathfrak{C}^{n+1}(X)$ to the isomorphism of intersection bundles $[\sigma]\colon\langle P\rangle\to\langle Q\rangle$ as in Lemma \ref{lemma:signs} \eqref{item:symmetry-mult}. Notice that if $\sigma^{\prime}$ is any other such permutation, then it differs from $\sigma$ by a permutation fixing the Chern monomial $P$. By the previous Lemma \ref{lemma:signs} \eqref{item:symmetry-mult}, the latter induces at most a sign on the intersection bundle $\langle P\rangle$. 

To show the above functor passes to the quotient $\CHfrak_+^{n+1}(X)$, we need to check a number of compatibilities on morphisms. That is, if two morphisms in $\Cfrak^{n+1}(X)$ are identified in $\CHfrak_{+}^{n+1}(X)$, then the banana is round and they are sent to the same morphism in $\Picfr(S)$. For this, we first need to square the intersection bundles. This will have the effect of removing some sing ambiguities. 

We proceed to discuss the aforementioned compatibilities. We will focus on the non-trivial ones. First, if $\varphi,\psi\colon P\to Q$ are symmetries in $\Cfrak_+^{n+1}(X)$ between two Chern monomials, inducing the same morphism in $\CHfrak_{+}^{n+1}(X)$, then we can represent them by two permutations of the indices of $P$, say $\sigma$ and $\tau$. We can write $\tau=\sigma\circ\eta$, where $\eta$ fixes the Chern monomial $P$. By Lemma \ref{lemma:signs}, $[\tau]=[\sigma]\circ[\eta]$, and $[\eta]^{\otimes 2}=\id$. Hence, $[\tau]^{\otimes 2}=[\sigma]^{\otimes 2}$, so that $\varphi$ and $\psi$ have the same image in $\Picfr(S)$. Second, we check that for fixed Chern monomials $P$ and $Q$, the assignment $E\mapsto \langle P\cdot\cfrak_{k}(E)\cdot Q\rangle^{\otimes 2}$ satisfies the analogs of the axioms of commutative multiplicative functors in Definition \ref{def:multiplicative-functor}. All but the analog of Definition \ref{def:multiplicative-functor} \eqref{item:mult-funct-5} are established in Section \ref{subsubsec:general-intersection-bundles}. The remaining property \eqref{item:mult-funct-5} is derived from Lemma \ref{lemma:signs} \eqref{item:symmetry-add} since we are squaring the intersection bundles. 

By construction and by MacLane's coherence theorem for $\Picfr(S)$, the functor \linebreak  $\CHfrak_{+}^{n+1}(X)\to\Picfr(S)$ given by the square of the intersection bundles is a symmetric monoidal functor, where $\CHfrak_{+}^{n+1}(X)$ is considered with the addition. Hence, it induces a functor of commutative Picard categories $\CHfrak_{u}^{n+1}(X)\to\Picfr(S)$.

The above construction generalizes to provide functors $\CHfrak_{u}^{n+1}(\Ucal)\to\Picfr(S)$, for any open partition $\Ucal$ of $X$. Since $S$, and hence $X$, is supposed to be quasi-compact, $\Ucal$ is necessarily finite. For the construction of the sought functor, write $\Ucal=\lbrace X_{i}\rbrace$, and $S_{i}=f(X_{i})$. The $S_{i}$ constitute a finite open and closed cover of $S$, but not necessarily a partition. Nevertheless, from it we can canonically construct an open partition $\Scal = \lbrace S_{j}^{\prime}\rbrace_{j}$, and then replace $\Ucal$ by the common refinement $\Ucal'=\Ucal \cap f^{-1}(\Scal)$. This allows us to reduce to the case when the $S_{i}$ form an open partition. In this case, for an object $P=(P_{i})_i$ in $\CHfrak_{u}^{n+1}(\Ucal)=\prod_{i}\CHfrak_{u}^{n+1}(X_{i})$ we define a line bundle, locally over each $S_{i}$:
\begin{displaymath}
    \langle P \rangle_{X/S}\ |_{S_{i}} = \bigotimes_{f(X_{j}) = S_{i} } \langle P_{j}\rangle_{X_{j}/S_{i}}.
\end{displaymath}
The functor we seek is then given on objects by squaring. We declare morphisms to behave accordingly. This construction is compatible with base change and with refinements $\Vcal$ of $\Ucal$. By the universal property of $\CHfrak^{n+1}(X)$ (cf. Proposition \ref{prop:CHX-is-a-direct-limit}), this procedure defines an entire line distribution of degree $n+1$. It hence induces a line distribution of degree $n+1$ and denominator 2. 

\end{proof}

We next introduce some definitions and notations based on the line distribution provided by the intersection bundles. Before, the reader may review \textsection \ref{subsec:line-distributions} for the basic notions on line distributions, and in particular the product operation by Chern power series (cf. Definition \ref{def:prod-Chern-series-line-distribution}) and the direct images (cf. Definition \ref{def:direct-image-distributions}).

\begin{definition}\label{definition:intersectiondistribution}
Suppose that $X\to S$ satisfies the condition $(C_n)$. 

\begin{enumerate}
    \item The line distribution defined by the intersection bundles, referred to as the intersection distribution, is denoted by $\langle \bullet \rangle_{X/S}$. 
    \item If $P$ is a Chern power series, we denote by $[ P ]_{X/S}$, or simply $[P]$, the line distribution $P \cdot \langle \bullet \rangle_{X/S}$.
    \item If $i\colon Y \to X$ is a closed subscheme, such that $Y \to S$ satisfies the condition $(C_m)$, we denote by $\delta_{Y/S}$ the line distribution of degree $m+1$ on $X \to S$ given by $i_\ast[ 1 ]_{Y/S} $. 
    \item By convention, if $Y \to T$ satisfies the condition $(C_m)$ and $T \subseteq S$ is a closed and open immersion of schemes, we equally denote by $\delta_{Y/S}$ the line distribution which is $\delta_{Y/T}$ over $T$ and trivial over $S\setminus T.$
    \item When $S$ is implicit and there is no risk for confusion, we simply denote by $\delta_{Y}$ the line distribution $\delta_{Y/S}$. 
\end{enumerate}
\end{definition}

 For simplicity, we will call intersection distribution any line distribution built out from intersection distributions, such as $[P]_{X/S}$ or $\delta_{Y/S}$ above. 
 
 Notice that, with the notation above, if $d$ is a rational number, the line distribution $[d]_{X/S}$ is identified with $\langle\bullet\rangle_{X/S}^{\otimes d}$. In particular, $[0]_{X/S}$ is isomorphic to the trivial line distribution. Abusing notations, we will sometimes denote the distributions $[0]_{X/S}$ and $[1]_{X/S}$ by $0$ and $1$, respectively.

The intersection distribution satisfies some formal properties, which can be derived straightforwardly from the properties of intersection bundles. For instance, given line bundles $L$ and $M$ on $X$, the bilinearity of the Deligne products translates into an isomorphism of line distributions
\begin{equation}\label{eq:c1-tensor-product-distribution}
    [\cfrak_{1}(L\otimes M)]_{X/S}\simeq [\cfrak_{1}(L)]_{X/S}+[\cfrak_{1}(M)]_{X/S},
\end{equation}
where we recall that we use additive instead of tensor product notation for line distributions.

In the rest of this section, we record some relevant features of the intersection distributions, in analogy with the usual properties in intersection theory. The properties are rather direct applications of the formalism and the developed properties for intersection bundles. The first statement recapitulates, within the formalism of line distributions, the contents of \textsection \ref{subsec:compatibility}.

\begin{corollary}\label{cor:properties-intersection-distribution}
With the notation and assumptions of Proposition \ref{prop:properties-chern-bundles}, the intersection distributions satisfy the following properties:
\begin{enumerate}
    \item (Projection formulas) Let $h\colon X^{\prime}\to X$ be as in Proposition \ref{prop:properties-chern-bundles} \eqref{item:projection-fromula}, and take $P$ in $\CHfrak(X)_{\QBbb}$ and $P^{\prime}$ in $\CHfrak(X^{\prime})_{\QBbb}$. There are canonical isomorphisms of line distributions:
    \begin{displaymath}
            h_\ast [P']_{X^{\prime}/S} \simeq 
\begin{cases}
    \left[ \cfrak_1 (\langle P^{\prime} \rangle_{X^{\prime}/X})\right]_{X/S}, & \text{if }\; \deg P^{\prime}=n^{\prime}+1.\\ \\
    [ \kappa ]_{X/S}, &  \text{if }\; \deg P^{\prime} = n^{\prime}.  \\ \\
         0,  & \text{if }\; \deg P^{\prime} < n^{\prime}. \\
\end{cases}
\end{displaymath}
    \item\label{item:prop-int-dist-whitney} (Whitney isomorphism) Let $0 \to E^{\prime}\to E \to E^{\bis} \to 0$ be a short exact sequence of vector bundles on $X$. Then there is a canonical isomorphism
    \begin{displaymath}
        [\cfrak_{k}(E)]_{X/S}\simeq \sum_{i=0}^{k} [\cfrak_{i}(E^{\prime})\cdot\cfrak_{k-i}(E^{\bis})]_{X/S},
    \end{displaymath}
    in a way compatible with admissible filtrations.
    \item\label{item:prop-int-dist-c1-det} (First Chern class isomorphism) Let $E$ be a vector bundle on $X$. Then, there is a canonical isomorphism 
 \begin{displaymath}
        [\cfrak_1(E)]_{X/S} \simeq [\cfrak_1(\det E)]_{X/S}
    \end{displaymath}
    in a way that is compatible with the Whitney isomorphism. 
    \item\label{item:prop-int-dist-rank} (Rank triviality) Let $E$ be a vector bundle on $X$ and $q$ an integer such that $q > \rk E$. Then there is an isomorphism 
    \begin{displaymath}
        [ \cfrak_q(E)]_{X/S} \simeq 0.
        \end{displaymath}
    \item\label{item:prop-int-dist-restriction} (Restriction isomorphism) Let $E$ be a vector bundle of constant rank $r$ on $X$. Suppose that $\sigma$ is a regular section of $E$,  whose zero locus $Y$, possibly empty, is flat over $S$. Then, there is a canonical isomorphism
    \begin{displaymath}
        [ \cfrak_r(E) ]_{X/S} \simeq \delta_{Y/S}, 
    \end{displaymath}
    where $i\colon Y\hookrightarrow X$ is the closed immersion of $Y$ in $X$. 
    \item\label{item:prop-int-dist-birational} (Birational invariance) Suppose $h \colon X' \to X$  is as in Proposition \ref{prop:properties-chern-bundles} \eqref{item:birational-invariance}. Then, there is a canonical isomorphism
    \begin{displaymath}
        h_\ast \delta_{X'/S}\simeq \delta_{X/S}.
    \end{displaymath}
    In particular, $h_{\ast}[h^{\ast}P]_{X'/S}\simeq [P]_{X/S}$. 
\end{enumerate}
By the definition $P \cdot [Q]_{X/S} = [Q \cdot P]_{X/S}$ these operations can be composed with each other in a natural way. As such, all operations commute with each other. 

\end{corollary}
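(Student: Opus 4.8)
The plan is to deduce each item of Corollary \ref{cor:properties-intersection-distribution} by unwinding the definition of line distributions and invoking the corresponding statement from Proposition \ref{prop:properties-chern-bundles}, together with the splitting principle for line distributions in Corollary \ref{cor:splittingdistribution}. Recall that a line distribution is a compatible family of functors $\CHfrak(X'_{S'})_{\QBbb}\to\Picfr(S')_{\QBbb}$, and by Theorem \ref{thm:Elkik-distribution} the assignment $P\mapsto\langle P\rangle_{X/S}$ is such a distribution. Each of the named operations (Whitney, first Chern class, rank triviality, restriction, birational invariance) is, by Proposition \ref{prop:properties-chern-bundles}, a canonical functorial isomorphism between the intersection \emph{bundles} $\langle P\cdot\cfrak_k(E)\cdot P'\rangle_{X/S}$, compatible with base change; since $[P]_{X/S}$ is by Definition \ref{definition:intersectiondistribution} the distribution $Q\mapsto \langle Q\cdot P\rangle_{X/S}$, applying such an isomorphism in the middle slot for arbitrary $Q$ on arbitrary base change $S'\to S$ yields precisely an isomorphism of line distributions of the asserted form. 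So the bulk of the work is bookkeeping: for the first projection formula one uses Proposition \ref{prop:properties-chern-bundles} \eqref{item:projection-fromula} together with the definition $h_\ast T(Q)=T(h^\ast Q)$ from Definition \ref{def:direct-image-distributions} and the projection formula identity \eqref{linedist:projformula}; for the Whitney isomorphism \eqref{item:prop-int-dist-whitney}, the first Chern class isomorphism \eqref{item:prop-int-dist-c1-det}, and rank triviality \eqref{item:prop-int-dist-rank} one directly transcribes Proposition \ref{prop:properties-chern-bundles} \eqref{whitneyiso}, \eqref{item:first-chern-determinant}, \eqref{item:Chern-rank}; for the restriction isomorphism \eqref{item:prop-int-dist-restriction} one combines Proposition \ref{prop:properties-chern-bundles} \eqref{item:restriction-isomorphism} with the definition of $\delta_{Y/S}=i_\ast[1]_{Y/S}$ and the projection formula, noting $\langle P|_Y\rangle_{Y/S}=(i_\ast[1]_{Y/S})(P)$; and birational invariance \eqref{item:prop-int-dist-birational} follows from Proposition \ref{prop:properties-chern-bundles} \eqref{item:birational-invariance} plus $h_\ast[h^\ast P]_{X'/S}=[P]_{X/S}\circ(\text{birational iso})$.

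Two subtleties require genuine attention, so I would isolate them. First, the asserted isomorphisms are between \emph{rational} line distributions, so I must ensure that the integral isomorphisms of Proposition \ref{prop:properties-chern-bundles} descend/extend appropriately; this is automatic by the functoriality of rationalization (Proposition \ref{prop:rationalization}, Corollary \ref{cor:rationalization}) applied to the category $\Dcal_{\ZBbb}(X/S)$, but since the intersection distribution has denominator $2$ one should check that products with a fixed rational Chern power series $P$ do not introduce incompatibilities — they do not, because $P\cdot T$ is defined by precomposition with a functor of commutative Picard categories and rationalization is compatible with such precomposition. Second, and this is the main obstacle, is the \emph{final clause}: that all these operations commute with each other. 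Here one cannot simply appeal to Proposition \ref{prop:properties-chern-bundles} alone, because that proposition only asserts commutativity of certain pairs via the compatibility statements, and the full package of compatibilities is the content of \textsection \ref{subsec:compatibility}, in particular Proposition \ref{prop:stupid-compatibilities}, Proposition \ref{prop:further-compatibilites}, Corollary \ref{Cor:RestrictionRestriction}, Proposition \ref{prop:newranktriviality}, and Proposition \ref{prop:sectionsandrestrictions}. The strategy for the commutativity clause is therefore: observe that every operation, after multiplying by the fixed middle power series $P$ and after passing to an affine base (legitimate by Proposition \ref{prop:line-distribution-affine}), is realized as an isomorphism of intersection \emph{bundles} of the form covered by those results; hence any diagram built from two such operations, applied in the two possible orders, is one of the diagrams shown to commute in \textsection \ref{subsec:compatibility}, possibly after squaring to kill the sign ambiguities recorded in Lemma \ref{lemma:signs} — but since we work rationally (and in fact with denominator $2$ already built in), the signs are harmless.

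Concretely, I would organize the proof as follows. Step 1: reduce to affine base via Proposition \ref{prop:line-distribution-affine}, and to vector bundles of constant rank via the decomposition property of intersection bundles over open partitions. Step 2: for each of the six items, write down the claimed isomorphism of line distributions as "apply the corresponding isomorphism of Proposition \ref{prop:properties-chern-bundles} in the free slot, for every $Q$ and every base change", and verify base-change compatibility — which is part of each cited statement — so that Definition \ref{def:distribution} \eqref{item:distribution-2}--\eqref{item:distribution-3} hold. For the items involving direct images ($h_\ast$), additionally invoke \eqref{linedist:projformula} and Definition \ref{def:direct-image-distributions}. For the restriction item, spell out the identification $\delta_{Y/S}(Q)=\langle Q|_Y\rangle_{Y/S}$ so that Proposition \ref{prop:properties-chern-bundles} \eqref{item:restriction-isomorphism} literally gives $[\cfrak_r(E)]_{X/S}(Q)=\langle Q\cdot\cfrak_r(E)\rangle_{X/S}\simeq\langle Q|_Y\rangle_{Y/S}=\delta_{Y/S}(Q)$. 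Step 3 (the commutativity clause): note that $P\cdot[Q]_{X/S}=[Q\cdot P]_{X/S}$ is immediate from Definition \ref{def:prod-Chern-series-line-distribution}, so products with power series are strictly associative and compatible with everything; then argue that each pair of operations fits into a commuting diagram by citing the appropriate compatibility from \textsection \ref{subsec:compatibility} (Proposition \ref{prop:stupid-compatibilities} for the "easy" pairs and the birational invariance compatibilities, Proposition \ref{prop:further-compatibilites} for Whitney/restriction, Whitney/projection, projection/restriction, rank-triviality/projection, rank-triviality/restriction, Corollary \ref{Cor:RestrictionRestriction} for iterated restrictions, and Proposition \ref{prop:sectionsandrestrictions}, Proposition \ref{prop:newranktriviality} for the subtler interactions), and observe that any iterated composition decomposes into such elementary squares. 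I expect the only real care needed is making sure the enumeration of pairs in \textsection \ref{subsec:compatibility} is exhaustive for the purposes here, and handling the sign ambiguity — which, as noted, dissolves because the intersection distribution is already defined with denominator $2$ (equivalently, after squaring), exactly as in the proof of Theorem \ref{thm:Elkik-distribution}. I would then remark that the details of these verifications are routine given the cited results and leave them to the reader, as is done elsewhere in the section.
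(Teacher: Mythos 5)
Your proposal is correct and follows essentially the same route as the paper's proof: deduce each item by applying the corresponding isomorphism from Proposition \ref{prop:properties-chern-bundles} in the free slot (together with the formal projection formula \eqref{linedist:projformula} and Definition \ref{def:direct-image-distributions} for the items involving $h_\ast$, and the identification $\delta_{Y/S}(Q)=\langle Q|_Y\rangle_{Y/S}$ for the restriction item), and obtain the commutativity clause from the compatibility results of \textsection\ref{subsec:compatibility} via a sign-killing argument as in the proof of Theorem \ref{thm:Elkik-distribution}. You spell out the bookkeeping in more detail than the paper does — in particular the reduction to affine bases via Proposition \ref{prop:line-distribution-affine}, the rationalization subtlety, and the explicit handling of denominator $2$ — but these are refinements, not a different argument.

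One small mismatch worth noting: for birational invariance the paper actually cites Proposition \ref{prop:stupid-compatibilities} \eqref{item:birational-comp} (the compatibility statement), because the second claim of item \eqref{item:prop-int-dist-birational}, $h_\ast[h^\ast P]_{X'/S}\simeq[P]_{X/S}$, requires combining birational invariance with the projection formula, not merely transcribing Proposition \ref{prop:properties-chern-bundles} \eqref{item:birational-invariance}; you gesture at this but attribute it to the wrong citation. Also, you flag "making sure the enumeration of pairs in \textsection\ref{subsec:compatibility} is exhaustive" as a remaining worry without resolving it — this is honest, and the paper itself leaves it at the same level of implicitness, so no gap is created relative to the paper's own standard.
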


\begin{proof}
The first part is a reformulation of the formal projection formula for line distributions in \eqref{linedist:projformula}, combined with the projection formula in Proposition \ref{prop:properties-chern-bundles}. The last property similarly follows, by instead appealing to Proposition \ref{prop:stupid-compatibilities} \eqref{item:birational-comp}. The other properties are direct translations of the corresponding properties in Proposition \ref{prop:properties-chern-bundles}. The fact that these indeed induce isomorphisms of line distributions, which commute with each other, follows from the results in \textsection \ref{subsec:compatibility} and a reasoning similar to the proof of Theorem \ref{thm:Elkik-distribution}. 
\end{proof}

\begin{remark}
By construction, the isomorphism \eqref{eq:c1-tensor-product-distribution} can be obtained as a combination of the first Chern class isomorphism applied to $E=L\oplus M$, and the Whitney isomorphism for the obvious filtration $L\subseteq L\oplus M$. Consequently, the fact that the isomorphism \eqref{eq:c1-tensor-product-distribution} commutes with all the other stated isomorphisms, is already contained in the corollary. 
\end{remark}

\begin{corollary}
    Let $p\colon M \to \PBbb^1_S$ verify the property $(C_n)$ and consider the line distributions $\delta_{M_0/S}$ and $\delta_{M_{\infty}/S}$ of degree $n+1$ from $M$ to $S$. Then, there is a canonical isomorphism 
    \begin{displaymath}
        \delta_{M_0/S} \simeq \delta_{M_{\infty}/S}
    \end{displaymath}
     of line distributions of degree $n+1$ from $M$ to $S$, which is compatible with the constructions in Corollary \ref{cor:properties-intersection-distribution}.
\end{corollary}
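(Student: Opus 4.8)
The statement says that for $p\colon M\to\PBbb^1_S$ satisfying $(C_n)$, the divisors $M_0=p^{-1}(0)$ and $M_\infty=p^{-1}(\infty)$ define canonically isomorphic line distributions of degree $n+1$, compatibly with all the operations of Corollary \ref{cor:properties-intersection-distribution}. The natural strategy is to realize both $M_0$ and $M_\infty$ as zero loci of regular sections of the \emph{same} line bundle and then invoke the restriction isomorphism from Corollary \ref{cor:properties-intersection-distribution} \eqref{item:prop-int-dist-restriction}. Precisely, let $L=p^{\ast}\Ocal_{\PBbb^1_S}(1)$ on $M$, and let $s_0,s_\infty\in H^0(\PBbb^1_S,\Ocal(1))$ be the tautological sections cutting out the $0$- and $\infty$-sections of $\PBbb^1_S\to S$. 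Then $\sigma_0=p^{\ast}s_0$ and $\sigma_\infty=p^{\ast}s_\infty$ are global sections of $L$ on $M$, their zero loci are exactly $M_0$ and $M_\infty$, and since the $0$- and $\infty$-sections are relative Cartier divisors on $\PBbb^1_S$, these pullbacks are regular sections whose zero loci $M_0,M_\infty$ are again flat over $S$ (they satisfy $(C_{n-1})$, by the discussion following Proposition \ref{prop:generalpropertiesdeligneproducts} applied to $M_0,M_\infty\subseteq M$, as the fibers of a relative effective Cartier divisor in an equidimensional family are purely of one dimension lower).

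First I would apply Corollary \ref{cor:properties-intersection-distribution} \eqref{item:prop-int-dist-restriction} to the pair $(L,\sigma_0)$ and to the pair $(L,\sigma_\infty)$. This gives two canonical isomorphisms of line distributions on $M$, both of degree $n+1$:
\begin{displaymath}
    \delta_{M_0/S}\;\simeq\;[\cfrak_1(L)]_{M/S}\;\simeq\;\delta_{M_\infty/S}.
\end{displaymath}
Composing the first with the inverse of the second produces the desired canonical isomorphism $\delta_{M_0/S}\simeq\delta_{M_\infty/S}$. The key point making this well defined is that both restriction isomorphisms land in the \emph{same} intermediate distribution $[\cfrak_1(L)]_{M/S}$, which depends only on the line bundle $L$ and not on the chosen section; this is immediate from Definition \ref{def:prod-Chern-series-line-distribution} and the fact that $\cfrak_1(L)$ is a well-defined object of $\CHfrak(M)_{\QBbb}$.

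The compatibility with the constructions in Corollary \ref{cor:properties-intersection-distribution} then reduces, via the factorization above, to two separate compatibility checks: (i) that the isomorphism $\delta_{M_0/S}\simeq[\cfrak_1(L)]_{M/S}$ (and likewise for $M_\infty$) is compatible with projection formulas, Whitney isomorphisms, first Chern class isomorphisms, rank trivialities, and birational invariance; and (ii) that the two-term composition does not introduce any incompatibility. Part (i) is precisely the content of the last sentence of Corollary \ref{cor:properties-intersection-distribution}, which asserts that all the listed operations commute with one another, together with the compatibility statements already packaged into Proposition \ref{prop:further-compatibilites} and Proposition \ref{prop:stupid-compatibilities}; in particular the restriction isomorphism is compatible with the projection formula, the Whitney isomorphism, the rank triviality, and (under the fiberwise-density hypothesis) birational invariance. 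Part (ii) is then formal, since a composition of two compatible isomorphisms is compatible. One should also record that the isomorphism is independent of the chosen coordinates $0,\infty$ on $\PBbb^1_S$ up to the unit ambiguities already controlled in Lemma \ref{lemma:signs}: any two sections of $\Ocal(1)$ cutting out relative Cartier divisors differ by multiplication by a unit and a linear change, and by the multilinearity of $[\cfrak_1(-)]$ (cf. \eqref{eq:c1-tensor-product-distribution}) and Proposition \ref{prop:generalpropertiesdeligneproducts} \eqref{item:delmultiplicationbyu} this only affects things by signs that disappear after the denominator-$2$ normalization built into the intersection distribution.

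The main obstacle I anticipate is not the existence of the isomorphism but the bookkeeping of the compatibility clause. One must be careful that the two restriction isomorphisms, although formally symmetric in $\sigma_0$ and $\sigma_\infty$, are genuinely instances of the \emph{same} construction applied to the \emph{same} bundle $L$; the potential trap is that an a priori reasonable alternative—restricting $\langle P\rangle_{M/S}$ directly along $M_0$ versus $M_\infty$ without passing through $[\cfrak_1(L)]_{M/S}$—would require Corollary \ref{Cor:RestrictionRestriction}-type coherence that is only available under Tor-independence hypotheses on $M_0\cap M_\infty$, which here is empty, so in fact this causes no difficulty. Thus the only real work is to confirm that every operation in Corollary \ref{cor:properties-intersection-distribution} commutes with the \emph{single} restriction isomorphism attached to a regular section of a line bundle, which is exactly what \textsection\ref{subsec:compatibility} establishes, and then to note that the resulting isomorphism is symmetric under swapping $0\leftrightarrow\infty$. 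I would conclude by remarking that this isomorphism is the prototype of a rational-equivalence invariance statement for intersection distributions, to be used in the deformation-to-the-normal-cone arguments of the sequel.
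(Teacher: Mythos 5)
Your proof is correct and follows essentially the same route as the paper: both realize $\delta_{M_0/S}$ and $\delta_{M_\infty/S}$ as restriction isomorphisms $[\cfrak_1(\Ocal(D))]\simeq\delta_D$ from Corollary~\ref{cor:properties-intersection-distribution}~\eqref{item:prop-int-dist-restriction} applied to the same line bundle (the paper passes through the canonical isomorphism $\Ocal(0)\simeq\Ocal(\infty)$ rather than two sections of $\Ocal(1)$, but this is only a cosmetic difference), and the compatibility clause is inherited by construction.
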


 \begin{proof}
 On $\PBbb^{1}_{S}$, consider the standard isomorphisms of line bundles $\Ocal(\infty)\simeq\Ocal(0)$. Pulling these back to $M$, and using the isomorphism $[\cfrak_{1}(\Ocal(D))]\simeq\delta_{D}$ provided by Corollary \ref{cor:properties-intersection-distribution} \eqref{item:prop-int-dist-restriction}, we obtain
\begin{displaymath} 
    \delta_{M_\infty/S}\simeq [\cfrak_{1}(p^\ast \Ocal(\infty))]_{M/S} \simeq [\cfrak_{1}(p^\ast \Ocal(0))]_{M/S} \simeq \delta_{M_0/S}.
\end{displaymath}
By construction, it is compatible with the isomorphisms in Corollary \ref{cor:properties-intersection-distribution}.
 \end{proof}

\begin{corollary}
    Let $D = \sum D_i$ be a finite sum of relative effective Cartier divisors $D_i$ on $X$. Suppose moreover that whenever $i\neq j$, $D_i \cap D_j$ is a relative effective Cartier divisor in $D_j$. Then, there is a canonical isomorphism
    \begin{displaymath}
        \delta_{D}\simeq \sum \delta_{D_i},
    \end{displaymath}
    which is compatible with the constructions in Corollary \ref{cor:properties-intersection-distribution}. The hypothesis that all $D_i$ are relative effective Cartier divisors is, in particular, satisfied if $D$ is a relative effective Cartier divisor and all but one of the $D_i$ are. 
\end{corollary}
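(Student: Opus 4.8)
The statement is essentially additivity of the class $\delta_{D}$ in the divisor $D$, and the natural strategy is to reduce to the $n=0$ situation (norm functors) using the formalism already developed, and then to the elementary algebraic identity $N_{D/S}=\prod_i N_{D_i/S}$ for a decomposition of a finite flat cycle. First I would observe that, by Proposition~\ref{prop:line-distribution-affine}, it suffices to construct the isomorphism locally on $S$, hence we may assume $S$ affine and $X\to S$ projective. Next, using the isomorphism $\delta_{D}\simeq[\cfrak_1(\Ocal(D))]_{X/S}$ of Corollary~\ref{cor:properties-intersection-distribution}~\eqref{item:prop-int-dist-restriction} (applied to the canonical section $\mathbf{1}_D$ of $\Ocal(D)$, which is regular with zero locus $D$), and the bilinearity isomorphism \eqref{eq:c1-tensor-product-distribution} applied to $\Ocal(D)\simeq\Ocal(D_1)\otimes\cdots\otimes\Ocal(D_r)$, I would obtain
\begin{displaymath}
    \delta_{D}\simeq[\cfrak_1(\Ocal(D))]_{X/S}\simeq\sum_i[\cfrak_1(\Ocal(D_i))]_{X/S}\simeq\sum_i\delta_{D_i},
\end{displaymath}
where the last step again uses the restriction isomorphism, now for each canonical section $\mathbf{1}_{D_i}$. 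The hypothesis that each $D_i$ is a relative effective Cartier divisor is exactly what is needed to make $\mathbf{1}_{D_i}$ a regular section with zero locus $D_i$ flat over $S$, so that $\delta_{D_i}$ is defined; and the last sentence of the statement follows because if $D$ and all but one $D_i$ are relative effective Cartier divisors, then so is the remaining one (it is the difference $D-\sum_{j\ne i}D_j$ of relative effective Cartier divisors whose associated invertible sheaf still admits a regular section, or one argues fiberwise using that the $D_j$ meet $D$ in relative Cartier divisors). The extra transversality hypothesis $D_i\cap D_j$ a relative Cartier divisor in $D_j$ is used to ensure compatibility with the restriction isomorphisms and to match the description of $\delta_D$ obtained by iterated restriction; it guarantees that the Tor-independence and flatness assumptions of Corollary~\ref{Cor:RestrictionRestriction} are in force when one expands everything in terms of symbols.

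\textbf{Carrying out the compatibility.} Once the isomorphism exists, the remaining content is that it is compatible with the operations of Corollary~\ref{cor:properties-intersection-distribution} (projection formula, Whitney, first Chern class, rank triviality, restriction, birational invariance). Here I would argue exactly as in the proof of Theorem~\ref{thm:Elkik-distribution} and the proof of Corollary~\ref{cor:properties-intersection-distribution}: the isomorphism above has been built entirely out of the restriction isomorphism of Proposition~\ref{prop:properties-chern-bundles}~\eqref{item:restriction-isomorphism} and the bilinearity of Deligne pairings, both of which were shown in \textsection\ref{subsec:compatibility} (Proposition~\ref{prop:further-compatibilites} and the surrounding corollaries) to be compatible with all the other operations and to commute with each other. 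Therefore the composite isomorphism $\delta_D\simeq\sum_i\delta_{D_i}$ inherits these compatibilities, and one checks the relevant diagrams commute by unwinding definitions, just as in the cited proofs. For the interaction with the restriction isomorphism itself, when some further $L_j=\Ocal(D')$ is restricted along $D'$, one invokes Corollary~\ref{Cor:RestrictionRestriction} (iterated restriction is independent of order) together with the transversality hypothesis on the $D_i\cap D_j$, which is precisely what makes the relevant intersections relative Cartier divisors so that the restriction isomorphisms are defined and compatible.

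\textbf{Main obstacle.} I expect the genuine difficulty to be not the construction of the isomorphism --- which is a two-line composition of already-established isomorphisms --- but rather verifying that the transversality hypotheses are exactly what is needed, and that the construction is \emph{canonical}, i.e.\ independent of the chosen identification $\Ocal(D)\simeq\bigotimes_i\Ocal(D_i)$ and of the order of the $D_i$. The order-independence follows from the strict commutativity of the target Picard category (MacLane coherence for line bundles, as used repeatedly above) together with Lemma~\ref{lemma:signs}, which controls the sign ambiguities introduced by symmetries; since we are working with line distributions of denominator $2$, or at worst can square as in the proof of Theorem~\ref{thm:Elkik-distribution}, these signs are harmless. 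The canonicity with respect to the identification of invertible sheaves reduces, via the relation defining Deligne pairings \eqref{eq:relationElkikDucrot} and the transitivity of the norm functor, to the analogous elementary statement for norm functors of rational functions in relative dimension zero; this is where the transversality hypothesis $D_i\cap D_j$ a relative Cartier divisor in $D_j$ is really consumed. The details here are routine but somewhat tedious bookkeeping with symbols, entirely in the spirit of the proofs in \textsection\ref{subsec:Deligne-pairings} and \textsection\ref{subsec:compatibility}, so I would leave them to the reader, as the paper does for the analogous corollaries.
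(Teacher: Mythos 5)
Your proof is correct but takes a genuinely different route from the paper's. The paper proves the isomorphism via birational invariance (Corollary~\ref{cor:properties-intersection-distribution}~\eqref{item:prop-int-dist-birational}), applied to the map $\pi\colon\bigsqcup_i D_i \to \sum_i D_i = D$: the transversality hypothesis on the $D_i\cap D_j$ is precisely what makes the complement $U$ of the pairwise intersections fiberwise dense in $D$ with $\pi^{-1}(U)\to U$ an isomorphism, so $\delta_D\simeq\pi_\ast\delta_{\bigsqcup D_i}\simeq\sum_i\delta_{D_i}$ falls out directly. You instead go through the first Chern class: $\delta_D\simeq[\cfrak_1(\Ocal(D))]\simeq\sum_i[\cfrak_1(\Ocal(D_i))]\simeq\sum_i\delta_{D_i}$, using the restriction isomorphism of Corollary~\ref{cor:properties-intersection-distribution}~\eqref{item:prop-int-dist-restriction} for the canonical sections and the bilinearity \eqref{eq:c1-tensor-product-distribution}. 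Your route is arguably more elementary and makes the existence of the isomorphism manifestly independent of the transversality hypothesis --- indeed it even handles repeated divisors, e.g.\ $D=2D_1$ gives $\delta_D\simeq 2\delta_{D_1}$, which the birational route does not see. On the other hand, the paper's route makes the role of the transversality hypothesis transparent and more directly matches the geometry of the situation, which can be useful when one later wants to identify the isomorphism with geometric constructions (deformation to the normal cone, etc.). For the last sentence of the statement, the paper cites \cite[\href{https://stacks.math.columbia.edu/tag/0B8V}{0B8V}]{stacks-project}; your fiberwise argument is a sketch of essentially the same thing, but the citation is cleaner. Finally, your worry about the canonicity of the identification $\Ocal(D)\simeq\bigotimes_i\Ocal(D_i)$ is unnecessary: this isomorphism is unique once one requires the canonical section $\mathbf{1}_D$ to map to $\bigotimes_i\mathbf{1}_{D_i}$, and order-independence is guaranteed by the strict commutativity of $\Dcal(X/S)$, so no symbol bookkeeping is actually required for that point.
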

\begin{proof}
Notice that, by \cite[\href{https://stacks.math.columbia.edu/tag/0B8V}{0B8U}]{stacks-project}, the sum is automatically a relative effective Cartier divisor. The sought isomorphism of distributions follows from Corollary \ref{cor:properties-intersection-distribution} \eqref{item:prop-int-dist-birational} since $\bigsqcup D_i \to \sum D_i$ satisfies the assumptions of Proposition \ref{prop:properties-chern-bundles} \eqref{item:birational-invariance}. The compatibility claim is automatic by the construction and the compatibility claim in Corollary \ref{cor:properties-intersection-distribution}. The last point follows from  \cite[\href{https://stacks.math.columbia.edu/tag/0B8V}{0B8V}]{stacks-project}.
\end{proof}

We conclude this subsection with some further properties of the line distributions defined by Chern classes.

\begin{proposition}\label{prop:isom-first-chern-functorial}
The first Chern class satisfies the following properties.

\begin{enumerate}
    \item  Both $E\mapsto [\cfrak_{1}(E)]$ and $E\mapsto [\cfrak_{1}(\det E)]$ are commutative multiplicative functors from $(\Vect_{X},\iso)$ to the commutative Picard category $\Dcal(X/S).$ 
    \item The first Chern class isomorphism is a natural transformation of these multiplicative functors. 
    \item The functors in the first point and the first Chern class isomorphism extend to the virtual category of vector bundles. 
\end{enumerate}
\end{proposition}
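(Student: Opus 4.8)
The first Chern class line distribution $[\cfrak_1(E)]_{X/S}$ is, by definition, the intersection distribution $\cfrak_1(E)\cdot\langle\bullet\rangle_{X/S}$, and, as recorded in Definition \ref{def:prod-Chern-series-line-distribution} and the subsequent remark, the assignment $P\mapsto P\cdot\langle\bullet\rangle_{X/S}$ is a bisymmetric, bimonoidal functor $(\CHfrak(X)_{\QBbb},+)\times(\Dcal(X/S),\otimes)\to(\Dcal(X/S),\otimes)$. Hence to establish the first point it suffices to observe that $E\mapsto\cfrak_1(E)$ is a commutative multiplicative functor $(\Vect_X,\iso)\to(\CHfrak(X)_{\QBbb},+)$, and likewise for $E\mapsto\cfrak_1(\det E)$. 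The former is exactly Remark \ref{rmk:superficial-remarks-Chern-functors} \eqref{rmk:supremChfun-2} combined with Proposition \ref{prop:formal-properties-CH-quasi-compact} \eqref{item:formal-properties-CH-quasi-compact-5}: the degree one component of the Chern functor $\cfrak$ is a functor of commutative Picard categories from $V(X)$, hence a commutative multiplicative functor on $(\Vect_X,\iso)$. For $\cfrak_1(\det(-))$, one composes the determinant functor $\det\colon(\Vect_X,\iso)\to(\Picfr(X),\otimes)$, which is commutative multiplicative, with the additive functor $L\mapsto\cfrak_1(L)$; here the multilinearity statement of Lemma \ref{lemma:elementary-properties-segre} \eqref{item:elpropseg-2} — more precisely its reflection in the Chern category via \eqref{eq:c1-tensor-product-distribution} and the bilinearity of $\cfrak_1$ on line bundles in $\CHfrak(X)_{\QBbb}$ — provides the multiplicative structure, and strict commutativity of $\CHfrak(X)_{\QBbb}$ forces the commutativity axiom.

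For the second point, the first Chern class isomorphism was constructed in Proposition \ref{prop:properties-chern-bundles} \eqref{item:first-chern-determinant} — equivalently, in the distribution language, Corollary \ref{cor:properties-intersection-distribution} \eqref{item:prop-int-dist-c1-det} — as a canonical isomorphism $[\cfrak_1(E)]_{X/S}\simeq[\cfrak_1(\det E)]_{X/S}$, and its construction via the diagram \eqref{eq:firstChernclassisodef} makes it manifestly functorial in isomorphisms of $E$ and compatible with base change, hence an isomorphism of functors $(\Vect_X,\iso)\to\Dcal(X/S)$. What must be checked to upgrade this to a natural transformation of \emph{multiplicative} functors is, by Remark \ref{rem:detfunctor} \eqref{rem:detfunctor-0}, only the compatibility with exact sequences (the normalization on the zero bundle being trivial on both sides). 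That is precisely the content of the clause "in a way that is compatible with the Whitney isomorphism" in Corollary \ref{cor:properties-intersection-distribution} \eqref{item:prop-int-dist-c1-det}: given $0\to E'\to E\to E''\to 0$, the Whitney isomorphisms $[\cfrak_1(E)]\simeq[\cfrak_1(E')]+[\cfrak_1(E'')]$ on the source side and the induced decomposition $\det E\simeq\det E'\otimes\det E''$ with \eqref{eq:c1-tensor-product-distribution} on the target side intertwine the two first-Chern-class isomorphisms. I would spell this out by noting the multiplicative structure on $E\mapsto[\cfrak_1(\det E)]$ is built from the multiplicative structure of $\det$ followed by \eqref{eq:c1-tensor-product-distribution}, and that the stated compatibility of \eqref{item:first-chern-determinant} with the Whitney isomorphism is exactly the assertion that the relevant square commutes.

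For the third point, I would invoke Theorem \ref{thm:delignevirtual}: a commutative multiplicative functor $(\Vect_X,\iso)\to\Pcal$ into a commutative Picard category extends uniquely (up to unique isomorphism) to a functor of commutative Picard categories $V(X)\to\Pcal$, and a natural transformation of commutative multiplicative functors extends to a natural transformation of the corresponding functors on $V(X)$. Applying this with $\Pcal=\Dcal(X/S)$ — which is a commutative Picard category by the discussion following Definition \ref{def:Q-distribution} — to the two functors of the first point and the natural transformation of the second point yields the two functors $V(X)\to\Dcal(X/S)$ and the extended first Chern class isomorphism between them. One should also remark that these extensions are compatible with the pullback functoriality of $\Dcal(X/S)$, which again follows formally from the uniqueness in Theorem \ref{thm:delignevirtual} together with the base change compatibility already recorded.

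\textbf{Main obstacle.} The only non-formal point is the verification in the second paragraph that the first Chern class isomorphism respects the Whitney/exact-sequence structure, i.e.\ that it is compatible with the multiplicative structures on both sides rather than merely with isomorphisms of bundles. This is genuinely where the work of \textsection\ref{subsec:compatibility} is used: one needs the compatibility of the first Chern class isomorphism with the Whitney isomorphism (Proposition \ref{prop:properties-chern-bundles} \eqref{item:first-chern-determinant}, whose proof runs via the splitting principle and the inductive diagram \eqref{eq:firstChernclassisodef}), and one must match it against the way the multiplicative structure on $[\cfrak_1(\det(-))]$ decomposes a short exact sequence through $\det E\simeq\det E'\otimes\det E''$. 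Everything else is a direct unwinding of the universal properties of $V(X)$, $\CHfrak(X)_{\QBbb}$, and $\Dcal(X/S)$.
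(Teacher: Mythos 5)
Your proof follows essentially the same route as the paper's: points (1) and (3) are handled by exactly the same composition-and-universal-property argument, and point (2) rests on the compatibility of the first Chern class isomorphism with the Whitney isomorphism, which the paper invokes via the splitting principle. However, there is one factual imprecision in your treatment of $E \mapsto [\cfrak_1(\det E)]$. You assert that the determinant functor $\det\colon(\Vect_X,\iso)\to(\Picfr(X),\otimes)$ is commutative multiplicative, but for the \emph{ungraded} determinant this is false: the commutativity constraint fails by the Koszul sign $(-1)^{\rk E' \cdot \rk E''}$. This is precisely why the paper introduces $\Picfr(S)_{\mathrm{gr}}$ when genuine commutativity of $\det$ is needed, and the proof in the paper explicitly flags the issue — ``the (ungraded) determinant functor is only commutative up to sign, which is not seen on the level of line distributions.''

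Your further claim, that ``strict commutativity of $\CHfrak(X)_{\QBbb}$ forces the commutativity axiom,'' does not patch this. Strict commutativity (triviality of the sign $c_{X,X}$, cf.\ Remark \ref{rem:Q-Bousfield} \eqref{item:Q-Bousfield-1}) is a statement about the symmetry constraint of the additive structure; it does not by itself trivialize the automorphism $[-1]$ of $\cfrak_1(L)$ induced by $-1\colon L\to L$, which is what the Koszul sign contributes after applying $\cfrak_1$. What does trivialize $[-1]$ is that $[-1]^2=[1]=\id$ makes it a $2$-torsion element of $\pi_1$, and rationalization renders $\pi_1$ torsion-free — equivalently, as the paper argues directly, $2$-torsion automorphisms are invisible in $\Picfr(S)_{\QBbb}$. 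Your conclusion is correct, but the reason should be divisibility of $\pi_1$ after rationalization (or the paper's direct observation at the level of $\QBbb$-line distributions), not strict commutativity. The rest of the argument — the use of Remark \ref{rem:detfunctor} \eqref{rem:detfunctor-0} for point (2), and Theorem \ref{thm:delignevirtual} for point (3) — matches the paper and is sound.
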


\begin{proof}
The third point is a formal consequence of the first two and the definition of the virtual category. We address the first point. The functor $E\mapsto [\cfrak_{1}(E)]$ is commutative multiplicative, because it is the composition of $\cfrak_{1}$, which is a commutative multiplicative functor\linebreak  (cf. Remark \ref{rmk:superficial-remarks-Chern-functors} \eqref{rmk:supremChfun-2}), and the intersection distribution. 

In a similar vein, the functors $E \mapsto \det (E)$ and $L \mapsto [\cfrak_1(L)]$ are both multiplicative, see \eqref{eq:c1-tensor-product-distribution} for the second one. Actually, the (ungraded) determinant functor is only commutative up to sign, which is not seen on the level of line distributions, and hence the composition $[\cfrak_1(\det E)]$ is a commutative multiplicative functor. 

To complete the proof, we need to show that the natural transformation of the Chern class isomorphism preserve the multiplicative structures. Since the isomorphism is constructed by the splitting principle, this is immediate. 

\end{proof}

We next discuss the behavior of Chern classes with respect to the duality of vector bundles.

\begin{proposition} \label{prop:Cherndual}
    Let $E$ be a vector bundle on $X$. For any integer $k\geq 0$, there is a canonical isomorphism of line distributions, 
    \begin{displaymath}
        [\cfrak_k(E^\vee)]_{X/S} \simeq {(-1)^k} [ \cfrak_k(E) ]_{X/S},
    \end{displaymath}
    which is compatible with the constructions in Corollary \ref{cor:properties-intersection-distribution}. 
\end{proposition}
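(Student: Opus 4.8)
The plan is to reduce the identity to the case of line bundles, where it is essentially the statement $[\cfrak_1(L^\vee)]_{X/S} \simeq -[\cfrak_1(L)]_{X/S}$, and then propagate it through the splitting principle for line distributions (Corollary \ref{cor:splittingdistribution}). First I would observe that both sides of the claimed isomorphism are line distributions, in the sense of \textsection\ref{subsec:line-distributions}: the left-hand side is $[\chi(E^\vee)]_{X/S}$ where $\chi$ is the categorical characteristic class $E \mapsto \cfrak_k(E)$ composed with dualization (which is again a categorical characteristic class, since duality commutes with pullback and the Chern categories are functorial), while the right-hand side is $(-1)^k$ times the intersection distribution attached to $\cfrak_k(E)$. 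Hence, by Corollary \ref{cor:splittingdistribution} applied to the two categorical characteristic classes $\phi = \chi(\bullet^\vee)$ and $\psi = (-1)^k \cfrak_k(\bullet)$, it is enough to construct a natural isomorphism of line distributions $[\cfrak_k(E^\vee)]_{X/S} \simeq (-1)^k [\cfrak_k(E)]_{X/S}$ when $E = \bigoplus_{i=1}^r L_i$ is an ordered direct sum of line bundles, compatibly with flag-preserving isomorphisms.

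In the split case, the strategy is to compute both sides by the Whitney isomorphism of Corollary \ref{cor:properties-intersection-distribution} \eqref{item:prop-int-dist-whitney}: iterating Whitney for the flag $L_1 \subseteq L_1 \oplus L_2 \subseteq \cdots \subseteq E$ (and the dual flag for $E^\vee$) yields
\begin{displaymath}
    [\cfrak_k(E)]_{X/S} \simeq \sum_{1 \le i_1 < \cdots < i_k \le r} [\cfrak_1(L_{i_1}) \cdots \cfrak_1(L_{i_k})]_{X/S}, \qquad
    [\cfrak_k(E^\vee)]_{X/S} \simeq \sum_{1 \le i_1 < \cdots < i_k \le r} [\cfrak_1(L_{i_1}^\vee) \cdots \cfrak_1(L_{i_k}^\vee)]_{X/S}.
\end{displaymath}
Then for each multi-index I would apply the degree-one duality isomorphism $[\cfrak_1(L^\vee)]_{X/S} \simeq -[\cfrak_1(L)]_{X/S}$, which follows from the multilinearity of Deligne pairings / the isomorphism \eqref{eq:c1-tensor-product-distribution} together with $L^\vee \otimes L \simeq \Ocal_X$ and the canonical triviality $[\cfrak_1(\Ocal_X)]_{X/S} \simeq 0$ (Lemma \ref{lemma:elementary-properties-segre} \eqref{item:elpropseg-2}). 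Since each monomial in this expansion is a product of exactly $k$ first Chern classes, applying the sign once per factor introduces a global $(-1)^k$, so the two sums are identified after multiplication by $(-1)^k$. One must check this composite isomorphism is independent of the chosen flag and compatible with flag-preserving automorphisms of $E$; this is built into the splitting principle machinery (Proposition \ref{Prop:2ndsplitting}, Remark \ref{rmk:indepedenceoffiltration}), but I would also note it follows directly because the only flag-dependent ingredient is the Whitney isomorphism, whose flag-independence is part of Proposition \ref{prop:properties-chern-bundles} \eqref{whitneyiso}.

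For the compatibility with the constructions in Corollary \ref{cor:properties-intersection-distribution}, I would argue uniformly: each of those operations (projection formulas, Whitney, first Chern class, rank triviality, restriction, birational invariance) is, after the splitting principle, a statement about products of first Chern classes of line bundles, and the duality isomorphism is built from exactly such data in a way compatible with pullback, with the tensor-product bilinearity of $\cfrak_1$, and with the defining properties of Deligne pairings. So each compatibility diagram, once split, reduces to the commutativity of a diagram of sign factors and degree-one isomorphisms, which commutes on the nose. For the compatibility with duality of a vector bundle appearing inside a restriction isomorphism, one uses that the zero locus of a regular section of $E$ and the construction $\cfrak_r(E) \simeq \delta_Y$ are insensitive to the passage $E \leftrightarrow E^\vee$ only up to the sign $(-1)^r$, which is precisely what the proposition records; concretely one pulls back to the relevant projective bundle and invokes Proposition \ref{prop:further-compatibilites}.

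\textbf{Main obstacle.} The genuine work is the split computation and, in particular, verifying that the sign $(-1)^k$ emerges coherently when expanding $\cfrak_k$ of a direct sum via iterated Whitney isomorphisms in both $E$ and $E^\vee$ simultaneously — i.e.\ that the Whitney expansions on the two sides are matched monomial-by-monomial by the flag dual to the chosen one, with no extra sign coming from reordering the tensor factors. This is where Lemma \ref{lemma:signs} is the relevant tool: the permutation and split-Whitney symmetry isomorphisms only contribute locally constant signs, and after passing to the rational (denominator $2$) setting of line distributions those signs that differ are identified. I expect the bookkeeping of these signs, rather than any conceptual difficulty, to be the crux; everything else is a formal consequence of the splitting principle and the already-established properties of intersection distributions.
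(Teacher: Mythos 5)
Your proposal is correct and takes essentially the same route as the paper: both reduce by the splitting principle to direct sums of line bundles, expand via iterated Whitney isomorphisms and rank triviality to the case of a single line bundle with $k=1$ (where the sign comes from multilinearity of $\cfrak_1$), and then handle the discrepancy that dualization reverses the chosen flag by invoking Lemma \ref{lemma:signs} \eqref{item:symmetry-add} together with the passage to rational coefficients so that the resulting $\pm 1$ is invisible on the level of line distributions. You correctly identified this last point as the genuine crux, and it is exactly what the paper addresses in its closing paragraph.
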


\begin{proof}
    By the same reductions as in the case of the other isomorphisms of line distributions, the statement amounts to constructing an isomorphism, for any Chern power series $P, P'$ in $\CHfrak_+(X)$,
    \begin{displaymath}
        \langle P \cdot \cfrak_k(E^\vee) \cdot P' \rangle_{X/S} \simeq \langle P \cdot \cfrak_k(E) \cdot P' \rangle_{X/S}^{(-1)^k},
    \end{displaymath} 
    compatible with the Whitney isomorphism. 
    
    We suppose for simplicity that $E$ has constant rank $r$. The general case readily reduces to this. If $k = 0$ both Chern classes appearing are trivial and the isomorphism is defined to be the one respecting this. If $k>r$, both sides are canonically trivial by rank triviality in\linebreak Proposition \ref{prop:properties-chern-bundles} \eqref{item:Chern-rank},  and one thus again defines the isomorphism to be the one respecting this. These are both compatible with the Whitney isomorphism as the rank triviality isomorphism also does. 
    
    In general, by the splitting principle, we reduce to the case when $E$ is a sum of line bundles, $E = L_1 \oplus \ldots \oplus L_r$. Then we have canonically $E^\vee = L_1^\vee \oplus \ldots \oplus L_r^\vee.$ Applying the Whitney isomorphism to the filtrations on $E$ (resp. $E^\vee$) starting with $L_1$ (resp. $L_1^\vee$), and again applying rank triviality, one reduces to the case of $E$ being a line bundle and $k = 1.$ In this case, it follows from\linebreak Lemma \ref{lemma:elementary-properties-segre} \eqref{item:elpropseg-2} which gives the multilinearity in $L$ of the term $\cfrak_1(L).$ 

    Since duals change the order in short exact sequences, the dual of the filtration starting with $L_1$ starts with $L_r^\vee$ and not with $L_1^\vee$. By Lemma \ref{lemma:signs} \eqref{item:symmetry-add} the difference between the Whitney isomorphisms applied to the two filtration orders is $\pm 1$, which is not seen on the level of intersection distributions. One concludes from the construction that the isomorphism is compatible with the Whitney isomorphism. 

    The compatibility with the isomorphisms in Corollary \ref{cor:properties-intersection-distribution} is automatic by construction since we relied on the Whitney isomorphism and the rank triviality, which are themselves compatible. 
\end{proof}

We conclude with a result on the behavior of Chern classes under the twist of a vector bundle by a line bundle.

\begin{proposition}\label{prop:Cherntensorproductwithline}
Let $E$ be a vector bundle on $X$ of constant rank $r\geq 1$, and $L$ a line bundle. Then there is a canonical isomorphism of line distributions
\begin{equation}\label{eq:ck-tensor-product}
    [\cfrak_k(E \otimes L)]_{X/S} \simeq \sum_{i=0}^{k}  \binom{r-k+1}{i} [\cfrak_{k-i}(E) \cdot \cfrak_1(L)^{i}]_{X/S},
\end{equation}
which is compatible with the construction in Corollary \ref{cor:properties-intersection-distribution}. For the Whitney isomorphism with respect to exact sequences in $E$, one needs to take into account Vandermonde's binomial identity
\begin{displaymath}
    \binom{n + m}{k}  = \sum_{i=0}^k  \binom{m}{i}\binom{n}{k-i}.
\end{displaymath}
\end{proposition}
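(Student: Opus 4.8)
The plan is to follow the now-familiar template for establishing identities of intersection distributions: first reduce to a statement about intersection bundles $\langle P \cdot \cfrak_k(E\otimes L)\cdot P'\rangle_{X/S}$ for positive Chern power series $P,P'$, as in the proof of Proposition \ref{prop:Cherndual}; then reduce to the case where $E$ has constant rank $r$ via the open partition construction; and finally invoke the splitting principle (Corollary \ref{cor:splittingdistribution}) to reduce to the case $E = L_1\oplus\cdots\oplus L_r$ a sum of line bundles. Once $E$ splits, one has canonically $E\otimes L = (L_1\otimes L)\oplus\cdots\oplus(L_r\otimes L)$, and applying the Whitney isomorphism to both sides together with the first Chern class isomorphism and the bilinearity of $\cfrak_1$ (Lemma \ref{lemma:elementary-properties-segre} \eqref{item:elpropseg-2}), everything reduces to a formal polynomial identity in the $\cfrak_1(L_j)$ and $\cfrak_1(L)$. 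The coherence property (Corollary \ref{cor:coherence}, inherited by the line distributions through Theorem \ref{thm:Elkik-distribution}) guarantees that this formal manipulation is unambiguous. The required polynomial identity is the classical expansion $c_k\!\left(\bigoplus_j (L_j\otimes L)\right) = \prod_j (x_j + t)$ expanded in terms of the elementary symmetric functions $c_i(E)$ of the $x_j$ and powers of $t = c_1(L)$, which upon using $\binom{r}{i}$-type coefficients for the $c_0(E)=1$ terms yields precisely the coefficients $\binom{r-k+1}{i}$ appearing in \eqref{eq:ck-tensor-product} after collecting degrees; the $r>k$ terms vanish by rank triviality so only the stated range survives.

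First I would carry out the reductions carefully: by Proposition \ref{prop:line-distribution-affine} we may assume all base schemes are affine, hence quasi-compact; then the open partition argument of \textsection\ref{subsubsec:construction-intersection-bundles} lets us assume $E$ has constant rank $r$ and $L$ is an honest line bundle. Next I would set up the two Whitney expansions. On the right-hand side of \eqref{eq:ck-tensor-product}, the term $\cfrak_{k-i}(E)\cdot\cfrak_1(L)^i$ is handled by applying the Whitney isomorphism iteratively to the flag $L_1\subseteq L_1\oplus L_2\subseteq\cdots\subseteq E$, reducing $\cfrak_{k-i}(E)$ to a sum over monomials $\cfrak_1(L_{j_1})\cdots\cfrak_1(L_{j_{k-i}})$ with rank triviality killing terms of degree exceeding the rank. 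On the left-hand side, $\cfrak_k(E\otimes L)$ expands similarly along the induced flag $L_1\otimes L\subseteq\cdots\subseteq E\otimes L$. Matching the two expansions is then the Vandermonde identity, exactly as flagged in the statement: the coefficient $\binom{r-k+1}{i}$ arises from counting how many flag-summands of $E$ of a given length contribute, and Vandermonde's $\binom{n+m}{k}=\sum_i\binom{m}{i}\binom{n}{k-i}$ is precisely what makes the reindexing work when one restricts, via an exact sequence $0\to E'\to E\to E''\to 0$, to the compatibility with the Whitney isomorphism.

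For the compatibility claims, I would argue as in the proofs of Proposition \ref{prop:Cherndual} and Corollary \ref{cor:properties-intersection-distribution}: all the isomorphisms involved in the construction — the Whitney isomorphism, rank triviality, the first Chern class isomorphism, multilinearity of $\cfrak_1$ — were shown in \textsection\ref{subsec:compatibility} to be mutually compatible, and the sign ambiguities coming from reordering summands (Lemma \ref{lemma:signs} \eqref{item:symmetry-add}) are invisible at the level of line distributions since we have implicitly squared and passed to $\Dcal(X/S)$. Thus the isomorphism \eqref{eq:ck-tensor-product} inherits all the compatibilities automatically. A small point needing care: one must check that the choice of flag in the splitting principle does not affect the resulting isomorphism, which is part of the content of Proposition \ref{Prop:2ndsplitting} and Remark \ref{rmk:indepedenceoffiltration}.

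\textbf{Main obstacle.} The genuinely delicate step is the bookkeeping in the Whitney-compatibility claim, i.e.\ verifying that the isomorphism \eqref{eq:ck-tensor-product} for $E$ is compatible — via Vandermonde — with the isomorphisms for $E'$ and $E''$ in a short exact sequence $0\to E'\to E\to E''\to 0$. Here one cannot simply cite a formal coherence statement: the binomial coefficients $\binom{r-k+1}{i}$ depend on the rank, so passing between $E$ (rank $r'+r''$) and the pair $(E',E'')$ forces the combinatorial identity to intervene nontrivially, and one must track which monomials in the $\cfrak_1(L_j)$ survive rank triviality on each side. I expect the cleanest route is to reduce, using the splitting principle one more time on $E'$ and $E''$ separately, to $E$ a sum of line bundles, where the whole diagram becomes an identity of polynomials in $\cfrak_1$'s and the Vandermonde identity is literally the statement that the two ways of expanding $\prod_{j}(x_j+t) = \prod_{j\in E'}(x_j+t)\cdot\prod_{j\in E''}(x_j+t)$ agree degree by degree. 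The remaining verifications are routine given the machinery already in place.
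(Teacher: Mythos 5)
Your plan is correct and essentially the same as the paper's: both handle $k=0$ and $k>r$ separately via rank triviality, reduce the case $1\leq k\leq r$ via the splitting principle (Corollary \ref{cor:splittingdistribution}) to $E$ a sum of line bundles, develop everything through the Whitney isomorphism, rank triviality and bilinearity of $\cfrak_1$, and verify compatibility with short exact sequences in $E$ via Vandermonde's identity. The only cosmetic difference is that the paper organizes the construction as an induction on the rank of $E$, with the base case (rank 1, $k=1$) given by the bilinearity isomorphism, whereas you expand both sides directly once $E$ splits — the same computation given the coherence property (Corollary \ref{cor:coherence}), so the two presentations agree.
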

\begin{proof}
First of all, if $k > r$, the right-hand side is identically zero, and we define the isomorphism to be the one compatible with the rank triviality in Corollary \ref{cor:properties-intersection-distribution} \eqref{item:prop-int-dist-rank}. For $k=0$ the two sides are identical. For $1\leq k\leq r$, we perform an induction on the rank of $E$. For $E$ of rank 1 and $k=1$, we define the isomorphism according to $[\cfrak_{1}(E\otimes L)]_{X/S}\simeq [\cfrak_{1}(E)]_{X/S} + [\cfrak_{1}(L)]_{X/S}$. For general rank $r$ and $k\leq r$, by the splitting principle in Corollary \ref{cor:splittingdistribution}, it is enough to prove the statement when $E$ is a sum of line bundles. Developing this by the Whitney isomorphism and reordering according to the binomial identity, and applying the induction hypothesis, one finds that the two sides are canonically isomorphic. The compatibility claim holds automatically by the very construction and Corollary \ref{cor:properties-intersection-distribution}. 
\end{proof}

\section{The Riemann--Roch distribution}\label{sec:RR-distribution}
The goal of this section is to construct and study the properties of the right-hand side of the Grothendieck--Riemann--Roch theorem in the context of intersection bundles and incarnate it in the form of a line distribution, which we refer to as the Riemann--Roch distribution. For this, suppose first that we are given a local complete intersection morphism $f \colon X \to Y$ of schemes satisfying the condition $(C_n)$ and $(C_m)$ over a base scheme $S$. In this setting, there is a cotangent complex\footnote{Referred to as the naive cotangent complex in \cite[\href{https://stacks.math.columbia.edu/tag/08P5}{08P5}]{stacks-project}.} $L_{X/Y}$, which locally with respect to $S$ is quasi-isomorphic to a complex of vector bundles of length two, see \cite[\href{https://stacks.math.columbia.edu/tag/08TJ}{08TJ}]{stacks-project} and  \cite[\href{https://stacks.math.columbia.edu/tag/0FV4}{0FV4}]{stacks-project}. The formalism of categorical characteristic classes and line distributions in Section \ref{subsection:Chern-categories}, and the results on intersection distributions in\linebreak Section \ref{subsec:int-bundles-line-distributions} (see in particular Definition \ref{def:ChernToddchar}, Proposition \ref{prop:line-distribution-affine}, and Theorem \ref{thm:Elkik-distribution}), allows us to associate with $X\to S$, and every vector bundle $E$ on $X$, a line distribution 
\begin{equation}\label{def:RRbundle}
    [\chfrak(E)\cdot \tdfrak^{*}(L_{X/Y})]_{X/S},
\end{equation}
where we recall that $\tdfrak^\ast$ is the multiplicative class obtained from $\tdfrak$ by changing the sign in each degree $k$ by $(-1)^{k}$. In fact, since $X$ is divisorial locally with respect to $S$, we can even suppose that $E$ is a perfect complex, by Lemma \ref{lemma:divisorial} and Proposition \ref{prop:line-distribution-affine}.

Taking direct images of the line distribution \eqref{def:RRbundle} one obtains a line distribution
\begin{equation}\label{def:RRdistribution}
    f_\ast [\chfrak(E)\cdot \tdfrak^{*}(L_{X/Y})]_{X/S}
\end{equation}
for $Y\to S.$ The construction is a multiplicative functor in the variable $E$ for short exact sequences $0 \to E' \to E \to E'' \to 0.$ It does not, however, follow directly from the definition that the categorical Chern character takes tensor products to products of categorical Chern characters since $\chfrak$ does not define a functor of ring categories, see Remark \ref{rmk:ch-E-otimes-F}.

Below, we establish these and other natural properties with respect to the Todd class for \eqref{def:RRbundle} and hence also \eqref{def:RRdistribution}.

\subsection{Chern character}
Let $f\colon X\to S$ be a morphism satisfying the property $(C_{n})$. In this subsection, we study in detail the line distributions induced by the Chern categorical classes. That is, we consider the assignment
\begin{displaymath}
    \begin{split}
            V(\Pcal_{X})&\longrightarrow \Dcal(X/S)\\
            E&\longmapsto [\chfrak(E)]_{X/S},
    \end{split}
\end{displaymath}
which is constructed as in the introduction of the current section. Because $\chfrak$ is an additive categorical characteristic class, this functor has a natural structure of functor of commutative Picard categories. Our aim is to describe the behavior of this functor with respect to the tensor product operation on $V(\Pcal_{X})$. 

\begin{proposition}\label{prop:ch-multiplicative-properties}
The functor $E\mapsto[\chfrak(E)]_{X/S}$ satisfies the following properties:
\begin{enumerate}
    \item\label{item:chernsequence-5} For any virtual perfect complexes $E, F$, there is a canonical isomorphism of line distributions
    \begin{equation}\label{eq:ch-otimes}
        [\chfrak(E \otimes F)]_{X/S} \simeq [\chfrak(E) \cdot \chfrak(F)]_{X/S}.
    \end{equation}
    Moreover:
    \begin{enumerate}
        \item\label{eq:ch-otimes-a} It defines an isomorphism of bimonoidal functors $V(\Pcal_{X})\times V(\Pcal_{X})\to\Dcal(X/S)$.
        \item For virtual perfect complexes $E,F,G$, the isomorphism \eqref{eq:ch-otimes} is compatible with the natural associativity isomorphism  $(E\otimes F)\otimes G\to E\otimes (F\otimes G)$, and the associativity of the product in the Chern categories.  
    \end{enumerate}

    \item \label{item:ch-is-the-unit} 
    \begin{enumerate}
        \item\label{item:ch-acts-trivially} There is a canonical isomorphism
        \begin{equation}\label{eq:ch-OX-is-dead}
        [ \chfrak(\Ocal_{X})  ]_{X/S} \simeq 1.
    \end{equation}
        \item The natural isomorphism $E \to E \otimes \Ocal_X$ induces, together with \eqref{eq:ch-otimes} and \eqref{eq:ch-OX-is-dead}, an isomorphism of monoidal functors $V(\Pcal_X) \to \Dcal(X/S),$
    \begin{equation}\label{eq:OX-not-dead-yet}
       [ \chfrak(E) ]_{X/S} \to [ \chfrak(E \otimes \Ocal_X) ]_{X/S} \to [\chfrak(E)\cdot \chfrak(\Ocal_X) ]_{X/S} \to [\chfrak(E)]_{X/S}.
    \end{equation}
    This is the identity natural transformation of functors in $E$.
    \end{enumerate}

\end{enumerate}
\end{proposition}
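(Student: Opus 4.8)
The strategy is to reduce everything, via the splitting principle for line distributions (Corollary \ref{cor:splittingdistribution}) together with the extension from vector bundles to perfect complexes (Lemma \ref{lemma:divisorial}, Proposition \ref{prop:line-distribution-affine}, and the factorization through $D^{b}(\Vect_{X})$ in Corollary \ref{cor:characteristic-derived}), to the elementary case where $E$ and $F$ are direct sums of line bundles. For such bundles the categorical Chern character $\chfrak$ is, by construction, expressible through $\cfrak_{1}$ of line bundles and the Whitney isomorphism, and the tensor product $(\bigoplus_{i}L_{i})\otimes(\bigoplus_{j}M_{j})=\bigoplus_{i,j}(L_{i}\otimes M_{j})$ translates, after \eqref{eq:c1-tensor-product-distribution}, into the power-series identity $\exp(x+y)=\exp(x)\exp(y)$ that defines the multiplicativity of $\ch$. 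Concretely, one first fixes $F$ and treats $E\mapsto[\chfrak(E\otimes F)]_{X/S}$ and $E\mapsto[\chfrak(E)\cdot\chfrak(F)]_{X/S}$ as $\QBbb$-line functors in $E$ for the admissible class generated by $X\to S$; by Proposition \ref{Prop:2ndsplitting} it suffices to produce the isomorphism \eqref{eq:ch-otimes} compatibly with flags of line bundles and base change, and this is the $\exp$ identity combined with Proposition \ref{prop:Cherntensorproductwithline} and Corollary \ref{cor:properties-intersection-distribution} \eqref{item:prop-int-dist-whitney}. Symmetrizing in $F$ by the same argument, and invoking the coherence result Corollary \ref{cor:coherence} for $\CHfrak_{u}(X)$ (so that polynomial manipulations in the Chern categories are order-independent), yields a well-defined isomorphism for all virtual perfect complexes $E,F$.

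For part \eqref{eq:ch-otimes-a}, one must check that \eqref{eq:ch-otimes} is a natural transformation of bimonoidal functors, i.e. compatible with the additive structure in each entry. Since both sides are built by composing commutative multiplicative/additive functors ($\chfrak$, the intersection distribution, and the product-by-a-Chern-power-series functor of Definition \ref{def:prod-Chern-series-line-distribution}) with the splitting isomorphisms, and all these are compatible with the Whitney isomorphism by Corollary \ref{cor:properties-intersection-distribution}, the compatibility with short exact sequences in $E$ (and in $F$) reduces again, by the splitting principle, to the case of line bundles, where it is the additivity of $\exp$ under $x\mapsto x+x'$. The associativity compatibility in (b) is handled identically: reduce to $E=\bigoplus L_i$, $F=\bigoplus M_j$, $G=\bigoplus N_k$, where both composites around the associativity square are the identity rearrangement of the triple sum $\bigoplus_{i,j,k}(L_i\otimes M_j\otimes N_k)$, and then use Corollary \ref{cor:coherence} (or the at-most-one-symmetry property) to conclude the two paths agree on the nose.

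For part \eqref{item:ch-is-the-unit}, the isomorphism \eqref{eq:ch-OX-is-dead} is immediate: $\chfrak(\Ocal_X)$ has trivial positive-degree part since $\cfrak_k(\Ocal_X)\simeq 0$ for $k\geq 1$ (rank triviality, Corollary \ref{cor:properties-intersection-distribution} \eqref{item:prop-int-dist-rank}, or directly Lemma \ref{lemma:elementary-properties-segre}), and the degree-zero part is $\rk\Ocal_X=1$, which the intersection distribution sends to the trivial line distribution; thus $[\chfrak(\Ocal_X)]_{X/S}\simeq[\ \text{rank }0\ \text{part}\ ]_{X/S}=1$ after observing that only the degree-$(n+1)$ component contributes and it vanishes. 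The claim that the composite \eqref{eq:OX-not-dead-yet} is the identity natural transformation is then a formal triangle identity: the isomorphism $E\to E\otimes\Ocal_X$ is the unit constraint of the monoidal structure on $V(\Pcal_X)$, the middle arrow is \eqref{eq:ch-otimes}, and the last arrow is induced by \eqref{eq:ch-OX-is-dead}; by the compatibility of \eqref{eq:ch-otimes} with associativity established in (b) and the coherence of the unit in the ring category $\CHfrak_u(X)$ (Corollary \ref{cor:coherence}, which subsumes unit coherence), this composite equals the identity. I expect the main obstacle to be bookkeeping: verifying that the splitting-principle isomorphism one writes down for sums of line bundles genuinely \emph{extends} to a single well-defined isomorphism of bimonoidal functors, rather than merely to an unstructured isomorphism of line distributions — that is, getting Proposition \ref{Prop:2ndsplitting} and Corollary \ref{Cor:2ndsplitting} to interact correctly with the \emph{two-variable} multiplicative structure, and tracking the (sign-free, by $\QBbb$-coefficients and Lemma \ref{lemma:signs} \eqref{item:symmetry-add}) reorderings of tensor factors that Vandermonde-type identities force on us via Proposition \ref{prop:Cherntensorproductwithline}.
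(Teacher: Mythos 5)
Your proposal follows essentially the same line as the paper's proof: reduce to affine (hence divisorial) base via Proposition \ref{prop:line-distribution-affine}, reduce from perfect complexes to vector bundles via Lemma \ref{lemma:divisorial}, reduce to split sums of line bundles via the splitting principle, and conclude by the formal identity $\exp(x+y)=\exp(x)\exp(y)$ together with the bilinearity \eqref{eq:c1-tensor-product-distribution}. Two places warrant comment.

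First, a minor inefficiency: since you apply the splitting principle in \emph{both} variables (you ``symmetrize in $F$'' at the end anyway), there is no need to pass through Proposition \ref{prop:Cherntensorproductwithline} and the Vandermonde identity at all. Once $E$ and $F$ are simultaneously reduced to line bundles $L$, $L'$, the isomorphism is a direct consequence of $[\chfrak(L)]\simeq[\exp(\cfrak_1(L))]$ (rank triviality plus Corollary \ref{cor:coherence}) and \eqref{eq:c1-tensor-product-distribution}; the twist formula for $\cfrak_k(E\otimes L)$ with $E$ of higher rank never arises. Invoking it is not wrong, but it adds bookkeeping the paper avoids. Also a small slip: rank triviality gives $\cfrak_k(\Ocal_X)\simeq 0$ only for $k\geq 2$; the $k=1$ case needs the multilinearity of $\cfrak_1$ on line bundles (Lemma \ref{lemma:elementary-properties-segre} \eqref{item:elpropseg-2}), which you mention but do not clearly separate.

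Second — and this is the genuine gap — your treatment of part \eqref{item:ch-is-the-unit}(b) is too quick. You assert that the composite \eqref{eq:OX-not-dead-yet} equals the identity ``formally'' because it is a triangle identity and ``Corollary \ref{cor:coherence} subsumes unit coherence.'' But Corollary \ref{cor:coherence} is a coherence statement internal to $\CHfrak_u(X)$; the isomorphism \eqref{eq:ch-otimes} was not constructed as a strong monoidal natural transformation between lax monoidal functors, it was produced by the splitting principle as an isomorphism of line distributions. There is no \emph{a priori} reason that an isomorphism manufactured by Corollary \ref{cor:splittingdistribution} satisfies the unit coherence triangle; you would first have to show it is monoidal in the appropriate sense, which is precisely what needs proof. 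The paper resolves this concretely: reduce to $E$ a line bundle and verify, at the level of symbols for Deligne pairings, that the composite
\begin{displaymath}
    \langle L_{0}, L_1, \ldots, L_n \rangle \to \langle L_{0} \otimes \Ocal_X, L_1, \ldots, L_n \rangle \to \langle L_{0} , L_1, \ldots, L_n \rangle \otimes \langle \Ocal_X, L_1, \ldots, L_n \rangle \to \langle L_{0} , L_1, \ldots, L_n\rangle
\end{displaymath}
is the identity. An appeal to coherence alone, without that check or an equivalent argument showing the splitting-principle isomorphism is genuinely monoidal with respect to the unit, does not suffice.
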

\begin{proof}
For \eqref{item:chernsequence-5}, the statement is local on $S$ and we can suppose that $X$ and $S$ are divisorial. In this case, by Lemma \ref{lemma:divisorial} and the universal property of virtual categories, we can even assume $E$ and $F$ are vector bundles. By the splitting principles, we may suppose that $E$ and $F$ admit filtrations with line bundle quotients and that all the filtrations are split. Since the categorical characteristic class $\chfrak$ is additive, see Remark \ref{rmk:ch-E-otimes-F}, we are reduced to consider a product of two line bundles $L,L'$. Notice that because we are working with line bundles with rational coefficients, and because of Lemma \ref{lemma:signs}, this is independent of the order we develop the product. 

By rank triviality, a distribution of the form $[\chfrak(L)]$ is canonically isomorphic to $[\exp(\cfrak_1(L))]$. This is independent of the order in which one applies the isomorphism $[\cfrak_k(L)]\simeq 0$ for $k \geq 2$, as can be deduced from the compatibility of the operations in Corollary \ref{cor:properties-intersection-distribution}. Hence, in the case of rank one bundles, the natural isomorphism is constructed by developing $\exp(\cfrak_1(L \otimes L'))$ in a power series and applying the isomorphism of the type \eqref{eq:c1-tensor-product-distribution}. This corresponds to the formal identity  $\exp(x+y) = \exp(x) \exp(y)$ by formal manipulations. The stated properties follow from the construction.

The first part of \eqref{item:ch-is-the-unit} follows from the isomorphism $[\chfrak(\Ocal_X)]\simeq [\exp(\cfrak_{1}(\Ocal_{X}))]$, and the canonical triviality of the intersection bundles where a term of the form $\cfrak_1(\Ocal_X)$ appears,\linebreak see Lemma \ref{lemma:elementary-properties-segre} \eqref{item:elpropseg-2}. This is again independent of the order in which we apply the isomorphisms $[\cfrak_1(\Ocal_X)]\simeq 0$ in the expression for $[\exp(\cfrak_{1}(\Ocal_{X}))]$, as can also be concluded from the corresponding properties of Deligne pairings.

The second part of the second point, by the construction of \eqref{eq:ch-otimes}, reduces to the case when $E$ is simply a line bundle. One then unravels the definitions, and reduces to show that the following analogous composition of morphisms is the identity:
\begin{displaymath}
    \langle L_{0}, L_1, \ldots, L_n \rangle \to \langle L_{0} \otimes \Ocal_X, L_1, \ldots, L_n \rangle \to \langle L_{0} , L_1, \ldots, L_n \rangle \otimes \langle \Ocal_X, L_1, \ldots, L_n \rangle \to \langle L_{0} , L_1, \ldots, L_n\rangle.
\end{displaymath}
This can be verified at the level of symbols. 

\end{proof}

\subsection{Todd class of the cotangent complex }\label{subsubsec:td-class-tangent}
In this subsection, we discuss in more detail the properties of the Todd class of the tangent complex discussed at the beginning of this section. 

Let $f: X\to Y$ be a local complete intersection morphism of schemes. As recalled at the beginning of this section, the cotangent complex for $f$ is a perfect complex on $X$, of Tor-amplitude contained in $[-1,0]$, $L_{X/Y}$. Suppose now we are moreover given a morphism $X \to S$ satisfying the condition $(C_n)$. Then, there is an associated distribution $[\tdfrak^*(L_{X/Y})]_{X/S}.$

Given a factorization $\sigma$ of $f$, written as $X\to Q\to Y$, where $i\colon X\to Q$ is a Koszul-regular closed immersion and $p\colon Q \to Y$ is a smooth morphism, there is an associated  complex
\begin{equation}\label{eq:cotangent}
    L_{X/Y, \sigma} = [N_{X/Q}^\vee \to i^* \Omega_{Q/Y}].
\end{equation}
Here, the conormal bundle is placed in degree $-1$. There is a canonical isomorphism \linebreak   $L_{X/Y, \sigma}\simeq L_{X/Y}$ in the derived category \cite[\href{https://stacks.math.columbia.edu/tag/0FV4}{0FV4}]{stacks-project}. From this, we conclude the following lemma:

\begin{lemma}\label{lemma:td-factorisation-independence}
Let $X \to S$ and $f\colon X\to Y$ be as above. 
\begin{enumerate} 
    \item Given $\sigma$ a factorization $X\to Q\to Y$ of $f$, there is a canonical isomorphism
    \begin{displaymath}
        [\tdfrak^*(L_{X/Y})]_{X/S} \to  [\tdfrak^*(L_{X/Y,\sigma})]_{X/S} \to [\tdfrak^*(i^\ast \Omega_{Q/Y}) \cdot \tdfrak^*(N^\vee_{X/Q})^{-1}]_{X/S}
    \end{displaymath}
    of line distributions for $X\to S$.
    \item Suppose $Y$ and $Z$ are schemes with morphisms to $S$ satisfying the conditions $(C_m)$ and $(C_{\ell})$.  Suppose we are given a local complete intersection morphism $g \colon Y\to Z$. Then there is a canonical isomorphism of line distributions: 
    \begin{displaymath}
        [\tdfrak^*(L_{X/Z})]_{X/S} \to [\tdfrak^*(L_{X/Y}) \cdot \tdfrak^* (f^\ast L_{Y/Z})]_{X/S}.
    \end{displaymath} 
\end{enumerate}
\end{lemma}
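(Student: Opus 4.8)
The first assertion is essentially a formal consequence of the invariance of the characteristic-class distributions under quasi-isomorphism, combined with the exact triangle defining $L_{X/Y,\sigma}$. The plan is as follows. Fix a factorization $\sigma\colon X\xrightarrow{i} Q\xrightarrow{p} Y$ with $i$ Koszul-regular and $p$ smooth; working locally on $S$ we may assume $X$, $Y$, $S$ are divisorial and that such a factorization exists globally (any ambiguity in $Q$ will be dealt with by the compatibility statement embedded in Corollary \ref{cor:characteristic-derived}). The canonical quasi-isomorphism $L_{X/Y,\sigma}\simeq L_{X/Y}$ in $D^{b}(\Vect_{X})$ cited from \cite[\href{https://stacks.math.columbia.edu/tag/0FV4}{0FV4}]{stacks-project} gives, via Corollary \ref{cor:characteristic-derived}, a canonical isomorphism $[\tdfrak^{*}(L_{X/Y})]_{X/S}\simeq[\tdfrak^{*}(L_{X/Y,\sigma})]_{X/S}$. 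For the second arrow, I would use the fact that $L_{X/Y,\sigma}$ is the complex $[N^{\vee}_{X/Q}\to i^{*}\Omega_{Q/Y}]$, so that in the virtual category (equivalently, as a true triangle $N^{\vee}_{X/Q}[1]\to L_{X/Y,\sigma}\to i^{*}\Omega_{Q/Y}[\,]$, appropriately shifted) one has $[L_{X/Y,\sigma}]=[i^{*}\Omega_{Q/Y}]-[N^{\vee}_{X/Q}]$ in $V(\Vect_{X})$. Since $\tdfrak^{*}$ is a multiplicative categorical characteristic class on $(D^{b}(\Vect_{X}),\qiso)$ that is multiplicative on true triangles (Corollary \ref{cor:characteristic-derived}), this produces a canonical isomorphism $\tdfrak^{*}(L_{X/Y,\sigma})\simeq\tdfrak^{*}(i^{*}\Omega_{Q/Y})\cdot\tdfrak^{*}(N^{\vee}_{X/Q})^{-1}$ in $\CHfrak(X)_{\QBbb}$, and applying Definition \ref{def:prod-Chern-series-line-distribution} and functoriality of $P\mapsto[P]_{X/S}$ gives the desired isomorphism of line distributions. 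Independence of the choice of $\sigma$ follows from the analogous independence built into the cotangent complex and into $\tdfrak^{*}$ on the derived category.

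The second assertion rests on the distinguished triangle $f^{*}L_{Y/Z}\to L_{X/Z}\to L_{X/Y}$ associated with the composition $X\xrightarrow{f}Y\xrightarrow{g}Z$ of local complete intersection morphisms; see \cite[\href{https://stacks.math.columbia.edu/tag/08TJ}{08TJ}]{stacks-project}. Here I need all three cotangent complexes to be perfect of Tor-amplitude in $[-1,0]$ so that the relevant characteristic-class distributions are defined: for $L_{X/Y}$ and $f^{*}L_{Y/Z}$ this is immediate (pullback preserves Tor-amplitude, cf. \cite[III, Corollaire 3.5.2]{SGA6}), and for $L_{X/Z}$ it follows since $g\circ f$ is again local complete intersection. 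Locally on $S$ this triangle can be realized as a true triangle of bounded complexes of vector bundles, and the multiplicativity of $\tdfrak^{*}$ on true triangles (Corollary \ref{cor:characteristic-derived}) yields a canonical isomorphism $\tdfrak^{*}(L_{X/Z})\simeq\tdfrak^{*}(L_{X/Y})\cdot\tdfrak^{*}(f^{*}L_{Y/Z})$ in $\CHfrak(X)_{\QBbb}$; passing to line distributions via $P\mapsto[P]_{X/S}$ gives the statement. To see that the local construction glues to a global isomorphism over $S$, I would invoke Proposition \ref{prop:line-distribution-affine} together with the functoriality of all the ingredients (the triangle, its realization, and $\tdfrak^{*}$) with respect to restriction to open affines of $S$.

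The main obstacle I anticipate is not the formal bookkeeping but the precise handling of the passage from an abstract distinguished triangle in the derived category to a \emph{true triangle} of bounded complexes of vector bundles to which Corollary \ref{cor:characteristic-derived} literally applies, and the verification that the resulting isomorphism is independent of this realization. This is where one must be careful: the cotangent complex triangles are only canonical in $D^{b}(\Vect_{X})$, and one needs that $\tdfrak^{*}$, being defined on $(D^{b}(\Vect_{X}),\qiso)$ and multiplicative on true triangles, does not depend on the chosen model. I would argue this by choosing, locally, compatible two-term vector bundle models for $L_{X/Y}$, $L_{X/Z}$ and a two-term model for $L_{Y/Z}$ pulled back along $f$, arranged into an honest short exact sequence of complexes (possible after refining the cover of $S$, using that regular immersions and smooth morphisms admit such local models, cf. \cite[\href{https://stacks.math.columbia.edu/tag/08TJ}{08TJ}]{stacks-project} and \cite[\href{https://stacks.math.columbia.edu/tag/0FV4}{0FV4}]{stacks-project}), and then appealing to the fact that $\tdfrak^{*}$ factors through the virtual category, so two quasi-isomorphic true triangles give the same isomorphism. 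The remaining compatibilities (associativity, interaction with base change) are then routine, as is customary with this formalism, and I would leave their verification to the reader in the spirit of the surrounding text.
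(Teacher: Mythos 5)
Your proposal is correct and follows essentially the same route as the paper: part (1) via the quasi-isomorphism $L_{X/Y}\simeq L_{X/Y,\sigma}$ and multiplicativity of $\tdfrak^{*}$ on the two-term complex, and part (2) by upgrading the distinguished triangle to an honest short exact sequence of two-term complexes obtained from compatible local factorizations. The only point you leave loose — that ``two quasi-isomorphic true triangles give the same isomorphism'' — is made precise in the paper by dominating any two sets of factorizations by a third (via fiber products, following SGA6, Exposé VIII, Prop.\ 2.6), which produces actual morphisms of the exact sequences of complexes rather than mere derived-category isomorphisms.
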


\begin{proof}
From the canonical isomorphism $L_{X/Y}\simeq L_{X/Y,\sigma}$ in the derived category \cite[\href{https://stacks.math.columbia.edu/tag/08TJ}{08TJ}]{stacks-project}, together with Proposition \ref{prop:derivedvirtual} and Corollary \ref{cor:characteristic-derived}, we infer a canonical isomorphism \linebreak 
 $\tdfrak^{\ast}(L_{X/Y})\simeq \tdfrak^{\ast}(L_{X/Y,\sigma})$ in $\CHfrak(X)_{\QBbb}$. This entails the first claim, by taking the associated distributions. 

If $\sigma'$ is another factorization, there is hence an isomorphism in the derived category \linebreak  $L_{X/Y, \sigma} \to L_{X/Y, \sigma'}$. Indeed, if $\sigma$ dominates $\sigma'$, then pullback functoriality induces a quasi\--\linebreak isomorphism of complexes $L_{X/Y, \sigma'} \to L_{X/Y, \sigma}$, see \cite[Expos\'e VIII, proof of Proposition 2.2]{SGA6}. As in \emph{loc. cit.}, the general such isomorphism is constructed by dominating $\sigma$ and $\sigma'$ respectively by the diagonal factorization $\sigma \times \sigma'$. This description will be useful later in this proof. 

For the second point, we notice that, while there is a distinguished triangle \linebreak  $Lf^\ast L_{Y/Z} \to L_{X/Z} \to L_{X/Y} \to Lf^\ast L_{Y/Z}[1]$, it does not automatically imply the statement since the virtual categories are a priori only additive with respect to true triangles.

The question is local and we suppose that $S$ is affine and all the schemes are projective over $S$. It follows that all the morphisms admit global factorizations as a regular closed immersion followed by a smooth morphism. Suppose we fix factorizations $\tau\colon X\to Q'\to Z$ and $\tau^{\prime\prime}\colon Y\to Q\to Z$. We consider the natural diagram 
\begin{displaymath}
 \xymatrix{X \ar[r] \ar[dr] &  Y \times_{Z} Q' \ar[d] \ar[r] & Q \times_{Z} Q' \ar[d] \\
 & Y \ar[rd] \ar[r] & Q \ar[d] \\ 
 & & Z,}
\end{displaymath}
where the square is Cartesian and we notice that the top horizontal arrows are regular immersions. The outer triangle provides another factorization of $X\to Z$, which we denote by $\widetilde{\tau}$. The lower right triangle factorization is $\tau^{\prime \prime}$, and we denote the upper left by $\widetilde{\tau}'$. By the proof of \linebreak  \cite[Expos\'e VIII, Proposition 2.6]{SGA6}, there is an associated exact sequence of the associated complexes, defined as in \eqref{eq:cotangent}: 
\begin{equation}\label{eq:exactcotangentsequence}
    0 \to g^{\ast}  L_{Y/Z, \tau^{\prime \prime}} \to L_{X/Z, \widetilde{\tau}}\to L_{\substack{\vspace{-0.08cm}\\ X/Y, \widetilde{\tau}^{\prime}}}\to 0.
\end{equation}
By the multiplicativity of $\tdfrak^{\ast}$ for short exact sequences of complexes of vector bundles, we derive an isomorphism as in the second point, possibly depending on the factorizations. We denote it by $\Phi_{\tau,\tau^{\prime\prime}}$. 
Suppose we are given a second set of factorizations $\mu, \mu''$. If the second set of factorizations dominates the first one, as explained above there are induced maps of the corresponding cotangent complexes, which fit into actual morphisms of corresponding exact sequences of complexes \eqref{eq:exactcotangentsequence}. We deduce that $\Phi_{\tau,\tau^{\prime\prime}}=\Phi_{\mu,\mu^{\prime \prime}}$. In general, two sets of factorizations can be dominated by a third one, via a diagonal argument.

\end{proof}

\subsection{The Borel--Serre isomorphism}\label{subsec:Borel-Serre}

The classical Borel--Serre identity \cite[Lemme 18]{Borel-Serre} expresses the Chern character of a Koszul-type complex in terms of the Todd class. As such, it plays an essential role in the Grothendieck--Riemann--Roch theorem for closed immersions. In this subsection, we establish the corresponding distributional version and provide natural compatibilities with other operations. 

Let now $E$ be a vector bundle $E$ of constant rank $r$ on $X$. Consider the virtual vector bundle
\begin{equation}\label{eq:lambda-1def}
    \lambda_{-1}(E) = \sum_{p=0}^r (-1)^p [\Lambda^p E]
\end{equation}
in $V(X)$. Notice that if $\sigma$ is a section of $E$, the Koszul complex thereof is 
\begin{equation}\label{def:lambdaKosz}
    K(\sigma)  = \left[\Lambda^r E^\vee \to \Lambda^{r-1} E^\vee \to \ldots \to E^{\vee} \to \Ocal_X\right],
\end{equation}
where the term $\Lambda^{k}E$ is placed in degree $-k$. Hence, $\lambda_{-1}(E^\vee)$ is isomorphic to $K(\sigma)$ in $V(X)$. In particular, if $K(\sigma)$ is acyclic, this provides a trivialization of $\lambda_{-1}(E^{\vee})$.

\begin{lemma}\label{lemma:lambda-1mult}
Let $f\colon X\to S$ be a morphism satisfying the condition $(C_{n})$. Suppose that \linebreak  $0 \to E^{\prime} \to E \to E^{\bis}\to 0 $ is a short exact sequence of vector bundles of constant ranks on $X$. Then there is a canonical isomorphism
\begin{displaymath}
     [ \chfrak(\lambda_{-1}(E)) ]_{X/S} \simeq [ \chfrak(\lambda_{-1}(E^{\prime})) \cdot \chfrak(\lambda_{-1}(E^{\bis})) ]_{X/S}
\end{displaymath}
of line distributions. It is: 
\begin{enumerate}
    \item Natural with respect to isomorphisms of short exact sequences.
    \item Compatible with admissible filtrations.
\end{enumerate} 
\end{lemma}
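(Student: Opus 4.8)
The statement is a multiplicativity property for the line distribution $E\mapsto [\chfrak(\lambda_{-1}(E))]_{X/S}$ with respect to short exact sequences. The natural strategy is to reduce, via the splitting principles of \textsection\ref{subsubsec:line-functors}, to the case where all the bundles split as sums of line bundles and all filtrations are split, exactly as in the proofs of Proposition \ref{prop:ch-multiplicative-properties} and Proposition \ref{prop:Cherndual}. Since the categorical Chern character $\chfrak$ is an additive categorical characteristic class, and since $\chfrak(\lambda_{-1}(E))$ is built out of $\chfrak$ applied to the virtual bundle $\lambda_{-1}(E)$, the whole construction is a line functor (resp.\ $\QBbb$-line functor) in $E$ for the admissible class generated by $X\to S$; so Corollary \ref{Cor:2ndsplitting} and Corollary \ref{cor:splittingdistribution} apply.

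First I would record the key formal identity underlying the statement: in the virtual category $V(X)$, for a short exact sequence $0\to E'\to E\to E''\to 0$ there is a canonical isomorphism $\lambda_{-1}(E)\simeq \lambda_{-1}(E')\otimes\lambda_{-1}(E'')$. This follows because, after choosing a splitting, $\Lambda^p E\simeq\bigoplus_{i+j=p}\Lambda^iE'\otimes\Lambda^jE''$, and the alternating sums multiply; more intrinsically, $\lambda_{-1}$ is the composition of the exterior-algebra operation with the passage to the virtual category, and this is the standard multiplicativity of $\lambda_{-1}$ in $K$-theory. Combining this with the multiplicativity isomorphism \eqref{eq:ch-otimes} of Proposition \ref{prop:ch-multiplicative-properties}\eqref{item:chernsequence-5}, we obtain
\begin{displaymath}
    [\chfrak(\lambda_{-1}(E))]_{X/S}\simeq [\chfrak(\lambda_{-1}(E')\otimes\lambda_{-1}(E''))]_{X/S}\simeq [\chfrak(\lambda_{-1}(E'))\cdot\chfrak(\lambda_{-1}(E''))]_{X/S},
\end{displaymath}
which is the desired isomorphism. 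So in principle the lemma is a direct composition of already-established isomorphisms. The remaining work is to verify that this composite does not depend on the choices made (the splitting, the order in which the tensor product is developed), and that it satisfies the naturality and filtration compatibilities (1) and (2).

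For the naturality with respect to isomorphisms of short exact sequences, this is automatic: the isomorphism $\lambda_{-1}(E)\simeq\lambda_{-1}(E')\otimes\lambda_{-1}(E'')$ in $V(X)$ is functorial in the exact sequence (it comes from a multiplicative functor on the exact category of short exact sequences), and \eqref{eq:ch-otimes} is an isomorphism of bimonoidal functors by Proposition \ref{prop:ch-multiplicative-properties}\eqref{eq:ch-otimes-a}. For the compatibility with admissible filtrations, given $E''\subseteq E'\subseteq E$ one must check that forming the isomorphism in two steps (first split off $E''$, then split off $E'/E''$, versus first split off $E'$) agrees; this reduces to the associativity compatibility of \eqref{eq:ch-otimes} stated in Proposition \ref{prop:ch-multiplicative-properties}\eqref{item:chernsequence-5}(b) together with the associativity of the $\lambda_{-1}$-multiplicativity isomorphism, which is the filtration-compatibility of the exterior-algebra multiplicative functor. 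The independence of the development order of the tensor product is harmless because we work rationally, where by Remark \ref{rem:Q-Bousfield}\eqref{item:Q-Bousfield-1} the ambient Picard category is strictly commutative and by Lemma \ref{lemma:signs} the only discrepancy between different orders is a sign, invisible at the level of $\QBbb$-line distributions.

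\textbf{Expected main obstacle.} The genuinely nontrivial point is making the first isomorphism $\lambda_{-1}(E)\simeq\lambda_{-1}(E')\otimes\lambda_{-1}(E'')$ precise \emph{as an isomorphism in the virtual category}, functorially and compatibly with filtrations, rather than merely as an equality of classes in $K_0$. One cannot simply cite $K$-theory; one needs a multiplicative functor on exact sequences. The cleanest route is probably to observe that $\Lambda^\bullet$ applied to $0\to E'\to E\to E''\to 0$ produces, after choosing a splitting, a filtration of $\Lambda^\bullet E$ with graded pieces $\Lambda^\bullet E'\otimes\Lambda^\bullet E''$, and to package this as in the Whitney-type constructions of \textsection\ref{subsec:properties-of-intersection-bundles}: i.e.\ one defines the isomorphism on line bundles by hand, checks it is independent of the chosen flag via the splitting principle, and propagates it. Alternatively, one can bypass this entirely by combining the Whitney isomorphism for $\cfrak_k$ (Corollary \ref{cor:properties-intersection-distribution}\eqref{item:prop-int-dist-whitney}) with the known polynomial expression of $\ch(\lambda_{-1}(E))$ in the Chern roots; working rationally and sum-of-line-bundles, one simply expands both sides and matches them monomial by monomial using the isomorphisms of Corollary \ref{cor:properties-intersection-distribution}, whose mutual compatibility guarantees a well-defined result independent of the order of operations. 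I would present the argument this second way, as it stays closest to the toolkit already assembled and minimizes new bookkeeping.
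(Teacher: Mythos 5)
Your proposal contains the paper's actual argument as the first of the two routes you sketch, so the substance is essentially right; but you then discard that route in favour of a more computational one, and the reason you give for discarding it rests on a misreading of the key construction. The filtration
\begin{displaymath}
F^i\Lambda^k E=\Imag\bigl(\Lambda^i E'\otimes\Lambda^{k-i}E\to\Lambda^k E\bigr)
\end{displaymath}
is canonical: it depends only on the inclusion $E'\hookrightarrow E$, not on any splitting, and its graded pieces are naturally $\Lambda^i E'\otimes\Lambda^{k-i}E''$. So the isomorphism
\begin{displaymath}
[\Lambda^k E]\simeq\sum_{i=0}^k\bigl[\Lambda^iE'\otimes\Lambda^{k-i}E''\bigr]
\end{displaymath}
already holds functorially in the virtual category without invoking any splitting principle, and the only residual choice in passing to $\lambda_{-1}(E)\simeq\lambda_{-1}(E')\otimes\lambda_{-1}(E'')$ is the isomorphism $(-A)\otimes(-B)\simeq A\otimes B$, well-defined up to sign and hence unambiguous after rationalization by Remark \ref{rem:Q-Bousfield}\eqref{item:Q-Bousfield-1}, as you correctly anticipate. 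This is exactly the paper's proof: form the canonical filtration, pass to $V(X)_{\QBbb}$, apply $\chfrak$ together with Proposition \ref{prop:ch-multiplicative-properties}\eqref{item:chernsequence-5}. Naturality (1) and filtration compatibility (2) then follow from functoriality of $F^\bullet$ in the exact sequence together with the bimonoidal and associativity clauses of Proposition \ref{prop:ch-multiplicative-properties}; no flag-independence check and no reduction to line bundles is needed at this stage.

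The second route you declare you would present — reduce to split sums of line bundles, expand $\chfrak(\lambda_{-1}(E))$ as $\prod_i(1-e^{\cfrak_1(L_i)})$, and match monomials using the Whitney isomorphism, rank triviality, and the tensor-product multiplicativity of $\chfrak$ — is also viable, but you have the relative costs backwards: it is genuinely more work, not less. In particular, property (2) becomes the delicate point. For a two-step admissible filtration you must compare the Chern-root expansions attached to the nested sequence and to the two induced short exact sequences and show the resulting identifications coincide; this is precisely the verification that the canonical filtration packages away. So while your sketch correctly records the target isomorphism, the sign issue, and the role of Proposition \ref{prop:ch-multiplicative-properties}, your preferred presentation would be longer and would make the compatibility claim (2) harder rather than easier to justify.
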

\begin{proof}

First of all, on $\Lambda^k E$ there is a standard natural filtration   

\begin{displaymath} F^i \Lambda^k E =\Imag ( \Lambda^{i} (E^{\prime}) \otimes \Lambda^{k-i}(E) \to \Lambda^k (E))
\end{displaymath} whose successive quotients are given by
\begin{equation}\label{eq:lambdafiltration}
    F^i\Lambda^k E/F^{i+1}\Lambda^k E \simeq  \Lambda^{i} (E^{\prime}) \otimes \Lambda^{k-i} (E^{\bis}).
\end{equation}
We deduce a canonical isomorphism in the virtual category
\begin{equation}\label{eq:isom-lambdas-virtual}
    [\Lambda^k (E)] \simeq \sum_{i=0}^k \left[ \Lambda^{i} (E^{\prime}) \otimes \Lambda^{k-i} (E^{\bis}) \right].
\end{equation}
The construction is clearly functorial with respect to isomorphisms of short exact sequences. 

From the definition \eqref{eq:lambda-1def} and \eqref{eq:isom-lambdas-virtual}, a formal computation provides an isomorphism in the virtual category
\begin{equation}\label{eq:isom-lambda-1-virtual}
    \lambda_{-1}(E) \simeq \lambda_{-1}(E^{\prime}) \otimes \lambda_{-1}(E^{\bis}).
\end{equation}
This depends on the choice of an isomorphism of the type $(-A) \otimes (-B) \simeq A \otimes B$. By \cite[\textsection 4.11 (b)]{Deligne-determinant}, the dependence is only up to sign \eqref{eq:isawthesign}. Therefore, by Remark \ref{rem:Q-Bousfield} \eqref{item:Q-Bousfield-1}, the isomorphism induced by \eqref{eq:isom-lambda-1-virtual} in $V(X)_{\QBbb}$ is well-defined.

The existence of the canonical isomorphism follows by applying the Chern character to this isomorphism, and using the tensor product multiplicativity of the associated line distributions \eqref{eq:ch-otimes} in Proposition \ref{prop:ch-multiplicative-properties} \eqref{item:chernsequence-5}. 

This construction is clearly base change functorial and functorial for isomorphisms of short exact sequences. The rest of the statement follows from general properties of the filtration \eqref{eq:lambdafiltration} with respect to admissible filtrations of $E$, again relying on Proposition \ref{prop:ch-multiplicative-properties} \eqref{item:chernsequence-5}, which ensures the compatibility of \eqref{eq:ch-otimes} with admissible filtrations. 
\end{proof}
\begin{remark}
Notice that, despite the relationship \eqref{eq:isom-lambda-1-virtual}, the construction of $\lambda_{-1}$ does not extend to the virtual category since the objects $\lambda_{-1}(E)$ are not invertible with respect to the tensor product.
\end{remark}

\begin{theorem}[Borel--Serre isomorphism]\label{thm:borelserre}
Let $E$ be a vector bundle of constant rank $r$ on $X$. Then, there is a canonical isomorphism of line distributions

\begin{equation}\label{eq:BS-isomorphism}
    [ \chfrak(\lambda_{-1}(E)) ]_{X/S} \simeq [ \cfrak_{r}(E^\vee) \cdot \tdfrak(E^\vee)^{-1}]_{X/S},
\end{equation}
which satisfies:
\begin{enumerate}

    \item It is compatible with short exact sequences of vector bundles $0 \to E^{\prime} \to E \to E^{\bis} \to 0$ of constant ranks $r',r,r''$, respectively, in the sense that the diagram of isomorphisms
\begin{displaymath}
    \xymatrix{ [ \chfrak(\lambda_{-1}(E)) ]_{X/S}  \ar[r] \ar[d] & [ \cfrak_{r}(E^\vee) \cdot \tdfrak(E^\vee)^{-1} ]_{X/S} \ar[d]  \\
     [ \chfrak(\lambda_{-1}(E^{\prime})) \cdot \chfrak(\lambda_{-1}(E^{\bis})) ]_{X/S} \ar[r] &  [ \cfrak_{r^{\prime}}((E^{\prime})^\vee) \cdot \tdfrak((E^{\prime})^\vee)^{-1} \cdot \cfrak_{r^{\prime\prime}}((E^{\bis})^\vee) \cdot \tdfrak((E^{\bis})^\vee)^{-1} ]_{X/S}}
\end{displaymath}
commutes. Here, the horizontal arrows are those of \eqref{eq:BS-isomorphism}. The left vertical isomorphism is that of  Lemma \ref{lemma:lambda-1mult}. Finally, the right vertical isomorphism is the one obtained by combining the Whitney isomorphism and the rank triviality in Corollary \ref{cor:properties-intersection-distribution}, and the multiplicativity for the Todd categorical class. 
    \item The isomorphism is compatible with admissible filtrations. 
    \item If $\sigma$ is a regular everywhere non-vanishing section of  $E^\vee$, 
both sides of \eqref{eq:BS-isomorphism} are canonically trivial, and the two trivializations correspond to each other. Here, the left-hand side is trivialized by the acyclicity of the associated Koszul complex $K(\sigma)$, see \eqref{def:lambdaKosz}, and the right-hand side is trivialized by Corollary \ref{cor:properties-intersection-distribution} \eqref{item:prop-int-dist-restriction}. 
\end{enumerate}
\end{theorem}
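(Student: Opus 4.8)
The strategy is to reduce everything to the case of a direct sum of line bundles via the splitting principle (Corollary \ref{cor:splittingdistribution}), where the classical Borel--Serre identity becomes an elementary manipulation of power series, and then to leverage the already-established compatibilities of the Whitney isomorphism, rank triviality, and tensor-product multiplicativity of the Chern character to verify the three listed properties. The key input is that both sides of \eqref{eq:BS-isomorphism} define line distributions (Theorem \ref{thm:Elkik-distribution} and Proposition \ref{prop:ch-multiplicative-properties}), and that Corollary \ref{cor:splittingdistribution} allows us to construct an isomorphism between them once we have one that is functorial on split flags of line bundles and behaves well under the flag-preserving automorphisms.

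First I would set up the line-bundle case. Assume $E = \bigoplus_{i=1}^{r} L_i$, with dual $E^\vee = \bigoplus_i L_i^\vee$. On the one hand, the filtration \eqref{eq:lambdafiltration} together with \eqref{eq:isom-lambda-1-virtual} gives, in $V(X)_\QBbb$, a canonical isomorphism $\lambda_{-1}(E) \simeq \bigotimes_{i=1}^r \lambda_{-1}(L_i) = \bigotimes_{i=1}^r (1 - [L_i])$. Applying $\chfrak$ and the multiplicativity isomorphism \eqref{eq:ch-otimes} of Proposition \ref{prop:ch-multiplicative-properties} \eqref{item:chernsequence-5}, one gets $[\chfrak(\lambda_{-1}(E))]_{X/S} \simeq \sum_i [\chfrak(1 - [L_i])]_{X/S}$, and each factor is, by rank triviality applied to $L_i$, canonically $[1 - \exp(\cfrak_1(L_i))]$. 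On the other hand, by the Whitney isomorphism, the first Chern class isomorphism, and rank triviality (Corollary \ref{cor:properties-intersection-distribution}), $[\cfrak_r(E^\vee)]_{X/S} \simeq [\prod_i \cfrak_1(L_i^\vee)]_{X/S} = [\prod_i (-\cfrak_1(L_i))]$, and $[\tdfrak(E^\vee)^{-1}]_{X/S} \simeq \prod_i [\tdfrak(L_i^\vee)^{-1}] = \prod_i [(1 - \exp(\cfrak_1(L_i)))/(-\cfrak_1(L_i))]$. Multiplying, the $-\cfrak_1(L_i)$ factors cancel against $\cfrak_r(E^\vee)$, and the right-hand side becomes $\prod_i [1 - \exp(\cfrak_1(L_i))]$ — which is, after reordering the product in $\CHfrak_+(X)$ (legitimate by the coherence Corollary \ref{cor:coherence}), exactly the left-hand expression. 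This produces the isomorphism \eqref{eq:BS-isomorphism} for split sums of line bundles, functorially in the $L_i$ and compatibly with the flag $L_1 \subseteq L_1 \oplus L_2 \subseteq \cdots$; Corollary \ref{cor:splittingdistribution} then extends it to arbitrary $E$.

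Next I would verify the three properties. For compatibility with short exact sequences (property 1) and admissible filtrations (property 2): by the splitting principle one reduces to split flags of line bundles, where the diagram in question unwinds into the formal identity $\prod_i(1-\exp x_i) = \big(\prod_{i}(1-\exp x_i')\big)\big(\prod_j(1-\exp x_j'')\big)$ after relabeling; since every vertical and horizontal arrow in the diagram is, on the nose, built from the Whitney isomorphism, rank triviality, the first Chern class isomorphism, the Todd multiplicativity, and \eqref{eq:ch-otimes}, and all of these mutually commute by \textsection\ref{subsec:compatibility} and Proposition \ref{prop:ch-multiplicative-properties}, the diagram commutes. The sign subtleties from reordering filtrations (Lemma \ref{lemma:signs} \eqref{item:symmetry-add}) are invisible at the level of line distributions because of the rational coefficients, so no obstruction arises there. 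For property 3 (the everywhere-nonvanishing section $\sigma$ of $E^\vee$): the acyclicity of $K(\sigma)$ trivializes $\lambda_{-1}(E^\vee)$, hence $\lambda_{-1}(E)^\vee$ — wait, more precisely it trivializes $[\chfrak(\lambda_{-1}(E))]$ after one identifies $\lambda_{-1}(E^\vee) \simeq K(\sigma)$; since $\sigma$ nowhere vanishes its zero locus $Y$ is empty, and $\cfrak_r(E^\vee)$ is trivialized by Corollary \ref{cor:properties-intersection-distribution} \eqref{item:prop-int-dist-restriction}. To match the two trivializations, I would use Proposition \ref{prop:newranktriviality}: a nowhere-vanishing section splits off a trivial line subbundle $\Ocal_X \subseteq E^\vee$, and the restriction trivialization of $[\cfrak_r(E^\vee)]$ via $\sigma$ agrees with the one obtained from the Whitney isomorphism for $0 \to \Ocal_X \to E^\vee \to E^{\vee\prime\prime} \to 0$ followed by rank triviality; on the $\chfrak(\lambda_{-1})$ side, the Koszul trivialization corresponds under \eqref{eq:isom-lambda-1-virtual} to the factor $\lambda_{-1}(\Ocal_X) = 1 - [\Ocal_X] = 0$ in $V(X)$, which is the compatible statement. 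The main obstacle is precisely this last point: reconciling the Koszul-complex trivialization with the intersection-theoretic one requires threading the isomorphism \eqref{eq:isom-lambda-1-virtual} in the virtual category through the tensor-product multiplicativity of $\chfrak$ and matching it against Proposition \ref{prop:newranktriviality}, and one must check that the chain of identifications does not introduce an unwanted unit — which, again, is harmless rationally but should be spelled out carefully.
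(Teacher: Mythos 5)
Your overall strategy — reduce to split direct sums of line bundles via the splitting principle, where the Borel--Serre identity is a transparent power-series manipulation, then invoke Corollary \ref{cor:splittingdistribution} and the compatibility machinery of \textsection\ref{subsec:compatibility} — is precisely the one the paper follows for constructing \eqref{eq:BS-isomorphism} and for properties (1) and (2). The only cosmetic difference is that you unwind the full split sum $E = \bigoplus L_i$ at once, whereas the paper's proof phrases this as an induction peeling off one line bundle at a time via the diagram \eqref{eq:Borelserresplit}; these are interchangeable given the splitting principle, and your explicit cancellation computation is correct.

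Property (3), however, contains a genuine gap. You correctly identify the two ingredients — Proposition \ref{prop:newranktriviality} for the $\cfrak_r(E^\vee)$ side, and the factor $\lambda_{-1}(\Ocal_X) \simeq 0$ in $V(X)$ for the $\chfrak(\lambda_{-1})$ side — and you honestly flag that reconciling the Koszul trivialization with \eqref{eq:isom-lambda-1-virtual} is ``the main obstacle.'' But you then leave that reconciliation unproven and mischaracterize what would close it. The missing observation, which is the heart of the paper's argument for the leftmost triangle of its diagram \eqref{eq:diagramfromhell}, is that the short exact sequences \eqref{eq:filKosz}, namely $0 \to \Lambda^k F \to \Lambda^k E \to \Lambda^{k-1} F \to 0$ arising from the filtration of $\Lambda^\bullet E$ induced by $0 \to F \to E \to \Ocal_X \to 0$, play a \emph{dual} role: they both define the Whitney-type isomorphism $\lambda_{-1}(E) \simeq \lambda_{-1}(\Ocal_X) \otimes \lambda_{-1}(F)$ in the virtual category, \emph{and} they literally describe the kernels and images of the Koszul differentials $d_{-k}$ (the second map in \eqref{eq:filKosz} is naturally isomorphic to $\Imag d_{-k}$, and $\Imag d_k = \ker d_{k-1}$). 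It is this coincidence of the two filtrations, fed through the bimonoidality of $[\chfrak(-\otimes-)]$ in Proposition \ref{prop:ch-multiplicative-properties}, that forces the two trivializations of $[\chfrak(\lambda_{-1}(E))]$ to agree on the nose. Your remark that any remaining discrepancy would be an ``unwanted unit'' that is ``harmless rationally'' is not an acceptable substitute: rationalizing kills only $\pm 1$ signs (Remark \ref{rem:Q-Bousfield}), not arbitrary units of $\Ocal_S$, so one must actually prove the trivializations coincide — which is exactly what the dual-role observation delivers, and what your proposal is missing.
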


\begin{proof}
 We construct the isomorphism by induction on the rank of $E$, which is constant by assumption. If $E$ has rank one, given the rank triviality property in Proposition \ref{prop:properties-chern-bundles} \eqref{item:Chern-rank}, the isomorphism is a simple rewriting of Chern power series combined with the isomorphism \ref{prop:Cherndual} for duals of vector bundles.

Assume now that the isomorphism is constructed for vector bundles of rank up to $r-1$, and $E$ is of rank $r$. By the splitting principle in Proposition \ref{Prop:2ndsplitting}, we can suppose that $E$ admits a complete flag, of which the first line bundle sits in a sequence $0 \to L \to E \to Q \to 0$, where $Q$ also admits a complete flag. We then define the isomorphism by the commutativity of the diagram
\begin{equation}\label{eq:Borelserresplit}
    \xymatrix{ \langle P \cdot \chfrak(\lambda_{-1}(E)) \cdot P' \rangle  \ar[r] \ar[d] & \langle P \cdot \cfrak_{r}(E^\vee) \cdot \tdfrak(E^\vee)^{-1}\cdot P' \rangle \ar[d]  \\
     \langle P \cdot \chfrak(\lambda_{-1}(L)) \cdot \chfrak(\lambda_{-1}(Q))\cdot P' \rangle \ar[r] &  \langle P \cdot \cfrak_{1}(L^\vee) \cdot \tdfrak(L^\vee)^{-1} \cdot \cfrak_{r-1}(Q^\vee) \cdot \tdfrak(Q^\vee)^{-1}\cdot P' \rangle.}
\end{equation}
Here, the left vertical isomorphism is provided by Lemma \ref{lemma:lambda-1mult}, and the lower horizontal isomorphism is given by the induction hypothesis. The right vertical isomorphism is obtained by combining the Whitney isomorphism with the rank triviality, together with the multiplicativity of the Todd class. This proves the first two statements. 

The third statement is less immediate since the two trivializations are constructed in a priori completely different ways. We prove it by induction on the rank of $E$. If $E$ is a line bundle, it is trivialized by $\sigma^{\vee}$ and we can suppose $E = \Ocal_X$. In this case, we recall from Proposition \ref{prop:ch-multiplicative-properties} \eqref{item:ch-acts-trivially} that there is a canonical isomorphism $[\chfrak(\Ocal_{X})]\simeq 1$. It relies on the triviality $[\cfrak_k(\Ocal_X)]\simeq 0$ for $k\geq 1$, which can be applied in any order in the construction of the isomorphism. Now, consider the diagram of isomorphisms

\begin{displaymath}
    \xymatrix{[ \chfrak(\Ocal_X)  - \chfrak(\Ocal_X)] \ar[r] \ar[d] & 1-[\chfrak(\Ocal_X)] \ar[d] \ar[r] & [\cfrak_1(\Ocal_X)\cdot  \tdfrak(\Ocal_X)^{-1}]  \ar[d] \\
    0 \ar[r] & 1-1 \ar[r] & [0 \cdot  1]=0.
    }
\end{displaymath}
The upper horizontal isomorphism is the Borel--Serre isomorphism for the trivial bundle, where we have identified $\Ocal_{X}^{\vee}$ to $\Ocal_{X}$. The vertical isomorphisms are the canonical trivializations. The statement that the outer contour commutes corresponds to the desired compatibility of the trivializations in the rank one case. We first remark that the leftmost square commutes for formal reasons. The rightmost square commutes too. Indeed, by construction, the upper right horizontal morphism is obtained by the rank triviality $[\cfrak_{k}(\Ocal_{X})]=0$ for $k\geq 2$ and then rewriting $1-[\exp(\cfrak_{1}(\Ocal_{X}))]$, and both vertical isomorphisms are defined in terms of the same rank triviality together with $[\cfrak_1(\Ocal_X)]\simeq 0.$

In the general case, $\sigma$ corresponds to an exact sequence of vector bundles of constant rank: $0 \to \Ocal_X \to E^\vee \to F^\vee \to 0.$ Consider the below diagram:

\begin{equation}\label{eq:diagramfromhell}
\xymatrix@R=20pt@C=30pt{
 [\chfrak(\lambda_{-1}(E))] \ar[r] \ar[d]  \ar@/_9.0pc/@{->}[dd] & [\cfrak_r(E^\vee) \cdot \tdfrak(E^\vee)^{-1}] \ar[d] \ar@/^9.0pc/@{->}[dd] \\
 [\chfrak(\lambda_{-1}(\Ocal_X)) \cdot \chfrak(\lambda_{-1}(F)) ] \ar[r] \ar[d] & [\cfrak_1(\Ocal_X) \cdot \tdfrak(\Ocal_X)^{-1} \cdot \cfrak_{r-1}(F^\vee) \cdot \tdfrak(F^\vee)^{-1}] \ar[d] \\
 0 \ar[r]^{=} & 0.
} 
\end{equation}
Here, the two upper horizontal morphisms are the Borel--Serre isomorphisms, again identifying $\Ocal_X^\vee \simeq \Ocal_X$. The upper vertical morphisms are the morphisms corresponding to the exact sequence $0 \to \Ocal_X \to E^\vee \to F^\vee \to 0$ and the vertical morphisms in \eqref{eq:Borelserresplit}. The lower vertical isomorphisms are the canonical trivializations induced by $[\chfrak(\lambda_{-1}(\Ocal_{X}))]\simeq 0$ and $[\cfrak_{1}(\Ocal_{X})]\simeq 0$. The curved downwards isomorphisms are the trivializations in the third point of the theorem, and the statement reduces to the assertion that the diagram of the outer contour commutes. The upper middle square commutes by construction, and the lower middle square commutes because of the above treated case. We are left to show that the leftmost and rightmost triangles commute. The rightmost triangle commutes by Proposition \ref{prop:newranktriviality}. Below, we address the leftmost triangle.

First, we notice that the construction of the isomorphism \eqref{eq:isom-lambda-1-virtual} for the exact sequence\linebreak $0\to F\to E\to\Ocal_{X}\to 0$ relies on the induced exact sequences 
\begin{equation}\label{eq:filKosz}
    0\to\Lambda^{k}F\to \Lambda^{k}E\to\Lambda^{k-1}F\to 0,
\end{equation}
where it can be verified that the first map is the inclusion and the second is naturally isomorphic to the image of the Koszul differential $d_{-k}\colon\Lambda^{k}E\to\Lambda^{k-1}E$. Now, the trivialization of the virtual class of the Koszul complex is obtained by the isomorphism
\begin{displaymath}
    [K(\sigma)]\simeq \sum_{k}(-1)^{k} \left([\ker d_{k}]+ [\Imag d_{k}]\right)  
\end{displaymath}
and the equality $\Imag d_{k} = \ker d_{k-1}$. The latter are described in \eqref{eq:filKosz}, which also provides the isomorphism  $\lambda_{-1}(E) \simeq \lambda_{-1}(\Ocal_X) \otimes \lambda_{-1}(F)$. The identification of the kernels and the cokernels corresponds to the obvious triviality $\lambda_{-1}(\Ocal_X) \simeq 0.$  It follows that under the isomorphism $\lambda_{-1}(E)\simeq \lambda_{-1}(\Ocal_X) \otimes \lambda_{-1}(F)$ in 
\eqref{eq:isom-lambda-1-virtual}, the trivialization of $\lambda_{-1}(E)$ coming from the Koszul complex corresponds to the trivialization  $\lambda_{-1}(\Ocal_X)\simeq 0.$ 

Since the functor $[\chfrak(E \otimes F)]$ is bimonoidal in $E$ and $F$ by Lemma \ref{lemma:lambda-1mult}, we conclude that the two trivializations of 
\begin{displaymath}
    [\chfrak(\lambda_{-1}(E))] \simeq [\chfrak(\lambda_{-1}(\Ocal_X) \otimes \lambda_{-1}(F))] \simeq [\chfrak(\lambda_{-1}(\Ocal_X)) \cdot \chfrak(\lambda_{-1}(F))] 
\end{displaymath}
appearing in the left triangle of \eqref{eq:diagramfromhell} correspond to each other. This concludes the proof.

\end{proof}

As an application, we find the following primitive version of the Deligne--Riemann--Roch isomorphism, for regular closed immersions defined by the zeros of a given section of a vector bundle. 

\begin{corollary}\label{cor:Borel-Serre-consequence}
    Let $X\to S$ be a morphism satisfying the condition $(C_n)$ and $E$ a vector bundle of constant rank $r$ on $X$. Suppose that $\sigma$ is a regular section of $E$,  whose zero locus $Y$, possibly empty, is flat over $S$. Then there is a canonical isomorphism, a priori depending on $\sigma$,
    \begin{displaymath}
        [\chfrak(i_! \ \Ocal_Y)]_{X/S} \to i_\ast [\tdfrak(N_{X/Y})^{-1}]_{Y/S}
    \end{displaymath}
    of line distributions. If $Y$ is empty, then both sides are canonically trivial, and the isomorphism identifies with the identity isomorphism.

\end{corollary}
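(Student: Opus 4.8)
The starting point is the Koszul resolution: since $\sigma$ is a regular section of $E$ with zero locus $Y$ flat over $S$, the Koszul complex $K(\sigma)$ of \eqref{def:lambdaKosz} is a resolution of $i_\ast\Ocal_Y$, so that in the virtual category $V(\Pcal_X)$ (equivalently $V(X)$, locally on $S$ where $X$ is divisorial, by Lemma \ref{lemma:divisorial}) one has a canonical isomorphism $i_!\,\Ocal_Y \simeq [K(\sigma)] \simeq \lambda_{-1}(E^\vee)$, using \eqref{eq:lambda-1def} and the identification of $\lambda_{-1}(E^\vee)$ with $K(\sigma)$ recalled just before Lemma \ref{lemma:lambda-1mult}. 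Applying the categorical Chern character distribution, this gives a canonical isomorphism of line distributions $[\chfrak(i_!\,\Ocal_Y)]_{X/S} \simeq [\chfrak(\lambda_{-1}(E^\vee))]_{X/S}$. (Here one must check, as in Proposition \ref{prop:ch-multiplicative-properties} and Lemma \ref{lemma:td-factorisation-independence}, that passing through the derived category / replacing resolutions is harmless, which holds by Proposition \ref{prop:derivedvirtual} and Corollary \ref{cor:characteristic-derived}.)

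\textbf{Main step.} Now apply the Borel--Serre isomorphism of Theorem \ref{thm:borelserre} to the vector bundle $E^\vee$ (of constant rank $r$): this yields a canonical isomorphism
\begin{displaymath}
    [\chfrak(\lambda_{-1}(E^\vee))]_{X/S} \simeq [\cfrak_r(E) \cdot \tdfrak(E)^{-1}]_{X/S},
\end{displaymath}
noting $(E^\vee)^\vee = E$. Then invoke the restriction isomorphism for intersection distributions, Corollary \ref{cor:properties-intersection-distribution} \eqref{item:prop-int-dist-restriction}, applied to the regular section $\sigma$ of $E$: this gives $[\cfrak_r(E)]_{X/S} \simeq \delta_{Y/S} = i_\ast[1]_{Y/S}$. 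Combining this with the projection formula for line distributions \eqref{linedist:projformula} (i.e. $i_\ast(i^\ast P \cdot T) = P\cdot i_\ast T$, applied with $T = [1]_{Y/S}$ and $P = \tdfrak(E)^{-1}$) one obtains
\begin{displaymath}
    [\cfrak_r(E)\cdot\tdfrak(E)^{-1}]_{X/S} \simeq i_\ast\big[\, i^\ast(\tdfrak(E)^{-1}) \,\big]_{Y/S} = i_\ast[\tdfrak(i^\ast E)^{-1}]_{Y/S}.
\end{displaymath}
Finally, $i^\ast E \simeq N_{X/Y}$ canonically (the conormal bundle of the regular immersion $i\colon Y\hookrightarrow X$ cut out by $\sigma$ is $i^\ast E^\vee$, so its dual, the normal bundle, is $i^\ast E$), which identifies the right-hand side with $i_\ast[\tdfrak(N_{X/Y})^{-1}]_{Y/S}$. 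Concatenating the three displayed isomorphisms produces the claimed isomorphism $[\chfrak(i_!\,\Ocal_Y)]_{X/S} \to i_\ast[\tdfrak(N_{X/Y})^{-1}]_{Y/S}$; its a priori dependence on $\sigma$ enters through the restriction isomorphism and the identification $i^\ast E \simeq N_{X/Y}$.

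\textbf{The empty case and the main obstacle.} When $Y = \emptyset$, the section $\sigma$ is everywhere non-vanishing, so the Koszul complex $K(\sigma)$ is acyclic and $\lambda_{-1}(E^\vee)$ is canonically trivial; likewise $[\cfrak_r(E)]_{X/S}$ is canonically trivial by Corollary \ref{cor:properties-intersection-distribution} \eqref{item:prop-int-dist-restriction} (with empty $Y$). The assertion that the composite isomorphism reduces to the identity is then precisely part (3) of Theorem \ref{thm:borelserre}, which says exactly that the Koszul trivialization of the left-hand side and the restriction trivialization of the right-hand side correspond under the Borel--Serre isomorphism; one only needs to observe that the projection formula step and the $i^\ast E \simeq N_{X/Y}$ step become vacuous over the empty scheme. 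The step I expect to require the most care is not any single one of these identifications in isolation but rather verifying that they assemble into a \emph{well-defined isomorphism of line distributions} — i.e.\ compatible with base change $S'\to S$ — and tracking the dependence on $\sigma$ cleanly; however, every ingredient (Koszul resolution, Borel--Serre, restriction, projection formula) is already known to be base-change compatible, so this is a matter of bookkeeping rather than a genuine obstacle. The one genuinely substantive input is Theorem \ref{thm:borelserre}(3), which does the real work of matching the two trivializations in the empty case.
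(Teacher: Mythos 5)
Your proposal is correct and follows essentially the same route as the paper's proof: identify $i_!\,\Ocal_Y$ with $\lambda_{-1}(E^\vee)$ via the Koszul resolution, apply the Borel--Serre isomorphism of Theorem~\ref{thm:borelserre} for $E^\vee$, and rewrite $[\cfrak_r(E)\cdot\tdfrak(E)^{-1}]_{X/S}$ via the restriction isomorphism, the projection formula, and $E|_Y\simeq N_{X/Y}$. The paper's own argument is terser and merely notes that ``the above proof works independently of if $Y$ is empty or not,'' whereas you correctly and more explicitly pin the empty-case assertion on Theorem~\ref{thm:borelserre}(3), which is the right ingredient.
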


\begin{proof}
This is essentially a rewriting of the Borel--Serre isomorphism \eqref{eq:BS-isomorphism} in Theorem \ref{thm:borelserre}, replacing $E$ by $E^\vee$. Indeed, on the one hand, the Koszul complex $K(\sigma)$ provides a resolution of $i_{\ast}\Ocal_{Y}$, so that $i_{!}\ \Ocal_{Y}\simeq K(\sigma)$ in $V(X)$. Recalling the definition of the Koszul resolution in \eqref{def:lambdaKosz}, we see that  $[\chfrak(\lambda_{-1}(E^{\vee}))]_{X/S}\simeq [\chfrak(i_{!}\ \Ocal_{Y})]_{X/S}$. On the other hand, since $\sigma$ is a regular section, by the restriction property in Corollary \ref{cor:properties-intersection-distribution} \eqref{item:prop-int-dist-restriction} there is an isomorphism $[\cfrak_r(E)]_{X/S} \simeq \delta_{Y/S}$, and it is standard that $E|_{Z} \simeq N_{X/Z}$. We deduce $[\cfrak_{r}(E)\cdot\tdfrak(E)^{-1}]_{X/S}\simeq i_{\ast}[\tdfrak(N_{X/Y})^{-1}]_{Y/S}$. 

Notice that the above proof works independently of if $Y$ is empty or not. 
\end{proof}

\begin{corollary}\label{cor:Borel-Serre-consequence2}
Under the assumptions of Corollary \ref{cor:Borel-Serre-consequence}, suppose moreover that there exists a retraction $p\colon X\to Y$. Then, for any virtual perfect complex $F$ on $Y$, there exists an isomorphism of line distributions

\begin{displaymath}
        [\chfrak(i_!\ F)]_{X/S} \to i_\ast [\chfrak(F) \cdot \tdfrak(N_{X/Y})^{-1}]_{Y/S},
    \end{displaymath}
which is functorial in $F$. That is, it defines an isomorphism of functors of commutative Picard categories $V(\Pcal_{Y})\to\Dcal(X/S)$.
\end{corollary}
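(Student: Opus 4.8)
The plan is to reduce the statement to the Borel--Serre isomorphism of Corollary \ref{cor:Borel-Serre-consequence} by means of the retraction $p$, the projection formula for intersection distributions, and the multiplicativity of the categorical Chern character. First I would set up the key geometric input: since $i\colon Y\to X$ is a regular closed immersion admitting the retraction $p\colon X\to Y$, we have $p\circ i=\id_{Y}$, so $i^{\ast}p^{\ast}F\simeq F$ for every virtual perfect complex $F$ on $Y$, and moreover there is a Tor-independence statement allowing the projection formula to apply. Working locally on $S$, so that all schemes are divisorial (using Lemma \ref{lemma:divisorial} and Proposition \ref{prop:line-distribution-affine}), we may assume $F$ is an honest perfect complex, and by the universal property of virtual categories it suffices to treat the case where $F=G$ is a vector bundle, the general case following by additivity on true triangles together with Corollary \ref{cor:characteristic-derived}.

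The heart of the argument is the chain of isomorphisms
\begin{align*}
    [\chfrak(i_{!}\ G)]_{X/S} &\simeq [\chfrak(i_{!}\ (i^{\ast}p^{\ast}G))]_{X/S}\\
    &\simeq [\chfrak(p^{\ast}G\otimes i_{!}\ \Ocal_{Y})]_{X/S}\\
    &\simeq [\chfrak(p^{\ast}G)\cdot\chfrak(i_{!}\ \Ocal_{Y})]_{X/S}\\
    &\simeq [\chfrak(p^{\ast}G)]_{X/S}\cdot[\chfrak(i_{!}\ \Ocal_{Y})]_{X/S},
\end{align*}
where the first step uses $i^{\ast}p^{\ast}G\simeq G$, the second is the projection formula for $i_{!}$ in the virtual category (Proposition \ref{prop:properties-direct-image-virtual} \eqref{item:properties-direct-image-virtual-2}), the third is the tensor-product multiplicativity of the Chern distribution in Proposition \ref{prop:ch-multiplicative-properties} \eqref{item:chernsequence-5}, and the last is just Definition \ref{def:prod-Chern-series-line-distribution}. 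Applying the Borel--Serre isomorphism of Corollary \ref{cor:Borel-Serre-consequence} to rewrite $[\chfrak(i_{!}\ \Ocal_{Y})]_{X/S}\simeq i_{\ast}[\tdfrak(N_{X/Y})^{-1}]_{Y/S}$, and then invoking the projection formula for line distributions \eqref{linedist:projformula} together with the base-change/pullback compatibility of $\chfrak$ (so that $\chfrak(p^{\ast}G)\cdot i_{\ast}[\cdots]_{Y/S}\simeq i_{\ast}(i^{\ast}\chfrak(p^{\ast}G)\cdot[\cdots]_{Y/S})$ and $i^{\ast}p^{\ast}G\simeq G$), yields
\begin{displaymath}
    [\chfrak(i_{!}\ G)]_{X/S}\simeq i_{\ast}[\chfrak(G)\cdot\tdfrak(N_{X/Y})^{-1}]_{Y/S}.
\end{displaymath}
Finally, functoriality in $G$: each of the isomorphisms above is a natural transformation of functors of commutative Picard categories in the variable $G$ --- the projection formula isomorphism of Proposition \ref{prop:properties-direct-image-virtual} is natural, \eqref{eq:ch-otimes} is an isomorphism of bimonoidal functors by Proposition \ref{prop:ch-multiplicative-properties} \eqref{eq:ch-otimes-a}, and the Borel--Serre isomorphism does not involve $G$ --- so the composite is a natural transformation $V(\Pcal_{Y})\to\Dcal(X/S)$, and it extends from vector bundles to all virtual perfect complexes by the universal property.

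The main obstacle I anticipate is verifying the Tor-independence and base-change hypotheses needed to legitimately apply the virtual-category projection formula $i_{!}(i^{\ast}p^{\ast}G\otimes i_{!}\Ocal_{Y})$ --- more precisely, checking that $i_{!}\ (i^{\ast}p^{\ast}G)\simeq p^{\ast}G\otimes i_{!}\ \Ocal_{Y}$ as virtual perfect complexes on $X$ in a way compatible with the monoidal structure, and that this holds compatibly after base change along arbitrary $S'\to S$ so that it genuinely produces an isomorphism of line distributions and not merely of line bundles over each base. This requires care because $i_{!}$ only makes sense as a functor on virtual (perfect) categories once one knows $Ri_{\ast}$ preserves perfection, which here holds since $i$ is a regular closed immersion; the compatibility of the projection formula with base change is furnished by Proposition \ref{prop:properties-direct-image-virtual} \eqref{item:properties-direct-image-virtual-3}, but one must check the Tor-independence of $i$ with the base change, which follows from the flatness of $Y$ and $X$ over $S$ and the fact that regular immersions remain regular after base change (as recalled in \textsection\ref{subsec:conventions-notations}). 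The remaining verifications are bookkeeping with the already-established compatibilities in Corollary \ref{cor:properties-intersection-distribution} and Proposition \ref{prop:ch-multiplicative-properties}.
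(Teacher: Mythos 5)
Your proof is correct and follows essentially the same route as the paper: multiply the Borel--Serre isomorphism of Corollary \ref{cor:Borel-Serre-consequence} by $\chfrak(p^{\ast}F)$, use the $K$-theoretic projection formula $i_{!}\,i^{\ast}p^{\ast}F \simeq p^{\ast}F \otimes i_{!}\,\Ocal_{Y}$ together with $i^{\ast}p^{\ast}F\simeq F$ to identify the left-hand side with $[\chfrak(i_{!}\,F)]_{X/S}$, use the projection formula for line distributions \eqref{linedist:projformula} to identify the right-hand side with $i_{\ast}[\chfrak(F)\cdot\tdfrak(N_{X/Y})^{-1}]_{Y/S}$, and conclude functoriality from Proposition \ref{prop:ch-multiplicative-properties} \eqref{eq:ch-otimes-a}. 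One small remark: the reduction to vector bundles you propose is unnecessary, since all the ingredients invoked (the projection formula of Proposition \ref{prop:properties-direct-image-virtual} \eqref{item:properties-direct-image-virtual-2}, the multiplicativity in Proposition \ref{prop:ch-multiplicative-properties}) already operate at the level of virtual perfect complexes, so the argument applies directly without passing through Corollary \ref{cor:characteristic-derived}.
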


\begin{proof}
    Suppose we multiply the isomorphism in Corollary \ref{cor:Borel-Serre-consequence} by the class $\chfrak(p^\ast F)$. By Proposition \ref{prop:ch-multiplicative-properties} \eqref{item:chernsequence-5} and  Proposition \ref{prop:properties-direct-image-virtual} \eqref{item:properties-direct-image-virtual-2}, the left-hand side is isomorphic to 
    \begin{displaymath}
        [\chfrak(p^\ast F \otimes i_!\ \Ocal_X)]_{X/S} \simeq [\chfrak(i_!\ i^\ast p^\ast F)]_{X/S} \simeq [\chfrak(i_!\ F)]_{X/S}.
    \end{displaymath}
    On the other hand, by the projection formula for line distributions \eqref{linedist:projformula}, we have similarly that the right-hand side is isomorphic to 
    \begin{displaymath}
        i_\ast [\chfrak(i^\ast p^\ast F) \cdot \tdfrak(N_{X/Y})^{-1}]_{Y/S} \simeq i_\ast [\chfrak(F) \cdot \tdfrak(N_{X/Y})^{-1}]_{Y/S}.
    \end{displaymath}
The functoriality claim is automatic, by the pullback functoriality of the Chern categories and the virtual categories, together with Proposition \ref{prop:ch-multiplicative-properties} \eqref{eq:ch-otimes-a}.

\end{proof}
\subsection{Formulation of Deligne--Riemann--Roch}\label{subsec:FormDRR}
In this subsection, we propose a formulation, in the framework of line distributions, of the existence of a functorial Riemann--Roch isomorphism. It contains, as a special case, the conjectured functorial form of the Riemann--Roch theorem in the program by Deligne in \cite{Deligne-determinant}. For simplicity, we work in the category of divisorial schemes. 

Let $X\to S$ and $Y\to S$ be morphisms satisfying the conditions $(C_{n})$ and $(C_{m})$, and $f\colon X\to Y$ be a local complete intersection morphism of $S$-schemes. Let $E$ be a virtual perfect complex on $X$. Recall, from the introduction of this section, the line distribution $f_{\ast}[\chfrak(E)\cdot\tdfrak^{\ast}(L_{X/Y})]_{X/S}$. As for any line distribution, it is defined over any $S$-scheme, say $u: S^{\prime}\to S$. Hence, it makes sense to compare it with the distribution obtained from the base changed morphism $f^{\prime}: X^{\prime} \to Y^{\prime}$. We will implicitly identify these distributions over $S^{\prime}$. For this comparison to make sense, it is enough to notice that the morphisms $Y^{\prime} \to Y$ and $X \to Y$ are Tor-independent, as is readily verified from the fact that $X \to S$ and $Y \to S$ are flat, so that there is a canonical isomorphism $u^{\ast} L_{X/Y} \simeq L_{X^{\prime}/Y^{\prime} }$. 

In a similar vein, we will consider the line distribution $[\chfrak(f_{!}\ E)]_{Y/S}$ which, to every $u\colon S^{\prime}\to S$ and every object $P$ of $\CHfrak(Y^{\prime})_{\QBbb}$, associates
\begin{displaymath}
    [\chfrak(f_!\ E)]_{Y^{\prime}/S^{\prime}}(P^{\prime})=\langle\chfrak(f^{\prime}_{!} u^{\ast} E)\cdot P^{\prime}\rangle_{Y^{\prime}/S^{\prime}}.
\end{displaymath}
This indeed defines a line distribution since by  Proposition \ref{prop:properties-direct-image-virtual} \eqref{item:properties-direct-image-virtual-3} there is a natural identification $f^{\prime}_! (u^{\ast} E) \simeq u^{\ast} (f_!\ E)$.

Notice that the line distributions so constructed give rise to functors $V(\Pcal_X)\to \Dcal(Y/S)$ of commutative Picard categories. With this in mind, we provide the following definition:
\begin{definition}
  A Deligne--Riemann--Roch isomorphism for $f$ is an isomorphism
\begin{equation}\label{eq:RR-isomorphism}
    R_{f}(E)\colon [\chfrak(f_!\ E)]_{Y/S}\to f_\ast [\chfrak(E) \cdot \tdfrak^{\ast}(L_{X/Y})]_{X/S}
\end{equation}
of line distributions. 

Considering both sides of the isomorphism \eqref{eq:RR-isomorphism} as functors $V(\Pcal_X)\to \Dcal(Y/S)$ of commutative Picard categories, the assignment $E\mapsto R_{f}(E)$ is further required to be an isomorphism of such functors. 
\end{definition}

We notice that for a given Chern power series $P$ and vector bundle $E$, at least when $S$ is divisorial and $f: X \to S$ and $g: Y \to S$ are globally projective, there is an identity of Chern classes 
\begin{displaymath}
    g_\ast \left(\ch(f_!\ [E]) \cdot [P] \right)= g_\ast \left(f_{\ast}(\ch([E]) \cdot \td^{\ast}(L_{X/Y})) \cdot [P]\right).
\end{displaymath}
Here, $[P]$ is the class in the Chow ring of $P $ and $[E]$ is the class in $K_0(X)$ of $E.$ This follows from the classical Grothendieck--Riemann--Roch theorem for $X \to Y$ in \cite[Expos\'e VIII]{SGA6}, together with the Chern class computation of intersection bundles in Proposition \ref{prop:chernclassofIntbundle}. Taking the codimension one part, this shows that there is always an isomorphism of the $\QBbb$-line bundles underlying a Deligne--Riemann--Roch isomorphism as in \eqref{eq:RR-isomorphism}. 

\begin{conjecture}
There is a canonical Deligne--Riemann--Roch isomorphism. 

\end{conjecture}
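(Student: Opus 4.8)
The plan is to reduce the construction of the Deligne--Riemann--Roch isomorphism to the two building blocks already established in this article, namely the Borel--Serre isomorphism of Theorem \ref{thm:borelserre} (with its consequence Corollary \ref{cor:Borel-Serre-consequence2}) for regular closed immersions, and a separately constructed isomorphism for projective bundle projections, using the deformation to the normal cone to globalize. Since a local complete intersection morphism $f\colon X\to Y$ factors locally on $X$ as a Koszul-regular immersion $i\colon X\hookrightarrow P$ followed by a smooth morphism $p\colon P\to Y$, and since line distributions are of local nature over $S$ by Proposition \ref{prop:line-distribution-affine}, one may first assume all schemes are affine over $S$ and $f$ admits such a global factorization. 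By the transitivity of the Todd class of the cotangent complex established in Lemma \ref{lemma:td-factorisation-independence}(2), the right-hand side of \eqref{eq:RR-isomorphism} decomposes along the factorization, so it suffices to produce the isomorphism separately for $i$ and for $p$, then compose, and finally check independence of the factorization.

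\textbf{Main steps.} First I would treat the case of a smooth projective morphism $p\colon P\to Y$, which one further factors through a relative projective bundle $P\hookrightarrow \PBbb^N_Y$; for the bundle projection $\pi\colon\PBbb(V)\to Y$ the required isomorphism should be built from the universal exact sequence \eqref{eq:tautological-exact-sequence}, the projection formula for line distributions \eqref{linedist:projformula}, and the splitting principle Corollary \ref{cor:splittingdistribution}, reducing Chern-class manipulations to sums of line bundles as in the proof of Proposition \ref{prop:properties-chern-bundles}; here one uses that $Rp_\ast$ of a vector bundle is computed cohomologically in a way compatible with base change by Proposition \ref{prop:perfect-conditions}(2). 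Next I would handle the closed immersion case: if $i$ admits a retraction it is exactly Corollary \ref{cor:Borel-Serre-consequence2}; in general the retraction exists after deforming to the normal cone, so one sets up the deformation scheme $M\to \PBbb^1_Y$ as in the classical proof, observes that $M_0$ and $M_\infty$ give isomorphic contributions via the divisor-moving isomorphism $\delta_{M_0/S}\simeq\delta_{M_\infty/S}$ recorded right after Corollary \ref{cor:properties-intersection-distribution}, and uses Corollary \ref{cor:Borel-Serre-consequence2} applied to the normal cone $N_{X/P}\hookrightarrow$ its projective completion (which does admit a retraction). Composing the two halves and invoking Lemma \ref{lemma:td-factorisation-independence} gives a candidate isomorphism; I would then verify that distinct factorizations yield the same isomorphism by the diagonal-domination argument used in the proof of Lemma \ref{lemma:td-factorisation-independence}(2), which propagates the well-definedness of $L_{X/Y,\sigma}$ to the distributional level, and this is what allows the local constructions to glue over a general divisorial base $S$.

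\textbf{Functoriality in $E$.} Once the isomorphism is constructed, checking that $E\mapsto R_f(E)$ is a natural transformation of functors $V(\Pcal_X)\to\Dcal(Y/S)$ of commutative Picard categories is largely formal: the left-hand side is multiplicative in $E$ because $f_!$ is a commutative multiplicative functor by Proposition \ref{prop:properties-direct-image-virtual}(1) composed with $\chfrak$, the right-hand side is multiplicative by the additivity of $\chfrak$ combined with Proposition \ref{prop:ch-multiplicative-properties}, and the splitting-principle constructions used in both halves are compatible with exact sequences by design. One also needs compatibility with base change $u\colon S'\to S$, which follows from the Tor-independence of $Y'\to Y$ with $X\to Y$ noted in \textsection\ref{subsec:FormDRR}, the base-change isomorphism $u^\ast L_{X/Y}\simeq L_{X'/Y'}$, and Proposition \ref{prop:properties-direct-image-virtual}(3).

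\textbf{Expected obstacle.} The hard part will be the smooth (projective bundle) case and, within it, pinning down the precise Whitney/rank-triviality bookkeeping so that the resulting isomorphism is canonical and independent of the chosen flag and of the auxiliary embedding $P\hookrightarrow\PBbb^N_Y$; this is where the delicate sign issues addressed in Lemma \ref{lemma:signs} reappear, and where one must lean heavily on the compatibility results of \textsection\ref{subsec:compatibility} to know that the various natural isomorphisms commute. A secondary difficulty is making the deformation-to-the-normal-cone argument rigorous over a non-Noetherian divisorial base, which will require the Noetherian approximation techniques already invoked throughout Section \ref{section:intersection-bundles} together with checking that the deformation scheme and its fibers satisfy the relevant conditions $(C_\bullet)$.
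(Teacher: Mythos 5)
The statement you are asked to prove is stated in the paper as a \emph{conjecture}, and the paper contains no proof of it: the authors only establish the special cases of a regular closed immersion cut out by a section of a vector bundle (Corollary \ref{cor:Borel-Serre-consequence}) and of such an immersion admitting a retraction (Corollary \ref{cor:Borel-Serre-consequence2}), and they explicitly remark that further properties would be needed even to \emph{characterize} $R_f$. Your proposal is a reasonable outline of the anticipated strategy --- indeed the introduction says the formalism is designed for "adapting to this setting the standard proof of factoring a projective morphism in terms of a closed immersion and a projective bundle" --- but it is a strategy sketch, not a proof, and the missing steps are exactly the open content of the conjecture.

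Concretely, the gaps are these. (1) The projective bundle case is asserted, not constructed: you say the isomorphism "should be built from" the tautological sequence, the projection formula and the splitting principle, but producing a canonical isomorphism $[\chfrak(\pi_!\,E)]_{Y/S}\to \pi_\ast[\chfrak(E)\cdot\tdfrak^{\ast}(\Omega_\pi)]_{\PBbb(V)/S}$ --- already nontrivial for $\pi_!\,\Ocal$ on $\PBbb^1$ --- is the heart of the matter and nothing in the paper supplies it. (2) Lemma \ref{lemma:td-factorisation-independence} gives independence of the factorization only for the object $\tdfrak^{\ast}(L_{X/Y})$ in the Chern category; it does not give independence of the composed Riemann--Roch \emph{isomorphism}, which requires comparing the immersion-case and smooth-case isomorphisms across a change of factorization --- a genuinely new compatibility. (3) In the deformation to the normal cone, the isomorphism $\delta_{M_0/S}\simeq\delta_{M_\infty/S}$ only identifies the two fiber distributions; one must additionally show that a Riemann--Roch isomorphism over $\PBbb^1_Y$ specializes at $0$ and $\infty$ to the two isomorphisms being compared, and that the extra components of $M_0$ (the blow-up of $Y$ in $X$) contribute trivially --- none of which is available from the results proved here. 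Your identification of the expected obstacles is accurate, but as written the proposal does not close any of them, so it cannot be accepted as a proof of the conjecture.
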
 
Ideally, one expects a natural such isomorphism to satisfy further compatibilities:

\begin{itemize}
    \item Multiplying both sides of \eqref{eq:RR-isomorphism} with an object $\chfrak(F)$ for a virtual perfect complex $F$ on $Y$, one can transform both sides according to the projection formulas in Proposition \ref{prop:properties-direct-image-virtual} \eqref{item:properties-direct-image-virtual-2} and \eqref{linedist:projformula}. One expects the isomorphism $R_{f}$ to interchange these projection formulas. 
    \item Compatibility with compositions $X \to Y \to Z$ of schemes over $S.$
    \item Compatibility with the Corollary \ref{cor:Borel-Serre-consequence} and Corollary \ref{cor:Borel-Serre-consequence2}.
\end{itemize}
The above list is not exhaustive, in that we expect to need more properties in order to characterize $R_{f}$.

\begin{corollary}
Let $f: X \to S$ be a local complete intersection morphism satisfying the condition $(C_n).$ If there exists a Deligne--Riemann--Roch isomorphism for $f$, then there is a functorial isomorphism  
\begin{equation}\label{eq:DRRdet}
    \lambda_{f}(E) \simeq \langle \chfrak(E) \cdot \tdfrak^{\ast}(L_{X/S}) \rangle_{X/S}.
\end{equation}
In particular, it is functorial in $E$ and compatible with base change.
\end{corollary}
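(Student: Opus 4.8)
The corollary is the special case $Y=S$ of the Deligne--Riemann--Roch isomorphism in \eqref{eq:RR-isomorphism}. The strategy is to specialize that isomorphism and then to identify the two resulting line distributions with the line bundles appearing in \eqref{eq:DRRdet} by means of the first Chern class isomorphism that compares intersection distributions on $S\to S$ with $\QBbb$-line bundles. First I would take $Y=S$ in the hypotheses, so that $g=\id_S$ satisfies the condition $(C_0)$ trivially, and $f\colon X\to S$ is the given local complete intersection morphism satisfying $(C_n)$. In this case $L_{X/Y}=L_{X/S}$, and \eqref{eq:RR-isomorphism} gives a canonical isomorphism of line distributions for $S\to S$,
\begin{displaymath}
    [\chfrak(f_!\ E)]_{S/S}\to f_\ast[\chfrak(E)\cdot\tdfrak^{\ast}(L_{X/S})]_{X/S}.
\end{displaymath}

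Next I would identify both sides with ordinary $\QBbb$-line bundles on $S$. For the left-hand side, recall from the discussion preceding the statement of Theorem \ref{thm:F} that when $X$ equals $S$ itself, the intersection distributions are, via a first Chern class isomorphism, equivalent to the category of $\QBbb$-line bundles; concretely, for a perfect complex $G$ on $S$ the distribution $[\chfrak(G)]_{S/S}$ is identified with the $\QBbb$-line bundle $\det G$ (taken with rational coefficients). Applying this to $G=f_!\ E=Rf_\ast E$, the left-hand side is identified with $\det Rf_\ast E=\lambda_f(E)$ (cf.\ Definition \ref{def:detcoh}), where we use $Y=S$ so that $f_!$ is the honest determinant of the cohomology. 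For the right-hand side, by Definition \ref{def:direct-image-distributions} and the projection formula \eqref{linedist:projformula}, $f_\ast[\chfrak(E)\cdot\tdfrak^{\ast}(L_{X/S})]_{X/S}$ is, as a distribution for $S\to S$, again identified through the same first Chern class isomorphism with the $\QBbb$-line bundle obtained by evaluating the intersection distribution at the unit Chern power series, namely $\langle\chfrak(E)\cdot\tdfrak^{\ast}(L_{X/S})\rangle_{X/S}$; this is precisely the intersection bundle associated with the degree $n+1$ component of $\chfrak(E)\cdot\tdfrak^{\ast}(L_{X/S})$, constructed in \textsection\ref{subsubsec:construction-intersection-bundles}. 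Composing these identifications with the specialized Riemann--Roch isomorphism yields \eqref{eq:DRRdet}.

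Finally, the functoriality and base change claims follow formally from the corresponding properties already built into the objects. The isomorphism \eqref{eq:RR-isomorphism} is required to be an isomorphism of functors $V(\Pcal_X)\to\Dcal(Y/S)$ of commutative Picard categories, hence in particular natural in $E$; the identification of $[\chfrak(f_!\ E)]_{S/S}$ with $\lambda_f(E)$ is natural in $E$ by Proposition \ref{prop:ch-multiplicative-properties} and the functoriality of the determinant of the cohomology; and the identification of the right-hand side with $\langle\chfrak(E)\cdot\tdfrak^{\ast}(L_{X/S})\rangle_{X/S}$ is natural in $E$ by the functoriality of the intersection bundles (Lemma \ref{lemma:elementary-properties-segre} \eqref{item:elpropsef-0}) and of the categorical characteristic classes (Proposition \ref{prop:additcharclass}). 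Compatibility with base change is immediate because line distributions are by definition compatible with base change, the determinant of the cohomology commutes with arbitrary base change for flat proper morphisms of finite presentation (Proposition \ref{prop:perfect-conditions} \eqref{item:perfect-conditions-2} and the remark after Definition \ref{def:detcoh}), the formation of intersection bundles commutes with base change, and the canonical isomorphism $u^\ast L_{X/S}\simeq L_{X'/S'}$ holds for any base change $u\colon S'\to S$.

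\textbf{Main obstacle.} The only genuinely delicate point is the bookkeeping needed to make the two identifications with $\QBbb$-line bundles precise and mutually compatible: one must be careful that the first Chern class isomorphism identifying $\Dcal(S/S)$ with $\Picfr(S)_\QBbb$ is the same one used on both sides, and that under it the direct-image distribution $f_\ast(\text{--})$ corresponds to evaluation at the unit power series, i.e.\ to the plain intersection bundle $\langle\text{--}\rangle_{X/S}$. This is a routine unwinding of Definitions \ref{def:prod-Chern-series-line-distribution} and \ref{def:direct-image-distributions} together with Theorem \ref{thm:Elkik-distribution}, but it is where the actual content of the corollary lies; everything else is a direct specialization.
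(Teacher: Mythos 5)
Your proof is correct and takes essentially the same approach as the paper: specialize the Deligne--Riemann--Roch isomorphism to $Y=S$, evaluate the resulting isomorphism of line distributions at the unit Chern power series $P=1$, and then use the first Chern class isomorphism to identify $\langle\chfrak(f_!\ E)\rangle_{S/S}=\langle\cfrak_1(f_!\ E)\rangle_{S/S}$ with $\langle\cfrak_1(\det f_!\ E)\rangle_{S/S}=\lambda_f(E)$. The paper's argument is slightly more streamlined (it does not invoke the projection formula \eqref{linedist:projformula} on the right-hand side, since evaluating at $P=1$ directly produces $\langle\chfrak(E)\cdot\tdfrak^{\ast}(L_{X/S})\rangle_{X/S}$), but the content is the same.
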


\begin{proof}
    First, we evaluate the isomorphism of line distributions for the Chern power series $P = 1.$ This immediately provides the right-hand side of the isomorphism \eqref{eq:DRRdet}. The left-hand side is given by $\langle \chfrak(f_!\ E) \rangle_{S/S}  = \langle \cfrak_1(f_!\ E) \rangle_{S/S} $. By the first Chern class isomorphism, see also\linebreak Proposition \ref{prop:isom-first-chern-functorial}, the latter is isomorphic to $ \langle \cfrak_1(\det f_!\ E) \rangle_{S/S}, $ which in turn is the line bundle $\lambda_{f}(E).$
\end{proof}

\providecommand{\bysame}{\leavevmode\hbox to3em{\hrulefill}\thinspace}
\providecommand{\MR}{\relax\ifhmode\unskip\space\fi MR }
\providecommand{\MRhref}[2]{%
  \href{http://www.ams.org/mathscinet-getitem?mr=#1}{#2}
}
\providecommand{\href}[2]{#2}

\end{document}